\theoremstyle{definition}
\newtheorem{definition}{Definition}
\crefname{definition}{Definition}{Definitions}
\newtheorem{theorem}{Theorem}[section]
\crefname{theorem}{Theorem}{Theorems}
\newtheorem{lemma}{Lemma}[section]
\newtheorem{proposition}{Proposition}[section]
\newtheorem{corollary}{Corollary}[section]
\theoremstyle{remark}
\newtheorem{example}{Example}[section]
\newtheorem{remark}{Remark}[section]
\newtheorem{conjecture}{Conjecture}[section]
\newtheorem{convention}{Convention}
\newtheorem{problem}{Problem}
\newtheorem{claim}{Claim}
\newtheorem{notation}{Notation}
\newtheorem{thmx}{Theorem}
\newcommand{\teich}{\mathcal{T}}
\newcommand{\pairingsymb}{\mathscr{P}}
\newcommand{\paircot}{\mathcal{P}^{\dagger}}
\newcommand{\teichmullernorm}{\boldsymbol{\tau}}
\newcommand{\LOneNorm}{\boldsymbol{n}}
\newcommand{\GuaidF}{\mathfrak{gf}}
\newcommand{\GoodS}{\mathfrak{L}}
\newcommand{\switch}{{\bf Swh}}
\newcommand{\Bers}[1]{\mathcal{T}^B_{#1}}
\newcommand{\proje}{\textbf{pr}}
\newcommand{\tv}[2]{\left\lsem#1\right\rsem_{{\scriptscriptstyle #2}}}
\newcommand{\tvD}[2]{\left\langlebar#1\right\ranglebar^{\scriptscriptstyle cycl}_{{\scriptscriptstyle #2}}}
\newcommand{\tvDBel}[2]{\left\langlebar#1\right\ranglebar_{{\scriptscriptstyle #2}}}
\newcommand{\tvdag}[2]{\left\lsem#1\right\rsem^\dagger_{{\scriptscriptstyle #2}}}
\newcommand{\ahlforsW}[1]{\mathscr{A}_{#1}}
\newcommand{\ahlforsWhat}[1]{\widehat{\mathscr{A}}_{#1}}
\newcommand{\Bersemb}{\mathscr{B}}
\newcommand{\isomorphism}{\mathscr{K}}
\newcommand{\verticalincop}{\mathbf{vi}}
\newcommand{\verticalprojop}{\mathbf{vp}}
\newcommand{\verticalinc}[2]{{\verticalincop}_{\scriptscriptstyle  #1:#2}}
\newcommand{\verticalincmodel}[1]{\verticalincop^{\mathbb{T}}_{\scriptscriptstyle #1}}
\newcommand{\verticalincmodelDol}[1]{\verticalincop^{\mathbb{T}:Dol}_{\scriptscriptstyle #1}}
\newcommand{\verticalincdag}[2]{\verticalincop^\dag_{\scriptscriptstyle  #1:#2}}
\newcommand{\verticalproj}[2]{{\verticalprojop}_{\scriptscriptstyle  #1:#2}}
\newcommand{\verticalprojdag}[2]{{\verticalprojop^\dag}_{\scriptscriptstyle  #1:#2}}
\newcommand{\Bersproj}{\mathscr{P}^B}
\newcommand{\trivialization}{\mathscr{I}}
\newcommand{\trivializationD}{\mathscr{I}^D}
\newcommand{\connectinghomo}{\mathscr{D}_0}
\newcommand{\canoflip}{{\boldsymbol \alpha}}
\newcommand{\canoflipdagger}{{\boldsymbol{dual}}^\dagger}
\newcommand{\tb}{\mathscr{T}\!\!\mathscr{B}}
\newcommand{\tbb}{\mathbf{TB}}
\newcommand{\sob}{\mathscr{C}}
\begin{document}

\title[The $L^1$-$L^\infty$-geometry]{The $L^1$-$L^\infty$-geometry of Teichm\"uller space \\ -- Second order infinitesimal structures --}
\author{Hideki Miyachi}

\date{\today}
\address{School of Mathematics and Physics,
College of Science and Engineering,
Kanazawa University,
Kakuma-machi, Kanazawa,
Ishikawa, 920-1192, Japan
}
\email{miyachi@se.kanazawa-u.ac.jp}
\thanks{This work is partially supported by JSPS KAKENHI Grant Numbers
20H01800,
20K20519,
22H01125}
\subjclass[2010]{primary: 32G05, 32G15, 53G60, 58A05, 58A30. Secondary: 32U15, 57M50, 53B05, 32Q45, 32V20}
\keywords{Teichm\"uller space, Teichm\"uller metric, Extremal quasiconformal mappings, Kodaira-Spencer theory, Double tangent space, Second order infinitesimal structure}

\maketitle

\tableofcontents

\chapter*{Abstract}
The $L^1$-$L^\infty$ geometry is the Finsler geometry of the Teichm\"uller space by the Teichm\"uller metric and the $L^1$-norm function of holomorphic quadratic differentials.
By definition, the Teichm\"uller metric is the dual metric to the $L^1$-norm function, and the $L^1$-norm function is thought of as the co-norm of the Teichm\"uller metric.
In this paper, aiming to develop the $L^1$-$L^\infty$-geometry and the differential geometry on the Teichm\"uller space, we formulate the second order infinitesimal structures (the infinitesimal structures on the (co)tangent bundles) over the Teichm\"uller space. Indeed, we will give model spaces of the second order infinitesimal spaces, and the dual pairing between the (second order) tangent and cotangent spaces.

By applying our formulation, we give affirmative answers to two folklore. We first show that the map from the space of holomorphic quadratic differentials to the tangent bundle defined by assigning Teichm\"uller Beltrami differentials is a real-analytic diffeomorphism on every stratum of the space of holomorphic quadratic differentials defined according to the structures of zeros. Second, we show that the Teichm\"uller metric is real-analytic on the image of each stratum. We also develop the $L^1$-$L^\infty$-geometry by giving a new relation, called the infinitesimal duality, between the $\partial$-derivatives of the Teichm\"uller metric and the $L^1$-norm function.

\chapter{Introduction}
\label{chap:introduction}
The \emph{Teichm\"uller space} $\teich_g$ of Riemann surfaces of genus $g$ ($\ge 2$) is a deformation space of marked Riemann surfaces of genus $g$. The Teichm\"uller space $\teich_g$ is an infinite branched covering space of the Riemann moduli space of Riemann surfaces of genus $g$, and the orbifold covering group is essentially identified with the mapping class group of an orientable closed surface of genus $g$.
It was shown by Teichm\"uller \cite{MR0003242} that the Teichm\"uller space is homeomorphic to the Euclidean space $\mathbb{R}^{6g-6}$ of dimension $6g-6$.
The Teichm\"uller space $\teich_g$ admits a natural complex structure, and is known to be biholomorphically equivalent to a bounded domain in $\mathbb{C}^{3g-3}$.

\section{The $L^1$-$L^\infty$-geometry}
\subsection{Background}
In the Teichm\"uller theory, the infinitesimal deformation of a Riemann surface $M_0$ is formulated by differentiating the family of quasiconformal mappings. Hence, the holomorphic tangent space at $x_0=(M_0,f_0)\in \teich_g$ is described as the quotient space of the space $L^\infty_{(-1,1)}(M_0)$ of $L^\infty$-measurable $(-1,1)$ forms on $M_0$ with $L^\infty$-norm (for the notation, see \Cref{chap:Teichmuller_theory}). The holomorphic cotangent space at $x_0=(M_0,f_0)\in \teich_g$ is canonically identified with the space $\mathcal{Q}_{x_0}=\mathcal{Q}_{M_0}$ of holomorphic quadratic differentials on $M_0$ with the $L^1$-norm function\index{Quadratic differential!$L^1$ norm function}:
$$
\LOneNorm(q)=\|q\|=\dfrac{1}{2i}\iint_{M_0}|q(z)|d\overline{z}\wedge dz
$$
for $q=q(z)dz^2\in \mathcal{Q}_{x_0}$.
The identification comes from the natural pairing
\begin{equation}
\label{eq:pairing_intro}
\llangle \mu,q\rrangle =\dfrac{1}{2i}\iint_{M_0}\mu(z)q(z)d\overline{z}\wedge dz
\end{equation}
for $\mu=\mu(z)d\overline{z}/dz\in L^\infty_{(-1,1)}(M_0)$ and $q=q(z)dz^2\in \mathcal{Q}_{x_0}$. In fact, the equivalence relation for defining the the holomorphic tangent space is given by the pairing, which is known as Teichm\"uller's theorem. 

The \emph{Teichm\"uller metric} $\teichmullernorm$\index{Teichm\"uller metric} on $\teich_g$ is a Finsler metric on the (holomorphic) tangent bundle $T\teich_g$ which is defined as the dual metric of the $L^1$-norm function on the space $\mathcal{Q}_g$ of holomorphic quadratic differentials (the holomorphic cotangent bundle) via the above pairing:
\begin{equation}
\label{eq:teichmuller_intro}
\teichmullernorm(x_0,v)=
\sup\{{\rm Re}\llangle \mu, q\rrangle\mid q\in \mathcal{Q}_{x_0}, \LOneNorm(q)=1\},
\end{equation}
where $x_0=(M_0,f_0)\in \teich_g$, $v\in T_{x_0}\teich_g$ and $\mu\in L^\infty_{(-1,1)}(M_0)$ presents the tangent vector $v$. The $L^1$-norm is \emph{reflexively dual} in the sense that
\begin{equation}
\label{eq:L1_intro}
\LOneNorm(q)=
\sup\{{\rm Re}\llangle \mu, q\rrangle\mid v=[\mu]\in T_{x_0}\teich_g, \teichmullernorm(x_0,v)=1\}
\end{equation}
for $q\in \mathcal{Q}_{x_0}$
(see \S\ref{subsec:reflexive_dual}).

The phrase ``the $L^1$-$L^\infty$-geometry" in the title is derived from the situation. Namely, it  means the geometry of Teichm\"uller space in terms of the $L^1$-norm on the holomorphic quadiratic differenitals (cotangent bundles), and the Teichm\"uller metric ($L^\infty$-norm) of the tangent bundles via the duality by the pairing, which is given from the duality between the $L^1$-space and the $L^\infty$-space. Incidentally, the Weil-Petersson geometry stands for the $L^2$-geometry on the Teichm\"uller space in this framework.

Royden gives important contributions to the $L^1$-$L^\infty$-geometry that the $L^1$-norm function and the Teichm\"uller metrics are of class $C^1$ on the complement of the zero sections, and that the Teichm\"uller metric coincides with the Kobayashi metric under the canonical complex structure (cf. \cite{MR0288254}. See also \cite{MR2194466}).
The length metric of the Teichm\"uller metric is called the \emph{Teichm\"uller distance}.
The characterization of the Teichm\"uller distance by the Kobayashi distance provides interactions between Complex geometry and the Teichm\"uller geometry (Extremal length geometry) (e.g. \cite{MR1635773}, \cite{MR0624820},  \cite{MR0955842}, \cite{MR1142683}, \cite{MR1031909}, \cite{MR2525030}, \cite{MR4028456}, and  \cite{MR4633651}).
There are enormous researches on the geometry of the Teichm\"uller distance from several points of view. See \cite{MR2655331} for instance.

The space $\mathcal{Q}_g=\cup_{x\in \teich_g}\mathcal{Q}_x$ of holomorphic quadratic differentials has interesting properties in other aspects as well. The space $\mathcal{Q}_g$ admits a natural stratification defined from the data of zeros of holomorphic quadratic differentials (cf. \S\ref{sec:statification_space_QD}). The Teichm\"uller Beltrami differentials \eqref{eq:TB-diff_formal} are the directions of geodesics and induce a natural flow action on $\mathcal{Q}_g$ preserving the stratified structure, which called the \emph{Teichm\"uller geodesic flow}. The dynamics of the Teichm\"uller geodesic flow presents the affine deformations of (singular) flat structures on a surface, and it is recently applied for several fields and for attacking and solving important problems (e.g. \cite{MR2913101},
\cite{MR3071503}, 
\cite{MR2264836}, 
\cite{MR2316268},
\cite{MR2010740}, 
\cite{MR2000471}, 
\cite{MR644018}, 
\cite{MR3822740},
\cite{MR866707},
and 
\cite{MR1094714}).

\subsection{Teichm\"uller Beltrami differenitals}
In the $L^1$-$L^\infty$-geometry, the following measurable $(-1,1)$-form
\begin{equation}
\label{eq:TB-diff_formal}
k\dfrac{\overline{q}}{|q|}=k\dfrac{\overline{q(z)}}{|q(z)|}\dfrac{d\overline{z}}{dz}\quad (k\ge 0, q\in \mathcal{Q}_{M_0}-\{0\})
\end{equation}
plays an important and fundamental role. Such $(-1,1)$-forms are called the \emph{(formal) Teich\"uller Beltrami differentials} (\cite{MR0003242}). The Teichm\"uller Beltrami differential is real-analytic expect at the zeros of the quadratic differential, and has no continuous-limit at each zero.

The Teichm\"uller Beltrami differentials are extremal in the two situations. When $0<k<1$, the quasiconformal mapping $g$ whose Beltrami differential is equal to \eqref{eq:TB-diff_formal} has a unique extremal in terms of the maximal dilatation in its homotopy class. This fact is recently known as Teichm\"uller's uniquness theorem (e.g. \cite[Theorem 5.9]{MR1215481}). The unique extremality of the differentials \eqref{eq:TB-diff_formal} shows that $\teich_g$ is homeomorphic to $\mathbb{R}^{6g-6}$, and makes the Teichm\"uller space $\teich_g$ to be a unique geodesic metric in terms of the Teichm\"uller distance.

When $v\in T_{x_0}\teich_g$ is presented by the Teichm\"uller Beltrami differenital \eqref{eq:TB-diff_formal} for $k>0$, it is unique \emph{infinitesimally extremal} in the sense that
$$
\teichmullernorm(x_0,v)=k=\left\|k\dfrac{\overline{q}}{|q|}\right\|_\infty={\rm Re}\left\llangle k\dfrac{\overline{q}}{|q|},\dfrac{q}{\|q\|}\right\rrangle,
$$
and, in addition, when a $(-1,1)$-form $\mu$ on $M_0$ presents $v$, $\|\mu\|_\infty\ge k$, and the equality holds only if $\mu=k\overline{q}/|q|$ almost everywhere on $M_0$. This is also known as Teichm\"uller's theorem (e.g. \cite
[Corollary 1]{MR0245787}, \cite{MR0393477} and \cite{MR0505549}).
The unique infinitesimally extremality of the differential \eqref{eq:TB-diff_formal} gives a bijection
$$
T^*_{x_0}\teich_g=\mathcal{Q}_{M_0}\ni q\mapsto \left[\|q\|\dfrac{\overline{q}}{|q|}\right]\in T_{x_0}\teich_g
$$
between the (holomorphic) cotangent space and the (holomorphic) tangent space (preserving the zero sections),
where the bracket $[\cdot]$ means the infinitesimal equivalence class (cf. \S\ref{sec:infinitesimal_deformation_RS}). Actually, the correspondence extends as a fiber bundle (not vector-bundle)-isomorphism between the (holomorphic) cotangent bundle to the (holomorphic) tangent bundle over the Teichm\"uller space.

 
It is known as that the importance of the differential \eqref{eq:TB-diff_formal} goes back to Herbert Gr{\"o}tzsch's works on extremal problems of conformal mappings from 1928 in which contain a powerful technique ``the length-area method" and an important observation on the (unique) extremality of affine mappings.
See Alberge-Papadopoulos \cite{MR4321185} and fruitful commentaries on Gr{\"o}tzsch's works and the history of the theory of quasiconformal mappings and the Teichm\"uller theory  in the same volume.
%
%
%
%
%
%

\section{Results}
The purpose of this paper is to develop the $L^1$-$L^\infty$-geometry on the Teichm\"uller space.
We will face two kinds of results in this paper. 
First, we formulate and develop the theory on the second order infinitesimal structures on the Teichm\"uller space. We mean by the \emph{second order infinitesimal structures} on the Teichm\"uller space the (co)tangent structures on the (co)tangent bundle over $\teich_g$. These are stages for discussing the differential geometry (Connections and Finsler geometry etc.). See \S\ref{sec:intro_future} below.

Next, applying our formulations of the second order infinitesimal structures, we will give affirmative answers to  (or concrete proofs of) two folklore.

\subsection{Two folklore}
The first folklore which we deal with is as follows:

\begin{thmx}[Teichmuller Beltrami map is diffeomorphic]
\label{thm:TB_map_is_diffeomorphism}
The Teichm\"uller Beltrami map
$$
\tb\colon \mathcal{Q}^\times_g\ni q\mapsto \left[\|q\|\dfrac{\overline{q}}{|q|}\right]\in T^\times \teich_g
$$
 is a real-analytic diffeomorphism onto its image on any stratum in $\mathcal{Q}_g$, where the symbol $\times$ means the complement of the zero section from each space.
\end{thmx}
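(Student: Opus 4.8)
The plan is to argue one stratum at a time. Fix a stratum $\mathcal{Q}(\kappa)\subset\mathcal{Q}_g$ indexed by the orders $\kappa=(k_1,\dots,k_n)$, $\sum_i k_i=4g-4$, of the zeros, and recall that $\mathcal{Q}(\kappa)$ carries a natural real-analytic (in fact complex-analytic) structure given by period coordinates: passing to the canonical orientation double cover $\widehat M\to M$ branched over the odd-order zeros, $q$ corresponds to an abelian differential $\omega$ on $\widehat M$, anti-invariant under the deck involution, with $\omega^{2}$ the pullback of $q$, and the anti-invariant relative periods of $\omega$ are local holomorphic coordinates on the stratum. The plan is then: near a given $q_0\in\mathcal{Q}(\kappa)$, (i) produce a real-analytic local frame along the stratum of the holomorphic cotangent bundle $\mathcal{Q}_g$, hence, via the pairing, of $T\teich_g$, and use the real-analytic trivialization of $T\teich_g$ furnished by the model spaces of the second order infinitesimal structures constructed in the earlier sections; (ii) in these coordinates verify that $\tb$ is real-analytic; and (iii) verify that $d\tb$ is everywhere injective and that $\tb$ restricted to $\mathcal{Q}(\kappa)$ is a topological embedding, which together give that it is a real-analytic diffeomorphism onto its image.

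For (ii) the point to be careful about is that as $q$ moves in $\mathcal{Q}(\kappa)$ both the Riemann surface $M=M(q)=\pi(q)$ and the zero locus move, while $\overline q/|q|$ has no continuous extension across the zeros, so one cannot simply differentiate the pairing integral. Instead I would use the flat-geometry description: in the natural coordinate $w$ with $q=dw^{2}$ the Teichm\"uller Beltrami differential is the \emph{constant} form $k\,\overline{dw}/dw$ away from the cone points, and a nearby $q'\in\mathcal{Q}(\kappa)$ is obtained by a real-affine modification of the flat picture depending real-analytically on the period coordinates; this yields a canonical Teichm\"uller-type quasiconformal comparison $M(q)\to M(q')$ transporting the Teichm\"uller Beltrami forms real-analytically in the parameters, the cone points forming a removable null set for the relevant integrals. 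Pairing against the real-analytic cotangent frame of (i) and passing through the model-space trivialization of $T\teich_g$ then exhibits $\tb$, read in coordinates, as a real-analytic map --- essentially because, in period coordinates, the whole construction is algebraic in the periods and their complex conjugates.

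For (iii), injectivity of $\tb$ on $\mathcal{Q}(\kappa)$ is immediate from the global bijectivity of the Teichm\"uller Beltrami map recalled in the Introduction (Teichm\"uller's uniqueness theorem): two holomorphic quadratic differentials over the same point of $\teich_g$ with the same infinitesimally extremal class coincide, a fortiori within one stratum. Since $\tb$ covers the identity of $\teich_g$ under the bundle projections, the horizontal part of $d\tb_q$ is just $d\pi_q$, so injectivity of $d\tb_q$ reduces to that of the differential of the fibrewise map $q'\mapsto\|q'\|\,\overline{q'}/|q'|$ from $\mathcal{Q}_{M(q)}^{\times}\cap\mathcal{Q}(\kappa)$ to $T^{\times}_{\pi(q)}\teich_g$; I would establish this by a direct computation of that differential in the period coordinates, or equivalently by showing that the set-theoretic inverse $v\mapsto\tb^{-1}(v)$ --- characterized by Teichm\"uller's theorem as the unique $q$ with $\LOneNorm(q)=\teichmullernorm(\pi(v),v)$ and $\overline q/|q|$ representing $v/\teichmullernorm(\pi(v),v)$ --- is real-analytic, by the same coordinate analysis. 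Finally $\tb$ is a homeomorphism on every fibre of $\mathcal{Q}(\kappa)$ and commutes with the projections, hence is a topological embedding of the stratum; combined with (ii) and the injectivity of $d\tb$ this gives a real-analytic diffeomorphism onto the image.

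I expect the main obstacle to be (ii): rigorously controlling the real-analytic dependence of the representing Beltrami differential, and of the resulting tangent vector in $T\teich_g$, on the point of the stratum, in the presence of the singularities of $\overline q/|q|$ at the zeros and the simultaneous variation of the conformal structure $M(q)$. This is exactly the difficulty that the model spaces for the second order infinitesimal structures built in the previous sections are designed to resolve, by packaging the flat- and period-coordinate data into a real-analytic trivialization of $T\teich_g$ along each stratum; the computation showing $d\tb$ is nondegenerate in (iii) will draw on the same package.
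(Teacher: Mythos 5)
Your overall architecture --- real-analyticity on the stratum, everywhere-injective differential, global injectivity from Teichm\"uller's uniqueness theorem, assembled by the inverse function theorem --- matches the paper's, and your reduction of the injectivity of $d\tb_q$ to the vertical directions via $\Pi_{\teich_g}\circ\tb=\Pi^\dagger_{\teich_g}$ is legitimate. The real-analyticity step could plausibly be carried out by your flat-geometry argument, though the paper takes a shorter route: it identifies $\tb$ with $D\Pi^\dagger_{\teich_g}\circ\bigl(-\tfrac12\mathscr{X}_{\LOneNorm^2}\bigr)$, where $\mathscr{X}_{\LOneNorm^2}$ is the Hamiltonian vector field of $\LOneNorm^2$ for the canonical holomorphic symplectic form on $\mathcal{Q}_g$, so real-analyticity of $\tb$ on a stratum is inherited from that of $\LOneNorm$ (known from period coordinates) through two holomorphic bundle maps.

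The genuine gap is in your step (iii): the injectivity of the differential, which is the actual content of the theorem, is never proved. You defer it to ``a direct computation of that differential in the period coordinates'' or, ``equivalently,'' to showing that the set-theoretic inverse is real-analytic --- but the second alternative is circular (real-analyticity of the inverse is precisely what the theorem delivers, and you give no independent route to it), and the first is not carried out and is not routine. Note that $\tb$ is only real-analytic, not holomorphic: differentiating $\overline q/|q|$ produces a nonzero anti-holomorphic derivative, so non-degeneracy of the real differential means precisely that $D\tb[V]+\overline{D}\tb[V]=0$ forces $V=0$ (\Cref{lem:non-degenerate}), and the conjugate-linear part enters the argument essentially. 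The paper's proof (\S\ref{subsec:proof_TB_map_is_diffeomorphism}) computes both derivatives of the pairing $(q,\alpha)\mapsto\|q\|\iint(\overline q/|q|)\alpha$ (\Cref{prop:derivative_pairing_Teichmuller_Beltrami}), first forces the horizontal part $[X]$ of $V$ to vanish by perturbing the test sections $\alpha_k$ in the vertical direction of the stratum, and then, for the remaining vertical part $\beta=\varphi_i-L_{\xi_i}(q_0)$, derives an integral identity which, upon substituting the test differentials $\gamma=q_0$ and $\gamma=\beta$, yields $\beta=\lambda q_0$ with $\lambda\in\mathbb{R}$ and finally $\lambda=0$. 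Some argument of this kind --- playing the holomorphic against the anti-holomorphic part of $d\tb$ via the Hermitian form $\iint\overline{\alpha}\beta/|q_0|$ on the vertical tangent space of the stratum --- is indispensable and is missing from your proposal.
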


The proof appears in \Cref{chap:TB_map}.
To this end, we first check that the Teichm\"uller Beltrami maps are real-analytic on each stratum (\S\ref{sec:smoothness_TBmap}). However, the author confesses that the real-analyticity itself is seemed to be well-known to experts. Indeed, our strategy is to show the following commutative diagram:
\begin{equation}
\label{eq:com_di_TB}
\minCDarrowwidth90pt
\xymatrix@C=50pt@R=40pt{
T\mathcal{Q}_g \ar[rd]^{D\Pi^\dagger_{\teich_g}}
\\
\mathcal{Q}^\times_g \ar[u]^{-\frac{1}{2}\mathscr{X}_{\LOneNorm^2}}\ar[r]_{\tb} 
& T\teich_g
%
%
}
\end{equation}
where $D\Pi^\dagger_{\teich_g}$ is the differential of the projection $\Pi^\dagger_{\teich_g}\colon \mathcal{Q}_g\to \teich_g$ and $\mathscr{X}_{\LOneNorm^2}$ is the Hamiltonian vector field of the square $\LOneNorm^2$ of the $L^1$-norm function on $\mathcal{Q}_g$ in terms of a canonical holomorphic symplectic structure on $\mathcal{Q}_g$ (cf. \S\ref{subsec:Hamiltonian_vector_field} and \S\ref{sec:smoothness_TBmap}).
Though the commutative diagram may also be known to experts (cf. \cite{MR1283559}), we confirm the diagram from the variational formula of the $L^1$-norm function (\Cref{coro:canonical_section}). The constant $-1/2$ at the head of the Hamiltonian vector field in the diagram may depend on the convention. 
The real-analyticity discussed here cannot be straightforwardly improved in the sense that there is $q_0\in \mathcal{Q}_g$ such that the Teichm\"uller Beltrami map is not differentiable in some direction at $q_0$ (cf. \cite{MR0288254} and \cite[Proposition 7.5.5]{MR2245223}).

After verifying the real-analyticity, we give a variational formula of the Teichm\"uller Beltrami maps (in our setting) to prove the non-degeneracy of the differential of the Teichm\"uller Beltrami map $\tb$ (cf. \Cref{prop:derivative_pairing_Teichmuller_Beltrami} and \S\ref{subsec:proof_TB_map_is_diffeomorphism}).

Following \Cref{thm:TB_map_is_diffeomorphism},
we define a subset $T\!\teich_g(k_1,\cdots,k_n;\epsilon)$ in $T\teich_g$ by
\begin{align*}
T\!\teich_g(k_1,\cdots,k_n;\epsilon)
&=\left\{v=\left[\|q\|\dfrac{\overline{q}}{|q|}\right]\in T\teich_g\mid q\in \mathcal{Q}_g(k_1,\cdots,k_n;\epsilon)
\right\}
\end{align*}
for $k_1$, $\cdots$, $k_n\in \mathbb{N}$ with $k_1+\cdots+k_n=4g-4$ and $\epsilon=\pm 1$,
where $\mathcal{Q}_g(k_1,\cdots,k_n;\epsilon)$ is the stratum with data $(k_1,\cdots,k_n)$ and $\epsilon$. From \Cref{thm:TB_map_is_diffeomorphism}, the Teichm\"uller Beltrami map
$$
\tb\colon \mathcal{Q}_g(k_1,\cdots,k_n;\epsilon)\to T\!\teich_g(k_1,\cdots,k_n;\epsilon)
$$
is a real-analytic diffeomorphism, and hence, the collection $\{T\!\teich_g(k_1,\cdots,k_n;\epsilon)\}$ defines a stratification in the (holomorphic) tangent bundle $T\teich_g$ by real-analytic submanifolds.
The $L^1$-norm function $\LOneNorm$ is real-analytic on each stratum in $\mathcal{Q}_g$ (e.g. \S\ref{sec:statification_space_QD}). The (unique) infinitesimal extremality deduces the \emph{duality} between the Teichm\"uller metric and the $L^1$-norm function:
\begin{equation}
\label{eq:T-N-dual}
\teichmullernorm(\Pi_{\teich_g}\circ \tb(q),\tb(q))=\LOneNorm(q)=\|q\|
\end{equation}
for $q\in \mathcal{Q}_g^\times$ (where $\Pi_{\teich_g}\colon T\teich_g\to \teich_g$ is the projection).
Thus, \Cref{thm:TB_map_is_diffeomorphism} gives an affirmative answer to the following second folklore.

\begin{thmx}[Teichm\"uller metric is real-analytic]
\label{thm:teichmuller_metric_is_real_analytic_strata}
The Teichm\"uller metric is real-analytic on each stratum in $T\teich_g$.
\end{thmx}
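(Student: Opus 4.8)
The plan is to obtain \Cref{thm:teichmuller_metric_is_real_analytic_strata} as a direct consequence of \Cref{thm:TB_map_is_diffeomorphism} together with the duality \eqref{eq:T-N-dual}. Fix data $k_1,\dots,k_n\in\mathbb{N}$ with $k_1+\cdots+k_n=4g-4$ and a sign $\epsilon=\pm1$, and look at the corresponding stratum $\mathcal{Q}_g(k_1,\dots,k_n;\epsilon)\subset\mathcal{Q}_g$ and its image $T\!\teich_g(k_1,\dots,k_n;\epsilon)=\tb(\mathcal{Q}_g(k_1,\dots,k_n;\epsilon))$. Since the Teichm\"uller Beltrami map is a fiber-bundle isomorphism $\mathcal{Q}^\times_g\to T^\times\teich_g$ (unique infinitesimal extremality), these images partition $T^\times\teich_g$, and by \Cref{thm:TB_map_is_diffeomorphism} each restriction $\tb\colon \mathcal{Q}_g(k_1,\dots,k_n;\epsilon)\to T\!\teich_g(k_1,\dots,k_n;\epsilon)$ is a real-analytic diffeomorphism. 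In particular $T\!\teich_g(k_1,\dots,k_n;\epsilon)$ is a real-analytic submanifold of $T\teich_g$ and $\tb^{-1}$ is real-analytic on it, so it is meaningful to ask whether a function on this stratum is real-analytic.

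The next step is to express $\teichmullernorm$ on the stratum through $\tb^{-1}$. Given $v\in T\!\teich_g(k_1,\dots,k_n;\epsilon)$, write $q=\tb^{-1}(v)\in\mathcal{Q}_g(k_1,\dots,k_n;\epsilon)$, so that $v=\tb(q)$ and $\Pi_{\teich_g}(v)=\Pi_{\teich_g}\circ\tb(q)$. Then \eqref{eq:T-N-dual} reads
$$
\teichmullernorm(v)=\teichmullernorm\bigl(\Pi_{\teich_g}\circ\tb(q),\tb(q)\bigr)=\LOneNorm(q)=\LOneNorm\bigl(\tb^{-1}(v)\bigr),
$$
i.e.\ $\teichmullernorm=\LOneNorm\circ\tb^{-1}$ as functions on $T\!\teich_g(k_1,\dots,k_n;\epsilon)$.

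Finally, the $L^1$-norm function $\LOneNorm$ is real-analytic on each stratum of $\mathcal{Q}_g$ (cf.\ \S\ref{sec:statification_space_QD}; concretely, in period coordinates on a stratum the total $|q|$-area $\LOneNorm(q)=\|q\|$ depends real-analytically on the relative periods via the intersection pairing). Since the composition of real-analytic maps between real-analytic manifolds is again real-analytic, $\teichmullernorm|_{T\!\teich_g(k_1,\dots,k_n;\epsilon)}=\LOneNorm\circ\tb^{-1}$ is real-analytic, which is the assertion. I do not expect a genuine obstacle at this stage: the analytic content has already been absorbed into \Cref{thm:TB_map_is_diffeomorphism} -- whose heart is the non-degeneracy of $D\tb$, established via the variational formula for the Teichm\"uller Beltrami map -- and into the classical real-analyticity of the $L^1$-norm on strata. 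The only point requiring a line of care is the bookkeeping, namely that $\tb^{-1}$ carries the tangent-bundle stratum back to the matching quadratic-differential stratum, which is exactly how $T\!\teich_g(k_1,\dots,k_n;\epsilon)$ was defined.
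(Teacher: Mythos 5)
Your argument is exactly the paper's: Theorem~\ref{thm:TB_map_is_diffeomorphism} makes $\tb$ a real-analytic diffeomorphism from each stratum of $\mathcal{Q}_g$ onto the corresponding stratum of $T\teich_g$, the duality \eqref{eq:T-N-dual} gives $\teichmullernorm=\LOneNorm\circ\tb^{-1}$ there, and \Cref{prop:L1-norm-realanalytic-strata} (real-analyticity of $\LOneNorm$ in period coordinates) finishes the proof. This matches the paper's deduction of Theorem~\ref{thm:teichmuller_metric_is_real_analytic_strata} line for line, so nothing further is needed.
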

%

\subsection{Infinitesimal Duality}
The duality \eqref{eq:T-N-dual} between the $L^1$-norm function and the Teichm\"uller metric described  by the following commutative diagram:
$$
\minCDarrowwidth90pt
\xymatrix@C=50pt@R=40pt@r{
\mathcal{Q}_g=T^*\!\teich_g
\ar[rd]^{\tb}
\ar[r]^{\LOneNorm}
\ar[d]_{\Pi^\dagger_{\teich_g}}
& 
\mathbb{R} \\
\teich_g
&
T\teich_g
\ar[u]^{\teichmullernorm}
\ar[l]_{\Pi_{\teich_g}}
}
$$
We will also obtain variational formulae of the $L^1$-norm function of holomorphic quadratic  differentials and the Teichm\"uller metric in our setting (cf. \Cref{thm:variation_L1-norm} and \Cref{thm:derivative_T-metric}). 
From our vatiational formulae, we obtain another duality property in the $L^1$-$L^\infty$-geometry, called the \emph{infiniteimal duality}
by checking the the following commutative diagram:
\begin{equation}
\label{eq:infinitesimal_duality_diagram}
\minCDarrowwidth90pt
\xymatrix@C=50pt@R=40pt{
T^*\mathcal{Q}_g=T^*\!T^*\!\teich_g 
\ar@/_20pt/[rd]_{\Pi^\dagger_{\mathcal{Q}_g}}
&
T\mathcal{Q}_g=TT^*\!\teich_g 
\ar[r]^{\switch_{\teich_g}}
\ar[l]_{\canoflipdagger_{\teich_g}}
\ar@/^20pt/[d]^{\Pi_{\mathcal{Q}_g}}
&T^*T\teich_g 
\ar@/^20pt/[d]^{\Pi^\dagger_{T\teich_g}}
\\
&\mathcal{Q}^\times _g=(T^*)^\times \teich_g 
\ar[ul]_{-\partial \LOneNorm^2}
\ar[u]^{-\frac{1}{2}\mathscr{X}_{\LOneNorm^2}}
\ar[r]^{\tb}
&T^\times \teich_g
\ar[u]^{\partial\teichmullernorm^2}
}
\end{equation}
(cf. \S\ref{sec:infiniteismal_duality}).
The infinitesimal duality implies that the Teichm\"uller metric and the $L^1$-norm function are also in a dual relationship in the infinitesimal sense.
The maps $\switch_{\teich_g}\colon T\mathcal{Q}_g\to T^*T\teich_g$ and $\canoflipdagger_{\teich_g}\colon T\mathcal{Q}_g\to T^*\mathcal{Q}_g$ appearing in the diagram are biholomorphisms, which called the \emph{switch} and the \emph{dualization}, defined on the tangent bundle $T\mathcal{Q}_g$ over $\mathcal{Q}_g$ (cf. \S\ref{sec:infiniteismal_duality}).  As we discuss later, such maps are defined canonically in a general situation (cf. \Cref{chap:double_tangent_space}).

It is possible that the infinitesimal duality holds for dual Finsler metrics under a general setting. In fact, a reasonable conjecture is that the reflexive duality like as \eqref{eq:teichmuller_intro} and \eqref{eq:L1_intro} is sufficient (cf. \S\ref{subsec:reflexive_dual}). 

\subsection{Horizontal lifts}
We will define two kinds of horizontal lifts of the vector bundle $\mathcal{Q}_g\to \teich_g$ in 
\S\ref{subsec:tangent_space_to_the_unit_sphere_bundle} and \S\ref{subsec:conjectual_picture}
at generic differentials in $\mathcal{Q}_g$, where for a complex vector bundle $E\to \teich_g$, a $\mathbb{C}$-subspace of the tangent space $T_vE$ ($v\in E$) is called a \emph{horizontal lift} if it is $\mathbb{C}$-isomorphic to the tangent space to $\teich_g$ via the differential $TE\to T\teich_g$ of the projection.
Our horizontal lift given here are subspaces of the real tangent space $T^{\mathbb{R}}_q\mathcal{SQ}_g$ to the unit sphere bundle $\mathcal{SQ}_g\to \teich_g$ at a generic differential $q\in \mathcal{Q}_g$.
The horozontal lift in \S\ref{subsec:tangent_space_to_the_unit_sphere_bundle} is given from a general point of view. The horizontal lift is \S\ref{subsec:conjectual_picture} is defined from a complex analytic property of the Teichm\"uller-Beltrami map. Indeed, the latter is characterized as the maximal $\mathbb{C}$-subspace among $\mathbb{C}$-subspaces of $T^{\mathbb{R}}_q\mathcal{SQ}_g$ in which the $(1,0)$-part of the differential of the Teichm\"uller-Beltrami map is $\mathbb{C}$-linear. Hence, the images of the differential of the Teichm\"uller-Beltrami map also define horizontal lifts of the tangent bundle $T\teich_g\to \teich_g$.

\subsection{Cases of Riemann surfaces of analytically finite type}
For the simplicity of the notation, in this paper, we discuss mainly the Teichm\"uller space of closed Riemann surface of genus $g$ ($\ge 2$). 

Our second order infinitesimal structures are formulated from both the Teichm\"uller theory (quasiconformal deformations), and the Kodaira-Spencer theory (sheaf cohomologies).
After setting an appropriate situation (for instance, we think $\Theta_M(-D)$ and $\Omega_{M}^{\otimes 2}(D)$ ($D$ is the divisor of the marked points) instead of $\Theta_M$ and $\Omega_M^{\otimes 2}$), we can check that our results here also naturally hold in the case of the Teichm\"uller space of Riemann surfaces of analytically finite type with negative Euler characteristic, where $\Theta_M(-D)$ and $\Omega_{M}^{\otimes 2}(D)$ are the sheaf of holomorphic tangent vectors with zeros at the support of $D$, and the sheaf of holomorphic quadratic differentials with at most simple poles at the support of $D$. When $D=0$ as a divisor we write $\Theta_{M}(0)=\Theta_M$ and $\Omega_{M}^{\otimes 2}(D)=\Omega_{M}^{\otimes 2}$ here (e.g. \cite{MR499279}).

\section{Future}
\label{sec:intro_future}
Our formulations of the second order infinitesimal spaces over the Teichm\"uller space, and our results, \Cref{thm:TB_map_is_diffeomorphism}, \Cref{thm:teichmuller_metric_is_real_analytic_strata}, and the infinitesimal duality \eqref{eq:infinitesimal_duality_diagram}, give an interactive communication between the $L^1$-geometry on $\mathcal{Q}_g$ and the $L^\infty$-geometry on $T\teich_g$ in the infinitesimal level. 


\subsection{}
The $L^1$-$L^\infty$-geomety is thought of as a (complex) Finsler geometry with the Teichm\"uller metric (the $L^\infty$-norm), and also recognized as the (complex) Hamiltonian-Lagrange geometry with the $L^1$-norm function. These geometries are the infinitesimal geometries on the (holomorphic) tangent bundle $T\teich_g$ and the holomorphic cotangent bundles $T^*\teich_g=\mathcal{Q}_g$.
Our formulation of second order infinitesimal spaces will help the study of the complex Finsler geometry on the Teichm\"uller space $\teich_g$ from a higher perspective.
In this paper, a \emph{complex Finsler metric} $F=F(x,\xi)$ is by definition, a continuous function on the complex (holomorphic) tangent bundle $TM$ over a complex manifold $M$ with the following three properties:
\begin{itemize}
\item[(1)] (Complex homogeneity) $F(x,\alpha \xi)=|\alpha| F(x,\xi)$ for $\alpha\in \mathbb{C}$, $x\in M$, and $\xi\in T_xM$; 
\item[(2)]
(Non negativity)
$F(x,\xi)\ge 0$ for $x\in M$ and $\xi\in T_xM$;
and
\item[(3)] (Triangle inequality) $F(x,\xi_1+\xi_2)\le F(x,\xi_1)+F(x,\xi_2)$ for $x\in M$, $\xi_1$ and $\xi_2\in T_xM$
\end{itemize}
(cf. \cite{MR0833808}). Several versions of the definition of complex Finsler metrics are known. For instance, in \cite{MR0377126}, complex Finsler metrics (structures) are defined with the condition (1), the positivity and the smoothness except for the zero-section (see also \cite{MR0211370} and \cite{MR0216446}). Sometimes, the following pseudoconvexity condition (3') is adopted instead of the condition (3):
\begin{itemize}
\item[(3')]
(Pseudoconvexity)
the complex Hessian $\begin{bmatrix} \dfrac{\partial^2 F^2}{\partial \xi_i\partial\overline{\xi_j}}\end{bmatrix}$ of the square of $F$ is positive definite on $TM-\{0\}$.
\end{itemize}
(cf. \cite{MR1323428} and \cite{MR2102340}).
The regularity of $F$ or $F^2$ is dependent on the situation (cf. \cite{MR1323428}, \cite{MR2102340}, \cite{MR3033515} and \cite{MR0833808}).

A strategy in the complex Finsler geometry is to study the Hermitian metric defined by the complex Hessian (the Levi form)
of the square $F^2$ or $F$ itself on $TM$ or the tautological line bundle of $TM$.
Indeed, it is known
that for a complex vector bundle $E$ over $M$, there is one to one correspondence between the set of Hermitian structures on the tautological line bundle of $E$ and the Finsler structures on $E$ (cf. \cite{MR1172107} and \cite[\S4]{MR0377126}). 

In any case, the complex Finsler geometry is a complex differential geometry on the holomorphic tangent bundle $TM$ and the holomorphic cotangent space $T^*M$ with a Finsler metric $F$. 
The infinitesimal calculations (linear and non-linear connections, curvatures, sprays, etc.) of Finsler metrics are carried out on the second order (or more higher order) infinitesimal spaces (\cite{MR1323428}, \cite{MR1172107},  \cite{MR1031389}, \cite{MR0377126}, \cite{MR1403586}, and \cite{MR2102340}).
Further, Finsler manifolds are also studied with linear and homogeneous nonlinear connections and nonhomogeneous nonlinear ones with a canonical vector field (the complex Liouville vector field) in $TTM$, and it also constructed a conservative connection for a mechanical system (complex Hamiltonian-Lagrange geometry) (\cite{MR0281134}, \cite{MR0336636}, \cite{MR0341361} and \cite{MR0247599}).

\subsection{}
After formulating the second order infinitesimal structures on the Teichm\"uller space, it is natural to ask the existence of natural Finsler structures on the second order infinitesimal spaces to discuss the $L^1$-$L^\infty$-geometry. The infinitesimal duality given here is a gift from the (original) duality between the $L^1$-norm function and the Teichm\"uller metric. If natural $L^1$-$L^\infty$ structures on the second order (or more higher order) infinitesimal spaces exist, it may yield more detailed dualities between the $L^1$-norm function and the Teichm\"uller metric in the second order (or more higher order) level.

\subsection{}
Our models of second order infinitesimal spaces will be also naturally used for understanding and formulating \emph{(linear or non-linear) connections} on $T\teich_g$ and $T^*\teich_g=\mathcal{Q}_g$ from a bird's-eye view (cf. \cite{MR0350650}). 
In fact, it is known that in the case of real differentiable manifold $M$, any (linear) connection $\nabla$ on $TM$ is presented as $\nabla_XY=K(DY(X))$ for $X\in T_pM$ and a $C^1$-vector field $Y$ around $p\in M$ by a $C^\infty$ map $K\colon TTM\to TM$ with suitable properties
(cf. \cite[Proposition 4.1]{MR1390760}. See also \S\ref{subsec:vertical_space_manifold}). The same kind of formulation can be considered similary for the connections of the (holomorphic) tangent bundles over complex manifolds. Recently, there are already many calculations (descriptions) and important estimates of the Levi-Civita connection and the curvature tensors of the Weil-Petersson metrics (e.g. \cite{MR0204641}, \cite{MR0136730},  \cite{MR1164870},\cite{MR0982185}, \cite{MR0842050}, \cite{MR2641916}). We hope our formulation helps not only the study on the Weil-Petersson geometry (the $L^2$-geometry) but on the studies of general (Riemannian, Hermitian, K\"ahler) metrics.

\subsection{}
The second order infinitesimal (cone) structures are also naturally discussed for the Deligne-Mumford compactification of the moduli space of Riemann surfaces of genus $g$ (Riemann surfaces of analytically finite type). There are many researches of the asymptotic behavior of tensors defined from the Weil-Petersson metric (e.g. \cite{MR0417456} and \cite{MR2641916}). The formulation of the second order infinitesimal spaces or cones on the Deligne-Mumford compactification will provide a systematic formulation for studying the asymptotic behaviors of the tensors defined not only from the Weil-Petersson metric but from general (Riemannian, Hermitian, K\"ahler) metrics.

Further, as we see in \S\ref{subsec:geometry_double_tangent_vectors}, a second order tangent vectors on the Teichm\"uller space presents an infinitesimal deformation defined from the holomorphic family of Riemann surfaces over a $2$-dimensional polydisk. The degenerating families over a $2$-dimensional polydisk are studied deeply by Takamura (cf. \cite{MR2023456}, \cite{MR2254876}), and he discusses splitting phenomena of the central fiber and barking deformations, and classifies the atomic fibrations. A second order tangent vector at the boundary of the moduli space is (possibly) thought of as an infinitesimal version of the splitting deformation.


%
%
%
%

\section{Organization of the paper}
The Teichm\"uller metric, the $L^1$-norm function, and the Teichm\"uller Beltrami map are functions defined on the holomorphic tangent bunldle $T\teich_g$ and holomorphic cotangent bundles $T^*\teich_g=\mathcal{Q}_g$ over the Teichm\"uller space. Hence, to study such functions in the infinitesimal level, we need to formulate the second order infinitesimal spaces over the Teichm\"uller space.

To this end, in \Cref{chap:double_tangent_space}, we recall and give fundamental properties of the second order infinitesimal structures on complex manifolds. Indeed, in the chapter we will introduce (recall) three canonical biholomorphic mappings
\begin{align*}
\canoflip_M &\colon TTM\to TTM \\
\switch_M &\colon TT^*\!M\to T^*\!TM \\
\canoflipdagger_M &\colon TT^*\!M\to T^*\!T^*\!M
\end{align*}
which are called the \emph{flip}, the \emph{switch}, and \emph{dualization}, respectively, where $TTM=T(TM)$, $T^*TM=T^*(TM)$, $TT^*\!M=T(T^*\!M)$ and $T^*\!T^*\!M=T^*\!(T^*\!M)$. These mappings satisfy several relations. See \Cref{fig:double_infinitesimal_space} in \S\ref{sec:tangent_cotangent_space}. 
In this paper, we sometime denote by $\mathcal{P}_M$ the natural pairing between holomorphic tangent and cotangent bundles $TM$ and $T^*\!M$ in thinking of the pairing function as a holomorphic function on the Whitney sum $TM\oplus T^*\!M$. 
We call $TTM$ the \emph{double tangent bundle} in this paper. The reason for ``double" is that ``$TT$"(double $T$) appears in the symbol.
The space $TTM$ is also called the \emph{second order tangent bundle} (\cite{MR1724021}) and the \emph{second tangent bundle} (\cite{MR2428390}) for example.

In \Cref{chap:Teichmuller_theory}, we shall recall basic notion and results in the Teichm\"uller theory, and the Kodaira-Spencer theory. We also recall the stratified structure of the space of holomorphic quadratic differentials.

In \Cref{chap:Teichmuller_space_of_tori}, we discuss our results in the most simplest case.
Indeed, we deal with the Teichm\"uller $\teich_1$ of tori. Under a canonical complex structure on $\teich_1$, the Teichm\"uller metric is the Poincar\'e metric of curvature $-4$. We will see the infinitesimal duality holds in this case.

In \Cref{Chap:model_pairing}, we will introduce the model spaces of the second order infinitesimal spaces over the holomorphic tangent and cotangent spaces over $\teich_g$. These spaces consist of $TT\teich_g$, $T^*\!T\teich_g$, $TT^*\!\teich_g=T\mathcal{Q}_g$ and $T^*\!T^*\!\teich_g=T^*\!\mathcal{Q}_g$. The model space of the holomorphic tangent space $TT^*\!\teich_g=T\mathcal{Q}_g$ over $\mathcal{Q}_g$ is already given by Hubbard and Masur \cite{MR523212}, and we will recall their constructuon. We also give the models $\mathcal{P}_{\mathbb{TT}}$ and $\paircot$ of the pairings $\mathcal{P}_{T\teich_g}$ and $\mathcal{P}_{\mathcal{Q}_g}$ between the holomorphic tangent and holomorphic cotangent spaces over $T\teich_g$ and $\mathcal{Q}_g$. We also discuss the holomorphic symplectic structure on $\mathcal{Q}_g$ following Kawai's formula \cite{MR1386110}. 

In \Cref{chap:trivialization_model} and \Cref{chap:direct_limits}, we discuss the trivializations and the direct limits of the model spaces. In \Cref{chap:double_tangent_space_model}, we will confirm that our model space of the double tangent space $TT\teich_g$ is an actual model. Namely, we will check that when we identify the Teichm\"uller space $\teich_g$ as a bounded domain (the Bers slice) via the Bers embedding, our model of the double tangent space $TT\teich_g$ actually coincides with the double tangent space over the bounded domain. 

From \Cref{chap:variation_formula_pairing} to \Cref{chap:TB_map}, we will apply our model spaces for studying the structures of $T\teich_g$ and $\mathcal{Q}_g$. We will give a variational formula of the pairing function on the Whitney sum $T\teich_g\oplus \mathcal{Q}_g$ in \Cref{chap:variation_formula_pairing}. By applying the variational formula, we check that our models of $T^*\!T\teich_g$, $TT^*\!\teich_g$ , and $T^*\!T^*\!\teich_g$ are naturally recognized as actual models.  In \Cref{chap:flip_switch_dual_lie}, we describe the filp, switch, dualization and Lie bracket under our models. 

In \Cref{chap:variational_formula_L1-teich}, we give variational formulae of the $L^1$-norm function and the Teichm\"uller metric, and prove the infinitesimal duality theorem. In the chapter, we also give a proof of Royden's results on the regularity of the $L^1$-norm function under our setting, recall Royden's regularity criterion of the dual Finsler metric to check the $C^1$-regularity of the Teichm\"uller metric. For reader's convenience, we will give a proof of the criterion in \S\ref{sec:Royden_s_criterion}.
We also discuss the infinitesimal structure of the unit sphere (ball) bundle in $\mathcal{Q}_g$. After confirming that the unit ball bundle in $\mathcal{Q}_g$ is strictly $(3g-2)$-convex at every generic boundary point, we pose conjectures on the negative directions of the Levi form of the $L^1$-norm function and on the plurisubharmonicity of the Teichm\"uller metric.
In \Cref{chap:TB_map}, we will prove \Cref{thm:TB_map_is_diffeomorphism}, and discuss a conjectural picture on the geometry of the unit sphere bundles.

In the last chapter \Cref{chap:appendices}, as appendices, we discuss three subjects. First, we confirm the correspondence between the (infinitesimal) Teichm\"uller theory and the Kodaira-Spencer theory. Second, we confirm the regularity of the integral operators which we use in this paper. The proof of the regularity given here is due to 
Professor Hiroshi Yanagihara. Third, we will discuss Royden's criterion on the regularity of the dual Finsler metric. The proof is mostly same as that given in Gardiner's book \cite{MR903027}. However, we give a complete proof of Royden's criterion here because of the difficulty to access the source at present. 

\section*{Acknowledgements}
The author thanks Professor Athanase Papadopoulos and Professor Ken'ichi Ohshika for warm encouragement and discussions. Actually, casual discussions with them provide motivations of author's works.
The author thanks Professor Norbert A'Campo for fascinating discussions.
The author also thanks Professor Hiroshi Yanagihara for useful comments to the author's question and allowing him to give his proof in this paper, and Professor Shigeharu Takayama and Professor Tadashi Ashikaga for helpful discussions.
 
 A part of this work is announced in the annual report of a scientific program ``Teichm\"uller Theory: Classical, Higher, Super and Quantum" in 2023 at the Mathematisches Forschungsinstitut Oberwolfach (MFO) organized by Professor Ken'ichi Ohshika, Professor Athanase Papadopoulos, Professor Robert C. Penner, and Professor Anna Wienhard, and in the proceedings of the 14th MSJ- Seasonal Institute ``New Aspects of Teichm\"uller theory" in 2022 and 2023 at Gakusyuin university and the university of Tokyo organized by Professor Ken'ichi Ohshika, 
Professor Sumio Yamada, Professor Nariya Kawazumi, Professor Takuya Sakasai, and Professor Tsukasa Ishibashi.
The author thanks the organizers, and the staffs of the venues, Gakusyuin university, the university of Tokyo, and MFO, for their warm hospitalities.

\chapter{Second order infinitesimal spaces on complex manifolds}
\label{chap:double_tangent_space}
In this chapter, we will discuss the tangent and cotangent bundles over the tangent and cotangent bundles over a complex manifold $M$. 
We call such spaces the \emph{second order infinitesimal spaces}. We have four second order infinitesimal spaces $TTM$, $T^*\!TM$ (over $TM$) and $TT^*\!M$, $T^*\!T^*\!M$ (over $T^*\!M$). We recall and give elementary properties these four spaces and basic maps between two of them as described in \Cref{fig:double_infinitesimal_space}. We use the dagger ``$\dagger$" to describe the notion in the cotangent bundle.
For a general properties of complex manifolds, see \cite{MR1393941}. Especially, for double (holomorphic) tangent spaces, see Abate-Patrizio \cite{MR1323428}, Aikou \cite{MR1172107}, Fisher-Turner \cite{MR1724021}, Fukui \cite{MR1031389}, Konieczna-Urba{\'n}ski \cite{MR1684522}, Michor \cite{MR2428390}, Munteanu \cite{MR2102340}, Pradines \cite{MR0388432}, or Sakai \cite{MR1390760} for instance.

We only focus on complex manifolds in this paper, but almost results given here on second order infinitesimal spaces hold for differentiable manifolds. 

\section{Summary : Second order infinitesimal spaces and Relations}
\label{sec:double_infinitesimal_spaces_and_commutative_conditions}
Before discussing the details on second order infinitesimal spaces, we first summarize the relations of the spaces.

There are four second order infinitesimal spaces for complex manifolds, $TTM$, $T^*\!TM$, $TT^*\!M$ and $T^*\!T^*\!M$. We will define a self map called \emph{flip} $\canoflip_M$ on $TTM$, the map called the \emph{switch} $\switch_M\colon TT^*\!M\to T^*\!TM$ and the map called the \emph{dualization} $\canoflipdagger_M\colon TT^*\!M\to T^*\!T^*\!M$, which satisfy the commutative diagrams in \Cref{fig:double_infinitesimal_space}.
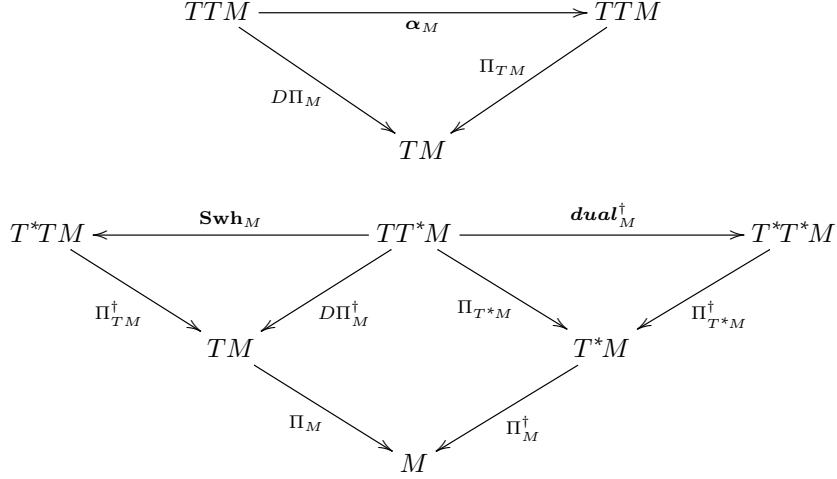
\begin{figure}[t]
$$
\xymatrix@C=50pt@R=40pt{
TTM
\ar[rd]_{D\Pi_{M}}
\ar[rr]_{\canoflip_M}
&
&
TTM
\ar[ld]_{\Pi_{TM}}
\\
&
TM
}
$$

$$
\xymatrix@C=40pt@R=30pt{
T^*\!TM
\ar[rd]_{\Pi^\dagger_{TM}}
&
&
TT^*\!M
\ar[ld]^{D\Pi^\dagger_M}
\ar[ll]_{\switch_M}
\ar[rd]_{\Pi_{T^*\!M}}
\ar[rr]^{\canoflipdagger_M}
&
&
T^*\!T^*\!M 
\ar[ld]^{\Pi^\dagger_{T^*\!M}}
\\
& 
TM
\ar[rd]_{\Pi_M}
&
&
T^*\!M 
\ar[ld]^{\Pi^\dagger_M}
&\\
&
& 
M
&
&
}
$$
\caption{Commutative diagrams : Second order infinitesimal spaces}
\label{fig:double_infinitesimal_space}
\end{figure}

\begin{notation}
For a subset $E\subset M$,  we define the fiber over $E$ by
\begin{align*}
(TT)_E(M)&=(\Pi_M\circ \Pi_{TM})^{-1}(E), \quad
(T^*\! T)_E(M)=(\Pi_M\circ \Pi^\dagger_{TM})^{-1}(E) \\
(TT^*\!)_E(M)&=(\Pi^\dagger_M\circ \Pi_{T^*\!M})^{-1}(E), \quad
(T^*\! T^*\!)_E(M)=(\Pi^\dagger_M\circ \Pi^\dagger_{T^*\!M})^{-1}(E).
\end{align*}
\end{notation}
For $p\in M$, we denote by $(TT)_p(M)$, $(T^*\! T)_p(M)$, $(TT^*\!)_E(M)$
and $(T^*\! T^*\!)_p(M)$ when $E=\{p\}$.
Each maps mentioned above preserve for the fibers over a point on $M$.

\section{Tangent and cotangent bundles}
\label{sec:tangent_cotangent_space}
Let $M$ be a complex manifold of dimension $n$. $M$ is a real dimensional real analytic manifold of (real) dimension $2n$. The complexification of the real tangent space $\tilde{T}M\otimes \mathbb{C}$ of $M$ is a complex vector bundle of (complex) rank $2n$ and is canonically splitted as the Whitney sum (direct sum) $T^{1,0}M\oplus T^{0,1}M$ of the holomorphic vector bundles of rank $n$. The spaces $T^{1,0}M$ and $T^{0,1}M$ are eigen spaces of the action of the complex structure on $M$ for the eigen values $i$ and $-i$ respectively (cf. \cite[Chapter IX]{MR1393941}). We call $T^{1,0}M$ and $T^{0,1}M$ the \emph{holomorphic and anti-holomorphic tangent bundle} of $M$. By definition, any complexified tangent vector $v$ is uniquely represented as the sum $v=v^{10}+v^{01}$ of vectors $v^{10}\in T^{1,0}M$ and $v^{01}\in T^{0,1}M$. We call $v^{pq}$ the \emph{$(p,q)$-part} of $v$ for $(p,q)=(1,0)$ and $(0,1)$. We denote by $\proje^{pq}\colon \tilde{T}M\otimes \mathbb{C}\to T^{p,q}M$ the projection.

For instance, suppose $M=\mathbb{C}^n$ and let $z_k=x_k+iy_k$ for $1\le k\le n$. Then, the underlying real manifold of $\mathbb{C}^n$ is the Euclidean space $\mathbb{R}^{2n}$ with coordinates $x_k$, $y_k$ ($1\le k\le n$). Any complexified tangent vector in $\tilde{T}\mathbb{C}\otimes \mathbb{C}$ satisfies
$$
\sum_{k=1}^n\left(\alpha_k\partial_{x_k}+\beta_k \partial_{y_k}\right)=
\sum_{k=1}^n(\alpha_k+i\beta_k)\partial_{z_k}
+
\sum_{k=1}^n(\alpha_k-i\beta_k)\partial_{\overline{z}_k}
$$
with
$$
\sum_{k=1}^n(\alpha_k+i\beta_k)\partial_{z_k}
\in T^{1,0}\mathbb{C}^n,\quad
\sum_{k=1}^n(\alpha_k-i\beta_k)\partial_{\overline{z}_k}
\in T^{0,1}\mathbb{C}^n,
$$
where $\alpha_k$, $\beta_k\in \mathbb{C}$, and
$\partial_{z_k}=\dfrac{1}{2}\left(\partial_{x_k}-i\partial_{y_k}\right)$ and 
$\partial_{\overline{z}_k}=\dfrac{1}{2}\left(\partial_{x_k}+i\partial_{y_k}\right)$.
Then,
$$
\partial_{z_1},\cdots, \partial_{z_n}
$$
consists of the (standard) basis of on the holomorphic tangent space around $p\in M$, and
\begin{equation}
\label{eq:tangent_space_basis}
\begin{CD}
z(U)\times \mathbb{C}^n@>>> (\Pi_M)^{-1}(U)\subset T^{1,0}M \\
(z,\eta) @>>> {\displaystyle \eta=\sum_{j=1}^n \eta_j\partial_{z_j}}
\end{CD}
\end{equation}
is a local chart of $T^{1,0}M$.

\begin{convention}
We adopt the following notation:
\begin{itemize}
\item
We denote by $TM$ and $T_pM$ the holomorphic tangent bundle of $M$, and the holomorphic tangent space at $p\in M$, for the simplicity.
\item As we mentioned above, for the basis of the tangent space on $TM$ in the coordinates, we abbreviate $\partial_{z_i}$,  $\partial_{\eta_i}$ to $\partial_{z_i}$, $\partial_{\eta_i}$, etc.
\end{itemize}
\end{convention}

We can easily check that for any $p\in M$ and $v\in T_pM=T^{1,0}_pM$, there is a holomorphic map $f\colon \{t\in \mathbb{C}\mid |t|<\epsilon\}\to M$ (for sufficiently small $\epsilon$) such that $f(0)=p$ and $Df|_0[(\partial/\partial t)|_{t=0}]=v$, and vice versa.

%

The (holomorphic) \emph{cotangent bundle} $T^*\!M=(T^*)^{1,0}M$ is the dual bundle of the (holomorphic) tangent bundle. As in the above case, it is formally defined as the $(1,0)$-part of the complexification $\tilde{T}^*M\otimes \mathbb{C}$ of the (real) cotangent bundle $\tilde{T}^*M$ of $M$. Let $\Pi^\dagger_M\colon T^*\!M\to M$ be the projection. Let $(U,z=(z_1,\cdots,z_n))$ be a local chart of $M$ at $p\in U\subset M$. Then,
$$
dz_1=dx_1+idy_1,\cdots, dz_n=dx_n+idy_n
$$
consists of the (standard) basis of on the cotangent space around $p\in M$, where $z_j=x_j+iy_j$($j=1,\cdots,n$), and
\begin{equation}
\label{eq:cotangent_space_basis}
\begin{CD}
z(U)\times \mathbb{C}^n @>>> (\Pi^\dagger_M)^{-1}(U)\subset T^*\!M
\\
 (z,\omega) @>>> {\displaystyle\omega= \sum_{j=1}^n \omega_jdz_j}
\end{CD}
\end{equation}
is a local chart of $T^*\!M$. The duality between $TM$ and $T^*\!M$ is given by the \emph{pairing}
$$
\mathcal{P}_M\colon TM\oplus T^*\!M\to \mathbb{C} 
$$
defined by
\begin{equation}
\label{eq:pairing_on_manifold}
\mathcal{P}_M\left(\sum_{j=1}^n\eta_j\partial_{z_j},\sum_{j=1}^n\omega_jdz_j\right)
=\sum_{j=1}^n \eta_j\omega_j
\end{equation}
for $p\in M$.
The pairing is a holomorphic function on the Whitney sum $TM\oplus T^*\!M$.

\section{Double tangent space}
\label{sec:double_tangent_bundle}
%
%

\subsection{Double tangent space}
\label{subsec:double_tangent_space}
The \emph{(holomorphic) double tangent bundle} $\pi_{TM}\colon TTM\to TM$ is the (holomorphic) tangent bundle $TTM=T(TM)$ of $TM$.
The double tangent space $TTM$ is a complex manifold of dimension $4n$.
With the local chart \eqref{eq:tangent_space_basis},
the double tangent space is trivialized as
\begin{equation}
\label{eq:double_tangent_vector}
\begin{CD}
\Pi_{TM}^{-1}(\Pi_M^{-1}(U))
@>>> (z(U)\times \mathbb{C}^n)\times (\mathbb{C}^n\times \mathbb{C}^n)
\\
{\displaystyle
\sum_{j=1}^n\xi_j\left(\partial_{z_i}\right)_\eta
+
\sum_{j=1}^n\zeta_j\left(\partial_{\eta_i}\right)_\eta
}
@>>>
(z,\eta,\xi,\zeta).
\end{CD}
\end{equation}
Under the local coordinates, 
the projection $\Pi_{TM}$ is described as
$$
\begin{CD}
TTM @>{\Pi_{TM}}>> TM \\
(z,\eta,\xi,\zeta) @>>> (z,\eta).
\end{CD}
$$

\subsection{Transition functions}
Let $(w_1,\cdots, w_n,H_1,\cdots,H_n)$ be another local chart around $\eta\in TM$. When
\begin{align*}
\sum_{j=1}^n\eta_j\left(\partial_{z_j}\right)_p
&=\sum_{j=1}^nH_j\left(\partial_{w_j}\right)_p
\\
\sum_{j=1}^n\xi_j\left(\partial_{z_j}\right)_\eta
+
\sum_{j=1}^n\zeta_j\left(\partial_{\eta_j}\right)_\eta
&=
\sum_{j=1}^n\Xi_j\left(\partial_{w_j}\right)_\eta
+
\sum_{j=1}^nZ_j\left(\partial_{H_j}\right)_\eta
\end{align*}
in $TTM$, the relation of the coefficients is given by
\begin{equation}
\label{eq:relation_local_charts}
\begin{bmatrix}
H_1 \\ \vdots \\ H_n \\
\Xi_1 \\ \vdots \\ \Xi_n \\ Z_1 \\ \vdots \\ Z_n
\end{bmatrix}
=
\begin{bmatrix}
(w_1)_{z_1} & \cdots & (w_1)_{z_n} & 0 & \cdots & 0 & 0 & \cdots & 0 \\
\vdots & & \vdots & \vdots & & \vdots & \vdots & & \vdots \\
(w_n)_{z_1} & \cdots & (w_n)_{z_n} & 0 & \cdots & 0 & 0 & \cdots & 0\\
0 & \cdots & 0 & (w_1)_{z_1} & \cdots & (w_1)_{z_n} & 0 & \cdots & 0 \\
\vdots & & \vdots & \vdots & & \vdots & \vdots & & \vdots \\
0 & \cdots & 0 & (w_n)_{z_1} & \cdots & (w_n)_{z_n} & 0 & \cdots & 0 \\
\vdots & & \vdots & \Gamma_{11} & \cdots &\Gamma_{1n} & (w_1)_{z_1} & \cdots & (w_1)_{z_n} \\
\vdots & & \vdots & \vdots & & \vdots \\
0 & \cdots & 0 & \Gamma_{n1} & \cdots & \Gamma_{nn} & (w_n)_{z_1} & \cdots & (w_n)_{z_n} 
\end{bmatrix}
\begin{bmatrix}
\eta_1 \\ \vdots \\ \eta_n \\ \xi_1 \\ \vdots \\ \xi_n \\ \zeta_1 \\ \vdots \\ \zeta_n
\end{bmatrix},
\end{equation}
where
$(w_i)_{z_j}=\dfrac{\partial w_i}{\partial z_j}$ and 
$$
\Gamma_{ij}(\eta)=\sum_{k=1}^n\eta_k\left(\dfrac{\partial w_i}{\partial z_j}\right)_{z_k}
=\sum_{k=1}^n\eta_k\dfrac{\partial^2 w_i}{\partial z_jz_k}.
$$

\begin{remark}[Convention]
\label{remark:cooridnate_change}
Henceforth, we abbreviate \eqref{eq:relation_local_charts} to
$$
\begin{bmatrix}
H \\
\Xi \\
Z
\end{bmatrix}
=
\begin{bmatrix}
J & 0 & 0 \\
0 & J & 0 \\
0 & \Gamma(\eta) & J
\end{bmatrix}
\begin{bmatrix}
\eta \\
\xi \\
\zeta
\end{bmatrix}
$$
for short, where $J=\begin{bmatrix}(w_i)_{z_j}\end{bmatrix}$ and $\Gamma(\eta)=\begin{bmatrix} \Gamma_{ij}(\eta)\end{bmatrix}$ (this means that the $(i,j)$-component of $J$ and $\Gamma(\eta)$ are $(w_i)_{z_j}$ and $\Gamma_{ij}(\eta)$, respectively).
In the following argument, we will discuss the change of the coefficients of the vectors (covectors) with respect to the change of local charts, and use this notation in the discussion.
\end{remark}

\subsection{Geometry of second order tangent vectors}
\label{subsec:geometry_double_tangent_vectors}
The second derivative of holomorphic maps, which is often called a \emph{$2$-jet} from a small disk to $M$, can be described in the (holomorphic) double tangent space.
Let $f\colon \{|t|<\epsilon\}\to M$ be a holomorphic map with $f(0)=p$ and $Df(\partial/\partial t|_{t=0})=u\in T_pM$. Set $F(t)=(F_1(t),\cdots,F_n(z))=z\circ f(t)$. Then,
$$
V_f=\sum_{j=1}^n(F_j)'(0)\left(\partial_{z_i}\right)_u
+
\sum_{j=1}^n(F_j)''(0)\left(\partial_{\eta_i}\right)_u
=u+\sum_{j=1}^n(F_j)''(0)\left(\partial_{\eta_i}\right)_u
$$
is a well-defined tangent vector in $T_uTM$. 
Since $u=D\Pi_M(V_f)=\Pi_{TM}(V_f)$, in general, some vector in $TTM$ cannot be represented as the second derivative of any holomorphic map from the unit disk. In fact, one can easily see that for any $u\in T_pM$ and $V\in T_uTM$, there is a holomorphic map $f\colon \{|t|<\epsilon\}\times \{|s|<\epsilon\}\to M$ such that
\begin{align}
u&=Df_{(0,0)}\left(\left.\partial_{ s}\right|_{s=0}\right)
=\sum_{i=1}^n\dfrac{\partial f_j}{\partial s}(0,0)\left(\partial_{z_i}\right)_p
\nonumber
\\
V&=\sum_{i=1}^n\dfrac{\partial f_j}{\partial t}(0,0)\left(\partial_{z_i}\right)_u
+
\sum_{i=1}^n\dfrac{\partial^2 f_j}{\partial t\partial s}(0,0)\left(\partial_{\eta_i}\right)_u
\label{eq:vectors_DTS2}
\end{align}
in the coordinates \eqref{eq:double_tangent_vector} around $u=\Pi_{TM}(V)\in TM$, where $z\circ f(t,s)=(f_1(t,s)$, $\cdots$, $f_n(t,s))$ (see Figure \ref{fig:DTS_image}. See also \cite[\S2]{MR1816050}). Thus, the $2$-jet space (i.e. the space of germs of holomorphic maps from a small disk to $M$) is a subspace of $TTM$ described by
$$
\{V\in TTM\mid \Pi_{TM}(V)=D\Pi_M(V)\}.
$$
In the case of real differentiable manifolds, Yano and Ishihara call this subbundle the \emph{tangent bundle of order $2$} in \cite[Chapter X]{MR0350650}.

\begin{figure}
\includegraphics[height = 5cm, bb = 6 0 553 341]{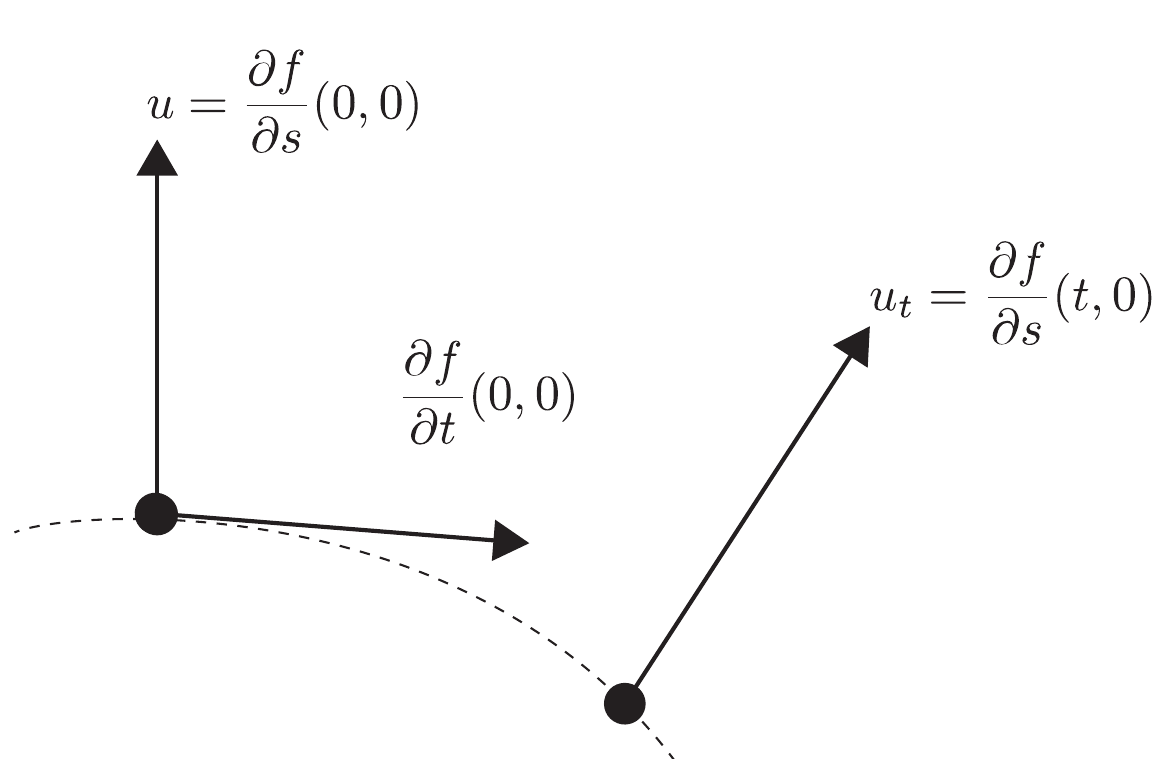}
\label{fig:DTS_image}
\caption{Vectors in the (holomorphic) double tangent space are presented by the second derivatives of holomorphic maps from a $2$-dimensional polydisk. To be more precise, assume $M=\mathbb{C}^n$. Let $u\in TM$ and $\{|t|,|s|<\epsilon\}\to M$ be a holomorphic map with $u=\dfrac{\partial f}{\partial s}(0,0)$. 
Then, $\{u_t\}_{|t|<\epsilon}=\left\{\dfrac{\partial f}{\partial s}(t,0)\right\}_{|t|<\epsilon}$ is a path through $u_0=u$ in $TM$. 
Vectors \eqref{eq:vectors_DTS2} in $T_uTM$ consists of two components.
One which measures the difference between $u=u_0$ and $u_t$ appears in the second term of \eqref{eq:vectors_DTS2}. 
The other term $\dfrac{\partial f}{\partial t}(0,0)$ records how the base points of the vectors $u_t$ approach to that of $u$ at $t=0$.
}
\end{figure}

\subsection{Horizontal projection and Vertical space}
\label{subsec:vertical_space_manifold}
Let $u\in TM$ and $p=\Pi_M(\eta)$. 
The \emph{horizontal projection} is the differential $(D\Pi_M)_{\eta}$ of the projection
$\Pi_M\colon TM\to M$ at $\eta$ which is a surjective linear map from $T_\eta TM$ to $T_pM$:
$$
\begin{CD}
T_\eta
TM\ni {\displaystyle \sum_{j=1}^n\xi_j\left(\partial_{z_i}\right)_\eta
+
\sum_{j=1}^n\zeta_j\left(\partial_{\eta_i}\right)_\eta}
@>{(D\Pi_M)_{\eta}}>>
{\displaystyle\sum_{j=1}^n\xi_j\left(\partial_{z_i}\right)_p}\in T_pM.
\end{CD}
$$
%
The \emph{vertical space}
$T_\eta^VTM=T_\eta T_pM$ 
is the kernel $\ker(D\Pi_M)|_{\eta}$ of the differential $(D\Pi_M)|_\eta\colon T_\eta TM\to T_pM$.
The vertical space is presented as
$$
T_\eta^VTM=T_\eta T_pM=
\left\{\sum_{j=1}^n\zeta_j\left(\partial_{\eta_i}\right)_\eta\mid \zeta\in \mathbb{C}^n\right\}.
$$
Since $T_pM$ is a vector space, $T_\eta T_pM$ is naturally identified with $T_pM$ by the \emph{vertical inclusion}:
\begin{equation}
\label{eq:vertical_projection}
\begin{CD}
T_pM
\ni
{\displaystyle\sum_{j=1}^n\zeta_j\left(\partial_{z_i}\right)_p}
@>{\verticalinc{M}{\eta}}>>
{\displaystyle\sum_{j=1}^n\zeta_j\left(\partial_{\eta_i}\right)_\eta}
\in \ker(D\pi_M)|_{\eta}\subset T_\eta TM.
\end{CD}
\end{equation}
Under the local coordinates \eqref{eq:double_tangent_vector},
the vertical inclusion and the horizontal projection are presented by 
$$
\begin{CD}
T_pM @>{\verticalinc{M}{\eta}}>> T_{\eta}TM @>{D\Pi_M}>> T_pM\\
(z,\zeta) @>>> (z,\eta,0,\zeta) \\
@. (z,\eta,\xi,\zeta) @>>> (z,\xi).
\end{CD}
$$

\section{Cotangent spaces to the tangent bundle}
\label{sec:cotangent_to_tangent_bundle}
The (holomorphic) cotangent space is the dual bundle to the tangent bundle.
With the local chart \eqref{eq:tangent_space_basis},
the cotangent space is trivialized as
\begin{equation}
\label{eq:double_cotangent_vector}
\begin{CD}
(\pi^\dagger_{TM})^{-1}(\pi_M^{-1}(U))
@>>> (z(U)\times \mathbb{C}^n)\times (\mathbb{C}^n\times \mathbb{C}^n)
\\
{\displaystyle
\sum_{j=1}^n\lambda_j\left(dz_i\right)_\eta
+
\sum_{j=1}^n\mu_j\left(d\eta_i\right)_\eta
}
@>>>
(z,\eta,\lambda,\mu).
\end{CD}
\end{equation}
A cotangent vector in $T^*_\eta TM$ is presented as
$$
\sum_{j=1}^n\lambda_jdz_j+\sum_{j=1}^n\mu_jd\eta_j
=
\sum_{j=1}^n\Lambda_jdw_j+\sum_{j=1}^nM_jdH_j
$$
in two different charts
if and only if
$$
\begin{bmatrix}
H \\
\Lambda \\
M
\end{bmatrix}
=
\begin{bmatrix}
J & 0 & 0 \\
0 & J^\dagger & \Gamma'(H) \\
0 & 0 & J^\dagger
\end{bmatrix}
\begin{bmatrix}
\eta \\
\lambda \\
\mu
\end{bmatrix},
$$
where
$J^\dagger =\begin{bmatrix} (z_j)_{w_i}\end{bmatrix}$
and
$\Gamma'(H)=\begin{bmatrix}\Gamma'_{ij}(H)\end{bmatrix}$ with
$$
\displaystyle
\Gamma'_{ij}(H)=\sum_{l=1}^nH_l\left(\dfrac{\partial z_j}{\partial w_i}\right)_{w_l}
=\sum_{l=1}^nH_l\dfrac{\partial^2 z_j}{\partial w_i\partial w_l}.
$$
The pull-back map $(\Pi_{M})^*\colon T^*_{p}M\to T_\eta^*TM$ defined from the projection $\Pi_M\colon TM\to M$ is the inclusion
$$
\begin{CD}
T^*_{p}M
\ni 
{\displaystyle \sum_{j=1}^n\lambda_j(dz_j)_p}
@>{(\Pi^\dagger_{M})^*}>>
{\displaystyle \sum_{j=1}^n\lambda_j(dz_j)_\eta}
\in T^*_\eta TM.
\end{CD}
$$
The projection
$$
\begin{CD}
T^*_\eta TM\ni 
{\displaystyle \sum_{j=1}^n\lambda_j(dz_j)_\eta+\sum_{j=1}^n\mu_j(d\eta_j)_\eta}
@>{\verticalproj{M}{\eta}}>>
{\displaystyle \sum_{j=1}^n\mu_j(dz_j)_p}
\in
T^*_pM
\end{CD}
$$
 is canonically defined. We call the projection the \emph{vertical projection}.
Under the chart defined above,
the projection for the vertical space is presented as
$$
\begin{CD}
T_p^*M @>{(\Pi^\dagger_{M})^*}>> T_\eta^*TM @>{\verticalproj{M}{\eta}}>> T_p^*M \\
(z,\lambda) @>>> (z,\eta, \lambda, 0) \\
@.(z,\eta,\lambda,\mu) @>>> (z,\mu).
\end{CD}
$$

\section{Tangent spaces and cotangent spaces to the cotangent bundle}
\label{sec:tangent_on_cotangent_bundles}

\subsection{Tangent space}
With the local chart \eqref{eq:cotangent_space_basis},
the tangent space $TT^*\!M$ is trivialized as
\begin{equation}
\label{eq:co_double_tangent_vector}
\begin{CD}
\Pi_{T^*\!M}^{-1}((\Pi^\dagger_M)^{-1}(U))
@>>> (z(U)\times \mathbb{C}^n)\times (\mathbb{C}^n\times \mathbb{C}^n)
\\
{\displaystyle
\sum_{j=1}^n\alpha_j\left(\partial_{z_i}\right)_\omega
+
\sum_{j=1}^n\beta_j\left(\partial_{\omega_i}\right)_\omega
}
@>>>
(z,\omega,\alpha,\beta).
\end{CD}
\end{equation}
For two presentations of tangent vectors to the cotangent bundle under the above two local charts,
$$
\sum_{i=1}^n\alpha_i\partial_{z_i}+\sum_{i=1}^n\beta_i\partial_{\omega_i}
=\sum_{i=1}^nA_i\partial_{w_i}+\sum_{i=1}^nB_i\partial_{\Omega_i}
$$
if and only if
\begin{equation}
\label{eq:transition_tan_cotan}
\begin{bmatrix}
\Omega \\
A \\
B 
\end{bmatrix}
=
\begin{bmatrix}
J^\dagger & 0 & 0 \\
0 &  J & 0 \\
0 & \Gamma''(\omega) & J^\dagger
\end{bmatrix}
\begin{bmatrix}
\omega \\
\alpha \\
\beta
\end{bmatrix}
\end{equation}
where
$\Gamma''(\omega)=\begin{bmatrix} \Gamma''_{ij}(\omega)\end{bmatrix}$ and
\begin{equation}
\label{eq:gamma_pra_pra}
\Gamma''_{ij}(\omega)
=\sum_{k=1}^n\omega_k\left(\dfrac{\partial z_k}{\partial w_i}\right)_{z_j}
.
\end{equation}
Then, the differential $D\Pi_M^\dagger$ of the projection $\Pi^\dagger_M\colon T^*\!M\to M$ is the \emph{horizontal projection}
$$
D\Pi_M^\dagger\colon
TT^*\!M\ni \sum_{i=1}^n\alpha_i\partial_{z_i}+\sum_{i=1}^n\beta_i\partial_{\omega_i}
\to\sum_{i=1}^n\alpha_i\partial_{z_i}\in TM.
$$
The kernel of the differential of the projection $T^*\!M\to M$, called the \emph{vertical space}, is naturally 
identified with the cotangent bundle via the inclusion
$$
\begin{CD}
T_p^*M\ni 
{\displaystyle \sum_{i=1}^n\beta_idz_i}
@>{\verticalincdag{M}{u}}>>
{\displaystyle
\sum_{i=1}^n\beta_i\partial_{\omega_i}\in T_\omega T^*\!M}.
\end{CD}
$$
Under the coordinate defined above,
the vertical inclusion is presented by
$$
\begin{CD}
T_p^*M @>{\verticalincdag{M}{\omega}}>> T_\omega T^*\!M @>{D\Pi^\dagger_M}>> T_pM \\
(z,\beta) @>>> (z,\omega,0,\beta) \\
@. (z,\omega,\alpha,\beta) @>>> (z,\alpha).
\end{CD}
$$
The image of $\verticalincdag{M}{\omega}$ is called the \emph{vertical space} in $T_\omega T^*\!M$.

\subsection{Cotangent space}
With the local chart \eqref{eq:cotangent_space_basis},
the tangent space $TT^*\!M$ is trivialized as
\begin{equation}
\label{eq:co_double_cotangent_vector}
\begin{CD}
(\Pi^\dagger_{T^*\!M})^{-1}((\Pi^\dagger_M)^{-1}(U))
@>>> (z(U)\times \mathbb{C}^n)\times (\mathbb{C}^n\times \mathbb{C}^n)
\\
{\displaystyle
\sum_{j=1}^n\nu_j\left(dz_i\right)_\omega
+
\sum_{j=1}^n\tau_j\left(d\omega_i\right)_\omega
}
@>>>
(z,\omega,\nu,\tau).
\end{CD}
\end{equation}
For two presentations of cotangent vectors to the cotangent bundle under the above two local charts, we can see that
$$
\sum_{i=1}^n\nu_i dz_i+\sum_{i=1}^n\tau_id\omega_i
=\sum_{i=1}^nN_idw_i+\sum_{i=1}^nT_id\Omega_i
$$
if and only if
\begin{equation}
\label{eq:transition_cotan_cotan}
\begin{bmatrix}
\Omega \\
N \\
T 
\end{bmatrix}
=
\begin{bmatrix}
J^\dagger & 0 & 0 \\
0 &  J^\dagger & -\Gamma''(\omega) \\
0 & 0 & J
\end{bmatrix}
\begin{bmatrix}
\omega \\
\nu \\
\tau
\end{bmatrix}
\end{equation}
where $\Gamma''(\omega)$ is defined in \eqref{eq:gamma_pra_pra}. 
The pull-back map $(\Pi^\dagger_{M})^*\colon T^*_{p}M\to T_\omega^*T^*\!M$ defined from the projection $\Pi^\dagger_M\colon T^*\!M\to M$ is the inclusion
$$
\begin{CD}
T^*_{p}M
\ni 
{\displaystyle \sum_{j=1}^n\nu_j(dz_j)_p}
@>{(\Pi^\dagger_{M})^*}>>
{\displaystyle \sum_{j=1}^n\nu_j(dz_j)_\omega}
\in T^*_\omega T^*\!M.
\end{CD}
$$
The projection
$$
\begin{CD}
T^*_\eta T^*\!M\ni 
{\displaystyle \sum_{j=1}^n\nu_j(dz_j)_\omega+\sum_{j=1}^n\tau_j(d\omega_j)_\omega}
@>{\verticalprojdag{M}{\omega}}>>
{\displaystyle \sum_{j=1}^n\tau_j(\partial_{z_j})_p}
\in
T_pM
\end{CD}
$$
 is canonically defined. We call the projection the \emph{vertical projection}.
Under the coordinate defined above,
the vertical projection is presented by
$$
\begin{CD}
T_p^*M @>{(\Pi^\dagger_M)^*}>> T^*_\omega T^*\!M @>{\verticalprojdag{M}{\omega}}>> T_pM \\
(z,\nu) @>>> (z,\omega,\nu,0) \\
@. (z,\omega,\lambda,\tau) @>>> (z,\tau).
\end{CD}
$$

\section{Flip, Switch, and Dualization}

\subsection{Flip}
\label{subsubsec:canonical_flip}
On the double tangent space $TTM$, there is a natural holomorphic involution
$$
\canoflip_M\colon TTM\to TTM
$$
called the \emph{flip} defined by
$$
\canoflip_M\colon (TT)_UM\ni (z,\eta,\xi,\zeta)\mapsto (z,\xi,\eta,\zeta)\in (TT)_UM
$$
under the local coordinates defined in \eqref{eq:double_tangent_vector}.
When a second order tangent vector is prescribed by a holomorphic map from a $2$-dimensional polydisk as \S\ref{subsec:geometry_double_tangent_vectors}, it is merely a changing the orders of variables. 
The flip is not a fiber-bundle isomorphism over $TM$ but
a bundle-isomorphism of the fibration $\Pi_M\circ \Pi_{TM}\colon TTM\to M$ via the double projection.

By definition,
\begin{align}
\Pi_{TM}\circ \canoflip_M &=D\Pi_{M}
\label{eq:filp_1}
\\
\canoflip_M \circ \canoflip_M &=id_{TTM}
\label{eq:filp_2}
\\
\canoflip_M(V+\verticalinc{M}{\eta}(\beta))&=\canoflip_M(V)+\verticalinc{M}{D\Pi_{M}(V)}(\beta)
\label{eq:filp_3}
\end{align}
for $\eta$, $\beta\in T_pM$ and $V\in T_\eta TM$.
See \cite[\S4 of Chapter I]{MR96276}. 

For readers, we check the well-definedness. Let $V=(z,\eta,\xi,\zeta)\in (TT)_UM$. Let $(w,H,\Xi,Z)$ be anothor presentation of $V$ with the local chart $(w,H)$.
Let $J=((w_i)_{z_j})$ and $\Gamma(\eta)=(\Gamma_{ij}(\eta))$ as \Cref{remark:cooridnate_change}.
From the above discussion, $H=J\eta$, $\Xi=J\xi$ and $Z=\Gamma(\eta) \xi+J\zeta$. The $i$-th coordinate of $\Gamma(\eta) \xi$ satisfies
\begin{align*}
\sum_{l=1}^n\Gamma_{il}(\eta)\xi_l&=
\sum_{l=1}^n\xi_l\left(\sum_{k=1}^n\eta_k\left(\dfrac{\partial w_i}{\partial z_l}\right)_{z_k}\right)
=\sum_{k=1}^n\eta_k\left(\sum_{l=1}^n\xi_l\left(\dfrac{\partial w_i}{\partial z_k}\right)_{z_l}\right)
=\sum_{k=1}^n\Gamma_{ik}(\xi)\eta_k
\end{align*}
which coincides with the $i$-th coordinate of $\Gamma(\xi) \eta$. Therefore,
the change of the coordinates presentations of tangent vectors is obtained as
$$
\begin{bmatrix}
\Xi \\
H \\
Z
\end{bmatrix}
=
\begin{bmatrix}
J\xi\\
J\eta \\
\Gamma(\xi) \eta+J\zeta
\end{bmatrix}
=
\begin{bmatrix}
J & 0 & 0 \\
0 & J & 0 \\
0 &\Gamma(\xi) & J
\end{bmatrix}
\begin{bmatrix}
\xi \\
\eta \\
\zeta
\end{bmatrix},
$$
which means $(z,\xi,\eta,\zeta)$ is thought of as an element of $TTM$.

\begin{remark}
In \cite{MR96276}, Kobayashi calls $\canoflip_M$ the \emph{involutive automorphism}. Here, it is named according to Sakai \cite{MR1390760}.
\end{remark}

\subsubsection*{{\bf Lie bracket}}
The \emph{Lie bracket} is an anti-symmetric product on the Lie algebra of the space of differentiable sections of the complexified tangent bundle $\tilde{T}M\otimes \mathbb{C}$. The subbundles $T^{1,0}M$ and $T^{0,1}M$ are closed under the operations of the Lie bracket (cf. \cite[Theorem 2.8 in Chapter IX]{MR1393941}). We denote by $[X,Y]$ the Lie bracket between sections $X$ and $Y$ of $\tilde{T}M\otimes \mathbb{C}$.

The following formula follows from the straight-forward calculation with the notation \eqref{eq:double_tangent_vector}. We give a proof for completeness (cf. (4.3) in \cite[II \S4]{MR1390760}).

\begin{proposition}[Lie bracket]
\label{prop:Lie_bracket}
Let $p\in M$. For $C^1$-vector fields $X$ and $Y$ of type $(1,0)$ around $p$, 
we have
$$
(\verticalinc{M}{X_p})^{-1}\Big(
\left(
\canoflip_M
\left(
\proje^{10}
\left(DY(X)_{Y_p}
\right)
\right)
\right)
-\proje^{10}\left(DX(Y)_{X_p}\right)
\Big)=[X,Y]_p.
$$
\end{proposition}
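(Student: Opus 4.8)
The plan is to reduce everything to an explicit computation in a single local chart $(U,z=(z_1,\dots,z_n))$ at $p$, using the trivialization \eqref{eq:double_tangent_vector} of $TTM$. Write $X=\sum_i X^i\,\partial_{z_i}$ and $Y=\sum_i Y^i\,\partial_{z_i}$ with $X^i,Y^i$ holomorphic-in-the-$C^1$-sense functions on $U$. The first step is to compute $DY(X)$: the map $Y\colon U\to TM$ in coordinates \eqref{eq:tangent_space_basis} is $q\mapsto (z(q),Y(z(q)))$, so its differential applied to the tangent vector $X_p=\sum_i X^i(p)(\partial_{z_i})_p$ produces, in the coordinates \eqref{eq:double_tangent_vector} around $Y_p\in TM$,
\[
DY(X)_{Y_p}=\sum_{i=1}^n X^i(p)\,(\partial_{z_i})_{Y_p}+\sum_{i=1}^n\Big(\sum_{k=1}^n X^k(p)\,\partial_{z_k}Y^i(p)\Big)(\partial_{\eta_i})_{Y_p},
\]
that is, the $(z,\eta,\xi,\zeta)$-coordinates are $(z(p),\,Y(p),\,X(p),\,(DY\cdot X)(p))$ where $(DY\cdot X)^i=\sum_k X^k\partial_{z_k}Y^i$. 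Since these coordinates are already of type $(1,0)$, applying $\proje^{10}$ does nothing here (this is the one place a short remark is needed: $DY(X)$ is a priori a real tangent vector to the complex manifold $TM$, but because $Y$ is holomorphic-fibered and $X$ is type $(1,0)$, its $(0,1)$-part vanishes). Then $\canoflip_M$ swaps the $\eta$- and $\xi$-slots by definition, giving coordinates $(z(p),\,X(p),\,Y(p),\,(DY\cdot X)(p))$, i.e. a vector in $T_{X_p}TM$ lying over $X_p$.

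The second step is the symmetric computation without the flip: $DX(Y)_{X_p}$ has coordinates $(z(p),\,X(p),\,Y(p),\,(DX\cdot Y)(p))$ with $(DX\cdot Y)^i=\sum_k Y^k\partial_{z_k}X^i$, and again $\proje^{10}$ acts trivially. The third step is to subtract: both vectors lie in $T_{X_p}TM$ and have the same $(z,\eta,\xi)$-coordinates $(z(p),X(p),Y(p))$, so the difference lies in the vertical space $\ker(D\Pi_M)_{X_p}$ and its coordinates are $(z(p),X(p),0,\,(DY\cdot X)(p)-(DX\cdot Y)(p))$. Applying $(\verticalinc{M}{X_p})^{-1}$, which by \eqref{eq:vertical_projection} strips off to the $\zeta$-slot as a vector in $T_pM$, yields $\sum_i\big(\sum_k X^k\partial_{z_k}Y^i-\sum_k Y^k\partial_{z_k}X^i\big)(p)\,(\partial_{z_i})_p$. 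The final step is to recognize this as the coordinate expression of the Lie bracket $[X,Y]_p$ of type-$(1,0)$ vector fields (cf. \cite[Chapter IX]{MR1393941}), which completes the proof.

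\textbf{Main obstacle.} The computation itself is routine; the one genuine subtlety — and the step I expect to need the most care — is the bookkeeping around $\proje^{10}$ and the well-definedness of each intermediate object. One must be careful that $DY(X)$, formed as the differential of a section of a complex bundle evaluated on a $(1,0)$-vector, is interpreted as an element of the complexified tangent bundle of $TM$ before projecting; the content of the proposition is partly that the holomorphic-coordinate formulas for $\canoflip_M$ and $\verticalinc{M}{\eta}$, originally defined on the holomorphic double tangent space, interact correctly with these projections. A clean way to organize this is to observe that in a holomorphic chart the section $Y$ has a well-defined holomorphic $(1,0)$-differential, so $\proje^{10}(DY(X))$ is exactly the expression written above, and then the chart-independence follows either from the transition-function computation already recorded in \eqref{eq:relation_local_charts} and \S\ref{subsubsec:canonical_flip} (where the symmetry $\Gamma_{ij}(\eta)\xi_l$ versus $\Gamma_{ik}(\xi)\eta_k$ was used to verify the flip is well-defined) or simply by invoking that the Lie bracket is coordinate-independent and all other maps in the formula are canonical.
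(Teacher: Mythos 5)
Your proof is correct and follows essentially the same route as the paper: a direct computation in a single holomorphic chart using the trivialization \eqref{eq:double_tangent_vector}, observing that the flipped vector and $\proje^{10}(DX(Y)_{X_p})$ share the same base point $X_p$ and horizontal component $Y(p)$, so the difference is vertical with $\zeta$-component $(DY\cdot X)-(DX\cdot Y)=[X,Y]$.

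One inaccuracy in your parenthetical remark, though it does not affect the computation: the claim that the $(0,1)$-part of $DY(X)_{Y_p}$ vanishes is false for merely $C^1$ vector fields. Since $Y^i$ is only $C^1$ and not holomorphic, $DY(X_p)$ carries a generally nonzero component $\sum_i X_p(\overline{Y^i})\,\partial_{\overline{\eta}_i}$, and $\proje^{10}$ genuinely discards it --- this is precisely why the projection appears in the statement of the proposition (were the $(0,1)$-part automatically zero, $\proje^{10}$ would be redundant). Your formula for the $(1,0)$-part, namely the coordinates $(z(p),Y(p),X(p),(DY\cdot X)(p))$, is nonetheless exactly right, so the rest of the argument stands as written.
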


\begin{proof}
We may assume that $M$ is a neighborhood $U$ of $p\in \mathbb{C}^n$. Let $\displaystyle X=\sum_{i=1}^nX_i(z)\partial_{z_i}$ and $\displaystyle Y=\sum_{i=1}^nY_i(z)\partial_{z_i}$.
Then,
for $z\in U$,
\begin{align*}
\canoflip_M\left(
(DY(X)_{Y_{z}})^{10}
\right)
&=Y_{z}+
\sum_{j=1}^n
\left(\sum_{i=1}^nX_i(z)\dfrac{\partial Y_j}{\partial z_i}(z)\right)
\left.\partial_{\eta_j}\right|_{X_z}
\\
(DX(Y)_{X_{z}})^{10}
&=Y_z+
\sum_{j=1}^n
\left(\sum_{i=1}^nY_i(z)\dfrac{\partial X_j}{\partial z_i}(z)\right)
\left.\partial_{\eta_j}\right|_{X_z}.
\end{align*}
Therefore,
the difference
$$
\canoflip_M\left(
(DY(X)_{Y_{z}})^{10}
\right)
-(DX(Y)_{X_{z}})^{10}
$$
is in the vertical space
and
\begin{align*}
&(\verticalinc{M}{X_{z}})^{-1}\left(
\canoflip_M\left(
(DY(X)_{Y_{z}})^{10}
\right)-
(DX(Y)_{X_{z}})^{10}
\right) \\
&=
(\verticalinc{M}{X_{z}})^{-1}\left(
\sum_{j=1}^n
\left(\sum_{i=1}^nX_i(z)\dfrac{\partial Y_j}{\partial z_i}(z)
-
\sum_{i=1}^nY_i(z)\dfrac{\partial X_j}{\partial z_i}(z)\right)
\left.\partial_{\eta_j}\right|_{X_z}
\right)
\\
&=[X,Y]_z,
\end{align*}
and we have done.
\end{proof}

\subsection{Switch from $T^*\!TM$ to $TT^*\!M$}
\label{sec:flip_TstarTMand TTstarM}
We claim

\begin{proposition}[Switch]
\label{prop:switch}
With the local charts  \eqref{eq:double_cotangent_vector} and \eqref{eq:co_double_tangent_vector},
the correspondence
$$
\switch_M\colon TT^*\!M\to T^*\!TM
$$
defined by
$$
\begin{CD}
(TT^*\!)_UM @>{\switch_M}>> (T^*\!T)_UM \\
(z,\omega,\alpha,\beta) @>>> (z,\alpha,\beta,\omega)
\end{CD}
$$
is a well-defined biholomorphism, which satisfies
$$
\Pi^\dagger_{TM}\circ \switch_M=D\Pi^\dagger_M.
$$
\end{proposition}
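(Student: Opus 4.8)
The plan is to verify the three assertions in turn: that the formula $(z,\omega,\alpha,\beta)\mapsto(z,\alpha,\beta,\omega)$ is independent of the choice of local chart (well-definedness), that it is a biholomorphism, and that it intertwines the two projections to $TM$. Since the local expression is visibly holomorphic with holomorphic inverse $(z,\alpha,\beta,\omega)\mapsto(z,\omega,\alpha,\beta)$ once well-definedness is established, the only real content is the chart-independence, and — as usual with these second order objects — the main obstacle is bookkeeping the transition cocycles correctly and seeing that the $\Gamma''$-terms in the two transition formulae match.

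First I would recall the two relevant transition formulae already derived in the excerpt. On the source side, $TT^*\!M$, a tangent vector to $T^*\!M$ transforms by \eqref{eq:transition_tan_cotan}:
$$
\begin{bmatrix}\Omega\\ A\\ B\end{bmatrix}
=
\begin{bmatrix}J^\dagger & 0 & 0\\ 0 & J & 0\\ 0 & \Gamma''(\omega) & J^\dagger\end{bmatrix}
\begin{bmatrix}\omega\\ \alpha\\ \beta\end{bmatrix},
$$
so $\Omega=J^\dagger\omega$, $A=J\alpha$, $B=\Gamma''(\omega)\alpha+J^\dagger\beta$. On the target side, $T^*\!TM$, a cotangent vector to $TM$ transforms by the formula in \S\ref{sec:cotangent_to_tangent_bundle}:
$$
\begin{bmatrix}H\\ \Lambda\\ M\end{bmatrix}
=
\begin{bmatrix}J & 0 & 0\\ 0 & J^\dagger & \Gamma'(H)\\ 0 & 0 & J^\dagger\end{bmatrix}
\begin{bmatrix}\eta\\ \lambda\\ \mu\end{bmatrix},
$$
so with $(\eta,\lambda,\mu)=(\alpha,\beta,\omega)$ we get $H=J\alpha$, $\Lambda=J^\dagger\beta+\Gamma'(H)\omega$, $M=J^\dagger\omega$. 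Now I would apply $\switch_M$ in the new chart to $(\Omega,A,B)=(J^\dagger\omega,\,J\alpha,\,\Gamma''(\omega)\alpha+J^\dagger\beta)$, obtaining the $T^*\!TM$-coordinates $(A,B,\Omega)=(J\alpha,\,\Gamma''(\omega)\alpha+J^\dagger\beta,\,J^\dagger\omega)$, and compare with $(H,\Lambda,M)=(J\alpha,\,J^\dagger\beta+\Gamma'(H)\omega,\,J^\dagger\omega)$. The first and third entries agree outright; well-definedness reduces to the identity $\Gamma''(\omega)\alpha=\Gamma'(H)\omega$ with $H=J\alpha$.

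The hard part — really the only computational point — is that last symmetry identity. Unwinding the definitions, $\Gamma''_{ij}(\omega)=\sum_k\omega_k(\partial z_k/\partial w_i)_{z_j}$ from \eqref{eq:gamma_pra_pra}, so $(\Gamma''(\omega)\alpha)_i=\sum_{j,k}\omega_k\alpha_j(\partial z_k/\partial w_i)_{z_j}$; differentiating $\partial z_k/\partial w_i$ along the $z$-variable and using the chain rule, $(\partial z_k/\partial w_i)_{z_j}=\sum_l(\partial^2 z_k/\partial w_i\partial w_l)(\partial w_l/\partial z_j)$, so this equals $\sum_l(\partial^2 z_k/\partial w_i\partial w_l)\,H_l$ once one recognizes $H_l=\sum_j(\partial w_l/\partial z_j)\alpha_j=(J\alpha)_l$, summed against $\omega_k$; that is precisely $\Gamma'_{il}(H)$ contracted with $\omega$, i.e. $(\Gamma'(H)\omega)_i$. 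I would present this as a short display-free chain of equalities. With well-definedness in hand, biholomorphy is immediate (the map and its inverse are polynomial in the fibre coordinates and holomorphic in $z$), and the projection identity $\Pi^\dagger_{TM}\circ\switch_M=D\Pi^\dagger_M$ is read off at once: $\Pi^\dagger_{TM}$ sends $(z,\alpha,\beta,\omega)\in T^*\!TM$ to $(z,\alpha)\in TM$, while $D\Pi^\dagger_M$ sends $(z,\omega,\alpha,\beta)\in TT^*\!M$ to $(z,\alpha)\in TM$ by the description of the horizontal projection in \S\ref{sec:tangent_on_cotangent_bundles}, so the two coincide.
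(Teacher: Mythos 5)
Your proposal is correct and follows essentially the same route as the paper: the paper's proof likewise reduces everything to checking chart-independence via the identity $\Gamma''(\omega)\alpha=\Gamma'(A)\omega$ with $A=J\alpha$, which it verifies by the same chain-rule manipulation (the paper substitutes $\alpha_l=\sum_s A_s\,\partial z_l/\partial w_s$ and regroups, while you differentiate $\partial z_k/\partial w_i$ along $w$ first — the two computations are the same identity read in opposite order). The remaining assertions (biholomorphy and $\Pi^\dagger_{TM}\circ\switch_M=D\Pi^\dagger_M$) are treated as immediate in the paper exactly as you treat them.
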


We call the biholomorphism $\switch_M$ the \emph{switch} from $T^*\!TM$ to $TT^*\!M$.


\begin{proof}
We only check the well-definedness. We use the notation in \S\S\ref{sec:cotangent_to_tangent_bundle},\ref{sec:tangent_on_cotangent_bundles} frequently.
Let $V=(z,\omega,\alpha,\beta)\in TT^*\!M$ and $(w,\Omega,A,B)$ the presentation of $V$ in the coordinates $(w,b)$. Then, $b=J^\dagger a$, $A=J\alpha$ and $B=\Gamma''(\omega)\alpha+J^\dagger\beta$.
Then, the $i$-th coordinate of $\Gamma''(\omega)\alpha$ is
\begin{align*}
\sum_{l=1}^n\Gamma''_{il}(\omega)\alpha_l
&=\sum_{l=1}^n\sum_{k=1}^n\omega_k\alpha_l\left(\dfrac{\partial z_k}{\partial w_i}\right)_{z_l}
=\sum_{l=1}^n\sum_{k=1}^n\omega_k
\left(\sum_{s=1}^nA_s\dfrac{\partial z_l}{\partial w_s}\right)
\left(\dfrac{\partial z_k}{\partial w_i}\right)_{z_l}
\\
&=
\sum_{k=1}^n\sum_{s=1}^na_kA_s\sum_{l=1}^n\left(\dfrac{\partial z_k}{\partial w_i}\right)_{z_l}\dfrac{\partial z_l}{\partial w_s}
=\sum_{k=1}^n\omega_k\sum_{s=1}^nA_s\left(\dfrac{\partial z_k}{\partial w_i}\right)_{w_s} \\
&=\sum_{l=1}^n\Gamma'_{il}(A)\omega_l,
\end{align*}
and it is nothing but the $i$-th coordinate of $\Gamma'(A)a$.
Therefore, we obtain
$$ 
\begin{bmatrix}
A \\
B \\
\Omega
\end{bmatrix}
=
\begin{bmatrix}
J \alpha\\
J^\dagger\beta+\Gamma'(A)\omega \\
J^\dagger \omega
\end{bmatrix}
=
\begin{bmatrix}
J & 0 & 0 \\
0 & J^\dagger & \Gamma'(A) \\
0 & 0 & J^\dagger
\end{bmatrix}
\begin{bmatrix}
\alpha \\
\beta \\
\omega
\end{bmatrix},
$$
which implies that $(z,\alpha,\beta,\omega)$ is thought of an element in $(T^*\!T)_UM$.
\end{proof}

\subsection{Dualization}
From the transitions \eqref{eq:transition_tan_cotan} and \eqref{eq:transition_cotan_cotan},
under the local charts \eqref{eq:co_double_tangent_vector} and \eqref{eq:co_double_cotangent_vector}, we have a bundle isomorphism
$$
\canoflipdagger_M\colon TT^*\!M\to T^*\!T^*\!M
$$
defined by
$$
\begin{CD}
(TT^*\!)_UM @>{\canoflipdagger_M}>> (T^*\!T^*\!)_UM \\
(z,\omega,\alpha,\beta) @>>> (z,\omega,\beta,-\alpha),
\end{CD}
$$
which satisfies
$$
\verticalprojdag{M}{\omega}\circ \canoflipdagger_M=-D\Pi^\dagger_M.
$$
We call the isomorphism $\canoflipdagger_M$ the \emph{dualization}.
The isomorphism $\canoflipdagger_M$ is derived from the canonical holomorphic symplectic structure on $T^*\!M$ (e.g. \cite[Chapter 2]{MR1853077}):
Indeed, the cotangent bundle $T^*\!M$ has a natural holomorphic symplectic form defined by
$$
\omega_M=\sum_{j=1}^nd\omega_j\wedge dz_j=d\left(\sum_{j=1}^n\omega_jdz_j\right)
$$
under the coordinates discussed above. The holomorphic symplectic structure leads a canonical isomorphism
\begin{equation}
\label{eq:dualization_symplectic}
\begin{CD}
TT^*\!M @>>> T^*\!T^*\!M \\
X @>>> [Y\mapsto 2\omega_M(X,Y)]
\end{CD}
\end{equation}
which is nothing but the dualization discussed above, where ``$2$" at the head of $\omega_M$ in the linear map \eqref{eq:dualization_symplectic} comes from the convention of the exterior derivative:
$$
d\omega_j\wedge dz_j(X,Y)=\dfrac{1}{2}
(d\omega_j(X)dz_j(Y)- d\omega_j(Y)dz_j(X))
$$
(cf. \cite[p.35]{MR1393940}).
For the adjustment to our description of the holomorphic symplectic form on $\mathcal{Q}_g$ given in \Cref{prop:holomorphic_symplectic_form} later, we notice that this convention is also used in Kawai's calculation (cf. \cite[(3.1)]{MR1386110}. See also \S\ref{subsec:Kawai}).
\eqref{eq:dualization_symplectic} follows from the following proposition.

\begin{proposition}
\label{prop:dualization_symplectic_form_M}
The dualization $\canoflipdagger_M$ satisfies the following condition:
$$
\mathcal{P}_{T^*\!M}(V,\canoflipdagger_M(W))=2\omega_M(W,V)
$$
for all $V$, $W\in T_\omega T^*\!M$ and $\omega\in T^*\!M$.
\end{proposition}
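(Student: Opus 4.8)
The plan is to verify the claimed identity $\mathcal{P}_{T^*\!M}(V,\canoflipdagger_M(W))=2\omega_M(W,V)$ by direct computation in the local coordinates that were set up earlier, since both sides are defined coordinate-by-coordinate and the only content is that the bilinear pairings match. First I would fix a chart $(U,z)$ on $M$ and the induced chart $(z,\omega,\cdot,\cdot)$ on $T^*\!M$, and work at a fixed point $\omega\in T^*_pM$. Write $W=(z,\omega,\alpha,\beta)$ and $V=(z,\omega,a,b)$ as elements of $T_\omega T^*\!M$ in the trivialization \eqref{eq:co_double_tangent_vector}; that is, $W=\sum_j\alpha_j(\partial_{z_j})_\omega+\sum_j\beta_j(\partial_{\omega_j})_\omega$ and similarly for $V$ with coefficients $a_j,b_j$. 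By the definition of the dualization, $\canoflipdagger_M(W)=(z,\omega,\beta,-\alpha)$ as an element of $T^*_\omega T^*\!M$ in the trivialization \eqref{eq:co_double_cotangent_vector}, i.e. the covector $\sum_j\beta_j(dz_j)_\omega-\sum_j\alpha_j(d\omega_j)_\omega$.

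Next I would compute the left-hand side using the formula \eqref{eq:pairing_on_manifold} for the pairing $\mathcal{P}_{T^*\!M}$ on the manifold $T^*\!M$: pairing the tangent vector $V$ (with coordinates $(a,b)$ in the $\partial_{z_j},\partial_{\omega_j}$ basis) against the cotangent vector $\canoflipdagger_M(W)$ (with coordinates $(\beta,-\alpha)$ in the $dz_j,d\omega_j$ basis) gives $\sum_j a_j\beta_j+\sum_j b_j(-\alpha_j)=\sum_j(a_j\beta_j-b_j\alpha_j)$. For the right-hand side I would use $\omega_M=\sum_j d\omega_j\wedge dz_j$ together with the stated convention $d\omega_j\wedge dz_j(X,Y)=\tfrac12(d\omega_j(X)dz_j(Y)-d\omega_j(Y)dz_j(X))$; evaluating $2\omega_M(W,V)$ and reading off $dz_j(W)=\alpha_j$, $d\omega_j(W)=\beta_j$, $dz_j(V)=a_j$, $d\omega_j(V)=b_j$ yields $\sum_j(\beta_j a_j-b_j\alpha_j)$. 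The two sides agree, which proves the identity; I would also remark that both sides are independent of the chart (the left-hand side manifestly so, the right-hand side because $\omega_M$ is intrinsically defined), so it suffices to check it in one chart, though in fact the computation is chart-free once the definitions are unwound.

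There is really no serious obstacle here — the statement is essentially a bookkeeping identity reconciling the combinatorial definition of $\canoflipdagger_M$ with the symplectic description \eqref{eq:dualization_symplectic}. The only point demanding care is the sign and the factor of $2$: one must track the antisymmetrization convention in the wedge product (the $\tfrac12$) and the sign $-\alpha$ in the definition of the dualization so that they conspire correctly, and one must be careful about the order of arguments, namely that it is $2\omega_M(W,V)$ and not $2\omega_M(V,W)$ that appears (which accounts for the overall sign working out rather than flipping). I would present the two short computations side by side and conclude; Proposition~\ref{prop:dualization_symplectic_form_M} then immediately gives \eqref{eq:dualization_symplectic} as stated, since the displayed identity says precisely that $Y\mapsto \mathcal{P}_{T^*\!M}(Y,\canoflipdagger_M(W))$ equals $Y\mapsto 2\omega_M(W,Y)$ as elements of $T^*_\omega T^*\!M$.
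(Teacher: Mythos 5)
Your computation is correct and is essentially identical to the paper's own verification: both write $V$ and $W$ in the local frame $\partial_{z_j},\partial_{\omega_j}$, apply the coordinate definition of $\canoflipdagger_M$ to get the covector $\sum_j(\beta_j dz_j-\alpha_j d\omega_j)$, and match $\sum_j(a_j\beta_j-b_j\alpha_j)$ against $2\omega_M(W,V)$ using the stated $\tfrac12$ antisymmetrization convention. Nothing further is needed.
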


Indeed, let $V=\sum_{i=1}^n(a_i\partial_{z_i}+b_i\partial_{\omega_i})$ and $W=\sum_{i=1}^n(\alpha_i\partial_{z_i}+\beta_i\partial_{\omega_i})$.
Then
$$
\canoflipdagger_M(W)=\sum_{i=1}^n(\beta_idz_i-\alpha_id\omega_i)
$$
and
\begin{align*}
\mathcal{P}_{T^*\!M}(V,\canoflipdagger_M(W))
&=
\sum_{i=1}^n(a_i\beta_i-b_i\alpha_i)=\sum_{i=1}^n(dz_i(V)d\omega_i(W)-dz_i(W)d\omega_i(V)) \\
&=2\omega_M(W,V).
\end{align*}
\subsection{Hamiltonian vector field}
\label{subsec:Hamiltonian_vector_field}
By mimicking the real case (cf. \cite[\S18]{MR1853077}), we define the \emph{Hamiltonian vector field} $\mathscr{X}_H$ for a $C^1$-function $H$ on $T^*\!M$ to be a $(1,0)$-vector-field $\mathscr{X}_H$ on $T^*\!M$ by
$$
\iota_{\mathscr{X}_H}\omega_M=\partial H.
$$
Namely, $\mathscr{X}_H$ is defined by the following equation:
$$
\omega_M((\mathscr{X}_H)_\omega,V)=\partial H|_\omega(V)
=\mathcal{P}_{T^*\!M}(V,\partial H|_v)
$$
for all $V\in T^{1,0}_\omega T^*\!M$ and $\omega\in T^*\!M$. If $W\in T_\omega T^*\!M$ satisfies $\canoflipdagger_M(W)=\partial H|_\omega$, 
\begin{align*}
\omega_M((\mathscr{X}_H)_\omega,V)
&=\mathcal{P}_{T^*\!M}(V,\partial H|_\omega)=\mathcal{P}_{T^*\!M}(V,\canoflipdagger_M(W))\\
&=2\omega_M(W,V).
\end{align*}
For the record, we summarize as follows.
\begin{proposition}
\label{prop:H-v}
Under the above notation, $\mathscr{X}_H=2(\canoflipdagger_M)^{-1}(\partial H)$.
\end{proposition}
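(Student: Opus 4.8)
The plan is to unwind the two definitions involved — that of the Hamiltonian vector field $\mathscr{X}_H$ and that of the dualization $\canoflipdagger_M$ — and show they agree up to the factor $2$. Since $\mathscr{X}_H$ is characterized by the scalar equation $\omega_M((\mathscr{X}_H)_\omega, V) = \partial H|_\omega(V)$ holding for all $V\in T^{1,0}_\omega T^*\!M$, and since $\omega_M$ is non-degenerate on each fiber $T_\omega T^*\!M$, the vector $\mathscr{X}_H$ is uniquely determined; so it suffices to exhibit \emph{some} vector field $W$ satisfying the same equation and identify it. The natural candidate is $W = 2(\canoflipdagger_M)^{-1}(\partial H)$, because $\canoflipdagger_M$ is a fiberwise isomorphism $T_\omega T^*\!M\to T^*_\omega T^*\!M$ (\Cref{prop:dualization_symplectic_form_M} makes it the one induced by $2\omega_M$), so $(\canoflipdagger_M)^{-1}(\partial H|_\omega)$ makes sense as an element of $T_\omega T^*\!M$ whenever $\partial H|_\omega$ is a cotangent vector, which it is since $H$ is $C^1$.

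First I would fix $\omega\in T^*\!M$ and set $W_\omega := (\canoflipdagger_M)^{-1}(\partial H|_\omega)\in T_\omega T^*\!M$, so that $\canoflipdagger_M(W_\omega) = \partial H|_\omega$ by definition. Then, for an arbitrary $V\in T^{1,0}_\omega T^*\!M$, I would apply \Cref{prop:dualization_symplectic_form_M} with the roles $(V, W) := (V, W_\omega)$ to get
$$
\partial H|_\omega(V) = \mathcal{P}_{T^*\!M}(V, \partial H|_\omega) = \mathcal{P}_{T^*\!M}(V, \canoflipdagger_M(W_\omega)) = 2\,\omega_M(W_\omega, V),
$$
where the first equality is just the tautological pairing of the covector $\partial H|_\omega$ against $V$ (as used in the excerpt's own computation of $\mathscr{X}_H$). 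Comparing with the defining equation $\omega_M((\mathscr{X}_H)_\omega, V) = \partial H|_\omega(V)$, we obtain $\omega_M((\mathscr{X}_H)_\omega, V) = 2\,\omega_M(W_\omega, V) = \omega_M(2W_\omega, V)$ for all $V$. By non-degeneracy of $\omega_M$ on the fiber this forces $(\mathscr{X}_H)_\omega = 2W_\omega = 2(\canoflipdagger_M)^{-1}(\partial H|_\omega)$, and since $\omega$ was arbitrary this is the claimed identity $\mathscr{X}_H = 2(\canoflipdagger_M)^{-1}(\partial H)$.

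There is essentially no obstacle here; the statement is a formal consequence of \Cref{prop:dualization_symplectic_form_M} together with the definition of $\mathscr{X}_H$, and indeed the displayed chain of equalities preceding the proposition in the excerpt already contains the whole argument — the proposition merely records its conclusion. The only points deserving a word of care are: (i) that $\canoflipdagger_M$ restricts on each fiber to a \emph{linear isomorphism} $T_\omega T^*\!M \to T^*_\omega T^*\!M$, which is clear from its coordinate description $(z,\omega,\alpha,\beta)\mapsto(z,\omega,\beta,-\alpha)$, so that $(\canoflipdagger_M)^{-1}$ applied to the section $\partial H$ yields a genuine $(1,0)$-vector field on $T^*\!M$; and (ii) that the resulting vector field has the same regularity as $\partial H$, i.e. it is continuous (or $C^1$, etc.) whenever $H$ is $C^1$ — again immediate since $\canoflipdagger_M$ is a biholomorphism. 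If one wanted a purely computational proof instead, one could alternatively write $\partial H = \sum_j (\partial_{z_j}H\, dz_j + \partial_{\omega_j}H\, d\omega_j)$, apply the coordinate formula for $(\canoflipdagger_M)^{-1}$, and check against $\iota_{\mathscr{X}_H}\omega_M = \partial H$ directly; but the intrinsic argument above is cleaner and is the one I would present.
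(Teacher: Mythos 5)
Your argument is correct and is exactly the paper's: the displayed computation preceding the proposition already applies \Cref{prop:dualization_symplectic_form_M} to a $W$ with $\canoflipdagger_M(W)=\partial H|_\omega$ and concludes via non-degeneracy of $\omega_M$, just as you do. Your added remarks on fiberwise invertibility and regularity are fine but not needed beyond what the paper records.
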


\section{Derivative of the Pairing function}

%

As noticed in \S\ref{sec:tangent_cotangent_space}, the pairing between $TM$ and $T^*\!M$ is a holomorphic function on the Whitney sum. By definition,
$$
TM\oplus T^*M=\{(v,\omega)\in TM\times T^*\!M\mid \Pi_M(v)=\Pi^\dagger_M(\omega)\}.
$$
Hence,
$$
T_{(v,\omega)}(TM\oplus T^*\!M)=\{(V_1,V_2)\in T_vTM\times T_\omega T^*\!M\mid
D\Pi_M|_v[V_1]=D\Pi^\dagger_M|_\omega[V_2]\}.
$$
We claim:

\begin{proposition}[Derivative of the pairing, Flip and Switch]
\label{prop:Derivative_of_pairing_and_flip}
Let $v\in TM$ and $\omega\in T^*\!M$.
For $V_1\in T_vTM$ and $V_2\in T_\omega T^*\!M$ with $D\Pi_{M}|_v[V_1]=D\Pi_{M}^\dagger|_{\omega}[V_2]$, 
$$
D\mathcal{P}_M|_{(v,\omega)}[V_1,V_2]=\mathcal{P}_{TM}|_{D\Pi_M|_v[V_1]}(\canoflip_M(V_1),\switch_M(V_2)).
$$
\end{proposition}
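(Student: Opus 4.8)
The plan is to verify the identity in local coordinates, exploiting the explicit coordinate formulas for the flip $\canoflip_M$, the switch $\switch_M$, and the pairing $\mathcal{P}_M$ established earlier in this chapter. First I would work in a chart $(U,z=(z_1,\dots,z_n))$ around $p=\Pi_M(v)=\Pi^\dagger_M(\omega)$, writing $v=\sum_j\eta_j\partial_{z_j}$ and $\omega=\sum_j\omega_j dz_j$, so that $\mathcal{P}_M(v,\omega)=\sum_j\eta_j\omega_j$ by \eqref{eq:pairing_on_manifold}. A tangent vector $V_1\in T_vTM$ is then $(z,\eta,\xi,\zeta)$ in the chart \eqref{eq:double_tangent_vector}, and $V_2\in T_\omega T^*\!M$ is $(z,\omega,\alpha,\beta)$ in the chart \eqref{eq:co_double_tangent_vector}; the compatibility hypothesis $D\Pi_M|_v[V_1]=D\Pi^\dagger_M|_\omega[V_2]$ says precisely that the ``horizontal'' components agree, i.e.\ $\xi=\alpha$ (both equal the base-direction vector, which I will call $u=D\Pi_M|_v[V_1]=\sum_j\xi_j\partial_{z_j}$).

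Next I would compute the left-hand side directly. Thinking of $\mathcal{P}_M$ as a holomorphic function on the Whitney sum $TM\oplus T^*\!M$, a curve realizing $(V_1,V_2)$ has coordinates $z(t)$ with $\dot z(0)=\xi$ (the common horizontal part), $\eta(t)$ with $\dot\eta(0)=\zeta$, and $\omega(t)$ with $\dot\omega(0)=\beta$; differentiating $\sum_j\eta_j(t)\omega_j(t)$ at $t=0$ gives $D\mathcal{P}_M|_{(v,\omega)}[V_1,V_2]=\sum_j(\zeta_j\omega_j+\eta_j\beta_j)$. (Note the $z$-derivative contributes nothing since the pairing \eqref{eq:pairing_on_manifold} is independent of the base point in these coordinates — this is the familiar fact that the tautological pairing has constant coordinate expression.) For the right-hand side I would apply the coordinate formulas: $\canoflip_M(V_1)=(z,\xi,\eta,\zeta)$ lies in $T_uTM$, and $\switch_M(V_2)=(z,\alpha,\beta,\omega)=(z,\xi,\beta,\omega)$ lies in $T^*_uTM$; here the key point is that $\canoflip_M$ sends $V_1$ to a vector over $u=D\Pi_M(V_1)$ (by \eqref{eq:filp_1}) while $\switch_M$ sends $V_2$ to a covector over $D\Pi^\dagger_M(V_2)=u$ as well (by the relation $\Pi^\dagger_{TM}\circ\switch_M=D\Pi^\dagger_M$ in \Cref{prop:switch}), so they pair in $T_uTM\oplus T^*_uTM$. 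Using the chart \eqref{eq:double_cotangent_vector} for $T^*TM$ and the pairing \eqref{eq:pairing_on_manifold} applied at the point $u$ of $TM$ — pairing the $(dz,d\eta)$-covector components against the $(\partial_z,\partial_\eta)$-vector components — one gets $\mathcal{P}_{TM}(\canoflip_M(V_1),\switch_M(V_2))=\sum_j(\eta_j\beta_j+\zeta_j\omega_j)$, which matches the left-hand side.

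The main obstacle, and the only genuinely non-routine part, is \emph{coordinate-independence}: both sides must be shown well-defined before the chart computation is meaningful. For the left-hand side this is automatic since $\mathcal{P}_M$ is an intrinsically defined holomorphic function. For the right-hand side I must confirm that $\canoflip_M(V_1)$ and $\switch_M(V_2)$ genuinely live over the \emph{same} base point $u\in TM$, which is exactly what \eqref{eq:filp_1} and \Cref{prop:switch} guarantee, and that the compatibility condition $\xi=\alpha$ is itself chart-independent — this follows because $D\Pi_M$ and $D\Pi^\dagger_M$ are intrinsic. Once these are in place, the identity is chart-independent and it suffices to check it in one convenient chart, which the computation above does. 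I would close by remarking that the cancellation of the $z$-derivative term on the left precisely mirrors the fact that the flip and switch are built to absorb the second-order (connection-type) coordinate corrections $\Gamma$, $\Gamma''$ appearing in the transition formulas \eqref{eq:relation_local_charts} and \eqref{eq:transition_tan_cotan}, so the identity is, in effect, the coordinate-free shadow of that cancellation.
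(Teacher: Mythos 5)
Your proposal is correct and follows essentially the same route as the paper: reduce to a chart where $M$ is a domain in $\mathbb{C}^n$, observe that the hypothesis forces the horizontal components of $V_1$ and $V_2$ to agree, differentiate the coordinate expression $\sum_j\eta_j\omega_j$ of the pairing (whose $z$-independence kills the base-direction term), and match the result $\sum_j(\zeta_j\omega_j+\eta_j\beta_j)$ against the coordinate formulas for $\canoflip_M$ and $\switch_M$. The extra paragraph on chart-independence is a reasonable elaboration of the paper's implicit reduction but does not change the argument.
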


\begin{proof}
We may assume that $M$ is a domain in $\mathbb{C}^n$, $TM=\{(z,\eta)\in U\times \mathbb{C}^n\}$ and $T^*\!M=\{(z,a)\in U\times \mathbb{C}^n\}$.
Suppose $\displaystyle v=((z_i),(\eta_i))$, $\omega=((z_i),(a_i))$, $\displaystyle V_1=((z_i),(\eta_i),(\alpha_i),(\beta_i))$
and $\displaystyle V_1=((z_i),(a_i),(A_i),(B_i))$ for $1\le i\le n$.
Notice from the assumption that
\begin{align*}
(z,\alpha)&=\Pi_{TM}(z,\alpha,\eta,\beta)) \\
&=\Pi_{TM}(\canoflip_M(V_1))=D\Pi_{TM}|_v[V_1]\\
&=D\Pi^\dagger|_{\omega}[V_2]=\Pi^\dagger_{TM}(\switch_M(V_2))\\
&=D\Pi^\dagger|_{\omega}(z,A,B,a) =(z,A)
\end{align*}
and $\alpha=A$.
%
Since the pairing is presented as
$\mathcal{P}_M((z,\eta),(z,a))=\sum_{j=1}^n\eta_ja_j$,
we obtain
$$
D\mathcal{P}_M=\sum_{j=1}^n a_jd\eta_j+\sum_{j=1}^n\eta_jda_j
$$
and
\begin{align*}
D\mathcal{P}_M|_{(v,\omega)}[V_1,V_2]=\sum_{j=1}^n\eta_jB_j+\sum_{j=1}^n \beta_ja_j
=\mathcal{P}_{TM}|_{D\Pi_M|_v[V_1]}(\canoflip_M(V_1), \switch_M(V_2)),
\end{align*}
which implies what we wanted.
\end{proof}

\section{Intrinsic characterization of Pairing on $TM$}
Let $p\in M$.
In the following statement, we identify $T^*\!_pM$ as a subspace in $T^*\!_uTM\subset (T^*\!T)_pM$ via the horizontal inclusions and $T_pM$ as the vertical space in $T_uTM\subset (TT)_pM$ for each $u\in T_pM$.
We claim
\begin{proposition}[Intrinsic characterization of the pairing $\mathcal{P}_{TM}$]
\label{prop:characterization_pairing}
For any $u\in T_pM$ and $\zeta\in T^*\!_pM$,
the pairing $\mathcal{P}_{TM}|_u$ on $T_uTM\oplus T_u^*\!TM$ satisfies
\begin{align}
\mathcal{P}_{TM}|_u(V,\omega)&=\mathcal{P}_M|_p(D\Pi_M(V),\omega) 
\quad(V\in T_uTM, \ \omega\in T^*\!_pM)
\label{eq:prop_characterization_pairing1}
\\
\mathcal{P}_{TM}|_u(v,\Omega)&=\mathcal{P}_M|_p(v, \Omega')
\quad(v\in T_pM, \ \Omega\in T^*\!_uTM)
\label{eq:prop_characterization_pairing2}
\\
D\mathcal{P}_M|_{(u,\zeta)}[V,W]
&=\mathcal{P}_{TM}|_{D\Pi_M(V)}(\canoflip_M(V),\switch_M(W))
\label{eq:prop_characterization_pairing3} \\
&\qquad\quad(V\in T_uTM, \ W\in T_\zeta T^*\!M)
\nonumber
\end{align}
with $D\Pi^\dagger_{TM}[W]=D\Pi_{TM}[V]\in T_pM$, where
$\Omega'\in T^*\!_pM$ is the image of vertical projection of $\Omega$.

Conversely, if a function $\mathcal{P}'$ on $(TT)_pM\oplus (T^*\!T)_pM$ which defines a $\mathbb{C}$-bilinear map $\mathcal{P}'\colon T_uTM\oplus V\in T^*\!_uTM\to \mathbb{C}$ on each $u\in T_pM$
satisfies the above three equations for all $u\in T_pM$ and $\zeta\in T^*\!_pM$, then $\mathcal{P}'=\mathcal{P}_{TM}$ on $\mathcal{P}'$ on $(TT)_pM\oplus (T^*\!T)_pM$.
\end{proposition}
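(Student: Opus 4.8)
The plan is to prove the two directions separately. For the direct implication, the three displayed equations \eqref{eq:prop_characterization_pairing1}, \eqref{eq:prop_characterization_pairing2}, and \eqref{eq:prop_characterization_pairing3} should all be verified by choosing local coordinates as in \eqref{eq:double_tangent_vector} and \eqref{eq:double_cotangent_vector}, writing $u = (z,\eta)$, $V = (z,\eta,\xi,\zeta) \in T_u TM$, and $\omega = \sum_j \lambda_j (dz_j)_p \in T^*_pM$ regarded as the horizontal covector $(z,\eta,\lambda,0) \in T^*_u TM$. Then \eqref{eq:prop_characterization_pairing1} is immediate from the pairing formula \eqref{eq:pairing_on_manifold} applied on $TM$: $\mathcal{P}_{TM}|_u(V,\omega) = \sum_j \xi_j \lambda_j$, and $D\Pi_M(V) = (z,\xi)$, so the right-hand side $\mathcal{P}_M|_p(D\Pi_M(V),\omega)$ is the same sum. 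Equation \eqref{eq:prop_characterization_pairing2} is symmetric in spirit: take $v = (z,\zeta) \in T_pM$ viewed as the vertical vector $(z,\eta,0,\zeta) \in T_u TM$ via \eqref{eq:vertical_projection} and $\Omega = (z,\eta,\nu,\mu) \in T^*_u TM$; then $\mathcal{P}_{TM}|_u(v,\Omega) = \sum_j \zeta_j \mu_j$, which is exactly $\mathcal{P}_M|_p(v,\Omega')$ where $\Omega' = \verticalproj{M}{\eta}(\Omega) = (z,\mu)$. Equation \eqref{eq:prop_characterization_pairing3} is nothing but a restatement of \Cref{prop:Derivative_of_pairing_and_flip} (equivalently \Cref{prop:characterization_pairing} itself reuses it), so it requires no new work — one simply cites the earlier proposition.

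For the converse, suppose $\mathcal{P}'$ is a function on $(TT)_pM \oplus (T^*T)_pM$ which restricts to a $\mathbb{C}$-bilinear pairing on $T_uTM \oplus T^*_uTM$ for each $u \in T_pM$ and satisfies the three equations. The key observation is that, fixing $u$, the space $T_uTM$ decomposes as the direct sum of the horizontal image $D\Pi_M$-lift and the vertical space $\verticalinc{M}{u}(T_pM)$, and dually $T^*_uTM$ decomposes as the horizontal image $(\Pi^\dagger_M)^*(T^*_pM)$ together with a complementary "vertical" piece detected by $\verticalproj{M}{u}$. Bilinearity then reduces the determination of $\mathcal{P}'$ to four blocks: (horizontal vector, horizontal covector), (horizontal vector, vertical covector), (vertical vector, horizontal covector), (vertical vector, vertical covector). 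Equation \eqref{eq:prop_characterization_pairing1} pins down the first block; \eqref{eq:prop_characterization_pairing2} pins down the third and, by choosing $v$ vertical and $\Omega$ ranging over all of $T^*_uTM$, also forces the (vertical, vertical) block to be computed via the vertical projection — matching $\mathcal{P}_{TM}$. The remaining block, (horizontal vector, vertical covector), is the one not directly constrained by \eqref{eq:prop_characterization_pairing1}–\eqref{eq:prop_characterization_pairing2} at a single point $u$, and this is where \eqref{eq:prop_characterization_pairing3} enters: differentiating the base-level pairing $\mathcal{P}_M$ produces exactly a term pairing a horizontal direction against a "$d\omega$-type" covector, and since $\canoflip_M$ and $\switch_M$ are known biholomorphisms (\S\ref{subsubsec:canonical_flip}, \Cref{prop:switch}), one can realize an arbitrary element of the fourth block as $\switch_M(W)$ paired with $\canoflip_M(V)$ for suitable $V, W$, thereby forcing agreement there too.

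The main obstacle I anticipate is not any single hard computation but the bookkeeping in the converse: one must check that the three equations, which are each "localized" to a particular sub-slot, genuinely span all of $T_uTM \oplus T^*_uTM$ under bilinearity, and in particular that the map $(V,W) \mapsto (\canoflip_M(V), \switch_M(W))$ has enough freedom — subject to the constraint $D\Pi^\dagger_{TM}[W] = D\Pi_{TM}[V]$ appearing in \eqref{eq:prop_characterization_pairing3} — to detect the troublesome (horizontal vector, vertical covector) block. Concretely, one should verify that as $V$ ranges over $T_uTM$ and $W$ over the fiber of $T^*M$ compatible with $V$, the pair $(\canoflip_M(V), \switch_M(W))$ sweeps out a set whose $\mathbb{C}$-linear span includes vectors of the form (horizontal, vertical-covector); inspecting the coordinate formulas $\canoflip_M(z,\eta,\xi,\zeta) = (z,\xi,\eta,\zeta)$ and $\switch_M(z,\omega,\alpha,\beta) = (z,\alpha,\beta,\omega)$ shows this is indeed the case, since $\zeta$ and the "$\omega$-slot" there are free. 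Once that surjectivity-of-slots claim is in hand, bilinearity and the three equations determine $\mathcal{P}'$ uniquely on each fiber, hence $\mathcal{P}' = \mathcal{P}_{TM}$, completing the proof.
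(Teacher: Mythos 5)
Your proposal is correct and follows essentially the same route as the paper: verify the three identities in the local coordinates \eqref{eq:double_tangent_vector}, \eqref{eq:double_cotangent_vector} (citing \Cref{prop:Derivative_of_pairing_and_flip} for \eqref{eq:prop_characterization_pairing3}), and for the converse expand the general bilinear pairing into the four horizontal/vertical blocks — the paper writes these as coefficient matrices $K^1_{ij},\dots,K^4_{ij}$ — and use the three equations to force each block, with \eqref{eq:prop_characterization_pairing3} killing the (horizontal vector, vertical covector) block. The one small slip is in your final freedom count: the slot of $\canoflip_M(z,\eta,\xi,\zeta)=(z,\xi,\eta,\zeta)$ that must be free is $\eta$ (which becomes the horizontal component of the flipped vector), paired against the $\omega$-slot of $\switch_M(z,\omega,\alpha,\beta)=(z,\alpha,\beta,\omega)$, not $\zeta$; since neither $\eta$ nor $\omega$ is constrained by $D\Pi^\dagger_{TM}[W]=D\Pi_{TM}[V]$, the argument goes through exactly as in the paper.
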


\begin{proof}
Since the statement is a local property,
we may assume that $M$ is a domain in $\mathbb{C}^n$, $TM=M\times \mathbb{C}^n$ and $T^*\!M=M\times \mathbb{C}^n$ are domains in $\mathbb{C}^{2n}$ with the coordinate functions $((z_i),(\eta_i))\in  TM$ and $((z_i),(a_i))\in  T^*\!M$.
Let
\begin{align*}
V&=\sum_{j=1}^n\alpha_j
\left(\partial_{z_j}\right)_{((z_i),(\eta_i))}+
\sum_{j=1}^n\beta_j
\left(\partial_{\eta_j}\right)_{((z_i),(\eta_i))}\\
\Omega
&=\sum_{j=1}^nA_j(dz_j)_{((z_i),(\eta_i))}+
\sum_{j=1}^nB_j(d\eta_j)_{((z_i),(\eta_i))}
\\
W&=\sum_{j=1}^nX_j
\left(\partial_{z_j}\right)_{((z_i),(a_i))}+
\sum_{j=1}^nY_j
\left(\partial_{a_j}\right)_{((z_i),(a_i))}.
\end{align*}
Since $D\Pi^\dagger_{TM}[W]=D\Pi_{TM}[V]$,
$$
((z_i),(X_i))=D\Pi^\dagger_{TM}[W]=D\Pi_{TM}[V]=((z_i),(\alpha_i)).
$$
Under the notation, the pairing $\mathcal{P}_{TM}$ is described as 
$$
\mathcal{P}_{TM}\left(
V
,
\Omega
\right)
=
\sum_{j=1}^n(\alpha_jA_j+\beta_jB_j).
$$
From the assumption, $D\Pi_M(V)\in T_pM$, $v\in T_pM\subset T_uTM$, $\omega\in T^*\!_pM\subset T^*\!_uTM$ and $\Omega'\in T^*\!_pM$ are presented as
\begin{align*}
&D\Pi_M(V)=\sum_{j=1}^n\alpha_j
\left(\partial_{z_j}\right)_{(z_i)},\quad
v=\sum_{j=1}^n\beta_j\partial_{\eta_j},\\
&\omega=\sum_{j=1}^nA_j(dz_j)_{(z_i)},
\quad \Omega'=\sum_{j=1}^nB_j(dz_j)_{(z_i)}.\\
&\canoflip_M(V)=\sum_{j=1}^n\eta_j
\left(\partial_{z_j}\right)_{((z_i),(\alpha_i))}+
\sum_{j=1}^n\beta_j
\left(\partial_{\eta_j}\right)_{((z_i),(\alpha_i))}
\\
&\switch_M(W)=
\sum_{j=1}^nY_j
\left(dz_j\right)_{((z_i),(\alpha_i))}+
\sum_{j=1}^na_j
\left(d \eta_j\right)_{((z_i),(\alpha_i))}.
\end{align*}
Hence, we can easily see that $\mathcal{P}_{TM}$ satisfies \eqref{eq:prop_characterization_pairing1} and \eqref{eq:prop_characterization_pairing2}.
From \Cref{prop:Derivative_of_pairing_and_flip}, $\mathcal{P}_{TM}$ satisfies \eqref{eq:prop_characterization_pairing3}.

Suppose that for any $u\in T_pM$, a $\mathbb{C}$-bilinear map $\mathcal{P}'|_u\colon T_uTM\oplus V\in T^*\!_uTM\to \mathbb{C}$ satisfies \eqref{eq:prop_characterization_pairing1}, \eqref{eq:prop_characterization_pairing2} and \eqref{eq:prop_characterization_pairing3}.
Define $K_{ij}^k$ ($1\le i,j\le n$, $1\le k\le 4$) by
\begin{align*}
\mathcal{P}'|_u\left(
V
,
\Omega
\right)
&=
\sum_{i,j=1}^n(K_{ij}^1(u)\alpha_iA_j+K_{ij}^2(u)\alpha_iB_j+K_{ij}^3(u)\beta_iA_j+K_{ij}^4(u)\beta_iB_j).
\end{align*}
Notice for the above expression that each $K^\cdot_{ij}$ may depend on $u\in T_pM$ in general (but not from the calculation below by the three equations in the end).

From \eqref{eq:prop_characterization_pairing1}, 
$$
\sum_{i,j=1}^n(K_{ij}^1(u)\alpha_iA_j+K_{ij}^3(u)\beta_iA_j)
=\sum_{j=1}^n\alpha_jA_j.
$$
Since the right-hand side is independent of the choice of $u$, $\alpha$, $\beta$ and $A$, we have $K_{ij}^1(u)=\delta_{ij}$ (Kroneker's delta), and $K_{ij}^3(u)=0$ for $1\le i,j\le n$ and $u\in T_pM$.
From \eqref{eq:prop_characterization_pairing2}
$$
\sum_{i,j=1}^n(K_{ij}^3(u)\beta_iA_j+K_{ij}^4(u)\beta_iB_j)
=\sum_{i,j=1}^nK_{ij}^4(u)\beta_iB_j
=\sum_{j=1}^n\beta_jB_j.
$$
which means $K_{ij}^4(u)=\delta_{ij}$ for $1\le i,j\le n$ and $u\in T_pM$.  From \eqref{eq:prop_characterization_pairing3}, 
and the proof of \Cref{prop:Derivative_of_pairing_and_flip},
\begin{align*}
D\mathcal{P}_M\mid_{(u,\zeta)}[V,W]
&=\sum_{j=1}^n(a_j\beta_j+\eta_jY_{j}) \\
\mathcal{P}'|_{D\Pi_M(V)}(\canoflip_M(V),\switch_M(W))
&=\sum_{j=1}^n(a_j\beta_j+\eta_jY_j)+\sum_{i,j=1}^nK_{ij}^2(D\Pi_M(V))\eta_ia_j.
\end{align*}
From the assumption, $\displaystyle \Pi_{TM}(V)=\Pi^\dagger_{TM}(\Omega)=u=((z_i),(\eta_i))$ and $\Pi_{T^*\!M}(W)=\zeta=((z_i),(a_i))$
can be taken arbitrary in $T_pM$ and $T^*\!_pM$ respectively with fixing $((z_i),(\alpha_i))=D\Pi_M[V]\in T_pM$,
we have $K_{ij}^2=0$  on $T_p(M)$ for $1\le i,j\le n$.
Therefore, we conclude that $\mathcal{P}'=\mathcal{P}_{TM}$.
\end{proof}

\chapter{Teichm\"uller theory}
\label{chap:Teichmuller_theory}

\section{Teichm\"uller space and Bers embedding}
\label{sec:Teichmuller_space}
For the contents of this section, readers can refer to
the books \cite{MR903027} ,\cite{MR1215481}, \cite{MR815922}, and \cite{MR927291}
for instance.

\subsection{Teichm\"uller space}
Let $\Sigma_g$ be an orientable closed surface of genus $g \ge 2$.
A \emph{marked Riemann surface $(X,f)$ of genus $g$} is a pair of a closed Riemann surface $X$ of genus $g$ and an orientation preserving homeomorphism $f\colon \Sigma_g\to X$. Two marked Riemann surfaces $(X_1,f_1)$ and $(X_2,f_2)$ are said to be \emph{Teichm\"uller equivalent} if there is a biholomorphism $h\colon X_1\to X_2$ such that $h\circ f_1$ is homotopic to $f_2$. The set $\teich_g$ of equivalence classes is called the \emph{Teichm\"uller space of closed Riemann surfaces of genus $g$}.

\subsection{Teichm\"uller space as a bounded domain}

\subsubsection{Automorphic forms on Riemann surfaces}
Let $\{(U_i,z_i)\}_{i\in I}$ be an analytic coordinate chart of a Riemann surface $M_0$.
For $p,q\in \mathbb{Z}$,
a \emph{$(p,q)$-form} $\varphi=\varphi(z)dz^pd\overline{z}^q$ is an assignment of a function $\varphi_i$ on $z_i(U_i)$ for each $i\in I$ with
$$
\varphi_j(z_{ij}(z))\left(\dfrac{dz_{ij}}{dz}(z)\right)^p\left(\overline{\dfrac{dz_{ij}}{dz}(z)}\right)^p=\varphi_i(z)
$$
for $z\in z_i(U_i\cap U_j)$,
where $z_{ij}=z_j\circ z_i^{-1}\colon z_i(U_i\cap U_j)\to z_j(U_i\cap U_j)$.
Notice that for a $(p,q)$-form $\varphi_1$ and an $(r,s)$-form $\varphi_2$, the product
$$
\varphi_1\varphi_2=\varphi_1(z)\varphi_2(z)dz^{p+r}d\overline{z}^{q+s}
$$
is a $(p+r,q+s)$-form.

We call a $(p,q)$-form $\varphi$ to be \emph{measurable}, \emph{bounded (measurable)}, \emph{smooth}, \emph{holomorphic}$, $\emph{meromorphic} if so is each $\varphi_i$ on $z_i(U_i)$ for $i\in I$. For simplicity, $(p,0)$-forms is called \emph{$p$-forms}. From historical reasons, $2$-forms, $(0,1)$-forms, and $(1,1)$-forms are called \emph{quadratic differentials}, \emph{vector fields}, and \emph{area forms}, respectively. For instance for a quadratic differential $Q$ and a vector field $\xi$, 
the product $\xi Q=Q\xi$ is a $1$-form.

\subsubsection{Automorphic forms on domains}
Let $D$ be a domain in $\hat{\mathbb{C}}$ such that $\hat{\mathbb{C}}\setminus D$ contains at least three points, and let $\Gamma$ be a subgroup of the group of biholomorphisms on $D$. Let $L^\infty(D,\Gamma)$ is the set of bounded measurable functions $\mu$ on $D$ satisfying $\mu(\gamma(z))(\overline{\gamma'(z)}/\gamma'(z))=\mu(z)$ for $z\in D$ and $\gamma\in \Gamma$. $L^\infty(D,\Gamma)$ is a complex Banach space with the essential supremum norm $\|\cdot\|_\infty$. Let $B_2(D,\Gamma)$ be the set of holomorphic functions $\varphi$ on $D$ with $\varphi(\gamma(z))\gamma'(z)^2=\varphi(z)$ for $z\in D$ and $\gamma\in \Gamma$. $B_2(D,\Gamma)$ is also a complex Banach space with the sup norm
$$
\|\varphi\|_\infty=\sup_{z\in D}\lambda_D(z)^{-2}|\varphi(z)|,
$$
where $\lambda_D=\lambda_D(z)|dz|$ is the Poincare metric on $D$ of curvature $-4$.

Suppose that $M=D/\Gamma$ is a closed surface of genus $g$. Then, $L^\infty(D,\Gamma)$ and $B_2(D,\Gamma)$ are canonically identified with the complex Banach spaces $L^\infty_{(-1,1)}(M)$ of bounded measurable $(-1,1)$-forms and $\mathcal{Q}_M$ of holomorphic quadratic differentials on $M$, respectively. There is a canonical pairing
\begin{align}
\label{eq:pairing}
L^\infty_{(-1,1)}(M)\times \mathcal{Q}_M\ni (\mu,q)\mapsto 
\llangle\mu,q\rrangle
&
:=\iint_{M}\mu(z)q(z)dxdy \\
&=\dfrac{1}{2i}
\iint_M\mu(z)q(z)d\overline{z}\wedge dz
\nonumber
\end{align}

\subsection{Bers projection and Bers embedding}
\label{subsec:Bers_embedding}
We fix $x_0=(M_0,f_0)\in \teich_g$ and the Fuchsian group $\Gamma_0$ acting on the unit disk $\mathbb{D}=\{|z|<1\}$ with $\mathbb{D}/\Gamma_0=M_0$.

Let $\mathbb{D}^*=\hat{\mathbb{C}}\setminus \overline{\mathbb{D}}$. 
For simplicity, we write $L^\infty=L^\infty(\mathbb{D},\Gamma_0)$ and $B_2=B_2(\mathbb{D}^*,\Gamma_0)$. Let $(L^\infty)_1$ be the unit ball of $L^\infty$ with center at the origin. Elements in $(L^\infty)_1$ are called \emph{Beltrami differentials}. For any $\mu\in (L^\infty)_1$, we consider a unique quasiconformal mapping $W_\mu$ on $\hat{\mathbb{C}}$ satisfying
$$
(W_\mu)_{\overline{z}}=\begin{cases}
\mu(W_\mu)_z & (z\in \mathbb{D}) \\
0 & (z\in \hat{\mathbb{C}}\setminus \mathbb{D}),
\end{cases}
$$
and $W_\mu(z)=z+o(1)$ as $z\to \infty$. Then $W_\mu$ descends to a quasiconformal mapping $w_\mu$ from $M_0$ to $M_\mu:=W_\mu(\mathbb{D})/W_\mu \Gamma (W_\mu)^{-1}$ and the map
\begin{equation}
\label{eq:bers_projection}
\Bersproj_{x_0}\colon (L^\infty)_1\ni \mu\mapsto (M_\mu,w_\mu)\in \teich_g
\end{equation}
is well-defined and surjective. We call the map $\Bersproj_{x_0}$ the \emph{Bers projection}. 

We also define a holomorphic map
$$
(L^\infty)_1\ni \mu\mapsto \left.\left(\dfrac{W_\mu'''}{W_\mu'}-\dfrac{3}{2}\left(\dfrac{W_\mu''}{W_\mu'}\right)^2\right)\right|_{\mathbb{D}^*}\in B_2
$$
which descends to an injective map $\Bersemb_{x_0}\colon \teich_g\to B_2$ via the Bers projection. The image $\Bers{x_0}$ of $\Bersemb_{x_0}$ is a bounded domain in $B_2$ containing the origin, and the origin of $B_2$ corresponds to $x_0\in \teich_g$. We call $\Bersemb_{x_0}$ the \emph{Bers embedding} of $\teich_g$ with base point $x_0$. The Teichm\"uller space $\teich_g$ admits a (unique) complex structure which makes $\Bersproj_{x_0}$ and $\Bersemb_{x_0}$ holomorphic (cf. \Cref{fig_BBA}). 
\begin{figure}[t]
$$
\xymatrix@C=50pt@R=40pt{
(L^\infty)_1
\ar[d]_{\Bersproj_{x_0}}
\\
\teich_g
\ar[r]_{\Bersemb_{x_0}}
&
B_2
\ar@{.>}[ul]_{\ahlforsW{x_0}}
}
$$
\caption{Bers projection, Bers embedding, and the Ahlfors-Weill section. The dotted arrow means the Ahlfors-Weill section is defined locally.}
\label{fig_BBA}
\end{figure}
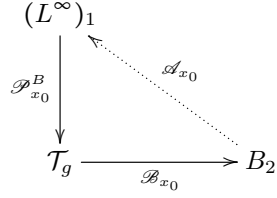

We define $\ahlforsW{x_0}\colon B_2\to L^\infty$ by
\begin{equation}
\label{eq:Ahlfors-Weill-map}
\ahlforsW{x_0}(\varphi)(z)=-\dfrac{1}{2}\lambda_{\mathbb{D}}(z)\varphi(1/\overline{z})(1/\overline{z})^4=
-\dfrac{1}{2\overline{z}^4}(|z|^2-1)^2\varphi(1/\overline{z})\quad
(z\in \mathbb{D}).
\end{equation}
The restriction of $\ahlforsW{x_0}$ to the open ball in $B_2$ of radius $2$ with center the origin is a holomorphic local right-inverse of $\Bersemb_{x_0}\circ \Bersproj_{x_0}$. Namely
\begin{equation}
\label{eq:right_inv}
\Bersemb_{x_0}\circ (\Bersproj_{x_0}\circ\ahlforsW{x_0})=id
\end{equation}
on the open ball. We call $\ahlforsW{x_0}$ the \emph{Ahlfors-Weill} section for $\Bersproj_{x_0}$.
In particular, the Bers projection is a holomorphic submersion.

\section{Infinitesimal theory}
\label{sec:infinitesimal_deformation_RS}

\subsection{Tangent spaces and Cotangent spaces}
\label{subsec:tangent_spaces_and_cotangent_spaces}
Let $x_0=(M_0,f_0)\in \teich_g$. As above-mentioned, via the universal covering $\mathbb{H}\to M_0$, there is a natural isometric isomorphism $L^\infty_{(-1,1)}(M_0)\to L^\infty(\mathbb{D},\Gamma_0)$.

Let $L^\infty_{(-1,1)}(M_0)_1$ be the open unit ball in $L^\infty_{(-1,1)}(M_0)$. For $\mu\in L^\infty_{(-1,1)}(M_0)_1$, there is a quasiconformal mapping $g^\mu\colon M_0\to M_\mu$ whose complex dilatation is equal to $\mu$. Then, the Bers projection \eqref{eq:bers_projection} is presented as
$$
L^\infty_{(-1,1)}(M_0)_1\ni \mu\to (M_\mu,g^\mu\circ f_0)\in \teich_g
$$
under the situation. Hence the (holomorphic) tangent space at $x_0$ is presented as
the quotient space of $L^\infty_{(-1,1)}(M_0)$ by the kernel of the differential of the Bers projection.
Indeed, the Teichm\"uller lemma asserts that the tangent space $T_{x_0}\teich_g$ is represented as the quotient space 
\begin{equation}
\label{eq:Teichmuller_lemma}
T_{x_0}\teich_g\cong L^\infty_{(-1,1)}(M_0)/\{\mu\in L^\infty_{(-1,1)}(M_0)\mid \llangle \mu,q\rrangle=0, q\in \mathcal{Q}_{M_0}\}.
\end{equation}
Hence, the pairing \eqref{eq:pairing} descends to the non-degenerate pairing
\begin{equation}
\label{eq:pairing_Teichmuller}
T_{x_0}\teich_g\times \mathcal{Q}_{M_0}\ni ([\mu],q)\mapsto 
\langle [\mu],q\rangle:=\llangle \mu,q\rrangle.
\end{equation}
From \eqref{eq:pairing_Teichmuller}, the space $\mathcal{Q}_{x_0}=\mathcal{Q}_{M_0}$ is canonically recognized as the (holomorphic) cotangent space $T^*_{x_0}\teich_g$ of $\teich_g$ at $x_0=(M_0,f_0)$.

\begin{remark}
After identifying $\mathcal{Q}_g=\cup_{x\in \teich_g}\mathcal{Q}_x$ with the cotangent bundle of $\teich_g$, when we use the notation defined at \eqref{eq:pairing_on_manifold}, 
\begin{equation}
\label{eq:pairing_Teichmuller_2}
\langle v,q\rangle=\mathcal{P}_{\teich_g}(v,q)
\end{equation}
holds for $v\in T\teich_g$ and $q\in \mathcal{Q}_g=T^*\teich_g$.
We will use the both notations appropriately, since the right-hand side of \eqref{eq:pairing_Teichmuller} seems not to be familiar with readers (or Teichm\"uller theorists).
\end{remark}

\subsection*{Strict convexity of $\mathcal{Q}_{M_0}$}
The $\mathbb{C}$-vector space $\mathcal{Q}_{M_0}$ is a complex Banach space with the $L^1$-norm
$$
\LOneNorm(q)=\|q\|=\iint_{M_0}|q(z)|dxdy=\dfrac{1}{2i}\iint_{M_0}|q(z)|d\overline{z}\wedge dz.
$$
The $L^1$-norm function $\LOneNorm$ on $\mathcal{Q}_g$ is of class $C^1$ except for the zero section (cf. \cite[Lemma 3]{MR0288254}). We will discuss the regurality of the $L^1$-norm function in \S\ref{sec:Royden_theorem_revisited}.

The following is well-known. However, we give a brief proof for the sake of readers.

\begin{proposition}[Strictly convexitiy]
\label{prop:strictly_convex-L1-norm}
The $L^1$-norm on $\mathcal{Q}_{M_0}$ is \emph{strictly convex} in the sense that for $q_1$, $q_2\in\mathcal{Q}_{M_0}$ with $q_1\ne q_2$ and $\|q_1\|=\|q_2\|=1$, $\|q_1+q_2\|<2$.
\end{proposition}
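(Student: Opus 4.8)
The plan is to show strict convexity of the $L^1$-norm on $\mathcal{Q}_{M_0}$ by analyzing the triangle inequality at the level of the integrand and identifying exactly when equality can occur. For $q_1, q_2 \in \mathcal{Q}_{M_0}$ with $\|q_1\| = \|q_2\| = 1$, we have pointwise $|q_1(z) + q_2(z)| \le |q_1(z)| + |q_2(z)|$ for almost every $z$, hence integrating gives $\|q_1 + q_2\| \le \|q_1\| + \|q_2\| = 2$. The point is to rule out equality. If $\|q_1 + q_2\| = 2$, then $\iint_{M_0} \big(|q_1(z)| + |q_2(z)| - |q_1(z)+q_2(z)|\big)\, dxdy = 0$, and since the integrand is nonnegative, it must vanish almost everywhere. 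Thus $|q_1(z) + q_2(z)| = |q_1(z)| + |q_2(z)|$ for almost every $z \in M_0$, which forces $q_1(z)$ and $q_2(z)$ to be nonnegative real multiples of one another at almost every point.

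The next step is to upgrade this almost-everywhere proportionality to a global statement using holomorphicity. Off the (discrete) zero set of $q_1 q_2$, the equality $|q_1 + q_2| = |q_1| + |q_2|$ means $q_2(z)/q_1(z)$ is a nonnegative real number wherever $q_1(z) \ne 0$. But $q_2/q_1$ is a meromorphic function on $M_0$; a meromorphic function whose values lie in $[0, \infty)$ on a set of positive measure (indeed on an open set, after removing the discrete zero/pole locus) must be constant by the open mapping theorem. Hence $q_2 = c\, q_1$ for some constant $c \ge 0$, and the normalization $\|q_2\| = \|q_1\| = 1$ forces $c = 1$, i.e. $q_1 = q_2$, contradicting the hypothesis $q_1 \ne q_2$. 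Therefore $\|q_1 + q_2\| < 2$.

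The main technical point — and the only place one must be slightly careful — is the passage from the measure-theoretic statement to the pointwise proportionality and then to global constancy: one needs that $q_1/q_2$ (or $q_2/q_1$) is a well-defined meromorphic function with isolated zeros and poles (which holds since $q_1, q_2$ are holomorphic quadratic differentials on a compact Riemann surface, hence not identically zero and with only finitely many zeros), so that "proportional with nonnegative ratio a.e." localizes to a connected open set where a holomorphic function takes values in a half-line. Everything else is the standard $L^1$ triangle inequality together with the elementary fact that equality $|a + b| = |a| + |b|$ in $\mathbb{C}$ holds iff $a, b$ lie on a common ray from the origin. This is why the statement is genuinely about the interaction of the $L^1$ structure with holomorphicity, and it is exactly the ingredient that later underlies the uniqueness in Teichm\"uller's extremality results.
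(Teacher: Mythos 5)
Your proof is correct. The one step that deserves scrutiny — passing from ``$q_2/q_1$ takes values in $[0,\infty)$ almost everywhere'' to global constancy — goes through: since $q_2/q_1$ is continuous off its discrete zero/pole set and $[0,\infty)$ is closed, the a.e.\ statement upgrades to an everywhere statement on a connected open set, and the open mapping theorem then forces constancy. The normalization $\|q_1\|=\|q_2\|=1$ rules out $c\ne 1$, so the argument is complete.

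Your route differs from the paper's in its decomposition, though both end with the same rigidity principle for meromorphic functions. You analyze the equality case of the pointwise triangle inequality $|q_1+q_2|\le|q_1|+|q_2|$ directly, which is the most elementary path. The paper instead isolates a lemma about the dual pairing: for $\alpha\ne 0$ and $\|\beta\|=1$, the equality ${\rm Re}\iint_{M_0}\tfrac{\overline{\alpha}}{|\alpha|}\beta=1$ forces $\alpha=t\beta$ with $t\in\mathbb{R}\setminus\{0\}$ (proved by showing the nonnegative integrand $\bigl(1-{\rm Re}\tfrac{\overline{\alpha}\beta}{|\alpha\beta|}\bigr)|\beta|$ vanishes a.e., so ${\rm Im}(\beta/\alpha)=0$ a.e.\ and the identity theorem applies). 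It then writes $\|q_1+q_2\|$ as the sum of the two pairings of $\overline{(q_1+q_2)}/|q_1+q_2|$ against $q_1$ and $q_2$, each bounded by $1$, and applies the lemma to each summand. The paper's detour buys a statement with independent value — it is essentially the infinitesimal unique-extremality of Teichm\"uller--Beltrami differentials in duality form, reused elsewhere — whereas your version is shorter and self-contained for the convexity claim alone. Both correctly identify holomorphicity (via open mapping / identity theorem) as the ingredient that turns a.e.\ alignment into global proportionality.
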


\begin{proof}
To this end, we first claim that for $\alpha$, $\beta\in \mathcal{Q}_{M_0}$ with $\alpha\ne 0$, $\|\beta\|=1$,
$$
{\rm Re}\iint_{M_0}\dfrac{\overline{\alpha}}{|\alpha|}\beta=1
$$
if and only if $\alpha=t\beta$ for $t\in \mathbb{R}$ with $t\ne 0$. We only check the ``only if"-part. Since ${\rm Re}(\overline{\alpha}\beta/|\alpha\beta|)\le 1$ on $M_0$ except for finite points (zeros of $\alpha$ and $\beta$), we have
$$
0\le \iint_{M_0}\left(1-{\rm Re}\dfrac{\overline{\alpha(z)}\beta(z)}{|\alpha(z)\beta(z)|}\right)|\beta(z)|dxdy
=1-{\rm Re}\iint_{M_0}\dfrac{\overline{\alpha}}{|\alpha|}\beta=0
$$
from the assumption. Since $|\beta|>0$ and $\overline{\alpha}\beta/|\alpha\beta|=|\alpha|\beta/|\beta|\alpha$ almost everywhere on $M_0$, the imaginary part of a meromorphic function $\beta/\alpha$ vanishes almost everywhere on $M_0$. From the identity theorem, we obtain $\alpha=t\beta$ for some $t\in \mathbb{R}$ with $t\ne 0$.

Let us return to the proof of the proposition.
Let $q_1$, $q_2\in \mathcal{Q}_{M_0}$ with $\|q_1\|=\|q_2\|=1$. 
When $\|q_1+q_2\|=2$, $q_1+q_2\ne 0$ and
\begin{align*}
2=\|q_1+q_2\|
&={\rm Re}\iint_{M_0}\dfrac{\overline{q_1+q_2}}{|q_1+q_2|}(q_1+q_2)
\\
&={\rm Re}\iint_{M_0}\dfrac{\overline{q_1+q_2}}{|q_1+q_2|}q_1
+{\rm Re}\iint_{M_0}\dfrac{\overline{q_1+q_2}}{|q_1+q_2|}q_2\le \|q_1\|+\|q_2\|=2.
\end{align*}
From the above argument, we deduce that $t_1q_1=q_1+q_2=t_2q_2$ for some $t_1$, $t_2\in \mathbb{R}$ with $t_1\ne 0$ and $t_2\ne 0$. Since $\|q_1\|=\|q_2\|=1$ and $q_1+q_2\ne 0$, we obtain $q_1=q_2$.
\end{proof}

\subsection{Trivializations of tangent bundle and cotangent bundle}
\label{subsec:trivialization_tangent_bundle_cotangent_bundle}
Since the Teichm\"uller space is biholomorphic to a bounded domain in $B_2=B_2(\mathbb{D}^*,\Gamma_0)$ (with the notation in \S\ref{subsec:Bers_embedding}), the (holomorphic) tangent bundle $T\teich_g$ is naturally identified with the trivial bundle $\teich_g\times B_2\to \teich_g$.

A basis $\varphi_1$, $\cdots$, $\varphi_{3g-3}\in B_2$ of $B_2$
gives a global (holomorphic) trivialization of $\mathcal{Q}_g=T^*\teich_g$ defined by
$$
\mathcal{Q}_g\ni q\mapsto (x,
(\langle D(\Bersemb_{x_0}^{-1})|_{x}[\varphi_1],q\rangle, \cdots,
\langle D(\Bersemb_{x_0}^{-1})|_{x}[\varphi_{3g-3}],q\rangle))\in \teich_g\times \mathbb{C}^{3g-3},
$$
where $x\in \teich_g$ with $q\in \mathcal{Q}_x$, and $D(\Bersemb_{x_0}^{-1})$ is the differential of the inverse of the Bers embedding. Since $\mathcal{Q}_{x_0}$ is the dual of $B_2$, for any $q\in \mathcal{Q}_g$, there is a unique $Q\in \mathcal{Q}_{x_0}$ such that
\begin{equation}
\label{eq:trivialization_pairing_Bers}
\langle (D(\Bersemb_{x_0}|_{x})^{-1}[\varphi_k],q\rangle=
\langle (D(\Bersemb_{x_0}|_{x_0})^{-1}[\varphi_k],Q\rangle=
\langle [\ahlforsW{x_0}(\varphi_k)],Q\rangle
\end{equation}
for all $k=1$, $\cdots$, $3g-3$. Threfore, we obtain a natural holomorphic identification
\begin{equation}
\label{eq:trivialization_cotangent_bundle}
T^*\teich_g\cong \mathcal{Q}_g\ni q \to (x,Q)\in \teich_g\times \mathcal{Q}_{x_0}.
\end{equation}
We can easily check that  the identification \eqref{eq:trivialization_cotangent_bundle} is independent of the choice of the frame $\{\varphi_1$, $\cdots$, $\varphi_{3g-3}\}$ of $B_2$.


\subsection{Teichm\"uller Beltrami differentials}
\label{subsubsec:TB_differenital}
The \emph{Teichm\"uller Beltrami differential} on $M_0$ is a $(-1,1)$-bounded measurable form of the form
$$
k\dfrac{\overline{q}}{|q|}=k\dfrac{\overline{q(z)}}{|q(z)|}\dfrac{d\overline{z}}{dz}.
$$
for $q\in \mathcal{Q}_{M_0}$ and $k>0$
(e.g. \cite[p.29]{MR590044}).
The Teichm\"uller Beltrami differential have an extremal property, called the infinitesimal unique extremal property. Namely, when $\mu\in L^\infty_{(-1,1)}(M_0)$ satisfies
$$
\llangle\mu,\varphi\rrangle=\llangle\overline{q}/|q|,\varphi \rrangle
$$
for all $\varphi\in \mathcal{Q}_{M_0}$, then $\|\mu\|_\infty\ge 1$, and the equality holds only if $\mu=\overline{q}/|q|$ almost everywhere on $M_0$  (cf. \cite[\S4.10, Corollary 2]{MR1730906} and the discussion below). Furthremore, each tangent vector $v\in T_{x_0}\teich_g$ admits a unique Teichm\"uller Beltrami differntial as a representative.

The quasiconformal mapping defined from the Teichm\"uller Beltrami differntial $k\overline{q}/|q|$ with $0<k<1$ is called the \emph{formal Teichm\"uller map} (cf. \cite[\S5.2.2]{MR1215481}).
A formal Teichm\"uller map defined from a pair $(k,q)$ with $k=\|q\|$ is called the \emph{Teichm\"uller map}.
Formal Teichm\"uller maps have an extremal property.
Namely, any formal Teichm\"uller map $f$ on $M_0$ is uniquely extremal in the sense that the maximal dilatation of $f$ attains minimal only at $f$ among all quasiconformal mappings on $M_0$ homotopic to $f$ (e.g. \cite[\S4.7, Corollary 1]{MR1730906} and \cite[Theorem 5.9]{MR1215481}).


\subsection{Teichm\"uller metric}
\label{subsec:Teichmuller_metric}
For $x=(M,f)\in \teich_g$, the \emph{Teichm\"uller metric} $\teichmullernorm$ is a Finsler metric on $\teich_g$ defined by
\begin{equation}
\label{eq:Teichmuller-norm}
\teichmullernorm(x,v)=\sup\left\{{\rm Re}\,\langle v,q\rangle\mid \|q\|=1, q\in \mathcal{Q}_{M}\right\}
\end{equation}
for $v\in T_x\teich_g$. By definition, the Teichm\"uller metric on $T\teich_g$ is thought of as the dual (Finsler) metric to the $L^1$-norm on $\mathcal{Q}_g\cong T^*\teich_g$ (cf. \cite[\S2]{MR0288254}). 

Royden shows that the Teichm\"uller metric coincides with the Kobayashi metric on $\teich_g$, and that the Teichm\"uller metric is of class $C^1$ on the tangent bundle except for the zero section (cf. \cite[Lemma 3, Theorem 3]{MR0288254}). We will deal with the regulality of the Teichm\"uller metric in \S\ref{sec:Royden_theorem_revisited}.

The length distance with respect to the Teichm\"uller metric on $\teich_g$ is called the \emph{Teichm\"uller distance} on $\teich_g$. The Teichm\"uller space with the Teichm\"uller distance is a unique geodesic and complete metric space which is deduced from the Teichm\"uller uniqueness theorem and the compactness of the family of quasiconformal mappings with uniformly bounded maximal dilatation.

\section{Kodaira-Spencer theory}
\label{subsubsec:1st_cohomology_tangent_space}
\subsection{Notation on the sheaf cohomology}
We first fix notations. For a sheaf $\mathscr{S}$ over a topological space $M$ and an open set $U\subset M$, we denote by $\Gamma(U,\mathscr{S})$ the set of sections of $\mathscr{S}$ over $U$. Let $\mathcal{U}=\{U_i\}_{i\in I}$ is a locally finite covering of $M$. For a $k$-chain $\sigma\in C^k(\mathcal{U},\mathscr{S})$, we denote by $\sigma_{i_0\cdots i_k}\in \Gamma(I_{i_0}\cap \cdots \cap U_{i_k},\mathscr{S})$ the $(i_0\cdots i_k)$-component of $\sigma$. 

Let $\delta\colon C^k(\mathcal{U},\mathscr{S})\to C^{k+1}(\mathcal{U},\mathscr{S})$ denote the coboundary operator. Let $Z^k(\mathcal{U},\mathscr{S})$ and $B^k(\mathcal{U},\mathscr{S})$ be the spaces of cocycles and coboundaries, and $H^k(\mathcal{U},\mathscr{S})$ the $k$-th cohomology group with coefficient $\mathscr{S}$. For instance, $H^0(M,\mathscr{S})=\Gamma(M,\mathscr{S})$. 
It is known that when a covering $\mathcal{U}=\{U_i\}_{i\in I}$ satisfies $H^1(U_i,\mathscr{S})=0$ for $i\in I$, the natural inclusion 
$$
H^1(\mathcal{U},\mathscr{S})\to H^1(M,\mathscr{S})
$$
defined from the direct limit is isomorphic and 
$$
H^2(\mathcal{U},\mathscr{S})\to H^2(M,\mathscr{S})
$$
is injective (cf. \cite[Theorems 3.4 and 3.5]{MR815922}).

Let $M$ be a Riemann surface.
Let $\Theta_M$ and $\Omega_M$ be the sheaves of germs of holomorphic vector fields and holomorphic $1$-forms on $M$, respectively. For $p,q,r\in \mathbb{Z}$ and $\mathscr{S}=\Theta_{M}$ or $\Omega_M$, we denote by $\mathcal{A}^{p,q}(\mathscr{S}^{\otimes r})$ the \emph{sheaf of germs of $C^\infty$-$(p,q)$-forms with coefficients in $\mathscr{S}^{\otimes r}$}. For instance, a section of $\mathcal{A}^{p,q}(\Omega_M^{\otimes r})$ (resp. $\mathcal{A}^{p,q}(\Theta_M^{\otimes r})$) on a domain $V\subset M$ is a smooth $(p+r,q)$-form on $V$ (resp. smooth $(p-r,q)$-forms), respectively. Therefore, there are natural identifications $\mathcal{A}^{p,q}(\Omega_M^{\otimes r})=\mathcal{A}^{0,q}(\Omega_M^{\otimes (p+r)})$ and $\mathcal{A}^{p,q}(\Theta_M^{\otimes r})=\mathcal{A}^{0,q}(\Omega_M^{\otimes (p-r)})$. For instance, $\mathcal{A}^{0,1}(\Omega_{M_0})$ is the sheaf of germs of smooth area forms on $M_0$.

Let $\mathcal{U}=\{(U_i,z_i)\}_{i\in I}$ be a locally finite analytic chart of $M$.
For $\mathscr{S}=\Theta_M$ or $\Omega_M$, the anti-holomorphic derivative $\overline{\partial}$ defines a sheaf homomorphism
$$
\begin{CD}
C^k(\mathcal{U},\mathcal{A}^{p,0}(\mathscr{S})) @>{\overline{\partial}}>> C^k(\mathcal{U},\mathcal{A}^{p,1}(\mathscr{S}))  \\
\sigma=\{\sigma_{i_0\cdots i_k}\}_{i_0,\cdots, i_k\in I} @>>>\overline{\partial}\sigma=\{(\sigma_{i_0\cdots i_k})_{\overline{z}}\}_{i_0,\cdots, i_k\in I}
\end{CD}
$$
%
%

\subsection{The first cohomology group as the tangent space}
\label{subsec:the_first_cohomology}
The following exact sequence
\begin{equation}
\label{eq:Dolbeault-sequence}
\begin{CD}
0 @>>> \Theta_{M_0} @>{inc}>> \mathcal{A}^{0,0}(\Theta_{M_0}) @>{-\overline{\partial}}>> \mathcal{A}^{0,1}(\Theta_{M_0}) @>>> 0,
\end{CD}
\end{equation}
where ``inc" means the inclusion,
leads the Dolbeault theorem
\begin{equation}
\label{eq:Dou}
H^1(M_0,\Theta_{M_0}) \cong \Gamma(M_0,\mathcal{A}^{0,1}(\Theta_{M_0}))/
\overline{\partial}\Gamma(M_0,\mathcal{A}^{0,0}(\Theta_{M_0}))
\end{equation}
(cf. \cite[Theorem 3.13]{MR815922}).
The canonical isomorphism $H^1(M_0,\Theta_{M_0})\cong T_{x_0}\teich_g$ follows from the coincidence of the right-hand sides of \eqref{eq:Teichmuller_lemma} and \eqref{eq:Dou}, which is verified from the Serre duality theorem (or the non-degeneracy of the Serre pairing defined by the residue map). See \cite[Theorem 7.9]{MR1215481} and \cite[\S17]{MR648106}. See also \S\ref{subsec:appendix_1} for the discussion on the sign of the $\overline{\partial}$-operator in \eqref{eq:Dolbeault-sequence}.

For reader's convenience, we shall give an explicit correspondence.
We fix a covering $\mathcal{U}=\{U_i\}_{i\in I}$ of $M_0$ with $H^1(\mathcal{U},\Theta_{M_0})\cong H^1(M_0,\Theta_{M_0})$.
For $X\in Z^1(\mathcal{U},\Theta_{M_0})$,
take $\xi\in C^0(\mathcal{U},\mathcal{A}^{0,0}(\Theta_0))$ such that $\delta\xi=X$. Then $\mu=-\overline{\partial }\xi_i$ on $U_i$ is naturally thought of as an element in $\Gamma(M_0,\mathcal{A}^{0,1}(\Theta_{M_0}))\subset L^\infty_{(-1,1)}(M_0)$, and
\begin{equation}
\label{eq:identify_H1_T_g}
\begin{CD}
H^1(M_0,\Theta_{M_0})\cong H^1(\mathcal{U},
\Theta_{M_0})\ni [X]@>{\mathscr{T}_{x_0}}>> [\mu]=[-\overline{\partial}\xi_i]\in T_{x_0}\teich_g
\end{CD}
\end{equation}
is a $\mathbb{C}$-linear isomorphism. See the discussion in \cite[Theorem 5.4]{MR815922}.

%
\subsection{Presentation of the pairing by the residue}
\label{subsec:residue}
The exact sequence of sheaves
\begin{equation}
\label{eq:exact_sequence_sheaves_differential_forms}
\begin{CD}
0@>>> \Omega_{M_0}@>>> \mathcal{A}^{0,0}(\Omega_{M_0})@>{d}>>  \mathcal{A}^{0,1}(\Omega_{M_0})@>>>0
\end{CD}
\end{equation}
leads the isomorphism
\begin{equation}
\label{eq:exact_sequence_sheaves_differential_forms_1}
H^1(M_0,\Omega_{M_0})\cong H^0(M_0, \mathcal{A}^{0,1}(\Omega_{M_0}))/dH^0(M_0,\mathcal{A}^{0,0}(\Omega_{M_0})).
\end{equation}
This yields the residue map
$$
{\rm Res}\colon H^1(M_0,\Omega_{M_0})\to \mathbb{C}
$$
defined by
\begin{equation}
\label{eq:definition_residue}
{\rm Res}([\omega])=\dfrac{1}{2\pi i}\iint_{M_0}\Omega(z)d\overline{z}\wedge dz
\end{equation}
for $[\omega]\in H^1(M_0,\Omega_{M_0})$,
where $\Omega\in H^0(M_0, \mathcal{A}^{0,1}(\Omega_{M_0}))$ is the corresponding area form via the isomorphism \eqref{eq:exact_sequence_sheaves_differential_forms_1}.
Namelly, there is $\Omega'\in C^0(M_0, \mathcal{A}^{0,0}(\Omega_{M_0}))$ such that $d\Omega'=\overline{\partial}\Omega'=\Omega$ and $\delta\Omega'=\omega$
(cf. \cite[\S17.1]{MR648106}).
\begin{convention}
\label{convention:1}
As \eqref{eq:definition_residue}, in this paper, we adopt $d\overline{z}\wedge dz$ as the basis of $2$-forms. 
The reason why we use $d\overline{z}\wedge dz$ as a basis of $2$-forms comes from the exact sequence \eqref{eq:exact_sequence_sheaves_differential_forms}. Namely, for $\omega=\omega(z)dz$, $d\omega=\omega_{\overline{z}}(z)d\overline{z}\wedge dz$. See \cite[(c) in \S15.9]{MR648106}.
\end{convention}

The pairing \eqref{eq:pairing_Teichmuller} is presented with the residue as
\begin{align}
\label{eq:pairing_2}
\langle v,q\rangle
&=\dfrac{1}{2i}\iint_{M_0}(-(\xi_i)_{\overline{z}})qd\overline{z}\wedge dz =-\dfrac{1}{2i}\iint_{M_0}d(\xi_i qdz) \\
&=-\pi \,{\rm Res}([Xq])
\nonumber
\end{align}
since 
$\delta (\{\xi_iq\}_{i\in I})=Xq=\{X_{ij}q\}_{i,j\in I}\in Z^1(\mathcal{U},\Omega_{M_0})$,
where $v=[X]=[\{X_{ij}\}_{i,j\in I}]\in H^1(\mathcal{U},\Theta_{M_0})\cong T_{x_0}\teich_g$, and $\xi=\{\xi_i\}_{i\in I}\in C^0(\mathcal{U},\Theta_{M_0})$ with $\delta\xi=X$. In particular,
when $H^1(U_i,\Theta_{M_0})=0$ for $i\in I$, the residue pairing
$$
\begin{CD}
H^1(\mathcal{U},\Theta_{M_0})\times \mathcal{Q}_{x_0}@>>> \mathbb{C} \\
([X],q) @>>> -\pi \,{\rm Res}([Xq])
\end{CD}
$$
is non-degenerate.

\section{Stratification of the space of quadratic differentials}
\label{sec:statification_space_QD}
Let $Z$ be a manifold. A \emph{stratification} of $Z$ is a locally finite collection of locally closed submanifolds $\{Z_j\}_{j\in J}$ of $Z$, the \emph{strata}, indexed by a set $J$ such that
\begin{enumerate}
\item
$Z=\cup_{j\in J}Z_j$; and
\item
$Z_j\cap \overline{Z_k}\ne \emptyset$ if and only if $Z_j\subset \overline{Z_k}$.
\end{enumerate}
(cf. \cite{MR3413977}). The conditions induce a partial order on $J$, where $j\le k$ if $Z_j\subset \overline{Z_k}$.

Each $q\in \mathcal{Q}_g$ determines a topological data $(k_1,\cdots,k_n;\epsilon)$ where
$k_1$, $\cdots$, $k_n$ are the orders of zeros: $\epsilon=+1$ if $q$ is the square of an abelian differentiall $\epsilon=-1$ otherwise. A stratum $\mathcal{Q}_g(k_1,\cdots,k_n;\epsilon)$ consists all quadratic differentials determining the data $(k_1,\cdots,k_n;\epsilon)$. The principal stratum $\mathcal{Q}_1$ corresponds to $(1,\cdots,1;-1)$. The principal stratum $\mathcal{Q}_1$ is open and generic, and each stratum is a complex submanifold of $\mathcal{Q}_g$ (cf. \cite{MR1094714}. See also \cite{MR3413977}). The index of the stratum satisfies $\sum_{i=1}^n k_i=4g-4$. In general, some data $(k_1,\cdots,k_n;\epsilon)$ cannot be realized by a holomorphic quadratic differential. See Masur-Smillie \cite{MR1214233}.

Any point in each stratum admits a local complex chart defined by the relative period of the abelian differentials defined from the square roots of the differential. Hence, the $L^1$-norm function is real analytic on each stratum because the $L^1$-norm is written as the quadratic form of the period.

For the record, we summery as follows.
\begin{proposition}
\label{prop:L1-norm-realanalytic-strata}
The $L^1$-norm function on $\mathcal{Q}_g$ is real-analytic on each stratum 
\end{proposition}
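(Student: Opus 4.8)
The plan is to exhibit around every point of a stratum a holomorphic coordinate chart in which $\LOneNorm$ is a constant-coefficient quadratic polynomial in the underlying real coordinates; since real-analyticity is local and invariant under $\mathbb{C}$-affine changes of chart, this proves the proposition. The charts to use are the period charts already invoked in this section (cf. \cite{MR1094714}, \cite{MR3413977}).

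First I would set up the period coordinates. Fix $q_0\in\mathcal{Q}_g(k_1,\dots,k_n;\epsilon)$, realized on $M_0$ with zero divisor $\sum_i k_i p_i$, and pass to the (possibly trivial) orientation double cover $\pi\colon\widehat{M}_0\to M_0$ on which a single-valued square root $\omega_0$ of $\pi^{*}q_0$ exists; let $\iota$ be the deck involution, so $\iota^{*}\omega_0=-\omega_0$, and let $\Sigma\subset\widehat{M}_0$ be the zeros of $\omega_0$ together with the branch points of $\pi$. One may assume --- this is the cited fact about strata --- that the relative period map $q\mapsto[\omega_q]\in H^{1}(\widehat{M},\Sigma;\mathbb{C})^{-}$, valued in the anti-invariant part under $\iota$, restricts to a biholomorphism from a neighborhood of $q_0$ in the stratum onto an open subset of the finite-dimensional vector space $H^{1}(\widehat{M},\Sigma;\mathbb{C})^{-}$. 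Fixing a $\mathbb{C}$-basis, the coordinates of $[\omega_q]$ give a holomorphic chart $c=(c_1,\dots,c_m)$ on the stratum near $q_0$.

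Next I would compute $\LOneNorm$ in this chart. Since $|q|=|\omega_q|^{2}$ on $M$, there is a universal constant $\kappa>0$ (absorbing the number of sheets of $\pi$ and normalization) with
$$
\LOneNorm(q)=\kappa\iint_{\widehat{M}}i\,\omega_q\wedge\overline{\omega_q},
$$
so $\LOneNorm(q)$ is $\kappa$ times the flat area of $(\widehat{M},\omega_q)$. This area depends only on the relative cohomology class: up to sign and a constant absorbed in $\kappa$ it is the value of the (topological, hence constant-coefficient) intersection pairing
$$
H^{1}(\widehat{M},\Sigma;\mathbb{C})^{-}\times H^{1}(\widehat{M},\Sigma;\mathbb{C})^{-}\longrightarrow H^{2}(\widehat{M};\mathbb{C})\cong\mathbb{C}
$$
on the pair $\bigl([\omega_q],[\overline{\omega_q}]\bigr)$ --- equivalently, after triangulating $(\widehat{M},\Sigma)$ by saddle connections $e_1,\dots,e_N$, the area is $\sum_{T}\pm\tfrac12\operatorname{Im}\!\bigl(\overline{z_a}\,z_b\bigr)$ summed over the triangles $T$, where the edge holonomies $z_a=\int_{e_a}\omega_q$ are integral linear functionals of $c$. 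Hence on the chart $\LOneNorm$ is a fixed Hermitian form $h(c,\overline{c})$, a real polynomial of degree $2$ in $\operatorname{Re}c$ and $\operatorname{Im}c$, which is real-analytic. This gives real-analyticity of $\LOneNorm$ on the stratum.

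The only real content is justifying the constant-coefficient area formula and its independence of the auxiliary choices. I expect the cleanest route --- and the one I would take --- is the intersection-pairing description, which needs only that cup product in relative cohomology is a locally constant bilinear operation and that $\tfrac{i}{2}\iint_{\widehat{M}}\omega\wedge\overline{\omega}$ is computed by it on relative classes, both standard for translation surfaces. If one prefers triangulations, the point to be careful about is choosing saddle-connection triangulations that stay combinatorially valid under small deformations and checking that the signed-area formula agrees on overlaps; one must also keep track of the branch points of $\pi$ and of the $\epsilon=-1$ case, in which $M$ itself carries only a half-translation structure so the passage to $\widehat{M}$ is genuinely needed. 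Either way, the upshot is that $\LOneNorm$ restricted to any period chart is a constant-coefficient quadratic form, hence real-analytic.
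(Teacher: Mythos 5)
Your proposal is correct and follows essentially the same route as the paper, which likewise invokes the relative period charts on each stratum and the fact that the $L^1$-norm is a (constant-coefficient) quadratic form in the periods. Your write-up simply supplies the details (orientation double cover, intersection pairing, saddle-connection triangulation) that the paper leaves to the cited references.
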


\chapter{Teichm\"uller space of tori}
\label{chap:Teichmuller_space_of_tori}

In this chapter, we devote to discuss the model case (the most simplest case) of our results.
We treat the Teichm\"uller space $\teich_1$ of tori.
The case of once punctured tori is treated in the same way as that in the case of flat tori. Indeed, the canonical completion at the puncture gives an isomorphism between the Teichm\"uller space of once punctured tori and that of flat tori. The case of four puncture sphere is treated in a similar way.

\section{Teichm\"uller space of tori}
The Teichm\"uller space $\teich_1$ of tori is the space of the Teichm\"uller equivalence classes of marked complex tori of dimension $1$, where a \emph{complex torus} of dimension $1$ is a Riemann surface homeomorphic to a torus. For $\tau\in \mathbb{H}=\{\tau\in \mathbb{C}\mid {\rm Im}(\tau)>0\}$, let $\Gamma_\tau=\langle 1,\tau\rangle$ be a $\mathbb{Z}$ lattice in $\mathbb{C}$ generated by $1$ and $\tau$, and set $M_\tau=\mathbb{C}/\Gamma_\tau$. Let $\Sigma_1=M_i$ ($i$ is the imaginary unit). Let $f_\tau\colon \Sigma_1\to M_\tau$ be a homeomorphism with $(f_\tau)_*(1)=1$ and $(f_\tau)_*(i)=\tau$. Then, $(M_\tau,f_\tau)$ is a marked complex torus, and 
$$
\mathbb{H}\ni \tau\mapsto (M_\tau,f_\tau)\in \teich_1
$$
is bijective. It is well-known that this identification become a biholomorphic under a canonical complex structure on $\teich_1$.

\section{Tangent and cotangent bundles}
For $(M_\tau,f_\tau)\in \teich_1$, the space $\mathcal{Q}_\tau$ of holomorphic quadratic differentials on $M_\tau$ is described by 
$$
\mathcal{Q}_\tau=\{q=q\,dz^2\mid q\in \mathbb{C}\}
$$
where the coordinate $z$ is induced from $\mathbb{C}$ via the projection $\mathbb{C}\to M_\tau=\mathbb{C}/\Gamma_\tau$ since any $\Gamma_\tau$-periodic holomorphic function on $\mathbb{C}$ is constant. We identify the space $\mathcal{Q}_1$ of holomorphic quadratic differentials with $\mathbb{H}\times \mathbb{C}$ by
\begin{equation}
\label{eq:Beltrami_torus1}
\mathcal{Q}_1\ni ((M_\tau,f_\tau),-2i\,q\,dz^2)\mapsto \left(\tau,q\right)\in \mathbb{H}\times \mathbb{C}.
\end{equation}
The constant ``$-2i$" in the trivialization \eqref{eq:Beltrami_torus1} is a regulation which makes the pairing between $T\teich_1$ and $\mathcal{Q}_1$ to be the ordinary complex Euclidean pairing as we see at
\eqref{eq:Beltrami_torus2}.
The $L^1$-norm function $\LOneNorm$ on $\mathcal{Q}_1$ is given by
$$
\LOneNorm(\tau,q)=
\dfrac{1}{2i}\iint_{M_\tau}|-2iq|d\overline{z}\wedge dz=2|q|{\rm Im}(\tau). 
$$
The $L^\infty_{(-1,1)}(M_\tau)$ consists of bounded measurable $(-1,1)$-form $\mu'=\mu'(z)d\overline{z}/dz$ such that $\mu'(z)$ is periodic in terms of the action of the lattice $\Gamma_\tau$. Then, $\mu'$ is infinitesimally equivalent to a unique constant $(-1,1)$-form $(\mu/{\rm Im}(\tau))d\overline{z}/dz$ with
$$
\displaystyle \mu=\dfrac{1}{2i}\iint_{M_\tau}\mu' (z)d\overline{z}\wedge dz.
$$
We notice that the Beltrami differential $\mu(t)$ of the quasiconformal (affine) deformation from $M_\tau$ to $M_{\tau+t \mu}$ ($\mu\in \mathbb{C}$) behaves
\begin{equation}
\label{eq:Beltrami_torus}
\mu(t)=t\dfrac{i\mu}{2{\rm Im}(\tau)}\dfrac{d\overline{z}}{dz}+o(t)
\end{equation}
as $t\to 0$. Hence, we define a trivialization of $T\teich_1$ by
\begin{equation}
\label{eq:trivialization_TT1}
T\teich_1 \ni \left((M_\tau,f_\tau),\left[\dfrac{i\mu}{2{\rm Im}(\tau)}\dfrac{d\overline{z}}{dz}\right]\right)
\mapsto \left(\tau,\mu\right)\in  \mathbb{H}\times \mathbb{C}
\end{equation}
Under the coordinates of $T\teich_g$ and $\mathcal{Q}_1$, the pairing function is
\begin{equation}
\label{eq:Beltrami_torus2}
\mathcal{P}_{\teich_1}((\tau,\mu),(\tau,q))=
\dfrac{1}{2i}\iint_{M_\tau}\dfrac{i\mu}{2{\rm Im}(\tau)}(-2iq)d\overline{z}\wedge dz
=\mu q.
\end{equation}
Therefore, the Teichm\"uller metric on $\teich_1$, which is the dual Finsler metric with respect to the $L^1$-norm function, is obtained by
$$
\teichmullernorm(\tau, \mu)=\sup
\left\{{\rm Re}\left(\mu q\right)\mid 
\LOneNorm(\tau,q)=1
\right\}
=\dfrac{|\mu|}{2{\rm Im}(\tau)},
$$
which implies that the Teichm\"uller metric on $\teich_1$ is nothing but the Poincar\'e metric on the upper-half plane $\mathbb{H}$ with curvature $-4$
(cf. \cite[Theorem 6.6.5]{MR2245223} and \cite[Chapter IV, Proposition 1.3]{MR2194466}).

The Teichm\"uller Beltrami map $\tb\colon \mathcal{Q}_1\to T\teich_1$ is presented by
$$
\tb(\tau, q)=\left((M_\tau,f_\tau),
\left[2i\,{\rm Im}(\tau)\overline{q}\dfrac{d\overline{z}}{dz}
\right]
\right)=(\tau,4\overline{q}{\rm Im}(\tau)^2).
$$
In this case, it is immediately checked that the Teichm\"uller Beltrami map is a real-analytic diffeomorphism from $\mathcal{Q}_1$ to $T\teich_1$.

\section{Second order infinitesimal spaces and Infinitesimal Duality}
All the second order infinitesimal spaces of $\teich_1$ are holomorphically trivial to $\mathbb{H}\times \mathbb{C}\times \mathbb{C}\times \mathbb{C}$. We identify
\begin{align*}
TT\teich_1\ni \xi\partial_\tau|_{(\tau,\mu)}+\zeta\partial_\mu|_{(\tau,\mu)}\mapsto (\tau,\mu,\xi,\zeta)\in \mathbb{H}\times \mathbb{C}\times \mathbb{C}\times \mathbb{C}\\
T^*\!T\teich_1\ni \kappa d\tau|_{(\tau,\mu)}+\lambda d\mu|_{(\tau,\mu)}\mapsto (\tau,\mu,\kappa, \lambda)\in \mathbb{H}\times \mathbb{C}\times \mathbb{C}\times \mathbb{C}\\
T\mathcal{Q}_1\ni \alpha\partial_\tau|_{(\tau,q)}+\beta\partial_q|_{(\tau,q)}\mapsto (\tau,q,\alpha,\beta)\in \mathbb{H}\times \mathbb{C}\times \mathbb{C}\times \mathbb{C}\\
T^*\!\mathcal{Q}_1\ni \gamma d\tau|_{(\tau,q)}+\eta dq|_{(\tau,q)}\mapsto (\tau,q,\gamma,\eta)\in \mathbb{H}\times \mathbb{C}\times \mathbb{C}\times \mathbb{C}.
\end{align*}
 
The flip, the switch, and the dualization are 
\begin{align*}
\canoflip_{\teich_1}(\tau,\mu,\xi,\zeta)&=(\tau,\xi,\mu,\zeta)
\\
 \switch_{\teich_1}(\tau,q,\alpha,\beta)&=(\tau,\alpha,\beta,q)
 \\
 \canoflipdagger_{\teich_1}
(\tau,q,\alpha,\beta)&=(\tau,q,\beta,-\alpha).
\end{align*}
From \eqref{eq:Beltrami_torus2}, \Cref{prop:Derivative_of_pairing_and_flip} and \Cref{prop:characterization_pairing}, the pairing functions and the holomorphic symplectic form are
\begin{align*}
\mathcal{P}_{T\teich_1}((\tau,\mu,\xi,\zeta),(\tau,\mu,\kappa,\lambda)
&=\xi\kappa+\zeta\lambda \\
\mathcal{P}_{\mathcal{Q}_1}((\tau,q,\alpha,\beta),(\tau,q,\gamma,\eta)
&=\alpha\gamma+\beta\eta \\
\omega_{\teich_1}((\tau,q,\alpha_1,\beta_1),(\tau,q,\alpha_2,\beta_2)
&=-\dfrac{1}{2}(\alpha_1\beta_2-\beta_1\alpha_2).
\end{align*}
Thus, the pairings are presented as those in the case of trivial bundles. This is caused from the regulation of the trivializations of $T\teich_1$ and $\mathcal{Q}_1$ so that the pairing between $T\teich_1$ and $\mathcal{Q}_1$ is presented as the ordinary Euclidean case as \eqref{eq:Beltrami_torus2}.

The differentials of the $L^1$-norm function $\LOneNorm$ on $\mathcal{Q}_1$ and the Teichm\"uller metric $\teichmullernorm$ on $T\teich_1$ are obtained by
\begin{align*}
\partial \LOneNorm|_{(\tau,q)}&=
-i|q|d\tau+{\rm Im}(\tau)\dfrac{\overline{q}}{|q|}dq
=
\left(\tau,q,
-|q|,{\rm Im}(\tau)\dfrac{\overline{q}}{|q|}
\right) \\
\partial \teichmullernorm|_{(\tau,\mu)}&=
-\dfrac{|\mu|}{4i{\rm Im}(\tau)^2}d\tau+
\dfrac{\overline{\mu}}{4|\mu|{\rm Im}(\tau)}d\mu
=
\left(\tau,\mu,
-\dfrac{|\mu|}{4i{\rm Im}(\tau)^2},
\dfrac{\overline{\mu}}{4|\mu|{\rm Im}(\tau)}
\right).
\end{align*}
Hence, we see that the $L^1$-norm function and the Teichm\"uller metric are of class $C^1$ except for the zero sections. In particular,
\begin{align*}
\partial \LOneNorm^2|_{(\tau,q)}&=
-4i|q|^2{\rm Im}(\tau)d\tau+4\overline{q}{\rm Im}(\tau)^2dq
=
\left(\tau,q,
-4i|q|^2{\rm Im}(\tau),
4\overline{q}{\rm Im}(\tau)^2
\right) \\
\partial \teichmullernorm^2|_{(\tau,\mu)}&=
-\dfrac{|\mu|^2}{4i{\rm Im}(\tau)^3}d\tau+
\dfrac{\overline{\mu}}{4{\rm Im}(\tau)^2}d\mu
=
\left(\tau,\mu,
-\dfrac{|\mu|^2}{4i{\rm Im}(\tau)^3},
\dfrac{\overline{\mu}}{4{\rm Im}(\tau)^2}
\right).
\end{align*}
Therefore, at $(\tau,q)=-2iq\,dz^2\in \mathcal{Q}_\tau$,
we obtain
\begin{align*}
\switch_{\teich_1}\circ (\canoflipdagger_{\teich_1})^{-1}(-\partial \LOneNorm^2|_{(\tau,q)})
&=
\switch_{\teich_1}\circ (\canoflipdagger_{\teich_1})^{-1}
\left(\tau,q,
4i|q|^2{\rm Im}(\tau),
-4\overline{q}{\rm Im}(\tau)^2
\right)
\\
&=
\switch_{\teich_1}
\left(\tau,q,
4\overline{q}{\rm Im}(\tau)^2,
4i|q|^2{\rm Im}(\tau)
\right)
\\
&=
\left(\tau,
4\overline{q}{\rm Im}(\tau)^2,
4i|q|^2{\rm Im}(\tau),q
\right)
\\
\partial \teichmullernorm^2|_{\tb(\tau,q)}
&=
\partial \teichmullernorm^2|_{(\tau,4\overline{q}{\rm Im}(\tau)^2)}
\\
&=
\left(
\tau,
4\overline{q}{\rm Im}(\tau)^2,
4i|q|^2{\rm Im}(\tau),
q
\right).
 \end{align*}
This coincidence implies that the infinitesimal duality holds in this model case. We will discuss in \S\ref{sec:infiniteismal_duality} for general cases.

\section{Levi convexities and CR structures of the unit sphere bundles}
\label{sec:convexity_unit_sphere_bundle}
We will discuss in \S\ref{subsec:tangent_space_to_the_unit_sphere_bundle} the structure of the (real) tangent space of the unit sphere bundle
$$
\mathcal{S}\mathcal{Q}_g=\{q\in \mathcal{Q}_g\mid \LOneNorm(q)=1\}
$$
in $\mathcal{Q}_g$. Here we deal with the simplest case $g=1$. 

The real tangent space $T^{\mathbb{R}}_q\mathcal{S}\mathcal{Q}_1$
at $q\in \mathcal{S}\mathcal{Q}_1$ is presented as
$$
T^{\mathbb{R}}_q\mathcal{S}\mathcal{Q}_1=\{V\in T_q\mathcal{Q}_1\mid
{\rm Re}(\partial \LOneNorm|_q(V))=0\}.
$$
It is easy see that $i\partial_q=(0,i)\in T_q\mathcal{Q}_1$ is in the real tangent space $T^{\mathbb{R}}_q\mathcal{S}\mathcal{Q}_1$.
In this case, the horizontal subspace $T_{q}^H\mathcal{S}\mathcal{Q}_1$ coincides with the maximal complex subspace of the (real) tangent space $T^{\mathbb{R}}_q\mathcal{S}\mathcal{Q}_1$ and
$$
T_{q}^H\mathcal{S}\mathcal{Q}_1=
\left\{
a({\rm Im}(\tau),iq)\mid a\in \mathbb{C}
\right\}
$$
and
$$
T^{\mathbb{R}}_q\mathcal{S}\mathcal{Q}_1=T_{q}^H\mathcal{S}\mathcal{Q}_1
\oplus \{t(0,i)\mid t\in \mathbb{R}\}.
$$

\subsection{Sign of the Levi form}
\label{subsec:Sign_of_the_Levi_form}
For making a conjectural picture of the structure of the sphere bundle, we continue to discuss with this simplest case (cf. \Cref{conj:Levi-L1}).
The complex Hessian (Levi matrix) of the $L^1$-norm function
is 
$$
{\rm Levi}(\LOneNorm)|_q=
\begin{bmatrix}
0 & -i\dfrac{q}{2|q|} \\
i\dfrac{\overline{q}}{2|q|} & \dfrac{{\rm Im}(\tau)}{2|q|}
\end{bmatrix}
=\dfrac{1}{2|q|}\begin{bmatrix}
0 & -iq \\
i \overline{q} & {\rm Im}(\tau)
\end{bmatrix}.
$$
This defines the Hermitian form on $T_q\mathcal{Q}_1$. Notice that
\begin{align*}
(0,i){\rm Levi}(\LOneNorm)|_q\begin{pmatrix} 0 \\ -i\end{pmatrix}
&=\dfrac{{\rm Im}(\tau)}{2|q|} >0 \\
({\rm Im}(\tau),iq){\rm Levi}(\LOneNorm)|_q\begin{pmatrix} {\rm Im}(\tau) \\ -i\overline{q}\end{pmatrix}
&=-\dfrac{|q|}{2}{\rm Im}(\tau) <0.
\end{align*}
Thus, the Levi form of $\LOneNorm$ is negative on the horizontal subspace $T^H_q\mathcal{S}\mathcal{Q}_1$ and positive on the vertical space.
We notice that the eigenvalues of the Levi form is $\lambda_{\pm}=({\rm Im}(\tau)\pm \sqrt{{\rm Im}(\tau)^2+4|q|^2})/2$, and the corresponding eigenspaces are spanned by
$$
V_\pm =\begin{pmatrix}
-2iq \\
{\rm Im}(\tau)^2\pm 4|q|^2
\end{pmatrix}.
$$ 
In particular, the eigenspace for the negative eigenvalue is not a subspace in the real tangent space to the unit sphere bundle.

On the other hand,
the complex Hessian (Levi matrix) of the Teichm\"uller metric $\teichmullernorm$ is 
$$
{\rm Levi}(\teichmullernorm)|_{(\tau,\mu)}=
\begin{bmatrix}
\dfrac{|\mu|}{4{\rm Im}(\tau)^3}
&
\dfrac{i\mu}{8|\mu|{\rm Im}(\tau)^2}
\\
-\dfrac{i\overline{\mu}}{8|\mu|{\rm Im}(\tau)^2}
&
\dfrac{1}{8|\mu|{\rm Im}(\tau)}
\end{bmatrix}
$$
which is positive definite. Hence, the Teichm\"uller metric is strictly plurisubharmonic on $T\teich_1^\times =T\teich_1-\{0\}$, and the unit sphere bundle $\mathcal{S}T\teich_1$ is pseudoconvex at every point.

\subsection{CR structures on the unit tangent bundles}
\label{subsec:CR-tori}
If we restrict the differential of the Teichm\"uller Beltrami map $\tb_0$ to the horizontal subspace $T^H_q\mathcal{S}\mathcal{Q}_1$,
$$
D\tb_0|_{q_0}\left({\rm Im}(\tau)\partial_\tau+iq\partial q\right)
={\rm Im}(\tau)\partial_\tau+(-4i\overline{q}{\rm Im}(\tau)^2)\partial_\mu.
$$
This means that the restriction of $D\tb_0|_{q_0}$ is $\mathbb{C}$-linear (compare with \Cref{prop:C-linear-tb}), and the image of $T^H_q\mathcal{S}\mathcal{Q}_1$ coincides with the maximal complex subspace in the real tangent space $T^\mathbb{R}_{\tb_0(q)}\mathcal{S}T\teich_1$ of the unit sphere bundle with respect to the Teichm\"uller metric.
Thus, the subbundle
$$
\cup_{q\in \mathcal{S}\mathcal{Q}_1}T^H_q\mathcal{S}\mathcal{Q}_1
$$
defines an (abstract) CR structure on $\mathcal{S}\mathcal{Q}_1$ and the Teichm\"uller Beltrami map $\tb_0$ is a CR-isomorphism between unit sphere bundles (cf. \cite[\S9]{MR1211412}). See \Cref{prob:2} in \S\ref{sec:onjectural_picture2}.

%
%


\chapter{Models of Spaces and Pairings}
\label{Chap:model_pairing}

\section{Model of second order infinitesimal spaces}
One of the purpose of this paper is to give models of second order infinitesimal spaces of the Teichm\"uller space $\teich_g$ in view of the theory of moduli of Riemanns surfaces.
To this end, we first formulate the model of the second order infinitesimal spaces.

Let $M$ be a complex manifold of dimension $n$.
Let $p\in M$. 
In this paper,
a \emph{model of the second order infinitesimal spaces of $M$ at $p\in M$} consisting of
\begin{itemize}
\item
collections of complex vector spaces of dimension $2n$:
$$
\{V_v\}_{v\in T_pM},
\{V^*_v\}_{v\in T_pM},
\{W_\omega\}_{\omega\in T^*_pM},
\{W^*_\omega\}_{\omega\in T^*_pM},
$$
where $V_v^*$ and $W_\omega^*$ are dual spaces of $V_v$ and $W_\omega$, respectively;
\item
cor $\omega\in T^*_pM$, each $W_\omega$ admits a complex symplectic form 
$\mathbf{\Omega}_\omega$;
\item
maps : the filp $A_p\colon V_p\to  V_p$, the switch $B_p\colon W_p\to V^*_p$, and the dualization $C_\omega\colon W_\omega\to W^*_\omega$, 
where 
\begin{align*}
V_p
&=\{(v,X)\mid v\in T_pM, X\in V_v\}, \\
V^*_p
&=\{(v,Z)\mid v\in T_pM, Z\in V^*_v\},\\
W_p
&=\{(\omega,\Omega)\mid \omega\in T^*_pM, \Omega\in W_\omega\};
\end{align*}
\item
Commutative diagrams. In each diagram, horizontal lines are all exact. :
$$
\begin{CD}
0@>>> T_pM @>{\verticalinc{M}{v}}>> T_vTM @>{D\Pi_M}>> T_pM @>>> 0  \\
@. @| @V{\Phi_v}VV @| \\
0@>>> T_pM @>{vi_v}>> V_v @>{Dpr_v}>> T_pM @>>> 0 
\end{CD}
$$
$$
\begin{CD}
0@>>> T^*_pM @>{(\Pi^\dagger_{M})^*}>> T^*_v\!TM @>{\verticalproj{M}{v}}>> T^*_pM @>>> 0 \\
@. @| @V{\Phi^\dagger_v}VV @| \\
0@>>> T^*_pM @>{pr^*_v}>> V^*_v @>{vp_v}>> T^*_pM @>>> 0
\end{CD}
$$
$$
\begin{CD}
0 @>>> T_p^*M @>{\verticalincdag{M}{\omega}}>> T_\omega T^*\!M @>{D\Pi^\dagger_M}>> T_pM @>>> 0 \\
@. @| @V{\Psi_\omega}VV @| \\
0@>>> T^*_pM @>{vi^\dagger_\omega}>> W_\omega @>{Dpr^\dagger_\omega}>> T_pM @>>> 0
\end{CD}
$$
$$
\begin{CD}
0@>>> T_p^*M @>{(\Pi^\dagger_M)^*}>> T^*_\omega T^*\!M @>{\verticalprojdag{M}{\omega}}>> T_pM @>>> 0 \\
@. @| @V{\Psi^\dagger_\omega}VV @| \\ 
0@>>> T^*_pM @>{pr^{\dagger,*}_\omega}>> W^*_\omega @>{vp^\dagger_\omega}>> T_pM @>>> 0
\end{CD}
$$
(Notice that in each commutative diagram, the vertical map in the middle is (automatically) an isomorphism).
%
\end{itemize}
with the properties that
\begin{enumerate}
\item
$\Phi^\dagger_v$ and $\Psi^\dagger_\omega$ are duals of $\Phi_v$ and $\Psi_\omega$, respectively. Namely,
\begin{align*}
\mathcal{P}_{TM}(V_1,V_2) &=\mathcal{P}_{V_v}(\Phi_v(V_1), \Phi^\dagger_v(V_2)) \\
\mathcal{P}_{T^*\!M}(\Omega_1,\Omega_2) &=\mathcal{P}_{W_\omega}(\Psi_\omega(\Omega_1), \Psi^\dagger_\omega(\Omega_2))
\end{align*}
for $V_1\in T_vTM$, $V_2\in T^*_v\!TM$, $\Omega_1\in T_\omega T^*\!M$ and $\Omega_2\in T^*_\omega\!T^*\!M$,
where $\mathcal{P}_{V_v}$ (resp. $\mathcal{P}_{W_\omega}$) is the dual pairing between $V_v$ and $V^*_v$ (resp. $W_\omega$ and $W^*_\omega$);
\item
$A_p\circ A_p=id_{V_p}$;
\item
for $V\in V_v$. if $u=Dpr_v(V)$, $A_p(V)\in V_u$ and $Dpr_u\circ A_p(V)=v$;
\item
let $v\in T_pM$ and $V\in V_v$. Set $u=Dpr_v(V)$. For any surjective $\mathbb{C}$-linear mappings $L_u\colon V_u\to T_pM$ and $L_v\colon V_v\to T_pM$ with $L_u\circ vi_u=L_v\circ vi_v=id_{T_pM}$, $L_u\circ A_p(V)=L_v(V)$;
\item
when $p$ varies, the derivative of the pairing function $\mathcal{P}_M$ on $TM\oplus T^*\!M$ in the direction $(V_1,V_2)\in T_vTTM\oplus T_\omega T^*\!M$ at $(v,\omega)\in TM\oplus T^*\!M$ satisfies
$$
D\mathcal{P}_M|_{(v,\omega)}[V_1,V_2]=\mathcal{P}_{V_u}(A_p(V_1),B_p(V_2))
$$
where $u=Dpr_v(V_1)=Dpr^\dagger_\omega(V_2)$; and
\item
for $\Omega_1$, $\Omega_2\in T_\omega T^*\!M$,
$$
\mathcal{P}_{W_\omega}(\Psi_\omega(\Omega_1),C_\omega(\Psi_\omega(\Omega_2)))
=2\mathbf{\Omega}_\omega(\Psi_\omega(\Omega_2),\Psi_\omega(\Omega_1)).
$$
\end{enumerate}
In our case,
$M=\teich_g$ and
we will denote the model spaces by
\begin{align*}
V_v&=\mathbb{T}_{[Y]}[\mathcal{U}]=\mathbb{T}^{Dol}_{[Y]}[\mathcal{U}]
=\mathbb{T}^{Bel}_{[\nu]}[\mathcal{U}]
\\
V^*_v&=\mathbb{T}^\dagger_{[Y]}[\mathcal{U}]
\\
W_\omega&={\bf H}^1(\mathcal{U},\mathbb{L}_{q_0}) \\
W^*_\omega&={\bf H}^{1,\dagger}(\mathcal{U},\mathbb{L}_{q_0})
\end{align*}
for $p=x_0=(M_0,f_0)\in\teich_g$, $v=[Y]=[\nu]\in H^1(\mathcal{U},\Theta_{M_0})=T_{x_0}\teich_g$, $\omega=q_0\in \mathcal{Q}_{x_0}$ and an appropriate covering $\mathcal{U}$ of $M_0$,
defined at \Cref{def:model_space}, 
\Cref{def:model_space_dol_independent},
\Cref{def:dol_pre_Bel}, 
\S\ref{sec:Model_Tstar}, 
\S\ref{sec:tangent-Qg},
and
\S\ref{sec:ModelTTster_TstarTstar}
later.
Models of pairings
\begin{align*}
\mathcal{P}_{\mathbb{TT}}
&\colon \mathbb{T}_{[Y]}[\mathcal{U}]\times \mathbb{T}^\dagger_{[Y]}[\mathcal{U}]\to \mathbb{C} \\
\paircot
&\colon {\bf H}^1(\mathcal{U},\mathbb{L}_{q_0})\times {\bf H}^{1,\dagger}(\mathcal{U},\mathbb{L}_{q_0})\to \mathbb{C}
\end{align*}
are defined in \S\ref{subsec:model_pairing_TT_TstarT} and \S\ref{subsec:model_pairing_cotangent}. The model of the holomorphic symplectic form
$$
\omega_{\mathcal{Q}_g}
\colon {\bf H}^1(\mathcal{U},\mathbb{L}_{q_0})\times {\bf H}^1(\mathcal{U},\mathbb{L}_{q_0})\to \mathbb{C}
$$
is given in \S\ref{subsec:Kawai} by reformulating Kawai's symplectic form (\cite{MR1386110}) in our setting.

\section{Lie bracket and Lie derivative}
\label{sec:Lie_derivative}
Let $(U,z)$ be an analytic coordinate chart of $M_0$ and $p,q\in \mathbb{N}\cup\{0\}$. 
For $\xi\in H^0(U,\mathcal{A}^{0,p}(\Theta_{M_0}))$, $\eta\in H^0(U,\mathcal{A}^{0,q}(\Theta_{M_0}))$, we define the \emph{Lie bracket} of $\xi_1$ and $\xi_2$ by
\begin{equation}
\label{eq:Lie-derivative}
[\xi,\eta]=\left(\xi(z)\eta_z(z)-\xi_z(z)\eta(z)\right)d\overline{z}^{p+q}\otimes \partial_{z}\in 
H^0(U,\mathcal{A}^{0,p+q}(\Theta_{M_0}))
\end{equation}
(cf. \cite[p.265]{MR815922}).
For $\xi$, $\eta\in \Gamma(U,\mathcal{A}^{0,0}(\Theta_{M_0}))$, we can see that
$$
\overline{\partial}[\xi,\eta]=[\overline{\partial}\xi,\eta]+[\xi,\overline{\partial}\eta]
\in \Gamma(U,\mathcal{A}^{0,1}(\Theta_{M_0})).
$$
For $p\ge 0$ and $r,s\in \mathbb{Z}$,
we define the \emph{Lie derivative} 
$$
L_{\cdot}(\cdot)\colon \Gamma(U,\mathcal{A}^{0,r}(\Omega_{M_0}^{\otimes p}))\oplus
\Gamma(U,\mathcal{A}^{0,s}(\Theta_{M_0}))
\to
\Gamma(U,\mathcal{A}^{0,r+s}(\Omega_{M_0}^{\otimes p}))
$$
by
\begin{equation}
\label{eq:lie_derivative_definition}
L_{\xi}(\omega)(z)dz^{p}d\overline{z}^{r+s}
=
(\xi(z)\omega_z(z)+p\,\xi_z(z)\omega(z))dz^pd\overline{z}^{r+s},
\end{equation}
where $\omega=\omega(z)dz^pd\overline{z}^r$ and  $\xi=\xi(z)\dfrac{d\overline{z}^s}{dz}$.
We often use the following formula in the following argument:
For $Q\in \Gamma(U,\mathcal{A}^{0,0}(\Omega_{M_0}^{\otimes 2}))$
and $\xi$, $\eta \in \Gamma(U,\mathcal{A}^{0,0}(\Omega_{M_0}))$,
\begin{align}
&\overline{\partial}L_\xi(Q)=L_{\overline{\partial}\xi}(Q)+L_{\xi}(\overline{\partial}Q)
\label{eq:Lie-derivative_0}
\\
&L_\xi(\eta Q) =L_\eta(\xi Q)
=\partial(\xi \eta Q)
\label{eq:Lie-derivative_01}
\\
&L_\xi(\eta Q)-\xi L_\eta(Q)+[\xi,\eta]Q=0
\label{eq:Lie-derivative_1}
\\
&L_{\xi}(L_\eta(Q))-L_\eta(L_\xi(Q))=L_{[\xi,\eta]}(Q).
\label{eq:Lie-derivative_2}
\end{align}

\begin{convention}
\label{convention:2}
As discussed in \Cref{convention:1},
for $\mu\in \Gamma(U,\mathcal{A}^{0,1}(\Theta_M))$ and $Q\in \Gamma(U,\mathcal{A}^{0,0}(\Omega_M^{\otimes 2}))$, we set
$$
\mu Q=\mu(z)Q(z)d\overline{z}\wedge dz.
$$
\end{convention}

For $\xi,\eta\in \Gamma(U,\mathcal{A}^{0,0}(\Theta_M))$, $\mu\in \Gamma(U,\mathcal{A}^{0,1}(\Theta_M))$ and $Q\in \Gamma(U,\mathcal{A}^{0,0}(\Omega_M^{\otimes 2}))$, 
$\xi Q=\xi(z)Q(z)dz\in \Gamma(U,\mathcal{A}^{0,0}(\Omega_M))$ and 
$$
\xi \mu Q=\mu \xi Q=\xi(z)\mu(z)Q(z)d\overline{z}\in \Gamma(U,\mathcal{A}^{0,1}).
$$
Hence
\begin{align}
&d(\xi Q)=\overline{\partial}(\xi Q)
=(\xi_{\overline{z}}(z)Q(z)+\xi Q_{\overline{z}}(z))d\overline{z}\wedge dz
=(\overline{\partial}\xi)Q+\xi (\overline{\partial}Q)
\label{eq:Lie-derivative_02}\\
&d(\xi \mu Q)=\partial (\xi \mu Q)=(\xi_z\mu Q+\xi \mu_z Q+\xi \mu Q_z)dz\wedge d\overline{z}
=-L_\xi(\mu Q)
\label{eq:Lie-derivative_03}
\\
&L_\xi(\mu Q)-\mu L_\xi(Q)+[\mu,\xi]Q=0
\label{eq:Lie-derivative_04}
\end{align}

\section{Model spaces of $T_{[Y]}T\teich_g$}
\label{sec:Model_space_definition}
Let $M_0$ be a closed Riemann surface of genus $g$.
In this section, we fix a locally finite covering $\mathcal{U}=\{U_i\}_{i\in I}$ of $M_0$.

\subsection{Operators}
For $X=\{X_{ij}\}_{i,j\in I}$, $Y=\{Y_{ij}\}_{i,j\in I}\in Z^1(\mathcal{U},\Theta_{M_0})$,
we define (the bilinarization of) the \emph{primary obstruction} $\zeta(X,Y)\in Z^2(\mathcal{U},\Theta_{M_0})$ for $X$ and $Y$ by
$$
\zeta(X,Y)_{ijk}=\dfrac{1}{2}([X_{ij},Y_{jk}]+[Y_{ij},X_{jk}])
$$
on $U_{i}\cap U_{j}\cap U_{k}$ (cf. \cite[\S5.1]{MR815922}). 
By definition,
$$
\zeta(X,Y)=\zeta(Y,X)
$$
for $X$, $Y\in Z^1(\mathcal{U},\Theta_{M_0})$.
We also define
\begin{align*}
&S\colon  C^0(\mathcal{U}, \mathcal{A}^{0,0}(\Theta_{M_0}))
\oplus Z^1(\mathcal{U}, \Theta_{M_0})\to 
C^1(\mathcal{U}, \Theta_{M_0}) \\
&K,K'\colon  C^0(\mathcal{U}, \mathcal{A}^{0,0}(\Theta_{M_0}))\oplus 
Z^1(\mathcal{U}, \Theta_{M_0})\to 
C^1(\mathcal{U}, \Theta_{M_0})
\end{align*}
by 
\begin{align*}
S(\xi,Y)_{ij} & =\dfrac{1}{2}[\xi_i+\xi_j,Y_{ij}] \\
K(\alpha,Y)_{ij} & = [\alpha_i,Y_{ij}] \\
K'(\alpha,Y)_{ij} & = [\alpha_j,Y_{ij}].
\end{align*}
Then, for $\alpha\in C^0(\mathcal{U},\Theta_{M_0})$, 
\begin{align}
\delta(S(\alpha,Y))
&=\zeta(\delta \alpha,Y)=\zeta(Y,\delta\alpha)
\label{eq:S-L-zeta1}
\\
K(\alpha,Y)
& =-\dfrac{1}{2}[\partial \alpha,Y]+S(\alpha,Y).
\label{eq:K-bra-S}
\\
K'(\alpha,Y)
& =\dfrac{1}{2}[\partial \alpha,Y]+S(\alpha,Y).
\label{eq:K_prime-bra-S}
\end{align}
(cf. \cite[\S5.1]{MR815922}). For $X=\{X_{ij}\}_{i,j\in I}\in C^1(\mathcal{U},\Theta_{M_0})$,
we set
\begin{equation}
\label{eq:filp}
X^*=\{X_{ji}\}_{i,j\in I}.
\end{equation}

\subsection{Model spaces $\mathbb{T}_Y[\mathcal{U}] $ for $Y\in Z^1(\mathcal{U},\Theta_{M_0})$}
\label{subsec:model_space_definition}
Henceforth, we fix a $1$-cocycle $Y=\{Y_{ij}\}\in Z^1(\mathcal{U},\Theta_{M_0})$.
We consider the following $\mathbb{C}$-linear maps
\begin{align}
&D_0^{Y}\colon  C^0(\mathcal{U},\Theta_{M_0})^{\oplus 2}
\to Z^1(\mathcal{U},\Theta_{M_0})\oplus C^1(\mathcal{U},\Theta_{M_0}) 
\label{eq:D_0_pre}
\\
&D_1^Y\colon  Z^1(\mathcal{U},\Theta_{M_0})\oplus C^1(\mathcal{U},\Theta_{M_0})\to C^1(\mathcal{U},\Theta_{M_0})\oplus C^2(\mathcal{U},\Theta_{M_0})
\label{eq:D_1_pre}
\end{align}
defined by
\begin{align}
D^Y_0(\alpha,\beta)&=(\delta \alpha, \delta \beta+K(\alpha,Y)) 
\label{eq:D_0}
\\
D^Y_1(X,\dot{Y})&=
\left(
\dot{Y}+\dot{Y}^*+[X,Y],
\delta \left(\dot{Y}+\dfrac{1}{2}[X,Y]\right)-\zeta(X,Y)
\right),
\label{eq:D_1}
\end{align}
where $\alpha=\{\alpha_i\}_i$, $\beta=\{\beta_i\}_i\in C^0(\mathcal{U},\Theta_{M_0})$, $X=\{X_{ij}\}$, $Y=\{Y_{ij}\}\in Z^1(\mathcal{U},\Theta_{M_0})$, $\dot{Y}=\{\dot{Y}_{ij}\}_{i,j}\in C^1(\mathcal{U},\Theta_{M_0})$ and 
$$
[X,Y]=\{[X_{ij},Y_{ij}]\}_{i,j}\in C^1(\mathcal{U},\Theta_{M_0}).
$$
We claim

\begin{lemma}
\label{lem:D_0-D_1-exact}
$D^Y_1\circ D^Y_0=0$.
\end{lemma}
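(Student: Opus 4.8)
The statement $D^Y_1 \circ D^Y_0 = 0$ is a cochain-complex identity, so the plan is a direct computation: apply $D^Y_0$ to a pair $(\alpha,\beta) \in C^0(\mathcal{U},\Theta_{M_0})^{\oplus 2}$, feed the result into $D^Y_1$, and check that both components vanish. Write $D^Y_0(\alpha,\beta) = (X,\dot{Y})$ with $X = \delta\alpha$ and $\dot{Y} = \delta\beta + K(\alpha,Y)$. Then $D^Y_1(X,\dot{Y})$ has two components, and I treat them in turn.

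For the first component, I must show $\dot{Y} + \dot{Y}^* + [X,Y] = 0$ in $C^1(\mathcal{U},\Theta_{M_0})$. Here $\dot{Y}^* = (\delta\beta)^* + K(\alpha,Y)^*$. Since $\delta\beta = \{\beta_j - \beta_i\}$ is antisymmetric, $(\delta\beta)^* = -\delta\beta$, so the $\delta\beta$-terms cancel. It remains to see $K(\alpha,Y) + K(\alpha,Y)^* + [X,Y] = 0$, i.e. $[\alpha_i,Y_{ij}] + [\alpha_j,Y_{ji}] + [(\delta\alpha)_{ij},Y_{ij}] = 0$ on $U_i \cap U_j$. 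Using $Y_{ji} = -Y_{ij}$ (as $Y$ is a $1$-cocycle, hence antisymmetric) and $(\delta\alpha)_{ij} = \alpha_j - \alpha_i$, this becomes $[\alpha_i,Y_{ij}] - [\alpha_j,Y_{ij}] + [\alpha_j - \alpha_i, Y_{ij}] = 0$ by bilinearity of the bracket — immediate. Equivalently one can read this off from $K(\alpha,Y) = -\tfrac12[\partial\alpha,Y] + S(\alpha,Y)$ (formula \eqref{eq:K-bra-S}) and $K'(\alpha,Y) = \tfrac12[\partial\alpha,Y] + S(\alpha,Y)$ (formula \eqref{eq:K_prime-bra-S}), noting $K(\alpha,Y)^* = K'(\alpha,Y)$ and that $[\partial\alpha,Y]$ is symmetrized away; then $K(\alpha,Y)+K(\alpha,Y)^* = 2S(\alpha,Y) = [\{\alpha_i+\alpha_j\},Y] = -[\delta\alpha,Y] + [\{2\alpha_j\}\text{-type terms}]$, and a short manipulation gives the claim. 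I will present whichever of these two routes is cleanest.

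For the second component, I must show $\delta\bigl(\dot{Y} + \tfrac12[X,Y]\bigr) - \zeta(X,Y) = 0$ in $C^2(\mathcal{U},\Theta_{M_0})$. Substituting $\dot{Y} = \delta\beta + K(\alpha,Y)$ and $X = \delta\alpha$: the term $\delta(\delta\beta) = 0$ since $\delta^2 = 0$. So I need $\delta\bigl(K(\alpha,Y) + \tfrac12[\delta\alpha,Y]\bigr) = \zeta(\delta\alpha,Y)$. By \eqref{eq:K-bra-S}, $K(\alpha,Y) + \tfrac12[\delta\alpha,Y] = -\tfrac12[\partial\alpha,Y] + S(\alpha,Y) + \tfrac12[\delta\alpha,Y]$; here $[\partial\alpha,Y]$ should be understood so that $-\tfrac12[\partial\alpha,Y] + \tfrac12[\delta\alpha,Y]$ collapses appropriately (the $\partial\alpha = \{\alpha_i - \alpha_j\}$ versus $\delta\alpha = \{\alpha_j - \alpha_i\}$ bookkeeping), leaving $\delta(S(\alpha,Y))$, which equals $\zeta(\delta\alpha,Y)$ by formula \eqref{eq:S-L-zeta1}. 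That is exactly the required identity.

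The main obstacle — really the only one — is sign/convention bookkeeping: keeping straight the definitions of $\partial\alpha$ versus $\delta\alpha$, the antisymmetry conventions $Y_{ji} = -Y_{ij}$ and $X^* = \{X_{ji}\}$, and the factors of $\tfrac12$ in $S$, $K$, $K'$, and $\zeta$. Once these are pinned down, every step reduces to bilinearity and antisymmetry of the Lie bracket together with the three already-established identities \eqref{eq:S-L-zeta1}, \eqref{eq:K-bra-S}, \eqref{eq:K_prime-bra-S} and $\delta^2 = 0$. I would organize the write-up as two short lemmas (one per component of $D^Y_1 \circ D^Y_0$), each a two- or three-line calculation citing those formulas, and conclude.
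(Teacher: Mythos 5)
Your proposal is correct and follows essentially the same route as the paper's proof: a direct componentwise computation in which the first component vanishes by antisymmetry of the cocycle $Y$ and bilinearity of the bracket, and the second reduces via $\delta^2=0$ and the identity $K(\alpha,Y)_{ij}+\tfrac12[(\delta\alpha)_{ij},Y_{ij}]=\tfrac12[\alpha_i+\alpha_j,Y_{ij}]=S(\alpha,Y)_{ij}$ to $\delta(S(\alpha,Y))=\zeta(\delta\alpha,Y)$, i.e.\ \eqref{eq:S-L-zeta1}. One small caution on your ``equivalently'' route for the first component: with the paper's conventions $K(\alpha,Y)^*_{ij}=[\alpha_j,Y_{ji}]=-K'(\alpha,Y)_{ij}$ rather than $+K'(\alpha,Y)_{ij}$, so your direct computation is the one to keep.
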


\begin{proof}
Let $\alpha=\{\alpha_i\}_i$, $\beta=\{\beta_i\}_i\in C^0(\mathcal{U},\Theta_{M_0})$.
The $(i,j)$-component of the first coordinate of $D^Y_1\circ D^Y_0(\alpha,\beta)$
satisifes
$$
(\beta_j-\beta_i)+[\alpha_i,Y_{ij}]+(\beta_i-\beta_j)+[\alpha_j,Y_{ji}]
+[\alpha_j-\alpha_i,Y_{ij}]=0.
$$
From (b) of \Cref{prop:linear-map-L} and \eqref{eq:S-L-zeta1},
the $(i,j,k)$-component of the second coordinate of $D^Y_1\circ D^Y_0(\alpha,\beta)$ satisfies
\begin{align*}
&[\alpha_i,Y_{ij}]+[\alpha_j,Y_{jk}]+[\alpha_k,Y_{ki}] \\
&+\dfrac{1}{2}[\alpha_j-\alpha_i,Y_{ij}]+\dfrac{1}{2}[\alpha_k-\alpha_j,Y_{jk}]+\dfrac{1}{2}[\alpha_i-\alpha_k,Y_{ki}] -\zeta(\delta \alpha,Y)\\
&=S(\alpha,Y)_{ij}+S(\alpha,Y)_{jk}+S(\alpha,Y)_{ki} - \delta(S(\alpha,Y))_{ijk} =0
\end{align*}
which implies what we wanted.
\end{proof}

\begin{definition}[Model spaces for cocycles]
\label{def:model_space_for_cocycle}
The \emph{model space of the double tangent space for a cocycle $Y\in Z^1(\mathcal{U},\Theta_{M_0})$} by
$$
\mathbb{T}_Y[\mathcal{U}] = \ker(D^Y_1)/{\rm Im}(D^Y_0).
$$
We denote by $\tv{X,\dot{Y}}{Y}$ the equivalence class of $(X,\dot{Y})\in \ker(D^Y_1)$ in $\mathbb{T}_Y[\mathcal{U}]$. 
\end{definition}

\subsection{Vertical spaces in the model space}
\label{subsec:vertical_space}
We discuss the vertical spaces in the model space.

\begin{definition}[Vertical spaces]
\label{def:vertical_space_TY}
The \emph{vertical spaces} $\mathbb{T}^{V}_Y[\mathcal{U}]$
 in $\mathbb{T}_Y[\mathcal{U}]$
is defined by
\begin{align}
\mathbb{T}^{V}_Y[\mathcal{U}]=\left\{
\tv{X,\dot{Y}}{Y}\in \mathbb{T}_{Y}\mid \mbox{$[X]=0$ in $H^1(\mathcal{U},\Theta_{M_0})$}
\right\}
\label{eq:vertical_space}
\end{align}
\end{definition}

\begin{proposition}[Vertical space]
\label{prop:vertical_space}
The verical space $\mathbb{T}^{V}_Y[\mathcal{U}]$ is canonically isomorphic to $H^1(\mathcal{U},\Theta_{M_0})$.
Indeed, 
\begin{equation}
\label{eq:vertical_isomorphism}
\verticalincmodel{Y}\colon
\mathbb{T}^{V}_Y[\mathcal{U}]\ni \tv{\delta \alpha,\dot{Y}}{Y}\mapsto [\dot{Y}-K(\alpha,Y)]\in H^1(\mathcal{U},\Theta_{M_0})
\end{equation}
is a well-defined $\mathbb{C}$-linear isomorphism, where $\alpha\in C^0(\mathcal{U},\Theta_{M_0})$. 
\end{proposition}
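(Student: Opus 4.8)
The plan is to verify in turn that $\verticalincmodel{Y}$ is well defined on $\mathbb{T}^V_Y[\mathcal{U}]$, $\mathbb{C}$-linear, injective, and surjective. The one genuinely computational input, which I would isolate first, is the claim that for $(X,\dot{Y})\in\ker(D^Y_1)$ with $X=\delta\alpha$ the $1$-cochain $\dot{Y}-K(\alpha,Y)$ is a cocycle. This follows because the vanishing of the second coordinate of $D^Y_1(X,\dot{Y})$ reads $\delta\!\bigl(\dot{Y}+\tfrac12[X,Y]\bigr)=\zeta(X,Y)=\zeta(\delta\alpha,Y)$, which by \eqref{eq:S-L-zeta1} equals $\delta\bigl(S(\alpha,Y)\bigr)$; since $S(\alpha,Y)_{ij}-K(\alpha,Y)_{ij}=\tfrac12[\alpha_j-\alpha_i,Y_{ij}]=\tfrac12[X,Y]_{ij}$ by the definitions of $S$ and $K$ (cf.\ \eqref{eq:K-bra-S}), i.e.\ $S(\alpha,Y)=K(\alpha,Y)+\tfrac12[X,Y]$, this gives $\delta\bigl(\dot{Y}-K(\alpha,Y)\bigr)=0$. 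Hence $[\dot{Y}-K(\alpha,Y)]\in H^1(\mathcal{U},\Theta_{M_0})$ makes sense.

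For well-definedness I would then argue as follows. Given a vertical class, choose a representative $(X,\dot{Y})\in\ker(D^Y_1)$; by \eqref{eq:vertical_space} we may take $[X]=0$, so $X=\delta\alpha$ for some $\alpha\in C^0(\mathcal{U},\Theta_{M_0})$, and since $g\ge 2$ forces $Z^0(\mathcal{U},\Theta_{M_0})=H^0(M_0,\Theta_{M_0})=0$, this $\alpha$ is uniquely determined by $X$. Changing the representative by an element $D^Y_0(a,b)=(\delta a,\delta b+K(a,Y))$ of $\mathrm{Im}(D^Y_0)$ then replaces $\alpha$ by $\alpha+a$ (again by uniqueness), and by $\mathbb{C}$-linearity of $K(\cdot,Y)$ in the first slot it alters $\dot{Y}-K(\alpha,Y)$ exactly by the coboundary $\delta b$; hence its class in $H^1(\mathcal{U},\Theta_{M_0})$ is unchanged. $\mathbb{C}$-linearity of $\verticalincmodel{Y}$ is then immediate, as $\delta$, $K(\cdot,Y)$, and passage to cohomology classes are $\mathbb{C}$-linear.

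Injectivity is direct: if $\verticalincmodel{Y}(\tv{\delta\alpha,\dot{Y}}{Y})=0$ then $\dot{Y}-K(\alpha,Y)=\delta b$ for some $b\in C^0(\mathcal{U},\Theta_{M_0})$, so by \eqref{eq:D_0} one has $(X,\dot{Y})=(\delta\alpha,\delta b+K(\alpha,Y))=D^Y_0(\alpha,b)\in\mathrm{Im}(D^Y_0)$, i.e.\ the class $\tv{\delta\alpha,\dot{Y}}{Y}$ vanishes. For surjectivity, take $[\dot{Z}]\in H^1(\mathcal{U},\Theta_{M_0})$ with $\dot{Z}\in Z^1(\mathcal{U},\Theta_{M_0})$; replacing $\dot{Z}$ by its alternation $\tfrac12(\dot{Z}-\dot{Z}^*)$ (notation \eqref{eq:filp}), which still lies in $Z^1(\mathcal{U},\Theta_{M_0})$ because $(\delta\dot{Z}^*)_{ijk}=(\delta\dot{Z})_{kji}=0$ and represents the same cohomology class (the alternation operator being chain-homotopic to the identity), we may assume $\dot{Z}^*=-\dot{Z}$. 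Then $(0,\dot{Z})\in\ker(D^Y_1)$, since $D^Y_1(0,\dot{Z})$ has first coordinate $\dot{Z}+\dot{Z}^*=0$ and second coordinate $\delta\dot{Z}-\zeta(0,Y)=0$; as $K(0,Y)=0$, the vertical class $\tv{0,\dot{Z}}{Y}$ satisfies $\verticalincmodel{Y}(\tv{0,\dot{Z}}{Y})=[\dot{Z}]$.

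The main (and really the only) obstacle is the first step: pinning down exactly how the kernel condition for $D^Y_1$ interacts with \eqref{eq:S-L-zeta1} and \eqref{eq:K-bra-S} so that $\dot{Y}-K(\alpha,Y)$ comes out a cocycle and, correlatively, that a change of representative contributes only a coboundary. Everything else — linearity, injectivity, and the antisymmetrization used for surjectivity — is routine \v{C}ech bookkeeping once the standing fact $H^0(M_0,\Theta_{M_0})=0$ for $g\ge 2$, which is what makes the choice of $\alpha$ unambiguous, is in hand.
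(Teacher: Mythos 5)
Your proof follows the same route as the paper's: the cocycle computation via \eqref{eq:S-L-zeta1} and \eqref{eq:K-bra-S}, well-definedness via the uniqueness of $\alpha$ coming from $H^0(M_0,\Theta_{M_0})=0$, injectivity by exhibiting a representative in the kernel as $D^Y_0(\alpha,b)$, and surjectivity via the class of $(0,W)$. One omission is worth fixing: under the paper's convention the elements of $Z^1(\mathcal{U},\Theta_{M_0})$ are skew-symmetric (the paper freely uses $Y_{ji}=-Y_{ij}$ for cocycles, and $C^1$ is \emph{not} assumed alternating, otherwise the first coordinate of $D^Y_1$ would be meaningless), so to conclude $\dot{Y}-K(\alpha,Y)\in Z^1(\mathcal{U},\Theta_{M_0})$ you must also verify $(\dot{Y}-K(\alpha,Y))_{ji}=-(\dot{Y}-K(\alpha,Y))_{ij}$. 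Your argument only uses the \emph{second} coordinate of the condition $(X,\dot{Y})\in\ker(D^Y_1)$; the skew-symmetry follows in one line from the \emph{first} coordinate, $\dot{Y}_{ij}+\dot{Y}_{ji}+[X_{ij},Y_{ij}]=0$, together with $[\delta\alpha,Y_{ij}]=[\alpha_j,Y_{ij}]-[\alpha_i,Y_{ij}]$, exactly as the paper does. Correspondingly, the antisymmetrization you perform in the surjectivity step is unnecessary under that convention (a class in $H^1(\mathcal{U},\Theta_{M_0})$ is already represented by a skew-symmetric cocycle), though it is harmless. Everything else in your write-up matches the paper's argument.
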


\begin{proof}
We will show that
Let $\tv{X,\dot{Y}}{Y}\in \mathbb{T}^{V}_Y[\mathcal{U}]$. Let $(X,\dot{Y})\in \ker(D^Y_1)$ be a representative of $\tv{X,\dot{Y}}{Y}$. By definition, $X=\delta \alpha$ for some $\alpha\in C^0(\mathcal{U},\Theta_{M_0})$. Hence,
$$
0=\delta(\dot{Y}+\dfrac{1}{2}[X,Y])-\zeta(X,Y)=\delta(\dot{Y}-K(\alpha,Y))
$$
and
\begin{align*}
(\dot{Y}-K(\alpha,Y))_{ji}
&=\dot{Y}_{ji}-[\alpha_j,Y_{ji}]=-\dot{Y}_{ij}-[\delta \alpha,Y_{ij}]+[\alpha_j,Y_{ij}]  \\
&=-(\dot{Y}_{ij}-[\alpha_i,Y_{ij}])
=-(\dot{Y}-K(\alpha,Y))_{ij}.
\end{align*}
Hence, $\dot{Y}-K(\alpha,Y)\in Z^1(\mathcal{U},\Theta_{M_0})$.

Let $(\delta \alpha',\dot{Y}') \in \ker(D^Y_1)$ be another representative of $\tv{X,\dot{Y}}{Y}\in \mathbb{T}_{Y}[\mathcal{U}]$. 
Since $(\delta\alpha-\delta\alpha', \dot{Y}-\dot{Y}')$ is trivial in $\mathbb{T}_Y[\mathcal{U}]$, from the definition of the model space,
there are $\beta$, $\gamma\in C^0(\mathcal{U},\Theta_{M_0})$ such that 
$\delta\alpha-\delta\alpha'=\delta \gamma$ and
$\dot{Y}-\dot{Y}'=\delta\beta+K(\gamma,Y)$.
Since $M_0$ admits no non-trivial holomorphic vector field, $\gamma=\alpha-\alpha'$. Hence $\dot{Y}-\dot{Y}'=\delta\beta+K(\alpha-\alpha',Y)$, which is equivalent to
$$
(\dot{Y}-K(\alpha,Y))-(\dot{Y}'-K(\alpha',Y))=\delta\beta.
$$
This means that the map \eqref{eq:vertical_isomorphism} is well-defined. 

Suppose $\tv{\delta \alpha,\dot{Y}}{Y}$ is in the kernel of $\verticalincmodel{Y}$. Then,  $\dot{Y}-K(\alpha,Y)=\delta\beta$ for some $\beta$, and
$$
(\delta \alpha,\dot{Y})=(\delta\alpha,\delta\beta+K(\alpha,Y))=D_0^Y(\alpha,\beta).
$$
Therefore $\tv{\delta \alpha,\dot{Y}}{Y}=0$ in $\mathbb{T}_Y[\mathcal{U}]$. Hence $\verticalincmodel{Y}$ is injective.

For $[W]\in H^1(\mathcal{U},\Theta_{M_0})$, $(0,W)\in \ker(D^Y_1)$ and $
\verticalincmodel{Y}(\tv{0,W}{Y})=[W]$. 
Therefore, $\verticalincmodel{Y}$ is a $\mathbb{C}$-linear isomorphism.
\end{proof}

%

\subsection{Model space $\mathbb{T}_{[Y]}[\mathcal{U}] $ for $[Y]\in H^1(\mathcal{U},\Theta_{M_0})$}
\label{subsec:model_space_definition_2}
We shall check the following.
\begin{proposition}
\label{prop:isomorphism_tile_L}
For $\beta\in C^0(\mathcal{U},\Theta_{M_0})$,
a linear isomorphism on $Z^1(\mathcal{U},\Theta_{M_0})\oplus C^1(\mathcal{U},\Theta_{M_0})$ defined by
\begin{equation}
\label{eq:cohomologus}
\tilde{\mathcal{L}}_{\beta;Y}(X,\dot{Y})=
(X,\dot{Y}+K'(\beta,X))
\end{equation}
descends to an isomorphism $\mathcal{L}_{\beta;Y}$ from $\mathbb{T}_Y[\mathcal{U}]$ and $\mathbb{T}_{Y+\delta\beta}[\mathcal{U}]$. Furthermore,
the isomorphism $\mathcal{L}_{\beta;Y}$ maps $\mathbb{T}^V_{Y}[\mathcal{U}]$ onto $\mathbb{T}^V_{Y+\delta \beta}[\mathcal{U}]$. 
\end{proposition}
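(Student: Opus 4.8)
The plan is to verify directly that the map $\tilde{\mathcal{L}}_{\beta;Y}$ descends. First I would check that $\tilde{\mathcal{L}}_{\beta;Y}$ sends $\ker(D^Y_1)$ isomorphically onto $\ker(D^{Y+\delta\beta}_1)$. Let $(X,\dot{Y})\in \ker(D^Y_1)$. Writing $Y'=Y+\delta\beta$, I need to evaluate $D^{Y'}_1(X,\dot{Y}+K'(\beta,X))$. For the first coordinate, using $X^*=-X$ (as $X\in Z^1$) and the identity $K'(\beta,X)+K'(\beta,X)^* = [\partial\beta,X]$ together with $[X,Y']=[X,Y]+[X,\delta\beta]$, one reduces the requirement to a cocycle identity relating $[X,\delta\beta]$ and $\delta$ of the relevant $1$-chains; this is the same kind of bookkeeping as in \Cref{lem:D_0-D_1-exact}, using \eqref{eq:K_prime-bra-S}, \eqref{eq:S-L-zeta1}, and the symmetry $\zeta(X,Y)=\zeta(Y,X)$. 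For the second coordinate, I would expand $\delta(\dot{Y}+K'(\beta,X)+\tfrac12[X,Y'])-\zeta(X,Y')$, substitute \eqref{eq:K_prime-bra-S} to trade $K'(\beta,X)$ for $\tfrac12[\partial\beta,X]+S(\beta,X)$, use \eqref{eq:S-L-zeta1} in the form $\delta(S(\beta,X))=\zeta(\delta\beta,X)=\zeta(X,\delta\beta)$, and check that the extra terms $\zeta(X,\delta\beta)$ and the $\delta$-contribution of $\tfrac12[X,\delta\beta]$ cancel against the difference $\zeta(X,Y')-\zeta(X,Y)=\zeta(X,\delta\beta)$; the $[\partial\beta,X]$ piece should vanish under $\delta$ up to the Jacobi-type identities packaged in \Cref{prop:linear-map-L}. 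The inverse is $\tilde{\mathcal{L}}_{-\beta;Y+\delta\beta}$, so bijectivity on kernels is automatic once the inclusion is established in both directions.

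Next I would show $\tilde{\mathcal{L}}_{\beta;Y}$ carries $\operatorname{Im}(D^Y_0)$ onto $\operatorname{Im}(D^{Y'}_0)$. Given $(\delta\alpha,\delta\gamma+K(\alpha,Y))=D^Y_0(\alpha,\gamma)$, applying $\tilde{\mathcal{L}}_{\beta;Y}$ gives $(\delta\alpha,\ \delta\gamma+K(\alpha,Y)+K'(\beta,\delta\alpha))$. Using \eqref{eq:K-bra-S}, \eqref{eq:K_prime-bra-S} and the bilinearity of $[\cdot,\cdot]$, one rewrites $K(\alpha,Y)+K'(\beta,\delta\alpha)$ as $K(\alpha,Y+\delta\beta)$ plus a $\delta$-coboundary term (absorbable into $\delta\gamma$); concretely $K(\alpha,\delta\beta)_{ij}=[\alpha_i,\beta_j-\beta_i]$ and $K'(\beta,\delta\alpha)_{ij}=[\beta_j,\alpha_j-\alpha_i]$, and their difference from $K(\alpha,\delta\beta)$ should be $\delta$ of the $0$-chain $\{[\alpha_i,\beta_i]\}$ up to sign, after invoking the Jacobi identity. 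This identifies the image of $D^Y_0(\alpha,\gamma)$ with $D^{Y'}_0(\alpha,\gamma')$ for a suitable $\gamma'$. Combining the two paragraphs, $\tilde{\mathcal{L}}_{\beta;Y}$ induces a well-defined isomorphism $\mathcal{L}_{\beta;Y}\colon \mathbb{T}_Y[\mathcal{U}]\to \mathbb{T}_{Y+\delta\beta}[\mathcal{U}]$.

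Finally, the statement about vertical spaces is essentially immediate from the definition \eqref{eq:vertical_space}: $\tilde{\mathcal{L}}_{\beta;Y}$ does not change the first coordinate $X$, so it sends the class $\tv{X,\dot{Y}}{Y}$ with $[X]=0$ in $H^1(\mathcal{U},\Theta_{M_0})$ to a class $\tv{X,\dot{Y}+K'(\beta,X)}{Y+\delta\beta}$ with the same $X$, hence with $[X]=0$. Since $\mathcal{L}_{\beta;Y}$ is an isomorphism and its inverse $\mathcal{L}_{-\beta;Y+\delta\beta}$ also preserves the first coordinate, it restricts to an isomorphism $\mathbb{T}^V_Y[\mathcal{U}]\to \mathbb{T}^V_{Y+\delta\beta}[\mathcal{U}]$.

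I expect the main obstacle to be the second-coordinate computation in the first paragraph: tracking all the $\zeta$, $S$, $K'$ and $[\partial\beta,X]$ terms and confirming the cancellations requires careful use of \eqref{eq:S-L-zeta1}, \eqref{eq:K_prime-bra-S} and the Jacobi-type identities of \Cref{prop:linear-map-L}, and sign conventions must be handled with some care. The rest is routine cochain bookkeeping of the same flavor already carried out in \Cref{lem:D_0-D_1-exact} and \Cref{prop:vertical_space}.
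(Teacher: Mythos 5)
Your proposal is correct and follows essentially the same route as the paper: a direct cochain verification that $\tilde{\mathcal{L}}_{\beta;Y}$ carries $\ker(D_1^Y)$ into $\ker(D_1^{Y+\delta\beta})$ and ${\rm Im}(D_0^Y)$ into ${\rm Im}(D_0^{Y+\delta\beta})$, with invertibility from $\tilde{\mathcal{L}}_{-\beta;Y+\delta\beta}$ and the vertical-space claim following because the first coordinate is unchanged. The only nitpick is that the cancellations you defer to ``Jacobi-type identities'' are in fact just antisymmetry of the bracket combined with \eqref{eq:S-L-zeta1} and the symmetry $\zeta(X,\delta\beta)=\zeta(\delta\beta,X)$, exactly as in the paper's computation.
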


\begin{proof}
From the definition, if $\tilde{\mathcal{L}}_{\beta;Y}$ descends to the linear map from $\mathbb{T}_Y[\mathcal{U}]$ and $\mathbb{T}_{Y+\delta\beta}[\mathcal{U}]$, the linear map sends the vertical space in $\mathbb{T}_Y[\mathcal{U}]$ to that in $\mathbb{T}_{Y+\delta}[\mathcal{U}]$. Hence, we only check the descendant of $\tilde{L}_{\beta;Y}$.

%
Let $(X,\dot{Y})\in \ker(D_1^Y)$.
Then,
\begin{align*}
&(\dot{Y}_{ij}+[\beta_j,X_{ij}])+(\dot{Y}_{ji}+[\beta_i,X_{ji}])+[X_{ij},Y_{ij}+\delta \beta] \\
&=\dot{Y}_{ij}+[\beta_j,X_{ij}]+\dot{Y}_{ji}-[\beta_i,X_{ij}]+[X_{ij},Y_{ij}]+[X_{ij},\beta_j-\beta_i] \\
&=\dot{Y}_{ij}+\dot{Y}_{ji}+[X_{ij},Y_{ij}]=0
\end{align*}
for $i$, $j\in I$.
From \eqref{eq:S-L-zeta1}, we have
\begin{align*}
\delta\left(\dfrac{1}{2}[X,\delta\beta]\right)-\zeta(X,\delta\beta)
&=
\delta\left(\dfrac{1}{2}[X,\delta\beta]-S(\beta,X)\right)
=-\delta\left(\dfrac{1}{2}[\delta\beta,X]+S(\beta,X)\right)
\\
&=-\delta K'(\beta,Y).
\end{align*}
Therefore, we obtain
\begin{align*}
&\delta\left(\dot{Y}+K'(\beta,Y)+\dfrac{1}{2}[X,Y+\delta \beta]\right)-\zeta(X,Y+\delta\beta) \\
&=\delta\left(\dot{Y}+\dfrac{1}{2}[X,Y]\right)-\zeta(X,Y)=0.
\end{align*}
Therefore, $\tilde{\mathcal{L}}_{\beta;Y}(X,\dot{Y})\in \ker(D_1^{Y+\delta\beta})$.

Assume $(X,\dot{Y})\in {\rm Im}(D_0^Y)$.
Then, $X=\delta\alpha$ and $\dot{Y}=\delta\gamma+K(\alpha,Y)$ for some $\alpha$, $\gamma\in C^0(\mathcal{U},\Theta_{M_0})$.
Since \begin{align*}
(\dot{Y}+K'(\beta,X))_{ij}
&=\gamma_j-\gamma_i+
[\alpha_i,Y_{ij}]+[\beta_j,\alpha_j-\alpha_i] \\
&=\gamma_j-\gamma_i+
[\alpha_i,Y_{ij}+\beta_j-\beta_i]+[\beta_j,\alpha_j]-[\beta_i,\alpha_i] \\
&=(\gamma_j+[\beta_j,\alpha_j])-(\gamma_i+[\beta_i,\alpha_i])+
[\alpha_i,Y_{ij}+\beta_j-\beta_i].
\end{align*}
we have
$$
\dot{Y}+K'(\beta,Y)=\delta(\gamma+[\beta,\alpha])+K(\alpha,Y+\delta\beta),
$$
which means $\tilde{\mathcal{L}}_{\beta;Y}(X,\dot{Y})\in {\rm Im}(D_0^{Y+\delta\beta})$.
Therefore, the map \eqref{eq:cohomologus} descends to a $\mathbb{C}$-linear map $\mathcal{L}_{\beta;Y}$ from $\mathbb{T}_Y[\mathcal{U}]\to \mathbb{T}_{Y+\delta\beta}[\mathcal{U}]$. Since $\tilde{\mathcal{L}}_{-\beta;Y}\circ\tilde{\mathcal{L}}_{\beta;Y}=id$, $\mathcal{L}_{\beta;Y}$ is isomorphic.
\end{proof}

\begin{definition}[Model space and Vertical space for cohomology classes]
\label{def:model_space}
For $\beta\in C^0(\mathcal{U},\Theta_{M_0})$, we identify $\mathbb{T}_Y[\mathcal{U}]$ and $\mathbb{T}_{Y+\delta\beta}[\mathcal{U}]$ by $\mathcal{L}_{\beta;Y}$. This identification is well-defined since $\mathcal{L}_{\beta';Y+\delta \beta}\circ  \mathcal{L}_{\beta;Y}=\mathcal{L}_{\beta'+\beta;Y}$ for $\beta$, $\beta'\in C^0(\mathcal{U},\Theta_{M_0})$.
This procedure makes a $\mathbb{C}$-vector space $\mathbb{T}_{[Y]}[\mathcal{U}]$. 
We call the space $\mathbb{T}_{[Y]}[\mathcal{U}]$ the \emph{model space of the double tangent space at $[Y]\in H^1(\mathcal{U},\Theta_{M_0})$}. We also define the \emph{vertical space} $\mathbb{T}_{[Y]}^V[\mathcal{U}]$ in the same manner.
\end{definition}

We denote by $\tv{X,\dot{Y}}{[Y]}\in \mathbb{T}_{[Y]}[\mathcal{U}]$ the equivalence class of $\tv{X,\dot{Y}}{Y}\in \mathbb{T}_Y[\mathcal{U}]$.
In \S\ref{sec:double_tangent_main},
we will show that $\mathbb{T}_Y[\mathcal{U}]$ and $\mathbb{T}_{[Y]}[\mathcal{U}]$ are naturally identified with the tangent space to the tangent bundle over $\teich_g$ at the tangent vector $[Y]\in H^1(\mathcal{U},\Theta_{M_0})$.

\begin{remark}[Sum on the model]
\label{remark:sum_double_tangent_space}
From the definition, for the additive operation on $\mathbb{T}_{[Y]}[\mathcal{U}]$,
$$
a \tv{X,\dot{Y}}{[Y]}+b\tv{X',\dot{Y}'}{[Y]}=\tv{Z,\dot{W}}{[Y]}
$$
if $Z=a X+b X'$ and $\dot{W}=a \dot{Y}+b\dot{Y}'$, where $a$, $b\in \mathbb{C}$, $(X,\dot{Y})$, $(X',\dot{Y}')\in \ker(D^Y_1)$ 
are representatives of $\tv{X,\dot{Y}}{Y}$ and $\tv{X',\dot{Y}'}{Y}$ for fixed $Y\in Z^1(\mathcal{U},\Theta_{M_0})$, respectively.
\end{remark}

\subsection{Vertical space and Cohomology group}
Since
\begin{align*}
\left((\dot{Y}+K'(\beta,\delta\alpha)-K(\alpha,Y+\delta\beta)\right)_{ij}
&=\dot{Y}+[\beta_j,\alpha_j-\alpha_i]-[\alpha_i,Y]-[\alpha_i,\beta_j-\beta_i] \\
&=(\dot{Y}-K(\alpha,Y))+\delta(\{[\beta_i,\alpha_i]\}_{i\in I})
\end{align*}
for $\alpha$, $\beta\in C^0(\mathcal{U},\Theta_{M_0})$,
from \Cref{prop:vertical_space,prop:isomorphism_tile_L}, we obtain a $\mathbb{C}$-linear isomorphism
\begin{equation}
\label{eq:vertical_isomorphism_cohomology_class}
\verticalincmodel{[Y]}\colon
\mathbb{T}^{V}_{[Y]}[\mathcal{U}]\ni \tv{\delta \alpha,\dot{Y}}{[Y]}\mapsto [\dot{Y}-K(\alpha,Y)]\in H^1(\mathcal{U},\Theta_{M_0})
\end{equation}
which satisfies the following commutative diagram:
\begin{equation}
\label{eq:commute_vertical_TYU}
\begin{CD}
\mathbb{T}^{V}_{Y}[\mathcal{U}]
@>{\verticalincmodel{Y}}>>
H^1(\mathcal{U},\Theta_{M_0})
\\
@V{\cong}VV @|
\\
\mathbb{T}^{V}_{[Y]}[\mathcal{U}]
@>>{\verticalincmodel{[Y]}}>
H^1(\mathcal{U},\Theta_{M_0}),
\end{CD}
\end{equation}
where the left vertical arrow is the natural identification in \Cref{def:model_space}.
The inverse of the map $\verticalincmodel{[Y]}$ is
\begin{equation}
\label{eq:commute_vertical_TYU2}
\begin{CD}
H^1(\mathcal{U},\Theta_{M_0}) @>>> \mathbb{T}^{V}_{[Y]}[\mathcal{U}] \\
[\dot{Y}] @>>> \tv{0,\dot{Y}}{[Y]}.
\end{CD}
\end{equation}

\section{Dolbeaut type presentation of Double tangent spaces}
\label{sec:Dolbeaut_type_presentation}
Throughout this section, we assume that a locally finite covering $\mathcal{U}=\{U_i\}_{i\in I}$ satisfies $H^1(U_i,\Theta_{M_0})=0$ for all $i\in I$. In this case
$$
\begin{CD}
C^0(\mathcal{U},\mathcal{A}^{0,0}(\Theta_{M_0}))@>{-\overline{\partial}}>> C^0(\mathcal{U},\mathcal{A}^{0,1}(\Theta_{M_0}))
\end{CD}
$$
is surjective (cf. \cite[Theorem 3.13]{MR815922}).
This section deals with two presentations of the double tangent spaces by complex $(-1,1)$-forms.
The first presentation is defined with respecting to the cohomology presentations of the basepoint. The second is defined with the presentation by the Beltrami differential of the basepoint. The first presentation is a mediator for connecting between the space $\mathbb{T}_{[Y]}[\mathcal{U}]$ and the second presentation.

\subsection{Dolbeault type presentations for cocycles}
\label{subsec:Dol_type_representation}
In the following diagram, we abbreviate $C^{k}(\mathcal{U},\mathscr{S})$, 
 $Z^k(\mathcal{U},\mathscr{S})$,
 $\Theta_{M_0}$ and $\mathcal{A}^{p,q}(\Theta_{M_0})$
 to 
 $C^{k}(\mathscr{S})$,
 $Z^k(\mathscr{S})$,
  $\Theta$ and $\mathcal{A}^{p,q}_\Theta$
  for simplicity.
The exact sequence \eqref{eq:Dolbeault-sequence} 
leads the following commutative diagram:
$$
\minCDarrowwidth10pt
\begin{CD}
C^0(\Theta)^{\oplus 2}
@>{\iota\oplus \iota}>>
C^0(\mathcal{A}^{0,0}_\Theta)^{\oplus 2}
@>{(-\overline{\partial})\oplus(-\overline{\partial})}>>
C^0(\mathcal{A}^{0,1}_\Theta)^{\oplus 2}
\\
@V{D^Y_0}VV @V{D^{Y;0}_0}VV @V{D^{Y;1}_0}VV \\
Z^1(\Theta)\oplus C^1(\Theta)
@>{\iota\oplus \iota}>>
Z^1(\mathcal{A}^{0,0}_\Theta)\oplus C^1(\mathcal{A}^{0,0}_\Theta)
@>{(-\overline{\partial})\oplus(-\overline{\partial})}>>
Z^1(\mathcal{A}^{0,1}_\Theta)\oplus C^1(\mathcal{A}^{0,1}_\Theta)
\\
@V{D^Y_1}VV @V{D^{Y;0}_1}VV \\
C^1(\Theta)\oplus C^2(\Theta)
@>{\iota\oplus \iota}>>
C^1(\mathcal{A}^{0,0}_\Theta)\oplus C^2(\mathcal{A}^{0,0}_\Theta),
\end{CD}
$$
where the $\mathbb{C}$-linear maps $D^{Y;0}_0$, $D^{Y;0}_1$, and $D^{Y;1}_0$ are natural extensions of $D^Y_0$ and $D^Y_1$ (defined in \eqref{eq:D_0} and \eqref{eq:D_1}) with notation in \S\ref{sec:Lie_derivative}:
\begin{align}
D^{Y;0}_0(\alpha,\beta)&=(\delta \alpha, \delta \beta+K(\alpha,Y)) 
\label{eq:D_0_Dol}
\\
D^{Y;0}_1(\xi,\dot{\eta})&=
\left(
\dot{\eta}+\dot{\eta}^*+[\xi,Y],
\delta \left(\dot{\eta}+\dfrac{1}{2}[\xi,Y]\right)-\zeta(\xi,Y)
\right),
\label{eq:D_1_Dol}
\end{align}
where $\alpha$, $\beta\in C^0(\mathcal{U},\mathcal{A}^{0,0}(\Theta_{M_0}))$, $\xi\in Z^1(\mathcal{U},\mathcal{A}^{0,0}(\Theta_{M_0}))$ and $\dot{\eta}\in C^1(\mathcal{U},\mathcal{A}^{0,0}(\Theta_{M_0}))$.
Since the map $(-\overline{\partial})\oplus(-\overline{\partial})$ at the right in the first line of the diagram is surjective, we have an exact sequence
\begin{equation}
\label{eq:exact_T_Y}
\minCDarrowwidth10pt
\begin{CD}
 @. @. \ker(D^{Y;0}_0)@>{(-\overline{\partial})\oplus(-\overline{\partial})}>> \ker(D^{Y;1}_0) \\ @>{\connectinghomo}>> 
\mathbb{T}_Y[\mathcal{U}] @>{\iota\oplus \iota}>> \ker(D^{Y;0}_1)/{\rm Im}(D^{Y;0}_0),
\end{CD}
\end{equation}
where $\connectinghomo$ is the connecting homomorphism.
We claim

\begin{lemma}
\label{lem:D-prime}
$\ker(D^{Y;0}_1)/{\rm Im}(D^{Y;0}_0)=\{0\}$.
\end{lemma}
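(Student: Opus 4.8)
The claim is that $\ker(D^{Y;0}_1)/{\rm Im}(D^{Y;0}_0) = \{0\}$, i.e.\ that every element $(\xi,\dot\eta)\in Z^1(\mathcal{U},\mathcal{A}^{0,0}(\Theta_{M_0}))\oplus C^1(\mathcal{U},\mathcal{A}^{0,0}(\Theta_{M_0}))$ lying in $\ker(D^{Y;0}_1)$ can be written as $D^{Y;0}_0(\alpha,\beta)$ for some $\alpha,\beta\in C^0(\mathcal{U},\mathcal{A}^{0,0}(\Theta_{M_0}))$. The key structural point is that, over a smooth (fine-sheaf) coefficient module like $\mathcal{A}^{0,0}(\Theta_{M_0})$, the \v{C}ech complex is acyclic in positive degrees: $H^1(\mathcal{U},\mathcal{A}^{0,0}(\Theta_{M_0}))=0$, so the coboundary operator $\delta\colon C^0(\mathcal{U},\mathcal{A}^{0,0}(\Theta_{M_0}))\to Z^1(\mathcal{U},\mathcal{A}^{0,0}(\Theta_{M_0}))$ is surjective, and similarly any $1$-cocycle with values in $\mathcal{A}^{0,0}(\Theta_{M_0})$ or in related fine sheaves is a coboundary. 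This is the mechanism that collapses the quotient.

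\emph{Step 1.} Take $(\xi,\dot\eta)\in\ker(D^{Y;0}_1)$. The first component of $D^{Y;0}_1$ vanishing says $\dot\eta+\dot\eta^*+[\xi,Y]=0$ in $C^1(\mathcal{U},\mathcal{A}^{0,0}(\Theta_{M_0}))$. First I would use surjectivity of $\delta$ on smooth $0$-cochains to write $\xi=\delta\alpha$ for some $\alpha\in C^0(\mathcal{U},\mathcal{A}^{0,0}(\Theta_{M_0}))$ (this uses $\xi\in Z^1$ and $H^1(\mathcal{U},\mathcal{A}^{0,0}(\Theta_{M_0}))=0$).

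\emph{Step 2.} Now I want to kill $\dot\eta$ after subtracting $K(\alpha,Y)$. Set $\dot\eta' = \dot\eta - K(\alpha,Y)$. Using $\xi=\delta\alpha$ together with formulas \eqref{eq:S-L-zeta1}, \eqref{eq:K-bra-S}, \eqref{eq:K_prime-bra-S} and the antisymmetry identity $\dot\eta+\dot\eta^*+[\delta\alpha,Y]=0$ (which in components reads $\dot\eta_{ij}+\dot\eta_{ji}+[\alpha_j-\alpha_i,Y_{ij}]=0$), I would check that $\dot\eta'$ is antisymmetric, hence is a $1$-cochain whose symmetrization vanishes; combined with the second-component condition $\delta(\dot\eta+\tfrac12[\xi,Y])-\zeta(\xi,Y)=0$, which rewrites (exactly as in the proof of \Cref{prop:vertical_space}) as $\delta\dot\eta'=0$, I conclude $\dot\eta'\in Z^1(\mathcal{U},\mathcal{A}^{0,0}(\Theta_{M_0}))$. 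Invoking again $H^1(\mathcal{U},\mathcal{A}^{0,0}(\Theta_{M_0}))=0$, write $\dot\eta'=\delta\beta$ for some $\beta\in C^0(\mathcal{U},\mathcal{A}^{0,0}(\Theta_{M_0}))$. Then $\dot\eta=\delta\beta+K(\alpha,Y)$, so $(\xi,\dot\eta)=(\delta\alpha,\delta\beta+K(\alpha,Y))=D^{Y;0}_0(\alpha,\beta)\in{\rm Im}(D^{Y;0}_0)$, which is what we want.

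\emph{Main obstacle.} The only delicate point is Step 2: verifying that $\dot\eta-K(\alpha,Y)$ is genuinely a cocycle, i.e.\ reconciling the two defining conditions of $\ker(D^{Y;0}_1)$ with the operator identities for $S$, $K$, $K'$, $\zeta$ in the \emph{smooth} setting. But these identities \eqref{eq:S-L-zeta1}--\eqref{eq:K_prime-bra-S} were established formally from the Lie-bracket bilinearity and Jacobi-type relations, so they hold verbatim with $\mathcal{A}^{0,0}(\Theta_{M_0})$-coefficients; the computation is essentially the one already carried out in \Cref{prop:vertical_space} (and in \Cref{lem:D_0-D_1-exact}) with $\delta\alpha'$ replaced by the general smooth cocycle $\xi$. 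So I expect this to be routine once the bookkeeping is set up; there is no real analytic or geometric difficulty, only the need to track signs carefully in the $K$ versus $K'$ terms.
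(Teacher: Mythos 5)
Your proof is correct and follows essentially the same route as the paper: write the cocycle component as $\delta\alpha$ using the acyclicity of the fine sheaf $\mathcal{A}^{0,0}(\Theta_{M_0})$, check via the identities \eqref{eq:S-L-zeta1} and \eqref{eq:K-bra-S} (exactly as in the proof of \Cref{prop:vertical_space}) that $\dot\eta-K(\alpha,Y)$ is a $1$-cocycle, write it as $\delta\beta$, and conclude $(\xi,\dot\eta)=D^{Y;0}_0(\alpha,\beta)$. No gaps.
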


\begin{proof}
Let $(\Xi,H)\in \ker(D^{Y;0}_1)$. By definition,
\begin{align*}
& H_{ij}+H_{ji}+[\Xi_{ij},Y_{ij}]=0 \\
&\delta \left(H+\dfrac{1}{2}[\Xi,Y]\right)-\zeta(\Xi,Y)=0
\end{align*}
for $i$, $j\in I$.
Let $\xi\in C^0(\mathcal{A}^{0,0}_\Theta)$ with $\delta\xi=\Xi$.
Since $\zeta(\Xi,Y)=\delta(S(\xi,Y))$,
from \eqref{eq:S-L-zeta1} and \eqref{eq:K-bra-S},
the above two equations imply that $H-K(\xi,Y)\in Z^1(\mathcal{A}^{0,0}_\Theta)$.
Take $\xi'\in C^0(\mathcal{A}^{0,0}_\Theta)$ with $\delta\xi'=H-K(\xi,Y)$. Then,
$$
D^{Y;0}_0(\xi,\xi')=
(\delta \xi, \delta \xi'+K(\xi,Y))=
(\Xi,H),
$$
which means $(\Xi,H)\in {\rm Im}(D^{Y;0}_0)$.
\end{proof}

\begin{definition}[Dolbeault type presentation for cocycle]
\label{def_mode_Dolbeaut}
We call the quotient space
\begin{equation}
\label{eq:definition_dol_rep}
\mathbb{T}^{Dol}_Y[\mathcal{U}]=\ker(D^{Y;1}_0)/
(-\overline{\partial})\oplus (-\overline{\partial})
(\ker(D^{Y;0}_0)).
\end{equation}
the \emph{Dolbeault type presentation of the model space for cocycle $Y$}.
\end{definition}

\begin{remark}
\label{remark:dol1}
Though the connecting homomorphism $\connectinghomo$ might not be defined for arbitrary coverings, we also define
$$
\mathbb{T}^{Dol}_Y[\mathcal{U}]=\ker(D^{Y;1}_0)/
(-\overline{\partial})\oplus (-\overline{\partial})
(\ker(D^{Y;0}_0))
$$
for an (arbitrary) locally finite covering on $M_0$.
\end{remark}

From \eqref{eq:exact_T_Y} and \Cref{lem:D-prime},
we obtain the following
(cf. \eqref{eq:Dou}).

\begin{theorem}[Dolbeault type presentation of model space]
\label{thm:Dolbeault_pre}
Under the above notation,
the connecting homomorphism $\connectinghomo$ 
leads the isomorphism
from $\mathbb{T}^{Dol}_Y[\mathcal{U}]$ onto $\mathbb{T}_Y[\mathcal{U}]$.
\end{theorem}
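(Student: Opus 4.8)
The statement is essentially a diagram-chase: we have the commutative diagram with exact rows displayed just before \Cref{def_mode_Dolbeaut}, we have already checked $D^Y_1\circ D^Y_0=0$ (\Cref{lem:D_0-D_1-exact}), and the crucial vanishing \Cref{lem:D-prime} tells us that the bottom-right quotient $\ker(D^{Y;0}_1)/{\rm Im}(D^{Y;0}_0)$ is zero. Feeding this into the exact sequence \eqref{eq:exact_T_Y} immediately forces the connecting homomorphism $\connectinghomo$ to be surjective, and the preceding term in the sequence, namely $(-\overline{\partial})\oplus(-\overline{\partial})\colon \ker(D^{Y;0}_0)\to \ker(D^{Y;1}_0)$, controls its kernel. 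So the whole content is: identify $\connectinghomo$ as a map out of the cokernel of that $\overline\partial$-map, and observe that by definition of $\mathbb{T}^{Dol}_Y[\mathcal{U}]$ in \eqref{eq:definition_dol_rep} that cokernel \emph{is} $\mathbb{T}^{Dol}_Y[\mathcal{U}]$.

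Concretely, first I would recall how the connecting homomorphism is built from the snake-type setup. Given a class in $\mathbb{T}^{Dol}_Y[\mathcal{U}]$, represent it by $(\Xi,H)\in \ker(D^{Y;1}_0)\subset C^0(\mathcal{A}^{0,1}_\Theta)^{\oplus 2}$; since the top-right horizontal arrow $(-\overline\partial)\oplus(-\overline\partial)$ on $C^0(\mathcal{A}^{0,0}_\Theta)^{\oplus 2}$ is surjective (this is where the hypothesis $H^1(U_i,\Theta_{M_0})=0$ is used), lift it to some $(\alpha,\beta)\in C^0(\mathcal{A}^{0,0}_\Theta)^{\oplus 2}$ with $(-\overline\partial\alpha,-\overline\partial\beta)=(\Xi,H)$; then apply $D^{Y;0}_0$ and check the result lies in the image of $Z^1(\Theta)\oplus C^1(\Theta)\to Z^1(\mathcal{A}^{0,0}_\Theta)\oplus C^1(\mathcal{A}^{0,0}_\Theta)$ — i.e.\ has vanishing $\overline\partial$, which holds because the diagram commutes and $(\Xi,H)\in\ker(D^{Y;1}_0)$; the resulting element of $Z^1(\Theta)\oplus C^1(\Theta)$ lands in $\ker(D^Y_1)$ by commutativity, and its class in $\mathbb{T}_Y[\mathcal{U}]=\ker(D^Y_1)/{\rm Im}(D^Y_0)$ is $\connectinghomo$ applied to the original class. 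This is the standard well-definedness check for a connecting homomorphism; I would keep it brief.

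Next, exactness of \eqref{eq:exact_T_Y} at the term $\mathbb{T}_Y[\mathcal{U}]$ gives that $\connectinghomo$ is injective modulo the image of $(-\overline\partial)\oplus(-\overline\partial)\colon\ker(D^{Y;0}_0)\to\ker(D^{Y;1}_0)$, hence descends to an injection $\mathbb{T}^{Dol}_Y[\mathcal{U}]\hookrightarrow\mathbb{T}_Y[\mathcal{U}]$ — this is exactly where \eqref{eq:definition_dol_rep} is invoked, since $\mathbb{T}^{Dol}_Y[\mathcal{U}]$ was defined as $\ker(D^{Y;1}_0)$ modulo that same image. Surjectivity of $\connectinghomo$ (hence of the descended map) follows from exactness of \eqref{eq:exact_T_Y} at $\ker(D^{Y;0}_1)/{\rm Im}(D^{Y;0}_0)$ together with \Cref{lem:D-prime}, which says that target group is trivial, so the image of $\iota\oplus\iota$ from $\mathbb{T}_Y[\mathcal{U}]$ is zero and thus everything in $\mathbb{T}_Y[\mathcal{U}]$ comes from $\connectinghomo$. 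One should also note $\connectinghomo$ is $\mathbb{C}$-linear because every map in the diagram is. That completes the isomorphism.

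\textbf{Main obstacle.} There is no deep obstacle; the only delicate point is bookkeeping in the construction of $\connectinghomo$ — making sure the chosen lift's image under $D^{Y;0}_0$ really is $\overline\partial$-closed and therefore comes from the sheaf-level term, and that changing the lift changes the output by an element of ${\rm Im}(D^Y_0)$ so that the class in $\mathbb{T}_Y[\mathcal{U}]$ is well-defined. This is routine homological algebra (the snake lemma applied to the three-column diagram), so I would state it cleanly and not belabor the index-chasing; the substantive inputs, \Cref{lem:D_0-D_1-exact} and especially \Cref{lem:D-prime}, have already been proved.
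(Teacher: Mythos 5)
Your proposal is correct and follows essentially the same route as the paper: the paper deduces the theorem directly from the exact sequence \eqref{eq:exact_T_Y} together with the vanishing in \Cref{lem:D-prime}, exactly as you do, with $\mathbb{T}^{Dol}_Y[\mathcal{U}]$ being by definition \eqref{eq:definition_dol_rep} the cokernel of $(-\overline{\partial})\oplus(-\overline{\partial})\colon\ker(D^{Y;0}_0)\to\ker(D^{Y;1}_0)$. Your explicit unwinding of the construction and well-definedness of $\connectinghomo$ is detail the paper leaves implicit (it is spelled out only afterwards, in the bullet points following the theorem), but it is the same argument.
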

For $(\mu,\dot{\nu})\in \ker(D^{Y;1}_0)$, we denote by $\tvD{\mu,\dot{\nu}}{Y}$ the equivalence class of $(\mu,\dot{\nu})$ in $\mathbb{T}^{Dol}_Y[\mathcal{U}]$.

For the record, we notice the following:
\begin{itemize}
\item
The kernels $\ker(D''_1)$ and $\ker(D'_0)$ are
\begin{align*}
\ker(D^{Y;1}_0)&=
\{(\mu,\dot{\nu})
\in 
H^0(\mathcal{U},\mathcal{A}^{0,1}(\Theta_{M_0}))
\oplus C^0(\mathcal{U},\mathcal{A}^{0,1}(\Theta_{M_0}))
\mid
\delta\dot{\nu}+[\mu,Y]=0\}
\\
\ker(D^{Y;0}_0)&=\{(\xi,\dot{\eta})
\in 
H^0(\mathcal{U},\mathcal{A}^{0,0}(\Theta_{M_0}))
\oplus C^0(\mathcal{U},\mathcal{A}^{0,0}(\Theta_{M_0}))
\mid
\delta\dot{\eta}+[\xi,Y]=0\},
\end{align*}
where $[\mu,Y]=\{[\mu,Y_{ij}]\}_{i,j\in I}$ and $[\xi,Y]=\{[\xi_i,Y_{ij}]\}_{i,j\in I}$.
\item
$\connectinghomo(\tvD{\mu,\dot{\nu}}{Y})=\tv{X,\dot{Y}}{Y}$ if and only if there are $\xi$, $\dot{\eta}\in C^0(\mathcal{U},\mathcal{A}^{0,0}(\Theta_{M_0}))$ such that
\begin{align*}
(\mu,\dot{\nu})&=(-\overline{\partial}\xi,-\overline{\partial}\dot{\eta})
 \\
 (X,\dot{Y})&=
 (\delta \xi,\delta \dot{\eta}+K(\xi,Y)).
\end{align*}
\end{itemize}

We notice the following.

\begin{proposition}
\label{prop:equivalence_class}
Fix $\eta\in C^0(\mathcal{U},\mathcal{A}^{0,0}(\Theta_{M_0}))$ with $\delta \eta = Y$.
For $(\mu_1,\dot{\nu}_1)$, $(\mu_2,\dot{\nu}_2)\in \ker(D^{Y;1}_0)$, $(\mu_1,\dot{\nu}_1)$ and $(\mu_2,\dot{\nu}_2)$ are equivalent in the quotient space $\mathbb{T}^{Dol}_Y[\mathcal{U}]$ if and only if there is $\alpha$, $\beta\in \Gamma(M_0,\mathcal{A}^{0,0}(\Theta_{M_0}))$ such that
\begin{itemize}
\item[{\rm (1)}]
$\mu_1=\mu_2-\overline{\partial}\alpha$; and
\item[{\rm (2)}]
$(\dot{\nu}_1)_i=(\dot{\nu}_2)_i+\overline{\partial}[\alpha,\eta_i]+\overline{\partial} \beta$ on $U_i$ for $i\in I$.
\end{itemize}
\end{proposition}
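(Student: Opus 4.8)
The plan is to unwind the definitions and reduce the claim to the concrete description of $\mathbb{T}^{Dol}_Y[\mathcal{U}]$ as $\ker(D^{Y;1}_0)$ modulo $(-\overline{\partial})\oplus(-\overline{\partial})(\ker(D^{Y;0}_0))$ given in \Cref{def_mode_Dolbeaut}. So $(\mu_1,\dot{\nu}_1)$ and $(\mu_2,\dot{\nu}_2)$ are equivalent precisely when their difference $(\mu_1-\mu_2,\dot{\nu}_1-\dot{\nu}_2)$ lies in the image of $\ker(D^{Y;0}_0)$ under $(-\overline{\partial})\oplus(-\overline{\partial})$. Thus I must show: such a difference has this form if and only if there exist global $\alpha,\beta\in\Gamma(M_0,\mathcal{A}^{0,0}(\Theta_{M_0}))$ satisfying (1) and (2), where $\eta\in C^0(\mathcal{U},\mathcal{A}^{0,0}(\Theta_{M_0}))$ is the fixed cochain with $\delta\eta=Y$.

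\medskip

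\textbf{The ``if'' direction.} Given global smooth vector fields $\alpha,\beta$ satisfying (1) and (2), I would set $\xi$ to be the constant $0$-cochain with entries $\xi_i=\alpha$ (legitimate since $\alpha$ is global, so $\delta\xi=0$) and define a $0$-cochain $\dot\eta$ by $(\dot\eta)_i=[\alpha,\eta_i]+\beta$ on $U_i$. Then $(-\overline{\partial}\xi)_i=-\overline{\partial}\alpha$ and $(-\overline{\partial}\dot\eta)_i=-\overline{\partial}[\alpha,\eta_i]-\overline{\partial}\beta$, so (1) and (2) say exactly $(\mu_1-\mu_2,\dot{\nu}_1-\dot{\nu}_2)=(-\overline{\partial})\oplus(-\overline{\partial})(\xi,\dot\eta)$. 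It remains to check $(\xi,\dot\eta)\in\ker(D^{Y;0}_0)$, i.e. $\delta\xi=0$ and $\delta\dot\eta+[\xi,Y]=0$ in the notation of \eqref{eq:D_0_Dol}; for the second, $(\delta\dot\eta)_{ij}=[\alpha,\eta_j-\eta_i]=[\alpha,Y_{ij}]$ since $\delta\eta=Y$, while $(K(\xi,Y))_{ij}=[\xi_i,Y_{ij}]=[\alpha,Y_{ij}]$, wait—I need to be careful: $\ker(D^{Y;0}_0)$ is cut out by $\delta\dot\eta+[\xi,Y]=0$ where $[\xi,Y]=\{[\xi_i,Y_{ij}]\}$, as recorded in the display after \Cref{thm:Dolbeault_pre}. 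So $\delta\dot\eta+[\xi,Y]=[\alpha,Y_{ij}]+[\alpha,Y_{ij}]$—this is not zero, which signals that I should instead take $(\dot\eta)_i=-[\alpha,\eta_i]+\beta$ or adjust a sign; the correct bookkeeping of the $K$-versus-$\delta$ signs is exactly where care is needed, and I would match conventions against \eqref{eq:D_0_Dol} and the explicit $\ker(D^{Y;0}_0)$ formula to pin down the signs in (2). Once the signs are arranged, membership in $\ker(D^{Y;0}_0)$ is immediate.

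\medskip

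\textbf{The ``only if'' direction.} Suppose $(\mu_1-\mu_2,\dot{\nu}_1-\dot{\nu}_2)=(-\overline{\partial}\xi,-\overline{\partial}\dot\eta)$ for some $(\xi,\dot\eta)\in\ker(D^{Y;0}_0)$, so $\delta\xi=0$ and $\delta\dot\eta+[\xi,Y]=0$. From $\delta\xi=0$ and the fact that $M_0$ carries no nonzero holomorphic vector field—so $\xi$ being a $\delta$-closed $0$-cochain of smooth vector fields just means all $\xi_i$ agree on overlaps—we get a global smooth $\alpha\in\Gamma(M_0,\mathcal{A}^{0,0}(\Theta_{M_0}))$ with $\xi_i=\alpha$ for all $i$; this gives (1) directly. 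For (2), consider $\dot\eta_i-[\alpha,\eta_i]$ (up to the sign fixed above): using $\delta\dot\eta=-[\xi,Y]$ and $\delta([\alpha,\eta])_{ij}=[\alpha,\eta_j-\eta_i]=[\alpha,Y_{ij}]=[\xi_i,Y_{ij}]$, the cochain $\{\dot\eta_i-[\alpha,\eta_i]\}_i$ is $\delta$-closed, hence given by a single global smooth vector field $\beta\in\Gamma(M_0,\mathcal{A}^{0,0}(\Theta_{M_0}))$. Applying $-\overline{\partial}$ to $\dot\eta_i=[\alpha,\eta_i]+\beta$ and recalling $-\overline{\partial}\dot\eta_i=(\dot{\nu}_1)_i-(\dot{\nu}_2)_i$ yields (2). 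The only subtlety here is that $\{\dot\eta_i-[\alpha,\eta_i]\}$ need not a priori be a cocycle of \emph{holomorphic} sections—it is a smooth one—but a $\delta$-closed $0$-cochain of smooth sections of any sheaf of (continuous, smooth, $\ldots$) sections glues to a global smooth section by the sheaf axiom, so this is fine.

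\medskip

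\textbf{Main obstacle.} There is no conceptual difficulty; the entire content is definition-chasing, and the only real hazard is the sign/convention bookkeeping among $\delta$, $K$, $K'$, $[\cdot,\cdot]$, and the choice $\delta\eta=Y$, together with keeping straight that $\ker(D^{Y;0}_0)$ is cut out by $\delta\dot\eta+[\xi,Y]=0$ rather than by the formula involving $K$. I expect to spend most of the effort making the sign in condition (2) come out consistently in both directions, and I would double-check it against the explicit descriptions of $\ker(D^{Y;1}_0)$ and $\ker(D^{Y;0}_0)$ and the connecting-homomorphism formulas displayed just before \Cref{prop:equivalence_class}.
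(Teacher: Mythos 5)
Your proposal is correct and follows the same route as the paper's proof: both directions are definition-chasing through $\mathbb{T}^{Dol}_Y[\mathcal{U}]=\ker(D^{Y;1}_0)/(-\overline{\partial})\oplus(-\overline{\partial})(\ker(D^{Y;0}_0))$, using the explicit description of $\ker(D^{Y;0}_0)$ (pairs $(\xi,\dot\eta)$ with $\xi$ global and $\delta\dot\eta+[\xi,Y]=0$) and the fact that a $\delta$-closed $0$-cochain of smooth vector fields glues to a global one. Since you explicitly deferred the sign bookkeeping, here is how it resolves. In the ``if'' direction the element of $\ker(D^{Y;0}_0)$ to use is $(-\alpha,\{[\alpha,\eta_i]+\beta\}_{i\in I})$ (equivalently its negative $(\alpha,\{-[\alpha,\eta_i]-\beta\}_{i\in I})$): one checks $([\alpha,\eta_j]+\beta)-([\alpha,\eta_i]+\beta)+[-\alpha,Y_{ij}]=[\alpha,Y_{ij}]-[\alpha,Y_{ij}]=0$, and its image under $(-\overline{\partial})\oplus(-\overline{\partial})$ is exactly $(\mu_2,\dot\nu_2)-(\mu_1,\dot\nu_1)$, which suffices since the subspace of trivial elements is closed under negation. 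In the ``only if'' direction the $\delta$-closed combination is $\dot\eta_i+[\alpha,\eta_i]$, not $\dot\eta_i-[\alpha,\eta_i]$: your displayed computation actually gives $\delta(\dot\eta-[\alpha,\eta])_{ij}=-[\alpha,Y_{ij}]-[\alpha,Y_{ij}]=-2[\alpha,Y_{ij}]\neq 0$, whereas $\delta(\dot\eta+[\alpha,\eta])_{ij}=0$; setting $\beta=-(\dot\eta_i+[\alpha,\eta_i])$ produces the global smooth field with $\overline{\partial}\beta=(\dot\nu_1)_i-(\dot\nu_2)_i-\overline{\partial}[\alpha,\eta_i]$, which is condition (2). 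With these two corrections your argument is complete and coincides with the paper's.
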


\begin{proof}
Suppose (1) and (2) hold. Since
$$
([\alpha,\eta_j]+\beta)-
([\alpha,\eta_i]+\beta)+[-\alpha,Y_{ij}]=0
$$
on $U_i\cap U_j$,
$(-\alpha, \{[\alpha,\eta_i]+\beta\}_{i\in I})\in \ker(D^{Y;0}_0)$. Therefore, $(\mu_1,\dot{\nu}_1)$ and $(\mu_2,\dot{\nu}_2)$ are equivalent in $\mathbb{T}^{Dol}_Y[\mathcal{U}]$.

Suppose $(\mu_1,\dot{\nu}_1)$ and $(\mu_2,\dot{\nu}_2)$ are equivalent in $\mathbb{T}^{Dol}_Y[\mathcal{U}]$. Then, there are $\alpha\in \Gamma(M_0,\mathcal{A}^{0,0}(\Theta_{M_0}))$ and $\gamma\in C^0(\mathcal{U},\mathcal{A}^{0,0}(\Theta_{M_0}))$  such that $\mu_1=\mu_2-\overline{\partial}\alpha$, $\dot{\nu}_1=\dot{\nu}_2-\overline{\partial}\gamma$ and
\begin{equation}
\label{eq:gamma_alpha_Y}
\delta \gamma+K(\alpha,Y)=0.
\end{equation}
Since $\delta \eta=Y$,
$$
0=\gamma_j-\gamma_i+
[\alpha,\eta_j]-[\alpha,\eta_i]
$$
on $U_i\cap U_j$.
This means that
$$
\beta =-\gamma_i-[\alpha,\eta_i]
$$
on $U_i$ becomes a global smooth vector field on $M_0$ and satisfies
\begin{align*}
\overline{\partial}\beta
&=
(\dot{\nu}_1)_i-(\dot{\nu}_2)_i-\overline{\partial}[\alpha,\eta_i]
\end{align*}
on $U_i$.
\end{proof}

\subsection{Vertical spaces in the Dolbeaut type presentation for cocycle}
\label{subsec:vertical_space_Dol}
We discuss the Dolbeaut type presentation of the vertical space of the model space.

\begin{definition}[Vertical spaces in the Dolbeaut type presentation]
\label{def:vertical_space_Dol}
The \emph{vertical spaces}  $\mathbb{T}^{Dol,V}_Y[\mathcal{U}]$ in $\mathbb{T}_Y^{Dol}[\mathcal{U}]$ is defined by
\begin{align}
\mathbb{T}^{Dol,V}_Y[\mathcal{U}]=\{
\tvD{\mu,\dot{\nu}}{Y}\in \mathbb{T}^{Dol}_Y\mid 
\mbox{$[\mu]=0$ in $T_{x_0}\teich_g$}
\}.
\label{eq:Dol_vertical_space}
\end{align}
\end{definition}

\begin{proposition}[Connecting isomorphism and vertical spaces]
\label{prop:Dol_vertical_space_isomorphism}
Let $\mathcal{U}=\{U_i\}_{i\in I}$ be a locally finite covering of $M_0$ with $H^1(U_i,\Theta_{M_0})=0$ for $i\in I$. 
Then, the connecting isomorphism
$$
\connectinghomo\colon \mathbb{T}^{Dol}_Y[\mathcal{U}]\to \mathbb{T}_Y[\mathcal{U}]
$$
induces an isomorphism between the vertical spaces.
\end{proposition}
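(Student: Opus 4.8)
\textbf{Proof plan for \Cref{prop:Dol_vertical_space_isomorphism}.}

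The plan is to show that the connecting isomorphism $\connectinghomo\colon \mathbb{T}^{Dol}_Y[\mathcal{U}]\to \mathbb{T}_Y[\mathcal{U}]$ from \Cref{thm:Dolbeault_pre} restricts to a bijection between the two vertical subspaces $\mathbb{T}^{Dol,V}_Y[\mathcal{U}]$ and $\mathbb{T}^{V}_Y[\mathcal{U}]$. Since $\connectinghomo$ is already known to be a $\mathbb{C}$-linear isomorphism of the ambient spaces, it suffices to check that it carries $\mathbb{T}^{Dol,V}_Y[\mathcal{U}]$ \emph{onto} $\mathbb{T}^{V}_Y[\mathcal{U}]$; injectivity is then automatic, and so is the fact that the restriction is an isomorphism of vector spaces. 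The essential point is that the vertical subspaces on both sides are, by definition, cut out by the \emph{same} condition expressed on the two models — namely that the ``base'' cohomology/Beltrami class of the double tangent vector vanishes — and that this base class is preserved by $\connectinghomo$. Concretely, $\mathbb{T}^{Dol,V}_Y[\mathcal{U}]$ is the set of $\tvD{\mu,\dot\nu}{Y}$ with $[\mu]=0$ in $T_{x_0}\teich_g$, while $\mathbb{T}^{V}_Y[\mathcal{U}]$ is the set of $\tv{X,\dot Y}{Y}$ with $[X]=0$ in $H^1(\mathcal{U},\Theta_{M_0})$.

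First I would recall, from the explicit description of $\connectinghomo$ given just after \Cref{thm:Dolbeault_pre}, that $\connectinghomo(\tvD{\mu,\dot\nu}{Y})=\tv{X,\dot Y}{Y}$ precisely when there are $\xi,\dot\eta\in C^0(\mathcal{U},\mathcal{A}^{0,0}(\Theta_{M_0}))$ with $(\mu,\dot\nu)=(-\overline{\partial}\xi,-\overline{\partial}\dot\eta)$ and $(X,\dot Y)=(\delta\xi,\delta\dot\eta+K(\xi,Y))$. Under the identifications $T_{x_0}\teich_g\cong H^1(\mathcal{U},\Theta_{M_0})$ via $\mathscr{T}_{x_0}$ of \eqref{eq:identify_H1_T_g} (which sends $[X]=[\delta\xi]$ to $[\mu]=[-\overline{\partial}\xi]$), the class $[\mu]$ vanishes in $T_{x_0}\teich_g$ if and only if $[X]=[\delta\xi]$ vanishes in $H^1(\mathcal{U},\Theta_{M_0})$. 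Hence $\tvD{\mu,\dot\nu}{Y}\in\mathbb{T}^{Dol,V}_Y[\mathcal{U}]$ if and only if $\connectinghomo(\tvD{\mu,\dot\nu}{Y})\in\mathbb{T}^{V}_Y[\mathcal{U}]$, which is exactly the assertion that $\connectinghomo$ maps one vertical space onto the other. Combined with \Cref{thm:Dolbeault_pre} this finishes the proof.

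The step that requires a little care — and that I would expect to be the main (minor) obstacle — is making rigorous the claim that ``$[\mu]=0$ in $T_{x_0}\teich_g$'' and ``$[X]=0$ in $H^1(\mathcal{U},\Theta_{M_0})$'' correspond under the hypothesis $H^1(U_i,\Theta_{M_0})=0$. One must check that the surjectivity of $-\overline{\partial}\colon C^0(\mathcal{U},\mathcal{A}^{0,0}(\Theta_{M_0}))\to C^0(\mathcal{U},\mathcal{A}^{0,1}(\Theta_{M_0}))$ (used to define $\connectinghomo$ at all) is genuinely available, that the cochain $\xi$ with $\mu=-\overline{\partial}\xi$ can indeed be taken in $C^0(\mathcal{U},\mathcal{A}^{0,0}(\Theta_{M_0}))$, and that $[\delta\xi]$ is a well-defined class in $H^1(\mathcal{U},\Theta_{M_0})$ independent of the choice of $\xi$ — all of which follow from the vanishing hypothesis together with the compatibility of the isomorphism $\mathscr{T}_{x_0}$ with the Dolbeault identification \eqref{eq:Dou}. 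Once this dictionary is in place, the proof is a formal two-line consequence, so I would present it compactly, citing \Cref{thm:Dolbeault_pre}, the explicit form of $\connectinghomo$, and \eqref{eq:identify_H1_T_g}.
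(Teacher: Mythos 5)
Your proposal is correct and follows essentially the same route as the paper: both arguments reduce to the observation that the connecting homomorphism sends $\tvD{\mu,\dot\nu}{Y}$ to a class whose base cocycle $X=\delta\xi$ satisfies $\mathscr{T}_{x_0}([X])=[\mu]$, so $[X]=0$ if and only if $[\mu]=0$, which is exactly the defining condition of the two vertical spaces. The paper merely spells this equivalence out with explicit cochains (producing $\gamma=\alpha-\xi$ in each direction) rather than citing the isomorphism \eqref{eq:identify_H1_T_g} directly, but the content is identical.
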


\begin{proof}
Let $\tvD{\mu,\dot{\nu}}{Y}\in \mathbb{T}^{Dol}_Y[\mathcal{U}]$. Suppose $\connectinghomo(\tvD{\mu,\dot{\nu}}{Y})=\tv{X,\dot{Y}}{Y}\in \mathbb{T}_Y$.
By the definition of the connecting homomorphism, there are $\alpha$, $\beta\in C^0(M_0,\mathcal{A}^{0,0}(\Theta_{M_0}))$ such that $\partial \alpha_i=-\mu$, $\overline{\partial}\beta_i=-\dot{\nu}_i$ for $i\in I$, $\delta \alpha=X$ and $\delta\beta +K(\alpha,Y)=\dot{Y}$. 

Assume first that $\tvD{\mu,\dot{\nu}}{Y}\in \mathbb{T}^{Dol,V}_Y[\mathcal{U}]$.
Since $[\mu]=0$, there is a section $\gamma\in \Gamma(M_0,\mathcal{A}^{0,0}(\Theta_{M_0}))$ such that $\overline{\partial}\gamma=-\mu$. We define $\xi\in C^0(\mathcal{U},\Theta_{M_0})$ by $\xi_i=\alpha_i-\gamma$ on $U_i$ for $i\in I$. Then, $\delta\xi=X$, and hence$[X]=0$. This means that $\tv{X,\dot{Y}}{Y}\in \mathbb{T}^{V}_Y[\mathcal{U}]$.

Conversely, suppose $\tv{X,\dot{Y}}{Y}\in \mathbb{T}^{V}_Y$. Then, there is $\xi\in C^0(\mathcal{U},\Theta_{M_0})$ such that $\delta\xi=X$.Hence, $\gamma=\alpha-\xi$ defines a global smooth vector field on $M_0$ with $\overline{\partial}\gamma=-\mu$. This means that $[\mu]=0$ and $\tvD{\mu,\dot{\nu}}{Y}\in \mathbb{T}^{Dol,V}_Y[\mathcal{U}]$.

Thus we conclude that $\connectinghomo(\mathbb{T}^{Dol,V}_Y[\mathcal{U}])=\mathbb{T}^{V}_Y[\mathcal{U}]$, and the vertical spaces are isomorphic via the connecting isomorphism $\connectinghomo$.
\end{proof}
%
%
%
%
%
%

\subsection{Dolbeault type presentations for cohomology classes}
\label{subsec:Dol_type_representation_class}
We start with the following lemma.

\begin{lemma}
\label{lem:Dol_base_change}
For $\beta\in C^0(\mathcal{U},\Theta_{M_0})$,
a linear isomorphism on $C^0(\mathcal{U},\mathcal{A}^{0,1}_{\Theta})^{\oplus 2}$ defined by
\begin{equation}
\label{eq:cohomologus_Dol}
\tilde{\mathcal{L}}^{Dol}_{\beta;Y}(\mu,\dot{\nu})=
(\mu,\dot{\nu}-[\mu,\beta])
\end{equation}
descends to an isomorphism $\mathcal{L}^{Dol}_{\beta;Y}$ from $\mathbb{T}^{Dol}_Y[\mathcal{U}]$ to $\mathbb{T}^{Dol}_{Y+\delta\beta}[\mathcal{U}]$.
Furthermore,
if a locally finite covering $\mathcal{U}=\{U_i\}_{i\in I}$ of $M_0$ satisfies $H^1(U_i,\Theta_{M_0})=0$ for $i\in I$. Then,
$\mathcal{L}^{Dol}_{\beta;Y}$ preserves the vertical spaces, and satisfies the following commutative diagram:
\begin{equation}
\label{eq:commulte_Dol_double_beta}
\begin{CD}
\mathbb{T}^{Dol}_Y[\mathcal{U}] @>{\connectinghomo}>> \mathbb{T}_Y[\mathcal{U}] 
\\
@V{\mathcal{L}^{Dol}_{\beta;Y}}VV @VV{\mathcal{L}_{\beta;Y}}V
\\
\mathbb{T}^{Dol}_{Y+\delta \beta}[\mathcal{U}] @>{\connectinghomo}>> \mathbb{T}_{Y+\delta \beta}[\mathcal{U}]  \\
\end{CD}
\end{equation}
\end{lemma}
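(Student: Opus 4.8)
The statement to prove is \Cref{lem:Dol_base_change}: that the fiberwise-linear map $\tilde{\mathcal{L}}^{Dol}_{\beta;Y}(\mu,\dot\nu)=(\mu,\dot\nu-[\mu,\beta])$ descends to an isomorphism $\mathcal{L}^{Dol}_{\beta;Y}\colon \mathbb{T}^{Dol}_Y[\mathcal{U}]\to \mathbb{T}^{Dol}_{Y+\delta\beta}[\mathcal{U}]$, that (when $H^1(U_i,\Theta_{M_0})=0$) it preserves the vertical spaces, and that it is compatible with the connecting isomorphism $\connectinghomo$ via the square \eqref{eq:commulte_Dol_double_beta}. The strategy closely parallels the proof of \Cref{prop:isomorphism_tile_L}, which did the analogous thing for the non-Dolbeault model spaces $\mathbb{T}_Y[\mathcal{U}]$; the present lemma is essentially the Dolbeault-side mirror, and the commutative square \eqref{eq:commulte_Dol_double_beta} will reduce the verification of several assertions (vertical-space preservation, well-definedness of the identification) to the already-proved cocycle-side statements once the diagram is checked.

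First I would check that $\tilde{\mathcal{L}}^{Dol}_{\beta;Y}$ maps $\ker(D^{Y;1}_0)$ into $\ker(D^{Y+\delta\beta;1}_0)$. By the description of $\ker(D^{Y;1}_0)$ recorded after \Cref{thm:Dolbeault_pre}, this amounts to: if $\delta\dot\nu+[\mu,Y]=0$, then $\delta(\dot\nu-[\mu,\beta])+[\mu,Y+\delta\beta]=0$. Expanding, $\delta(\dot\nu-[\mu,\beta])+[\mu,Y]+[\mu,\delta\beta] = -[\mu,Y]-\delta[\mu,\beta]+[\mu,Y]+[\mu,\delta\beta]$, and $\delta[\mu,\beta]_{ij}=[\mu,\beta_j]-[\mu,\beta_i]=[\mu,\beta_j-\beta_i]=[\mu,(\delta\beta)_{ij}]$, so the two terms cancel — here one uses that $\mu\in\Gamma(M_0,\mathcal{A}^{0,1}(\Theta_{M_0}))$ is global so $[\mu,\cdot]$ commutes with $\delta$ in this sense, and bilinearity of the Lie bracket over the constant section. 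Next I would check that $\tilde{\mathcal{L}}^{Dol}_{\beta;Y}$ sends $(-\overline{\partial})\oplus(-\overline{\partial})(\ker(D^{Y;0}_0))$ into $(-\overline{\partial})\oplus(-\overline{\partial})(\ker(D^{Y+\delta\beta;0}_0))$: if $(\mu,\dot\nu)=(-\overline{\partial}\xi,-\overline{\partial}\dot\eta)$ with $\delta\dot\eta+[\xi,Y]=0$, I would produce $(\xi',\dot\eta')\in\ker(D^{Y+\delta\beta;0}_0)$ with $-\overline{\partial}\xi'=\mu$ and $-\overline{\partial}\dot\eta'=\dot\nu-[\mu,\beta]$; the natural guess, mirroring the $C^0$-side computation in \Cref{prop:isomorphism_tile_L}, is $\xi'=\xi$, $\dot\eta'_i = \dot\eta_i - [\xi,\beta_i]$ (or a similar correction involving $K'$), after which $-\overline{\partial}\dot\eta'_i = \dot\nu_i + \overline{\partial}[\xi,\beta_i] = \dot\nu_i - [\overline{\partial}\xi,\beta_i]-[\xi,\overline{\partial}\beta_i]$; using $\overline{\partial}\xi = -\mu$ this gives $\dot\nu_i + [\mu,\beta_i] - [\xi,\overline{\partial}\beta_i]$, and I would absorb the leftover $[\xi,\overline{\partial}\beta_i]$ term, which is global, into the choice of $\dot\eta'$ — this bookkeeping with the $\overline{\partial}$-Leibniz rule \eqref{eq:Lie-derivative_0} for the bracket is the step where I expect to spend the most care, matching it against \Cref{prop:equivalence_class}. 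Invertibility is then immediate since $\tilde{\mathcal{L}}^{Dol}_{-\beta;Y+\delta\beta}\circ\tilde{\mathcal{L}}^{Dol}_{\beta;Y}=\mathrm{id}$.

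Finally, for the commutative square \eqref{eq:commulte_Dol_double_beta}: I would trace a class $\tvD{\mu,\dot\nu}{Y}$ through $\connectinghomo$ using the explicit description after \Cref{thm:Dolbeault_pre} (choose $\xi,\dot\eta\in C^0(\mathcal{U},\mathcal{A}^{0,0}(\Theta_{M_0}))$ with $(\mu,\dot\nu)=(-\overline{\partial}\xi,-\overline{\partial}\dot\eta)$, giving $\connectinghomo(\tvD{\mu,\dot\nu}{Y})=\tv{\delta\xi,\delta\dot\eta+K(\xi,Y)}{Y}$), then apply $\mathcal{L}_{\beta;Y}$ from \Cref{prop:isomorphism_tile_L}, and separately apply $\mathcal{L}^{Dol}_{\beta;Y}$ followed by $\connectinghomo$, and check the two outputs agree in $\mathbb{T}_{Y+\delta\beta}[\mathcal{U}]$ — this should come down to a $\delta$-exact discrepancy expressible as $K$-terms, exactly the kind of identity already packaged in \eqref{eq:S-L-zeta1}, \eqref{eq:K-bra-S}, \eqref{eq:K_prime-bra-S}. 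Once the square commutes, vertical-space preservation of $\mathcal{L}^{Dol}_{\beta;Y}$ follows from \Cref{prop:Dol_vertical_space_isomorphism} together with the corresponding statement for $\mathcal{L}_{\beta;Y}$ in \Cref{prop:isomorphism_tile_L}, so no independent argument is needed there. The main obstacle I anticipate is purely computational: keeping the sign conventions (the $-\overline{\partial}$ in \eqref{eq:Dolbeault-sequence}, Convention~\ref{convention:1}/\ref{convention:2}) and the $K$-versus-$K'$ distinction straight so that the cancellations actually occur; conceptually the lemma is routine, being the Dolbeault avatar of \Cref{prop:isomorphism_tile_L}.
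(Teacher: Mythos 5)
Your proposal follows the paper's proof essentially step for step: the same correction term $\dot{\eta}'=\dot{\eta}-[\xi,\beta]$ for the image of $(-\overline{\partial})\oplus(-\overline{\partial})$, invertibility via $\tilde{\mathcal{L}}^{Dol}_{-\beta;Y+\delta\beta}\circ\tilde{\mathcal{L}}^{Dol}_{\beta;Y}=\mathrm{id}$, the same explicit chase of the square \eqref{eq:commulte_Dol_double_beta} through $\connectinghomo$, and the same deduction of vertical-space preservation from \Cref{prop:Dol_vertical_space_isomorphism}. The only blemish is a sign slip in your Leibniz computation: the rule is $\overline{\partial}[\xi,\beta_i]=[\overline{\partial}\xi,\beta_i]+[\xi,\overline{\partial}\beta_i]$ with $\overline{\partial}\beta_i=0$ (since $\beta_i$ is a holomorphic vector field), which yields exactly $-\overline{\partial}\dot{\eta}'_i=\dot{\nu}_i-[\mu,\beta_i]$ with no leftover term to absorb, confirming your guess.
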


\begin{proof}
First we check $\tilde{\mathcal{L}}^{Dol}_{\beta;Y}$ descends to a map from $\mathbb{T}^{Dol}_Y[\mathcal{U}]$ to $\mathbb{T}^{Dol}_{Y+\delta \beta}[\mathcal{U}]$.
Let $(\mu,\dot{\nu})\in \ker(D^{Y;1}_0)$ and $(\tilde{\mu},\tilde{\dot{\nu}})=\tilde{\mathcal{L}}^{Dol}_{\beta;Y}(\mu,\dot{\nu})$. Then,
\begin{align*}
\tilde{\dot{\nu}}_j-\tilde{\dot{\nu}}_i+[\tilde{\mu},Y_{ij}+\beta_j-\beta_i]
&=(\dot{\nu}_j+\partial [\xi_j,\beta_j])-(\dot{\nu}_i+\partial  [\xi_i,\beta_i])
+[\mu,Y_{ij}]+[\mu,\beta_j]-[\mu,\beta_i] \\
&=-[\mu,\beta_j]+[\mu,\beta_i]
+[\mu,\beta_j]-[\mu,\beta_i]=0.
\end{align*}
Hence, $(\tilde{\mu},\tilde{\dot{\nu}})\in \ker(D^{Y+\delta\beta;1}_0)$.

Suppose that $(\mu,\dot{\nu})\in {\rm Im}((-\overline{\partial})\oplus (-\overline{\partial}))$. There are $\xi\in H^0(\mathcal{U},\mathcal{A}^{0,0}(\Theta_{M_0}))$ and $\eta\in C^0(\mathcal{U},\mathcal{A}^{0,0}(\Theta_{M_0}))$ such that $\mu_i=-\overline{\partial}\xi_i$, $\dot{\nu}_i=-\overline{\partial}\dot{\eta}_i$ and
$$
\dot{\eta}_j-\dot{\eta}_i+[\xi,Y_{ij}]=0
$$
for $i$, $j\in I$.
Let $\tilde{\xi}=\xi$ and $\tilde{\eta}=\dot{\eta}-[\xi,\beta]$. Then
\begin{align*}
\tilde{\eta}_j-\tilde{\eta}_i+[\tilde{\xi},Y_{ij}+\beta_j-\beta_i]
&=(\dot{\eta}_j-[\xi_j,\beta_j])-(\dot{\eta}_i-[\xi_i,\beta_i])+[\xi,Y_{ij}+\beta_j-\beta_i]=0
\\
(-\overline{\partial})\oplus (-\overline{\partial})(\tilde{\xi},\tilde{\eta})_i
&=(-\overline{\partial}\tilde{\xi}_i,-\overline{\partial} \tilde{\eta}_i)=(-\overline{\partial}\xi_i, -\overline{\partial} \dot{\eta}_i-[-\overline{\partial}\xi_i,\beta_i] )\\
&=(\mu_i, \dot{\nu}_i-[\mu,\beta_i])=\tilde{\mathcal{L}}^{Dol}_{\beta;Y}(\mu,\dot{\nu})_i
\end{align*}
for $i\in I$.
Hence 
$\tilde{\mathcal{L}}^{Dol}_{\beta;Y}(\mu,\dot{\nu})\in {\rm Im}((-\overline{\partial})\oplus (-\overline{\partial}))$. Since $\tilde{\mathcal{L}}^{Dol}_{-\beta;Y+\delta\beta}\circ \tilde{\mathcal{L}}^{Dol}_{\beta;Y}$ is the identity map, $\tilde{\mathcal{L}}^{Dol}_{\beta;Y}$ descends a $\mathbb{C}$-linear isomorphism
$\mathcal{L}^{Dol}_{\beta;Y}\colon \mathbb{T}^{Dol}_Y[\mathcal{U}]\to \mathbb{T}^{Dol}_{Y+\delta\beta}[\mathcal{U}]$.

Next, we check the diagram \eqref{eq:commulte_Dol_double_beta} is commute in the case where $H^1(U_i,\Theta_{M_0})=0$ for $i\in I$.
Let $(\mu,\dot{\nu})\in \ker(D^{Y;1}_0)$, and $\tv{X,\dot{Y}}{Y}=\connectinghomo(\tvD{\mu,\dot{\nu}}{Y})$. Then, there are $\xi$, $\dot{\eta}\in C^0(\mathcal{U},\mathcal{A}^{0,0}(\Theta_{M_0})$ such that
\begin{align*}
(\mu_i,\dot{\nu}_i)&=(-\overline{\partial})\oplus (-\overline{\partial})(\xi,\dot{\eta})_i=(-\overline{\partial}\xi_i,-\overline{\partial}\dot{\eta}_i) \\
X_{ij}&=\xi_j-\xi_i \\
\dot{Y}_{ij}&=\dot{\eta}_j-\dot{\eta}_i+[\xi_i,Y_{ij}]
\end{align*}
for $i$, $j\in I$.
Let $\tilde{\xi}=\xi$ and $\tilde{\eta}=\dot{\eta}-[\xi,\beta]$. Then,
for $\beta\in C^0(\mathcal{U},\Theta_{M_0})$,
\begin{align*}
(-\overline{\partial})\oplus (-\overline{\partial})(\tilde{\xi},\tilde{\eta})_i
&=(-\overline{\partial}\tilde{\xi}_i,-\overline{\partial}\tilde{\eta}_i)
=(\mu_i, \dot{\nu}_i-[\mu_i,\beta_i])=\tilde{\mathcal{L}}^{Dol}_{\beta;Y}(\mu,\dot{\nu})_i \\
\tilde{\xi}_j-\tilde{\xi}_i
&=\xi_j-\xi_i=X_{ij}\\
\tilde{\eta}_j-\tilde{\eta}_i+[\tilde{\xi}_i,Y_{ij}+\beta_j-\beta_i]
&=(\dot{\eta}_j-[\xi_j,\beta_j])-(\dot{\eta}_i-[\xi_i,\beta_i])+
[\xi_i,Y_{ij}+\beta_j-\beta_i]\\
&=\dot{\eta}_j-\dot{\eta}_i+[\xi_i,Y_{ij}]-[\xi_j,\beta_j]+[\xi_i,\beta_j] \\
&=\dot{Y}_{ij}+K'(\beta,X)_{ij}.
\end{align*}
This implies that the connecting homomorphism $\connectinghomo$ sends
the equivalence class of $\tilde{\mathcal{L}}^{Dol}_{\beta;Y}(\mu,\dot{\nu})$ in 
$\mathbb{T}^{Dol}_{Y+\delta \beta}[\mathcal{U}]$
to that of $\tilde{\mathcal{L}}_{\beta;Y}(X,\dot{Y})$ in $\mathbb{T}_{Y+\delta \beta}[\mathcal{U}] $. This means that the diagram \eqref{eq:commulte_Dol_double_beta} is commute.
From \Cref{prop:Dol_vertical_space_isomorphism}, the isomorphism $\mathcal{L}^{Dol}_{\beta;Y}$ preserves the vertical spaces.
\end{proof}

\begin{definition}[Model space and Vertical space for cohomology classes]
\label{def:model_space_dol_independent}
For $\beta\in C^0(\mathcal{U},\Theta_{M_0})$, we identify $\mathbb{T}^{Dol}_Y[\mathcal{U}]$ and $\mathbb{T}^{Dol}_{Y+\delta\beta}[\mathcal{U}]$ by $\mathcal{L}^{Dol}_{\beta;Y}$. 
The identified $\mathbb{C}$-vector space $\mathbb{T}^{Dol}_{[Y]}[\mathcal{U}]$ is called the \emph{Dolbeault type presentation of the model space of the double tangent space at $[Y]\in H^1(M_0,\Theta_{M_0})\cong \teich_{x_0}\teich_g$}. We also define the \emph{vertical space} $\mathbb{T}^{Dol,V}_{[Y]}[\mathcal{U}]$ in the same manner.
\end{definition}

From \Cref{lem:Dol_base_change}, we have
\begin{corollary}
\label{coro:Dol_tangent_connecing_homo}
The connecting homomorphism $\connectinghomo$ induces an isomorphism
$$
\connectinghomo\colon \mathbb{T}^{Dol}_{[Y]}[\mathcal{U}]\to \mathbb{T}_{[Y]}[\mathcal{U}]
$$
which respects the vertical spaces.
\end{corollary}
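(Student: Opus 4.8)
The plan is to deduce the corollary directly from the compatibility square of \Cref{lem:Dol_base_change} together with the cocycle-level isomorphism of \Cref{thm:Dolbeault_pre}. Recall that $\mathbb{T}^{Dol}_{[Y]}[\mathcal{U}]$ and $\mathbb{T}_{[Y]}[\mathcal{U}]$ are, by construction, the spaces obtained by identifying, for every $\beta\in C^0(\mathcal{U},\Theta_{M_0})$, the cocycle-level spaces $\mathbb{T}^{Dol}_{Y+\delta\beta}[\mathcal{U}]$ (resp. $\mathbb{T}_{Y+\delta\beta}[\mathcal{U}]$) along the transition isomorphisms $\mathcal{L}^{Dol}_{\beta;Y}$ (resp. $\mathcal{L}_{\beta;Y}$). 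These transition isomorphisms obey the cocycle relation $\mathcal{L}^{Dol}_{\beta';Y+\delta\beta}\circ\mathcal{L}^{Dol}_{\beta;Y}=\mathcal{L}^{Dol}_{\beta'+\beta;Y}$, which is immediate from the bilinearity of the Lie bracket in the defining formula $\tilde{\mathcal{L}}^{Dol}_{\beta;Y}(\mu,\dot{\nu})=(\mu,\dot{\nu}-[\mu,\beta])$, and analogously for $\mathcal{L}_{\beta;Y}$ as already recorded in \Cref{def:model_space}; hence both systems of identifications are consistent and the two limit spaces are well defined.

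First I would observe that, since $\mathcal{U}$ satisfies $H^1(U_i,\Theta_{M_0})=0$ for all $i\in I$, \Cref{thm:Dolbeault_pre} furnishes for each cocycle $Y'$ in the orbit $\{Y+\delta\beta\}$ a $\mathbb{C}$-linear isomorphism $\connectinghomo\colon\mathbb{T}^{Dol}_{Y'}[\mathcal{U}]\to\mathbb{T}_{Y'}[\mathcal{U}]$. The diagram \eqref{eq:commulte_Dol_double_beta} of \Cref{lem:Dol_base_change} asserts exactly that $\mathcal{L}_{\beta;Y}\circ\connectinghomo=\connectinghomo\circ\mathcal{L}^{Dol}_{\beta;Y}$, i.e. that the family $\{\connectinghomo\}$ is a morphism between the two compatible systems of identifications. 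Consequently $\{\connectinghomo\}$ passes to the identified spaces and yields a well-defined $\mathbb{C}$-linear map $\connectinghomo\colon\mathbb{T}^{Dol}_{[Y]}[\mathcal{U}]\to\mathbb{T}_{[Y]}[\mathcal{U}]$; it is an isomorphism because, after choosing any cocycle in the orbit as a representative, it is realized by the isomorphism of \Cref{thm:Dolbeault_pre}.

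Next I would treat the vertical spaces. By \Cref{prop:Dol_vertical_space_isomorphism}, for each cocycle $Y'$ the cocycle-level map $\connectinghomo$ carries $\mathbb{T}^{Dol,V}_{Y'}[\mathcal{U}]$ isomorphically onto $\mathbb{T}^{V}_{Y'}[\mathcal{U}]$; since the transition isomorphisms $\mathcal{L}^{Dol}_{\beta;Y}$ and $\mathcal{L}_{\beta;Y}$ preserve the vertical spaces (\Cref{lem:Dol_base_change} and \Cref{prop:isomorphism_tile_L}), this matching descends to the identified spaces, so the induced $\connectinghomo$ restricts to an isomorphism $\mathbb{T}^{Dol,V}_{[Y]}[\mathcal{U}]\to\mathbb{T}^{V}_{[Y]}[\mathcal{U}]$, which is the remaining assertion.

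Since essentially all of the analytic content — descending the base-change map and matching it with the connecting homomorphism — has already been carried out in \Cref{lem:Dol_base_change} and \Cref{prop:Dol_vertical_space_isomorphism}, I expect no genuine obstacle; the only point that deserves attention is the bookkeeping that the two systems of identifications are strictly compatible (the cocycle relation for the $\mathcal{L}^{Dol}$'s), which legitimizes the passage to $\mathbb{T}^{Dol}_{[Y]}[\mathcal{U}]$, and this is immediate from the bilinearity of the Lie bracket.
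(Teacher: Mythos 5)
Your proposal is correct and follows essentially the same route as the paper, which derives the corollary directly from \Cref{lem:Dol_base_change} (the commutative square with the connecting homomorphism and the preservation of vertical spaces) combined with \Cref{thm:Dolbeault_pre} and \Cref{prop:Dol_vertical_space_isomorphism}. Your explicit verification of the cocycle relation for the transition maps $\mathcal{L}^{Dol}_{\beta;Y}$ is a detail the paper leaves implicit, but it is exactly the right bookkeeping and adds nothing that changes the argument.
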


\subsection{Vertical spaces and Tangent spaces}
Fix $\eta\in C^0(\mathcal{U},\mathcal{A}^{0,0}(\Theta_{M_0}))$ with $\delta\eta=Y$.
From \Cref{prop:equivalence_class} and \Cref{lem:Dol_base_change}, 
\begin{equation}
\label{eq:vertical_isomorphism_cohomology_class_dol}
\verticalincmodel{[Y]}\colon
\mathbb{T}^{Dol, V}_{[Y]}[\mathcal{U}]\ni \tvD{-\overline{\partial}\alpha,\dot{\nu}}{[Y]}\mapsto [\{\dot{\nu}_i-\overline{\partial}[\alpha,\eta_i]\}_{i\in I}]\in T_{x_0}\teich_g
\end{equation}
is a well-defined $\mathbb{C}$-linear mapping which satisfies the following diagram:
\begin{equation}
\label{eq:commute_vertical_TYU_Dol}
\begin{CD}
\mathbb{T}^{Dol, V}_{Y}[\mathcal{U}]
@>{\verticalincmodel{Y}}>>
T_{x_0}\teich_g
\\
@V{\cong}VV @|
\\
\mathbb{T}^{Dol, V}_{[Y]}[\mathcal{U}]
@>>{\verticalincmodel{[Y]}}>
T_{x_0}\teich_g,
\end{CD}
\end{equation}
since $\delta(\eta+\beta)=Y+\delta\beta$ for $\beta \in C^0(\mathcal{U},\Theta_{M_0})$.
The inverse of the map $\verticalincmodel{[Y]}$ is
\begin{equation}
\label{eq:commute_vertical_TYU_Dol2}
\begin{CD}
T_{x_0}\teich_g@>>> \mathbb{T}^{V}_{[Y]}[\mathcal{U}] \\
[\dot{\nu}] @>>> \tvD{0,\dot{\nu}}{[Y]}.
\end{CD}
\end{equation}

\subsection{Dolbeaut type presentation in terms of Beltrami differentials}
\label{subsec:Dolbeaut_type_presentation_Beltrami_differentials}
In this section, we discuss the presentation of the double tangent space in terms of Beltrami differentials.
The construction is almost same as that of the Dolbeaut presentation for cohomology classes.

Let $[Y]\in H^1(\mathcal{U},\Theta_{M_0})$.
For $\eta\in C^0(\mathcal{U},\mathcal{A}^{0,0}(\Theta_{M_0}))$ with $\delta\eta=Y$, we set
\begin{align*}
\mathbb{Z}^{Bel}_\eta[\mathcal{U}]
&=
\{(\mu,\dot{\nu})
\in 
H^0(\mathcal{U},\mathcal{A}^{0,1}(\Theta_{M_0}))
\oplus C^0(\mathcal{U},\mathcal{A}^{0,1}(\Theta_{M_0}))
\mid
\delta(\dot{\nu}+[\mu,\eta])=0\}
\\
\mathbb{B}^{Bel}_\eta[\mathcal{U}]
&=
\{(\xi,\zeta)
\in 
H^0(\mathcal{U},\mathcal{A}^{0,0}(\Theta_{M_0}))
\oplus C^0(\mathcal{U},\mathcal{A}^{0,0}(\Theta_{M_0}))
\mid
\delta(\zeta+[\xi,\eta])=0\},
\end{align*}
where $[\mu,\eta]=\{[\mu,\eta_i]\}_{i\in I}$ and $[\xi,\eta]=\{[\xi,\eta_i]\}_{i\in I}$.
We notice that as subspaces of $H^0(\mathcal{U},\mathcal{A}^{0,1}(\Theta_{M_0}))
\oplus C^0(\mathcal{U},\mathcal{A}^{0,1}(\Theta_{M_0}))$
 and $H^0(\mathcal{U},\mathcal{A}^{0,0}(\Theta_{M_0}))
\oplus C^0(\mathcal{U},\mathcal{A}^{0,0}(\Theta_{M_0}))$,
the above spaces $\mathbb{Z}^{Bel}_{\eta}[\mathcal{U}]$ and $\mathbb{B}^{Bel}_{\eta}[\mathcal{U}]$ coincide with the spaces $\ker(D^{Y;1}_0)$ and $\ker(D^{Y;0}_0)$ defined in \S\ref{subsec:Dol_type_representation}, respectively.

The linear map
$$
(-\overline{\partial})\oplus (-\overline{\partial})
\colon \mathbb{B}^{Bel}_\eta[\mathcal{U}]\to \mathbb{Z}^{Bel}_\eta[\mathcal{U}]
$$
is well-defined.
Indeed,
for $(\xi,\zeta)\in \mathbb{B}^{Bel}_\eta[\mathcal{U}]$,
let
$(\mu,\dot{\nu})=(-\overline{\partial})\oplus (-\overline{\partial})(\xi,\zeta)=(-\overline{\partial}\xi, -\overline{\partial}\zeta)$.
Then,
\begin{align*}
\delta(\dot{\nu}+[\mu,\eta])_{ij}&=(\dot{\nu}_j+[\mu,\eta_j])-(\dot{\nu}_i+[\mu,\eta_i]) \\
&=-\overline{\partial}(\xi_j-\xi_i)-[\overline{\partial}\xi,\eta_j-\eta_i])
=-\overline{\partial}\left((\xi_j-\xi_i)+[\xi,Y_{ij}]\right)
\\
&=-\overline{\partial}\left((\xi_j+[\xi,\eta_j]-(\xi_i+[\xi,\eta_i])\right)=0
\end{align*}
for $i$, $j\in I$.
We define 
\begin{equation}
\label{eq:dolbeaut_presentation_Beltrami}
\mathbb{T}^{Bel}_\eta[\mathcal{U}]
=
\mathbb{Z}^{Bel}_\eta[\mathcal{U}]
/
(-\overline{\partial})\oplus (-\overline{\partial})
(\mathbb{B}^{Bel}_\eta[\mathcal{U}]).
\end{equation}
We denote by $\tvDBel{\mu,\dot{\nu}}{\eta}\in \mathbb{T}^{Bel}_\eta[\mathcal{U}]$ the equivalence class of $(\mu,\dot{\nu})\in \mathbb{Z}^{Bel}_\eta[\mathcal{U}]$.
By definition,
there is a canonical identification
\begin{equation}
\label{eq:canonical_identification_Dol}
\mathbb{T}^{Dol}_Y[\mathcal{U}]
\ni 
\tvD{\mu,\dot{\nu}}{Y}
\mapsto \tvDBel{\mu,\dot{\nu}}{\eta}\in
\mathbb{T}^{Bel}_\eta[\mathcal{U}]
\end{equation}
since $\delta\eta=Y$.


The presentation $\mathbb{T}^{Bel}_\eta[\mathcal{U}]$ depends on the choice of $\eta$ with $\delta\eta=Y$.
Indeed, let $\eta'\in C^0(\mathcal{U},\mathcal{A}^{0,0}(\Theta_{M_0}))$
with $\delta \eta'=Y$. Then, the following diagram is commutative:
$$
\begin{CD}
\mathbb{B}^{Bel}_\eta[\mathcal{U}]
@>{(\xi,\zeta)\mapsto (\xi,\zeta+[\xi,\eta-\eta'])}>>
\mathbb{B}^{Bel}_{\eta'}[\mathcal{U}] \\
@V{(-\overline{\partial})\oplus (-\overline{\partial})}VV @VV{(-\overline{\partial})\oplus (-\overline{\partial})}V
\\
\mathbb{Z}^{Bel}_{\eta}[\mathcal{U}]
@>{(\xi,\zeta)\mapsto (\mu,\dot{\nu}+[\mu,\eta-\eta'])}>>
\mathbb{Z}^{Bel}_{\eta'}[\mathcal{U}].
\end{CD}
$$
In the diagram, we notice that $\eta-\eta'$ defines a global vector field on $M_0$
and $\overline{\partial}(\delta\eta)=\overline{\partial}(\delta\eta')=0$. Therefore,
we obtain a canonical isomorphism
\begin{equation}
\label{eq:canonical_isomorphism_Bel_Dol}
\mathbb{T}^{Bel}_{\eta}[\mathcal{U}]
\ni\tvDBel{\mu,\dot{\nu}}{\eta}
\mapsto
\tvDBel{\mu,\dot{\nu}+[\mu,\eta-\eta']}{\eta'}
\in\mathbb{T}^{Bel}_{\eta'}[\mathcal{U}].
\end{equation}

By applying the above discussion, for $\beta\in C^0(\mathcal{U},\Theta_{M_0})$, we can check that the linear map
$\tilde{\mathcal{L}}^{Dol}_{\beta;Y}$ on $C^0(\mathcal{U},\mathcal{A}^{0,1}_{\Theta})^{\oplus 2}$ defined in \eqref{eq:cohomologus_Dol} sends $\mathbb{Z}^{Bel}_{\eta}[\mathcal{U}]$
onto $\mathbb{Z}^{Bel}_{\eta+\beta}[\mathcal{U}]$, and descend to an isomorphism
\begin{equation}
\label{eq:canonical_isomorphism_Bel_Dol2}
\mathbb{T}^{Bel}_{\eta}[\mathcal{U}]\ni
\tvDBel{\mu,\dot{\nu}}{\eta}
\mapsto \tvDBel{\mu,\dot{\nu}-[\mu,\beta]}{\eta+\beta}
\in \mathbb{T}^{Bel}_{\eta+\beta}[\mathcal{U}].
\end{equation}
\begin{definition}[Dolbeaut presentation in terms of Beltrami differentials]
\label{def:dol_pre_Bel}
Let $[\nu]\in T_{x_0}\teich_g$. We define the \emph{Dolbeaut presentation of  the model space in terms of Beltrami differentials} $\mathbb{T}^{Dol}_{[\nu]}[\mathcal{U}]$ as the isomorphism class of the vector spaces $\{\mathbb{T}_{\eta}[\mathcal[\mathcal{U}]\}_\eta$ 
via \eqref{eq:canonical_isomorphism_Bel_Dol} and \eqref{eq:canonical_isomorphism_Bel_Dol2}, where $\eta$ runs all cochain $\eta\in C^0(\mathcal{U},\mathcal{A}^{0,0}(\Theta_{M_0}))$ with $[-\overline{\partial}\eta]=[\nu]$.
For $\tvDBel{\mu,\dot{\nu}}{\eta}\in \mathbb{T}^{Bel}_{\eta}[\mathcal{U}]$,
we denote by $\tvDBel{\mu,\dot{\nu}}{[\nu]}\in \mathbb{T}^{Bel}_{[\nu]}[\mathcal{U}]$
the equivalence class of $\tvDBel{\mu,\dot{\nu}}{\eta}$.
\end{definition}

Let $[Y]\in H^0(\mathcal{U},\Theta_{M_0})$ with $\mathscr{T}_{x_0}([Y])=[\nu]$.
We notice the following.
\begin{itemize}
\item
Two spaces $\mathbb{T}^{Dol}_{[Y]}[\mathcal{U}]$ and $\mathbb{T}^{Dol}_{[\nu]}[\mathcal{U}]$ are naturally isomorphic by  a canonical identification \eqref{eq:canonical_identification_Dol}.
\item
The connecting homomorphism $\connectinghomo\colon \mathbb{T}_{[Y]}[\mathcal{U}]\to \mathbb{T}^{Dol}_{[Y]}[\mathcal{U}]$ discussed in \Cref{prop:Dol_vertical_space_isomorphism} is naturally thought of as an isomorphism
$$
\connectinghomo \colon \mathbb{T}^{Bel}_{\eta}[\mathcal{U}]\to \mathbb{T}_{Y}[\mathcal{U}]
$$
when $\eta\in C^0(\mathcal{U},\mathcal{A}^{0,0}(\Theta_{M_0}))$ satisfies $\delta\eta=Y$.
\item
The vertical space in $\mathbb{T}^{Bel}_{[\nu]}[\mathcal{U}]$ is also described by the same argument as that in \S\ref{subsec:vertical_space_Dol} and \eqref{eq:vertical_isomorphism_cohomology_class_dol}. 
\end{itemize}


\section{Model of $T^*_{[Y]}T\teich_g$ and Model of the pairing}
\label{sec:Model_Tstar}


Let $\mathcal{U}=\{U_i\}_{i\in I}$ be a locally finite covering on $M_0$.
Let $x_0=(M_0,f_0)\in \teich_g$ and $Y\in Z^1(\mathcal{U},\Theta_{M_0})$.
We define the linear maps
\begin{align*}
D^{Y,\dagger}_1\colon C^0(\mathcal{U},\Omega_{M_0}^{\otimes 2})^{\oplus 2}\to C^1(\mathcal{U},\Omega_{M_0}^{\otimes 2})^{\oplus 2}
\end{align*}
by
$$
D^{Y,\dagger}_1(\psi,q)=(\delta \psi-L_Y(q),\delta q)
$$
where $L_Y(q)_{ij}=L_{Y_{ij}}(q_0)$ for $i$, $j\in I$.
We call
$$
\mathbb{T}^\dagger_{Y}[\mathcal{U}]=\ker(D^{Y,\dagger}_1)
$$
the \emph{model space of the cotangent space to the tangent bundle for a cocycle $Y\in Z^1(\mathcal{U},\Theta_{M_0})$}. Notice that the inclusion on the first coordinate
$$
T^*_{x_0}\teich_g=\mathcal{Q}_{M_0}=H^0(\mathcal{U},\Omega_{M_0}^{\otimes 2})\ni \psi\mapsto
(\psi,0)\in
\mathbb{T}^\dagger_{Y}[\mathcal{U}]
$$
is well-defined. We call this map the \emph{horizontal inclusion}. The projection, which we call the \emph{vertical projection},
on the second coordinate
$$
\mathbb{T}^\dagger_{Y}[\mathcal{U}]\ni (\psi,q)\mapsto q\in \mathcal{Q}_{M_0}=T^*_{x_0}\teich_g
$$
leads the identification between the cokernel of the horizontal inclusion and $\mathcal{Q}_{M_0}$. Thus, we have the following exact sequence
\begin{equation}
\label{eq:cotangent_space_tangent_bundle_cycle}
\minCDarrowwidth10pt
\begin{CD}
0
@>>>
H^0(\mathcal{U},\Omega_{M_0}^{\otimes 2})
@>>>
\mathbb{T}^\dagger_{Y}[\mathcal{U}] @>>>
H^0(\mathcal{U},\Omega_{M_0}^{\otimes 2})
@>>> 0 \\
@.
\psi
@>>> (\psi,0) \\
@.@. (\psi,q) @>>> q
\end{CD}
\end{equation}
In particular, $\dim_{\mathbb{C}}\mathbb{T}^\dagger_{Y}[\mathcal{U}] = 6g-6$ when $H^1(U_i,\Theta_{M_0})=0$ for $i\in I$ (cf. \Cref{thm:Trivialization_pairing_TTT_g} later).

To define the \emph{model space} $\mathbb{T}^\dagger_{[Y]}[\mathcal{U}]$ \emph{of the cotangent space to the tangent bundle for $[Y]\in H^1(\mathcal{U},\Theta_{M_0})$},
we identify
two spaces $\mathbb{T}^\dagger_{Y}[\mathcal{U}]$ and $\mathbb{T}^\dagger_{Y+\delta\beta}[\mathcal{U}]$ for $\beta\in C^0(\mathcal{U},\Theta_{M_0})$ via the $\mathbb{C}$-linear isomorphism
\begin{equation}
\label{eq:equivalence_cotangent_to_tangent}
\mathcal{L}^\dagger_{\beta;Y}\colon
\mathbb{T}^\dagger_{Y}[\mathcal{U}]\ni (\psi,q)
\to
(\psi+L_\beta(q),q)
\in
\mathbb{T}^\dagger_{Y+\delta\beta}[\mathcal{U}].
\end{equation}
We denote by $\tvdag{\psi,q}{[Y]}\in \mathbb{T}^\dagger_{[Y]}[\mathcal{U}]$ the corresponding element to $(\psi,q)\in  \mathbb{T}^\dagger_{Y}[\mathcal{U}]$. From the definition, $\mathcal{L}^\dagger_{\beta;Y}$ commutes the horizontal inclusion. Thus, the diagram \eqref{eq:cotangent_space_tangent_bundle_cycle} leads the following exact sequence:
\begin{equation}
\label{eq:cotangent_space_tangent_bundle_cohom_class}
\minCDarrowwidth10pt
\begin{CD}
0
@>>>
H^0(\mathcal{U},\Omega_{M_0}^{\otimes 2})
@>>>
\mathbb{T}^\dagger_{[Y]}[\mathcal{U}] @>>>
H^0(\mathcal{U},\Omega_{M_0}^{\otimes 2})
@>>> 0 \\
@.
\psi
@>>> \tvdag{\psi,0}{[Y]}\ \\
@.@. \tvdag{\psi,q}{[Y]}\ @>>> q.
\end{CD}
\end{equation}

\subsection{Model of Pairing between $TT\teich_g$ and $T^*T\teich_g$}
\label{subsec:model_pairing_TT_TstarT}
Let $\mathcal{U}=\{U_i\}_{i\in I}$ be a locally finite covering.
Let $Y\in Z^1(\mathcal{U},\Theta_{M_0})$.
To define the \emph{pairing} between $\mathbb{T}_{Y}[\mathcal{U}]$ and $\mathbb{T}^{\dagger}_{Y}[\mathcal{U}]$, we start with the following lemma.
\begin{lemma}
\label{lem:pairing_tangent_cotangent}
For $(X,\dot{Y})\in \ker(D^Y_1)$, $(\psi, q)\in \ker(D^{Y,\dagger}_1)$, and $\xi\in C^0(\mathcal{U},\mathcal{A}^{0,0}(\Theta_{M_0}))$ with $\delta\xi=X$,
we define
\begin{align*}
\omega_{ij}
&=X_{ij}\psi_i-\dot{Y}_{ji}q+L_{Y_{ij}}(\xi_jq)
\\
&={\color{black}X_{ij}\psi_i+(\dot{Y}_{ij}+[X_{ij},Y_{ij}])q+L_{Y_{ij}}(\xi_jq)}.
\end{align*}
Then, $\{\omega_{ij}\}_{i,j\in I}\in C^1(\mathcal{U},\mathcal{A}^{0,0}(\Omega_{M_0}))$ satisfies the following
\begin{itemize}
\item[(1)]
$\omega=\{\omega_{ij}\}_{i,j\in I}\in Z^1(\mathcal{U},\mathcal{A}^{0,0}(\Omega_{M_0}))$.
\item[(2)]
Let $A=\{A_i\}_{i\in I}\in C^0(\mathcal{U},\mathcal{A}^{0,0}(\Omega_{M_0}))$ with $\delta A=\omega$, and $\eta\in C^0(\mathcal{U},\mathcal{A}^{0,0}(\Theta_{M_0}))$ with $\delta\eta=Y$.
Then, $\overline{\partial}A-L_\eta(d(\xi q))=\{(A_i)_{\overline{z}}-L_{\eta_i}((\xi_i)_{\overline{z}}q)\}_{i\in I}$ defines an area element on $M_0$.
\item[(3)]
Under the notation in (2), the integral
\begin{align}
\label{eq:pairing_definition_TM_TstarM}
\tilde{\mathcal{P}}_{\mathbb{TT}}((X,\dot{Y}), (\psi, q))
&=
-\dfrac{1}{2i}\iint_{M_0}((A_i)_{\overline{z}}-L_{\eta_i}((\xi_i)_{\overline{z}}q))d\overline{z}\wedge dz
\end{align}
is independent of the choice of the representatives $Y$ of $[Y]$, $(X,\dot{Y})$ in $\tv{X,\dot{Y}}{Y}$ and $(\psi, q)\in \ker(D^{Y,\dagger}_1)$ in $\tvdag{\psi,q}{[Y]}$.
\end{itemize}
\end{lemma}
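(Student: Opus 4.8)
The plan is to verify the three assertions in order, each reducing to a cocycle-type computation using the defining relations $(X,\dot{Y})\in\ker(D^Y_1)$ and $(\psi,q)\in\ker(D^{Y,\dagger}_1)$, together with the Lie-derivative identities \eqref{eq:Lie-derivative_1}, \eqref{eq:Lie-derivative_2}, \eqref{eq:Lie-derivative_04} and the relations \eqref{eq:S-L-zeta1}--\eqref{eq:K_prime-bra-S}. First, for (1) I would compute $(\delta\omega)_{ijk}=\omega_{jk}-\omega_{ik}+\omega_{ij}$ on $U_i\cap U_j\cap U_k$, substituting the second expression for $\omega_{ij}$ and grouping terms by type: the terms containing $\psi$ collapse using $\delta X=0$ (since $X\in Z^1$) and $\delta\psi=L_Y(q)$; the terms containing $q$ without $L$ collapse because $(X,\dot Y)\in\ker(D_1^Y)$ forces $\delta(\dot Y+\tfrac12[X,Y])=\zeta(X,Y)$ and $\dot Y_{ij}+\dot Y_{ji}+[X_{ij},Y_{ij}]=0$; and the terms $L_{Y_{ij}}(\xi_j q)$ recombine with the remaining $L_Y(q)$-terms and $\zeta(X,Y)$-contributions via \eqref{eq:Lie-derivative_1} and the definition of $\zeta$, using $\delta\xi=X$. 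The upshot is that all $1$-chain obstructions cancel, so $\delta\omega=0$ and $\omega\in Z^1(\mathcal{U},\mathcal{A}^{0,0}(\Omega_{M_0}))$.

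For (2), having $A\in C^0$ with $\delta A=\omega$, I would check that $(A_i)_{\overline z}-L_{\eta_i}((\xi_i)_{\overline z}q)$ is independent of $i$ on overlaps, i.e. that $\overline\partial A-L_\eta(d(\xi q))\in Z^0(\mathcal{U},\mathcal{A}^{0,1}(\Omega_{M_0}))=H^0(M_0,\mathcal{A}^{0,1}(\Omega_{M_0}))$. This amounts to showing $(\delta(\overline\partial A))_{ij}=\overline\partial\omega_{ij}$ equals $(\delta(L_\eta(d(\xi q))))_{ij}=L_{\eta_j}(d(\xi_j q))-L_{\eta_i}(d(\xi_i q))$; I would expand $\overline\partial\omega_{ij}$ using $\overline\partial X_{ij}=0$, $\overline\partial\psi_i=0$, $\overline\partial q=0$ (all holomorphic), the Leibniz rule \eqref{eq:Lie-derivative_0} for $\overline\partial L_{Y_{ij}}(\xi_j q)$, and $\overline\partial\xi_j$-terms, matching against the target via $\delta\eta=Y$ and \eqref{eq:Lie-derivative_04}. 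So the difference is a globally defined area form.

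For (3), I would show the integral \eqref{eq:pairing_definition_TM_TstarM} is unchanged under the three allowed modifications. Changing $A$ to $A+\delta B^{\,0}$-type ambiguity is not available ($A$ is only defined up to adding a global holomorphic $1$-form, which is $0$ on a closed surface of genus $g\ge2$, so $A$ is unique); changing the representative $(X,\dot Y)\mapsto(X,\dot Y)+D_0^Y(\alpha,\beta)$ and correspondingly $\xi\mapsto\xi+\alpha$ (plus a global vector field) alters $\omega$ by a coboundary whose primitive's $\overline\partial$ integrates to $0$ over $M_0$ by Stokes, once one uses that the extra terms assemble into $d(\text{global }1\text{-form})$; changing $(\psi,q)\mapsto(\psi+L_\beta(q),q)$ together with $Y\mapsto Y+\delta\beta$, $\eta\mapsto\eta+\beta$ is exactly matched by the isomorphism $\mathcal{L}^\dagger_{\beta;Y}$ of \eqref{eq:equivalence_cotangent_to_tangent} and the change-of-cocycle identity \eqref{eq:cohomologus}, and one checks the integrand transforms by an exact form. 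The main obstacle I anticipate is (3): tracking all the simultaneous changes of $Y$, $\xi$, $\eta$, $(X,\dot Y)$, $(\psi,q)$ consistently and confirming that every discrepancy is an exact $2$-form on $M_0$ (hence integrates to zero), rather than merely a global one; this bookkeeping, organized via the identities \eqref{eq:Lie-derivative_01}--\eqref{eq:Lie-derivative_04} and Stokes' theorem $\iint_{M_0}d(\,\cdot\,)=0$, is where the real work lies, while (1) and (2) are comparatively mechanical cocycle manipulations.
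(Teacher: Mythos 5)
Your plans for (1) and (2) follow essentially the same route as the paper: a direct computation of $(\delta\omega)_{ijk}$ using $\delta\xi=X$, $\delta\psi=L_Y(q)$, the kernel conditions for $D_1^Y$, and \eqref{eq:Lie-derivative_1}; and for (2) a check that the cocycle discrepancy of $\overline{\partial}A$ matches that of $L_\eta((\xi)_{\overline z}q)$. Your plan for (3) is also structurally the right one (every discrepancy must be shown to be an exact $2$-form, then Stokes), and the treatments of the changes of $(X,\dot Y)$, of $Y$ within $[Y]$, and of $(\psi,q)$ are in line with what the paper does.

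However, there is a concrete error in your handling of the ambiguity of $A$. You claim $A$ is unique because it is "defined up to adding a global holomorphic $1$-form, which is $0$ on a closed surface of genus $g\ge 2$." This is wrong twice over. First, if $\delta A=\delta A'=\omega$ then $A-A'$ is a global \emph{smooth} $(1,0)$-form, a section of $\mathcal{A}^{0,0}(\Omega_{M_0})$, with no holomorphicity forced on it; the space of such forms is infinite-dimensional. Second, even the space of global holomorphic $1$-forms on a closed surface of genus $g$ is $H^0(M_0,\Omega_{M_0})\cong\mathbb{C}^g\ne 0$; it is holomorphic \emph{vector fields} ($H^0(M_0,\Theta_{M_0})$) that vanish when $\chi(M_0)<0$. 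So the $A$-independence is a genuine check you cannot skip; the correct argument is exactly the Stokes computation you use elsewhere, namely
$$
\iint_{M_0}\overline{\partial}(A_i-A'_i)\,d\overline{z}\wedge dz=\iint_{M_0}d\bigl((A_i-A'_i)\,dz\bigr)=0
$$
since $A-A'$ is a global $1$-form. Relatedly, you never address the independence of the choice of $\eta$ with $\delta\eta=Y$ fixed (which varies by a global smooth vector field $H$); this requires its own Stokes argument, $\iint_{M_0}L_H(\mu q)\,d\overline{z}\wedge dz=-\iint_{M_0}d\bigl((H\mu q)\,d\overline{z}\bigr)=0$. Neither omission is fatal to the strategy, but as written your item (3) would not close.
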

\begin{proof}
(1)
\quad
From the argument in the proof of \Cref{prop:vertical_space} and \eqref{eq:Lie-derivative_1},
\begin{align*}
(\delta \omega)_{ijk}
&=
(X_{ij}\psi_i-\dot{Y}_{ji}q+L_{Y_{ij}}(\xi_jq))
+(X_{jk}\psi_j-\dot{Y}_{kj}q+L_{Y_{jk}}(\xi_jq))
\\
&\quad +(X_{ki}\psi_k-\dot{Y}_{ik}q+L_{Y_{ki}}(\xi_iq))
\\
&=\xi_{i}(\psi_k-\psi_i)+\xi_{j}(\psi_i-\psi_j)+\xi_k(\psi_j-\psi_k)\\
&\quad-([\xi_j,Y_{ji}]+[\xi_k,Y_{kj}]+[\xi_i,Y_{ik}])q \\
&\qquad +L_{Y_{ij}}(\xi_jq)+L_{Y_{jk}}(\xi_jq)+L_{Y_{ki}}(\xi_kq)
\\
&=\xi_{i}L_{Y_{ik}}(q)+\xi_{j}L_{Y_{ji}}(q)+\xi_kL_{Y_{kj}}(q)\\
&\quad-([\xi_j,Y_{ji}]+[\xi_k,Y_{kj}]+[\xi_i,Y_{ik}])q \\
&\qquad -L_{Y_{ji}}(\xi_jq)-L_{Y_{kj}}(\xi_kq)-L_{Y_{ik}}(\xi_iq)
=0
\end{align*}
for $i$, $j$, $k\in I$.

\medskip
(2)\quad
The existence of such an $A=\{A_i\}_{i\in I}$ follows from the fact that the sheaf $\mathcal{A}^{0,0}(\Omega_{M_0})$ is fine (e.g. \cite[\S3.4]{MR815922}).
Since $(\xi_j)_{\overline{z}}=(\xi_i)_{\overline{z}}$ on $U_i\cap U_j$,
for $z\in z_j(U_i\cap U_j)$,
\begin{align*}
&(A_j)_{\overline{z}}(z)
-
(A_i)_{\overline{z}}(z_{ij}(z))\left|\dfrac{dz_{ij}}{dz}(z)\right|^2
=
\left(
A_j(z)-A_i(z_{ij}(z))\dfrac{dz_{ij}}{dz}(z)
\right)_{\overline{z}}
\\
&=\left(
\left(
X_{ij}(z)\psi_i(z_{ij}(z))-\dot{Y}_{ji}(z)q_i(z_{ij}(z))+L_{Y_{ij}}(\xi_jq_j)(z_{ij}(z))
\right)\dfrac{dz_{ij}}{dz}(z)
)
\right)_{\overline{z}}
\\
&=L_{Y_{ij}}((\xi_j)_{\overline{z}}q_j)(z_{ij}(z))\left|\dfrac{dz_{ij}}{dz}(z)\right|^2
\\
&=L_{\eta_j}((\xi_j)_{\overline{z}}q_j)(z)
-L_{\eta_i}((\xi_i)_{\overline{z}}q_i)(z_{ij}(z)))\left|\dfrac{dz_{ij}}{dz}(z)\right|^2,
\end{align*}
where $q=\{q_i\}_{i\in I}\in H^0(\mathcal{U},\Omega_{M_0}^{\otimes 2})=\mathcal{Q}_{x_0}$.
This implies what we wanted.

\medskip
(3)\quad
We will show that the integral is independent of the choices of 
\begin{itemize}
\item[(i)] $A\in C^0(\mathcal{U},\mathcal{A}^{0,0}(\Omega_{M_0}))$ with $\delta A=\omega$;
\item[(ii)] $\eta\in C^0(\mathcal{U},\mathcal{A}^{0,0}(\Theta_{M_0}))$ with $\delta \eta= Y$;
\item[(iii)] $\xi\in C^0(\mathcal{U},\mathcal{A}^{0,0}(\Theta_{M_0}))$ with $\xi = X$;
\item[(iv)] representations $Y\in Z^1(\mathcal{U},\Theta_{M_0}))$ in $[Y]$; and
\item[(v)] representations $(X,\dot{Y})\in \ker(D^Y_1)$ of $\tv{X,\dot{Y}}{Y}$.
\end{itemize}

(i) {\bf The choice of $A$:}
Let $A'=\{A'_i\}_{i\in I}\in C^0(\mathcal{U},\mathcal{A}^{0,0}(\Omega_{M_0}))$ with $\delta A'=\omega$.
Then, $A-A'=(A(z)-A'(z))dz$ is a smooth $1$-form on $M_0$ and
\begin{align*}
&\iint_{M_0}\left((A_i)_{\overline{z}}-L_{\eta_i}((\xi_i)_{\overline{z}} q)\right)d\overline{z}\wedge dz-
\iint_{M_0}\left((A'_i)_{\overline{z}}-L_{\eta_i}((\xi_i)_{\overline{z}} q)\right)d\overline{z}\wedge dz \\
&=\iint_{M_0}\overline{\partial}(A_i-A'_i)d\overline{z}\wedge dz
=\iint_{M_0}d(A_i dz-A'_idz)=0
\end{align*}
from the Stokes theorem.

\medskip
\noindent
(ii) {\bf The choice of $\eta$:}
Let $\eta'\in C^0(\mathcal{U},\mathcal{A}^{0,0}(\Theta_{M_0}))$ with
$\delta \eta'=Y$. Then $H=\eta'-\eta\in H^0(\mathcal{U},\mathcal{A}^{0,0}(\Theta_{M_0}))$. Since $\mu=\overline{\partial}\xi\in H^0(\mathcal{U},\mathcal{A}^{0,1}(\Theta_{M_0}))$ and $H\mu q=H(z)\mu(z) q(z)d\overline{z}$ is a global $(0,1)$-form on $M_0$,
\begin{align*}
&\iint_{M_0}\left((A_i)_{\overline{z}}-L_{\eta_i}(\mu q)\right)d\overline{z}\wedge dz-
\iint_{M_0}\left((A_i)_{\overline{z}}-L_{\eta_i'}(\mu q)\right)d\overline{z}\wedge dz \\
&=\iint_{M_0} L_{H}(\mu q)d\overline{z}\wedge dz=\iint_{M_0} (H\mu q)_z(z)d\overline{z}\wedge dz
\\
&=-\iint_{M_0} d\left((H\mu q) d\overline{z}\right)=0.
\end{align*}

\medskip
\noindent
(iii) {\bf The choice of $\xi$:}
Let $\xi'\in C^0(\mathcal{U},\mathcal{A}^{0,0}(\Theta_{M_0}))$ with $\delta \xi'=X$. Take $A'=\{A'_i\}_{i\in I}\in C^0(\mathcal{U},\mathcal{A}^{0,0}(\Omega_{M_0}))$ satisfying
$$
A'_jdz-A'_idz=(X_{ij}\psi_i-\dot{Y}_{ji}q+L_{Y_{ij}}(\xi'_jq))dz
$$
for $i$, $j\in I$. Then,  $\Xi=\xi'-\xi$ defines a smooth vector field on $M_0$.
Let $\nu=\overline{\partial}\eta\in H^0(\mathcal{U},\mathcal{A}^{0,1}(\Theta_{M_0}))$. Define $B=\{B_idz\}_{i\in I}\in C^0(\mathcal{U},\mathcal{A}^{0,0}(\Omega_{M_0}))$ by
$$
B_idz=(A'_i-A_i-L_{\eta_i}(\Xi q))dz.
$$
Then, 
\begin{align*}
(\delta B)_{ij}
&=B_jdz-B_idz\\
&=L_{Y_{ij}}(\xi'_jq)dz-L_{Y_{ij}}(\xi_jq)dz-(L_{\eta_j}(\Xi q)-L_{\eta_i}(\Xi q))dz \\
&=L_{Y_{ij}}(\Xi q)dz-L_{\eta_j-\eta_i}(\Xi q)dz=0.
\end{align*}
Hence, $B$ defines a global $1$-form on $M_0$. Notice that
\begin{align*}
d(L_{\eta_i}(\Xi q)dz)
&=d((\eta_i\Xi q)_zdz)
=
((\eta_i\Xi q)_{z\overline{z}})d\overline{z}\wedge dz
\\
&=
(-(\nu\Xi q)_{z}+(\eta_i \Xi_{\overline{z}}q)_z)d\overline{z}\wedge dz\\
&=
(-(\nu\Xi q)_{z}+L_{\eta_i}(\Xi_{\overline{z}}q))d\overline{z}\wedge dz.
\end{align*}
Therefore, we obtain
\begin{align*}
&d (B_idz)=
d((A'_i-A_i-L_{\eta_i}(\Xi q))dz) \\
&=((A'_i)_{\overline{z}}-(A_i)_{\overline{z}}
+(\nu\Xi q)_{z}-L_{\eta_i}(\Xi_{\overline{z}}q)
)
d\overline{z}\wedge dz \\
&=
\left((A'_i)_{\overline{z}}
-L_{\eta_i}((\xi'_i)_{\overline{z}}q)
\right)d\overline{z}\wedge dz
-\left((A_i)_{\overline{z}}
-L_{\eta_i}((\xi_i)_{\overline{z}}q)
\right)d\overline{z}\wedge dz+(\nu\Xi q)_{z}d\overline{z}\wedge dz
\\
&=
\left((A'_i)_{\overline{z}}
-L_{\eta_i}((\xi'_i)_{\overline{z}}q)
\right)d\overline{z}\wedge dz
-\left((A_i)_{\overline{z}}
-L_{\eta_i}((\xi_i)_{\overline{z}}q)
\right)d\overline{z}\wedge dz-d((\nu\Xi q)d\overline{z})
\end{align*}
for $i\in I$.
This means that
\begin{align*}
&
\iint_{M_0}
\left(
(A'_i)_{\overline{z}}
-L_{\eta_i}((\xi'_i)_{\overline{z}}q)
\right)d\overline{z}\wedge dz
-
\iint_{M_0}
\left(
(A_i)_{\overline{z}}
-L_{\eta_i}((\xi_i)_{\overline{z}}q)
\right)d\overline{z}\wedge dz \\
&=\iint_{M_0}d(B_idz+(\nu \Xi q)d\overline{z})=0.
\end{align*}

\medskip
\noindent
{\bf (iv) The choice of $Y$ in $[Y]$:}
Take $A\in C^0(\mathcal{U},\mathcal{A}^{0,0}(\Theta_{M_0}))$ and the cocycle $\omega$ for $Y$ as the statement.
Let $\beta\in C^0(\mathcal{U},\Theta_{M_0})$. From \Cref{prop:isomorphism_tile_L} and \eqref{eq:equivalence_cotangent_to_tangent}, for $Y+\delta \beta$, 
the representatives of $\tv{X,\dot{Y}}{[Y]}$ and $\tvdag{\psi, q}{[Y]}$ are $(X,\dot{Y}+K'(\beta,X))$ and $(\psi+L_\beta(q),q)$ respectively. Hence, from \eqref{eq:Lie-derivative_01} and \eqref{eq:Lie-derivative_1}, the cocycle $\omega'$ which we consider here should be
\begin{align*}
\omega'_{ij}dz
&=X_{ij}(\psi_i+L_{\beta_i}(q))dz
-(\dot{Y}_{ji}+[\beta_i,X_{ji}])qdz
+L_{Y_{ij}+\beta_j-\beta_i}(\xi_jq)dz
\\
&=(X_{ij}\psi_i-\dot{Y}_{ji}q+L_{Y_{ij}}(\xi_jq)+X_{ij}L_{\beta_i}(q)
-[\beta_i,X_{ji}]q+L_{\beta_j-\beta_i}(\xi_jq))dz
\\
&=(X_{ij}\psi_i-\dot{Y}_{ji}q+L_{Y_{ij}}(\xi_jq)
+
L_{\beta_j}(\xi_jq)-L_{\beta_i}(\xi_iq))dz.
\end{align*}
From (i) and (ii), we may take $A'=\{A'_idz\}_{i\in I}=\{A_idz+L_{\beta_i}(\xi_iq)dz\}_{i\in I}$. In this case, the integrand becomes
\begin{align*}
((A'_i)_{\overline{z}}-L_{\eta_i+\beta_i}((\xi_i)_{\overline{z}}q))d\overline{z}\wedge dz
&=((A_i+L_{\beta_i}(\xi_iq))_{\overline{z}}-L_{\eta_i+\beta_i}((\xi_i)_{\overline{z}}q))d\overline{z}\wedge dz
\\
&=((A_i)_{\overline{z}}-L_{\eta_i}((\xi_i)_{\overline{z}}q))d\overline{z}\wedge dz,
\end{align*}
which implies the integral does not change.

\medskip
\noindent
{\bf (v) The choice of $(X,\dot{Y})$ in $\ker(D^Y_1)$:}
Take $A\in C^0(\mathcal{U},\mathcal{A}^{0,0}(\Theta_{M_0}))$ and the cocycle $\omega$ for $Y$ as the statement.
Let $\alpha$, $\beta\in C^0(\mathcal{U},\Theta_{M_0})$. We consider $(X',\dot{Y}')=(X,\dot{Y})+D_0^Y(\alpha,\beta)$. In this case, the cocycle $\omega'$ which we consider here is
\begin{align*}
\omega'_{ij}dz&=
(X_{ij}+\alpha_j-\alpha_i)\psi_idz
-(\dot{Y}_{ji}+\beta_i-\beta_j+[\alpha_j,Y_{ji}])qdz
+L_{Y_{ij}}((\xi_j+\alpha_j)q)dz\\
&=\left(X_{ij}\psi_i-\dot{Y}_{ji}q+L_{Y_{ij}}(\xi_jq)
+(\alpha_j\psi_j-\alpha_i\psi_i+\beta_iq-\beta_jq
\right. \\
&\quad 
\left.
-\alpha_jL_{Y_{ij}}(q)-[\alpha_j,Y_{ij}]qdz+L_{Y_{ij}}(\alpha_jq)
\right)dz
\\
&=\left(X_{ij}\psi_i-\dot{Y}_{ji}q+L_{Y_{ij}}(\xi_jq)\right)dz
+(\alpha_j\psi_j-\beta_jq)dz-(\alpha_i\psi_i-\beta_iq)dz
\end{align*}
from \eqref{eq:Lie-derivative_1}. Therefore, from (i) and (ii), we may take $A'=\{A'_i\}_{i\in I}$ defined by
$$
A_i'dz=A_idz+(\alpha_i\psi_i-\beta_iq)dz
$$
for $i\in I$. Then, $\delta A'=\omega'$ and
the integrand satisfies
\begin{align*}
((A'_i)_{\overline{z}}-L_{\eta_i}((\xi_j+\alpha_j)_{\overline{z}}q))d\overline{z}\wedge dz
=(A_i-L_{\eta_i}((\xi_i)_{\overline{z}}q)d\overline{z}\wedge dz
\end{align*}
and the integral does not change.
\end{proof}

\begin{definition}[Model of the pairing]
\label{def:model_pairing}
Let $[Y]\in H^0(\mathcal{U},\Theta_{M_0})$.
The \emph{model of the pairing}
$$
\mathcal{P}_{\mathbb{TT}}\colon \mathbb{T}_{[Y]}[\mathcal{U}]\oplus  \mathbb{T}^\dagger_{[Y]}[\mathcal{U}]\to \mathbb{C}
$$
is defined by 
$$
\mathcal{P}_{\mathbb{TT}}\left(
\tv{X,\dot{Y}}{[Y]},\tvdag{\psi, q}{[Y]}
\right)
=
\tilde{\mathcal{P}}_{\mathbb{TT}}\left((X,\dot{Y}), (\psi, q)\right).
$$
Notice in the calculation that we first fix a representative $Y$ in $[Y]$, and take representatives $(X,\dot{Y})\in \ker(D^Y_1)$ of $\tv{X,\dot{Y}}{[Y]}$ and $(\psi,q)\in \ker(D^{Y,\dagger}_1)$ of $\tvdag{\psi, q}{[Y]}$.
\end{definition}

We claim the non-degeneracy of the model of the pairing. It can be also deduced from the presentation of the model of the pairing under the trivialization (cf. \Cref{thm:Trivialization_pairing_TTT_g}).

\begin{proposition}
\label{prop:non-degenerate}
When the locally finite covering $\mathcal{U}=\{U_i\}_{i\in I}$ satisfies $H^1(U_i,\Theta_{M_0})=0$ for $i\in I$,
the model of the pairing $\mathcal{P}_{\mathbb{TT}}$ is non-degenerate.
\end{proposition}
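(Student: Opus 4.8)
The plan is to exploit the two exact sequences already in hand: the sequence \eqref{eq:cotangent_space_tangent_bundle_cohom_class} for $\mathbb{T}^\dagger_{[Y]}[\mathcal{U}]$, and the corresponding picture for $\mathbb{T}_{[Y]}[\mathcal{U}]$ built from the vertical space $\mathbb{T}^V_{[Y]}[\mathcal{U}]$ and the horizontal projection. Both fit into short exact sequences with $H^0(\mathcal{U},\Omega_{M_0}^{\otimes 2})=\mathcal{Q}_{x_0}=T^*_{x_0}\teich_g$ at the ends (for $\mathbb{T}^\dagger$) and $H^1(\mathcal{U},\Theta_{M_0})=T_{x_0}\teich_g$ at the ends (for $\mathbb{T}$, via $\verticalincmodel{[Y]}$ from \Cref{prop:vertical_space} and the horizontal projection onto $H^1$). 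The Teichm\"uller duality \eqref{eq:pairing_Teichmuller} between $T_{x_0}\teich_g$ and $\mathcal{Q}_{x_0}$ is non-degenerate, so the strategy is to show that under these identifications the model pairing $\mathcal{P}_{\mathbb{TT}}$ restricts and descends to the Teichm\"uller pairing on the sub/quotient pieces, and then invoke the standard fact that a bilinear pairing between two $6g-6$–dimensional spaces filtered by $3g-3$–dimensional pieces, which is non-degenerate on the associated graded, is itself non-degenerate.

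Concretely, I would carry out the following steps. First I would compute $\mathcal{P}_{\mathbb{TT}}(\tv{X,\dot{Y}}{[Y]},\tvdag{\psi,0}{[Y]})$ for a horizontal cotangent vector $(\psi,0)$: here $q=0$, so in \Cref{lem:pairing_tangent_cotangent} the cocycle is $\omega_{ij}=X_{ij}\psi_i$, which is exactly the Čech cocycle $X\psi=\{X_{ij}\psi_i\}$ representing the cup product, and the integral \eqref{eq:pairing_definition_TM_TstarM} reduces (via $\delta A = X\psi$ and $A=\{\xi_i\psi\}$ up to a global $1$-form, using $\delta\xi=X$) to $-\tfrac{1}{2i}\iint_{M_0}(-(\xi_i)_{\bar z}\psi)\,d\bar z\wedge dz$, which by \eqref{eq:pairing_2} is precisely $\langle [X],\psi\rangle = \mathcal{P}_{\teich_g}(v,\psi)$ with $v = \mathscr{T}_{x_0}([X])$ the horizontal projection of $\tv{X,\dot{Y}}{[Y]}$. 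This shows the model pairing restricted to (double tangent) $\times$ (horizontal cotangent) factors through the Teichm\"uller pairing on $T_{x_0}\teich_g\times \mathcal{Q}_{x_0}$, hence is non-degenerate modulo the vertical part. Second, I would compute $\mathcal{P}_{\mathbb{TT}}$ on a vertical tangent vector $\tv{0,\dot{Y}}{[Y]}$ against a general $\tvdag{\psi,q}{[Y]}$: now $X=0$, $\xi=0$, so $\omega_{ij}=\dot Y_{ij}q$, and since $\dot Y\in Z^1(\mathcal{U},\Theta_{M_0})$ (from $D^Y_1(0,\dot Y)=0$ one gets $\dot Y+\dot Y^*=0$ and $\delta\dot Y=0$) the cochain $\dot Y q$ is a $\Omega_{M_0}$-valued cocycle, and the integral recovers $-\pi\,{\rm Res}([\dot Y q]) = \langle \verticalincmodel{[Y]}(\tv{0,\dot Y}{[Y]}), q\rangle$ again by \eqref{eq:pairing_2} — i.e. the Teichm\"uller pairing between the vertical class in $H^1(\mathcal{U},\Theta_{M_0})$ and $q$. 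Third, I would assemble these two computations: pick bases adapted to the filtrations, observe the pairing matrix is block triangular with the two diagonal blocks equal to the (non-degenerate) Teichm\"uller pairing matrix, and conclude non-degeneracy; here the hypothesis $H^1(U_i,\Theta_{M_0})=0$ is used so that $\dim\mathbb{T}^\dagger_{[Y]}[\mathcal{U}]=\dim\mathbb{T}_{[Y]}[\mathcal{U}]=6g-6$ and the Čech-to-de Rham comparison \eqref{eq:pairing_2} is valid.

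The main obstacle I anticipate is bookkeeping rather than conceptual: showing cleanly that the "off-diagonal" block — the pairing of a vertical tangent vector against a horizontal cotangent vector, i.e. $\mathcal{P}_{\mathbb{TT}}(\tv{0,\dot Y}{[Y]},\tvdag{\psi,0}{[Y]})$ — behaves correctly, and more importantly that the identification of the pairing with the Teichm\"uller residue pairing in the two diagonal blocks is genuinely independent of all the auxiliary choices ($\eta$ with $\delta\eta=Y$, $\xi$ with $\delta\xi=X$, the primitive $A$, the representative $Y$ of $[Y]$). Much of that independence is already packaged in \Cref{lem:pairing_tangent_cotangent}(3) and in \Cref{prop:equivalence_class}, so the real work is to trace through the reductions to \eqref{eq:pairing_2} carefully, keeping the sign $-\pi$ and the convention $d\bar z\wedge dz$ consistent. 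Once the two diagonal blocks are pinned to the non-degenerate Teichm\"uller pairing, non-degeneracy of $\mathcal{P}_{\mathbb{TT}}$ follows formally; alternatively, as the statement notes, one can instead defer to the explicit trivialized formula in \Cref{thm:Trivialization_pairing_TTT_g} and read off non-degeneracy there, which I would mention as the shorter route.
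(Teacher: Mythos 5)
Your proposal is correct and follows essentially the same route as the paper's proof: pair against horizontal covectors $\tvdag{\psi,0}{[Y]}$ to detect the horizontal projection $[X]$ through the residue pairing \eqref{eq:pairing_2}, then reduce to a vertical representative and pair against general covectors to detect the vertical class $[\dot{Y}-K(\alpha,Y)]$ — these are exactly the computations around \eqref{eq:non-degnerate_0} and \eqref{eq:non-degnerate_1}, with your block-triangular packaging replacing the paper's ``assume it pairs to zero with everything'' contrapositive. (Only a cosmetic slip: with $A_i=\xi_i\psi$ the integrand in \eqref{eq:pairing_definition_TM_TstarM} is $(\xi_i)_{\overline{z}}\psi$, so the value is $\frac{1}{2i}\iint_{M_0}\bigl(-(\xi_i)_{\overline{z}}\bigr)\psi\,d\overline{z}\wedge dz=\langle[X],\psi\rangle$; your intermediate display carries an extra minus sign, which does not affect non-degeneracy.)
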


\begin{proof}
Let $\tv{X,\dot{Y}}{[Y]}\in \mathbb{T}_{[Y]}[\mathcal{U}]$. Suppose
$$
\mathcal{P}_{\mathbb{TT}}\left(
\tv{X,\dot{Y}}{[Y]},\tvdag{\psi, q}{[Y]}
\right)
=0
$$
for all $\tvdag{\psi, q}{[Y]}\in \mathbb{T}_{[Y]}^\dagger[\mathcal{U}]$. 
We will conclude $\tv{X,\dot{Y}}{[Y]}=0$.

Let $(X,\dot{Y})\in \ker(D^Y_1)$ be a representative of $\tv{X,\dot{Y}}{[Y]}$ and a representative $(\psi,q)\in \ker(D^{Y,\dagger}_1)$ of $\tvdag{\psi,q}{[Y]}$. We take $\xi$, $\eta$, $A$, and $\omega$ as \Cref{lem:pairing_tangent_cotangent}.

When $q=0$, from the definition, $\psi=\{\psi_i\}_{i\in I}\in H^1(\mathcal{U},\Omega_{M_0}^{\otimes 2})$ and
\begin{equation}
\label{eq:non-degnerate_0}
\omega_{ij}=X_{ij}\psi
\end{equation}
for $i$, $j\in I$.
This means that 
\begin{align*}
0&=\mathcal{P}_{\mathbb{TT}}\left(
\tv{X,\dot{Y}}{[Y]},\tvdag{\psi, 0}{[Y]}
\right)
=-\dfrac{1}{2i}\iint_{M_0}(A_i)_{\overline{z}}(z)d\overline{z}\wedge dz
\\
&=-\pi{\rm Res}([\{X_{ij}\psi\}_{i,j\in I}])
\end{align*}
(cf. \S\ref{subsec:residue}). Since the residue defines a non-degenerate pairing,
we have $[X]=0$ in $H^0(\mathcal{U},\Theta_{M_0})$. Therefore, there is $\alpha\in C^0(\mathcal{U},\Theta_{M_0})$ with $\delta \alpha=X$. In this case,
we may assume $\xi=\alpha$ from (3) of \Cref{lem:pairing_tangent_cotangent}. From \eqref{eq:Lie-derivative_1} and \eqref{eq:D_1},
\begin{align}
\omega_{ij}
&=(\alpha_j-\alpha_i)\psi_i-\dot{Y}_{ji}q+L_{Y_{ij}}(\alpha_jq)
\label{eq:non-degnerate_1}
\\
&=(\alpha_j-\alpha_i)\psi_i-\dot{Y}_{ji}q+
\alpha_jL_{Y_{ij}}(q)-[\alpha_j,Y_{ij}]q
\nonumber
\\
&{\color{black}=\alpha_j(\psi_i+L_{Y_{ij}}(q))-\alpha_i\psi_i-(\dot{Y}_{ji}-[\alpha_j,Y_{ji}])q}
\nonumber
\\
&=\alpha_j\psi_j-\alpha_i\psi_i-(\dot{Y}_{ji}-[\alpha_j,Y_{ji}])q
\nonumber
\\
&=\alpha_j\psi_j-\alpha_i\psi_i+(\dot{Y}_{ij}-[\alpha_i,Y_{ij}])q.
\nonumber
\end{align}
Notice from \Cref{prop:vertical_space}, $\dot{Y}-K(\alpha,Y)=\{\dot{Y}_{ij}-[\alpha_i,Y_{ij}]\}_{i,j\in I}\in Z^1(\mathcal{U},\Theta_{M_0})$. Therefore,
in this case, the pairing satisfies
\begin{align*}
0&=\mathcal{P}_{\mathbb{TT}}\left(
\tv{\delta \alpha,\dot{Y}}{[Y]},\tvdag{\psi, q}{[Y]}
\right)
=-\dfrac{1}{2i}\iint_{M_0}(A_i)_{\overline{z}}(z)d\overline{z}\wedge dz \\
&=-\pi{\rm Res}([\{(\dot{Y}_{ij}-[\alpha_i,Y_{ij}])q\}_{i,j\in I}]).
\end{align*}
Since $(\psi,q)$ is taken arbitrary in $\ker(D^{Y,\dagger}_1)$, we have
$\dot{Y}-K(\alpha,Y)=\delta\beta$ for some $\beta\in C^0(\mathcal{U},\Theta_{M_0})$. Thus, we obtain 
$$
(X,\dot{Y})=(\delta \alpha, \delta \beta+K(\alpha,Y))=D^Y_0(\alpha,\beta),
$$
and $\tv{X,\dot{Y}}{[Y]}=0$.

Next, let $\tvdag{\psi,q}{[Y]}\in \mathbb{T}^\dagger_{[Y]}[\mathcal{U}]$.
Suppose that 
$$
\mathcal{P}_{\mathbb{TT}}\left(
\tv{X,\dot{Y}}{[Y]},\tvdag{\psi, q}{[Y]}
\right)
=0
$$
for all $\tv{X, \dot{Y}}{[Y]}\in \mathbb{T}_{[Y]}[\mathcal{U}]$. 

As the above discussion, we fix $(X,\dot{Y})\in \ker(D^Y_1)$ be a representative of $\tv{X,\dot{Y}}{[Y]}$ and a representative $(\psi,q)\in \ker(D^{Y,\dagger}_1)$ of $\tvdag{\psi,q}{[Y]}$, and take $\xi$, $\eta$, $A$, and $\omega$ as \Cref{lem:pairing_tangent_cotangent}.

Suppose first that $[X]=0$. Then, there is $\alpha\in C^0(\mathcal{U},\Theta_{M_0})$ such that $\delta\alpha=X$. From the discussion around \eqref{eq:non-degnerate_1}, the cocycle $\omega$ satisfies
\begin{align*}
\omega_{ij}
&=\alpha_j\psi_j-\alpha_i\psi_i+(\dot{Y}_{ij}-[\alpha_i,Y_{ij}])q.
\end{align*}
Hence, 
$$
0=\mathcal{P}_{\mathbb{TT}}\left(
\tv{\delta \alpha,\dot{Y}}{[Y]},\tvdag{\psi, q}{[Y]}
\right)
=-\pi{\rm Res}([(\dot{Y}_{ij}-[\alpha_i,Y_{ij}])q]).
$$
Since $H^1(U_i,\Theta_{M_0})=0$ for $i\in I$, from \Cref{prop:vertical_space},
the totality of $[(\dot{Y}_{ij}-[\alpha_i,Y_{ij}]$ fills $H^1(\mathcal{U},\Theta_{M_0})=H^1(M_0,\Theta_{M_0})$. Therefore, the non-degeneracy of the residue implies that $q=0$. 
Thus, from the discussion around \eqref{eq:non-degnerate_0}, $\psi$ satisfies
$$
0=\mathcal{P}_{\mathbb{TT}}\left(
\tv{\delta \alpha,\dot{Y}}{[Y]},\tvdag{\psi, 0}{[Y]}
\right)
=-\pi{\rm Res}([\{X_{ij}\psi_i\}_{i,j\in I}]).
$$
From the non-degeneracy of the residue again, we obtain $\psi=0$. Thus, we conclude $\tvdag{\psi,q}{[Y]}=0$.
\end{proof}
%
%
%
%
%
%

\subsection{Dolbeaut presentation and Pairing}
We claim
\begin{lemma}
\label{lem:dolbeaut_presention_pairing}
Let $\mathcal{U}=\{U_i\}_{i\in I}$ be a locally finite covering of $M_0$ with $H^1(U_i,\Theta_{M_0})=0$ for $i\in I$.
Let $(X,\dot{Y})\in \ker(D^Y_1)$ and $(\psi,q)\in \ker(D^{Y,\dagger}_1)$.
Let $\xi=\{\xi_i\}_{i\in I}$, $\dot{\eta}=\{\dot{\eta}_i\}_{i\in I}\in C^0(\mathcal{U},\mathcal{A}^{0,0}(\Theta_{M_0}))$ with $\delta\xi=X$ and $\dot{Y}=\delta\dot{\eta}+K(\xi,Y)$. Then
$$
(\xi_j\psi_j+\dot{\eta}_jq)-
(\xi_i\psi_i+\dot{\eta}_iq)
=X_{ij}\psi_i-\dot{Y}_{ji}q+L_{Y_{ij}}(\xi_jq)
$$
\end{lemma}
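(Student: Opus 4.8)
The statement is a purely local algebraic identity about cochains on $M_0$, so the plan is to verify it componentwise by unwinding the definitions of $X$, $\dot{Y}$ in terms of $\xi$, $\dot{\eta}$. First I would record what the hypotheses give us: $X_{ij} = \xi_j - \xi_i$ (from $\delta\xi = X$), and from $\dot{Y} = \delta\dot{\eta} + K(\xi,Y)$ together with the convention \eqref{eq:filp} we have
$$
\dot{Y}_{ij} = \dot{\eta}_j - \dot{\eta}_i + [\xi_i, Y_{ij}], \qquad
\dot{Y}_{ji} = \dot{\eta}_i - \dot{\eta}_j + [\xi_j, Y_{ji}] = \dot{\eta}_i - \dot{\eta}_j - [\xi_j, Y_{ij}],
$$
using $Y_{ji} = -Y_{ij}$ for the cocycle $Y$. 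Substituting these into the right-hand side $X_{ij}\psi_i - \dot{Y}_{ji}q + L_{Y_{ij}}(\xi_j q)$, the terms group as
$$
(\xi_j - \xi_i)\psi_i - (\dot{\eta}_i - \dot{\eta}_j)q + [\xi_j, Y_{ij}]q + L_{Y_{ij}}(\xi_j q).
$$

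\textbf{Key step.} The heart of the computation is to recognize that $[\xi_j, Y_{ij}]q + L_{Y_{ij}}(\xi_j q) = \xi_j\, L_{Y_{ij}}(q)$. This is exactly formula \eqref{eq:Lie-derivative_1} (with $\xi \rightsquigarrow Y_{ij}$, $\eta \rightsquigarrow \xi_j$, $Q \rightsquigarrow q$), which reads $L_{Y_{ij}}(\xi_j q) - \xi_j L_{Y_{ij}}(q) + [Y_{ij},\xi_j]q = 0$; since $[Y_{ij},\xi_j] = -[\xi_j,Y_{ij}]$, this rearranges to the claimed identity. The remaining task is then to recognize the cocycle identity for $(\psi, q) \in \ker(D^{Y,\dagger}_1)$, namely $\delta\psi = L_Y(q)$, i.e. $\psi_j - \psi_i = L_{Y_{ij}}(q)$ on $U_i \cap U_j$. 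Using this to replace $\xi_j L_{Y_{ij}}(q)$ by $\xi_j(\psi_j - \psi_i)$, the right-hand side becomes
$$
(\xi_j - \xi_i)\psi_i + (\dot{\eta}_j - \dot{\eta}_i)q + \xi_j\psi_j - \xi_j\psi_i
= \xi_j\psi_j - \xi_i\psi_i + (\dot{\eta}_j - \dot{\eta}_i)q,
$$
where the $\xi_j\psi_i$ terms cancel. This is exactly $(\xi_j\psi_j + \dot{\eta}_j q) - (\xi_i\psi_i + \dot{\eta}_i q)$, which is the left-hand side.

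\textbf{Anticipated obstacle.} There is no deep obstacle here — it is a bookkeeping exercise in the Lie-derivative formalism of \S\ref{sec:Lie_derivative}. The only points where care is needed are: (i) keeping the anti-symmetrization conventions straight, i.e. correctly using $Y_{ji} = -Y_{ij}$, $[\,\cdot\,,\,\cdot\,]$ anti-symmetry, and the star operation \eqref{eq:filp} when reading off $\dot{Y}_{ji}$ from $\dot{Y} = \delta\dot{\eta} + K(\xi,Y)$; and (ii) invoking \eqref{eq:Lie-derivative_1} with the right assignment of arguments so that the $[\xi_j,Y_{ij}]q$ term and the $L_{Y_{ij}}(\xi_j q)$ term combine. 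I would present the proof as a short two-line chain of substitutions citing \eqref{eq:Lie-derivative_1} and the defining cocycle relation $\delta\psi = L_Y(q)$ of $\ker(D^{Y,\dagger}_1)$, with a remark that the computation is local on $U_i \cap U_j$ so there are no global issues.
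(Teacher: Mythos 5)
Your proof is correct and follows essentially the same route as the paper: substitute $X_{ij}=\xi_j-\xi_i$ and $\dot Y_{ji}=\dot\eta_i-\dot\eta_j-[\xi_j,Y_{ij}]$, combine the bracket term with $L_{Y_{ij}}(\xi_j q)$ via the Lie-derivative identities, and finish with the cocycle relation $\psi_j-\psi_i=L_{Y_{ij}}(q)$; the paper merely performs the last two steps in the opposite order. One small repair is needed in your key step: with the substitution you name ($\xi\rightsquigarrow Y_{ij}$, $\eta\rightsquigarrow\xi_j$), \eqref{eq:Lie-derivative_1} reads $L_{Y_{ij}}(\xi_jq)-Y_{ij}L_{\xi_j}(q)+[Y_{ij},\xi_j]q=0$, not what you wrote, and your subsequent sign flip would not yield the identity; the correct derivation is to take $\xi\rightsquigarrow\xi_j$, $\eta\rightsquigarrow Y_{ij}$ in \eqref{eq:Lie-derivative_1}, giving $L_{\xi_j}(Y_{ij}q)-\xi_jL_{Y_{ij}}(q)+[\xi_j,Y_{ij}]q=0$, and then invoke the symmetry $L_{\xi_j}(Y_{ij}q)=L_{Y_{ij}}(\xi_jq)$ from \eqref{eq:Lie-derivative_01}, after which $[\xi_j,Y_{ij}]q+L_{Y_{ij}}(\xi_jq)=\xi_jL_{Y_{ij}}(q)$ follows with no further manipulation.
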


\begin{proof}
Indeed, from 
\eqref{eq:Lie-derivative_01} and \eqref{eq:Lie-derivative_1},
\begin{align*}
&X_{ij}\psi_i-\dot{Y}_{ji}q+L_{Y_{ij}}(\xi_jq)
\\
&=(\xi_j-\xi_i)\psi_i-(\dot{\eta}_i-\dot{\eta}_j+[\xi_j,Y_{ji}])q+L_{Y_{ij}}(\xi_jq) \\
&=\xi_j\psi_i-\xi_i\psi_i-\dot{\eta}_iq+\dot{\eta}_jq-[\xi_j,Y_{ji}]q+L_{Y_{ij}}(\xi_jq) \\
&=\xi_j(\psi_j-L_{Y_{ij}}(q))-\xi_i\psi_i-\dot{\eta}_iq+\dot{\eta}_jq+[\xi_j,Y_{ij}]q+L_{Y_{ij}}(\xi_jq) \\
&=(\xi_j\psi_j+\dot{\eta}_jq)-(\xi_i\psi_i+\dot{\eta}_iq)
{\color{black}-\xi_jL_{Y_{ij}}(q)+[\xi_j,Y_{ij}]q}+L_{Y_{ij}}(\xi_jq)
\\
&=(\xi_j\psi_j+\dot{\eta}_jq)-(\xi_i\psi_i+\dot{\eta}_iq)
{\color{black}-L_{\xi_j}(Y_{ij}q)}+L_{Y_{ij}}(\xi_jq) \\
&=(\xi_j\psi_j+\dot{\eta}_jq)-(\xi_i\psi_i+\dot{\eta}_iq),
\end{align*}
which implies what we wanted.
\end{proof}


\begin{theorem}[Dolbeaut presentation and Pairing]
\label{thm:doulbeaut_presentation_pairing}
Let $\mathcal{U}=\{U_i\}_{i\in I}$ be a locally finite covering of $M_0$ with $H^1(U_i,\Theta_{M_0})=0$ for $i\in I$.
Let $\tv{X,\dot{Y}}{Y}\in \mathbb{T}_{Y}[\mathcal{U}]$ and $\tvdag{\psi,q}{Y}\in \mathbb{T}^\dagger_{Y}[\mathcal{U}]$. Let $\eta\in C^0(\mathcal{U},\mathcal{A}^{0,0}(\Theta_{M_0}))$ with $\delta \eta= Y$ and
$$
\connectinghomo^{-1}(\tv{X,\dot{Y}}{Y})=\tvDBel{\mu,\dot{\nu}}{\eta}\in \mathbb{T}_{\eta}[\mathcal{U}].
$$
Then,
\begin{align*}
\mathcal{P}_{\mathbb{TT}}(\tv{X,\dot{Y}}{[Y]}, \tvdag{\psi, q}{[Y]})
&=
\dfrac{1}{2i}\iint_{M_0}((\mu\psi_i+\dot{\nu}_iq)
-L_{\eta_i}(\mu q))d\overline{z}\wedge dz.
\nonumber
\end{align*}
\end{theorem}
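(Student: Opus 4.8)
\textbf{Proof plan for \Cref{thm:doulbeaut_presentation_pairing}.}
The plan is to reduce the statement to \Cref{lem:pairing_tangent_cotangent}, specifically to the definition \eqref{eq:pairing_definition_TM_TstarM} of $\tilde{\mathcal P}_{\mathbb{TT}}$, by exhibiting the explicit zero-cochain $A$ that witnesses the cocycle $\omega$ constructed from the given representatives. First I would unwind the hypothesis $\connectinghomo^{-1}(\tv{X,\dot Y}{Y})=\tvDBel{\mu,\dot\nu}{\eta}$: by the description of the connecting homomorphism recorded right after \Cref{thm:Dolbeault_pre} (and \Cref{prop:Dol_vertical_space_isomorphism}), this means there exist $\xi=\{\xi_i\}$, $\dot\eta=\{\dot\eta_i\}\in C^0(\mathcal U,\mathcal A^{0,0}(\Theta_{M_0}))$ with $\mu=-\overline\partial\xi_i$ on $U_i$, $\dot\nu_i=-\overline\partial\dot\eta_i$ on $U_i$, $\delta\xi=X$, and $\dot Y=\delta\dot\eta+K(\xi,Y)$. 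These are exactly the data needed to compute $\mathcal P_{\mathbb{TT}}$ via \Cref{lem:pairing_tangent_cotangent}, with this $\xi$ serving as the lift of $X$ and this $\eta$ (which satisfies $\delta\eta=Y$) serving as the lift of $Y$.

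The key step is then \Cref{lem:dolbeaut_presention_pairing}, which tells me precisely that, with $A_i := \xi_i\psi_i+\dot\eta_i q$ (a local $1$-form on $U_i$, i.e.\ $A=\{A_i\,dz\}_{i\in I}\in C^0(\mathcal U,\mathcal A^{0,0}(\Omega_{M_0}))$), one has $\delta A=\omega$ where $\omega_{ij}=X_{ij}\psi_i-\dot Y_{ji}q+L_{Y_{ij}}(\xi_jq)$ is exactly the cocycle from \Cref{lem:pairing_tangent_cotangent}(1). Having identified this $A$, I substitute into \eqref{eq:pairing_definition_TM_TstarM}:
\begin{align*}
\mathcal P_{\mathbb{TT}}(\tv{X,\dot Y}{[Y]},\tvdag{\psi,q}{[Y]})
&=-\frac{1}{2i}\iint_{M_0}\big((A_i)_{\overline z}-L_{\eta_i}((\xi_i)_{\overline z}q)\big)\,d\overline z\wedge dz.
\end{align*}
Now $(A_i)_{\overline z}=\overline\partial(\xi_i\psi_i+\dot\eta_iq)=(\overline\partial\xi_i)\psi_i+(\overline\partial\dot\eta_i)q=-\mu\psi_i-\dot\nu_iq$ (using that $\psi_i,q$ are holomorphic so $\overline\partial\psi_i=\overline\partial q=0$, and Convention~\ref{convention:2} / \eqref{eq:Lie-derivative_02} for the product rule on forms), and $(\xi_i)_{\overline z}q = \mu q$ after the sign correction, so $-L_{\eta_i}((\xi_i)_{\overline z}q)=-L_{\eta_i}(\mu q)$ becomes $+L_{\eta_i}(\mu q)$ inside the bracket with the overall minus sign; carefully tracking these two sign flips gives the integrand $(\mu\psi_i+\dot\nu_iq)-L_{\eta_i}(\mu q)$ times $+\tfrac1{2i}$, which is the claimed formula.

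The main obstacle I expect is \emph{bookkeeping of signs and conventions} rather than any conceptual difficulty: one must reconcile the sign in $\mu=-\overline\partial\xi_i$ (from the Dolbeault sequence \eqref{eq:Dolbeault-sequence} with $-\overline\partial$), the sign in $(A_i)_{\overline z}$ coming from $\overline\partial$ applied to a product with a holomorphic factor, the minus sign outside the integral in \eqref{eq:pairing_definition_TM_TstarM}, and the convention $d\overline z\wedge dz$ (Convention~\ref{convention:1}) versus $dz\wedge d\overline z$ appearing in the Lie-derivative identities \eqref{eq:Lie-derivative_02}–\eqref{eq:Lie-derivative_03}. A secondary, minor point is to confirm that $A=\{A_i\,dz\}$ genuinely lies in $C^0(\mathcal U,\mathcal A^{0,0}(\Omega_{M_0}))$, i.e.\ that each $A_i$ transforms as a $1$-form coefficient on its own patch $U_i$ — this is immediate since $\xi_i\in\Gamma(U_i,\mathcal A^{0,0}(\Theta_{M_0}))$ and $\psi_i\in\Gamma(U_i,\mathcal A^{0,0}(\Omega_{M_0}^{\otimes 2}))$ so $\xi_i\psi_i$ is a $1$-form on $U_i$, and likewise $\dot\eta_iq$ — and that the well-definedness (independence of all choices) is already guaranteed by \Cref{lem:pairing_tangent_cotangent}(3), so no further verification is needed on that front.
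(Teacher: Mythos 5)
Your proposal is correct and follows essentially the same route as the paper: take the cochain $A_i\,dz=(\xi_i\psi_i+\dot{\eta}_iq)\,dz$, invoke \Cref{lem:dolbeaut_presention_pairing} to see that $\delta A$ is the cocycle $\omega$ of \Cref{lem:pairing_tangent_cotangent}, and substitute into \eqref{eq:pairing_definition_TM_TstarM}. One slip in your sign narrative: since $\mu=-\overline{\partial}\xi_i$ you have $(\xi_i)_{\overline{z}}q=-\mu q$ (not $\mu q$), so the bracket equals $-(\mu\psi_i+\dot{\nu}_iq-L_{\eta_i}(\mu q))$ and the outer $-\tfrac{1}{2i}$ then yields exactly the stated integrand — your final formula is right, but the intermediate sentence as written does not produce it.
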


\begin{proof}
Under the notation in \Cref{lem:dolbeaut_presention_pairing},
let $\mu=-\overline{\partial}\xi_i$ and $\dot{\nu}=-\overline{\partial}\dot{\eta}_i$.
When we set $A_idz=(\xi_i\psi_i+\dot{\eta}_iq)dz$,
$$
((A_i)_{\overline{z}}-L_{\eta_i}((\xi_i)_{\overline{z}}q))d\overline{z}\wedge dz
=-(\mu \psi_i+\dot{\nu}q-L_{\eta_i}(\mu q))d\overline{z}\wedge dz.
$$
Hence, the formula follows from the definition of the pairing.
\end{proof}

\section{Model space of $T_{q_0}\mathcal{Q}_g$}
\label{sec:tangent-Qg}
Let $\mathcal{Q}_g$ be the vector bundle of holomorphic quadratic differentials over $\teich_g$. Let $\Pi_{\mathcal{Q}_g}\colon \mathcal{Q}_g\to \teich_g$ is the projection. As discussed in \S\ref{sec:infinitesimal_deformation_RS}, the bundle $\mathcal{Q}_g$ is canonically identified with the cotangent bundle $T^*\teich_g$ over $\teich_g$. In this section, we review Hubbard and Masur's description of the holomorphic tangent space to $\mathcal{Q}_g$ given in \cite{MR523212}.

\subsection{Tangent space}
\label{subsec:tangent_space_to_T_Tstar}
The tangent space $T_{q_0}\mathcal{Q}_g$ of $\mathcal{Q}_g$ at a holomorphic quadratic differential $q_0\in \mathcal{Q}_{M_0}=H^1(M_0,\Omega_{M_0}^{\otimes 2})$ is described as the first hyper-cohomology group of the complex of sheaves $\mathbb{L}_{q_0}$:
\begin{equation}
\label{eq:complex_of_sheaves_L}
\begin{CD}
0 @>>> \Theta_{M_0} @>{L_{\cdot}(q_0)}>> \Omega_{M_0}^{\otimes 2} @>>> 0,
\end{CD}
\end{equation}
where $L{\cdot}(q_0)$ is the Lie derivative along a vector field given at \eqref{eq:lie_derivative_definition}
(cf. \cite[Proposition 3.1]{MR523212}).

Fix a locally finite covering $\mathcal{U}=\{U_i\}_{i\in I}$ of $M_0$. Then, the tangent space $T_{q_0}\mathcal{Q}_g$ is the first cohomology group ${\bf H}^1(\mathbb{L}_{q_0})$ of the following double complex:
\begin{equation}
\label{eq:double_complex}
\begin{CD}
0
\\
@AAA \\
C^{0}(\mathcal{U},\Omega_{M_0}^{\otimes 2})
@>{\delta}>>
C^{1}(\mathcal{U},\Omega_{M_0}^{\otimes 2}) \\
@A{{\color{black}L_{\cdot}q_{0}}}AA
@A{{\color{black}-L_{\cdot}q_{0}}}AA
\\
C^{0}(\mathcal{U},\Theta_{M_0})
@>{\delta}>>
C^{1}(\mathcal{U},\Theta_{M_0})
@>{\delta}>>
C^{2}(\mathcal{U},\Theta_{M_0}).
\end{CD}
\end{equation}
Then,
\begin{align*}
{\bf Z}^1(\mathcal{U},\mathbb{L}_{q_0})
&=\{(X,\varphi)\in Z^1(\mathcal{U},\Theta_{M_0})\oplus C^0(\mathcal{U},\Omega^{\otimes 2}_{M_0})\mid \delta\varphi-L_X(q_0)=0\}
\\
{\bf B}^1(\mathcal{U},\mathbb{L}_{q_0})
&=\{(\delta \alpha, \{{\color{black}L_{\alpha_{i}}(q_0)}\}_i,)\in 
Z^1(\mathcal{U},\Theta_{M_0})
\oplus
C^0(\mathcal{U},\Omega^{\otimes 2}_{M_0})
\mid \alpha=\{\alpha_i\}_i\in C^{0}(\mathcal{U},\Theta_{M_0})\}
\end{align*}
and
$$
{\bf H}^1(\mathcal{U},\mathbb{L}_{q_0})
={\bf Z}^1(\mathcal{U},\mathbb{L}_{q_0})/{\bf B}^1(\mathcal{U},\mathbb{L}_{q_0}).
$$
The first \emph{hyper-cohomology group}
${\bf H}^1(\mathbb{L}_{q_0})$ is defined by taking the direct limit of ${\bf H}^1(\mathcal{U},\mathbb{L}_{q_0})$ in terms of the refinement of (locally finite) coverings. We denote by $[X,\varphi]_{q_0}\in {\bf H}^1(\mathcal{U},\mathbb{L}_{q_0})$ the cohomology class of $(X,\varphi)\in {\bf Z}^1(\mathcal{U},\mathbb{L}_{q_0})$.

The following proposition might be well-known. Indeed, Hubbard and Masur deal with the case of the direct limits in \cite[Proposition 4.5]{MR523212}. However, they do not indicate conditions of the covering in the statement (but, it is implicitly given). Hence, we give a proof for completeness and for confirmation, although the idea here is the same as that given by Hubbard and Masur.

\begin{proposition}
\label{prop:some_covering_hypercohomology}
When a locally finite covering $\mathcal{U}=\{U_i\}_{i\in I}$ satisfies that $H^1(U_i,\Theta_{M_0})=0$ for all $i\in I$,
${\bf H}^1(\mathcal{U},\mathbb{L}_{q_0})$ is isomorphic to ${\bf H}^1(\mathbb{L}_{q_0})$. In particular, ${\bf H}^1(\mathcal{U},\mathbb{L}_{q_0})$  is isomorphic to $T_{q_0}\mathcal{Q}_g$.
\end{proposition}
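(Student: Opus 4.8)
The plan is to prove that the natural map ${\bf H}^1(\mathcal{U},\mathbb{L}_{q_0})\to {\bf H}^1(\mathbb{L}_{q_0})$, induced by passing to the direct limit over refinements, is an isomorphism when $H^1(U_i,\Theta_{M_0})=0$ for all $i\in I$. The essential point is a hypercohomology spectral-sequence argument (equivalently, a direct chase on the double complex \eqref{eq:double_complex}) together with the standard fact from the excerpt that, for such a covering, the \v Cech-to-derived-functor comparison is an isomorphism in degree $1$ and injective in degree $2$ for the coefficient sheaves $\Theta_{M_0}$ and $\Omega_{M_0}^{\otimes 2}$ (cited here as \cite[Theorems 3.4 and 3.5]{MR815922}).

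First I would set up the two short/long exact sequences coming from the stupid filtration of the complex $\mathbb{L}_{q_0}$: $0\to \Omega_{M_0}^{\otimes 2}[-1]\to \mathbb{L}_{q_0}\to \Theta_{M_0}\to 0$ (with $\Omega_{M_0}^{\otimes 2}$ placed in degree $1$). This yields, both on the level of the fixed covering $\mathcal{U}$ and on the level of the direct limit, a commutative ladder with exact rows
\begin{equation*}
\begin{CD}
H^0(\mathcal{U},\Omega_{M_0}^{\otimes 2}) @>>> {\bf H}^1(\mathcal{U},\mathbb{L}_{q_0}) @>>> H^1(\mathcal{U},\Theta_{M_0}) @>{L_{\cdot}(q_0)}>> H^1(\mathcal{U},\Omega_{M_0}^{\otimes 2}) \\
@VVV @VVV @VVV @VVV \\
H^0(M_0,\Omega_{M_0}^{\otimes 2}) @>>> {\bf H}^1(\mathbb{L}_{q_0}) @>>> H^1(M_0,\Theta_{M_0}) @>{L_{\cdot}(q_0)}>> H^1(M_0,\Omega_{M_0}^{\otimes 2}),
\end{CD}
\end{equation*}
where the connecting map is induced by the Lie-derivative morphism $L_{\cdot}(q_0)\colon \Theta_{M_0}\to \Omega_{M_0}^{\otimes 2}$ and the left-hand term involves the cokernel of $H^0(\mathcal{U},\Theta_{M_0})\to H^0(\mathcal{U},\Omega_{M_0}^{\otimes 2})$ (which on a compact Riemann surface of genus $g\ge 2$ is just $H^0$ since there are no global holomorphic vector fields). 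Then I would invoke the hypothesis $H^1(U_i,\Theta_{M_0})=0$: by \cite[Theorems 3.4 and 3.5]{MR815922}, the vertical arrows $H^1(\mathcal{U},\Theta_{M_0})\to H^1(M_0,\Theta_{M_0})$ is an isomorphism, and the analogous statement for $\Omega_{M_0}^{\otimes 2}$ follows from the same theorems once one notes that on each $U_i$ one also has $H^1(U_i,\Omega_{M_0}^{\otimes 2})=0$ — this holds because the $U_i$ may be taken to be (unions of) simply connected coordinate disks, equivalently Stein, so all higher cohomology of coherent sheaves vanishes. The $H^0$ terms agree on the nose for any covering. A diagram chase (four/five lemma) then forces ${\bf H}^1(\mathcal{U},\mathbb{L}_{q_0})\to {\bf H}^1(\mathbb{L}_{q_0})$ to be an isomorphism.

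The main obstacle, and the step requiring care, is checking that the hypothesis $H^1(U_i,\Theta_{M_0})=0$ is enough to control $\Omega_{M_0}^{\otimes 2}$ as well and, more delicately, that the comparison map for ${\bf H}^2$ of the complex $\mathbb{L}_{q_0}$ is injective, which is what makes the five-lemma argument go through at the right-hand end of the ladder; this is where \cite[Theorem 3.5]{MR815922} (injectivity of $H^2(\mathcal{U},\mathscr{S})\to H^2(M_0,\mathscr{S})$) is used, applied to $\mathscr{S}=\Theta_{M_0}$. One should also record that the identification ${\bf H}^1(\mathbb{L}_{q_0})\cong T_{q_0}\mathcal{Q}_g$ is precisely \cite[Proposition 3.1]{MR523212} cited above, so the last sentence of the statement is immediate once the covering-independence is established. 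I would close by remarking, as Hubbard and Masur do, that the argument is insensitive to which such covering is chosen, which is exactly the content needed for all later computations in the paper to be performed on a convenient fixed $\mathcal{U}$.
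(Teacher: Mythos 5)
Your proposal is correct and rests on the same backbone as the paper's proof: both extract from the two-term complex $\mathbb{L}_{q_0}$ the five-term exact sequence
\begin{equation*}
H^0(\mathcal{U},\Theta_{M_0})\to H^0(\mathcal{U},\Omega_{M_0}^{\otimes 2})\to {\bf H}^1(\mathcal{U},\mathbb{L}_{q_0})\to H^1(\mathcal{U},\Theta_{M_0})\to H^1(\mathcal{U},\Omega_{M_0}^{\otimes 2})
\end{equation*}
and compare it with its direct-limit analogue. Where you diverge is at the right-hand end. The paper kills the last term outright: by Serre duality, $H^1(M_0,\Omega_{M_0}^{\otimes 2})\cong H^0(M_0,\Omega_{M_0}(-(q_0)))^*=0$ because a nonzero holomorphic $1$-form has degree $2g-2<4g-4=\deg(q_0)$; injectivity of $H^1(\mathcal{U},\cdot)\to H^1(M_0,\cdot)$ then gives $H^1(\mathcal{U},\Omega_{M_0}^{\otimes 2})=0$, the sequence collapses to a short exact sequence $0\to\mathcal{Q}_{M_0}\to{\bf H}^1(\mathcal{U},\mathbb{L}_{q_0})\to H^1(\mathcal{U},\Theta_{M_0})\to 0$, and the comparison with the limit is immediate (it also exhibits the fibration $\mathcal{Q}_{M_0}\to T_{q_0}\mathcal{Q}_g\to T_{x_0}\teich_g$, which the paper uses later). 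You instead keep the $H^1(\Omega_{M_0}^{\otimes 2})$ term and run the five lemma. That works, but two of your supporting claims are heavier than needed: (i) you do not need $H^1(U_i,\Omega_{M_0}^{\otimes 2})=0$ at all, since the five lemma at that position only requires injectivity of $H^1(\mathcal{U},\Omega_{M_0}^{\otimes 2})\to H^1(M_0,\Omega_{M_0}^{\otimes 2})$, which holds for any covering; and (ii) the ${\bf H}^2$ comparison never enters, because the five-term window centered at ${\bf H}^1(\mathbb{L}_{q_0})$ ends at $H^1(\Omega_{M_0}^{\otimes 2})$. Also, your justification that the $U_i$ "may be taken to be" simply connected disks is not available — the covering is given, not chosen — though the fact you want is still true since every proper open subset of a compact Riemann surface is Stein. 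The trade-off: your route avoids the Serre-duality degree count and is more robust to the coefficient sheaf, while the paper's route yields the stronger structural output (the short exact sequence) that it needs downstream.
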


\begin{proof}
We consider the following commutative diagram:
$$
\minCDarrowwidth10pt
\begin{CD}
0
@>>>
0
@>>>
C^0(\mathcal{U},\Theta_{M_0})
@>{id}>>
C^0(\mathcal{U},\Theta_{M_0})
@>>>
0
\\
@.
@VVV
@V{\delta\oplus {\color{black}L_{\cdot}(q_0)}}VV
@VV{\delta_2}V
\\
0
@>>> 
C^0(\mathcal{U},\Omega_{M_0}^{\otimes 2})
@>{inc}>>
C^1(\mathcal{U},\Theta_{M_0})
\oplus
C^0(\mathcal{U},\Omega_{M_0}^{\otimes 2})
@>{pr_1}>>
C^1(\mathcal{U},\Theta_{M_0})
@>>>
0
\\
@.
@V{\delta_1}VV
@V{\delta_2\oplus (\delta_1-L_\cdot(q_0))}VV
@VV{\delta_2}V
\\
0
@>>>
C^1(\mathcal{U},\Omega_{M_0}^{\otimes 2})
@>{inc}>>>
C^2(\mathcal{U},\Theta_{M_0})
\oplus
C^1(\mathcal{U},\Omega_{M_0}^{\otimes 2})
@>{pr_1}>>
C^2(\mathcal{U},\Theta_{M_0})
@>>> 0,
\end{CD}
$$
where, in the above diagram, $\delta_1$ and $\delta_2$ are coboundary map for cochains of sheaves $\Omega_{M_0}^{\oplus 2}$ and $\Theta_{M_0}$, $inc$ means the inclusion to the second coordinate, and $pr_1$ is the projection on the first coordinate
(cf. the diagram in \cite[p.264]{MR523212}). This diagram leads the exact sequence
$$
\minCDarrowwidth10pt
\begin{CD}
H^0(\mathcal{U},\Theta_{M_0})
@>>>
H^0(\mathcal{U},\Omega_{M_0}^{\otimes 2})
@>>>
{\bf H}^1(\mathcal{U},\mathbb{L}_{q_0})
@>>>
H^1(\mathcal{U},\Theta_{M_0})
@>>>
H^1(\mathcal{U},\Omega_{M_0}^{\otimes 2}).
\end{CD}
$$
Since $M_0$ does not admit a (global) holomorphic vector field, we have $H^0(\mathcal{U},\Theta_{M_0})=0$. Let $(q_0)$ be the divisor of $q_0$. The sheaf $\Omega_{M_0}^{\otimes 2}$ is naturally isomorphic to the sheaf $\mathcal{O}_{M_0}((q_0))$ of meromorphic functions on $M_0$ which is multiples of the divisor $(q_0)$, where a section on $U\subset M_0$ is a meromorphic function on $U$ satisfying $(f)+D\ge 0$ (e.g. \cite[\S16.4, \S17.4]{MR648106}).
From the Serre duality,
$$
H^1(M_0,\Omega_{M_0}^{\otimes 2})\cong H^1(M_0,\mathcal{O}_{M_0}((q_0)))
\cong H^0(M_0,\Omega_{M_0}(-(q_0)))^*.
$$
An element in $H^0(M_0,\Omega_{M_0}(-(q_0))$ is a holomorphic $1$-form $\omega$ on $M_0$ with $(\omega)-(q_0)\ge 0$, and $2g-2=\deg((\omega))\ge \deg((q_0))=4g-4$. Therefore, $H^1(M_0,\Omega_{M_0}^{\otimes 2})=0$ if $g\ge 2$.
Since the canonical map $H^1(\mathcal{U},\Omega_{M_0}^{\otimes 2})\to H^1(M_0,\Omega_{M_0}^{\otimes 2})$ is injective, we conclude that $H^1(\mathcal{U},\Omega_{M_0}^{\otimes 2})=0$.
Thus, when $H^1(U_i,\Theta_{M_0})=0$ for all $i\in I$, we have the following exact sequence
$$
\minCDarrowwidth10pt
\begin{CD}
0
@>>>
\mathcal{Q}_{M_0}\cong H^0(\mathcal{U},\Omega_{M_0}^{\otimes 2})
@>>>
{\bf H}^1(\mathcal{U},\mathbb{L}_{q_0})
@>>>
H_1(\mathcal{U},\Theta_{M_0})\cong T_{x_0}\teich_g
@>>>
0,
\end{CD}
$$
which canonically represents the fibration
$Q_{M_0}\to T_{q_0}\mathcal{Q}_g\to T_{x_0}\teich_g$ via the isomorphism ${\bf H}^1(\mathbb{L}_{q_0})\cong T_{q_0}\mathcal{Q}_g$ (cf. \cite[Proposition 4.5]{MR523212}).
\end{proof}
%


\begin{remark}
\label{remark:differential_projection_cotangent}
From the above discussion, the map
$$
T_{q_0}\mathcal{Q}_g\cong {\bf H}^1(\mathbb{L}_{q_0})\ni [X,\varphi]_{q_0}\to [X]\in H^1(M_0,\Theta_{M_0})
\cong T_{x_0}\teich_g
$$
is the horizontal projection (cf. \S\ref{sec:tangent_on_cotangent_bundles}), which
presents the differential 
$$
D\Pi^\dagger_{\teich_g}=D\Pi_{\mathcal{Q}_g}|_{q_0}\colon T_{q_0}\mathcal{Q}_g\to T_{x_0}\teich_g
$$
of the projection $\Pi^\dagger_{\teich_g}=\Pi_{\mathcal{Q}_g}\colon T^*\teich_g=\mathcal{Q}_g\to \teich_g$, where $x_0=\Pi_{\mathcal{Q}_g}(q_0)$. As discussed in the proof of \Cref{prop:some_covering_hypercohomology}, the kernel, which is nothong but the vertical space, is isomorphic to $T^*_{x_0}\teich_g\cong H^0(\mathcal{U},\Omega_{M_0}^{\otimes 2})$.
\end{remark}

\subsection{Description with holomorphic families}
\label{subsec:description_Hol}
Let $B$ be a neighborhood of the origin in $\mathbb{C}$.
Let $(\mathcal{M},\pi,B)$ be a holomorphic family of compact Riemann surfaces of genus $g$. For $t\in B$, set $M_t=\pi^{-1}(t)$. By taking $B$ sufficiently small, we identify $\mathcal{M}$ with the product $M_{0}\times B$ via the local trivialization as $C^\infty$-manifolds (cf. \cite[Theorem 2.4]{MR815922}). By the implicit mapping theorem, when $B$ is taken sufficiently small if necessary again, there is a covering $\{U_i\}_{i\in I}$ of $M_{0}$ and an injective holomorphic map $Z_i\colon U_i\times B_0\to \mathbb{C}\times B_0$ such that $z_i^t=Z_i\mid_{U_i\times \{t\}}\colon U_i\times \{t\}\to \mathbb{C}$ ($i\in I$) make an analytic chart of $M_t$ for all $t\in B$. We identify $U_i\times \{t\}$ with $U_i$ and define $z_{ij}^t=z^{t}_i\circ (z_j^{t})^{-1}$ on $U_i\cap U_j$.
Let $\{q_t\}_{t\in B}$ be a holomorphic family of holomorphic quadratic differentials with $q_t\in  M_t$. Suppose that $q_{t}$ is presented as $q_{t}^t(z)dz^2$ on $z^0_i(U_i)$. Set
\begin{align*}
X_{ij}(z) & = \left.\dfrac{\partial z_{ij}^t}{\partial t}\right\mid_{t=0}\circ (z_{ij}^0)^{-1}(z)\quad (z\in z_i^0(U_i\cap U_j))
\\
\varphi_i(z)
&=\left.\dfrac{\partial q_{i}^t}{\partial t}\right|_{t=0}(z)
\quad (z\in z_i^0(U_i))
\end{align*}
(cf. \S\ref{subsec:cochain}).
Then, $(X,\varphi)=(\{X_{ij}\}_{i,j\in I}, \{\varphi_i\}_{i\in I})\in {\bf Z}^1(\mathcal{U},\mathbb{L}_{q_0})$ and it represents the tangent vector of the family $\{q_t\}_{t\in B}$ at $t=0$. Indeed, since
$$
z_{ij}^t(z)=z_{ij}^0(z)+tX_{ij}(z_{ij}(z))+o(t),
$$
for $z\in z_j(U_i\cap U_j)$,
\begin{align*}
&q_i^t(z_{ij}^t(z))\dfrac{dz_{ij}^t}{dz}(z)^2 \\
&=q_i^t\left(
z_{ij}^0(z)+tX_{ij}(z_{ij}^0(z))+o(t)
\right)\dfrac{dz_{ij}^0}{dz}(z)^2\left(1+tX_{ij}'(z_{ij}^0(z))\dfrac{dz_{ij}^0}{dz}(z)+o(t)\right)^2
\\
&=
q_0(z_{ij}^0(z))\dfrac{dz_{ij}^0}{dz}(z)^2
+t
\left(
\varphi_i(z_{ij}^0(z))
+L_{X_{ij}}(q_0)(z_{ij}^0(z))
\right)
\dfrac{dz_{ij}^0}{dz}(z)^2
+o(t) \\
q_j^t(z)&=q_0(z)+t\varphi_j(z)+o(t)
\end{align*}
as $t\to 0$. This means that
$$
\varphi_j(z)-\varphi_i(z_{ij}^0(z))\dfrac{dz_{ij}^0}{dz}(z)^2=
L_{X_{ij}}(q_0)(z_{ij}^0(z))
\dfrac{dz_{ij}^0}{dz}(z)^2
$$
for $z\in z_j(U_i\cap U_j)$, which is written as 
$$
\varphi_j-\varphi_i=L_{X_{ij}}(q_0)\quad (i,j\in I)
$$
or $\delta\varphi=L_X(q_0)$
for short.

\subsection{Tangent spaces to strata}
The following is essentially due to Hubbard-Masur \cite{MR523212} and Dumas \cite{MR3413977}.
For the sake of completeness, we give a brief proof.

\begin{proposition}[Tangent space to the strata]
\label{prop:tangent_space_strata}
Let $q_0\in \mathcal{Q}_g(k_1,\cdots,k_n;\epsilon)$.
Let $\mathcal{U}=\{U_i\}_{i\in I}$ be a locally finite covering of $M_0$ such that each $U_i$ contains at most $1$ zero of $q_0$, and $H^1(U_i,\Theta_{M_0})=0$ for $i\in I$.
A tangent vector $[X,\varphi]_{q_0}\in H^1(\mathcal{U},\mathbb{L}_{q_0})\cong T_{q_0}\mathcal{Q}_g$ is in $T_{q_0}\mathcal{Q}_g(k_1,\cdots,k_n;\epsilon)$, then for any $i\in I$, a meromorphic function $\varphi_i/q_0$ has at most simple poles.
\end{proposition}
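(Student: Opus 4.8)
The plan is to reduce the statement to a purely local computation at each zero of $q_0$, using the period-coordinate description of the stratum recalled in \S\ref{sec:statification_space_QD}. First I would recall that, near a zero $p$ of order $k$ of $q_0$, one may choose a local coordinate $w$ with $q_0 = w^k\,dw^2$ (away from $p$, a square root $\sqrt{q_0}$ gives a preferred coordinate $\zeta$ with $\zeta^{(k+2)/2}\sim w$, but the cleanest bookkeeping is in $w$). A tangent vector to $\mathcal{Q}_g$ at $q_0$ is represented, in the holomorphic-family description of \S\ref{subsec:description_Hol}, by a family $q_t = q_0 + t\varphi + o(t)$ together with the infinitesimal coordinate change $X$ satisfying $\delta\varphi = L_X(q_0)$. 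The stratum $\mathcal{Q}_g(k_1,\dots,k_n;\epsilon)$ is, locally, the locus where the zero orders $k_1,\dots,k_n$ and the spin $\epsilon$ are preserved; its tangent space is therefore the subspace of deformations $(X,\varphi)$ for which the family $q_t$ can be realized (after an infinitesimal biholomorphic change of chart) so that each $q_t$ still has a zero of the \emph{same} order $k_j$ near $p_j$.

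The key local lemma I would isolate is this: if $q_0 = w^k\,dw^2$ near $p$ and $q_t = (w^k + t\psi(w) + o(t))dw^2$ with $\psi$ holomorphic, then $q_t$ has a zero of order exactly $k$ near $p$ for all small $t$ (after an infinitesimal change of coordinate) if and only if $\psi$ vanishes to order at least $k-1$ at $p$ --- equivalently, $\psi/w^k$ has at most a simple pole. The ``if'' direction: writing $\psi(w) = w^{k-1}h(w)$ with $h$ holomorphic, one substitutes $w = u + (t/k)h(0)u^{?}\cdots$; more precisely I would solve $\tilde w(w,t)$ from requiring $q_t = \tilde w^k\,d\tilde w^2$ to first order, which amounts to an ODE $L_Y(q_0) = -\psi$ for a local vector field $Y$, and check that this $Y$ is holomorphic at $p$ precisely when $\mathrm{ord}_p(\psi)\ge k-1$. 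The ``only if'' direction: if $\psi$ vanishes to order $<k-1$ at $p$, then the multiplicity of the zero of $q_t$ near $p$ drops (count via the argument principle / Weierstrass preparation applied to $w^k + t\psi(w)$), so $(X,\varphi)$ cannot be tangent to the stratum. The spin component $\epsilon$ does not give an extra condition here because it is locally constant on the stratum and the deformations respecting all zero orders automatically preserve it.

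From the local lemma the global statement follows by patching. Given $[X,\varphi]_{q_0}$ with $\mathcal{U}$ chosen so each $U_i$ meets at most one zero of $q_0$: on a $U_i$ containing the zero $p$ of order $k$, the representative $\varphi_i$ satisfies $\varphi_i = $ (the ``honest'' deformation) $+ L_{X_{\cdot}}(q_0)$ corrections coming from the cocycle relation $\delta\varphi = L_X(q_0)$, and $L$ of a holomorphic vector field never decreases $\mathrm{ord}_p(\cdot/q_0)$ in a way that matters --- so the condition ``$\varphi_i/q_0$ has at most a simple pole at $p$'' is well-defined independent of the cocycle representative and of the coordinate (this is a small check using \eqref{eq:lie_derivative_definition}: if $\xi$ is holomorphic and $q_0$ vanishes to order $k$, then $L_\xi(q_0) = (\xi q_0' + 2\xi' q_0)$ vanishes to order $\ge k-1$). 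On the $U_i$ not meeting any zero there is no condition, consistent with ``at most simple poles'' being vacuous. Conversely, if the pole condition holds on every $U_i$, the local lemma produces on each $U_i$ an infinitesimal chart change straightening the family to one preserving zero orders, and these glue (modulo $\mathbf{B}^1$, i.e.\ modulo $\{(\delta\alpha, L_\alpha(q_0))\}$) to show $[X,\varphi]_{q_0}\in T_{q_0}\mathcal{Q}_g(k_1,\dots,k_n;\epsilon)$. A dimension count --- the stratum has complex dimension $2g-2+n$ (from period coordinates), matching the dimension of the space of $(X,\varphi)$ cut out by ``$n$ simple-pole conditions'' --- can be used as a sanity check or to upgrade the inclusion to an equality without re-examining surjectivity by hand.

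The main obstacle I anticipate is the local lemma's ``if'' direction: solving $L_Y(q_0) = -\psi$ for a \emph{holomorphic} germ $Y$ at the zero $p$ and verifying the exact order threshold. In the coordinate $q_0 = w^k dw^2$ this reads $Y\cdot kw^{k-1} + 2Y'w^k = -\psi$, i.e.\ $(w^2 Y)' = -w^{2-k}\psi/(\text{something})$ after multiplying by $w^{2-k}$; one must track carefully that $w^2 Y$ — hence $Y$ — stays holomorphic exactly when $\mathrm{ord}_p(\psi) \ge k-1$, and handle the borderline residue term (the simple pole of $\psi/q_0$) which corresponds precisely to the ``free'' direction in the stratum (moving the value of the quadratic differential, not its zero structure). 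I would also need to double-check the $\epsilon = +1$ case (squares of abelian differentials): there the relevant local model is $\omega_0 = w^{k/2}dw$ with $k$ even, and the deformation must preserve being a global square; but since $\mathcal{Q}_g(\dots;+1)$ is open in $\{q : q = \omega^2\}$ and the square-root map is a local biholomorphism on strata, the pole condition transports without change, so no separate argument is really needed beyond noting this.
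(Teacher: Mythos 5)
Your core computation for the forward direction is the same as the paper's: the paper writes a family staying in the stratum as $q_t=\alpha_t^*(z^k\,dz^2)$ with $\alpha_0(z)=z$ and differentiates to get $\varphi_i=z^{k-1}(k\dot\alpha+2z\dot\alpha')\,dz^2$, which is exactly your $\varphi=L_{Y}(q_0)$ with $Y=\dot\alpha\,\partial_z$ holomorphic, so $\mathrm{ord}_{z_0}(\varphi_i)\ge k-1$. Where you go further is the converse: the paper's written proof only establishes the inclusion ``tangent to the stratum $\Rightarrow$ at most simple poles'' and leans on Hubbard--Masur and Dumas for the rest, whereas you propose either the dimension count ($\dim=2g-2+n$, which is correct and does close the argument once the inclusion is known, since the stratum is a complex submanifold) or an explicit local solution of $L_Y(q_0)=-\psi$. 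Both of your completions are legitimate, and the dimension count is the cleaner one.

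Two cautions. First, your ``local lemma'' is stated about families rather than tangent vectors, and in that form it is false: $q_t=(w^k+t^2w^{k-2})\,dw^2$ has $\psi=0$ yet leaves the stratum for $t\ne 0$. What you actually need (and implicitly use) is the statement about the tangent space, namely that $T_{q_0}$ of the stratum is contained in the simple-pole locus and that the dimensions match; phrase it that way. Second, your integrating factor is off: from $2w^kY'+kw^{k-1}Y=-\psi$ one gets $\bigl(w^{k/2}Y\bigr)'=-\tfrac{1}{2}w^{-k/2}\psi$, not $(w^2Y)'=\cdots$; the half-integer powers for odd $k$ are handled on the double cover, and the residue obstruction to holomorphy of $Y$ at $w=0$ is exactly the coefficient of $w^{k-2}$ in $\psi$ (the space $P_k$ of the Hubbard--Masur universal deformation), so the threshold $\mathrm{ord}(\psi)\ge k-1$ comes out where you expect. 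You flagged both points yourself; with those repaired the plan is sound.
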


\begin{proof}
The following argument is a modification of that given by Dumas \cite[Lemma 5.2]{MR3413977}.

As we discussed in \S\ref{subsec:description_Hol}, each $\varphi_i$ is appeared by differentiating a holomorphic family $\{q_t\}_{|t|<\epsilon}$ of holomorphic quadratic differentials on a local chart $(U_i,z_i)$. For the simplicity of the argument, we may assume that $z^t_i(z_0)=0$ and $q_0=z^kdz^2$ on $z_i(U_i)$, where $k$ is the order of zero at $z_0$ of $q_0$.
Since $q_t$ has the same symbol as $q_0$, we can write
$$
q_t=\alpha_t^*(z^kdz^2)
$$
for $|t|<\epsilon$ and some holomorphic function $\alpha_t$ which depends holomorphically in $t$ and $\alpha_0(z)=z$.
Then,
$$
\varphi_i=z^{k-1}(k\alpha'+2z\dot{\alpha}')dz^2
$$
and $\varphi_i$ has a zero of order at least $k-1$ at $z_0$. Hence $\varphi_i/q_0$ has at most simple poles on $U_i$.
\end{proof}

\section[The model space of $T^*_{q_0}\mathcal{Q}_g$]{The model space of $T^*_{q_0}\mathcal{Q}_g$ and Models of the pairing and the holomorphic symplectic form}
\label{sec:ModelTTster_TstarTstar}
\subsection{Model space}
We set
$$
{\bf H}^{1,\dagger}(\mathcal{U},\mathbb{L}_{q_0})=\{[\Phi,Y]^\dagger_{q_0}\mid [-Y,\Phi]_{q_0}\in {\bf H}^1(\mathcal{U},\mathbb{L}_{q_0})\}.
$$
and ${\bf H}^{1,\dagger}(\mathbb{L}_{q_0})$ as its direct limit. We will adopt the space ${\bf H}^{1,\dagger}(\mathbb{L}_{q_0})$ as the \emph{model space} of the cotangent space $T^*_{q_0}\mathcal{Q}_g$ to $\mathcal{Q}_g$ at $q_0\in \mathcal{Q}_g$. A representative $(\Phi, Y)$ of $[\Phi,Y]^\dagger_{q_0}\in {\bf H}^{1,\dagger}(\mathcal{U},\mathbb{L}_{q_0})$ satisfies
$$
\Phi_j-\Phi_i=-L_{Y_{ij}}(q_0)
$$
for $i,j\in I$.

\subsection{Model of pairing}
\label{subsec:model_pairing_cotangent}
Let $\mathcal{U}$ be a locally finite covering of $M_0$. We define the \emph{pairing} between ${\bf H}^{1}(\mathcal{U},\mathbb{L}_{q_0})$ and ${\bf H}^{1,\dagger}(\mathcal{U},\mathbb{L}_{q_0})$ by
\begin{align}
\label{eq:pairing_cotangent}
&\paircot([X,\varphi]_{q_0},[\Phi,Y]^\dagger_{q_0})
\\
&=-\dfrac{1}{2i}
\iint_{M_0}(\xi_i)_{\overline{z}}
\left(
(\Phi_i+L_{\eta_i}(q_0))+(\eta_i)_{\overline{z}}(\varphi_i-L_{\xi_i}(q_0))
\right)d\overline{z}\wedge dz,
\nonumber
\end{align}
where $\xi$, $\eta\in C^0(\mathcal{U}, \mathcal{A}^{0,0}(\Theta_{M_0}))$ with $\delta\xi=X$ and $\delta\eta=Y$.
We check the pairing \eqref{eq:pairing_cotangent} is well-defined.

We first check that the integral in \eqref{eq:pairing_cotangent} is independent of the choices of $\xi$ and $\eta$.
Let $\xi'=\{\xi'_i\}_{i\in I}\in C^0(\mathcal{U}, \mathcal{A}^{0,0}(\Theta_{M_0}))$ with $\delta\xi'=X$. Then, $\Xi=\xi_i-\xi'_i$ on $U_i$ defines a global vector field on $M_0$. The difference between the integrals in the right-hand side \eqref{eq:pairing_cotangent} defined from $\xi$ and $\xi'$ is equal to
\begin{align*}
&\dfrac{1}{2i}\iint_{M_0}
\left(
\Xi_{\overline{z}}(\Phi_i+L_{\eta_i}(q_0))
-(\eta_i)_{\overline{z}}L_{\Xi}(q_0)\right)d\overline{z}\wedge dz
\\
&=\dfrac{1}{2i}\iint_{M_0}d(\Xi (\Phi_i+L_{\eta_i}(q_0))dz+
2(q_0 \Xi (\eta_i)_{\overline{z}})d\overline{z})=0,
\end{align*}
since the one form 
$$
\Xi (\Phi_i+L_{\eta_i}(q_0))dz
+2q_0 \Xi (\eta_i)_{\overline{z}}d\overline{z}
$$
in the integral is the global one form on $M_0$.
The well-definedness for $\eta$ is treated by the same argument.

We next check that the integral in \eqref{eq:pairing_cotangent} is independent of the choices of the cocycles.
The cohomology class for the cocycle $(X,\varphi)$ consists of $(X+\delta\alpha, \varphi+L_{\alpha}(q_0))$ 
for $\alpha\in C^0(\mathcal{U},\Theta_{M_0})$. 
Let $\xi'\in C^0(\mathcal{U},\mathcal{A}^{0,0}(\Theta_{M_0}))$
with $\delta\xi'=X+\delta\alpha$. Since $\delta(\xi'-\alpha)=X$, from the previous discussion, the integrand in the right-hand side of \eqref{eq:pairing_cotangent} is
\begin{align*}
&(\xi'_i)_{\overline{z}}
(\Phi_i+L_{\eta_i}(q_0))
+
(\eta_i)_{\overline{z}}
(\varphi_i+L_{\alpha_i}(q_0)-L_{\xi'_i}(q_0))
\\
&(\xi'_i-\alpha_i)_{\overline{z}}
(\Phi_i+L_{\eta_i}(q_0))
+
(\eta_i)_{\overline{z}}
(\varphi_i-L_{\xi'_i-\alpha_i}(q_0))
\\
&=(\xi_i)_{\overline{z}}
(\Phi_i+L_{\eta_i}(q_0))
+
(\eta_i)_{\overline{z}}
(\varphi_i-L_{\xi_i}(q_0)).
\end{align*}
The case for $[\Phi,Y]\in {\bf H}^{1,\dagger}(\mathcal{U},\mathbb{L}_{q_0})$ follows from the same argument.

For two locally finite coverings $\mathcal{U}$, $\mathcal{V}$.
Suppose that $\mathcal{V}$ is a refinement of $\mathcal{U}$.
Since the natural maps ${\bf H}^{1}(\mathcal{U},\mathbb{L}_{q_0})\to {\bf H}^{1}(\mathcal{V},\mathbb{L}_{q_0})$ and ${\bf H}^{1,\dagger}(\mathcal{U},\mathbb{L}_{q_0})\to {\bf H}^{1,\dagger}(\mathcal{V},\mathbb{L}_{q_0})$ are defined by the restriction, for the pairing \eqref{eq:pairing_cotangent} descends to the pairing on ${\bf H}^{1}(\mathbb{L}_{q_0})\times {\bf H}^{1,\dagger}(\mathbb{L}_{q_0})$.

\begin{proposition}[Non-degenerate]
\label{prop:non-degenerate_cotang}
When a locally finite covering $\mathcal{U}=\{U_i\}_{i\in I}$ satisfies $H^1(U_i,\Theta_{M_0})=0$ for $i\in I$, the pairing \eqref{eq:pairing_cotangent} is non-degenerate.
\end{proposition}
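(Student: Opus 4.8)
The plan is to establish non-degeneracy in each variable by splitting each of the two $(6g-6)$-dimensional spaces into its vertical and horizontal pieces, exactly as in the proof of \Cref{prop:non-degenerate}, and to recognise the restriction of $\paircot$ to these pieces as the residue pairing of \S\ref{subsec:residue}, which is non-degenerate precisely because $H^1(U_i,\Theta_{M_0})=0$ for $i\in I$.

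First I would record two normal-form facts, both consequences of $H^0(M_0,\Theta_{M_0})=0$ and $H^1(\mathcal{U},\Omega_{M_0}^{\otimes 2})=0$ (the latter established inside the proof of \Cref{prop:some_covering_hypercohomology}). If $[X,\varphi]_{q_0}\in {\bf H}^1(\mathcal{U},\mathbb{L}_{q_0})$ has vanishing horizontal projection $[X]\in H^1(\mathcal{U},\Theta_{M_0})$ (cf. \Cref{remark:differential_projection_cotangent}), then it is represented by $(0,\varphi')$ with $\varphi'\in H^0(\mathcal{U},\Omega_{M_0}^{\otimes 2})=\mathcal{Q}_{M_0}$, and $\varphi'\neq 0$ whenever the class is nonzero; symmetrically, if $[\Phi,Y]^\dagger_{q_0}\in {\bf H}^{1,\dagger}(\mathcal{U},\mathbb{L}_{q_0})$ maps to $0$ under $[\Phi,Y]^\dagger_{q_0}\mapsto[Y]\in H^1(\mathcal{U},\Theta_{M_0})$, it is represented by $(\Phi',0)$ with $0\neq\Phi'\in\mathcal{Q}_{M_0}$. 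I would also note that every $Y\in Z^1(\mathcal{U},\Theta_{M_0})$ occurs in some representative of a class in ${\bf H}^{1,\dagger}(\mathcal{U},\mathbb{L}_{q_0})$, since $\{-L_{Y_{ij}}(q_0)\}_{i,j}\in Z^1(\mathcal{U},\Omega_{M_0}^{\otimes 2})$ is a coboundary, and dually for $X$ and ${\bf H}^1(\mathcal{U},\mathbb{L}_{q_0})$.

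Next I would evaluate \eqref{eq:pairing_cotangent} on these distinguished representatives. Substituting $\xi=0$ yields $\paircot([0,\varphi]_{q_0},[\Phi,Y]^\dagger_{q_0})=\langle\mathscr{T}_{x_0}([Y]),\varphi\rangle$, and substituting $\eta=0$ yields $\paircot([X,\varphi]_{q_0},[\Phi,0]^\dagger_{q_0})=\langle\mathscr{T}_{x_0}([X]),\Phi\rangle$, where $\langle\cdot,\cdot\rangle$ denotes the Teichm\"uller pairing \eqref{eq:pairing_Teichmuller}; both identities follow by comparing the surviving integrand with \eqref{eq:pairing_2}, so in particular the first value is independent of $\Phi$, the second is independent of $\varphi$, and each equals the residue pairing $([V],q)\mapsto-\pi\,{\rm Res}([Vq])$. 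Non-degeneracy then follows by a dichotomy. Given a nonzero $c\in{\bf H}^1(\mathcal{U},\mathbb{L}_{q_0})$: if its horizontal projection $[X]$ is nonzero, non-degeneracy of the residue pairing produces $\Phi\in\mathcal{Q}_{M_0}$ with $\paircot(c,[\Phi,0]^\dagger_{q_0})\neq0$; if $[X]=0$, write $c=[0,\varphi']_{q_0}$ with $\varphi'\neq0$ and choose $[Y]$ and a representative $(\Phi,Y)$ with $\paircot(c,[\Phi,Y]^\dagger_{q_0})=\langle\mathscr{T}_{x_0}([Y]),\varphi'\rangle\neq0$. Applying the same dichotomy to a nonzero $c^\dagger=[\Phi,Y]^\dagger_{q_0}$ — either $[Y]\neq0$, detected by a suitable $[0,\varphi]_{q_0}$, or $[Y]=0$ so that $c^\dagger=[\Phi',0]^\dagger_{q_0}$ with $\Phi'\neq0$, detected by a suitable $[X,\varphi]_{q_0}$ — completes the proof.

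The main obstacle is the sign- and convention-careful verification of the two restricted formulas above: after the substitution $\xi=0$ (resp. $\eta=0$) the Lie-derivative corrections in \eqref{eq:pairing_cotangent} drop out, and one must match the surviving integrand with \eqref{eq:pairing_2}, keeping track of the orientation $d\overline{z}\wedge dz$ (Convention \ref{convention:1}), the sign in the Dolbeault identification \eqref{eq:identify_H1_T_g}, and the factor $-\pi$. Everything else is routine bookkeeping with the two exact sequences, whose required vanishing statements are already in hand.
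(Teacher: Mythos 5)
Your proposal is correct and follows essentially the same route as the paper's proof: the paper also tests a class against $[\Psi,0]^\dagger_{q_0}$ to kill the horizontal projection $[X]$ via non-degeneracy of the residue pairing, then normalizes the representative to $\xi=\alpha$ (equivalently your $(0,\varphi')$ form) and tests against general $[\Phi,Y]^\dagger_{q_0}$, implicitly using the same surjectivity of $Y\mapsto[\Phi,Y]^\dagger_{q_0}$ that you justify explicitly from $H^1(\mathcal{U},\Omega_{M_0}^{\otimes 2})=0$. Your contrapositive phrasing and the explicit identities $\paircot([0,\varphi]_{q_0},[\Phi,Y]^\dagger_{q_0})=\langle\mathscr{T}_{x_0}([Y]),\varphi\rangle$ and $\paircot([X,\varphi]_{q_0},[\Phi,0]^\dagger_{q_0})=\langle\mathscr{T}_{x_0}([X]),\Phi\rangle$ are just a cleaner packaging of the same computation.
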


\begin{proof}
Let $[X,\varphi]_{q_0}\in {\bf H}^{1}(\mathcal{U},\mathbb{L}_{q_0})$.
Suppose that
$$
\paircot([X,\phi]_{q_0},[\Phi,Y]^\dagger_{q_0})=0
$$
for all $[\Phi,Y]^\dagger_{q_0}\in {\bf H}^{1,\dagger}(\mathcal{U},\mathbb{L}_{q_0})$.
Let $\Psi\in \mathcal{Q}_g$. Since $[\Psi,0]^\dagger_{q_0}\in {\bf H}^{1,\dagger}(\mathcal{U},\mathbb{L}_{q_0})$, 
%
$$
0=\paircot([X,\phi],[\Psi,0])=
-\dfrac{1}{2i}
\iint_{M_0}(\xi_i)_{\overline{z}}(z)\Psi(z)d\overline{z}\wedge dz.
$$
From the assumption of the covering, $H^1(\mathcal{U},\Theta_{M_0})\cong H^1(M_0,\Theta_{M_0})$.
Therefore, $[X]=0$ in $H^1(\mathcal{U},\Theta_{M_0})$
(cf. \S\ref{subsec:the_first_cohomology}).
Therefore, there is $\alpha\in C^0(\mathcal{U},\Theta_{M_0})$ such that $\delta\alpha=X$. We may adopt $\alpha$ as $\xi$ in the formula \eqref{eq:pairing_cotangent} of the pairing and 
$$
0=\paircot([\delta \alpha,\varphi]_{q_0},[\Phi,Y]^\dagger_{q_0})=
-\dfrac{1}{2i}
\iint_{M_0}(\eta_i)_{\overline{z}}(\varphi_i(z)-L_{\alpha_i}(q_0)(z))d\overline{z}\wedge dz.
$$
Since $\{\varphi_i-L_{\alpha_i}(q_0)\}_{i\in I}$ defines a (global) holomorphic quadratic differential on $M_0$, we get $\varphi_i-L_{\alpha_i}(q_0)=0$ for all $i$, since the pairing \eqref{eq:pairing} is non-degenerate.
Thus, we get $[X,\varphi]_{q_0}=[\delta\alpha, L_\alpha(q_0)]^\dagger_{q_0}=0$.
The remaining case is dealt with the same argument.
\end{proof}

%
\subsection{Kawai's description of the holomorphic symplectic form}
\label{subsec:Kawai}
In this section, following Kawai \cite{MR1386110}, we recall the holomorphic symplectic form on the tangent space of the space of holomorphic quadratic differentials,
and rewrite Kawai's symplectic form under our setting.

First we recall Kawai's description.
Let $M_0$ be a closed Riemann surface of genus $g$.
Let $q_0\in \mathcal{Q}_{M_0}$
Let $\Gamma_0$ be a cocompact Fuchsian group acting on the upper-half plane $\mathbb{H}$ with $\mathbb{H}/\Gamma_0=M_0$. In the following discussion, we identify differentials on a Riemann surfaces with automorphic forms on its universal covering space.

Let $\mu_j$ ($j=1$, $\cdots$, $3g-3$) be a system of smooth $(-1,1)$-automorphic forms whose Teichm\"uller equivalence classes span the tangent space at $x_0=(M_0,f_0)\in \teich_g$.
In \cite{MR1386110}, Kawai takes such a basis from the space of harmonic Beltrami differentials.
We can check that his calculation is valid for a system of $3g-3$ smooth $(-1,1)$-forms which form a basis on the tangent space to the Teichm\"uller space at $x_0$.

Let $q_s^\alpha$ ($\alpha=1,2$, $|s|<\epsilon$) be a holomorphic family of quadratic differentials with $q_0^1=q_0^2=q_0\in \mathcal{Q}_{x_0}$. Suppose $q_s^\alpha\in \mathcal{Q}_{x^\alpha_s}$ and  $x^\alpha_s=(M^\alpha_s,f^\alpha_s)$ with $x^1_0=x^2_0=x_0$. Suppose that the Beltrami differential of $f^\alpha_s\colon M_0\to M^\alpha_s$ is equal to $\nu_\alpha(s)=\sum_{j=1}^{3g-3}\epsilon^\alpha_j(s)\mu_j$ for some holomorphic functions $\epsilon^\alpha_j$ ($\alpha=1$, $2$ and $j=1$, $\cdots$, $3g-3$).
Let $w^{\nu_\alpha(s)}$ be the normalized quasiconformal mapping on $\mathbb{H}$ satisfying the Beltrami equation $(w^{\nu_\alpha(s)})_{\overline{z}}=\nu_\alpha(s)(w^{\mu^i(s)})_z$, $w^{\nu_\alpha(s)}(0)=w^{\nu_\alpha(s)}(1)-1=0$ and $w^{\mu_\alpha(s)}(\infty)=\infty$.
Then, Kawai describes the holomorphic symplectic form on $\mathcal{Q}_g$ as
\begin{align}
&\omega_{\mathcal{Q}_g}(t_1,t_2)
\label{eq:Kawai_symplectic_form}
\\
&=\dfrac{1}{2}\sum_{j=1}^{3g-3}\left\{
\left(
\iint_{\mathbb{H}/\Gamma_0}[\dot{q}^1_0(z)+(q^1_0)'(z)
\dot{w}^{\nu_1(0)}(z)
+2q_0^1(z)(\dot{w}^{\nu_1(0)})_z(z)]\mu_j(z)dxdy
\right)\dot{\epsilon}^2_j(0)\right.
\nonumber
\\
&\quad
-
\left.
\left(
\iint_{\mathbb{H}/\Gamma_0}[\dot{q}^2_0(z)+(q^2_0)'(z)
\dot{w}^{\nu_2(0)}(z)
+2q_0^2(z)(\dot{w}^{\nu_2(0)})_z(z)]\mu_j(z)dxdy
\right)\dot{\epsilon}^1_j(0)
\right\},
\nonumber
\end{align}
where $t_\alpha\in T_{q_0}\mathcal{Q}_g$ is tangent to the family $\{q^\alpha_s\}_{|s|<1}$ at $s=0$. 
See \cite[(3.4)]{MR1386110}.
%

We claim 

\begin{proposition}[Holomorphic symplectic form]
\label{prop:holomorphic_symplectic_form}
Fix a locally finite covering $\mathcal{U}=\{U_\alpha\}_{i\in I}$ with $H^1(U_i,\Theta_{M_0})=0$ for $i\in I$.
The holomorphic symplectic form $\omega_{\mathcal{Q}_g}$ on $\mathcal{Q}_g$ is described on the model space ${\bf H}^1(\mathbb{L}_{q_0})$ of $T_{q_0}\mathcal{Q}_g$ by
\begin{align*}
&\omega_{\mathcal{Q}_g}([X^1,\varphi^1]_{q_0},[X^2,\varphi^2]_{q_0})
\\
&=
\dfrac{1}{4i}
\left(
\iint_{M_0}(\xi^1_i)_{\overline{z}}(\varphi^2_i-L_{\xi^2_i}(q_0))
-\iint_{M_0}(\xi^2_i)_{\overline{z}}(\varphi^1_i-L_{\xi^1_i}(q_0))
\right)d\overline{z}\wedge dz
\end{align*}
for $[X^\alpha,\varphi^\alpha]_{q_0}\in {\bf H}^1(\mathbb{L}_{q_0})$ and $\xi^\alpha\in C^0(\mathcal{U}, \mathcal{A}^{0,0}(\Theta_{M_0}))$ with $[\delta\xi^\alpha]=[X^\alpha]$ ($\alpha=1$, $2$). 
\end{proposition}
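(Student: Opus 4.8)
The plan is to reduce Kawai's formula \eqref{eq:Kawai_symplectic_form}, which is written in terms of quasiconformal deformations and normalized $w^{\nu}$-maps on $\mathbb{H}$, to a \v{C}ech--Dolbeault expression on the covering $\mathcal{U}$, and then recognize the latter as the stated integral. The bridge between the two pictures is the explicit correspondence, recorded in \S\ref{subsec:description_Hol}, between a holomorphic family $\{q_t\}$ of quadratic differentials and a cocycle $(X,\varphi)\in {\bf Z}^1(\mathcal{U},\mathbb{L}_{q_0})$: if $X_{ij}=\partial_t z_{ij}^t|_{t=0}\circ(z_{ij}^0)^{-1}$ and $\varphi_i=\partial_t q_i^t|_{t=0}$, then $\delta\varphi=L_X(q_0)$. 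So first I would fix representatives $[X^\alpha,\varphi^\alpha]_{q_0}$ ($\alpha=1,2$) with $\xi^\alpha\in C^0(\mathcal{U},\mathcal{A}^{0,0}(\Theta_{M_0}))$, $[\delta\xi^\alpha]=[X^\alpha]$, and set $\mu_\alpha=-\overline{\partial}\xi^\alpha\in H^0(\mathcal{U},\mathcal{A}^{0,1}(\Theta_{M_0}))$, the Beltrami differential representing the underlying tangent vector $[X^\alpha]\in T_{x_0}\teich_g$. The family $\{q^\alpha_s\}$ in Kawai's setup is then chosen so that its derivative at $s=0$ is $t_\alpha=[X^\alpha,\varphi^\alpha]_{q_0}$, and $\nu_\alpha(0)$ is (infinitesimally equivalent to) $\mu_\alpha$.

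\textbf{Key steps.} (1) Interpret each bracketed integrand in \eqref{eq:Kawai_symplectic_form}, namely $\dot q^\alpha_0+(q^\alpha_0)'\dot w^{\nu_\alpha(0)}+2q^\alpha_0(\dot w^{\nu_\alpha(0)})_z$, as a globally defined object on $M_0$: this is precisely the Lie-derivative-corrected variation $\varphi^\alpha_i-L_{\xi^\alpha_i}(q_0)$ transported by the quasiconformal flow, using that $\dot w^{\nu_\alpha(0)}$ is a (non-holomorphic) vector field whose $\overline\partial$ is $\mu_\alpha$ and whose $z_i$-local pieces differ from $\xi^\alpha_i$ by a holomorphic vector field on $U_i$. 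Concretely, one checks $\varphi^\alpha_i - L_{\xi^\alpha_i}(q_0)$ is independent of $i$ up to a holomorphic quadratic differential, and that pairing it against $\mu_j$ and summing $\sum_j(\cdot)\dot\epsilon^\beta_j(0)$ reconstructs the pairing $\llangle \mu_\beta, \varphi^\alpha_i-L_{\xi^\alpha_i}(q_0)\rrangle$ via \eqref{eq:pairing}; this is where $\sum_j \dot\epsilon^\beta_j(0)\mu_j = \nu_\beta'(0)\equiv\mu_\beta$ is used, together with the non-degeneracy of \eqref{eq:pairing_Teichmuller}. (2) Substitute $\mu_\beta=-\overline\partial\xi^\beta$ and rewrite $\llangle \mu_\beta,\varphi^\alpha_i-L_{\xi^\alpha_i}(q_0)\rrangle = -\frac{1}{2i}\iint_{M_0}(\xi^\beta_i)_{\overline z}(\varphi^\alpha_i - L_{\xi^\alpha_i}(q_0))\,d\overline z\wedge dz$, which is legitimate because $(\xi^\beta_i)_{\overline z}(\varphi^\alpha_i - L_{\xi^\alpha_i}(q_0))$ glues to a global area form on $M_0$ (same gluing computation already done in \Cref{lem:pairing_tangent_cotangent}(2) and in the well-definedness part of \S\ref{subsec:model_pairing_cotangent}). (3) Insert the overall factor $\tfrac12$ from \eqref{eq:Kawai_symplectic_form} and collect the two terms $\alpha=1,\beta=2$ and $\alpha=2,\beta=1$ with the correct sign, yielding exactly
\[
\frac{1}{4i}\left(\iint_{M_0}(\xi^1_i)_{\overline z}(\varphi^2_i-L_{\xi^2_i}(q_0))-\iint_{M_0}(\xi^2_i)_{\overline z}(\varphi^1_i-L_{\xi^1_i}(q_0))\right)d\overline z\wedge dz.
\]
(4) Finally verify independence of all choices (the representative $(X^\alpha,\varphi^\alpha)$ within its class, and $\xi^\alpha$ within $[\delta\xi^\alpha]=[X^\alpha]$) by the same Stokes-theorem arguments used for $\paircot$ in \S\ref{subsec:model_pairing_cotangent}; antisymmetry in $(1)\leftrightarrow(2)$ is manifest, and non-degeneracy follows from \Cref{prop:non-degenerate_cotang} combined with $\canoflipdagger$.

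\textbf{Main obstacle.} The delicate point is step (1): translating the \emph{analytic} normalization in Kawai's formula — the specific normalized solutions $w^{\nu_\alpha(s)}$ of the Beltrami equation on $\mathbb{H}$, and the derivatives $\dot w^{\nu_\alpha(0)}$, $(\dot w^{\nu_\alpha(0)})_z$ — into the \emph{cohomological} language, i.e. showing that the combination $(q^\alpha_0)'\dot w^{\nu_\alpha(0)}+2q^\alpha_0(\dot w^{\nu_\alpha(0)})_z = L_{\dot w^{\nu_\alpha(0)}}(q_0)$ and that replacing $\dot w^{\nu_\alpha(0)}$ by the \v{C}ech $0$-cochain $\xi^\alpha$ changes each local integrand only by a global exact form plus a holomorphic-quadratic-differential ambiguity that integrates against $\mu_\beta$ to zero by Teichm\"uller's lemma. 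Once this identification is in place the rest is bookkeeping of constants and signs, matching the convention fixed after \eqref{eq:dualization_symplectic} so that the symplectic form on $\mathcal{Q}_g$ is compatible with $\canoflipdagger$ and \Cref{prop:holomorphic_symplectic_form} is consistent with the torus computation in \Cref{chap:Teichmuller_space_of_tori}.
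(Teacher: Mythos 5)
Your proposal is correct and takes essentially the same route as the paper: one sets $\xi^\alpha_i=-\dot{w}^{\nu_\alpha(0)}|_{U_i}$, so that Kawai's bracketed integrand $\dot{q}^\alpha_0+(q^\alpha_0)'\dot{w}^{\nu_\alpha(0)}+2q^\alpha_0(\dot{w}^{\nu_\alpha(0)})_z$ becomes $\varphi^\alpha_i-L_{\xi^\alpha_i}(q_0)$ and $\sum_j\dot{\epsilon}^\beta_j(0)\mu_j=-(\xi^\beta_i)_{\overline{z}}$, after which the constants collect to the stated formula. The paper's proof is just a terser version of your steps (1)--(3), with the choice-independence in your step (4) handled by the same Stokes arguments already given for the pairing $\paircot$.
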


\begin{proof}
As we mentioned above, we rewrite Kawai's holomorphic symplectic form under our setting.
Fix a locally finite covering $\mathcal{U}=\{U_i\}_{i\in I}$.
Suppose that for $\alpha=1,2$, $t_\alpha=[X^\alpha,\varphi^\alpha]_{q_0}\in {\bf H}^1(\mathbb{L}_{q_0})\cong T_{q_0}\mathcal{Q}_g$. 
The restriction $\xi^\alpha_i$ of the minus of the derivative $-\dot{w}^{\nu_i(0)}$ (in Kawai's formula) to $U_i$ defines a cochain $\xi^\alpha=\{\xi^\alpha_i\}_{i\in I}\in C^0(\mathcal{U},\mathcal{A}^{0,0}(\Theta_{M_0}))$ with $[\delta\xi^\alpha]=[X^\alpha]$ (cf. \S\ref{subsec:appendix_1}).
Notice that
$$
(\xi^\alpha)_{\overline{z}}=-\nu_i(0)|_{U_i}=-\sum_{j=1}^{3g-3}\epsilon^\alpha_j(0)\mu_j|_{U_i}
$$
for $\alpha=1$, $2$ and $i\in I$.
From \eqref{eq:Kawai_symplectic_form},
we obtain
\begin{align*}
\omega_{\mathcal{Q}_g}(t_1,t_2)
&=
\dfrac{1}{2}
\left(-\iint_{M_0}[\varphi^1_i(z)-L_{\xi^1_i}(q_0)(z)](\xi^2_i)_{\overline{z}}(z)dxdy
\right.
\\
&\qquad+
\left.
\iint_{M_0}[\varphi^2_i(z)-L_{\xi^2_i}(q_0)(z)](\xi^1_i)_{\overline{z}}(z)dxdy\right),
\end{align*}
which implies what we wanted.
\end{proof}

\section{Remarks on regularity of cochains in the formulae of pairings}
\label{sec:remark_smoothness}
All $0$-chains in the formulae of the pairings in \Cref{lem:pairing_tangent_cotangent}, \Cref{thm:doulbeaut_presentation_pairing}, \S\ref{subsec:model_pairing_cotangent}, and \Cref{prop:holomorphic_symplectic_form} are assumed to be smooth. However, for the application, we may need to think of such $0$-chains with weaker regularity conditions.

Henceforth, let $\sob^{k,s}(D)$ be the function space consisting of continuous functions all of whose $s$-th derivarives are in $L^s$ for all $s\le k$. By definition, $\sob^{0,s}(D)=L^s(D)$.

For a Riemann surface $M$ and a , let $\sob^{k,s:p,q}(\mathcal{S})$ be the sheaf of germs of $(p,q)$-forms of class $\sob^{k,s}$with coefficients in $\mathcal{S}=\Theta_{M}$ or $\Omega_{M}$. 
It is known that the Green formula holds for differentials of class $\sob^{1,1}$. For instance, see \cite[p.150 (6.17)]{MR0344463}.
Threfore, in \Cref{lem:pairing_tangent_cotangent} and \Cref{thm:doulbeaut_presentation_pairing}, we can check that the proofs of the formulae work with 
\begin{itemize}
\item
$\xi$, $\eta\in C^0(\mathcal{U},\sob^{2,1:0,0}(\Theta_{M_0})$, 
\item
$A\in C^0(\mathcal{U},\sob^{1,1:0,0}(\Omega_{M_0}))$,
\item
$\mu\in L^{\infty}_{(-1,1)}(M_0)$
with $-\overline{\partial}\xi=\mu$ for some $\xi\in C^0(\mathcal{U},\sob^{2,1:0,0}(\Theta_{M_0}))$,
\item
$\dot{\nu}\in C^0(\mathcal{U},\sob^{0,\infty:0,1}(\Theta_{M_0}))$
with $-\overline{\partial}\dot{\eta}=\dot{\nu}$ for some $\dot{\eta}\in C^0(\mathcal{U},\sob^{2,1:0,0}(\Theta_{M_0}))$ which satisfies
$$
\delta\dot{\eta}+K(\xi,\delta \eta)=0.
$$
\end{itemize}
For the formulae of the pairing and the symplectic form in \Cref{lem:pairing_tangent_cotangent} and \Cref{thm:doulbeaut_presentation_pairing}, we can also check that the proofs work with $\xi$, $\eta\in C^0(\mathcal{U},\sob^{2,1}(\Theta_{M_0}))$.

Indeed, the invariance of the pairing represented with Dolbeaut presentations in \Cref{thm:doulbeaut_presentation_pairing} follows from the discussion in \Cref{lem:pairing_tangent_cotangent} for
$$
A_i=\xi_i \psi_i+\dot{\eta}_iq.
$$
The other case can be similarly checked.

In general, we need such smoothness of cochains to the validity of the formulae and the invariance in terms of the representatives, since we apply the Green theorem in checking the invariance. However, in some case, the formulae also valid with the slightly weaker assumption.

Indeed, the Dolbeaut presentations in \Cref{thm:doulbeaut_presentation_pairing} also holds with cochains $\xi$ and $\eta$ such that $-\overline{\partial}\xi$ and $-\overline{\partial}\eta$ are Teichm\"uller Beltrami differentials (cf. \S\ref{subsubsec:TB_differenital}). A Teichm\"uller Beltrami differential on $M_0$, by definition, forms $\overline{\varphi}/|\varphi|$ for some $\varphi\in \mathcal{Q}_{M_0}$. Hence, it is real analytic except for the zeros of $\varphi$, but not continuous on whole $M_0$, which means that such $\xi$ and $\eta$ are not in $C^0(\mathcal{U},\sob^{2,1:0,0}(\Theta_{M_0})$. 
However, in general, for a holomorphic function $f$ on a domain $D$,
$$
\left(\dfrac{\overline{f}}{|f|}\right)_z=-\dfrac{(\overline{f})^2}{2|f|^3}f',\quad
\left(\dfrac{\overline{f}}{|f|}\right)_{\overline{z}}=\dfrac{\overline{f'}}{2|f|}\in L^1_{loc}(D)
$$
since zeros of a holomorphic function are discrete.
Hence, for any relatively compact simply conneted subdomain $D'$ in $D$, we can construct $\eta\in \sob^{2,1}(D'-{\rm Zero}(f))$ such that $\eta_{\overline{z}}=-\overline{f}/|f|\in L^\infty(D)$ (cf. \cite[Proposition 4.19]{MR1215481}). The discussion with the Green theorem works for such differentials
by applying the Royden-type argument given in \cite{MR0288254} (cf. \S\ref{subsec:Royden_cal}). Namely, we first take a smooth exhaustion of the complement of the zeros of the quadratic differntials (which defines the Teichm\"uller Beltrami differential), apply the Green theorem for such domain in the exhaustion, and take the limit.


\chapter{Trivializations of Models of Spaces and Pairings}
\label{chap:trivialization_model}
\section{Guiding frame, Good section for the coboundary operator}
\label{sec:good_section}
Henceforth, for discussing trivializations, we fix a $\mathbb{C}$-linear map, which we call a \emph{guiding frame}, 
$\GuaidF\colon H^1(M_0,\Theta_{M_0})\to L_{(-1,1)}^\infty(M_0)$ such that
\begin{itemize}
\item[(i)]
for $X\in Z^1(\mathcal{U},\Theta_{M_0})$, $\GuaidF([X])$ represents $[X]\in H^1(\mathcal{U},\Theta_{M_0})\cong H^1(M_0,\Theta_{M_0})\cong T_{x_0}\teich_g$. Namely $\mathscr{T}_{x_0}([X])=[\GuaidF([X])]$ in $T_{x_0}\teich_g$; and
\item[(ii)]
for $[X]\in H^1(\mathcal{U},\Theta_{M_0})$,
each $\GuaidF([X])$ is a smooth Beltrami differential. Namely, $\GuaidF([X])\in \Gamma(M_0,\mathcal{A}^{0,1}(\Theta_{M_0}))$.
\end{itemize}

\begin{example}[Ahlfors-Weill guiding frame]
Since the Ahlfors-Weill section $\ahlforsW{x_0}$ is $\mathbb{C}$-linear, the differential $D\ahlforsW{x_0}|_{x_0}$ from $T_0\Bers{x_0}=B_2(\mathbb{D}^*,\Gamma_0)$ to $T_0L^\infty(\mathbb{D},\Gamma_0)=L^\infty(\mathbb{D},\Gamma_0)$ coincides with $\ahlforsW{x_0}$. Hence,
\begin{equation}
\label{eq:ahlfors_weill-frame}
\GuaidF=\ahlforsW{x_0}\circ D\Bersemb_{x_0}|_{x_0}\circ \mathscr{T}_{x_0}
\colon H^1(M_0,\Theta_{M_0})\to L^\infty(\mathbb{D},\Gamma_0)\cong L_{(-1,1)}^\infty(M_0)
\end{equation}
is a guiding frame, where $D\Bersemb_{x_0}|_{x_0}\colon T_{x_0}\teich_g\to T_0\Bers{x_0}=B_2(\mathbb{D}^*,\Gamma_0)$ is the differential of the Bers embedding $\Bersemb_{x_0}$ at $x_0\in \teich_g$ (cf. \S\ref{subsec:Bers_embedding}). 
We call such guiding frame the \emph{Ahlfors-Weill guiding frame}.

Indeed, let $[X]\in H^1(\mathcal{U},\Theta_{M_0})$. each $\GuaidF([X])$ is a smooth Beltrami differential. Since
\begin{align*}
[\GuaidF([X])]
&=\left.D\Bersproj_{x_0}\right|_0\circ \GuaidF([X])
=\left.D\Bersproj_{x_0}\right|_0\circ \ahlforsW{x_0}\circ D\Bersemb_{x_0}|_{x_0}\circ \mathscr{T}_{x_0}([X]) \\
&=D\left.(\Bersproj_{x_0}\circ \ahlforsW{x_0}\right)_{0}\circ D\Bersemb_{x_0}|_{x_0}\circ \mathscr{T}_{x_0}([X]) \\
&= (D\Bersemb_{x_0}|_{x_0})^{-1}\circ D\Bersemb_{x_0}|_{x_0}\circ \mathscr{T}_{x_0}([X])
=\mathscr{T}_{x_0}([X])
\end{align*}
from \eqref{eq:right_inv},
$\GuaidF([X])$ represents $[X]$.
\end{example}

\begin{proposition}[Good section for $\delta$]
\label{prop:linear-map-L}
For a guiding frame $\GuaidF$, there is a $\mathbb{C}$-linear map $\GoodS=\GoodS^{\GuaidF}$ from $Z^1(\mathcal{U},\Theta_{M_0})$ to $C^0(\mathcal{U},\mathcal{A}^{0,0}(\Theta_{M_0}))$ such that
\begin{itemize}
\item[(a)]
$\delta\circ \GoodS(X)=X$ for $X\in Z^1(\mathcal{U},\Theta_{M_0})$; and
\item[(b)]
$\GoodS\circ \delta(\alpha)=\alpha$ for $\alpha\in C^0(\mathcal{U},\Theta_{M_0})$.
\end{itemize}
Especially, the linear map $\GoodS$ is uniquely determined from the guiding frame $\GuaidF$ with the above condition {\rm (a)} and the following condition {\rm (c)}:
\begin{itemize}
\item[(c)]
$\overline{\partial}\GoodS(X)_i=-\GuaidF([X])$ on $U_i$ for $i\in I$.
\end{itemize}
Actually, the condition {\rm (b)} follows from the conditions {\rm (a)} and {\rm (c)}.
\end{proposition}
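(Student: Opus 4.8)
The plan is to construct $\GoodS$ fiberwise over the index set of the cocycle and then glue, using the guiding frame $\GuaidF$ to pin down the correct smooth representative. First I would fix $X\in Z^1(\mathcal{U},\Theta_{M_0})$. Since the covering is locally finite with $H^1(U_i,\Theta_{M_0})=0$ (which we may assume; in any case the construction below only uses the surjectivity of $\delta$ at the level we need), the sheaf $\mathcal{A}^{0,0}(\Theta_{M_0})$ is fine and in particular $\delta\colon C^0(\mathcal{U},\mathcal{A}^{0,0}(\Theta_{M_0}))\to Z^1(\mathcal{U},\mathcal{A}^{0,0}(\Theta_{M_0}))$ is surjective (a partition-of-unity argument, cf.\ \cite[\S3.4]{MR815922}). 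So there exists at least one $\alpha\in C^0(\mathcal{U},\mathcal{A}^{0,0}(\Theta_{M_0}))$ with $\delta\alpha=X$; the issue is to select one $\mathbb{C}$-linearly in $X$ and so that condition (c) holds. The key observation is that any two choices $\alpha,\alpha'$ with $\delta\alpha=\delta\alpha'=X$ differ by a global section: $\alpha-\alpha'\in \Gamma(M_0,\mathcal{A}^{0,0}(\Theta_{M_0}))$. Applying $\overline{\partial}$, the $(0,1)$-form $-\overline{\partial}\alpha_i$ on $U_i$ patches to a \emph{global} element $\mu_\alpha\in\Gamma(M_0,\mathcal{A}^{0,1}(\Theta_{M_0}))\subset L^\infty_{(-1,1)}(M_0)$ (indeed $-\overline{\partial}\alpha_j=-\overline{\partial}\alpha_i$ on overlaps since $\alpha_j-\alpha_i=X_{ij}$ is holomorphic), and by \eqref{eq:identify_H1_T_g} its class in $T_{x_0}\teich_g$ equals $\mathscr{T}_{x_0}([X])=[\GuaidF([X])]$. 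Thus $\mu_\alpha-\GuaidF([X])=\overline{\partial}g$ for a unique $g\in\Gamma(M_0,\mathcal{A}^{0,0}(\Theta_{M_0}))$ (unique because $M_0$ carries no nonzero holomorphic vector field, so $\overline{\partial}$ is injective on global sections).

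Then I would \emph{define} $\GoodS(X)$ by $\GoodS(X)_i=\alpha_i+g$ on $U_i$ for any such $\alpha$. This is independent of the choice of $\alpha$: replacing $\alpha$ by $\alpha'=\alpha+h$ with $h\in\Gamma(M_0,\mathcal{A}^{0,0}(\Theta_{M_0}))$ changes $\mu_\alpha$ to $\mu_\alpha+\overline{\partial}h$ hence $g$ to $g-h$, and $\alpha_i+g$ is unchanged. By construction $\delta\GoodS(X)=\delta\alpha=X$, giving (a); and $\overline{\partial}\GoodS(X)_i=-\mu_\alpha+\overline{\partial}g=-\GuaidF([X])$ on $U_i$, giving (c). Linearity in $X$ is immediate from the linearity of $\GuaidF$ and of the (affine-linear, then rigidified by the unique $g$) construction: choosing $\alpha$ linearly in $X$ (possible because $\delta$ is a surjective $\mathbb{C}$-linear map between vector spaces, so admits a linear right inverse) makes $\mu_\alpha$, hence $g$, hence $\GoodS(X)$, depend $\mathbb{C}$-linearly on $X$.

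Next I would verify (b), $\GoodS\circ\delta(\alpha)=\alpha$ for $\alpha\in C^0(\mathcal{U},\Theta_{M_0})$ \emph{holomorphic}. Apply the construction to $X=\delta\alpha$: the holomorphic $0$-cochain $\alpha$ itself is a legitimate choice of lift, and then $\mu_\alpha=-\overline{\partial}\alpha_i=0$ since each $\alpha_i$ is holomorphic; also $[X]=[\delta\alpha]=0$ in $H^1$, so $\GuaidF([X])=\GuaidF(0)=0$ by $\mathbb{C}$-linearity of $\GuaidF$; hence $g$ is the unique global section with $\overline{\partial}g=0$, i.e.\ $g=0$. Therefore $\GoodS(\delta\alpha)_i=\alpha_i$, which is (b). Finally, for the uniqueness claim: if $\GoodS'$ is another $\mathbb{C}$-linear section satisfying (a) and (c), then for each $X$ the cochain $\GoodS'(X)-\GoodS(X)$ has vanishing $\delta$, so it is a global smooth vector field $e_X$ on $M_0$, and condition (c) for both gives $\overline{\partial}e_X=-\GuaidF([X])+\GuaidF([X])=0$; since $M_0$ has no nonzero holomorphic vector field, $e_X=0$, so $\GoodS'=\GoodS$.

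\textbf{Main obstacle.} The only genuinely delicate point is making every choice $\mathbb{C}$-linear \emph{simultaneously}: one must pick the right inverse of $\delta$ linearly, then check that the correction term $g$ — defined through the $\overline{\partial}$-equation $\overline{\partial}g=\mu_\alpha-\GuaidF([X])$ on the finite-dimensional space $H^0(\mathcal{A}^{0,1})$ and its complement — also depends linearly on $X$. This is really a bookkeeping matter once one notes that $\overline{\partial}$ on global smooth sections is injective with finite-codimensional image whose complement is identified with $T_{x_0}\teich_g$ via $\mathscr{T}_{x_0}$, and that both $\mathscr{T}_{x_0}$ and $\GuaidF$ are linear; I expect no real difficulty beyond writing this carefully.
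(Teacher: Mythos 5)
Your proof is correct and follows essentially the same route as the paper's: both use the guiding frame to pin down the $\overline{\partial}$-data of the section and the absence of nonzero holomorphic vector fields on $M_0$ to force uniqueness; you merely reverse the order of operations, first lifting $X$ to a smooth $0$-cochain and then correcting by a global $\overline{\partial}$-preimage $g$, whereas the paper first solves $\overline{\partial}\xi_j=-\GuaidF([X])$ locally and then corrects by a holomorphic $0$-cochain. Note only the harmless sign slip in your independence check: replacing $\alpha$ by $\alpha+h$ changes $\mu_\alpha$ to $\mu_\alpha-\overline{\partial}h$ (not $+\overline{\partial}h$), which is what actually yields $g\mapsto g-h$ and hence the claimed invariance of $\alpha_i+g$.
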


\begin{proof}
Let $X=\{X_{ij}\}_{i,j}\in Z^1(\mathcal{U},\Theta_{M_0})$. For an index $j$, we take $\xi_j\in \Gamma(U_j,\mathcal{A}^{0,0}(\Theta_{M_0}))$ which satisfies $\overline{\partial}\xi_j=-\GuaidF([X])$ on $U_i$. By the assumption, there is an $\{\alpha_j\}_j\in C^0(\mathcal{U},\Theta_{M_0})$ such that
\begin{equation}
\label{eq:xi_alpha}
\xi_j-\xi_i=X_{ij}+(\alpha_j-\alpha_i)
\end{equation}
on $U_{i}\cap U_j$ 
for all indices $i$, $j\in I$ (cf. \S\ref{subsubsec:1st_cohomology_tangent_space}). 

We claim that the difference $\xi_j-\alpha_j\in \Gamma(U_j,\mathcal{A}^{0,0}(\Theta_{M_0}))$ is independent of the choice of $\xi_j$'s. Indeed, take $\xi'_j\in \Gamma(U_j,\mathcal{A}^{0,0}(\Theta_{M_0}))$ and $\alpha'_j\in \Gamma(U_j,\Theta_{M_0})$ satisfying $\overline{\partial}\xi'_j=-\GuaidF([X])$ on $U_i$ and 
$$
\xi'_j-\xi'_i=X_{ij}+(\alpha'_j-\alpha'_i)
$$
on $U_{i}\cap U_j$ for all $i$, $j\in I$.
Then,
$$
\xi'_j-\xi'_i-(\alpha'_j-\alpha'_i)=X_{ij}=\xi_j-\xi_i+(\alpha_j-\alpha_i)
$$
and
$$
(\xi_j-\alpha_j)-(\xi'_j-\alpha'_j)=(\xi_i-\alpha_i)-(\xi'_i-\alpha'_i)
$$
on $U_{i}\cap U_j$ for all $i$, $j$. Since $\overline{\partial}\xi_j=\overline{\partial}\xi'_j=-\GuaidF([X])\mid_{U_j}$, 
$$
\xi=(\xi_j-\alpha_j)-(\xi'_j-\alpha'_j)
$$
on $U_j$ defines a holomorphic vector field on $M$. Since $\chi(M)<0$, $M$ has no non-trivial holomorphic vector field. Therefore
$$
\xi_j-\alpha_j=\xi'_j-\alpha'_j
$$
for all $j$, and we confirm the claim.

Let us return to the proof of \Cref{prop:linear-map-L}.
We define a $\mathbb{C}$-linear map $\GoodS\colon Z^1(\mathcal{U},\Theta_{M_0})\to C^0(\mathcal{U},\mathcal{A}^{0,0}(\Theta_{M_0}))$ by
$$
\GoodS(X)=\{\xi_j-\alpha_j\}_j.
$$

We check that $\GoodS$ satisfies the desired conditions. Indeed, from \eqref{eq:xi_alpha}, $\GoodS$ satisfies the condition (a). 
%
We check that the condition (b) follows from the conditions (a) and (c). Let $\alpha=\{\alpha_j\}_j\in C^0(\mathcal{U},\Theta_{M_0})$. Since $\GuaidF([\delta\alpha])=\GuaidF(0)=0$, the condition (c) implies that $\GoodS(\delta\alpha)_i\in \Gamma(U_i,\Theta_{M_0})$ for each $i\in I$. From the condition (a), $\delta \circ \GoodS(\delta\alpha)=\delta \alpha$. Hence
$$
\xi=\GoodS(\delta\alpha)_i-\alpha_i
$$
on $U_{i}$ defines a holomorphic vector field on $M_0$, and $\GoodS(\delta\alpha)=\alpha$. Therefore, $\GoodS$ satisfies the condition (b).

We check the uniqueness. Suppose $\GoodS'$ satisfies the conditions (a) and (c).
Consider $\GoodS''\colon Z^1(\mathcal{U},\Theta_{M_0})\to C^0(\mathcal{U},\mathcal{A}^{0,0}(\Theta_{M_0}))$ defined by $\GoodS''(X)=\GoodS(X)-\GoodS'(X)$. From the condition (c), $\GoodS''(X)_i\in \Gamma(U_i,\Theta_{M_0})$ for $i\in I$. From the condition (a), 
$$
\delta\circ \GoodS''(X)=\delta\circ \GoodS(X)-\delta\circ \GoodS'(X)=X-X=0.
$$
Hence $\GoodS''(X)$ defines a global holomorphic vector field on $M_0$. Hence $\GoodS''(X)=0$ for all $X\in Z^1(\mathcal{U},\Theta_{M_0})$.
\end{proof}
%
As \eqref{eq:S-L-zeta1}, we have
\begin{equation}
\label{eq:S-L-zeta}
\delta(S(\GoodS(X),Y))=\zeta(X,Y)
\end{equation}
for $X,Y\in Z^1(\mathcal{U},\Theta_{M_0})$.

\section{Trivialization of model spaces of $T_{[Y]}T\teich_g$}
\label{sec:trivialization_of_model_space}
In the following three sections, we assume that a locally finite covering $\mathcal{U}=\{U_i\}_{i\in I}$ satisfies $H^1(U_i,\Theta_{M_0})=0$ for all $i\in I$. In this section, we will show that the model spaces $\mathbb{T}_Y[\mathcal{U}]$ and $\mathbb{T}_{[Y]}[\mathcal{U}]$ defined in \Cref{def:model_space_for_cocycle,def:model_space} are isomorphic to the product space $H^1(\mathcal{U},\mathcal{O}_{M_0})^{\oplus 2}\cong H^1(M_0,\mathcal{O}_{M_0})^{\oplus 2}$. In particular, in this case,
\begin{equation}
\label{eq:dimention_T_Y}
\dim_{\mathbb{C}}\mathbb{T}_{[Y]}[\mathcal{U}]=\dim_{\mathbb{C}}\mathbb{T}_{Y}[\mathcal{U}]=6g-6
\end{equation}
for $[Y]\in H^1(\mathcal{U},\Theta_{M_0})$ (cf. \Cref{thm:trivialization}). 
Let
$$
\Pi \colon C^1(\mathcal{U},\Theta_{M_0})\to C^1(\mathcal{U},\Theta_{M_0})/\delta C^0(\mathcal{U},\Theta_{M_0})
$$
be the projection. Since $\delta\circ \delta=0$, the coboundary operator $\delta\colon C^1(\mathcal{U},\Theta_{M_0})\to Z^2(\mathcal{U},\Theta_{M_0})$
descends to a $\mathbb{C}$-linear map
$$
\delta\colon
 C^1(\mathcal{U},\Theta_{M_0})/\delta C^0(\mathcal{U},\Theta_{M_0})\to Z^2(\mathcal{U},\Theta_{M_0}).
$$
The existence in the following lemma is trivial since $H^2(M_0,\Theta_{M_0})=0$.
We give and fix a concrete construction of the coboundary.

\begin{lemma}[Coboundary of the primary obstraction]
\label{lem:map_E}
There is a $\mathbb{C}$-linear map
$$
E\colon Z^1(\mathcal{U},\Theta_{M_0})^{\oplus 2}\to C^1(\mathcal{U},\Theta_{M_0})/\delta C^0(\mathcal{U},\Theta_{M_0})
$$
such that
\begin{itemize}
\item[(a)]
$\delta\circ E(X,Y)=\zeta(X,Y)$ for $X,Y\in Z^1(\mathcal{U},\Theta_{M_0})$;
\item[(b)]
For $\alpha\in C^0(\mathcal{U},\Theta_{M_0})$, $\Pi\circ S(\alpha,Y)=E(\delta\alpha,Y)$; and
\item[(c)]
For  $X\in Z^1(\mathcal{U},\Theta_{M_0})$ and $\beta\in C^0(\mathcal{U},\Theta_{M_0})$, 
$$
K'(\beta,X)+\dfrac{1}{2}[X,\delta\beta]-E(X,\delta\beta)=0
$$
modulo $\delta C^0(\mathcal{U},\Theta_{M_0})$.
\item[(d)]
For $X,Y\in Z^1(\mathcal{U},\Theta_{M_0})$,
$E(X,Y)=E(Y,X)$ in $C^1(\mathcal{U},\Theta_{M_0})/\delta C^0(\mathcal{U},\Theta_{M_0})$.
\end{itemize}
\end{lemma}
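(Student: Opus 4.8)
The plan is to build the map $E$ explicitly using the good section $\GoodS=\GoodS^{\GuaidF}$ of the coboundary operator $\delta$ constructed in \Cref{prop:linear-map-L}. The basic idea is: given $X,Y\in Z^1(\mathcal{U},\Theta_{M_0})$, the primary obstruction $\zeta(X,Y)\in Z^2(\mathcal{U},\Theta_{M_0})$ is a coboundary (since $H^2(M_0,\Theta_{M_0})=0$), and by \eqref{eq:S-L-zeta} one explicit primitive is $S(\GoodS(X),Y)$, i.e. $\delta(S(\GoodS(X),Y))=\zeta(X,Y)$. However, $S(\GoodS(X),Y)$ is not bilinear and not symmetric in $(X,Y)$ because $\GoodS$ only sees the first variable. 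So the first step is to symmetrize: I would \emph{define}
$$
E(X,Y)=\Pi\Big(\tfrac{1}{2}\big(S(\GoodS(X),Y)+S(\GoodS(Y),X)\big)\Big)\in C^1(\mathcal{U},\Theta_{M_0})/\delta C^0(\mathcal{U},\Theta_{M_0}),
$$
where $\Pi$ is the projection. Since $\GoodS$ and $S$ are both $\mathbb{C}$-linear in each argument, $E$ is $\mathbb{C}$-bilinear, and the symmetrization makes property (d), $E(X,Y)=E(Y,X)$, immediate.

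Next I would verify the four properties in order. For (a): $\delta\circ\Pi$ makes sense as the descended coboundary map, and $\delta(S(\GoodS(X),Y))=\zeta(X,Y)$ by \eqref{eq:S-L-zeta}, while $\delta(S(\GoodS(Y),X))=\zeta(Y,X)=\zeta(X,Y)$ by the symmetry $\zeta(X,Y)=\zeta(Y,X)$ recorded after its definition; averaging gives $\delta\circ E(X,Y)=\zeta(X,Y)$. For (b): when $X=\delta\alpha$, we have $\GoodS(\delta\alpha)=\alpha$ by property (b) of \Cref{prop:linear-map-L}, so the first term is $\Pi(\tfrac12 S(\alpha,Y))$; the second term is $\Pi(\tfrac12 S(\GoodS(Y),\delta\alpha))$, and using \eqref{eq:S-L-zeta1} ($\delta(S(\GoodS(Y),\delta\alpha))=\zeta(\delta\alpha,\delta\alpha)$ — wait, one must be careful) together with \eqref{eq:K-bra-S}/\eqref{eq:K_prime-bra-S} I would show $S(\GoodS(Y),\delta\alpha)\equiv S(\alpha,Y)$ modulo $\delta C^0(\mathcal{U},\Theta_{M_0})$; summing gives $E(\delta\alpha,Y)=\Pi(S(\alpha,Y))$ as required. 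The computation uses $\delta(S(\beta,X))=\zeta(\delta\beta,X)$ from \eqref{eq:S-L-zeta1} and the fact that two elements of $C^1$ with the same coboundary differ by an element of $\delta C^0$ up to $Z^1$; here one needs $H^1(\mathcal{U},\Theta_{M_0})\hookrightarrow H^1(M_0,\Theta_{M_0})$ to control the ambiguity — which is why the covering hypothesis $H^1(U_i,\Theta_{M_0})=0$ is in force.

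For (c): this is the identity $K'(\beta,X)+\tfrac12[X,\delta\beta]-E(X,\delta\beta)\equiv 0$ mod $\delta C^0$. Using the definition of $E$ and property (b) (with roles swapped, $\GoodS(\delta\beta)=\beta$), one term of $E(X,\delta\beta)$ is $\Pi(\tfrac12 S(\beta,X))$, and I would combine this with \eqref{eq:K_prime-bra-S}, namely $K'(\beta,X)=\tfrac12[\partial\beta,X]+S(\beta,X)$ — more precisely the version in the excerpt is $K'(\beta,X)=\tfrac12[\delta\beta,X]+S(\beta,X)$ when read for cochains — plus the computation already carried out in the proof of \Cref{prop:isomorphism_tile_L} (the identity $\delta(\tfrac12[X,\delta\beta])-\zeta(X,\delta\beta)=-\delta K'(\beta,Y)$ type manipulation). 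The remaining term $\tfrac12 S(\GoodS(X),\delta\beta)$ must be shown to equal $\tfrac12 S(\beta,X)$ mod $\delta C^0$ by the same symmetry argument as in (b). Assembling, (c) follows from bilinearity and the bracket identities \eqref{eq:Lie-derivative_1}. The main obstacle I anticipate is bookkeeping: keeping track of which identities hold \emph{on the nose} versus only \emph{modulo} $\delta C^0(\mathcal{U},\Theta_{M_0})$, and making sure that the symmetrization does not break the normalization needed in (b) and (c). Every one of these ambiguities is resolved by $M_0$ having no nonzero holomorphic vector field and by the covering condition $H^1(U_i,\Theta_{M_0})=0$, so the verification is routine once the definition of $E$ is fixed; the real work is choosing that definition so all four properties hold simultaneously.
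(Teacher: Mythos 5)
There is a genuine gap, and it sits at the very first step: your proposed definition of $E$ does not land in the stated target space. The cochain $S(\GoodS(X),Y)$ has components $\tfrac12[\GoodS(X)_i+\GoodS(X)_j,Y_{ij}]$, and since $\GoodS(X)_i$ is only a \emph{smooth} vector field (with $\overline{\partial}\GoodS(X)_i=-\GuaidF([X])\neq 0$ unless $[X]=0$), one computes $\overline{\partial}S(\GoodS(X),Y)_{ij}=-[\GuaidF([X]),Y_{ij}]$, which is generically nonzero. Symmetrizing does not help: $\overline{\partial}$ of your candidate is $-\tfrac12([\GuaidF([X]),Y_{ij}]+[\GuaidF([Y]),X_{ij}])$, still nonzero. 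So $\tfrac12(S(\GoodS(X),Y)+S(\GoodS(Y),X))$ lives in $C^1(\mathcal{U},\mathcal{A}^{0,0}(\Theta_{M_0}))$, not in $C^1(\mathcal{U},\Theta_{M_0})$, and the projection $\Pi$ (which is only defined on holomorphic cochains, with quotient by \emph{holomorphic} coboundaries $\delta C^0(\mathcal{U},\Theta_{M_0})$) cannot be applied to it. The missing idea is the correction term: one must choose $\sigma_i\in\Gamma(U_i,\mathcal{A}^{0,0}(\Theta_{M_0}))$ with $\overline{\partial}\sigma_i=-[\GuaidF([X]),\GoodS(Y)_i]$ and set $E_{ij}(X,Y)=S(\GoodS(X),Y)_{ij}-(\sigma_j-\sigma_i)$; only this difference is holomorphic, and the ambiguity in $\sigma$ is exactly a coboundary in $\delta C^0(\mathcal{U},\Theta_{M_0})$. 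Note that this correction is by a coboundary of a \emph{smooth} $0$-cochain, which is not an element of $\delta C^0(\mathcal{U},\Theta_{M_0})$, so it cannot be absorbed into the quotient.

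Once $E$ is defined this way, symmetry (d) is no longer automatic and must be checked: one shows that $[\GoodS(X)_i,\GoodS(Y)_i]-(\sigma_i-\sigma'_i)$ is holomorphic on each $U_i$ (where $\sigma'$ is the corrector for $E(Y,X)$), so $E(X,Y)-E(Y,X)$ is a holomorphic coboundary. Your verification sketches for (b) and (c) also lean on the claim $S(\GoodS(Y),\delta\alpha)\equiv S(\alpha,Y)$ modulo $\delta C^0(\mathcal{U},\Theta_{M_0})$, which you justify only by noting the two sides have the same coboundary; two $1$-cochains with equal coboundary differ by a cocycle, not necessarily a coboundary, so this step is unproved (and in fact it is essentially properties (b) and (d) combined, so the argument is circular). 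The clean route for (b) is to observe that when $X=\delta\alpha$ one has $\GuaidF([\delta\alpha])=0$, hence the correctors $\sigma_i$ may be taken holomorphic and $E(\delta\alpha,Y)\equiv S(\GoodS(\delta\alpha),Y)=S(\alpha,Y)$ directly.
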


\begin{proof}
Let $\GoodS\colon Z^1(\mathcal{U},\Theta_{M_0})\to C^0(\mathcal{U},\mathcal{A}^{0,0}(\Theta_{M_0}))$ be a linear map defined in \Cref{prop:linear-map-L}. 
Then
\begin{align*}
\overline{\partial}S(\GoodS(X),Y)_{ij}
&=\dfrac{1}{2}\overline{\partial}[\GoodS(X)_i+\GoodS(X)_j,Y_{ij}] \\
&=\dfrac{1}{2}[\overline{\partial}\GoodS(X)_i+\overline{\partial}\GoodS(X)_j,Y_{ij}] \\
&=-[\GuaidF([X]),Y_{ij}] \\
&=-[\GuaidF([X]),\GoodS(Y)_j]-[\GuaidF([X]),\GoodS(Y)_i].
\end{align*}
Take $\sigma_j\in \Gamma(U_j,\mathcal{A}^{0,0}(\Theta_{M_0}))$ with
$\overline{\partial}\sigma_i=-[\GuaidF([X])\mid_{U_i},\GoodS(Y)_i]$ on $U_i$. We define
\begin{equation}
\label{eq:map_E_1}
E_{ij}(X,Y)=S(\GoodS(X),Y)_{ij}-(\sigma_j-\sigma_i)
\in \Gamma(U_{i}\cap U_{j},\Theta_{M_0})
\end{equation}
for $X$, $Y\in Z^1(\mathcal{U},\Theta_{M_0})$. Then, $E(X,Y)=\{E_{ij}(X,Y)\}_{i,j}$ is determined up to coboundaries in $\delta C^0(\mathcal{U},\Theta_{M_0})$ (which is caused only from the choice of $\sigma=\{\sigma_i\}_i\in C^0(\mathcal{U},\mathcal{A}^{0,0}(\Theta_{M_0}))$). 
From the linearity of $\GoodS$, $E$ defines a $\mathbb{C}$-linear map
$$
E\colon Z^1(\mathcal{U},\Theta_{M_0})^{\oplus 2}
\to C^1(\mathcal{U},\Theta_{M_0})/\delta C^0(\mathcal{U},\Theta_{M_0}).
$$
We check that the map $E$ satisfies the desired conditions.
Indeed, from \eqref{eq:S-L-zeta},
$$
\delta\circ E(X,Y)=\delta \circ T(X,Y)=\delta\circ S(\GoodS(X),Y)=\zeta(X,Y).
$$
Hence $E$ satisfies the condition (a).

Let $\alpha=\{\alpha_j\}_j\in C^0(\mathcal{U},\Theta_{M_0})$. Since $\GuaidF([\delta \alpha])=\GuaidF(0)=0$ and $\overline{\partial}\sigma_j=-[\GuaidF([X]),\GoodS(Y)_j]=0$ on $U_j$, the vector field $\sigma_j$ in \eqref{eq:map_E_1} is a holomorphic vector field on $U_j$ for all $j$. From \Cref{prop:linear-map-L} and \eqref{eq:map_E_1}, 
$$
E(\delta \alpha,Y)\equiv T(\delta \alpha,Y)=S(\GoodS(\delta \alpha),Y)=S(\alpha,Y)
$$
modulo $\delta C^0(\mathcal{U},\Theta_{M_0})$. Therefore, $E$ satisfies the condition (b).

We check (c). Indeed, from the definition of $E$ and (b) of \Cref{prop:linear-map-L}, 
\begin{equation}
\label{eq:definition_E}
E_{ij}(X,\delta\beta)=S(\GoodS(X),\delta\beta)_{ij}-(\sigma'_j-\sigma'_i)
\end{equation}
where $\overline{\partial}\sigma'_j=-[\GuaidF([X]),\beta_j]$ on $U_j$.
By (a) of \Cref{prop:linear-map-L},
\begin{align*}
&K'(\beta,X)_{ij}+\dfrac{1}{2}[X,\delta\beta]_{ij}-S(\GoodS(X),\delta\beta)_{ij}+(\sigma'_j-\sigma'_i)
 \\
&=[\beta_j,\GoodS(X)_j-\GoodS(X)_i]+\dfrac{1}{2}[\GoodS(X)_j-\GoodS(X)_i,\beta_j-\beta_i] \\
&\qquad -\dfrac{1}{2}[\GoodS(X)_i+\GoodS(X)_j,\beta_j-\beta_i]+(\sigma'_j-\sigma'_i) \\
&=([\beta_j,\GoodS(X)_j]+\sigma'_j)-([\beta_i,\GoodS(X)_i]+\sigma'_i).
\end{align*}
Since $[\beta_i,\GoodS(X)_i]+\sigma'_i\in \Gamma(U_i,\Theta_{M_0})$ for $i\in I$,
we have done.

Finally we check (d).
Let $\sigma$, $\sigma'\in C^0(\mathcal{U},\mathcal{A}^{0,0}(\Theta_{M_0}))$ such that $\overline{\partial}\sigma_i=-[\GuaidF([X]),\GoodS(Y)_i]$ and
$\overline{\partial}\sigma'_i=-[\GuaidF([Y]),\GoodS(X)_i]$ on $U_i$.
From the definition \eqref{eq:map_E_1} of $E$,
\begin{align*}
E_{ij}(X,Y)-E_{ij}(Y,X)
&=S(\GoodS(X),Y)-S(\GoodS(Y),X)-(\sigma_j-\sigma'_j)+(\sigma_i-\sigma'_i)\\
&=([\GoodS(X)_j,\GoodS(Y)_j]-(\sigma_j-\sigma'_j))\\
&\quad-([\GoodS(X)_i,\GoodS(Y)_i]-(\sigma_i-\sigma'_i)).
\end{align*}
Since for $i\in I$, 
\begin{align*}
\overline{\partial}
([\GoodS(X)_i,\GoodS(Y)_i]-(\sigma_i-\sigma'_i))
&=-[\GuaidF([X]),\GoodS(Y)_i]-[\GoodS(X)_i,\GuaidF([Y])] \\
&\quad-(-[\GuaidF([X]),\GoodS(Y)_i]+[\GuaidF([Y]),\GoodS(X)_i])=0,
\end{align*}
$E(X,Y)-E(Y,X)\equiv 0$ modulo $\delta C^0(\mathcal{U},\Theta_{M_0})$.
\end{proof}
%

We claim the following:
\begin{theorem}[Trivialization of model spaces]
\label{thm:trivialization}
Let $E$ be the map defined in \Cref{lem:map_E}.
Then, the $\mathbb{C}$-linear map $\trivialization_Y\colon \mathbb{T}_Y[\mathcal{U}] \to H^1(\mathcal{U},\Theta_{M_0})^{\oplus 2}$
 defined by
\begin{equation}
\label{eq:trivialization}
\trivialization_Y\left(
\tv{X,\dot{Y}}{Y}\right)
=\left([X],
\left[\dot{Y}+\dfrac{1}{2}[X,Y]-E(X,Y)\right]
\right) 
\end{equation}
is an isomorphism which satisfies
\begin{equation}
\label{eq:commute_L_Y_beta}
\trivialization_{Y+\delta\beta}\circ \mathcal{L}_{\beta;Y}=\trivialization_Y
\end{equation}
for all $\beta\in C^0(\mathcal{U},\Theta_{M_0})$. 
In particular, the isomorphism $\trivialization_Y$ descends to the isomorphism
$$
\trivialization_{[Y]}\colon \mathbb{T}_{[Y]}[\mathcal{U}]\to  H^1(\mathcal{U},\Theta_{M_0})^{\oplus 2}. 
$$
\end{theorem}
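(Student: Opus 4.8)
\textbf{Proof proposal for Theorem \ref{thm:trivialization}.}

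The plan is to verify three things in sequence: that $\trivialization_Y$ is well-defined (independent of the choice of representative $(X,\dot Y)\in\ker(D^Y_1)$), that it is a linear isomorphism, and finally that it is compatible with the identifications $\mathcal{L}_{\beta;Y}$ so that it descends to $\trivialization_{[Y]}$. For well-definedness, I would take $(X,\dot Y),(X',\dot Y')\in\ker(D^Y_1)$ representing the same class in $\mathbb{T}_Y[\mathcal{U}]$, so there are $\alpha,\beta\in C^0(\mathcal{U},\Theta_{M_0})$ with $X-X'=\delta\alpha$ and $\dot Y-\dot Y'=\delta\beta+K(\alpha,Y)$. Then on the first coordinate $[X]=[X']$ in $H^1$ immediately. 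On the second coordinate I must check that
$$
\left(\dot Y+\tfrac12[X,Y]-E(X,Y)\right)-\left(\dot Y'+\tfrac12[X',Y]-E(X',Y)\right)
$$
is a coboundary. Expanding, this difference equals $\delta\beta+K(\alpha,Y)+\tfrac12[\delta\alpha,Y]-E(\delta\alpha,Y)$. By (b) of \Cref{lem:map_E}, $E(\delta\alpha,Y)\equiv \Pi\circ S(\alpha,Y)$, i.e.\ $E(\delta\alpha,Y)=S(\alpha,Y)$ modulo $\delta C^0$; and by \eqref{eq:K-bra-S}, $K(\alpha,Y)=-\tfrac12[\partial\alpha,Y]+S(\alpha,Y)$. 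Here I should be careful: $[\delta\alpha,Y]_{ij}=[\alpha_j-\alpha_i,Y_{ij}]$ is the relevant cochain, and combining these identities the $S$-terms cancel against $E$, the bracket terms cancel against each other, and only $\delta\beta$ survives. Hence the second coordinate is $[\dot Y+\tfrac12[X,Y]-E(X,Y)]$ unambiguously, and linearity in $(X,\dot Y)$ is inherited from the $\mathbb{C}$-linearity of $E$ and of the bracket.

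For the isomorphism claim, I would argue both injectivity and surjectivity directly. Injectivity: if $\trivialization_Y(\tv{X,\dot Y}{Y})=0$ then $[X]=0$, so $X=\delta\alpha$ for some $\alpha\in C^0(\mathcal{U},\Theta_{M_0})$, and $\dot Y+\tfrac12[X,Y]-E(X,Y)=\delta\gamma$ for some $\gamma$. Using the same computation as above (with $\alpha$ in place of $\alpha$ and reading it backwards), $\dot Y=\delta\gamma'+K(\alpha,Y)$ for a suitable $\gamma'$, so $(X,\dot Y)=D^Y_0(\alpha,\gamma')$ and the class vanishes. Surjectivity: given $([W_1],[W_2])\in H^1(\mathcal{U},\Theta_{M_0})^{\oplus2}$ with $W_1,W_2\in Z^1(\mathcal{U},\Theta_{M_0})$, I would produce a preimage by setting $X=W_1$ and solving for $\dot Y$; the natural candidate is $\dot Y = W_2 - \tfrac12[W_1,Y]+E(W_1,Y)$ (taking any cochain representative of the class $E(W_1,Y)$ in $C^1/\delta C^0$). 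One must then check $(X,\dot Y)\in\ker(D^Y_1)$: the first coordinate of $D^Y_1$ requires $\dot Y+\dot Y^*+[X,Y]=0$, which holds because $W_2$ is a cocycle (so $W_2^*=-W_2$), $[W_1,Y]^*$ relates to $[W_1,Y]$ appropriately, and $E(W_1,Y)=E(Y,W_1)$ is symmetric by (d) of \Cref{lem:map_E}; the second coordinate requires $\delta(\dot Y+\tfrac12[X,Y])-\zeta(X,Y)=0$, which holds because $\delta W_2=0$ and $\delta E(W_1,Y)=\zeta(W_1,Y)$ by (a) of \Cref{lem:map_E}. A dimension count via \eqref{eq:cotangent_space_tangent_bundle_cycle}-type exact sequences would give an alternative shortcut, but the explicit inverse is cleaner.

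For the descent, I would verify \eqref{eq:commute_L_Y_beta} directly: recall $\mathcal{L}_{\beta;Y}$ is induced by $\tilde{\mathcal{L}}_{\beta;Y}(X,\dot Y)=(X,\dot Y+K'(\beta,X))$, so I must show
$$
\dot Y+K'(\beta,X)+\tfrac12[X,Y+\delta\beta]-E(X,Y+\delta\beta)\equiv \dot Y+\tfrac12[X,Y]-E(X,Y)
$$
modulo $\delta C^0(\mathcal{U},\Theta_{M_0})$. The difference is $K'(\beta,X)+\tfrac12[X,\delta\beta]-E(X,\delta\beta)$, which is exactly $0$ modulo $\delta C^0$ by (c) of \Cref{lem:map_E}. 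Since $\trivialization_Y$ is then constant along the identifications defining $\mathbb{T}_{[Y]}[\mathcal{U}]$ (\Cref{def:model_space}), it descends to a well-defined $\mathbb{C}$-linear map $\trivialization_{[Y]}$, which is an isomorphism because each $\trivialization_Y$ is. The formula \eqref{eq:dimention_T_Y} follows since $\dim_{\mathbb{C}}H^1(\mathcal{U},\Theta_{M_0})=\dim_{\mathbb{C}}T_{x_0}\teich_g=3g-3$. The main obstacle I anticipate is bookkeeping the $^*$-operation and the various placements of indices in the bracket cochains $[X,Y]$, $[X,\delta\beta]$, $[\delta\alpha,Y]$ consistently with the definitions in \eqref{eq:D_1}, \eqref{eq:K-bra-S}, \eqref{eq:K_prime-bra-S}; all the conceptual content is packaged into the four properties (a)–(d) of \Cref{lem:map_E} together with \eqref{eq:S-L-zeta1}, so once those are invoked correctly the verification is mechanical.
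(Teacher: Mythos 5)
Your proposal is correct and follows essentially the same route as the paper: well-definedness by checking that the formula kills ${\rm Im}(D^Y_0)$ via \eqref{eq:K-bra-S} and property (b) of \Cref{lem:map_E}, the explicit inverse $(X,W)\mapsto\tv{X,\,W-\tfrac12[X,Y]+E(X,Y)}{Y}$, and descent via property (c). One minor remark: in your surjectivity check, the first coordinate of the $\ker(D^Y_1)$ condition needs the index-antisymmetry $E(X,Y)^*=-E(X,Y)$ (which follows from the explicit formula \eqref{eq:map_E_1}, since $S(\xi,Y)_{ji}=-S(\xi,Y)_{ij}$), not the argument-symmetry $E(X,Y)=E(Y,X)$ of (d) that you cite — though the paper's own proof does not verify this coordinate at all.
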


\begin{proof}
We first check that the map \eqref{thm:trivialization} is well-defined. From \Cref{lem:map_E}, the cochain in the bracket in the second coordinate of the map \eqref{thm:trivialization} is cocycle. We check that the restriction of the $\mathbb{C}$-linear map $Z^1(\mathcal{U},\Theta_{M_0})\oplus C^1(\mathcal{U},\Theta_{M_0})
\to H^1(\mathcal{U},\Theta_{M_0})\oplus H^1(\mathcal{U},\Theta_{M_0})$ defined by
$$
(X,\dot{Y})\mapsto
\left([X],
\left[\dot{Y}+\dfrac{1}{2}[X,Y]-E(X,Y)\right]
\right) 
$$
to the image of $D^Y_0$ (defined in \eqref{eq:D_0}) is trivial. Let $\alpha$, $\beta\in C^0(\mathcal{U},\Theta_{M_0})$. The first coordinate of the image of $D^Y_0(\alpha,\beta)$ under the above map is trivial because of the definition of the cohomology group $H^1(\mathcal{U},\Theta_{M_0})$.
For the second coordinate,
\begin{align*}
&
\left(\delta \beta+K(\alpha,Y)+\dfrac{1}{2}[\delta\alpha,Y]-E(\delta\alpha,Y)
\right)_{ij} 
\\
&\equiv \beta_j-\beta_i+[\alpha_i,Y_{ij}]+\dfrac{1}{2}[\alpha_j-\alpha_i,Y_{ij}]-
\dfrac{1}{2}[\alpha_i+\alpha_j,Y_{ij}]
=\beta_j-\beta_i\equiv 0
\end{align*}
modulo $\delta C^0(\mathcal{U},\Theta_{M_0})$. Hence, \eqref{eq:trivialization} is well-defined.

We construct the inverse of the map \eqref{thm:trivialization}. For $X$, $W\in Z^1(\mathcal{U},\Theta_{M_0})$, we define 
$$
\dot{W}=W-\dfrac{1}{2}[X,Y]+E(X,Y).
$$
Then, from \Cref{lem:map_E}, 
\begin{align*}
\delta\left(\dot{W}+\dfrac{1}{2}[X,Y]\right)-\zeta(X,Y) &=
\delta\left(W+E(X,Y)\right)-\zeta(X,Y)=0.
\end{align*}
Hence we have a well-defined $\mathbb{C}$-linear map
\begin{equation}
\label{eq:inverse}
(X,W)\mapsto
\tv{
X,
W-\dfrac{1}{2}[X,Y]+E(X,Y)}{Y}
\end{equation}
from $Z^1(\mathcal{U},\Theta_{M_0})\oplus Z^1(\mathcal{U},\Theta_{M_0})$ to $\mathbb{T}_Y$. We check that the image of the direct sum of two copies of $\delta C^0(\mathcal{U},\Theta_{M_0})$ under \eqref{eq:inverse} is trivial. Let $\alpha$, $\beta\in C^0(\mathcal{U},\Theta_{M_0})$. When we substitute $\delta\alpha$ and $\delta \beta$ to $X$ and $W$ in \eqref{eq:inverse}, the $ij$-component of the second coordinate is equal to
\begin{align*}
&\beta_j-\beta_i-\dfrac{1}{2}[\alpha_j-\alpha_i,Y_{ij}]+E(\delta \alpha,Y)_{ij} \\
&\equiv \beta_j-\beta_i-\dfrac{1}{2}[\alpha_j-\alpha_i,Y]+\dfrac{1}{2}[\alpha_i+\alpha_j,Y_{ij}]
=\beta_j-\beta_i+K(\alpha,Y)_{ij}
\end{align*}
modulo $\delta C^0(\mathcal{U},\Theta_{M_0})$,
which implies that the image of $(\delta \alpha,\delta\beta)$ under the map \eqref{eq:inverse} is zero in $\mathbb{T}_Y$.
Hence, the map \eqref{eq:inverse} descends to the $\mathbb{C}$-linear map 
from $H^1(\mathcal{U},\Theta_{M_0})^{\oplus 2}$ to $\mathbb{T}_Y$. From the definition, this map is the inverse of \eqref{thm:trivialization}.

\Cref{eq:commute_L_Y_beta} follows from (c) of \Cref{lem:map_E}.
\end{proof}

\subsection{Trivialization for Dolbeault type presentation}
We continue to use the linear maps $\GuaidF$, $\GoodS$ and $E$ in \Cref{prop:linear-map-L} and \Cref{lem:map_E}.
Let $Y\in Z^1(\mathcal{U},\Theta_{M_0})$.
We define a $\mathbb{C}$-linear map 
$\tilde{\trivializationD}_Y\colon \ker(D^{Y,1}_0) \to \Gamma(\mathcal{U},\mathcal{A}^{0,1}(\Theta_{M_0}))^{\oplus 2}$ by
\begin{equation}
\label{eq:definition_Xi}
\tilde{\trivializationD_Y}(\mu,\dot{\nu})=(\GuaidF([X]), \hat{\nu})
\end{equation}
where $[X]\in H^1(\mathcal{U},\Theta_{M_0})$ with $[\GuaidF([X])]=[\mu]$ (cf. \eqref{eq:Teichmuller_lemma}) and the differential $\hat{\nu}\in \Gamma(\mathcal{U},\mathcal{A}^{0,1}(\Theta_{M_0}))$ in \eqref{eq:definition_Xi} is defined by
\begin{equation}
\label{eq:definition_hat_nu}
\hat{\nu} = 
(\dot{\nu}_i-\overline{\partial}[\chi,\GoodS(Y)_i])+[\GuaidF([X]),\GoodS(Y)_i]
\end{equation}
on $U_i$ where $\chi\in \Gamma(M_0,\mathcal{A}^{0,0}(\Theta_{M_0}))$ with $\overline{\partial}\chi=\GuaidF([X])-\mu$.
Since
\begin{align*}
[\chi,\GoodS(Y)_j]-[\chi,\GoodS(Y)_i]
+[-\chi,Y_{ij}]=0
\end{align*}
on $U_i\cap U_j$,
from \Cref{prop:equivalence_class},
$(\mu,\dot{\nu})$ and $(\GuaidF([X]),\{\dot{\nu}_i-\overline{\partial}[\chi ,\GoodS(Y)_i]\}_{i\in I})$ are
equivalent in $\mathbb{T}^{Dol}_Y[\mathcal{U}]$.
We claim
\begin{theorem}[Trivialization for Dolbeault type presentation]
\label{thm:trivialization_Dol}
The map $\tilde{\trivializationD_Y}$ descends to a $\mathbb{C}$-linear isomorphism
$\trivializationD_Y\,\colon\, \mathbb{T}^{Dol}_Y \to (T_{x_0}\teich_g)^{\oplus 2}$
which commutes the following diagram
\begin{equation}
\label{eq:trivialization_model_spaces}
\begin{CD}
\mathbb{T}_Y[\mathcal{U}] @<{\connectinghomo}<<  \mathbb{T}^{Dol}_Y[\mathcal{U}] \\
@V{\trivialization_Y}VV @VV{\trivializationD_Y}V \\
H^1(\mathcal{U},\Theta_{M_0})^{\oplus 2}
@>{\mathscr{T}_{x_0}\oplus \mathscr{T}_{x_0}}>>
(T_{x_0}\teich_g)^{\oplus 2},
\end{CD}
\end{equation}
where $\connectinghomo$ is the connecting homomorphism defined at \eqref{eq:exact_T_Y}, and $\mathscr{T}_{x_0}$ is the isomorphism defined at \eqref{eq:identify_H1_T_g}. Furthermore, in the commutative diagram \eqref{eq:trivialization_model_spaces},
the vertical spaces satisfy
\begin{equation}
\label{eq:trivialization_vertical_spaces}
\begin{CD}
\mathbb{T}^{V}_Y[\mathcal{U}] @<{\left.\connectinghomo\right|_{\mathbb{T}^{Dol,V}_Y}}<<  \mathbb{T}^{Dol,V}[\mathcal{U}]_Y \\
@V{\left.\trivialization_Y\right|_{\mathbb{T}_Y^{V}}=\{[0]\}\oplus \verticalincmodel{Y}:\mathcal{U}}VV @VV{\left.\trivializationD_Y\right|_{\mathbb{T}^{Dol}_Y}}V \\
\{[0]\}\oplus H^1(\mathcal{U},\Theta_{M_0})
@>{\mathscr{T}_{x_0}\oplus \mathscr{T}_{x_0}}>>
\{[0]\}\oplus T_{x_0}\teich_g.
\end{CD}
\end{equation}
\end{theorem}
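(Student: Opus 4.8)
The plan is to \emph{define} $\trivializationD_Y$ to be the composite
$$
(\mathscr{T}_{x_0}\oplus\mathscr{T}_{x_0})\circ\trivialization_Y\circ\connectinghomo\colon\ \mathbb{T}^{Dol}_Y[\mathcal{U}]\longrightarrow (T_{x_0}\teich_g)^{\oplus 2},
$$
which is a $\mathbb{C}$-linear isomorphism because $\connectinghomo$ (\Cref{thm:Dolbeault_pre}), $\trivialization_Y$ (\Cref{thm:trivialization}) and $\mathscr{T}_{x_0}$ (see \eqref{eq:identify_H1_T_g}) all are, and for which \eqref{eq:trivialization_model_spaces} commutes tautologically. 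Everything then reduces to checking that this composite coincides with the map obtained from $\tilde{\trivializationD}_Y$ by post-composing with the quotients $\Gamma(M_0,\mathcal{A}^{0,1}(\Theta_{M_0}))\hookrightarrow L^\infty_{(-1,1)}(M_0)\twoheadrightarrow T_{x_0}\teich_g$ on each factor; this one identification simultaneously shows that $\tilde{\trivializationD}_Y$ descends and that its descent is the asserted isomorphism fitting into \eqref{eq:trivialization_model_spaces}. Throughout, the standing assumption $H^1(U_i,\Theta_{M_0})=0$ for all $i$ is in force.

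To run the identification, fix $(\mu,\dot\nu)\in\ker(D^{Y;1}_0)$ and choose $0$-cochains $\xi,\dot\eta\in C^0(\mathcal{U},\mathcal{A}^{0,0}(\Theta_{M_0}))$ with $-\overline\partial\xi_i=\mu|_{U_i}$ and $-\overline\partial\dot\eta_i=\dot\nu_i$, which exist by the standing assumption. Using that $\mu$ is global and $\delta\dot\nu+[\mu,Y]=0$, one verifies that $X:=\delta\xi\in Z^1(\mathcal{U},\Theta_{M_0})$, that $\dot Y:=\delta\dot\eta+K(\xi,Y)\in C^1(\mathcal{U},\Theta_{M_0})$ with $(X,\dot Y)\in\ker(D^Y_1)$, and that $\connectinghomo(\tvD{\mu,\dot\nu}{Y})=\tv{X,\dot Y}{Y}$ by the explicit description of the connecting homomorphism recorded after \Cref{thm:Dolbeault_pre}. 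Applying $(\mathscr{T}_{x_0}\oplus\mathscr{T}_{x_0})\circ\trivialization_Y$, the first coordinate becomes $\mathscr{T}_{x_0}([\delta\xi])=[-\overline\partial\xi_i]=[\mu]=[\GuaidF([X])]$ in $T_{x_0}\teich_g$, which is the class of the first entry $\GuaidF([X])$ of $\tilde{\trivializationD}_Y(\mu,\dot\nu)$ — note that the $[X]$ occurring in the definition of $\tilde{\trivializationD}_Y$ is necessarily this one, since $\mathscr{T}_{x_0}$ is injective.

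For the second coordinate one must show $\mathscr{T}_{x_0}\big(\big[\dot Y+\tfrac12[X,Y]-E(X,Y)\big]\big)=[\hat\nu]$, with $\hat\nu$ as in \eqref{eq:definition_hat_nu}. Because $\delta\xi=X=\delta\GoodS(X)$, the vector field $\chi:=\xi_i-\GoodS(X)_i$ is globally defined on $M_0$ and satisfies $\overline\partial\chi=\GuaidF([X])-\mu$, hence is the $\chi$ of \eqref{eq:definition_hat_nu}. Combining $K(\xi,Y)+\tfrac12[\delta\xi,Y]=S(\xi,Y)$ (immediate from the definitions of $K$ and $S$), the concrete form $E_{ij}(X,Y)=S(\GoodS(X),Y)_{ij}-(\sigma_j-\sigma_i)$ from \Cref{lem:map_E} with $\overline\partial\sigma_i=-[\GuaidF([X])|_{U_i},\GoodS(Y)_i]$, and $S(\xi,Y)_{ij}-S(\GoodS(X),Y)_{ij}=[\chi,Y_{ij}]=[\chi,\GoodS(Y)_j]-[\chi,\GoodS(Y)_i]$, one gets
$$
\dot Y+\tfrac12[X,Y]-E(X,Y)=\delta\big(\dot\eta+\{[\chi,\GoodS(Y)_i]\}_{i\in I}+\sigma\big),
$$
so $\mathscr{T}_{x_0}$ of this class equals the class of $-\overline\partial\big(\dot\eta_i+[\chi,\GoodS(Y)_i]+\sigma_i\big)=\dot\nu_i-\overline\partial[\chi,\GoodS(Y)_i]+[\GuaidF([X]),\GoodS(Y)_i]=\hat\nu|_{U_i}$, as wanted. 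This completes the identification, hence the first assertion of the theorem together with commutativity of \eqref{eq:trivialization_model_spaces}.

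The vertical diagram \eqref{eq:trivialization_vertical_spaces} then drops out by restriction: $\connectinghomo$ carries $\mathbb{T}^{Dol,V}_Y[\mathcal{U}]$ isomorphically onto $\mathbb{T}^V_Y[\mathcal{U}]$ by \Cref{prop:Dol_vertical_space_isomorphism}, and for $\tv{\delta\alpha,\dot Y}{Y}\in\mathbb{T}^V_Y[\mathcal{U}]$ (with $\alpha\in C^0(\mathcal{U},\Theta_{M_0})$) one has $\tfrac12[\delta\alpha,Y]-E(\delta\alpha,Y)\equiv-K(\alpha,Y)$ modulo $\delta C^0(\mathcal{U},\Theta_{M_0})$ by \Cref{lem:map_E}(b) and the definitions, so \eqref{eq:vertical_isomorphism} shows $\trivialization_Y$ restricts on $\mathbb{T}^V_Y[\mathcal{U}]$ to $\{[0]\}\oplus\verticalincmodel{Y}$; post-composing with $\mathscr{T}_{x_0}$ yields \eqref{eq:trivialization_vertical_spaces}. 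The main obstacle will be the cochain manipulation in the third paragraph exhibiting the explicit $\delta$-primitive of $\dot Y+\tfrac12[X,Y]-E(X,Y)$ — it requires careful juggling of the definitions of $K$, $S$, $E$, $\GoodS$ and $\mathscr{T}_{x_0}$ — but it is bookkeeping rather than anything deep; all remaining steps are formal.
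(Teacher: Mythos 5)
Your proposal is correct and follows essentially the same route as the paper: both arguments compute $(\mathscr{T}_{x_0}\oplus\mathscr{T}_{x_0})\circ\trivialization_Y\circ\connectinghomo$ on a representative $(\mu,\dot\nu)$, exhibit the explicit $\delta$-primitive $\dot\eta+[\chi,\GoodS(Y)]+\sigma$ of $\dot Y+\tfrac12[X,Y]-E(X,Y)$ via the $K$/$S$/$E$ identities, and read off $[\hat\nu]$ by applying $-\overline\partial$; the only (immaterial) difference is that the paper first normalizes the representative to $(\GuaidF([X]),\{\dot\nu_i-\overline\partial[\chi,\GoodS(Y)_i]\})$ while you carry the global correction $\chi=\xi-\GoodS(X)$ through the computation. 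The vertical-space diagram is likewise handled as in the paper, via \Cref{prop:Dol_vertical_space_isomorphism} and \Cref{lem:map_E}(b).
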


\begin{proof}
Since
\begin{align*}
\overline{\partial}[\chi,\GoodS(Y)_j]
-
\overline{\partial}[\chi,\GoodS(Y)_i]
&=\overline{\partial}[\chi,Y_{ij}] 
=[\overline{\partial}\chi,Y_{ij}] \\
&=[\GuaidF([X])-\mu,Y_{ij}]
\end{align*}
on $U_i\cap U_j$,
we have
\begin{align*}
0
&=\dot{\nu}_j-\dot{\nu}_i+[\mu,Y_{ij}] \\
&=(\dot{\nu}_j-\overline{\partial}[\chi,\GoodS(Y)_j])-
(\dot{\nu}_i-\overline{\partial}[\chi,\GoodS(Y)_i])
+[\GuaidF([X]),Y_{ij}]
\end{align*}
on $U_i\cap U_j$ for $i$, $j\in I$.
Let $\beta\in C^0(\mathcal{U},\mathcal{A}^{0,0}(\Theta_{M_0}))$ with
$-\overline{\partial}\beta_i=\dot{\nu}_i-\overline{\partial}[\chi,\GoodS(Y)_i]$ on $U_i$,
and $\dot{Y}=\delta\beta+K(\GoodS(X),Y)$.
Then,
$$
\connectinghomo\left(\tvD{\mu,\dot{\nu}}{Y}\right)=
\connectinghomo\left(\tvD{\GuaidF([X]),\{\dot{\nu}_i-\overline{\partial}[\chi,\GoodS(Y)_i]\}_{i\in I}}{Y}\right)=\tv{X,\dot{Y}}{Y}
$$
from the definition of the connecting homomorphism.
On the other hand, 
\begin{align*}
&\dot{Y}_{ij}+\dfrac{1}{2}[X_{ij},Y_{ij}]-E(X,Y)_{ij} \\
&=\beta_j-\beta_i+[\GoodS(X)_i,Y_{ij}]+\dfrac{1}{2}[\GoodS(X)_j-\GoodS(X)_i,\GoodS(Y)_j-\GoodS(Y)_i] \\
&\quad -\dfrac{1}{2}[\GoodS(X)_j+\GoodS(X)_i,\GoodS(Y)_j-\GoodS(Y)_i]+\sigma_j-\sigma_i \\
&=(\beta_j+\sigma_j)-(\beta_i+\sigma_i)
\end{align*}
modulo $\delta C^0(\mathcal{U},\Theta_{M_0})$,
where $\sigma_i\in \Gamma(U_i,\mathcal{A}^{0,0}(\Theta_{M_0}))$ satisfies $-\overline{\partial}\sigma_i=[\GuaidF([X]),\GoodS(Y)_i]$ (cf. \eqref{eq:map_E_1}).
Since
$$
-\overline{\partial}(\beta_i+\sigma_i)=
(\dot{\nu}_i-\overline{\partial}[\chi,\GoodS(Y)_i])+[\GuaidF([X]),\GoodS(Y)_i]=\hat{\nu}_i
$$
on $U_i$,
$$
\mathscr{T}_{x_0}\left(
\left[
\dot{Y}+\dfrac{1}{2}[X,Y]-E(X,Y)
\right]
\right)
=[\hat{\nu}]\in T_{x_0}\teich_g.
$$
As a consequence,
$(\mathscr{T}_{x_0}\oplus \mathscr{T}_{x_0})\circ \trivialization_Y\circ \connectinghomo\left(\tvD{\mu,\dot{\nu}}{Y}\right)=([\mu],[\hat{\nu}])$, which
is nothing but the equivalence class of $\tilde{\trivializationD}_Y(\mu,\dot{\nu})$
in $T_{x_0}\teich_g\oplus T_{x_0}\teich_g$.

The commutative diagram \eqref{eq:trivialization_vertical_spaces} follows from the diagram \eqref{eq:trivialization_model_spaces}, the definition of $E$ (\eqref{eq:K-bra-S}, \eqref{eq:definition_E}), the definitions of the vertical spaces (\Cref{def:vertical_space_TY,def:vertical_space_Dol}),
and \Cref{prop:Dol_vertical_space_isomorphism}.
\end{proof}

\begin{remark}
\label{remark:1}
The map $E$ defined in \Cref{lem:map_E} are dependent on the choices of the linear map $\GuaidF\colon H^1(M_0,\Theta_{M_0})\to L_{(-1,1)}^\infty(M_0)$.
Therefore, so are the trivializations in \cref{thm:trivialization,thm:trivialization_Dol}.
\end{remark}

\section{Trivialization of the model space of $T^*_{[Y]}T\teich_g$}
\label{sec:trivialization_of_model_space_T*}
Fix a guiding frame $\GuaidF\colon H^1(M_0,\Theta_{M_0})\to L_{(-1,1)}^\infty(M_0)$ and take $\GoodS\colon Z^1(\mathcal{U},\Theta_{0})\to C^0(\mathcal{U},\mathcal{A}^{0,0}(\Theta_{M_0}))$ defined in \Cref{prop:linear-map-L}.

Let $\tvdag{\psi,q}{[Y]}\in \mathbb{T}^\dagger_{[Y]}[\mathcal{U}]$.
Let $Y\in Z^1(\mathcal{U},\Theta_{M_0})$ and $(\psi,q)=(\{\psi_i\}_{i\in I}, q)\in \ker(D^{Y,\dagger}_1)$ be representatives of $[Y]$ and $\tvdag{\psi,q}{[Y]}$ respectively.
Since
$$
\psi_j-\psi_i=L_{Y_{ij}}(q)=L_{\GoodS(Y)_j}(q)-L_{\GoodS(Y)_i}(q)
$$
for $i$, $j\in I$, $\psi-L_{\GoodS(Y)}(q)=\{\psi_i-L_{\GoodS(Y)_i}(q)\}_{i\in I}\in H^0(\mathcal{U},\mathcal{A}^{0,0}(\Omega_{M_0}^{\otimes 2}))$.

The exact sequence
$$
\begin{CD}
0 @>>> \Omega_{M_0}^{\otimes 2} @>{inc}>>
\mathcal{A}^{0,0}(\Omega_{M_0}^{\otimes 2})
@>{\overline{\partial}}>>
\mathcal{A}^{0,1}(\Omega_{M_0}^{\otimes 2})
@>>> 0
\end{CD}
$$
leads an exact sequence
\begin{equation}
\label{eq:exact_cotangent_smooth_holo}
\minCDarrowwidth10pt
\begin{CD}
@.H^0(\mathcal{U},\Omega_{M_0}^{\otimes 2})
@>>>H^0(\mathcal{U},\mathcal{A}^{0,0}(\Omega_{M_0}^{\otimes 2}))
@>{\overline{\partial}}>>
H^0(\mathcal{U},\mathcal{A}^{0,1}(\Omega_{M_0}^{\otimes 2}))
\\
@>>> H^1(\mathcal{U},\Omega_{M_0}^{\otimes 2}).
\end{CD}
\end{equation}
Since $\delta L_{\GoodS(Y)}(q)=L_Y(q)\in C^1(\mathcal{U},\Omega_{M_0}^{\otimes 2})$, $\overline{\partial} L_{\GoodS(Y)}(q)\in H^0(\mathcal{U},\mathcal{A}^{0,1}(\Omega_{M_0}^{\otimes 2}))$.
Since $H^1(\mathcal{U},\Omega_{M_0}^{\otimes 2})=0$,
there is a unique smooth quadratic differential $Q=\{Q_i\}_{i\in I}\in H^0(\mathcal{U},\mathcal{A}^{0,0}(\Omega_{M_0}^{\otimes 2}))$ which satisfies
\begin{equation}
\label{eq:condition_Q}
\overline{\partial}Q_i=\overline{\partial}L_{\GoodS(Y)_i}(q)
\end{equation}
for $i\in I$,
and the \emph{uniqueness condition}
\begin{equation}
\label{eq:uniqueness_lift_Q}
\iint_{M_0}\GuaidF([Z])Q=\dfrac{1}{2i}\iint_{M_0}\GuaidF([Z])(z)Q(z)d\overline{z}\wedge dz=0
\end{equation}
for all $[Z]\in H^1(M_0,\Theta_{M_0})$.
The uniqueness of such $Q$ follows from the fact that $\mathcal{Q}_{M_0}$ is the dual of $T_{x_0}\teich_g$ and 
$$
T_{x_0}\teich_g\cong H^1(M_0,\Theta_{M_0})\ni [Z]\mapsto \iint_{M_0}\GuaidF([Z])Q\in \mathbb{C}
$$
is a $\mathbb{C}$-linear functional.
We claim:
\begin{theorem}[Trivialization]
\label{thm:trivialization_cotangent_to_tangent}
Under the above notation,
\begin{equation}
\label{eq:trivialization_t_dagger}
\trivialization^{\dagger}_{[Y]}
\colon
\mathbb{T}^\dagger_{[Y]}[\mathcal{U}]\ni 
\tvdag{\psi,q}{[Y]}
\mapsto (\{\psi_i-L_{\GoodS(Y)_i}(q)+Q\}_{i\in I},q)\in \mathcal{Q}_{M_0}\oplus \mathcal{Q}_{M_0}
\end{equation}
is a $\mathbb{C}$-isomorphism.
\end{theorem}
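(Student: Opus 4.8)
The plan is to exhibit an explicit two-sided inverse for the map $\trivialization^{\dagger}_{[Y]}$, thereby establishing that it is a $\mathbb{C}$-linear isomorphism. First I would check that $\trivialization^{\dagger}_{[Y]}$ is well-defined, i.e.\ that it is independent of the choices of the representative $Y$ of $[Y]$ and the representative $(\psi,q)$ of $\tvdag{\psi,q}{[Y]}$, and that the first coordinate is indeed an element of $\mathcal{Q}_{M_0}$. The latter is immediate from the discussion preceding the statement: since $\psi-L_{\GoodS(Y)}(q)\in H^0(\mathcal{U},\mathcal{A}^{0,0}(\Omega_{M_0}^{\otimes 2}))$ and $\overline{\partial}(\psi_i-L_{\GoodS(Y)_i}(q)+Q_i)=-\overline{\partial}L_{\GoodS(Y)_i}(q)+\overline{\partial}Q_i=0$ by \eqref{eq:condition_Q}, the collection $\{\psi_i-L_{\GoodS(Y)_i}(q)+Q\}_{i\in I}$ defines a global holomorphic quadratic differential. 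For independence of the representative $(\psi,q)$: replacing $(\psi,q)$ by $(\psi+L_\beta(q),q)$ via $\mathcal{L}^\dagger_{\beta;Y}$ when passing from $Y$ to $Y+\delta\beta$ (see \eqref{eq:equivalence_cotangent_to_tangent}), and using $\GoodS(Y+\delta\beta)=\GoodS(Y)+\beta$ from \Cref{prop:linear-map-L}(b) and $\mathbb{C}$-linearity of $\GoodS$, the quantity $\psi_i-L_{\GoodS(Y)_i}(q)$ is unchanged; and the smooth differential $Q$ defined by \eqref{eq:condition_Q} and the uniqueness condition \eqref{eq:uniqueness_lift_Q} depends only on $q$ and $[Y]$ through $L_{\GoodS(Y)}(q)$ up to a global holomorphic term, which is killed by \eqref{eq:uniqueness_lift_Q}. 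This last point is the delicate part of the verification, and I expect it to be the main technical obstacle.

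Second, I would construct the inverse. Given $(\Psi, q)\in \mathcal{Q}_{M_0}\oplus\mathcal{Q}_{M_0}$, fix a representative $Y\in Z^1(\mathcal{U},\Theta_{M_0})$ of $[Y]$, form $Q$ as above (depending only on $q$ and $[Y]$), and set
$$
\psi_i = \Psi + L_{\GoodS(Y)_i}(q) - Q_i \qquad (i\in I).
$$
Then $\psi_j-\psi_i = L_{\GoodS(Y)_j}(q)-L_{\GoodS(Y)_i}(q) = L_{Y_{ij}}(q)$ since $\Psi$ and $Q$ are global, and $\delta q = 0$ trivially, so $(\psi, q)\in\ker(D^{Y,\dagger}_1)$ and $\tvdag{\psi,q}{[Y]}$ is a well-defined element of $\mathbb{T}^\dagger_{[Y]}[\mathcal{U}]$. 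I would check this assignment is independent of the representative $Y$ (again using $\GoodS(Y+\delta\beta)=\GoodS(Y)+\beta$ and the transformation rule $\mathcal{L}^\dagger_{\beta;Y}$), so it defines a $\mathbb{C}$-linear map $\mathcal{Q}_{M_0}\oplus\mathcal{Q}_{M_0}\to\mathbb{T}^\dagger_{[Y]}[\mathcal{U}]$.

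Third, I would verify that the two composites are the identity. Composing $\trivialization^{\dagger}_{[Y]}$ with the constructed map: starting from $\tvdag{\psi,q}{[Y]}$ we get $(\Psi,q)$ with $\Psi = \{\psi_i-L_{\GoodS(Y)_i}(q)+Q\}$, and feeding this back produces $\psi_i' = \Psi + L_{\GoodS(Y)_i}(q) - Q_i = \psi_i$, recovering the original class. Composing in the other order: starting from $(\Psi,q)$, form $\psi_i = \Psi + L_{\GoodS(Y)_i}(q) - Q_i$, then $\trivialization^\dagger_{[Y]}$ returns $\psi_i - L_{\GoodS(Y)_i}(q) + Q_i = \Psi$ in the first coordinate and $q$ in the second; note that the $Q$ appearing in the second application of $\trivialization^\dagger$ is the same $Q$ since it depends only on $q$ and $[Y]$. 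Hence $\trivialization^{\dagger}_{[Y]}$ is bijective, and since all maps involved are manifestly $\mathbb{C}$-linear, it is a $\mathbb{C}$-linear isomorphism. The remaining routine bookkeeping — tracking the exact sequence \eqref{eq:cotangent_space_tangent_bundle_cohom_class} to confirm dimensions ($\dim_{\mathbb{C}}\mathbb{T}^\dagger_{[Y]}[\mathcal{U}]=6g-6=2\dim_{\mathbb{C}}\mathcal{Q}_{M_0}$) as a consistency check — I would omit or relegate to a remark.
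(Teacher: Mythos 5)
Your proof is correct, but it takes a different route from the paper. The paper's argument is shorter: it first observes that $\dim_{\mathbb{C}}\mathbb{T}^\dagger_{[Y]}[\mathcal{U}]=2\dim_{\mathbb{C}}\mathcal{Q}_{M_0}=6g-6$ (via the exact sequence \eqref{eq:cotangent_space_tangent_bundle_cohom_class}), so it suffices to prove injectivity; and injectivity is a three-line check --- if the image vanishes then $q=0$ from the second coordinate, whence $\overline{\partial}Q=\overline{\partial}L_{\GoodS(Y)_i}(0)=0$ makes $Q$ holomorphic, the uniqueness condition \eqref{eq:uniqueness_lift_Q} forces $Q=0$, and then $\psi=0$. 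You instead build an explicit two-sided inverse $(\Psi,q)\mapsto\{\Psi+L_{\GoodS(Y)_i}(q)-Q\}_{i\in I}$, which costs you more bookkeeping on well-definedness but buys a constructive proof of surjectivity: in effect you re-derive the splitting of the exact sequence rather than invoking the dimension count (which itself rests on the vanishing $H^1(\mathcal{U},\Omega_{M_0}^{\otimes 2})=0$ established earlier). Your verifications are sound; the identity $\GoodS(Y+\delta\beta)=\GoodS(Y)+\beta$ and the invariance of the defining conditions for $Q$ under $Y\mapsto Y+\delta\beta$ are exactly right. One small point worth making explicit in the inverse construction: to conclude $(\psi,q)\in\ker(D^{Y,\dagger}_1)$ you need each $\psi_i=\Psi+L_{\GoodS(Y)_i}(q)-Q_i$ to be \emph{holomorphic} on $U_i$ (membership in $C^0(\mathcal{U},\Omega_{M_0}^{\otimes 2})$), not just to satisfy the cocycle relation; this is immediate from \eqref{eq:condition_Q} since $\overline{\partial}(L_{\GoodS(Y)_i}(q)-Q_i)=0$, the same observation you already used for the forward direction, but it should be stated.
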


\begin{proof}
Since $\dim_{\mathbb{C}}\mathbb{T}^\dagger_{[Y]}[\mathcal{U}]=2\dim_{\mathbb{C}}\mathcal{Q}_{M_0}=6g-6$,
it suffices to show that the map \eqref{eq:trivialization_t_dagger} is injective.

Suppose that $\tvdag{\psi,q}{[Y]}$ is in the kernel of the map \eqref{eq:trivialization_t_dagger}.
Let $(\psi,q)\in \ker(D^{Y,\dagger}_1)$ be the representative of $\tvdag{\psi,q}{[Y]}$.
From the second coordinate, we have $q=0$. From \eqref{eq:condition_Q}, $\overline{\partial}Q=\overline{\partial}\psi=0$. Hence, $Q$ is holomorphic.
However, from the uniqueness condition \eqref{eq:uniqueness_lift_Q}, we have $Q=0$,
and hence $\psi=0$. As a conclusion, we obtain $\tvdag{\psi,q}{[Y]}=0$.
\end{proof}

\section{Presentations of the pairings under trivializations}
\label{sec:presentation_pairing_trivialization}
Let $x_0=(M_0,f_0)\in \teich_g$ and $[Y]\in H^1(M_0,\Theta_{M_0})\cong T_{x_0}\teich_g$.
Let $\mathcal{U}=\{U_i\}_{i\in I}$ be a locally finite covering of $M_0$ with $H^1(U_i,\Theta_{M_0})=0$ for $i\in I$.
Fix a guiding frame  $\GuaidF\colon H^1(\mathcal{U},\Theta_{M_0})\to L^\infty_{(-1,1)}(M_0)$
and the good section $\GoodS\colon Z^1(\mathcal{U},\Theta_{M_0})\to C^0(\mathcal{U},\mathcal{A}^{0,0}(\Theta_{M_0}))$ in \Cref{prop:linear-map-L}.
Consider the trivializations $\trivialization_{[Y]}$ on $\mathbb{T}_{[Y]}[\mathcal{U}]$ 
in \Cref{thm:trivialization}
and 
$\trivialization^{\dagger}_{[Y]}$ on $\mathbb{T}^\dagger_{[Y]}[\mathcal{U}]$
in \Cref{thm:trivialization_cotangent_to_tangent}
defined from the guiding frame $\GuaidF$ and the good section $\GoodS$.

Let $\tv{X,\dot{Y}}{Y}\in \mathbb{T}_Y[\mathcal{U}]$ and 
$\tvdag{\psi,q}{[Y]}\in \mathbb{T}^\dagger_{[Y]}[\mathcal{U}]$.
Let $(X,\dot{Y})$ and $(\psi,q)$ be the representatives.
We continue to use the notation in \S\ref{sec:trivialization_of_model_space} and \S\ref{sec:trivialization_of_model_space_T*} frequently.
In the following discussion, we notice that though $E(X,Y)$ is determined up to $C^0(\mathcal{U},\Theta_{M_0})$, the ambiguity does not affect in the calculation.

For $i,j\in I$, 
\begin{align*}
&X_{ij}(\psi_i-L_{\GoodS(Y)_i}(q)+Q)+
\left(\dot{Y}_{ij}+\dfrac{1}{2}[X_{ij},Y_{ij}]-E(X,Y)_{ij}\right)q \\
&=X_{ij}\psi_i-\dot{Y}_{ji}q+L_{Y_{ij}}(\GoodS(X)_jq) \\
&\quad
-X_{ij}L_{\GoodS(Y)_i}(q)+X_{ij}Q-\dfrac{1}{2}[X_{ij},Y_{ij}]q-E(X,Y)_{ij}q-L_{Y_{ij}}(\GoodS(X)_jq)
\end{align*}
and the last five terms become
\begin{align*}
&-X_{ij}L_{\GoodS(Y)_i}(q)+X_{ij}Q-\dfrac{1}{2}[X_{ij},Y_{ij}]q-E(X,Y)_{ij}q-L_{Y_{ij}}(\GoodS(X)_jq)
\\
&=-(\GoodS(X)_j-\GoodS(X)_i)L_{\GoodS(Y)_i}(q)+(\GoodS(X)_j-\GoodS(X)_i)Q\\
&\quad
-\dfrac{1}{2}[\GoodS(X)_j-\GoodS(X)_i,\GoodS(Y)_j-\GoodS(Y)_i]q
-\dfrac{1}{2}[\GoodS(X)_j+\GoodS(X)_i,\GoodS(Y)_j-\GoodS(Y)_i]q \\
&\quad +(\sigma_j-\sigma_i)q-L_{\GoodS(Y)_j-\GoodS(Y)_i}(\GoodS(X)_jq)
\\
&=-\GoodS(X)_jL_{\GoodS(Y)_i}(q)+\GoodS(X)_iL_{\GoodS(Y)_i}(q)+\GoodS(X)_jQ-\GoodS(X)_iQ\\
&\quad
-[\GoodS(X)_j,\GoodS(Y)_j]q+[\GoodS(X)_j,\GoodS(Y)_i]q \\
&\quad +(\sigma_jq-\sigma_iq-L_{\GoodS(Y)_j}(\GoodS(X)_jq)-L_{\GoodS(Y)_i}(\GoodS(X)_jq)
\\
&=
(\GoodS(X)_jQ+\sigma_jq-\GoodS(X)_jL_{\GoodS(Y)_j}(q))-(\GoodS(X)_iQ+\sigma_iq-\GoodS(X)_iL_{\GoodS(Y)_i}(q))
\\
&\quad
-\GoodS(X)_jL_{\GoodS(Y)_i}(q)+[\GoodS(X)_j,\GoodS(Y)_i]q+L_{\GoodS(Y)_i}(\GoodS(X)_jq)
\\
&\quad
+\GoodS(X)_jL_{\GoodS(Y)_j}(q)-[\GoodS(X)_j,\GoodS(Y)_j]q-L_{\GoodS(Y)_j}(\GoodS(X)_jq) \\
&=
(\GoodS(X)_jQ+\sigma_jq-\GoodS(X)_jL_{\GoodS(Y)_j}(q))
-(\GoodS(X)_iQ+\sigma_iq-\GoodS(X)_iL_{\GoodS(Y)_i}(q))
\end{align*}
from \eqref{eq:Lie-derivative_1}
since $Q$ and $q$ are global sections on $M_0$.
Hence, for $\Omega=\{\Omega_i\}_{i\in I}\in C^0(\mathcal{U},\mathcal{A}^{0,0}(\Omega_{M_0}))$ with
$$
\Omega_j-\Omega_i=
X_{ij}(\psi_i-L_{\GoodS(Y)_i}(q)+Q)+
\left(\dot{Y}_{ij}+\dfrac{1}{2}[X_{ij},Y_{ij}]-E(X,Y)_{ij}\right)q,
$$
we define $A=\{A_i\}_{i\in I}\in C^0(\mathcal{U},\mathcal{A}^{0,0}(\Omega_{M_0}))$
by 
$$
A_i=\Omega_i+(-\GoodS(X)_iQ-\sigma_iq+\GoodS(X)_iL_{\GoodS(Y)_i}(q)).
$$
on $U_i$.
Then, $A=\{A_i\}_{i\in I}$ satisfies
$$
A_j-A_i=X_{ij}\psi_i-\dot{Y}_{ji}q+L_{Y_{ij}}(\GoodS(X)_jq)
$$
and
\begin{align*}
&\overline{\partial}A_i-L_{\GoodS(Y)_i}((\GoodS(X)_i)_{\overline{z}}q)
\\
&=\overline{\partial}\Omega_i+\GuaidF([X])Q
-\GoodS(X)_i\overline{\partial}(L_{\GoodS(Y)_i}(q))
-[\GuaidF([X]),\GoodS(Y)_i]]q
\\
&\quad
+\GuaidF([X])L_{\GoodS(Y)_i}(q)+\GoodS(X)_i\overline{\partial}(L_{\GoodS(Y)_i}(q))
-L_{\GoodS(Y)_i}(\GuaidF([X])q)
\\
&=\overline{\partial}\Omega_i+\GuaidF([X])Q.
\end{align*}
from \eqref{eq:Lie-derivative_04}.
Therefore, we deduce
\begin{align*}
&-\dfrac{1}{2i}\iint_{M_0}((A_i)_{\overline{z}}(z)-L_{\GoodS(Y)_i}((\GoodS(X)_i)_{\overline{z}}q)(z))d\overline{z}\wedge dz \\
&= -\dfrac{1}{2i}\iint_{M_0}((\Omega_i)_{\overline{z}}(z)+\GuaidF([X])(z)Q(z))d\overline{z}\wedge dz \\
&=-\dfrac{1}{2i}\iint_{M_0}d\Omega_i
\end{align*}
from the uniqueness condition \eqref{eq:uniqueness_lift_Q} of $Q$.
Thus,
from \eqref{eq:pairing_2}, we obtain the following.

\begin{theorem}[Trivialization and Pairing]
\label{thm:Trivialization_pairing_TTT_g}
Under the above notation,
for $\tv{X,\dot{Y}}{Y}\in \mathbb{T}_Y[\mathcal{U}]$ and 
$\tvdag{\psi,q}{[Y]}\in \mathbb{T}^\dagger_{[Y]}[\mathcal{U}]$,
we set
\begin{align*}
\trivialization_{[Y]}\left(\tv{X,\dot{Y}}{Y}\right)
&=([X],[Z])\in H^1(\mathcal{U},\Theta_{M_0})^{\oplus 2} \\
(\mathscr{T}_{x_0}\oplus \mathscr{T}_{x_0})(([X],[Z]))
&=([\mu],[\hat{\nu}])
\in T_{x_0}\teich_g\oplus T_{x_0}\teich_g \\
\trivialization^{\dagger}_{[Y]}\left(\tvdag{\psi,q}{[Y]}\right)
&=(\varphi,q)\in \mathcal{Q}_{x_0}\oplus \mathcal{Q}_{x_0}.
\end{align*}
Then,
\begin{align*}
\mathcal{P}_{\mathbb{TT}}\left(
\tv{X,\dot{Y}}{[Y]},\tvdag{\psi, q}{[Y]}
\right)
&=-\pi\left(
{\rm Res}([\{X_{ij}\varphi\}_{i,j\in I}])+{\rm Res}([\{Z_{ij}q\}_{i,j\in I}])
\right) \\
&=\langle [\mu],\varphi\rangle+\langle [\hat{\nu}],q\rangle.
\end{align*}
\end{theorem}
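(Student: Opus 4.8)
\textbf{Proof proposal for Theorem \ref{thm:Trivialization_pairing_TTT_g}.}

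The plan is to directly combine the explicit computation carried out in the preceding displays of \S\ref{sec:presentation_pairing_trivialization} with the residue representation of the pairing \eqref{eq:pairing_2}. First I would fix a representative $Y\in Z^1(\mathcal{U},\Theta_{M_0})$ of $[Y]$, representatives $(X,\dot Y)\in\ker(D^Y_1)$ and $(\psi,q)\in\ker(D^{Y,\dagger}_1)$, and the auxiliary cochains $\xi=\GoodS(X)$, $\eta=\GoodS(Y)$, together with $\sigma=\{\sigma_i\}$ satisfying $-\overline\partial\sigma_i=[\GuaidF([X]),\GoodS(Y)_i]$ (as in \eqref{eq:map_E_1}) and the canonical lift $Q$ of \eqref{eq:condition_Q} with the uniqueness normalization \eqref{eq:uniqueness_lift_Q}. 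The point is that, by \Cref{thm:trivialization} and \Cref{thm:trivialization_cotangent_to_tangent}, the trivialized data are $\varphi=\{\psi_i-L_{\GoodS(Y)_i}(q)+Q\}_{i\in I}$ and the cocycle $Z$ with $Z_{ij}=\dot Y_{ij}+\tfrac12[X_{ij},Y_{ij}]-E(X,Y)_{ij}$, so the combination $X_{ij}\varphi+Z_{ij}q$ is exactly the $1$-cochain that the long computation in \S\ref{sec:presentation_pairing_trivialization} rewrites.

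Next I would invoke that computation verbatim: it produces a $0$-cochain $\Omega=\{\Omega_i\}$ of smooth $1$-forms with $\delta\Omega=\{X_{ij}\varphi+Z_{ij}q\}$, and a related $0$-cochain $A=\{A_i\}$ with $\delta A=\{X_{ij}\psi_i-\dot Y_{ji}q+L_{Y_{ij}}(\GoodS(X)_jq)\}$ — i.e.\ precisely the cocycle $\omega$ appearing in \Cref{lem:pairing_tangent_cotangent}(3) with $\xi=\GoodS(X)$, $\eta=\GoodS(Y)$. The chain of equalities already displayed shows
$$
(A_i)_{\overline z}-L_{\GoodS(Y)_i}((\GoodS(X)_i)_{\overline z}q)=(\Omega_i)_{\overline z}+\GuaidF([X])\,Q,
$$
and then the uniqueness condition \eqref{eq:uniqueness_lift_Q} kills the $\GuaidF([X])Q$ term upon integration, leaving
$$
\tilde{\mathcal P}_{\mathbb{TT}}((X,\dot Y),(\psi,q))=-\frac{1}{2i}\iint_{M_0}(\Omega_i)_{\overline z}\,d\overline z\wedge dz=-\frac{1}{2i}\iint_{M_0}d\Omega_i .
$$
Since $\delta\Omega=\{X_{ij}\varphi+Z_{ij}q\}=\{X_{ij}\varphi\}+\{Z_{ij}q\}$ and each of $\{X_{ij}\varphi\}$, $\{Z_{ij}q\}$ lies in $Z^1(\mathcal U,\Omega_{M_0})$ (because $\varphi,q\in\mathcal Q_{M_0}$ and $X,Z\in Z^1(\mathcal U,\Theta_{M_0})$), the definition \eqref{eq:definition_residue} of the residue and the computation \eqref{eq:pairing_2} give
$-\tfrac{1}{2i}\iint_{M_0}d\Omega_i=-\pi\,{\rm Res}([\{X_{ij}\varphi\}])-\pi\,{\rm Res}([\{Z_{ij}q\}])$, which is the first asserted equality.

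For the second equality I would use \eqref{eq:pairing_2} again in the reverse direction: by construction $[\mu]=\mathscr T_{x_0}([X])$ and $[\hat\nu]=\mathscr T_{x_0}([Z])$, and \eqref{eq:pairing_2} states exactly that $\langle [\mu],\varphi\rangle=-\pi\,{\rm Res}([X\varphi])$ and $\langle [\hat\nu],q\rangle=-\pi\,{\rm Res}([Zq])$. Finally I would note the mild regularity caveat: the cochains $\GoodS(X)$, $\GoodS(Y)$, $\sigma$ and $\Omega$, $A$, $Q$ appearing here are all smooth (the good section $\GoodS$ of \Cref{prop:linear-map-L} lands in $C^0(\mathcal U,\mathcal A^{0,0}(\Theta_{M_0}))$ by property (ii) of the guiding frame, and $Q$ is a smooth quadratic differential), so the applications of the Stokes/Green formula in the displayed computation and in \Cref{lem:pairing_tangent_cotangent} are legitimate without recourse to \S\ref{sec:remark_smoothness}. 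The main obstacle — though it is bookkeeping rather than a genuine difficulty — is making sure that the long telescoping identity in \S\ref{sec:presentation_pairing_trivialization} is invoked with exactly the right auxiliary choices ($\xi=\GoodS(X)$, the particular $\sigma$ tied to $E$ via \eqref{eq:map_E_1}, and the normalized $Q$) so that the $E(X,Y)$-ambiguity modulo $\delta C^0(\mathcal U,\Theta_{M_0})$ and the choice of $A$ both drop out; both independences are guaranteed by \Cref{lem:pairing_tangent_cotangent}(3)(i),(v) and \Cref{lem:map_E}(b),(c), so once these are cited the statement follows.
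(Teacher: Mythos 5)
Your proposal is correct and follows essentially the same route as the paper: the theorem is proved there precisely by the telescoping computation in \S\ref{sec:presentation_pairing_trivialization} that identifies $\{X_{ij}\varphi+Z_{ij}q\}$ with the cocycle of \Cref{lem:pairing_tangent_cotangent} up to the coboundary $\delta\{\GoodS(X)_iQ+\sigma_iq-\GoodS(X)_iL_{\GoodS(Y)_i}(q)\}$, kills the residual $\GuaidF([X])Q$ term with the normalization \eqref{eq:uniqueness_lift_Q}, and converts the remaining integral into residues via \eqref{eq:pairing_2}. Your added remarks on the smoothness of the auxiliary cochains and on the independence of the choices of $A$, $\sigma$, and the $E(X,Y)$-ambiguity are consistent with the paper and require no change.
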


%
%
%
%
%

\section[Trivialization and Pairing]{Trivialization of the model space of $T_{q_0}T^*\teich_g$ and $T^{*}_{q_0}T^*\teich_g$, and the Pairing under the trivializations}
\label{sec:trivialization_TTstar_TstarTstar}
As well as the previous sections,
we fix a guiding frame $\GuaidF\colon H^1(M_0,\Theta_{M_0})\to L_{(-1,1)}^\infty(M_0)$,
and take $\GoodS$ as \Cref{prop:linear-map-L}.
For $[X,\varphi]\in {\bf H}^1(\mathcal{U},\mathbb{L}_{q_0})$
and $[\Phi,Y]\in {\bf H}^{1,\dagger}(\mathcal{U},\mathbb{L}_{q_0})$, we take $Q$,
$Q'\in H^0(\mathcal{U},\mathcal{A}^{0,0}(\Omega_{M_0}^{\otimes 2}))$ satisfying
\begin{align*}
\overline{\partial}Q_i
&=\overline{\partial}(\varphi-L_{\GoodS(X)_i}(q_0))=\overline{\partial}(L_{\GoodS(X)_i}(q_0))
\\
\overline{\partial}Q'_i
&=\overline{\partial}(\Phi_i+L_{\GoodS(Y)_i}(q_0))=-\overline{\partial}(L_{\GoodS(Y)_i}(q_0))
\end{align*}
on $U_i$
and
$$
\dfrac{1}{2i}\iint_{M_0}\GuaidF([Z])Qd\overline{z}\wedge dz=\dfrac{1}{2i}\iint_{M_0}\GuaidF([Z])Q'd\overline{z}\wedge dz=0
$$
for all $[Z]\in H^1(M_0,\Theta_{M_0})$ as \S\ref{sec:trivialization_of_model_space_T*}. 
We claim
\begin{theorem}[Trivialization]
\label{thm:Trivialization_T_Tstar}
Under the above notation,
\begin{align*}
\trivialization^{*}_{q_0}
&\colon
{\bf H}^1(\mathcal{U},\mathbb{L}_{q_0})\ni [X,\varphi]_{q_0}
\mapsto ([X], \{\varphi_i-L_{\GoodS(X)_i}(q_0)+Q\}_{i\in I})\in H^1(\mathcal{U},\Theta_{M_0})\oplus \mathcal{Q}_{M_0}
\\
\trivialization^{*\dagger}_{q_0}
&\colon
{\bf H}^{1,\dagger}(\mathcal{U},\mathbb{L}_{q_0})\ni 
[\Phi, Y]_{q_0}\mapsto (\Phi+L_{\GoodS(Y)}(q_0)+Q',[Y])\in \mathcal{Q}_{M_0}\oplus H^1(\mathcal{U},\Theta_{M_0})
\end{align*}
are $\mathbb{C}$-isomorphisms.
\end{theorem}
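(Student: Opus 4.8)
The statement to prove is \Cref{thm:Trivialization_T_Tstar}: the two maps $\trivialization^{*}_{q_0}$ and $\trivialization^{*\dagger}_{q_0}$ are $\mathbb{C}$-isomorphisms. The plan is to imitate the proofs of \Cref{thm:trivialization_cotangent_to_tangent} and \Cref{thm:trivialization_cotangent_to_tangent}'s companion \Cref{thm:Trivialization_pairing_TTT_g}: first check that each map is well-defined (independent of the choice of representatives), then verify linearity is automatic, count dimensions to reduce to injectivity, and finally prove injectivity. Throughout I will use freely that $\mathcal{U}$ is a locally finite covering with $H^1(U_i,\Theta_{M_0})=0$ for all $i$, so that (by \Cref{prop:some_covering_hypercohomology} and the sheaf-theoretic vanishing in its proof) ${\bf H}^1(\mathcal{U},\mathbb{L}_{q_0})\cong T_{q_0}\mathcal{Q}_g$ has complex dimension $6g-6$, and likewise ${\bf H}^{1,\dagger}(\mathcal{U},\mathbb{L}_{q_0})$ has dimension $6g-6$; also that $H^1(\mathcal{U},\Omega_{M_0}^{\otimes 2})=0$ and $M_0$ carries no nontrivial holomorphic vector field.

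\textbf{Well-definedness.} For $\trivialization^{*}_{q_0}$: given $(X,\varphi)\in {\bf Z}^1(\mathcal{U},\mathbb{L}_{q_0})$, one has $\delta\varphi = L_X(q_0)$, and since $\delta\GoodS(X)=X$ (property (a) of \Cref{prop:linear-map-L}) we get $\varphi_j-\varphi_i = L_{Y_{ij}}(q_0)$-type identity forcing $\{\varphi_i - L_{\GoodS(X)_i}(q_0)\}_i$ to be a global smooth section of $\mathcal{A}^{0,0}(\Omega_{M_0}^{\otimes 2})$; its $\overline\partial$ equals $\overline\partial(L_{\GoodS(X)_i}(q_0))$, which glues to a global $(0,1)$-form, so the unique solution $Q$ of $\overline\partial Q_i = \overline\partial L_{\GoodS(X)_i}(q_0)$ satisfying the uniqueness condition $\iint_{M_0}\GuaidF([Z])Q=0$ exists by the same argument as around \eqref{eq:condition_Q}--\eqref{eq:uniqueness_lift_Q}, using $H^1(\mathcal{U},\Omega_{M_0}^{\otimes 2})=0$ and that $\mathcal{Q}_{M_0}$ is dual to $T_{x_0}\teich_g$. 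Hence $\varphi_i - L_{\GoodS(X)_i}(q_0)+Q$ is a holomorphic quadratic differential, i.e.\ an element of $\mathcal{Q}_{M_0}$. Independence of the choice of cocycle in the class $[X,\varphi]_{q_0}$: replacing $(X,\varphi)$ by $(X+\delta\alpha,\varphi+L_\alpha(q_0))$ changes $[X]$ not at all in $H^1(\mathcal{U},\Theta_{M_0})$, and changes the second entry by $L_{\alpha_i}(q_0) - L_{\GoodS(\delta\alpha)_i}(q_0)$; but $\GoodS(\delta\alpha)=\alpha$ by property (b) of \Cref{prop:linear-map-L}, and $Q$ is unchanged since $\overline\partial L_{\alpha_i}(q_0)$ glues (as $\alpha$ is a global holomorphic cochain-obstruction only up to $\delta C^0$), so the second entry is unchanged. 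The argument for $\trivialization^{*\dagger}_{q_0}$ is the same with signs flipped, noting the defining relation $\Phi_j-\Phi_i = -L_{Y_{ij}}(q_0)$ and that $[-Y,\Phi]_{q_0}\in{\bf H}^1(\mathcal{U},\mathbb{L}_{q_0})$.

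\textbf{Injectivity and conclusion.} Both source and target spaces have complex dimension $6g-6$ (target: $\dim_{\mathbb{C}} H^1(\mathcal{U},\Theta_{M_0}) + \dim_{\mathbb{C}}\mathcal{Q}_{M_0} = (3g-3)+(3g-3)$), so it suffices to show each map is injective. Suppose $\trivialization^{*}_{q_0}([X,\varphi]_{q_0})=0$. From the first coordinate $[X]=0$, so $X=\delta\alpha$ for some $\alpha\in C^0(\mathcal{U},\Theta_{M_0})$, and then as just noted we may take $\GoodS(X)=\alpha$; from the second coordinate $\varphi_i - L_{\alpha_i}(q_0)+Q=0$, i.e.\ $\varphi_i - L_{\alpha_i}(q_0) = -Q$ is both a global holomorphic quadratic differential and equal to $-Q$ with $\overline\partial Q = \overline\partial L_{\alpha_i}(q_0) = 0$ (since $\alpha_i$ is now holomorphic on $U_i$); hence $Q$ is holomorphic, and the uniqueness condition \eqref{eq:uniqueness_lift_Q} forces $Q=0$, whence $\varphi_i = L_{\alpha_i}(q_0)$, i.e.\ $(X,\varphi) = (\delta\alpha, \{L_{\alpha_i}(q_0)\}_i)\in {\bf B}^1(\mathcal{U},\mathbb{L}_{q_0})$, so $[X,\varphi]_{q_0}=0$. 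The argument for $\trivialization^{*\dagger}_{q_0}$ is symmetric. This completes the proof. The main obstacle — and the only place real care is needed — is the bookkeeping around the auxiliary differentials $Q, Q'$: one must check that the $\overline\partial$-closedness and gluing statements hold exactly as in \S\ref{sec:trivialization_of_model_space_T*}, that the uniqueness normalization is compatible with the change-of-cocycle moves, and that the good section $\GoodS$ (which depends on the fixed guiding frame $\GuaidF$) interacts correctly with the Lie-derivative identities \eqref{eq:Lie-derivative_01}, \eqref{eq:Lie-derivative_1}; everything else is dimension counting plus the no-holomorphic-vector-fields argument already used repeatedly above.
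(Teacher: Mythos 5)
Your proposal is correct and follows essentially the same route as the paper: the paper's proof is exactly the dimension count followed by the injectivity argument ($[X]=0\Rightarrow X=\delta\alpha$, $\GoodS(\delta\alpha)=\alpha$ by \Cref{prop:linear-map-L}, hence $\overline{\partial}Q=0$, the normalization \eqref{eq:uniqueness_lift_Q} forces $Q=0$, and so $\varphi=L_\alpha(q_0)$ gives the trivial class). Your additional well-definedness checks are consistent with the setup the paper carries out in the preamble to the theorem, the only cosmetic point being that the reason $Q$ is unchanged under $(X,\varphi)\mapsto(X+\delta\alpha,\varphi+L_\alpha(q_0))$ is that $\overline{\partial}L_{\alpha_i}(q_0)=0$ (since $\alpha_i$ is holomorphic), rather than a gluing statement.
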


\begin{proof}
We only check that $\trivialization^{*}_{q_0}$ is $\mathbb{C}$-isomorphic. Since the dimensions of the both sides are same, it suffices to show that the map is injective.
Suppose that $ [X,\varphi]_{q_0}$ is in the kernel of the map.
Since $[X]=0$, $X=\delta\alpha$ for some $\alpha\in C^0(\mathcal{U},\Theta_{M_0})$.
From \Cref{prop:linear-map-L}, $\GoodS(X)=\GoodS(\delta\alpha)=\alpha$. Thus,
$$
\overline{\partial}Q=-\overline{\partial}(\psi-L_{\alpha}(q_0))=0
$$ 
and hence $Q=0$ and $\varphi=L_\alpha(q_0)$.
Thus, we obtain
$$
[X,\varphi]_{q_0}=[\delta\alpha,L_{\alpha}(q_0)]_{q_0}=0
$$
and we have done.
\end{proof}

We define the pairing $\paircot$ between models ${\bf H}^1(\mathcal{U},\mathbb{L}_{q_0})$
and ${\bf H}^{1,\dagger}(\mathcal{U},\mathbb{L}_{q_0})$, and the symplectic form $\omega_{\mathcal{Q}_g}$ on the model ${\bf H}^1(\mathcal{U},\mathbb{L}_{q_0})$
at \eqref{eq:pairing_cotangent} and \Cref{prop:holomorphic_symplectic_form}.
We claim

\begin{theorem}[Pairing and Symplectic form under the trivializations]
\label{thm:pairing_cotangent_over_cotangent}
Under the above notation, for $[X,\varphi]_{q_0}$, $[X',\varphi']_{q_0}\in {\bf H}^1(\mathcal{U},\mathbb{L}_{q_0})$
and $[\Phi,Y]_{q_0}\in {\bf H}^{1,\dagger}(\mathcal{U},\mathbb{L}_{q_0})$, we set
\begin{align*}
\trivialization^{*}_{q_0}([X,\varphi]_{q_0})
&=([X],\psi)=(\{[\{X_{ij}\}_{i,j\in I}], \{\psi_i\}_{i\in I})
\in H^1(\mathcal{U},\Theta_{M_0})\oplus \mathcal{Q}_{x_0} \\
\trivialization^{*}_{q_0}([X',\varphi']_{q_0})
&=([X'],\psi')=(\{[\{X'_{ij}\}_{i,j\in I}], \{\psi'_i\}_{i\in I})
\in H^1(\mathcal{U},\Theta_{M_0})\oplus \mathcal{Q}_{x_0}
\\
\trivialization^{*\dagger}_{q_0}([\Phi,Y]_{q_0})
&=(\Psi,[Y])=(\{\Psi_i\}_{i\in I}, [\{Y_{ij}\}_{i,j\in I}])\in \mathcal{Q}_{x_0} \oplus H^1(\mathcal{U},\Theta_{M_0})
\end{align*}
and 
$\mathscr{T}_{x_0}([X])=[\mu]$, $\mathscr{T}_{x_0}([X'])=[\mu']$,
$\mathscr{T}_{x_0}([Y])=[\nu]\in T_{x_0}\teich_g$. 
Then
\begin{align*}
\paircot([X,\varphi],[\Phi,Y])
&=-\pi\left({\rm Res}([\{X_{ij}\Psi_i\}_{i,j\in I}])+{\rm Res}([\{Y_{ij}\psi_i\}_{i,j\in I}])\right)
\\
&=\langle [\mu],\Psi\rangle+\langle [\nu],\psi\rangle
\\
\omega_{\mathcal{Q}_g}([X,\varphi],[X',\varphi'])
&=\dfrac{\pi}{2}
\left({\rm Res}([\{X_{ij}\psi'_i\}_{i,j\in I}])-{\rm Res}([\{X'_{ij}\psi_i\}_{i,j\in I}])\right)
\\
&=-\dfrac{1}{2}(\langle [\mu],\psi'\rangle-\langle [\mu'],\psi\rangle).
\end{align*}
\end{theorem}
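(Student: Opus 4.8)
\textbf{Proof proposal for \Cref{thm:pairing_cotangent_over_cotangent}.}
The plan is to reduce each of the three identities to the integral formulae we already have --- the pairing \eqref{eq:pairing_cotangent} for $\paircot$ and the symplectic form in \Cref{prop:holomorphic_symplectic_form} --- and then to feed in the explicit representatives produced by the trivializations $\trivialization^{*}_{q_0}$ and $\trivialization^{*\dagger}_{q_0}$ of \Cref{thm:Trivialization_T_Tstar}. Concretely, fix a locally finite covering $\mathcal{U}=\{U_i\}_{i\in I}$ with $H^1(U_i,\Theta_{M_0})=0$ for $i\in I$, and fix representatives $(X,\varphi)\in {\bf Z}^1(\mathcal{U},\mathbb{L}_{q_0})$, $(\Phi,Y)$ with $[-Y,\Phi]_{q_0}\in {\bf H}^1(\mathcal{U},\mathbb{L}_{q_0})$. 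Choosing $\xi=\GoodS(X)$ and $\eta=\GoodS(Y)$ in the formula \eqref{eq:pairing_cotangent} (which is legitimate since $\delta\GoodS(X)=X$, $\delta\GoodS(Y)=Y$ by \Cref{prop:linear-map-L}), the integrand becomes
$$
(\GoodS(X)_i)_{\overline{z}}\bigl((\Phi_i+L_{\GoodS(Y)_i}(q_0))+(\GoodS(Y)_i)_{\overline{z}}(\varphi_i-L_{\GoodS(X)_i}(q_0))\bigr).
$$
By definition of the trivializations, $\Psi=\{\Psi_i\}_{i\in I}=\Phi+L_{\GoodS(Y)}(q_0)+Q'$ and $\psi=\{\psi_i\}_{i\in I}=\varphi-L_{\GoodS(X)}(q_0)+Q$, where $Q,Q'\in H^0(\mathcal{U},\mathcal{A}^{0,0}(\Omega_{M_0}^{\otimes 2}))$ satisfy the defining $\overline{\partial}$-equations and the uniqueness conditions. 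Substituting $\Phi_i+L_{\GoodS(Y)_i}(q_0)=\Psi_i-Q'$ and $\varphi_i-L_{\GoodS(X)_i}(q_0)=\psi_i-Q$, the integrand splits into the ``main'' piece $(\GoodS(X)_i)_{\overline{z}}\Psi_i+(\GoodS(Y)_i)_{\overline{z}}\psi_i$ together with two ``correction'' terms involving $Q$ and $Q'$. Since $(\GoodS(X)_i)_{\overline{z}}=-\GuaidF([X])$ and $(\GoodS(Y)_i)_{\overline{z}}=-\GuaidF([Y])$ by condition (c) of \Cref{prop:linear-map-L} (the minus sign of $\overline\partial\GoodS(X)_i$), the correction integrals $\iint_{M_0}\GuaidF([X])Q'\,d\overline{z}\wedge dz$ and $\iint_{M_0}\GuaidF([Y])Q\,d\overline{z}\wedge dz$ vanish by the uniqueness condition \eqref{eq:uniqueness_lift_Q} applied to $[X]$ and $[Y]$ respectively. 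What is left is
$$
\paircot([X,\varphi],[\Phi,Y])
=-\dfrac{1}{2i}\iint_{M_0}\bigl((\GoodS(X)_i)_{\overline{z}}\Psi_i+(\GoodS(Y)_i)_{\overline{z}}\psi_i\bigr)d\overline{z}\wedge dz,
$$
and each summand is exactly of the shape computed in \eqref{eq:pairing_2}: $-\tfrac{1}{2i}\iint_{M_0}(\GoodS(X)_i)_{\overline{z}}\Psi_i\,d\overline{z}\wedge dz=-\tfrac{1}{2i}\iint_{M_0}d(\GoodS(X)_i\Psi_i\,dz)+\text{(a global $2$-form that integrates to zero)}$, which by the residue presentation equals $-\pi\,{\rm Res}([\{X_{ij}\Psi_i\}_{i,j\in I}])=\langle[\mu],\Psi\rangle$ via $\mathscr{T}_{x_0}([X])=[\mu]$; symmetrically for the $Y$-term. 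This yields the first display.

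For the symplectic form, I would run exactly the same substitution starting from \Cref{prop:holomorphic_symplectic_form}: with $\xi^\alpha=\GoodS(X^\alpha)$ one has
$$
\omega_{\mathcal{Q}_g}([X,\varphi],[X',\varphi'])
=\dfrac{1}{4i}\iint_{M_0}\bigl((\GoodS(X)_i)_{\overline{z}}(\varphi'_i-L_{\GoodS(X')_i}(q_0))-(\GoodS(X')_i)_{\overline{z}}(\varphi_i-L_{\GoodS(X)_i}(q_0))\bigr)d\overline{z}\wedge dz.
$$
Writing $\varphi'_i-L_{\GoodS(X')_i}(q_0)=\psi'_i-Q^{(2)}$ and $\varphi_i-L_{\GoodS(X)_i}(q_0)=\psi_i-Q^{(1)}$ (the quadratic-differential lifts attached to $X'$ and $X$), the $Q$-correction terms again carry factors $(\GoodS(X)_i)_{\overline{z}}=-\GuaidF([X])$ or $(\GoodS(X')_i)_{\overline{z}}=-\GuaidF([X'])$ and hence vanish by \eqref{eq:uniqueness_lift_Q}. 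The surviving integral is $\tfrac{1}{4i}\iint_{M_0}((\GoodS(X)_i)_{\overline{z}}\psi'_i-(\GoodS(X')_i)_{\overline{z}}\psi_i)d\overline{z}\wedge dz$; comparing with \eqref{eq:pairing_2} each term is $\mp\tfrac{\pi}{2}{\rm Res}$, giving $\tfrac{\pi}{2}({\rm Res}([\{X_{ij}\psi'_i\}])-{\rm Res}([\{X'_{ij}\psi_i\}]))=-\tfrac12(\langle[\mu],\psi'\rangle-\langle[\mu'],\psi\rangle)$, which is the second display.

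The routine bookkeeping --- that $\GoodS(X)_i\Psi_i\,dz$ differs from a global $1$-form only by quantities whose $\overline\partial$ is a globally defined area form, so Stokes applies and the residue presentation \eqref{eq:pairing_2} is valid --- is already packaged in \S\ref{subsec:residue}, \S\ref{sec:remark_smoothness}, and the proofs of \Cref{lem:pairing_tangent_cotangent} and \Cref{thm:doulbeaut_presentation_pairing}; I would simply cite those. The one genuine point requiring care, and the step I expect to be the main obstacle, is \emph{bookkeeping the signs and the roles of the two factors in $\mathbb{L}_{q_0}$ versus $\mathbb{L}_{q_0}^\dagger$}: the cotangent model ${\bf H}^{1,\dagger}$ is built from $[-Y,\Phi]_{q_0}$, so one must be consistent about whether $L_{\GoodS(Y)}(q_0)$ enters with a $+$ or a $-$, and about the $-\tfrac{1}{2i}$ versus $\tfrac{1}{4i}$ normalizations; a sign error here would flip one of the residue terms. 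I would resolve this by checking the torus case of \S\ref{sec:convexity_unit_sphere_bundle} (where $\mathcal{P}_{\mathcal{Q}_1}=\alpha\gamma+\beta\eta$ and $\omega_{\teich_1}=-\tfrac12(\alpha_1\beta_2-\beta_1\alpha_2)$ are computed explicitly) as a consistency test against the general formula.
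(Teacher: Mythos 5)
Your proposal is correct and follows essentially the same route as the paper's proof: choose $\xi=\GoodS(X)$, $\eta=\GoodS(Y)$, use $(\GoodS(X)_i)_{\overline z}=-\GuaidF([X])$, kill the $Q,Q'$ corrections via the uniqueness condition \eqref{eq:uniqueness_lift_Q}, and identify what remains with the residue pairing. The paper merely runs the chain of equalities in the opposite direction, starting from the cochain $\Omega_i=\GoodS(X)_i\Psi_i+\GoodS(Y)_i\psi_i$ whose coboundary is $\{X_{ij}\Psi_i+Y_{ij}\psi_i\}$, and likewise dispatches the symplectic-form identity by the same substitution.
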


\begin{proof}
We only check the formula for the pairing. The formula for the symplectic form is also deduced by the similar argument.
We use the notation given above frequently.

We define $\{\Omega_i\}_{i\in I}\in C^0(\mathcal{U},\mathcal{A}^{0,0}(\Omega_{M_0}))$ by
$$
\Omega_i=\GoodS(X)_{i}\Psi_i+\GoodS(Y)_i\psi_i.
$$
Then,
$$
\Omega_j-\Omega_i=X_{ij}\Psi_i+Y_{ij}\psi_i
$$
and
\begin{align*}
&-\pi\left({\rm Res}([\{X_{ij}\Psi_i\}_{i,j\in I}])+{\rm Res}([\{Y_{ij}\psi_i\}_{i,j\in I}])\right)
\\
&=-\dfrac{1}{2i}\iint_{M_0}d\Omega_i
=-\dfrac{1}{2i}\iint_{M_0}(\GuaidF([X])\Psi_i+\GuaidF([Y])\psi_i) \\
&
=-\dfrac{1}{2i}\iint_{M_0}(\GuaidF([X])(\Phi_i+L_{\GoodS(Y)_i}(q_0)+Q')+\GuaidF([Y])(\varphi_i-L_{\GoodS(X)_i}(q_0)+Q)) \\
&=\paircot([X,\varphi],[\Phi,Y])
\end{align*}
from the uniqueness condition for $Q$ and $Q'$.
%
%
\end{proof}

\section{Dualities in model spaces}
\label{sec:duals_in_model_spaces}
As we discussed in \S\ref{sec:infinitesimal_deformation_RS} and \S\ref{subsec:residue},
the pairing defined by the residue
$$
H^1(\mathcal{U},\Theta_{M_0})\times \mathcal{Q}_{x_0}\ni ([X],\varphi)\mapsto
-\pi{\rm Res}([\{X_{ij}\varphi\}_{i,j\in I}])\in\mathbb{C}
$$
is non-degenerate, and gives a duality between $H^1(\mathcal{U},\Theta_{M_0})$
and $\mathcal{Q}_{x_0}$. Under the identification $H^1(\mathcal{U},\Theta_{M_0})$
with the tangent space $T_{x_0}\teich_g$, the duality gives a recognition of $\mathcal{Q}_{x_0}$ as the cotangent space $T_{x_0}^*\teich_g$.

Let $[Y]\in H^1(\mathcal{U},\Theta_{M_0})$ and $q_0\in \mathcal{Q}_{x_0}$.
As we observed that the models of pairings $\mathcal{P}_{\mathbb{TT}}$ between $\mathbb{T}_{[Y]}[\mathcal{U}]$ and $\mathbb{T}^\dagger_{[Y]}[\mathcal{U}]$, and $\paircot$ between ${\bf H}^{1}(\mathcal{U},\mathbb{L}_{q_0})$ and  ${\bf H}^{1,\dagger}(\mathcal{U},\mathbb{L}_{q_0})$ are non-generate.
Hence, these pairing makes $\mathbb{T}^\dagger_{[Y]}[\mathcal{U}]$ and ${\bf H}^{1,\dagger}(\mathcal{U},\mathbb{L}_{q_0})$ as the dual spaces of $\mathbb{T}_{[Y]}[\mathcal{U}]$ and ${\bf H}^{1}(\mathcal{U},\mathbb{L}_{q_0})$, respectively. 

Recall from Linear algebra, for vector spaces $V$ and $W$, a natural isomorphism $(V\oplus W)^*\cong V^*\oplus W^*$ of the dual spaces of the direct sum is induced by the pairing 
$$
(V\oplus W)\times (V^*\oplus W^*)\ni ((v,w),(v^*,w^*))\mapsto v^*(v)+w^*(w)
$$
(e.g. \cite[II.5, p.90]{MR1009162}).

We now fix a guiding frame $\GuaidF$. The we obtain the isomorphisms
\begin{align*}
\mathbb{T}_{[Y]}[\mathcal{U}]
&\to H^1(\mathcal{U},\Theta_{M_0})\oplus H^1(\mathcal{U},\Theta_{M_0}) \\
\mathbb{T}^\dagger_{[Y]}[\mathcal{U}]
&\to H^1(\mathcal{U},\Theta_{M_0})\oplus \mathcal{Q}_{x_0}
\end{align*}
via the trivializations defined from the guiding frame $\GuaidF$.
From \Cref{thm:Trivialization_pairing_TTT_g} and \Cref{thm:pairing_cotangent_over_cotangent}, the trivializations of the model spaces are natural in terms of the pairings. 

As a consequence, the trivializations for $\mathbb{T}^\dagger_{[Y]}[\mathcal{U}]$ and ${\bf H}^{1,\dagger}(\mathcal{U},\mathbb{L}_{q_0})$ defined in \Cref{thm:trivialization_cotangent_to_tangent} and \Cref{thm:Trivialization_T_Tstar} defined from the guiding frame $\GuaidF$ naturally coincide with the isomorphisms of the dual spaces
\begin{align*}
\mathbb{T}^\dagger_{[Y]}[\mathcal{U}]
&\to \mathcal{Q}_{x_0}\oplus \mathcal{Q}_{x_0}
=(H^1(\mathcal{U},\Theta_{M_0})\oplus H^1(\mathcal{U},\Theta_{M_0}))^* \\
\mathbb{T}^\dagger_{[Y]}[\mathcal{U}]
&\to \mathcal{Q}_{x_0}\oplus H^1(\mathcal{U},\Theta_{M_0})
=
(H^1(\mathcal{U},\Theta_{M_0})\oplus \mathcal{Q}_{x_0})^*
\end{align*}
via the pairiings.

%

%
%
%
%
%
%
%
%
%
%
%
%


\chapter{Direct limits}
\label{chap:direct_limits}
In this section, we define $\mathbb{C}$-vector spaces $\mathbb{T}_{[Y]}$ and $\mathbb{T}^\dagger_{[Y]}$ for a cohomology class $[Y]\in H^1(M_0,\Theta_{M_0})$ by taking the direct limits, in a similar manner as the definition of the cohomologies of sheaves on spaces (cf. \cite[\S12.5]{MR648106} or \cite[\S3.3]{MR815922}). This spaces $\mathbb{T}_{[Y]}$ and $\mathbb{T}^\dagger_{[Y]}$ are thought of the (ideal) model spaces of the double tangent space and the cotangent space at $[Y]\in H^1(M_0,\Theta_{M_0})\cong T_{x_0}\teich_g$ defined independently of the choice of locally finite coverings.

As we already discussed in \S\ref{subsec:tangent_space_to_T_Tstar}, the direct limit  ${\bf H}^1(\mathbb{L}_{q_0})$ of the model of the tangent space $T_{q_0}\mathcal{Q}_g$ is already discussed by Hubbard and Masur in \cite{MR523212}. The direct limit ${\bf H}^{1,\dagger}(\mathbb{L}_{q_0})$ of the model  of the cotangent space $T^*_{q_0}\mathcal{Q}_g$ is also treated in the same way. Moreover, by the standard argument, we see that the pairing  and the holomorphic symplectic forms also descends to the direct limits (cf. \S\ref{sec:direct_limit_cotangent_space}). Hence, we treat only here the infinitesimal spaces over $T\teich_g$.

\section{Direct limit of the model space $\mathbb{T}_{[Y]}[\mathcal{U}]$}
\subsection{Refinements and induced homomorphisms}
Let $\mathcal{U}=\{U_i\}_{i\in I}$ be a locally finite covering on $M_0$ and $\mathcal{V}=\{V_\lambda\}_{j\in \Lambda}$ be a refinement of $\mathcal{U}$. In this case, we denote by $\mathcal{V}\prec \mathcal{U}$.
For a sheaf $\mathscr{S}$ on $M_0$, let $\Pi^{\mathcal{U}}_{\mathcal{V}}\colon C^k(\mathcal{U},\mathscr{S})\to C^k(\mathcal{V},\mathscr{S})$ be the homomorphism induced by the restriction (cf. \cite[\S3.3, Lemma 3.2]{MR815922}).
The map is defined with a \emph{refining map} $k\colon \Lambda\to I$ between indices with $V_\lambda\subset U_{k(\lambda)}$, and by restricting sections on $U_{k(\lambda)}$ to $V_\lambda$. The map $\Pi^{\mathcal{U}}_{\mathcal{V}}$ is extended to a homomorphism between products of the groups of cochains and the cohomology groups. 
For the simplicity of the notation, we use the same symbol to denote the extension and the induced homomorphism between cohomology groups. One can easily check that
$$
D^Y_q\circ \Pi^{\mathcal{U}}_{\mathcal{V}}=\Pi^{\mathcal{U}}_{\mathcal{V}}\circ D^Y_q,
\quad
D^{Y;s}_q\circ \Pi^{\mathcal{U}}_{\mathcal{V}}=\Pi^{\mathcal{U}}_{\mathcal{V}}\circ D^{Y;s}_q
$$
for $s,q=0,1$.  
We first claim

\begin{lemma}
\label{lem:commute_refinement}
The map $\Pi^{\mathcal{U}}_{\mathcal{V}}$ induces a canonical homomorphism $\check{\Pi}^{\mathcal{U}}_{\mathcal{V}}\colon \mathbb{T}_{[Y]}[\mathcal{U}]\to \mathbb{T}_{{[\Pi^{\mathcal{U}}_{\mathcal{V}}(Y)]}}[\mathcal{V}]$,
which are uniquely defined by $\mathcal{U}$ and $\mathcal{V}$.
\end{lemma}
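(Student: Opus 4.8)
The plan is to construct $\check{\Pi}^{\mathcal{U}}_{\mathcal{V}}$ from the restriction map $\Pi^{\mathcal{U}}_{\mathcal{V}}$ on cochains, and to verify that it descends to the model spaces and is independent of all auxiliary choices. First I would recall that by definition $\mathbb{T}_Y[\mathcal{U}]=\ker(D^Y_1)/{\rm Im}(D^Y_0)$, and that the displayed commutation relations $D^Y_q\circ \Pi^{\mathcal{U}}_{\mathcal{V}}=\Pi^{\mathcal{U}}_{\mathcal{V}}\circ D^{\Pi^{\mathcal{U}}_{\mathcal{V}}(Y)}_q$ hold (here one must be slightly careful: $D^Y_q$ on the source uses the cocycle $Y$ on $\mathcal{U}$, while on the target it uses $\Pi^{\mathcal{U}}_{\mathcal{V}}(Y)$; the Lie bracket and the operators $K$, $\zeta$, $S$ all commute with restriction, so this is immediate). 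Consequently $\Pi^{\mathcal{U}}_{\mathcal{V}}$ maps $\ker(D^Y_1)$ into $\ker(D^{\Pi^{\mathcal{U}}_{\mathcal{V}}(Y)}_1)$ and ${\rm Im}(D^Y_0)$ into ${\rm Im}(D^{\Pi^{\mathcal{U}}_{\mathcal{V}}(Y)}_0)$, hence induces a $\mathbb{C}$-linear map $\mathbb{T}_Y[\mathcal{U}]\to \mathbb{T}_{\Pi^{\mathcal{U}}_{\mathcal{V}}(Y)}[\mathcal{V}]$.

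Next I would check compatibility with the identification of $\mathbb{T}_Y[\mathcal{U}]$ with $\mathbb{T}_{[Y]}[\mathcal{U}]$: the isomorphisms $\mathcal{L}_{\beta;Y}$ of \Cref{prop:isomorphism_tile_L} are built from the operator $K'$, which commutes with restriction, and $\Pi^{\mathcal{U}}_{\mathcal{V}}$ sends a $0$-cochain $\beta$ on $\mathcal{U}$ to a $0$-cochain $\Pi^{\mathcal{U}}_{\mathcal{V}}(\beta)$ on $\mathcal{V}$ with $\Pi^{\mathcal{U}}_{\mathcal{V}}(Y+\delta\beta)=\Pi^{\mathcal{U}}_{\mathcal{V}}(Y)+\delta\Pi^{\mathcal{U}}_{\mathcal{V}}(\beta)$. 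So the square relating $\mathcal{L}_{\beta;Y}$, $\mathcal{L}_{\Pi^{\mathcal{U}}_{\mathcal{V}}(\beta);\Pi^{\mathcal{U}}_{\mathcal{V}}(Y)}$ and the two restriction maps commutes, which lets the map descend to a well-defined $\check{\Pi}^{\mathcal{U}}_{\mathcal{V}}\colon \mathbb{T}_{[Y]}[\mathcal{U}]\to \mathbb{T}_{[\Pi^{\mathcal{U}}_{\mathcal{V}}(Y)]}[\mathcal{V}]$ (and one notes $[\Pi^{\mathcal{U}}_{\mathcal{V}}(Y)]$ corresponds to the same cohomology class in $H^1(M_0,\Theta_{M_0})$, so this is genuinely a map between model spaces at the fixed point $[Y]$).

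The main obstacle — as always with \v{C}ech-type direct limits — is \emph{independence of the refining map} $k\colon \Lambda\to I$. Two refining maps $k,k'$ with $V_\lambda\subset U_{k(\lambda)}\cap U_{k'(\lambda)}$ induce chain-homotopic restriction maps on ordinary \v{C}ech cochains via the standard prism operator $h$; I would lift this homotopy to the double complex defining $\mathbb{T}_Y[\mathcal{U}]$, i.e.\ exhibit an explicit operator (built from $h$ together with the Lie-bracket/$K$-terms) showing that the two induced maps on $\ker(D^Y_1)$ differ by an element of ${\rm Im}(D^Y_0)$. This is the only place requiring real computation; the bookkeeping is somewhat heavier than in the classical case because of the $\Gamma$ and $K$ correction terms, but it is formal. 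Finally I would record the cocycle condition $\check{\Pi}^{\mathcal{V}}_{\mathcal{W}}\circ \check{\Pi}^{\mathcal{U}}_{\mathcal{V}}=\check{\Pi}^{\mathcal{U}}_{\mathcal{W}}$ for $\mathcal{W}\prec\mathcal{V}\prec\mathcal{U}$ (immediate from the analogous statement for $\Pi$ and the independence just proved), which is what makes the subsequent direct limit $\mathbb{T}_{[Y]}=\varinjlim_{\mathcal{U}}\mathbb{T}_{[Y]}[\mathcal{U}]$ legitimate.
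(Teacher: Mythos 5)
Your proposal is correct and follows essentially the same route as the paper: restriction commutes with $D^Y_0$, $D^Y_1$ and with $\mathcal{L}_{\beta;Y}$, so a fixed refining map induces the map on model spaces, and the only substantive point is independence of the refining map, which the paper handles by exactly the explicit \v{C}ech homotopy you describe, with homotopy data $\alpha_\lambda=X_{k(\lambda)j(\lambda)}|_{V_\lambda}$ and $\dot\gamma_\lambda=\dot Y_{k(\lambda)j(\lambda)}|_{V_\lambda}$. One small imprecision in your sketch: the two refining maps $k,j$ produce different (merely cohomologous) restricted cocycles $W$ and $W'$ on $\mathcal{V}$, so the two images do not differ by an element of ${\rm Im}(D^{W}_0)$ on the nose; in the paper one first applies $\tilde{\mathcal{L}}_{-\beta;W}$ with $\beta_\lambda=Y_{k(\lambda)j(\lambda)}|_{V_\lambda}$ and only then corrects by $D^{W'}_0(\alpha,\dot\gamma)$, i.e.\ the comparison genuinely takes place in $\mathbb{T}_{[\Pi^{\mathcal{U}}_{\mathcal{V}}(Y)]}[\mathcal{V}]$ after the $\mathcal{L}$-identification — which your set-up (having already checked compatibility with $\mathcal{L}_{\beta;Y}$) accommodates.
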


\begin{proof}
Since the map $\Pi^{\mathcal{U}}_{\mathcal{V}}$ is defined by restricting, 
if the refining map $k\colon \Lambda \to I$ 
is fixed, the map $\Pi^{\mathcal{U}}_{\mathcal{V}}$ (defined by $k$) defines well-defined linear maps
\begin{equation}
\label{eq:refine_ok}
\mathbb{T}_Y[\mathcal{U}]\to \mathbb{T}_{\Pi^{\mathcal{U}}_{\mathcal{V}}(Y)}[\mathcal{V}],\quad
\mathbb{T}_{[Y]}[\mathcal{U}]\to \mathbb{T}_{[\Pi^{\mathcal{U}}_{\mathcal{V}}(Y)]}[\mathcal{V}].
\end{equation}

Let $k, j\colon \Lambda\to I$ be maps with $V_\lambda\subset U_{k(\lambda)}\cap U_{j(\lambda)}$ for $\lambda\in \Lambda$. 
Let $(X,\dot{Y})\in \ker(D^Y_1)$.
We define $Z$, $Z'$, $W$, $W'\in C^1(\mathcal{V},\Theta_{M_0})$, $\dot{W}$, $\dot{W}'\in C^1(\mathcal{V},\Theta_{M_0})$, $\alpha$, $\beta$, $\dot{\gamma}\in C^0(\mathcal{V},\Theta_{M_0})$ by
\begin{itemize}
\item $Z_{\lambda\mu}=X_{k(\lambda)k(\mu)}|_{V_{\lambda}\cap V_{\mu}}$,
$Z'_{\lambda\mu}=X_{j(\lambda)j(\mu)}|_{V_{\lambda}\cap V_{\mu}}$,
\item
$W_{\lambda\mu}=Y_{k(\lambda)k(\mu)}|_{V_{\lambda}\cap V_{\mu}}$,
$W'_{\lambda\mu}=Y_{j(\lambda)j(\mu)}|_{V_{\lambda}\cap V_{\mu)}}$,
\item
$\dot{W}_{\lambda\mu}=\dot{Y}_{k(\lambda)k(\mu)}|_{V_{\lambda}\cap V_{\mu}}$,
$\dot{W}'_{\lambda\mu}=\dot{Y}_{j(\lambda)j(\mu)}|_{V_{\lambda}\cap V_{\mu}}$,
\item
$\alpha_{\lambda}=X_{k(\lambda)j(\lambda)}|_{V_{\lambda}}$,
$\beta_{\lambda}=Y_{k(\lambda)j(\lambda)}|_{V_{\lambda}}$,
\item
$\dot{\gamma}_{\lambda}=\dot{Y}_{k(\lambda)j(\lambda)}|_{V_{\lambda}}$, and
$\dot{\gamma}^*_{\lambda}=\dot{Y}_{j(\lambda)k(\lambda)}|_{V_{\lambda}}$
\end{itemize}
for $\lambda,\mu\in \Lambda$.
Then, it is easy to see that $(Z,\dot{W})\in \ker(D^{W}_1)$ and $(Z',\dot{W}')\in \ker(D^{W'}_1)$.
It is known that $[W]=[W']=[\Pi^{\mathcal{U}}_{\mathcal{V}}(Y)]$ in $H^1(\mathcal{V},\Theta_{M_0})$ (cf. \cite[Lemma 3.2]{MR815922}).

We will show
\begin{equation}
\label{eq:well-defined-refinement}
\begin{cases}
Z'=Z+\delta \alpha,\quad W'=W+\delta\beta
\\
\dot{W}'
=(\dot{W}+K'(\beta,Z))+\delta\dot{\gamma}+K(\alpha,W').
\end{cases}
\end{equation}
Indeed, \eqref{eq:well-defined-refinement} is equivalent to
$$
(Z',\dot{W}')=\tilde{L}_{-\beta;W}(Z,\dot{W})+D^{W'}_0(\alpha,\dot{\gamma})
$$
and hence $\tv{Z',\dot{W}'}{W'}$ is equivalent to $\tv{Z,\dot{W}}{W}$ in $\mathbb{T}_{{[\Pi^{\mathcal{U}}_{\mathcal{V}}(Y)]}}[\mathcal{V}]$ by \Cref{prop:isomorphism_tile_L}. As a consequence, the induced linear map $\mathbb{T}_{[Y]}[\mathcal{U}]\to \mathbb{T}_{[\Pi^{\mathcal{U}}_{\mathcal{V}}(Y)]}[\mathcal{V}]$
is independent of the choice of the map $k\colon \Lambda\to I$ with $V_\lambda\subset U_{k(\lambda)}$.

We proceed to the proof of  \eqref{eq:well-defined-refinement}.
Since $X$ and $Y$ are cocycles,
$$
Z'=Z+\delta\alpha, \quad W'=W+\delta\beta.
$$
Since $(X,\dot{Y})\in \ker(D^Y_1)$, $(Z,\dot{W})$ and $(Z',\dot{W}')$ satisfy
\begin{equation}
\label{eq:refinement_0}
\dot{\gamma}_\lambda+\dot{\gamma}_{\lambda}^*+[\alpha_\lambda,\beta_\lambda]=0
\end{equation}
and
\begin{align}
\dot{W}_{\lambda\mu}+\dot{Y}_{k(\mu)j(\lambda)}+\dot{\gamma}^*_\lambda &
+\dfrac{1}{2}[Z_{\lambda\mu},W_{\lambda\mu}]+\dfrac{1}{2}[X_{k(\mu)j(\lambda)},Y_{k(\mu)j(\lambda)}]+
\dfrac{1}{2}[-\alpha_\lambda,-\beta_\lambda] 
\label{eq:refinement_1}
\\
&-\dfrac{1}{2}[Z_{\lambda\mu},Y_{k(\mu)j(\lambda)}]-\dfrac{1}{2}[W_{\lambda\mu},X_{k(\mu)j(\lambda)}]=0
\nonumber
\\
\dot{\gamma}^*_{\mu}+\dot{Y}_{k(\mu)j(\lambda)}+\dot{W}'_{\lambda\mu} &
+\dfrac{1}{2}[-\alpha_\mu,-\beta_\mu]
+\dfrac{1}{2}[X_{k(\mu)j(\lambda)},Y_{k(\mu)j(\lambda)}] 
+\dfrac{1}{2}[Z'_{\lambda\mu},W'_{\lambda\mu}]
\label{eq:refinement_2}\\
&-\dfrac{1}{2}[-\alpha_{\mu},Y_{k(\mu)j(\lambda)}]-\dfrac{1}{2}[-\beta_{\mu},X_{k(\mu)j(\lambda)}]=0
\nonumber
\end{align}
for $\lambda$, $\mu\in \Lambda$. 
Since
\begin{align*}
X_{k(\mu)j(\lambda)}&=\alpha_\mu-Z'_{\lambda\mu}=\alpha_\lambda-Z_{\lambda\mu} \\
Y_{k(\mu)j(\lambda)}&=\beta_\mu-W'_{\lambda\mu}=\beta_\lambda-W_{\lambda\mu},
\end{align*}
by subtracting
\eqref{eq:refinement_2} from \eqref{eq:refinement_1}, we obtain
\begin{align*}
0
&=
\dot{W}_{\lambda\mu}-\dot{W}'_{\lambda\mu}
+\left(\dot{\gamma}^*_\lambda 
+\dfrac{1}{2}[\alpha_\lambda,\beta_\lambda]
\right)
-
\left(\dot{\gamma}^*_\mu
+\dfrac{1}{2}[\alpha_\mu,\beta_\mu]
\right)
+\dfrac{1}{2}[Z_{\lambda\mu},W_{\lambda\mu}] \\
&\quad
-\dfrac{1}{2}[Z'_{\lambda\mu},W'_{\lambda\mu}]
+\dfrac{1}{2}[\beta_\lambda,Z_{\lambda\mu}]+\dfrac{1}{2}[\alpha_\lambda,W_{\lambda\mu}]
+\dfrac{1}{2}[\alpha_\mu,W'_{\lambda\mu}]+\dfrac{1}{2}[\beta_\mu,Z'_{\lambda\mu}].
\end{align*}
The last six terms of the right-hand side becomes
\begin{align*}
&-[Z_{\lambda\mu},\beta_\mu]+[\alpha_\lambda,W'_{\lambda\mu}]-
\dfrac{1}{2}[\alpha_\mu,\beta_\mu]+\dfrac{1}{2}[\alpha_{\lambda},\beta_{\lambda}] \\
&=K'(\beta,Z)_{\lambda\mu}+K(\alpha,W')_{\lambda\mu}-
\dfrac{1}{2}[\alpha_\mu,\beta_\mu]+\dfrac{1}{2}[\alpha_{\lambda},\beta_{\lambda}].
\end{align*}
From \eqref{eq:refinement_0}, 
we obtain \eqref{eq:well-defined-refinement}.
%
%
\end{proof}

We claim the following (Compare \cite[Theorem 3.4]{MR815922}).
\begin{lemma}
\label{lem:injectivity_refinement}
Let $\mathcal{U}$ and $\mathcal{V}$ be locally finite coverings of $M_0$ with $\mathcal{V}\prec \mathcal{U}$.
Let $\check{\Pi}^{\mathcal{U}}_{\mathcal{V}}\colon \mathbb{T}_{[Y]}[\mathcal{U}]\to \mathbb{T}_{[\Pi^{\mathcal{U}}_{\mathcal{V}}(Y)]}[\mathcal{V}]$ be the induced homomorphism. Then
\begin{itemize}
\item[{\rm (a)}]
$\check{\Pi}^{\mathcal{U}}_{\mathcal{V}}$ maps 
$\mathbb{T}^V_{[Y]}[\mathcal{U}]$ to 
$\mathbb{T}^V_{[\Pi^{\mathcal{U}}_{\mathcal{V}}(Y)]}[\mathcal{V}]$; and
\item[{\rm (b)}]
$\check{\Pi}^{\mathcal{U}}_{\mathcal{V}}$ is injective.
\end{itemize}
\end{lemma}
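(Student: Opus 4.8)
The statement to prove is \Cref{lem:injectivity_refinement}, asserting that for locally finite coverings $\mathcal{V}\prec\mathcal{U}$ the induced homomorphism $\check{\Pi}^{\mathcal{U}}_{\mathcal{V}}\colon\mathbb{T}_{[Y]}[\mathcal{U}]\to\mathbb{T}_{[\Pi^{\mathcal{U}}_{\mathcal{V}}(Y)]}[\mathcal{V}]$ preserves vertical spaces and is injective. The plan is to reduce everything to the corresponding classical facts for $H^1(\cdot,\Theta_{M_0})$, which are stated (via \cite[Theorem 3.4]{MR815922}) and whose analogue the lemma is modeled on. For (a), recall from \Cref{def:vertical_space_TY} and \Cref{def:model_space} that $\tv{X,\dot Y}{[Y]}$ lies in $\mathbb{T}^V_{[Y]}[\mathcal{U}]$ exactly when $[X]=0$ in $H^1(\mathcal{U},\Theta_{M_0})$. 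Since, as recorded in the proof of \Cref{lem:commute_refinement}, the first coordinate of $\check{\Pi}^{\mathcal{U}}_{\mathcal{V}}$ is literally the classical restriction map $\Pi^{\mathcal{U}}_{\mathcal{V}}$ on $Z^1(\mathcal{U},\Theta_{M_0})$, and that map sends coboundaries to coboundaries, $[X]=0$ forces $[\Pi^{\mathcal{U}}_{\mathcal{V}}(X)]=0$; hence the image lands in $\mathbb{T}^V_{[\Pi^{\mathcal{U}}_{\mathcal{V}}(Y)]}[\mathcal{V}]$. This part is essentially bookkeeping.

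For (b), the approach is as follows. Fix a refining map $k\colon\Lambda\to I$ and a representative $Y\in Z^1(\mathcal{U},\Theta_{M_0})$ of $[Y]$, so that $\check{\Pi}^{\mathcal{U}}_{\mathcal{V}}$ is computed via $\Pi^{\mathcal{U}}_{\mathcal{V}}$ (restriction). Suppose $\tv{X,\dot Y}{Y}\in\mathbb{T}_Y[\mathcal{U}]$ maps to $0$ in $\mathbb{T}_{\Pi^{\mathcal{U}}_{\mathcal{V}}(Y)}[\mathcal{V}]$. First extract the first coordinate: $[\Pi^{\mathcal{U}}_{\mathcal{V}}(X)]=0$ in $H^1(\mathcal{V},\Theta_{M_0})$, and by the classical injectivity of the restriction map on $H^1$ (cf.\ \cite[Theorem 3.4]{MR815922}, using $H^0(M_0,\Theta_{M_0})=0$) we get $[X]=0$ in $H^1(\mathcal{U},\Theta_{M_0})$. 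Thus $X=\delta\alpha$ for some $\alpha\in C^0(\mathcal{U},\Theta_{M_0})$. By \Cref{prop:vertical_space}, after subtracting $D^Y_0(\alpha,0)$ we may assume $X=0$, i.e.\ the class is represented by $(0,\dot Y)$ with $\dot Y\in Z^1(\mathcal{U},\Theta_{M_0})$ (the cocycle condition $\delta\dot Y=0$ comes from $(0,\dot Y)\in\ker(D^Y_1)$), and $\verticalincmodel{Y}(\tv{0,\dot Y}{Y})=[\dot Y]$. The hypothesis then says $\Pi^{\mathcal{U}}_{\mathcal{V}}(\dot Y)$ is a coboundary in $\mathbb{T}_{\Pi^{\mathcal{U}}_{\mathcal{V}}(Y)}[\mathcal{V}]$; unwinding the definition of triviality in the model space (\Cref{def:model_space_for_cocycle}) and using (a), this forces $[\Pi^{\mathcal{U}}_{\mathcal{V}}(\dot Y)]=0$ in $H^1(\mathcal{V},\Theta_{M_0})$, hence $[\dot Y]=0$ in $H^1(\mathcal{U},\Theta_{M_0})$ by classical injectivity again, hence $\tv{0,\dot Y}{Y}=0$ by \Cref{prop:vertical_space}. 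Finally transport this to $\mathbb{T}_{[Y]}[\mathcal{U}]$ via \Cref{thm:trivialization} / \Cref{def:model_space}.

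The cleanest route to (b) is probably to invoke \Cref{thm:trivialization}: the trivialization $\trivialization_{[Y]}$ identifies $\mathbb{T}_{[Y]}[\mathcal{U}]\cong H^1(\mathcal{U},\Theta_{M_0})^{\oplus 2}$, and one checks that under this identification (together with the analogous $\trivialization_{[\Pi^{\mathcal{U}}_{\mathcal{V}}(Y)]}$ on $\mathcal{V}$) the map $\check{\Pi}^{\mathcal{U}}_{\mathcal{V}}$ becomes the direct sum $\Pi^{\mathcal{U}}_{\mathcal{V}}\oplus\Pi^{\mathcal{U}}_{\mathcal{V}}$ on $H^1(\mathcal{U},\Theta_{M_0})^{\oplus 2}$ — at least up to a map of the form $\mathrm{id}\oplus(\text{something depending on }Y)$ coming from the $E(X,Y)$ and $\frac12[X,Y]$ terms in \eqref{eq:trivialization}. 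Injectivity of $\check{\Pi}^{\mathcal{U}}_{\mathcal{V}}$ would then follow from injectivity of $\Pi^{\mathcal{U}}_{\mathcal{V}}$ on $H^1$. \textbf{The main obstacle} I anticipate is precisely checking that the trivialization is natural with respect to refinements — i.e.\ that $\trivialization$ commutes with $\check{\Pi}^{\mathcal{U}}_{\mathcal{V}}$ up to a triangular automorphism — because the auxiliary maps $\GuaidF$, $\GoodS$, $E$ are covering-dependent and one must track how $\GoodS_{\mathcal{U}}$ restricts relative to $\GoodS_{\mathcal{V}}$ (they need not agree, only agree modulo coboundaries and modulo holomorphic vector fields, which vanish here). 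If that naturality is awkward, the alternative is the hands-on argument of the previous paragraph, working directly with representatives and the exact-sequence/filtration structure (first coordinate in $H^1(\mathcal{U},\Theta_{M_0})$, vertical part again in $H^1(\mathcal{U},\Theta_{M_0})$), invoking the classical injectivity of $\Pi^{\mathcal{U}}_{\mathcal{V}}$ on $H^1$ twice; that argument avoids the trivialization entirely and is the safer fallback.
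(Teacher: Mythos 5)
Your proposal is correct, and part (a) is essentially identical to the paper's. For part (b), however, you take a genuinely different route from the paper. The paper argues entirely by hand at the level of cochains: given trivializing data $\alpha,\beta\in C^0(\mathcal{V},\Theta_{M_0})$ with $\Pi^{\mathcal{U}}_{\mathcal{V}}(X)=\delta\alpha$ and $\Pi^{\mathcal{U}}_{\mathcal{V}}(\dot Y)=\delta\beta+K(\alpha,W)$, it glues them back up to $\mathcal{U}$ by setting $A_i=X_{k(\lambda)i}+\alpha_\lambda$ and $B_i=-\dot Y_{ik(\lambda)}+[A_i,Y_{ik(\lambda)}]+\beta_\lambda$ on $U_i\cap V_\lambda$, checks well-definedness, and verifies $D_0^Y(A,B)=(X,\dot Y)$ directly — in effect re-running the classical Leray proof of injectivity of the $H^1$ refinement map inside the twisted complex in a single step. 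Your first ("hands-on") argument instead exploits the extension structure $H^1(\mathcal{U},\Theta_{M_0})\to\mathbb{T}_{[Y]}[\mathcal{U}]\to H^1(\mathcal{U},\Theta_{M_0})$ and invokes the classical injectivity of $\Pi^{\mathcal{U}}_{\mathcal{V}}$ on ordinary $H^1$ twice (once for the horizontal class $[X]$, once for the vertical class $[\dot Y-K(\alpha,Y)]$ after normalizing $X=0$, using $H^0(M_0,\Theta_{M_0})=0$ to kill the first component of the trivializing cochain on $\mathcal{V}$). That version is valid for arbitrary locally finite coverings, which is what the lemma needs for the direct limit, and is arguably more conceptual; the paper's version is more explicit and self-contained. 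Your second route via \Cref{thm:trivialization} is the one to avoid: as you suspect, the trivialization is only constructed under the hypothesis $H^1(U_i,\Theta_{M_0})=0$ for all $i$, whereas the lemma must hold for general coverings, so the naturality argument cannot carry the full statement. Since you explicitly designate the first argument as the fallback and it is complete, the proposal stands.
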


\begin{proof}
Let $\mathcal{U}=\{U_i\}_{i\in I}$ and $\mathcal{V}=\{V_\lambda\}_{\lambda\in \Lambda}$. We denote by $k\colon \Lambda\to  I$ the map defining the induced map $\Pi^{\mathcal{U}}_{\mathcal{V}}$. Let $W=\Pi^{\mathcal{U}}_{\mathcal{V}}(Y)$.

\medskip
\noindent
(a)
Let $\tv{X,\dot{Y}}{Y}\in \mathbb{T}^V_{Y}[\mathcal{U}]$. There is an $\alpha\in C^0(\mathcal{U},\Theta_{M_0})$ such that $X=\delta\alpha$. Then, 
$\Pi^{\mathcal{U}}_{\mathcal{V}}(X)=\delta(\Pi^{\mathcal{U}}_{\mathcal{V}}(\alpha))$
and hence
$$
\tv{\Pi^{\mathcal{U}}_{\mathcal{V}}(X),\Pi^{\mathcal{U}}_{\mathcal{V}}(\dot{Y})}{Y}
=\tv{\delta(\Pi^{\mathcal{U}}_{\mathcal{V}}(\alpha)),\Pi^{\mathcal{U}}_{\mathcal{V}}(\dot{Y})}{Y}
\in \mathbb{T}^V_{\Pi^{\mathcal{U}}_{\mathcal{V}}(Y)}[\mathcal{V}].
$$
\quad

\medskip
\noindent
(b)
\quad
Let $(X,\dot{Y})\in \ker (D^Y_1)$ with $\check{\Pi}^{\mathcal{U}}_{\mathcal{V}}(\tv{X,\dot{Y}}{[Y]})=0$ in $\mathbb{T}_{[\Pi^{\mathcal{U}}_{\mathcal{V}}(Y)]}[\mathcal{V}]$. Take $\alpha$, $\beta\in C^0(\mathcal{V},\Theta_{M_0})$ such that
$$
\Pi^{\mathcal{U}}_{\mathcal{V}}(X)=\delta \alpha, \quad
\Pi^{\mathcal{U}}_{\mathcal{V}}(\dot{Y})=\delta\beta+K(\alpha,W).
$$
For $i\in I$, 
$$
X_{k(\lambda)i}+X_{ik(\mu)}=X_{k(\lambda)k(\mu)}=\alpha_\mu-\alpha_\lambda
$$
and $X_{k(\lambda)i}+\alpha_\lambda=X_{k(\mu)i}+\alpha_\mu$ on $U_i\cap V_\lambda\cap V_\mu$. Hence
$A=\{A_i\}_{i\in I}\in C^0(\mathcal{U},\Theta_{M_0})$ is defined by 
$$
A_i=X_{k(\lambda)i}+\alpha_\lambda
$$
on $U_i\cap V_\lambda$ for $i\in I$. The cochain $A$ satisfies
$$
A_j-A_i=(X_{k(\lambda)j}+\alpha_\lambda)-(X_{k(\lambda)i}+\alpha_\lambda)=X_{ij}
$$
on $U_i\cap U_j\cap V_\lambda$ for $i,j\in I$ and $\lambda\in \Lambda$. This mean that $\delta A=X$.
Notice from the definition that $A_{k(\lambda)}=X_{k(\lambda)k(\lambda)}+\alpha_\lambda=\alpha_\lambda$ on $V_\lambda= U_{k(\lambda)}\cap V_\lambda$ for $\lambda\in \Lambda$.

Since $\delta A=X$,
from the definition of $D^W_1$ (see \eqref{eq:D_1}) and \eqref{eq:K-bra-S}, we obtain
\begin{align*}
0&=\dot{Y}_{k(\lambda)i}+\dot{Y}_{ik(\lambda)}+[X_{ik(\lambda)},Y_{ik(\lambda)}] \\
&=(\dot{Y}_{k(\lambda)i}-[A_{k(\lambda)},Y_{k(\lambda)i}])+
(\dot{Y}_{ik(\lambda)}-[A_{i},Y_{ik(\lambda)}])\\
0&=\left(\delta\left(\Pi^{\mathcal{U}}_{\mathcal{V}}(\dot{Y})+\dfrac{1}{2}[\Pi^{\mathcal{U}}_{\mathcal{V}}(X),\Pi^{\mathcal{U}}_{\mathcal{V}}(Y)]\right)-\zeta(\Pi^{\mathcal{U}}_{\mathcal{V}}(X),\Pi^{\mathcal{U}}_{\mathcal{V}}(Y))\right)_{ik(\lambda)k(\mu)}
\\
&=\dot{Y}_{ik(\lambda)}+\dot{Y}_{k(\lambda)k(\mu)}+\dot{Y}_{k(\mu)i}
-[A_i, Y_{ik(\lambda)}]
-[A_{k(\lambda)}, Y_{k(\lambda)k(\mu)}]-[A_{k(\mu)},Y_{k(\mu)i}] \\
&=(\dot{Y}_{ik(\lambda)}-[A_i, Y_{ik(\lambda)}])+(\beta_\mu-\beta_\lambda+[A_{k(\lambda)},Y_{k(\lambda)k(\mu)}]))-(\dot{Y}_{ik(\mu)}-[A_i, Y_{ik(\mu)}]) \\
&\quad-[A_{k(\lambda)}, Y_{k(\lambda)k(\mu)}] \\
&=(\dot{Y}_{ik(\lambda)}-[A_i, Y_{ik(\lambda)}]-\beta_\lambda)-(\dot{Y}_{ik(\mu)}-[A_i, Y_{ik(\mu)}]-\beta_\mu). 
\end{align*}
on $U_i\cap V_\lambda\cap V_\mu$
for $\lambda$, $\mu\in \Lambda$ and $i\in I$.
Therefore, $B=\{B_i\}_{i\in I}\in C^0(\mathcal{U},\Theta_{M_0})$ is defined by
$$
B_i=-\dot{Y}_{ik(\lambda)}+[A_i, Y_{ik(\lambda)}]+\beta_\lambda
$$
on $U_i\cap V_\lambda$, and the cochain $B$ satisfies
\begin{align*}
(\delta B)_{ij}
&=-(\dot{Y}_{jk(\lambda)}-[A_j, Y_{jk(\lambda)}]-\beta_\lambda)+(\dot{Y}_{ik(\lambda)}-[A_i, Y_{ik(\lambda)}]-\beta_\lambda) \\
&=(\dot{Y}_{k(\lambda)j}-[A_{k(\lambda)}, Y_{jk(\lambda)}])+(\dot{Y}_{ik(\lambda)}-[A_i, Y_{ik(\lambda)}])
\\
&=-(\dot{Y}_{ji}-[A_j,Y_{ji}])=\dot{Y}_{ij}-[A_i,Y_{ij}].
\end{align*}
on $U_i\cap U_j\cap V_\lambda$. Therefore
$$
\dot{Y}=\delta B+K(A,Y),
$$
and $D^Y_0(A,B)=(X,\dot{Y})$.
Thus, we obtain $\tv{X,\dot{Y}}{[Y]}=0$ in $\mathbb{T}_{[Y]}[\mathcal{U}]$.
\end{proof}

%
%

\subsection{The direct limit $\mathbb{T}_{[Y]}$ for $[Y]\in H^1(M_0,\Theta_{M_0})$}
\label{subsec:inductice_limit_def}
It is known that a canonical homomorphism $\Pi^{\mathcal{U}}\colon H^1(\mathcal{U},\Theta_{M_0})\to H^1(M_0,\Theta_{M_0})$ defined from the direct limit via refinements is injective (cf. \cite[Theorem 3.4]{MR815922}). For the simplicity of notation, for a cocycle $Y\in Z^1(\mathcal{U},\Theta_{M_0})$, we denote by $[Y]$ the cohomology class of $Y$ in both $H^1(\mathcal{U},\Theta_{M_0})$ and $H^1(M_0,\Theta_{M_0})$.

Let $\mathcal{U}$, $\mathcal{V}$, and $\mathcal{W}$ be locally finite coverings of $M_0$
with $\mathcal{W}\prec\mathcal{V}\prec \mathcal{U}$. It is easy to see that the homomorphism $\check{\Pi}^{\mathcal{U}}_{\mathcal{V}}$ in \Cref{lem:commute_refinement} satisfies
that $\check{\Pi}^{\mathcal{U}}_{\mathcal{U}}=id$ and
$$
\check{\Pi}^{\mathcal{V}}_{\mathcal{W}}\circ \check{\Pi}^{\mathcal{U}}_{\mathcal{V}}=\check{\Pi}^{\mathcal{U}}_{\mathcal{W}}.
$$
Therefore, the \emph{direct limit}
$$
\mathbb{T}_{[Y]}=\varinjlim_{\mathcal{U}}\mathbb{T}_{[Y]}[\mathcal{U}]
$$
is well-defined (in this case, the totality of the locally finite coverings of $M_0$ is thought of as a directed set such that $\mathcal{U}$ precedes $\mathcal{V}$ if $\mathcal{V}$ is a refinement of $\mathcal{U}$). From \Cref{lem:injectivity_refinement}, the induced homomorphism
$$
\check{\Pi}^{\mathcal{U}}\colon \mathbb{T}_{[Y]}[\mathcal{U}]\to \mathbb{T}_{[Y]}
$$
is injective (cf. \cite[II, Proposition 0.3]{MR842190}).

\subsection{Structure of the direct limit $\mathbb{T}_{[Y]}$}
\label{subsec:structure_direct_limit}
From \Cref{prop:vertical_space}, \eqref{eq:K-bra-S}, (b) of \Cref{lem:map_E}, \Cref{thm:trivialization}, we notice the following.

\begin{proposition}
\label{prop:structure_T_Y_U}
Let $\alpha\in C^0(\mathcal{U},\Theta_{M_0})$.
Then,
$$
\minCDarrowwidth20pt
\begin{CD}
0 @>>> H^1(\mathcal{U},\Theta_{M_0})@>>> \mathbb{T}_Y[\mathcal{U}] @>>> H^1(\mathcal{U},\Theta_{M_0}) \\
@. [W] @ >>> \tv{\delta\alpha,W+K(\alpha,Y)}{Y} @. \\
@. @. \tv{X,\dot{Y}}{Y} @>>> [X]
\end{CD}
$$
is exact. In particular, the dimension of $\mathbb{T}_Y[\mathcal{U}] $ is at most the twice of that of $H^1(\mathcal{U},\Theta_{M_0})$. In addition, when $H^1(U_i,\Theta_{M_0})=0$ for all $i$,
$$
\minCDarrowwidth20pt
\begin{CD}
0 @>>> H^1(\mathcal{U},\Theta_{M_0})@>>> \mathbb{T}_Y[\mathcal{U}] @>>> H^1(\mathcal{U},\Theta_{M_0})
@>>>0
\end{CD}
$$
is exact.
\end{proposition}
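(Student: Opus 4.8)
The statement to prove is \Cref{prop:structure_T_Y_U}: exactness of the sequence
$$
0 \to H^1(\mathcal{U},\Theta_{M_0}) \to \mathbb{T}_Y[\mathcal{U}] \to H^1(\mathcal{U},\Theta_{M_0})
$$
where the first map sends $[W]$ to $\tv{\delta\alpha, W+K(\alpha,Y)}{Y}$ (for any choice of $\alpha\in C^0(\mathcal{U},\Theta_{M_0})$) and the second sends $\tv{X,\dot Y}{Y}$ to $[X]$; together with the dimension bound and, under the assumption $H^1(U_i,\Theta_{M_0})=0$, surjectivity of the second map. The plan is to unwind the three spots (injectivity at the left, exactness in the middle, well-definedness of the second arrow) directly from the definitions of $D^Y_0$, $D^Y_1$ in \eqref{eq:D_0}, \eqref{eq:D_1}, and then invoke \Cref{prop:vertical_space}, which already identifies $\mathbb{T}^V_Y[\mathcal{U}]$ with $H^1(\mathcal{U},\Theta_{M_0})$.

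First I would check that the second arrow $\tv{X,\dot Y}{Y}\mapsto [X]$ is well-defined. A representative $(X,\dot Y)\in\ker(D^Y_1)$ has $X$ a cocycle by the first coordinate of $D^Y_1$ being zero (indeed $\dot Y+\dot Y^*+[X,Y]=0$ does not immediately give $X\in Z^1$, so one must note instead that $X$ is a cocycle because it already lies in $Z^1(\mathcal{U},\Theta_{M_0})$ by the very definition of the domain of $D^Y_1$ in \eqref{eq:D_1_pre}). Changing the representative by $D^Y_0(\alpha,\beta)=(\delta\alpha,\delta\beta+K(\alpha,Y))$ changes $X$ by $\delta\alpha$, hence does not change $[X]\in H^1(\mathcal{U},\Theta_{M_0})$. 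So the map is well-defined and linear. Its kernel is exactly $\mathbb{T}^V_Y[\mathcal{U}]$ by \Cref{def:vertical_space_TY}. Then by \Cref{prop:vertical_space} the map $\verticalincmodel{Y}$ is an isomorphism $\mathbb{T}^V_Y[\mathcal{U}]\cong H^1(\mathcal{U},\Theta_{M_0})$; I would just verify that its inverse is the stated first arrow. The inverse of $\verticalincmodel{Y}$ was identified in the proof of \Cref{prop:vertical_space} as $[\dot Y]\mapsto\tv{0,\dot Y}{Y}$; I need to reconcile $\tv{0,\dot Y}{Y}$ with $\tv{\delta\alpha,\dot Y+K(\alpha,Y)}{Y}$. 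But these differ by $D^Y_0(\alpha,0)=(\delta\alpha,K(\alpha,Y))$, hence represent the same class in $\mathbb{T}_Y[\mathcal{U}]$; and under $\verticalincmodel{Y}$, $\tv{\delta\alpha,\dot Y+K(\alpha,Y)}{Y}\mapsto[(\dot Y+K(\alpha,Y))-K(\alpha,Y)]=[\dot Y]$, consistent. So the first arrow is injective with image the kernel of the second, giving exactness at both $H^1(\mathcal{U},\Theta_{M_0})$ (left) and $\mathbb{T}_Y[\mathcal{U}]$ (middle).

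For the dimension bound: exactness of $0\to H^1(\mathcal{U},\Theta_{M_0})\to\mathbb{T}_Y[\mathcal{U}]\to H^1(\mathcal{U},\Theta_{M_0})$ shows $\dim_{\mathbb{C}}\mathbb{T}_Y[\mathcal{U}]\le 2\dim_{\mathbb{C}}H^1(\mathcal{U},\Theta_{M_0})$ immediately, since the image of the second arrow is a subspace of $H^1(\mathcal{U},\Theta_{M_0})$. For the final assertion, I would show that when $H^1(U_i,\Theta_{M_0})=0$ for all $i$ the second arrow $\tv{X,\dot Y}{Y}\mapsto[X]$ is surjective, i.e. given any cocycle $X\in Z^1(\mathcal{U},\Theta_{M_0})$ there is $\dot Y\in C^1(\mathcal{U},\Theta_{M_0})$ with $(X,\dot Y)\in\ker(D^Y_1)$. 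This is a lifting problem: one needs $\dot Y+\dot Y^*+[X,Y]=0$ and $\delta(\dot Y+\tfrac12[X,Y])=\zeta(X,Y)$. Choosing $\dot Y=-\tfrac12[X,Y]+\dot Y_0$ with $\dot Y_0$ antisymmetric (so the first equation holds automatically, using $[X,Y]_{ij}+[X,Y]_{ji}$ — here one uses that $[X_{ij},Y_{ij}]$ is symmetric under $i\leftrightarrow j$ after taking $\dot Y_0$ to absorb the correction, as in the flip computation of \S\ref{subsubsec:canonical_flip}), the remaining equation becomes $\delta\dot Y_0=\zeta(X,Y)$, solvable in $Z^2$ because $\zeta(X,Y)$ is a coboundary — indeed $\zeta(X,Y)=\delta(S(\GoodS(X),Y))$ by \eqref{eq:S-L-zeta}, which is exactly where the hypothesis $H^1(U_i,\Theta_{M_0})=0$ (needed for the good section $\GoodS$ of \Cref{prop:linear-map-L}) enters; alternatively one cites \Cref{thm:trivialization} directly, which under this hypothesis gives $\mathbb{T}_Y[\mathcal{U}]\cong H^1(\mathcal{U},\Theta_{M_0})^{\oplus 2}$ compatibly with the second arrow being the first projection. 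The cleanest route is simply to quote \Cref{thm:trivialization} for the surjectivity and the exactness of the short exact sequence, and to give the elementary direct argument above for the part valid without the vanishing hypothesis. The main obstacle I anticipate is purely bookkeeping: matching the two descriptions of the splitting map (the one via $\verticalincmodel{Y}$ from \Cref{prop:vertical_space} and the one written in the statement with an explicit $\alpha$) and checking the symmetry/antisymmetry manipulations in the lifting argument cleanly, rather than any conceptual difficulty.
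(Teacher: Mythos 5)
Your proposal is correct and follows essentially the same route as the paper, which proves this proposition simply by assembling \Cref{prop:vertical_space}, \eqref{eq:K-bra-S}, (b) of \Cref{lem:map_E}, and \Cref{thm:trivialization}; your reconciliation of the stated splitting map with $\verticalincmodel{Y}^{-1}\colon[W]\mapsto\tv{0,W}{Y}$ via the coboundary $D^Y_0(\alpha,0)=(\delta\alpha,K(\alpha,Y))$ is exactly the intended content of those citations. Your direct lifting argument for surjectivity (take $\dot Y=-\tfrac12[X,Y]+E(X,Y)$, using that $[X,Y]$ is symmetric and $E(X,Y)$ is an antisymmetric holomorphic cochain with $\delta E(X,Y)=\zeta(X,Y)$) is just an unpacking of the inverse map \eqref{eq:inverse} in \Cref{thm:trivialization}, so it is the same argument in slightly more elementary clothing.
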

We discuss the exact sequences in \Cref{prop:structure_T_Y_U} for various locally finite coverings. Let $\mathcal{U}$ and $\mathcal{V}$ are locally finite coverings with $\mathcal{V}\prec \mathcal{U}$. From (a) of \Cref{lem:injectivity_refinement}, the following diagram is commutative:
$$
\minCDarrowwidth20pt
\begin{CD}
0 
@>>>
H^1(\mathcal{U},\Theta_{M_0})
@<{\verticalincmodel{[Y]}}<{\cong}<
\mathbb{T}^V_{[Y]}[\mathcal{U}]
@>>> 
\mathbb{T}_{[Y]}[\mathcal{U}] 
@>>> 
H^1(\mathcal{U},\Theta_{M_0}) 
\\
@.
@V{\Pi^{\mathcal{U}}_{\mathcal{V}}}VV
@V{\check{\Pi}^{\mathcal{U}}_{\mathcal{V}}}VV
@V{\check{\Pi}^{\mathcal{U}}_{\mathcal{V}}}VV
@V{\Pi^{\mathcal{U}}_{\mathcal{V}}}VV
\\
0 
@>>> 
H^1(\mathcal{V},\Theta_{M_0})
@<{\verticalincmodel{[Y]}}<{\cong}<
\mathbb{T}^V_{[Y]}[\mathcal{V}] 
@>>> 
\mathbb{T}_{[Y]}[\mathcal{V}]
@>>>
H^1(\mathcal{V},\Theta_{M_0}) .
\end{CD}
$$
Therefore, 
there is a $\mathbb{C}$-linear isomorphism
$$
\verticalincmodel{[Y]}\colon \mathbb{T}^V_{[Y]}\to H^1(M_0,\Theta_{M_0})
$$
which satisfies the following commutative diagram
\begin{equation}
\label{eq:direct_limit-exact1}
\minCDarrowwidth20pt
\begin{CD}
0
@>>>
H^1(\mathcal{U},\Theta_{M_0})
@<{\verticalincmodel{[Y]}}<{\cong}<
\mathbb{T}^V_{[Y]}[\mathcal{U}]
@>>> 
\mathbb{T}_{[Y]}[\mathcal{U}] @>>> H^1(\mathcal{U},\Theta_{M_0}) 
\\
@.
@V{\Pi^{\mathcal{U}}}VV
@V{\check{\Pi}^{\mathcal{U}}}VV
@V{\check{\Pi}^{\mathcal{U}}}VV
@V{\Pi^{\mathcal{U}}}VV
\\
0
@>>>
H^1(M_0,\Theta_{M_0})
@<{\verticalincmodel{[Y]}}<{\cong}<
\mathbb{T}^V_{[Y]}
@>>>
\mathbb{T}_{[Y]}
@>>>
H^1(M_0,\Theta_{M_0}).
\end{CD}
\end{equation}
When $H^1(U_i,\Theta_{M_0})=0$ for all $i\in I$, from \eqref{eq:dimention_T_Y} and \Cref{prop:structure_T_Y_U},  
$$
\dim_{\mathbb{C}} \mathbb{T}_{[Y]}=6g-6,
$$
and the last arrows in the two horizontal sequences in \eqref{eq:direct_limit-exact1} are surjective.

\subsection{Direct limit of Dolbeault type presentation}
In this section, we discuss the direct limit of the Dolbeaut-type presentations. We only deal with the case of $\{\mathbb{T}^{Dol}_{[Y]}[\mathcal{U}]\}_{\mathcal{U}}$. The direct limit of $\{\mathbb{T}^{Bel}_{[\nu]}[\mathcal{U}]\}_{\mathcal{U}}$ is obtained in the similar manner.

Though the connecting homomorphism $\connectinghomo$ might not be defined for arbitrary coverings, we can define the connecting homomorphism
$$
\connectinghomo\colon \mathbb{T}^{Dol}_{[Y]}:=
\varinjlim_{\mathcal{U}}\mathbb{T}^{Dol}_{[Y]}[\mathcal{U}]\to  \mathbb{T}_{[Y]}
$$
between the direct limits since any covering $\mathcal{U}=\{U_i\}_{i\in I}$ admits a refinement $\mathcal{V}=\{V_j\}_{j\in J}$ with $H^1(V_j,\Theta_{M_0})=0$ for all $j\in J$. Notice from \Cref{prop:Dol_vertical_space_isomorphism} and \Cref{lem:commute_refinement,lem:injectivity_refinement}, the linear map
$$
\check{\Pi}^{\mathcal{U}}_{\mathcal{V}}\colon \mathbb{T}^{Dol}_{[Y]}[\mathcal{U}]\to \mathbb{T}^{Dol}_{[\Pi^{\mathcal{U}}_{\mathcal{V}}(Y)]}[\mathcal{V}]
$$
for locally finite coverings $\mathcal{U}=\{U_i\}_{i\in I}$ and $\mathcal{V}=\{V_j\}_{j\in J}$ of $M_0$ with $\mathcal{V}\prec \mathcal{U}$ and $H^1(U_i,\Theta_{M_0})=0$, $H^1(V_j,\Theta_{M_0})=0$ for $i\in I$ and $j\in J$ is well-defined and injective (in fact, it is isomorphic).

From \Cref{lem:Dol_base_change} and \Cref{coro:Dol_tangent_connecing_homo}, we deduce the following.

\begin{theorem}[Direct limit and Connecting homomorphism]
\label{thm:Dol_direct_limit}
The connecting homomorphism $\connectinghomo$ induces an isomorphism $\connectinghomo\colon \mathbb{T}^{Dol}_{[Y]}\to \mathbb{T}_{[Y]}$ which satisfies the following commutative diagram:
$$
\begin{CD}
\mathbb{T}^{Dol}_{[Y]}[\mathcal{U}] @>{\connectinghomo}>> \mathbb{T}_{[Y]}[\mathcal{U}] \\
@V{\check{\Pi}^{\mathcal{U}}}VV @V{\check{\Pi}^{\mathcal{U}}}VV \\
\mathbb{T}^{Dol}_{[Y]}@>{\connectinghomo}>> \mathbb{T}_{[Y]}
\end{CD}
$$
for a locally finite covering $\mathcal{U}=\{U_i\}_{i\in I}$ with $H^1(U_i,\Theta_{M_0})=0$ for $i\in I$. In addition, the commutative diagram respects the vertical spaces.
\end{theorem}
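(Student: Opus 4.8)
\textbf{Proof plan for \Cref{thm:Dol_direct_limit}.}

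The plan is to assemble the statement from three ingredients that are already available in the excerpt: (1) the fact that for a \emph{fixed} cohomology class $[Y]$ and a locally finite covering $\mathcal{U}=\{U_i\}_{i\in I}$ with $H^1(U_i,\Theta_{M_0})=0$ for all $i\in I$, the connecting homomorphism $\connectinghomo\colon\mathbb{T}^{Dol}_{[Y]}[\mathcal{U}]\to\mathbb{T}_{[Y]}[\mathcal{U}]$ is an isomorphism respecting the vertical spaces (this is \Cref{coro:Dol_tangent_connecing_homo}, built on \Cref{prop:Dol_vertical_space_isomorphism,lem:Dol_base_change}); (2) the functoriality of $\connectinghomo$ under refinement, namely that for $\mathcal{V}\prec\mathcal{U}$ with both coverings acyclic for $\Theta_{M_0}$ one has $\check{\Pi}^{\mathcal{U}}_{\mathcal{V}}\circ\connectinghomo=\connectinghomo\circ\check{\Pi}^{\mathcal{U}}_{\mathcal{V}}$; and (3) the already-established bookkeeping on the directed system of acyclic coverings — every locally finite covering admits an acyclic refinement, and the transition maps $\check{\Pi}^{\mathcal{U}}_{\mathcal{V}}$ on both $\{\mathbb{T}^{Dol}_{[Y]}[\mathcal{U}]\}$ and $\{\mathbb{T}_{[Y]}[\mathcal{U}]\}$ are isomorphisms between acyclic members and are injective in general (\Cref{lem:commute_refinement,lem:injectivity_refinement}, together with the Dolbeault-side remarks just preceding the theorem).

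First I would verify the commutation $\check{\Pi}^{\mathcal{U}}_{\mathcal{V}}\circ\connectinghomo=\connectinghomo\circ\check{\Pi}^{\mathcal{U}}_{\mathcal{V}}$ at the level of coverings. This is essentially a diagram-chase: pick a representative $(\mu,\dot\nu)\in\ker(D^{Y;1}_0)$, choose $\xi,\dot\eta\in C^0(\mathcal{U},\mathcal{A}^{0,0}(\Theta_{M_0}))$ with $(\mu,\dot\nu)=(-\overline{\partial}\xi,-\overline{\partial}\dot\eta)$ and $X=\delta\xi$, $\dot Y=\delta\dot\eta+K(\xi,Y)$ witnessing $\connectinghomo(\tvD{\mu,\dot\nu}{Y})=\tv{X,\dot Y}{Y}$; then restrict everything along a refining map $k\colon\Lambda\to I$. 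Because restriction commutes with $\overline{\partial}$, with $\delta$, and with the bracket operations $K$ and $\zeta$, the restricted data $(\xi_{k(\cdot)},\dot\eta_{k(\cdot)})$ still witnesses the connecting isomorphism downstairs, so both composites send $\tvD{\mu,\dot\nu}{Y}$ to $\tv{\Pi^{\mathcal{U}}_{\mathcal{V}}(X),\Pi^{\mathcal{U}}_{\mathcal{V}}(\dot Y)}{\Pi^{\mathcal{U}}_{\mathcal{V}}(Y)}$. Independence of the refining map is already handled by \Cref{lem:commute_refinement}. This step also automatically preserves vertical spaces, since $\connectinghomo$ does so at each covering (\Cref{prop:Dol_vertical_space_isomorphism}) and the refinement maps do so on each side ((a) of \Cref{lem:injectivity_refinement} and its Dolbeault analogue).

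Next, passing to the direct limit: by the universal property of $\varinjlim$, the compatible family $\{\connectinghomo\colon\mathbb{T}^{Dol}_{[Y]}[\mathcal{V}]\to\mathbb{T}_{[Y]}[\mathcal{V}]\}$ over acyclic coverings $\mathcal{V}$ induces a well-defined $\mathbb{C}$-linear map $\connectinghomo\colon\mathbb{T}^{Dol}_{[Y]}\to\mathbb{T}_{[Y]}$ fitting into the stated square; here I use that the acyclic coverings are cofinal in the directed set of all locally finite coverings, so the limit over acyclic coverings computes the full limit on each side. For bijectivity I would note that a direct limit of a system whose every transition map is an isomorphism between cofinally many terms is canonically isomorphic to any of those terms; concretely, $\check{\Pi}^{\mathcal{U}}\colon\mathbb{T}^{Dol}_{[Y]}[\mathcal{U}]\to\mathbb{T}^{Dol}_{[Y]}$ and $\check{\Pi}^{\mathcal{U}}\colon\mathbb{T}_{[Y]}[\mathcal{U}]\to\mathbb{T}_{[Y]}$ are isomorphisms for any acyclic $\mathcal{U}$ (the refinement maps among acyclic coverings are isomorphisms), and since the square with these vertical isomorphisms and the horizontal $\connectinghomo$'s commutes and the top $\connectinghomo$ is an isomorphism by \Cref{coro:Dol_tangent_connecing_homo}, the bottom one is too. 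Preservation of the vertical spaces in the limit follows from the commutative diagram \eqref{eq:direct_limit-exact1} and its Dolbeault counterpart, together with the already-proven fact that each $\connectinghomo$ at covering level respects vertical spaces.

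The only genuine obstacle is confirming the cofinality/isomorphism bookkeeping cleanly — i.e.\ that restricting to acyclic coverings does not lose information and that the transition maps among acyclic coverings are honest isomorphisms (not merely injections). But this is exactly the content already packaged in \Cref{lem:injectivity_refinement} and the Dolbeault remarks immediately before the theorem (which assert the Dolbeault-side $\check{\Pi}^{\mathcal{U}}_{\mathcal{V}}$ between acyclic coverings ``is well-defined and injective (in fact, it is isomorphic)''), so the remaining work is purely formal. No further computation beyond the restriction-commutes-with-$\overline{\partial},\delta,K,\zeta$ check is needed.
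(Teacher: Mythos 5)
Your proposal is correct and follows essentially the same route as the paper, which deduces the theorem from \Cref{coro:Dol_tangent_connecing_homo}, \Cref{lem:commute_refinement}, \Cref{lem:injectivity_refinement}, and the cofinality of acyclic coverings in the directed system; your write-up simply makes explicit the compatibility-with-refinement diagram chase and the passage to the limit that the paper leaves implicit.
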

%

\subsection{Structure of Dolbeault type presentation}
From \eqref{eq:vertical_isomorphism_cohomology_class_dol},
the following is well-defined and exact for $\alpha\in H^0(\mathcal{U},\mathcal{A}^{0,0}(\Theta_{M_0}))$:
\begin{equation}
\label{eq:exact_Dol}
\minCDarrowwidth10pt
\begin{CD}
0@>>>
T_{x_0}\teich_g @>{\cong}>> \mathbb{T}^{Dol,V}_Y[\mathcal{U}] @>{inc}>>\mathbb{T}^{Dol}_Y[\mathcal{U}]@>>> T_{x_0}\teich_g @>>> 0\\
@.[\dot{\nu}] @>>> \tvD{-\overline{\partial}\alpha ,\dot{\nu}+\overline{\partial}[\alpha,\eta_i]}{Y} \\
@.@. @. [\tvD{\mu,\dot{\nu}}{Y} @>>> [\mu]
\end{CD}
\end{equation}
and the following diagram is commute:
\begin{equation}
\label{eq:commute_Dol}
\minCDarrowwidth20pt
\begin{CD}
T_{x_0}\teich_g @>{\cong}>> 
\mathbb{T}^{Dol,V}_Y[\mathcal{U}]@>{inc}>>\mathbb{T}^{Dol}_Y[\mathcal{U}]@>{onto}>> T_{x_0}\teich_g \\
@A{\mathscr{T}_{x_0}}AA @V{\connectinghomo}VV @V{\connectinghomo}VV @A{\mathscr{T}_{x_0}}AA \\
H^1(\mathcal{U},\Theta_{M_0}) @>{\cong}>> 
\mathbb{T}^{V}_Y[\mathcal{U}]@>{inc}>>\mathbb{T}_Y[\mathcal{U}]@>{onto}>> H^1(\mathcal{U},\Theta_{M_0}),
\end{CD}
\end{equation}
where the first horizontal line is the exact sequence \eqref{eq:exact_Dol} and ``inc" means the inclusion.

\section{Direct limit of $\mathbb{T}^\dagger_{[Y]}$ for $[Y]\in H^0(M_0,\Theta_{M_0})$}
\label{sec:direct_limit_cotangent_space}
For locally finite coverings $\mathcal{U}$ and $\mathcal{V}$ of $M_0$ with $\mathcal{V}\prec \mathcal{U}$, we can see that the refinement $\Pi^{\mathcal{U}}_{\mathcal{V}}$ commutes $D^{Y,\dagger}_1$ and $\tilde{L}^\dagger_{\beta;Y}$
%
on $C^0(\mathcal{U},\Omega_{M_0}^{\otimes 2})^{\oplus 2}$
for $\beta\in C^0(\mathcal{U},\Theta_{M_0})$.
Therefore, we have a well-defined map
$$
\check{\Pi}^{\mathcal{U}}_{\mathcal{V}}\colon 
\mathbb{T}^\dagger_{[Y]}[\mathcal{U}]
\to \mathbb{T}^\dagger_{[\Pi^{\mathcal{U}}_{\mathcal{V}}(Y)]}[\mathcal{V}].
$$
and $\check{\Pi}^{\mathcal{V}}_{\mathcal{W}}\circ \check{\Pi}^{\mathcal{U}}_{\mathcal{V}}=\check{\Pi}^{\mathcal{U}}_{\mathcal{W}}$
for locally finite coverings $\mathcal{U}$, $\mathcal{V}$, $\mathcal{W}$ with $\mathcal{W}\prec \mathcal{V}\prec \mathcal{U}$. Therefore, the direct limit
$$
\mathbb{T}^\dagger_{[Y]}=\varinjlim_{\mathcal{U}}\mathbb{T}^\dagger_{[Y]}[\mathcal{U}]
$$
exists.
The isomorphism
$$
\mathbb{T}^\dagger_{[Y]}[\mathcal{U}]\to \mathcal{Q}_{M_0}\oplus \mathcal{Q}_{M_0}
$$
given in \Cref{thm:trivialization_cotangent_to_tangent} naturally induces a $\mathbb{C}$-isomorphism
$$
\mathbb{T}^\dagger_{[Y]}\to \mathcal{Q}_{M_0}\oplus \mathcal{Q}_{M_0}
$$
which commutes the following diagram:
$$
\begin{CD}
\mathbb{T}^\dagger_{[Y]}[\mathcal{U}]@>>> \mathcal{Q}_{M_0}\oplus \mathcal{Q}_{M_0} \\
@V{\check{\Pi}^{\mathcal{U}}}VV @| \\
\mathbb{T}^\dagger_{[Y]} @>>>\mathcal{Q}_{M_0}\oplus \mathcal{Q}_{M_0}.
\end{CD}
$$
We can also check that the pairing
$$
\mathcal{P}_{\mathbb{TT}}\colon \mathbb{T}_{[Y]}[\mathcal{U}]\oplus \mathbb{T}^\dagger_{[Y]}[\mathcal{U}]\to \mathbb{C}
$$
defined at \Cref{def:model_pairing}
also descends to the non-degenerate pairing $\mathcal{P}_{\mathbb{TT}}$ between the direct limits $\mathbb{T}_{[Y]}\oplus \mathbb{T}^\dagger_{[Y]}$
(cf. \Cref{prop:non-degenerate}).

\chapter{Double tangent spaces to Teichm\"uller space}
\label{chap:double_tangent_space_model}
One of the main purpose of this chapter is to show \Cref{thm:main2}, which says that the direct limit $\mathbb{T}_{[Y]}$  of the model space for $[Y]\in H^1(M_0,\Theta_{M_0})$ defined in \S\ref{subsec:inductice_limit_def} naturally stands for the double tangent space $T_vT\teich_g$ of $\teich_g$ at the tangent vector $v\in T_{x_0}\teich_g$ corresponding to $[Y]$.


\section{Holomorphic families, charts and cochains}
In this section, we shall discuss the cocycles for the double tangent space defined from holomorphic families of Riemann surfaces.
We will see that the maps $D_0^Y$ and $D_1^Y$ defined in \S\ref{subsec:model_space_definition} naturally appear in the calculation.
Moreover, the calculations given in this section will be used in \S\ref{sec:double_tangent_main} to show that the model spaces discussed in \S\ref{sec:Model_space_definition} and \S\ref{sec:Dolbeaut_type_presentation} are exactly the models of the double tangent space.
\label{sec:families_charts_cochains}

\subsection{Families and charts}
\label{subsec:charts}
Let $\epsilon_0>0$ and $D=\{(t,s)\in \mathbb{C}^2\mid |t|,|s|<\epsilon_0\}$. Let $\mathcal{M}_0\to D$ be a holomorphic family of Riemann surfaces of genus $g$ with the fiber at $(0,0)\in D$ is biholomorphically equivalent to $M_0$. 
Let $M_{t,s}$ be the fiber over $(t,s)\in D$.
After a trivialization (as the differential family) $\mathcal{M}_0\cong M_0\times D$, 
when we take $\epsilon_0>0$ sufficiently small,  there is a complex analytic chart $\mathcal{U}=\{U_i\}_{i\in I}$ of $M_0$ and a family of diffeomorphisms $U_i\times D_i\ni (p, t,s)\mapsto \hat{z}_i(p,t,s):=(z^{t,s}_i(p),t,s)\in \mathbb{C}\times D$ (onto its image) such that 
the chart $\{(U_i\times D, \hat{z}_i\}_{i\in I}$ makes $M_0\times D$ the total space of holomorphic family of Riemann surfaces of genus $g$ which is isomorphic to $\mathcal{M}_0$ with fixing the parameter space $D$.
For $(t,s)\in D$, the analytic coordinate chart $\mathcal{U}^{t,s}=\{(U_i, z_i^{t,s})\}_{i\in I}$ on $M_0$ makes a base surface of $M_0$ a Riemann surface conformally equivalent to $M_{t,s}$.
We may assume that $z^{0,0}_i(U_i)=\mathbb{D}$ for each $i\in I$.
We fix a notation:
\begin{align*}
W_i^{t,s}(z)&=z^{t,s}_i\circ z^{0,0}_i(z) \quad (z\in \mathbb{D})\\
U^{t,s}_i &=z^{t,s}_i(U_i) \\
U^{t,s}_{ij} &=z^{t,s}_i(U_i\cap U_j) \\
z_{ij}^{t,s} &=z_i^{t,s}\circ (z_j^{t,s})^{-1}
\colon U^{t,s}_{ji}\to U^{t,s}_{ij}.
\end{align*}
When $s=0$, we set $z_i^t=z_i^{t,0}$, $U^{t}_i =U^{t,0}_i$,
$U^{t}_{ij}=U^{t,0}_{ij}$, and $z_{ij}^{t}=z_{ij}^{t,0}$ for simplicity.
 


\subsection{Cochains}
\label{subsec:cochain}
For $i$, $j\in I$ with $i\ne j$ and $U_i\cap U_j\ne \emptyset$, let
$$
\tilde{U}_{ij}=\{((t,s),z)\in D\times \mathbb{C}\mid z\in U^{t,s}_{ij}\}.
$$
Then $\tilde{U}_{ij}$ is a domain in $\mathbb{C}^3$. From the assumption,
$$
\tilde{U}_{ji}\ni ((t,s),z)\mapsto ((t,s),z_{ij}^{t,s}(z))\in \tilde{U}_{ij}
$$
is a biholomorphism. For $t\in D$, we define
\begin{align*}
X^t_{ij}&=\dfrac{\partial z_{ij}^{t}}{\partial t}\circ z_{ji}^t(z)\partial_{z_i^t}\in \Gamma(U^t_{ij},\Theta_{M_0}) \\
Y^{t}_{ij}&=\left.\dfrac{\partial z_{ij}^{t,s}}{\partial s}\right|_{s=0}\circ z_{ji}^t(z)\partial_{z_i^t} \in \Gamma(U^t_{ij},\Theta_{M_0}) \\
\dot{Y}_{ij}&=\left.\dfrac{\partial Y_{ij}^{t}}{\partial t}\right|_{t=0}(z)\partial_{z_i^0}
\in \Gamma(U^0_{ij},\Theta_{M_0}).
\end{align*}
We set $\dot{Y}_{ii}=0$,
$X_{ij}=X^0_{ij}$, $Y_{ij}=Y^0_{ij}$. Then
\begin{equation}
\label{eq:double_tangent_space-2}
X=\{X_{ij}\}_{i,j}, \ Y=\{Y_{ij}\}_{i,j}\in Z^1(\mathcal{U},\Theta_{M_0}), \
\dot{Y}=\{\dot{Y}_{ij}\}_{i,j}\in C^1(\mathcal{U},\Theta_{M_0}).
\end{equation}
From \eqref{eq:identify_H1_T_g}, the cohomology classes $[X]$, $[Y]\in H^1(\mathcal{U},\Theta_{M_0})$
are corresponds to $\dot{\varphi}$, $\psi_0\in B_2=T_0\Bers{x_0}$ via tha natural identification $T_0\Bers{x_0}\cong H^1(M_0,\Theta_{M_0})\cong H^1(\mathcal{U},\Theta_{M_0})$.

\subsection{Relations between cochains}
The following proposition implies that $(X,\dot{Y})\in \ker(D^Y_1)$
for $X$ and $\dot{Y}$ defined in \eqref{eq:double_tangent_space-2}.

\begin{proposition}
\label{prop:cocycle_conditions}
Under the above notation, $\dot{Y}$ satisfies
\begin{itemize}
\item[{\rm (a)}] $\dot{Y}_{ij}+\dot{Y}_{ji}+[X_{ij},Y_{ij}]=0$ for $i$, $j\in I$; and
\item[{\rm (b)}] Let $Z_{ij}=\dot{Y}_{ij}+\dfrac{1}{2}[X_{ij},Y_{ij}]$. Then
\begin{align}
Z_{ij} +Z_{ji} &= 0 
\label{eq:double_tangent_3}
\\
Z_{ij}+Z_{jk}+Z_{ki}-\zeta(X,Y)_{ijk} &=0
\label{eq:double_tangent_4}
\end{align}
for $i$, $j$, $k\in I$.
\end{itemize}
\end{proposition}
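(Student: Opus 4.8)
The plan is to extract the three asserted identities directly from the cocycle conditions for the double family $z_{ij}^{t,s}$, differentiating the relevant compositions in the parameters $t$ and $s$ and evaluating at the origin. The key observation is that $\{z_{ij}^{t,s}\}$ is a set of transition functions of a holomorphic family, so for all $(t,s)$ in a neighbourhood of $(0,0)$ one has the cocycle relations
$$
z_{ij}^{t,s}\circ z_{ji}^{t,s}=\mathrm{id},\qquad
z_{ij}^{t,s}\circ z_{jk}^{t,s}\circ z_{ki}^{t,s}=\mathrm{id}
$$
on the appropriate overlaps. Item (a) will come from differentiating the first relation once in $s$ and once in $t$ (the mixed second derivative); the chain rule produces exactly $\dot{Y}_{ij}+\dot{Y}_{ji}$ together with a Lie-bracket term $[X_{ij},Y_{ij}]$, using the definitions of $X_{ij}$, $Y_{ij}$, $\dot Y_{ij}$ in \S\ref{subsec:cochain} and the formula \eqref{eq:Lie-derivative} for the bracket. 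For (b), the antisymmetry \eqref{eq:double_tangent_3} is immediate from (a): since $Z_{ij}=\dot Y_{ij}+\tfrac12[X_{ij},Y_{ij}]$ and $[X_{ij},Y_{ij}]=[X_{ji},Y_{ji}]$ up to the usual chart-change bookkeeping (the bracket of two $1$-cocycles evaluated on the same overlap is symmetric under $i\leftrightarrow j$ in the sense used here), one gets $Z_{ij}+Z_{ji}=\dot Y_{ij}+\dot Y_{ji}+[X_{ij},Y_{ij}]=0$.

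\textbf{The three-fold identity.} The identity \eqref{eq:double_tangent_4} is the substantive one and it will be obtained by differentiating the $2$-cocycle relation $z_{ij}^{t,s}\circ z_{jk}^{t,s}\circ z_{ki}^{t,s}=\mathrm{id}$. First I would differentiate once in $s$ at $s=0$, holding $t$ fixed: this yields the first-order relation $Y^t_{ij}+Y^t_{jk}+Y^t_{ki}=0$ (the cocycle condition $Y^t\in Z^1(\mathcal U^{t},\Theta)$, after transporting via the chart changes), which is the statement that $Y^t$ is a cocycle for each $t$. Then I would differentiate this in $t$ at $t=0$. The $t$-derivative hits both the $Y$-terms (producing the $\dot Y_{ij}+\dot Y_{jk}+\dot Y_{ki}$ part) and the chart-change maps $z^{t}_{\bullet}$ implicit in the transport (producing bracket terms of the form $[X_{\bullet\bullet},Y_{\bullet\bullet}]$). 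Collecting these bracket contributions and comparing with the definition of the primary obstruction $\zeta(X,Y)_{ijk}=\tfrac12([X_{ij},Y_{jk}]+[Y_{ij},X_{jk}])$ from \S\ref{subsec:model_space_definition}, one finds that the half-brackets in $Z_{ij}=\dot Y_{ij}+\tfrac12[X_{ij},Y_{ij}]$ exactly account for the difference, so that $Z_{ij}+Z_{jk}+Z_{ki}=\zeta(X,Y)_{ijk}$. This is the same bookkeeping that underlies \cite[\S5.1]{MR815922} for the primary obstruction of a deformation, now carried out one order higher.

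\textbf{Main obstacle.} The hard part will be the careful chain-rule computation for the second identity: keeping straight which derivative acts on the transition function $z_{ij}^{t,s}$ versus on its argument (which itself depends on $(t,s)$ through $z_{jk}^{t,s}$, $z_{ki}^{t,s}$), and checking that the spurious second-derivative terms $\partial^2 z_{ij}^{t,s}/\partial s^2$, $\partial^2 z_{ij}^{t,s}/\partial t\,\partial s$ organize themselves into genuine Lie brackets rather than leaving an uncancelled remainder. Concretely, one must verify that a term such as $(X_{ij})'\cdot Y_{ij}$ (derivative of a vector-field coefficient times another coefficient) combines with its partner under the $i,j,k$-cyclic sum to give $\tfrac12([X_{ij},Y_{jk}]+[Y_{ij},X_{jk}])$, which is where the factor $\tfrac12$ in the definition of $Z_{ij}$ is consumed. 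I expect this to be a moderately lengthy but entirely mechanical computation once the notation of \S\ref{subsec:charts}--\S\ref{subsec:cochain} is in place; the sign conventions (particularly the $-\overline\partial$ in \eqref{eq:Dolbeault-sequence} and the orientation of $d\overline z\wedge dz$) play no role here since everything is holomorphic in the parameters, so the only real danger is arithmetic slips in the chain rule.
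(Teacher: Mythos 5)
Your strategy matches the paper's proof essentially step for step: (a) comes from differentiating the antisymmetry relation $Y^t_{ij}+Y^t_{ji}=0$ (itself the $s$-derivative of $z_{ij}^{t,s}\circ z_{ji}^{t,s}=\mathrm{id}$) in $t$, \eqref{eq:double_tangent_3} is immediate from (a) and $[X_{ji},Y_{ji}]=[X_{ij},Y_{ij}]$, and \eqref{eq:double_tangent_4} comes from differentiating the transported cocycle identity $Y^t_{ij}+Y^t_{jk}+Y^t_{ki}=0$ in $t$ and reorganizing the resulting brackets against $\zeta(X,Y)_{ijk}$. The computational bookkeeping you flag as the main obstacle is exactly what the paper carries out, and it closes as you predict.
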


\begin{proof}
(a)\quad From the definition, for $z\in U^0_{ij}=z_i^0(U_i\cap U_j)$, 
$z\in U^t_{ij}$ for sufficiently small $t\in D$ and
\begin{align*}
& Y_{ji}^t(z_{ji}^t(z))\dfrac{1}{(z_{ji}^t)'(z)}=Y_{ji}(z_{ji}^0(z))\dfrac{1}{(z_{ji}^0)'(z)} \\
&\quad +t
(\dot{Y}_{ji}(z_{ji}^0(z))+Y'_{ji}(z_{ji}^0(z))X_{ji}(z_{ji}^0(z))-Y_{ji}(z_{ji}^0(z))X'_{ji}(z_{ji}^0(z)))\dfrac{1}{(z_{ji}^0)'(z)} \\
&\quad+o(|t|) \\
&=Y_{ji}(z_{ji}^0(z))\dfrac{1}{(z_{ji}^0)'(z)}+t
(\dot{Y}_{ji}(z_{ji}^0(z))+[X_{ji},Y_{ji}](z_{ji}^0(z)))\dfrac{1}{(z_{ji}^0)'(z)} \\
&\quad+o(|t|) \\
&Y_{ij}^t(z)
=Y_{ij}(z)+t\dot{Y}_{ij}(z)+o(|t|).
\end{align*}
Since $Y_{ij}^t(\partial/\partial z_i^0)+Y_{ji}^t(\partial/\partial z_j^0)=0$ for $t\in D$, we obtain
$$
\dot{Y}_{ij}(z)+\left(\dot{Y}_{ji}(z_{ji}^0(z))+[X_{ji},Y_{ji}](z_{ji}^0(z))\right)\dfrac{1}{(z_{ji}^0)'(z)}=0,
$$
which implies what we wanted.

\medskip
\noindent
(b) \eqref{eq:double_tangent_3} follows from (a) since
$$
[X_{ji},Y_{ji}]=[-X_{ij},-Y_{ij}]=[X_{ij},Y_{ij}].
$$
We shall check \eqref{eq:double_tangent_4}. Let $z\in z^0_i(U_i\cap U_j\cap U_k)$.
Since $\{Y^t_{ij}\}_{i,j}$ is a $1$-cocycle for $t\in D$, $Y^t_{ij}+Y^t_{jk}+Y^t_{ki}=0$,
which is equivalent to
$$
Y^t_{ij}(z)
+
Y_{jk}(z^t_{ji}(z))\dfrac{1}{(z^t_{ji})'(z)}
+
Y^t_{ki}(z^t_{ki}(z))\dfrac{1}{(z^t_{ki})'(z)}=0.
$$
From the calculation, we can see 
\begin{align*}
Y^t_{ij}(z)&=Y_{ij}(z)+t\dot{Y}_{ij}(z)+o(|t|) \\
Y_{jk}(z^t_{ji}(z))\dfrac{1}{(z^t_{ji})'(z)}
&=Y_{jk}(z^0_{ji}(z))\dfrac{1}{(z^0_{ji})'(z))} \\
&\qquad +t
\left(
\dot{Y}_{jk}(z^0_{ji}(z))+[X_{ji},Y_{jk}](z^0_{ji}(z))
\right)\dfrac{1}{(z^0_{ji})'(z)}+o(|t|)
\\
Y^t_{ki}(z^t_{ki}(z))\dfrac{1}{(z^t_{ki})'(z)}
&=Y_{ki}(z^0_{ki}(z))\dfrac{1}{(z^0_{ki})'(z)} \\
&\qquad+t
\left(
\dot{Y}_{ki}(z^0_{ki}(z))+[X_{ki},Y_{ki}](z^0_{ki}(z))
\right)\dfrac{1}{(z^0_{ki})'(z)}+o(|t|)
\end{align*}
as $t\to 0$. Therefore,
\begin{equation}
\label{eq:double_tangent_5}
\dot{Y}_{ij}+\dot{Y}_{jk}+\dot{Y}_{ki}+[X_{ji},Y_{jk}]+[X_{ki},Y_{ki}]=0.
\end{equation}
On the other hand, since
\begin{align*}
& \dfrac{1}{2}[X_{ij},Y_{ij}]+\dfrac{1}{2}[X_{jk},Y_{jk}]+\dfrac{1}{2}[X_{ki},Y_{ki}]
-\dfrac{1}{2}([X_{ij},Y_{jk}]+[Y_{ij},X_{jk}]) \\
&=\dfrac{1}{2}[X_{ij},Y_{ik}+Y_{kj}]+\dfrac{1}{2}[X_{jk},Y_{ji}+Y_{ik}]+\dfrac{1}{2}[X_{ki},Y_{ki}]
+\dfrac{1}{2}[X_{ji},Y_{jk}]+\dfrac{1}{2}[X_{jk},Y_{ij}] \\
&=
[X_{ji},Y_{jk}]+[Y_{ki},Y_{ki}]-\dfrac{1}{2}[X_{ki},Y_{ki}]+\dfrac{1}{2}[X_{ij},Y_{ik}]
+\dfrac{1}{2}[X_{jk},Y_{ik}] \\
&=[X_{ji},Y_{jk}]+[Y_{ki},Y_{ki}],
\end{align*}
\eqref{eq:double_tangent_5} is equivalent to \eqref{eq:double_tangent_4}.
\end{proof}

\subsection{Connection to Dolbeaut presentation}
\label{subsec:Notation_2}
We continue to use the symbols defined and discussed in the previous sections.
For $(t,s)\in D$, we set
\begin{align*}
\xi^{s}_{i}&=-\left.\dfrac{\partial W^{t,s}_i}{\partial t}\right|_{t=0}
\circ (W_i^{0,s})^{-1}(z)\partial_{z_i^{0,s}}
\in \Gamma(U^{0,s}_i,\mathcal{A}^{0,0}(\Theta_{M_0})) \\
\eta^{t}_{i}
&=
-\left.\dfrac{\partial W_i^{t,s}}{\partial s}\right|_{s=0}
\circ
(W_i^{t,0})^{-1}(z)\partial_{z_i^{t}}
\in \Gamma(U^t_i,\mathcal{A}^{0,0}(\Theta_{M_0})) \\
\dot{\xi}_{i}&=\left.\dfrac{\partial \xi_{i}^{s}}{\partial s}\right|_{s=0}(z)\partial_{z_i^0},\quad \dot{\xi}^*_{i}=\left.\dfrac{\partial \xi_{i}^{s}}{\partial \overline{s}}\right|_{s=0}(z)\partial_{z_i^0}
\in \Gamma(U^0_{i},\mathcal{A}^{0,0}(\Theta_{M_0}))
\\
\dot{\eta}_{i}&=\left.\dfrac{\partial \eta_{i}^{t}}{\partial t}\right|_{t=0}(z)\partial_{z_i^0},\quad \dot{\eta}^*_{i}=\left.\dfrac{\partial \eta_{i}^{t}}{\partial \overline{t}}\right|_{t=0}(z)\partial_{z_i^0}
\in \Gamma(U^0_{i},\mathcal{A}^{0,0}(\Theta_{M_0}))
\\
X^{s}_{ij}&=\left.\dfrac{\partial z_{ij}^{t,s}}{\partial t}\right|_{t=0}\circ z_{ji}^{0,s}(z)\partial_{z_i^{0,s}} \in \Gamma(U^{0,s}_{ij},\Theta_{M_0}) \\
\dot{X}_{ij}&=\left.\dfrac{\partial X_{ij}^{s}}{\partial s}\right|_{s=0}(z)\partial_{z_i^0}
\in \Gamma(U^0_{ij},\Theta_{M_0}).
\end{align*}
For signs of $\xi_i^s$ and $\eta^s_i$, see \S\ref{subsec:appendix_1}.
We set $\dot{X}_{ii}=0$ and $\dot{X}=\{\dot{X}_{ij}\}_{i,j}\in C^1(\mathcal{U},\Theta)$. From the definition,
$$
\xi^s_j-\xi_i^s=X^s_{ij},\quad
\eta^t_j-\eta_i^t=Y^t_{ij}.
$$
\begin{remark}
\label{remark:2}
$\xi_i^s$ and $\eta_i^t$ may not depend holomorphically in $s$, $t$ in general. 
For instance, see Masumoto's example in \cite[\S3]{MR1123804}).
\end{remark}

\begin{proposition}
\label{prop:dot_X_Y}
Under the above notation,
$\dot{X}$ and $\dot{Y}$ satisfy the following.
\begin{align}
\dot{Y}_{ij} &= \dot{\eta}_j-\dot{\eta}_i-[\eta^0_j,X_{ji}]=\dot{\eta}_j-\dot{\eta}_i+[\eta^0_j,X_{ij}]
\label{eq:dot_Y_ij}
\\
\dot{X}_{ij} &= \dot{\xi}_j-\dot{\xi}_i-[\xi^0_j,Y_{ji}]=\dot{\xi}_j-\dot{\xi}_i+[\xi^0_j,Y_{ij}]
\label{eq:dot_X_ij}
\\
\dot{\eta}^*_j-\dot{\eta}^*_i
&=(\eta^0_i)_{\overline{z}}\overline{X_{ij}}
\label{eq:eta-star}
\\
\dot{\xi}^*_j-\dot{\xi}^*_i
&=(\xi^0_i)_{\overline{z}}\overline{Y_{ij}}
\label{eq:xi-star}
\\
\dot{X}_{ij}-\dot{Y}_{ij}&=[{\color{black}X}_{ij},{\color{black}Y}_{ij}].
\label{eq:Xij-Yij}
\end{align}
In particular,
\begin{align}
\label{eq:Yij_xi}
\dot{Y}_{ij}
&=\dot{\xi}_j-\dot{\xi}_i+[\xi^0_{i},Y_{ij}]
\\
\dot{\eta}_i&=\dot{\xi}_i+[\xi_i,\eta_i]+\chi,
\label{eq:eta_xi}
\end{align}
where $\chi$ is a (global) vector field on $M_0$.
\end{proposition}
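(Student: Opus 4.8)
\textbf{Proof proposal for Proposition \ref{prop:dot_X_Y}.}

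The plan is to derive all five displayed identities \eqref{eq:dot_Y_ij}--\eqref{eq:Xij-Yij} by differentiating the defining relations $\xi^s_j-\xi^s_i = X^s_{ij}$ and $\eta^t_j-\eta^t_i = Y^t_{ij}$ together with the cocycle transition rules of the $z^{t,s}_i$'s, and then to assemble \eqref{eq:Yij_xi} and \eqref{eq:eta_xi} as algebraic consequences. First I would fix $i,j$ and work on the overlap $z^0_i(U_i\cap U_j)$, where all of $z^{t,s}_{ij}$, $W^{t,s}_i$, $\xi^s_i$, $\eta^t_i$ are honest (smooth in $(t,s)$, holomorphic in $z$ along each fiber) functions. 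The key computational input is the chain-rule expansion already used in \S\ref{subsec:cochain} and in the proof of \Cref{prop:cocycle_conditions}: for a vector field $V^t$ transported through a coordinate change $z^t_{ij}$ one has, to first order in $t$,
\[
V^t_{ij}(z^t_{ij}(w))\frac{1}{(z^t_{ij})'(w)} = V_{ij}(z^0_{ij}(w))\frac{1}{(z^0_{ij})'(w)} + t\bigl(\dot V_{ij} + [X_{ij},V_{ij}]\bigr)(z^0_{ij}(w))\frac{1}{(z^0_{ij})'(w)} + o(|t|),
\]
and the analogous expansion in the $s$-direction using $Y_{ij}$ in place of $X_{ij}$. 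Applying this with $V = \eta$ to $\eta^t_j - \eta^t_i = Y^t_{ij}$ and differentiating in $t$ at $t=0$ yields \eqref{eq:dot_Y_ij}; the symmetric computation with $V=\xi$ applied to $\xi^s_j-\xi^s_i=X^s_{ij}$, differentiated in $s$ at $s=0$, gives \eqref{eq:dot_X_ij}. For \eqref{eq:eta-star} and \eqref{eq:xi-star} I would differentiate the same overlap relations in the \emph{conjugate} variable $\overline t$ (resp. $\overline s$): since $\eta^t_i$ is a smooth $(0,0)$-form whose $\overline z$-derivative records the Beltrami differential of the $t$-deformation, the $\overline t$-derivative of the cocycle relation picks up exactly the term $(\eta^0_i)_{\overline z}\,\overline{X_{ij}}$ (the antiholomorphic part of the coordinate variation), while $Y^t_{ij}$ is holomorphic in the fiber variable so contributes nothing to the conjugate derivative. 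Here I would cite \S\ref{subsec:appendix_1} for the sign conventions on $\xi^s_i$, $\eta^s_i$.

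For \eqref{eq:Xij-Yij}, the cleanest route is to compare the two mixed second derivatives of $z^{t,s}_{ij}$ at the origin: $\dot X_{ij}$ and $\dot Y_{ij}$ are both built from $\partial^2 z^{t,s}_{ij}/\partial t\partial s|_{0}$ but transported through different first-order coordinate changes, and the discrepancy is precisely the Lie bracket $[X_{ij},Y_{ij}]$ — this is the same mechanism as the symmetry computation $\sum_l\Gamma_{il}(\eta)\xi_l = \sum_k \Gamma_{ik}(\xi)\eta_k$ in \S\ref{subsubsec:canonical_flip}. Alternatively, subtract \eqref{eq:dot_X_ij} from \eqref{eq:dot_Y_ij} after reconciling $\dot\xi$ with $\dot\eta$; but the direct Clairaut-type argument is more transparent. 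Then \eqref{eq:Yij_xi} follows by combining \eqref{eq:dot_X_ij} and \eqref{eq:Xij-Yij}: from \eqref{eq:dot_X_ij}, $\dot X_{ij} = \dot\xi_j - \dot\xi_i + [\xi^0_i, Y_{ij}]$ (using $\xi^0_j = \xi^0_i + X_{ij}$ and the Jacobi/bilinearity of the bracket to move the basepoint, noting $[X_{ij},Y_{ij}]$ is absorbed), and then \eqref{eq:Xij-Yij} converts $\dot X_{ij}$ to $\dot Y_{ij}$.

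Finally, \eqref{eq:eta_xi}: the difference $\dot\eta_i - \dot\xi_i - [\xi_i,\eta_i]$ must be checked to be independent of $i$, i.e. to glue to a global smooth vector field $\chi$ on $M_0$. For this I would compute the coboundary: using \eqref{eq:dot_Y_ij}, \eqref{eq:Yij_xi}, and the relations $\eta^0_j = \eta^0_i + Y_{ij}$, $\xi^0_j = \xi^0_i + X_{ij}$, one gets $(\dot\eta_j - \dot\xi_j - [\xi_j,\eta_j]) - (\dot\eta_i - \dot\xi_i - [\xi_i,\eta_i]) = 0$ on $U_i\cap U_j$ after a bracket manipulation (expand $[\xi_j,\eta_j] = [\xi_i + X_{ij}, \eta_i + Y_{ij}]$ and cancel against $[\eta^0_j,X_{ij}] - [\xi^0_i,Y_{ij}] + [X_{ij},Y_{ij}]$). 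The expression being a $0$-cochain with vanishing coboundary, it defines the global field $\chi$. The main obstacle I anticipate is bookkeeping the signs and basepoints consistently across the holomorphic/antiholomorphic derivatives — in particular making sure the conventions of \S\ref{subsec:appendix_1} for $\xi^s_i,\eta^t_i$ (the minus signs in their definitions) propagate correctly so that \eqref{eq:eta-star}--\eqref{eq:xi-star} come out with the stated sign and that the bracket terms in \eqref{eq:dot_Y_ij}--\eqref{eq:Yij_xi} match; the actual differential-geometric content is routine once the expansion lemma above is in hand.
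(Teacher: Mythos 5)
Your proposal is correct and follows essentially the same route as the paper: expand the transported cocycle relations $\eta^t_j-\eta^t_i=Y^t_{ij}$ and $\xi^s_j-\xi^s_i=X^s_{ij}$ in $t,\overline t$ (resp.\ $s,\overline s$) to obtain \eqref{eq:dot_Y_ij}--\eqref{eq:xi-star}, get \eqref{eq:Xij-Yij} by comparing the two mixed second derivatives of $z^{t,s}_{ij}$ at the origin, and deduce \eqref{eq:Yij_xi} and \eqref{eq:eta_xi} algebraically (the latter by checking the coboundary of $\dot\eta_i-\dot\xi_i-[\xi_i,\eta_i]$ vanishes). Only a wording slip: \eqref{eq:dot_X_ij} gives $\dot X_{ij}=\dot\xi_j-\dot\xi_i+[\xi^0_j,Y_{ij}]$ with basepoint $\xi^0_j$, and the shift to $\xi^0_i$ in \eqref{eq:Yij_xi} is exactly what subtracting the term $[X_{ij},Y_{ij}]$ from \eqref{eq:Xij-Yij} supplies, as you clearly intend.
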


\begin{proof}
For $z\in U^0_{ij}$, $z\in U_{ij}^{t,s}$ for sufficiently small $(t,s)\in D$ and
\begin{align*}
&\eta_j^t(z_{ji}^t(z))\dfrac{1}{(z_{ji}^t)'(z)}
=\eta_j^0(z^0_{ji}(z))\dfrac{1}{(z^0_{ji})'(z)}
\\
&\quad +t\left(\dot{\eta}_{j}(z_{ji}(z))+(\eta^0_j)_z(z_{ji}(z))X_{ji}(z^0_{ji}(z))-
\eta^0_j(z^0_{ji}(z))X_{ji}'(z^0_{ji}(z))
\right)\dfrac{1}{(z^0_{ji})'(z)} \\
&\qquad
+\overline{t} 
\left(
\dot{\eta}^*_j(z_{ji}(z))+(\eta^0_j)_{\overline{z}}(z^0_{ji}(z))\overline{X_{ji}(z^0_{ji}(z))}
\right)\dfrac{1}{(z^0_{ji})'(z)}+o(|t|)
\\
&\eta_i^t(z)
=\eta_i^0(z)+t\dot{\eta}_{i}(z)+\overline{t}\dot{\eta}^*_{i}(z)+o(|t|) \\
&Y_{ij}^t(z)
=Y_{ij}(z)+t\dot{Y}_{ij}(z)+o(|t|).
\end{align*}
Hence, the relation $\eta_j^t-\eta^t_i=Y^t_{ij}$ implies
\begin{align*}
\dot{Y}_{ij}(z)&=\left(\dot{\eta}_{j}(z^0_{ji}(z))+[X_{ji},\eta^0_j](z^0_{ji}(z))\right)\dfrac{1}{(z^0_{ji})'(z)} -\dot{\eta}_{i}(z) \\
0 &=\left(
\dot{\eta}^*_j(z_{ji}(z))+(\eta^0_j)_{\overline{z}}(z_{ji}^0(z))\overline{X_{ji}(z)}
\right)\dfrac{1}{(z_{ji}^0)'(z)}-\dot{\eta}^*_{i}(z),
\end{align*}
which are equivalent to
\eqref{eq:dot_Y_ij} and \eqref{eq:eta-star}.
By the same calculation, we also have
\eqref{eq:dot_X_ij} and \eqref{eq:xi-star}.

Next we prove \eqref{eq:Xij-Yij}.
As the discussion above, for $z\in U^0_{ij}$, $z\in U_{ij}^{t,s}$ for sufficiently small $(t,s)\in D$. Then,
\begin{align*}
\dot{Y}_{ij}(z)
&=\left.\dfrac{\partial Y_{ij}^t}{\partial t}\right|_{t=0}(z) =\left.\partial_{t}
\left(
\left.\dfrac{\partial z^{t,s}_{ij}}{\partial s}\right|_{s=0}\circ z^{t,0}_{ji}(z)
\right)
\right|_{t=0} \\
&=
\left.\dfrac{\partial^2 z^{t,s}_{ij}}{\partial s\partial t}\right|_{((t,s)=(0,0)}\circ z^0_{ji}(z)
+Y'_{ij}(z)\dfrac{1}{(z^0_{ji})'(z)}\cdot X_{ji}(z^0_{ji}(z))
\\
&=
\left.\dfrac{\partial^2 z^{t,s}_{ij}}{\partial s\partial t}\right|_{((t,s)=(0,0)}\circ z^0_{ji}(z)
-Y'_{ij}(z)X_{ij}(z)
\\
\dot{X}_{ij}(z)
&=
\left.\dfrac{\partial^2 z^{t,s}_{ij}}{\partial t\partial s}\right|_{((t,s)=(0,0)}\circ z^0_{ji}(z)
-X'_{ij}(z)Y_{ij}(z),
\end{align*}
which implies \eqref{eq:Xij-Yij}.
\eqref{eq:Yij_xi} follows from \eqref{eq:dot_X_ij} and \eqref{eq:Xij-Yij}.
\eqref{eq:eta_xi} is deduced from \eqref{eq:dot_Y_ij} and \eqref{eq:dot_X_ij}.
\end{proof}

\begin{remark}
Equation \eqref{eq:Xij-Yij} leads the formulation of the flip on $TT\teich_g$ (cf. \Cref{thm:canonical_flip}).
Equation \eqref{eq:Yij_xi} means that $\dot{Y}$ is in the image of the connection homomorphism $\connectinghomo$ which is discussed in \S\ref{subsec:Dol_type_representation}.
\end{remark}

\section{Double tangent space over Teichm\"uller space}
\label{sec:double_tangent_main}
In this section, we will show that the model space $\mathbb{T}_{[Y]}[\mathcal{U}]$ for $[Y]\in H^1(M_0,\Theta_{M_0})$ defined in \Cref{def:model_space} is naturally regarded as the double tangent space $T_vT\teich_g$ of $\teich_g$ for $v\in T_{x_0}\teich_g$ corresponding to $[Y]$, when the covering $\mathcal{U}$ satisfies an appropriate condition. Namely, throughout this section, we assume that  a locally covering $\mathcal{U}=\{U_i\}_{i\in I}$ of $M_0$ satisfies that for each $i\in I$, $\overline{U_i}$ is an embedded closed topological disk in $M_0$ with smooth boundary. In this case, $H^1(M_0,\Theta_{M_0})\cong H^1(\mathcal{U},\Theta_{M_0})$ since $H^1(U_i,\Theta_{M_0})=0$ for all $i\in I$.

\subsection{Double tangent spaces on domains revisited}
Suppose $M$ is a domain in $\mathbb{C}^n$. Let $(z^1,\cdots,z^n)$ be the standard coordinates of $\mathbb{C}^n$.
Then, $TM$ is biholomorphic to $M\times \mathbb{C}^n$ by
$$
M\times \mathbb{C}^n\ni (p,\eta)\mapsto \sum_{k=1}^n\eta^k\left(\partial_{z^k}\right)_p\in TM
$$
$TTM$ is biholomorphic to $(M\times \mathbb{C}^n)\times (\mathbb{C}^n\times \mathbb{C}^n)$ by
\begin{equation}
\label{eq:trivialization_TTM}
(M\times \mathbb{C}^n)\times (\mathbb{C}^n\times \mathbb{C}^n)\ni (p,\eta,\alpha,\beta)\mapsto 
\sum_{k=1}^n\alpha^k\left(\partial_{z^k}\right)_u+\sum_{k=1}^n\beta^k\left(\partial_{\eta^k}\right)_u\in TTM
\end{equation}
for $\displaystyle u=\sum_{k=1}^n \eta^k\left(\partial_{z^k}\right)_p\in TM$.
Namely, in the trivialization \eqref{eq:trivialization_TTM}, a tangent vector $(p,\eta,\alpha,\beta)\in TTM=(M\times \mathbb{C}^n)\times (\mathbb{C}^n\times \mathbb{C}^n)$ is represented by a holomorphic disk
$$
\{|t|<\epsilon\}\ni t\mapsto (\chi_1(t),\chi_2(t))=(p+t\alpha+o(|t|),\eta+t\beta+o(|t|))\quad (t\to 0).
$$
in $TM= M\times \mathbb{C}^n$. Under these notation, a holomorphic polydisk
\begin{align*}
\{|t|<\epsilon\}\times \{|s|<\epsilon\}
\in (t,s)\mapsto 
&\chi(t,s)
=
\chi_1(t)+s\chi_2(t) \\
&=p+t\alpha+s(\eta+t\beta)+\beta't^2+\beta''s^2+o((|t|+|s|)^2)
\end{align*}
for some $\beta'$, $\beta''\in \mathbb{C}^n$ as $t,s\to 0$ in $M$ satisfies that
the partial derivative $\dfrac{\partial \chi}{\partial s}(t,0)$ at $(t,0)$ represents the tangent vector $\chi_2(t)$ at $\chi_1(t)\in M$.

\subsection{Double tangent spaces over Teichm\"uller spaces}
We use symbols defined in \S\ref{subsec:Bers_embedding} frequently. As \S\ref{subsec:Bers_embedding}, we fix $x_0=(M_0,f_0)\in \teich_g$, the Fuchsian group $\Gamma_0$ acting on $\mathbb{D}$ with $\mathbb{D}/\Gamma_0=M_0$.

We identify the Teichm\"uller space $\teich_g$ with a bounded domain $\Bers{x_0}\subset B_2$ via the Bers embedding $\Bersemb_{x_0}$. The tangent bundle $T\teich_g$ of $\teich_g$ is (holomorphically) isomorphic to a trivial holomorphic vector bundle $T\Bers{x_0}=\Bers{x_0}\times B_2\to \Bers{x_0}$ as we already discussed in \S\ref{subsec:trivialization_tangent_bundle_cotangent_bundle}.
Since the Ahlfors-Weill section $\ahlforsW{x_0}$ defined in \eqref{eq:Ahlfors-Weill-map} is $\mathbb{C}$-linear, after identifying $L^\infty_{(-1,1)}(M_0)\cong L^\infty(\mathbb{D},\Gamma_0)$,
\begin{equation}
\label{eq:Bers_tangent}
\ahlforsWhat{x_0}\colon
T_0\Bers{x_0}=B_2\ni \psi \mapsto [\ahlforsW{x_0}(\psi)]=\left.D\Bersproj_{x_0}\right|_0\circ \ahlforsW{x_0}(\psi)\in T_{x_0}\teich_g
\end{equation}
is the inverse of the differential $D\Bersemb_{x_0}|_{x_0}\colon T_{x_0}\teich_g\to T_0\Bers{x_0}$ of the Bers embedding $\Bersemb_{x_0}$ at $x_0\in \teich_g$.

Since $\Bersemb_{x_0}\colon \teich_g\to \Bers{x_0}$ is biholomorphic, the double tangent space $TT\teich_g$ is also a trivial holomorphic vector bundle which is isomorphic to the product spaces $TT\Bers{x_0}=(\Bers{x_0}\times B_2)\times (B_2\times B_2)$. 

Let
\begin{equation}
\label{eq:double_tangent_space-1}
(0, \psi_0,\dot{\varphi},\dot{\psi})\in TT\Bers{x_0}=(\Bers{x_0}\times B_2)\times (B_2\times B_2),
\end{equation}
where $(0, \psi_0)\in T_0\Bers{x_0}$ and $(\dot{\varphi},\dot{\psi})\in T_{(0,\psi_0)}T\Bers{x_0}$ and $\dot{\varphi}\in T_0\Bers{x_0}$. 

\subsection{Holomorphic families}
Consider a holomorphic map $\chi(t)=(\varphi_t,\psi_t)$ from $D_1=\{|t|<\epsilon_0\}$ to $T\Bers{x_0}=\Bers{x_0}\times B_2$ such that
\begin{equation}
\label{eq:definition_families}
\begin{cases}
\varphi_t=t\dot{\varphi}+o(|t|) \\
\psi_t=\psi_0+t\dot{\psi}+o(|t|)
\end{cases}
\end{equation}
as $t\to 0$. The map $\chi$ is a holomorphic disk tangent to $(\dot{\varphi},\dot{\psi})\in T_{(0,\psi_0)}T\Bers{x_0}$ at the origin $0\in D_1$.

Let $D_2=\{|s|<\epsilon_0\}$. For $(t,s)\in D=D_1\times D_2$,
\begin{equation}
\label{eq:mu-teichmuller}
\mu(t,s)(z)=\ahlforsW{x_0}(\varphi_t+s\psi_t)=-\dfrac{1}{2\overline{z}^4}(|z|^2-1)^2\left(\varphi_t(1/\overline{z})+s\psi_t(1/\overline{z})\right)
\end{equation}
is regarded as a (real analytic) $(-1,1)$-form on $M_0$ (cf. \eqref{eq:Ahlfors-Weill-map}).
When $\epsilon_0$ is sufficiently small, $\|\mu(t,s)\|_\infty<1$ for $(t,s)\in D$. Hence, the bundle $\mathcal{M}=\cup_{(t,s)}M_{t,s}$ of the $\mu(t,s)$-quasiconformal deformations $M_{t,s}$ of $M_0$ defines a holomorphic family $\mathcal{M}\to D$ of Riemann surfaces of genus $g$. 

For $(t,s)\in D$, let $\mu_i(t,s)\in L^\infty(\mathbb{D})$ be the representation of $\mu(t,s)$ via the chart $(U_i,z^0_i)$. 
Let $W_i^{t,s}$ be a quasiconformal mapping on $z^0_i(U_i)=\mathbb{D}$ such that 
\begin{itemize}
\item[(i)]
$\overline{\partial}W_i^{t,s}=\mu_i(t,s)\partial W_i^{t,s}$ on $\mathbb{D}$; and
\item[(ii)]
each $W^{t,s}_i$ ($i\in I$, $(t,s)\in D$) is normalized in the sense that $W^{t,s}_i$ is extended to a quasiconformal mapping on $\mathbb{C}$ with $W^{t,s}_i(z)=z+o(1)$ as $z\to \infty$.
\end{itemize}
We shrink $U_i$ so that each $\mu_i(t,s)$ is smooth on the closure of $\mathbb{D}$.
Let $z^{t,s}_i=W_i^{t,s}\circ z^{0}_i$ and define $\hat{z}_i\colon U_i\times D\to \mathbb{C}\times D$ by $\hat{z}_i(0,t,s)=(z^{t,s}(p),t,s)$. Then, $\{U_i\times D, \hat{z}_i\}$ is a complex analytic chart on $M_0\times D$, which is total space of the holomorphic family of Riemann surfaces of genus $g$ which isomorphic to $\mathcal{M}\to D$
(cf. \cite{MR0430318}).

\subsection{Double tangent space and its model}
From the discussion in \S\ref{subsec:cochain}, we define the cochains $X^t$, $Y^s$, $\dot{Y}$ and $\xi^0$, $\eta^0$, $\dot{\xi}\in C^0(\mathcal{U},\mathcal{A}^{0,0}(\Theta_{M_0}))$
from the coordinate system defined in the previous section. By definition
$$
\delta\xi^0 = X \quad \mbox{\and}\quad
\delta\eta^0 = Y.
$$
Notice from \eqref{eq:identify_H1_T_g} that $[X]$, $[Y]\in H^1(M_0,\Theta_{M_0})$ naturally correspond to $\dot{\varphi}$, $\psi_0\in B_2$ under the isomorphism $B_2=T_0\Bers{x_0}\cong H^1(M_0,\Theta_{M_0})$.

\begin{theorem}[Double tangent space and its model]
\label{thm:main1}
Under the above symbols, the map
\begin{equation}
\label{eq:main_map}
\isomorphism_{[Y]}^{x_0}\colon T_{[Y]}T\teich_g\cong T_{(0,\psi_0)}T\Bers{x_0}= B_2\times B_2\ni (\dot{\varphi},\dot{\psi})\mapsto \tv{X,\dot{Y}}{Y}\in \mathbb{T}_{Y}[\mathcal{U}]
\end{equation}
is a $\mathbb{C}$-linear isomorphism.

Furthermore, when we choose the Ahlfors-Weill guiding frame $\GuaidF=\ahlforsW{x_0}\circ D\Bersemb_{x_0}|_{x_0}\circ \mathscr{T}_{x_0}$ (discussed in \eqref{eq:ahlfors_weill-frame}) as the guiding frame for the trivializations, the isomorphism $\isomorphism_{[Y]}^{x_0}$ satisfies the following commutative diagram:
\begin{equation}
\label{eq:commutative_DT}
\begin{CD}
T_{(0,\psi_0)}T\Bers{x_0} @>{\isomorphism_{[Y]}^{x_0}}>>\mathbb{T}_{Y}[\mathcal{U}] 
@<{\connectinghomo}<< 
\mathbb{T}^{Dol}_Y
\\
@VV{\ahlforsWhat{x_0}\oplus \ahlforsWhat{x_0}}V @VV{{\trivialization_{Y}}}V 
@VV{\trivializationD_Y}V \\
(T_{x_0}\teich_g)^{\oplus 2}
@<{\mathscr{T}_{x_0}\oplus \mathscr{T}_{x_0}}<< H^1(\mathcal{U},\Theta_{M_0})^{\oplus 2}
@>{\mathscr{T}_{x_0}\oplus \mathscr{T}_{x_0}}>> (T_{x_0}\teich_g)^{\oplus 2}
\end{CD}
\end{equation}
(cf. See \Cref{remark:1}).
\end{theorem}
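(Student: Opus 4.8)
The plan is to show that the map $\isomorphism_{[Y]}^{x_0}$ in \eqref{eq:main_map} is well-defined, linear, and bijective by realizing both sides through the \emph{same} holomorphic family $\mathcal{M}\to D$, and then to read off the commutative diagram \eqref{eq:commutative_DT} by tracking a representative through the two trivializations $\trivialization_Y$ and $\trivializationD_Y$ already constructed in \Cref{thm:trivialization} and \Cref{thm:trivialization_Dol}. First I would verify that the recipe is well-defined: given $(\dot\varphi,\dot\psi)\in T_{(0,\psi_0)}T\Bers{x_0}$, choose the holomorphic disk $\chi(t)=(\varphi_t,\psi_t)$ as in \eqref{eq:definition_families}, build the Beltrami form $\mu(t,s)$ by \eqref{eq:mu-teichmuller}, and extract the cochains $X=\{X_{ij}\}$, $Y=\{Y_{ij}\}$, $\dot Y=\{\dot Y_{ij}\}$ following \S\ref{subsec:cochain}. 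By \Cref{prop:cocycle_conditions}, $(X,\dot Y)\in\ker(D^Y_1)$, so $\tv{X,\dot Y}{Y}$ defines a class in $\mathbb{T}_Y[\mathcal{U}]$. The independence of the choice of the disk $\chi$ realizing $(\dot\varphi,\dot\psi)$, and of the choice of the normalized quasiconformal maps $W_i^{t,s}$ (hence the charts $z_i^{t,s}$), must be checked: two such choices differ by a cochain of the form $D^Y_0(\alpha,\beta)$ (a change of $0$-cochain trivialization together with a change of the vector field $\xi^0$), so they give the same class in $\mathbb{T}_Y[\mathcal{U}]$. Linearity is immediate from \Cref{remark:sum_double_tangent_space} once one notes that the first-order data $X,\dot Y$ depend linearly on $(\dot\varphi,\dot\psi)$ through the linear Ahlfors-Weill map and differentiation.

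Next I would establish bijectivity by a dimension count plus injectivity, or more cleanly by exhibiting the diagram \eqref{eq:commutative_DT} and invoking that all other arrows there are isomorphisms. The key computation is: with the Ahlfors-Weill guiding frame $\GuaidF=\ahlforsW{x_0}\circ D\Bersemb_{x_0}|_{x_0}\circ \mathscr{T}_{x_0}$, one has $\GuaidF([X])=\ahlforsW{x_0}(\dot\varphi)=-\overline\partial\xi^0_i$ on $U_i$ by the very construction of the charts (here $\xi^0$ is precisely the cochain $\xi^s|_{s=0}$ of \S\ref{subsec:Notation_2}, whose $\overline\partial$ recovers $\mu(t,0)$ to first order in $t$). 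Thus $\xi^0=\GoodS(X)$ for the good section $\GoodS=\GoodS^{\GuaidF}$ of \Cref{prop:linear-map-L}, by the uniqueness clause of that proposition. Similarly $\eta^0=\GoodS(Y)$. Then $\trivialization_Y(\tv{X,\dot Y}{Y})=([X],[\dot Y+\tfrac12[X,Y]-E(X,Y)])$, and using \eqref{eq:Yij_xi} and \eqref{eq:eta_xi} of \Cref{prop:dot_X_Y} together with the explicit form \eqref{eq:map_E_1} of $E$, one computes that $\dot Y+\tfrac12[X,Y]-E(X,Y)$ is cohomologous to $\delta$ of a $0$-cochain whose $\overline\partial$ is $-\ahlforsW{x_0}(\dot\psi)$ (up to a global holomorphic vector field, i.e. zero). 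Hence $(\mathscr{T}_{x_0}\oplus\mathscr{T}_{x_0})\circ\trivialization_Y\circ\isomorphism_{[Y]}^{x_0}(\dot\varphi,\dot\psi)=([\ahlforsW{x_0}(\dot\varphi)],[\ahlforsW{x_0}(\dot\psi)])=(\ahlforsWhat{x_0}(\dot\varphi),\ahlforsWhat{x_0}(\dot\psi))$, which is exactly the left square of \eqref{eq:commutative_DT}. The right square follows from \Cref{thm:trivialization_Dol} since $\connectinghomo$ and $\trivializationD_Y$ are already known to be compatible with $\trivialization_Y$ via $\mathscr{T}_{x_0}\oplus\mathscr{T}_{x_0}$. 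Since $\ahlforsWhat{x_0}\oplus\ahlforsWhat{x_0}$, $\mathscr{T}_{x_0}\oplus\mathscr{T}_{x_0}$, and $\trivialization_Y$ are all isomorphisms, $\isomorphism_{[Y]}^{x_0}$ is too.

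The main obstacle, I expect, is the bookkeeping in the middle step: showing that the second coordinate of $\trivialization_Y\circ\isomorphism_{[Y]}^{x_0}(\dot\varphi,\dot\psi)$ equals $\mathscr{T}_{x_0}^{-1}(\ahlforsWhat{x_0}(\dot\psi))$. This requires carefully matching the antiholomorphic-derivative relations \eqref{eq:dot_Y_ij}, \eqref{eq:dot_X_ij}, \eqref{eq:Xij-Yij}, \eqref{eq:Yij_xi}, \eqref{eq:eta_xi} for the cochains $\xi^0,\eta^0,\dot\xi,\dot Y$ against the definition \eqref{eq:map_E_1} of $E(X,Y)$ in terms of $\GoodS$ and the auxiliary cochain $\sigma$ solving $\overline\partial\sigma_i=-[\GuaidF([X]),\GoodS(Y)_i]$; the cancellations are exactly those appearing in the proof of \Cref{thm:trivialization_Dol} around the equation for $\hat\nu$, so the computation is essentially a re-run of that one with the concrete cochains coming from the family. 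A secondary subtlety is the regularity of $\xi^0_i$, $\eta^0_i$ in the parameters: as noted in \Cref{remark:2} these need not depend holomorphically on $s,t$, but only the first-order expansions are used, and by shrinking $D$ and the $U_i$ one may assume the $\mu_i(t,s)$ are smooth up to the boundary so that the $W_i^{t,s}$ are smooth; hence all cochains entering the argument lie in $C^0(\mathcal{U},\mathcal{A}^{0,0}(\Theta_{M_0}))$ as required, and the pairing/Green-theorem machinery of \S\ref{sec:remark_smoothness} is not even needed here. Finally, I would remark that independence of the whole construction from the chosen covering $\mathcal{U}$ (with $\overline{U_i}$ a smooth closed disk) follows from the direct-limit formalism of \Cref{lem:commute_refinement} and \Cref{lem:injectivity_refinement}, giving the passage to $\mathbb{T}_{[Y]}$ and hence \Cref{thm:main2}.
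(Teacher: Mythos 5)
Your proposal is correct and follows essentially the same route as the paper: the same construction of the cochains $X,Y,\dot Y$ from the holomorphic family built via the Ahlfors--Weill section, the same identification $\xi^0=\GoodS(X)$, $\eta^0=\GoodS(Y)$ forced by the uniqueness clause of \Cref{prop:linear-map-L} for the Ahlfors--Weill guiding frame, and the same cancellation computation for the second coordinate (the $\hat{\nu}=\mu^3_i$ calculation of \Cref{thm:trivialization_Dol} re-run with the concrete cochains $-T(\mu^1_i)$, $-T(\mu^2_i)$, $\dot{\xi}_i$).

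The one organizational difference is how bijectivity is obtained. The paper proves injectivity of $\isomorphism_{[Y]}^{x_0}$ directly, by passing to the Dolbeault presentation and invoking \Cref{prop:equivalence_class} to extract global vector fields $\alpha,\beta$ from a trivial image, concluding $\dot{\varphi}=0$ and then $\dot{\psi}=0$, and then finishes with a dimension count; the commutative diagram is verified afterwards. You instead establish the diagram first and read off bijectivity from the fact that $\ahlforsWhat{x_0}\oplus\ahlforsWhat{x_0}$, $\mathscr{T}_{x_0}\oplus\mathscr{T}_{x_0}$ and $\trivialization_Y$ are already known to be isomorphisms. This is logically sound and slightly more economical, since the injectivity argument becomes redundant once the diagram is in hand; the trade-off is that you must carry out the full diagram computation even to get the first assertion of the theorem, whereas the paper's injectivity argument is self-contained and does not depend on the choice of guiding frame. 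Your remarks on well-definedness (independence of the choice of disk $\chi$ and of the normalization of $W_i^{t,s}$) are a point the paper handles only implicitly, via the observation that the explicit formulas for $X$, $Y$, $\dot{Y}$ depend only on $\mu^1,\mu^2,\mu^3$, i.e.\ on $(\dot{\varphi},\psi_0,\dot{\psi})$; your proposed reduction to a difference of the form $D^Y_0(\alpha,\beta)$ is not actually needed.
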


\begin{proof}
We set
$$
\mu_i(t,s)=t\mu_i^1+s\mu^2_i+ts\mu^3_i+t^2 \mu^4_i+s^2\mu^5_i+o((|t|+|s|)^2)
$$
as $|t|+|s|\to 0$, 
where $\mu_i^1$, $\mu_i^2$ and $\mu_i^3$ are the presentations in the coordinate chart $(U_i,z^{0}_i)$ of $\ahlforsW{x_0}(\dot{\varphi})$, $\ahlforsW{x_0}(\psi_0)$ and $\ahlforsW{x_0}(\dot{\psi})$. $\mu^4_i$ and $\mu^5_i$ are some smooth Beltrami differentals.

From the condition (ii) of $W^{t,s}_i$, 
\begin{align}
W^{t,s}_i(z)
&=z+T(\mu_i(t,s))+T\left(\mu_i(t,s)H(\mu_i(t,s))\right)+o((|t|+|s|)^2) 
\label{eq:W-expantion}
\\
&=z+tT(\mu_i^1)+sT(\mu_i^2)
+t^2\left(T(\mu^4_i)+T(\mu_i^1 H(\mu_i^1)\right) 
\nonumber\\
&\quad +ts \left(T(\mu^3_i)+T(\mu^1_i H(\mu^2_i))+T(\mu^2_i H(\mu^1_i))\right)
\nonumber\\
&\qquad +s^2\left(T(\mu^5_i)+T(\mu_i^2 H(\mu_i^2)))+o((|t|+|s|)^2\right)
\nonumber
\end{align}
as $|t|+|s|\to 0$, where $T$ and $H$ are defined by
\begin{align}
T(\omega)(z)&=-\dfrac{1}{\pi}\iint_{\mathbb{C}}\dfrac{\omega(\zeta)}{\zeta-z}
\frac{i}{2}d\zeta\wedge d\overline{\zeta}
\label{eq:operator_T} \\
H(\omega)(z)&=-\dfrac{1}{\pi}\iint_{\mathbb{C}}\dfrac{\omega(\zeta)}{(\zeta-z)^2}
\frac{i}{2}d\zeta\wedge d\overline{\zeta}
\label{eq:operator_H}
\end{align}
for $\omega\in L^p(\mathbb{C})$ ($1<p<\infty$) on $\mathbb{C}$ (cf. \cite[Theorem 4.3]{MR867407}), and $H(\omega)$ is defined as the Cauchy principal value.  It is known that for a $C^\infty$-function $\omega$ on the closure of a bounded domain $D$ with smooth boundary (and set $\omega\equiv 0$ for $\mathbb{C}\setminus \overline{D}$), 
$T(\omega)$ and $H(\omega)$ are $C^\infty$ on $D$ and satisfy
\begin{equation}
\label{eq:proof_main_4}
T(\omega)_{\overline{z}}=\omega, \
T(\omega)_z=H(\omega), 
\
\mbox{and}
\
H(\omega)_{\overline{z}}=\omega_z
\end{equation}
(see \S\ref{appendix:regularity}. See also\cite[p.26]{MR867407}, \cite[p.160]{MR0344463}). 

In our case,
\begin{equation}
\label{eq:proof_main_3}
\xi^0_i=-T(\mu^1_i)\quad \mbox{and}\quad \eta^0_i=-T(\mu^2_i)
\end{equation}
on $U^0_i$ for $i\in I$ from \eqref{eq:W-expantion}.

From the definition (cf. \S\ref{subsec:Notation_2}), 
$\xi^s_i\circ W_i^{0,s}=-\left.\dfrac{\partial W_i^{t,s}}{\partial t}\right|_{t=0}$, and
\begin{align*}
\left.\dfrac{\partial^2W_i^{t,s}}{\partial s\partial t}
\right|_{t=0,s=0}(z)
&=-\dot{\xi}_i(z)+(T(\mu_i^1))_z(z)T(\mu^2_i)(z) \\
&=-\dot{\xi}_i(z)+H(\mu_i^1)(z)T(\mu^2_i)(z)
\end{align*}
for $z\in U^0_i$ since $W_i^{t,s}(z)$ varies holomorphically in $t,s$ when $z\in U^0_i$ is fixed.
Hence, from \eqref{eq:W-expantion},
$$
\dot{\xi}_i=-\left(T(\mu^3_i)+T(\mu^1_i H(\mu^2_i))+T(\mu^2_i H(\mu^1_i))-H(\mu_i^1)T(\mu^2_i)\right)
$$
on $U^0_i$. From \Cref{prop:cocycle_conditions} and \eqref{eq:Yij_xi},
$$
T_{(0,\psi_i)}T\Bers{x_0}=B_2\oplus B_2\ni (\dot{\varphi}, \dot{\psi})\mapsto 
(X,\dot{Y})\in \ker(D^Y_1)
$$
is a $\mathbb{C}$-linear which descends to $\isomorphism_{[Y]}^{x_0}$ in \eqref{eq:main_map}, and hence the map $\isomorphism_{[Y]}^{x_0}$ is also $\mathbb{C}$-linear.

We will show that \eqref{eq:main_map} is an isomorphism. 
We use the Dolbeault type presentation, and continue to discuss with the notation given above. 
Suppose $\isomorphism_{[Y]}^{x_0}(\dot{\varphi},\dot{\psi})=0$ in $\mathbb{T}_{Y}$. From \Cref{thm:trivialization_Dol}, 
\begin{equation}
\label{eq:proof-main_thm_1}
\connectinghomo^{-1}\circ \isomorphism_{[Y]}^{x_0}(\dot{\varphi},\dot{\psi})
=\tvD{-\overline{\partial}\xi, -\overline{\partial}\dot{\xi}}{Y}
\end{equation}
is trivial. Since $\eta^0_i=-T(\mu^2_i)$ for $i\in I$ and $\delta\eta^0=Y$, 
from \Cref{prop:equivalence_class}, there are $\alpha$, $\beta\in \Gamma(M_0,\mathcal{A}^{0,0}(\Theta_{M_0}))$ such that 
\begin{align}
\overline{\partial}\xi_i &= \overline{\partial}\alpha
\label{eq:dol-1} \\
\overline{\partial}\dot{\xi}_i 
&=-\overline{\partial}[\alpha, \eta^0_i]-\overline{\partial}\beta,
\label{eq:dol-2}
\end{align}
where
\begin{align}
\overline{\partial}\dot{\xi}_i 
&= 
-\overline{\partial}
\left(
T(\mu^3_i)+T(\mu^1_i H(\mu^2_i))+T(\mu^2_i H(\mu^1_i))-H(\mu_i^1)T(\mu^2_i)
\right)
\label{eq:dol-3}
\\
&= 
-\mu^3_i-\mu^1_i (T(\mu^2_i))_z-\mu^2_i H(\mu^1_i)+(\mu_i^1)_zT(\mu^2_i)+H(\mu_i^1)\mu^2_i
\nonumber
\\
&= 
-\mu^3_i+[\mu^1_i,T(\mu^2_i)]
\nonumber
\end{align}
on $U_i$ (cf. \eqref{eq:proof_main_4}).

From \eqref{eq:proof_main_3}, \eqref{eq:dol-1} implies that $\ahlforsW{x_0}(\dot{\varphi})=-\overline{\partial}\alpha$, which means that $\ahlforsW{x_0}(\dot{\varphi})$ is infinitesimally trivial by \eqref{eq:Dou}. Since $\ahlforsWhat{x_0}$ in \eqref{eq:Bers_tangent} is isomorphism, we obtain $\dot{\varphi}=0$ (and hence $\mu^1_i=0$ for all $i\in I$). Therefore, $\overline{\partial}\alpha=0$, that is, $\alpha$ is a holomorphic vector field on $M_0$, which also implies that $\alpha=0$.
Thus, \eqref{eq:dol-2} and \eqref{eq:dol-3} lead
$$
\mu^3_i=\overline{\partial}\beta
$$
on $U^0_i$. Since $\mu^3_i$ is the presentation of $\ahlforsW{x_0}(\dot{\psi})$ on $U_i$, applying the above discussion, we deduce that $\dot{\psi}=0$. As a consequence, $\isomorphism_{[Y]}^{x_0}$ is injective. Since the dimensions of $T_{(0,\psi_0)}T\Bers{x_0}$ and $\mathbb{T}_{Y}$ are equal, $\isomorphism_{[Y]}^{x_0}$ is an isomorphism.

Next, we will show that the diagram commutes. To show this, we shall check that
\begin{equation}
\label{eq:proof_main_2}
\trivializationD_Y\circ \connectinghomo^{-1}\circ \isomorphism_{[Y]}^{x_0}=\ahlforsWhat{x_0}\oplus \ahlforsWhat{x_0}
\end{equation}
on $T_{(0,\psi_0)}T\Bers{x_0}=B_2\oplus B^2$ since $\trivializationD_Y\circ \connectinghomo^{-1}=(\mathscr{T}_{x_0}\oplus \mathscr{T}_{x_0})\circ \trivialization_{Y}$ from \Cref{thm:trivialization_Dol}. 

Let $\GoodS$ be the good section for the coboundary operator $\delta$ defined from the Ahlfors-Weill guiding frame $\GuaidF=\ahlforsW{x_0}\circ D\Bersemb_{x_0}|_{x_0}\circ \mathscr{T}_{x_0}$ (cf. \Cref{prop:linear-map-L}). From the definition,
$$
\GoodS(X)_i=-T(\GuaidF([X]))+\alpha_i
$$
for $X\in Z^1(\mathcal{U},\Theta_{M_0})$ and some $\alpha\in C^0(\mathcal{U},\Theta_{M_0})$ (depending on $X$). Since $X=\delta \xi^0$, $Y=\delta \eta^0$, $\GuaidF([X])=\ahlforsW{x_0}(\dot{\varphi})$ and $\GuaidF([Y])=\ahlforsW{x_0}(\psi_0)$,
we deduce
\begin{align*}
\GoodS(X)_i&=\xi^0_i=-T(\mu^1_i) \\
\GoodS(Y)_i&=\eta^0_i=-T(\mu^2_i)
\end{align*}
on $U_i$. 
From the definition of $\trivializationD_Y$ in \eqref{eq:definition_Xi} and \eqref{eq:proof-main_thm_1}, the Beltrami differential in the first coordinate of $\trivializationD_Y\circ \connectinghomo^{-1}\circ \isomorphism_{[Y]}^{x_0}(\dot{\varphi}, \dot{\psi})$ satisfies
$$
\overline{\partial}\GoodS(X)_i=-\mu^1_i
$$
on $U_i$, which implies that the first coordinate is equal to $\ahlforsW{x_0}(\dot{\varphi})$.

Notice that the first coordinate of $\trivializationD_Y\circ \connectinghomo^{-1}\circ \isomorphism_{[Y]}^{x_0}(\dot{\varphi}, \dot{\psi})$ satisfies $\GuaidF([X])=\ahlforsW{x_0}(\dot{\varphi})$,
Hence, from \eqref{eq:definition_hat_nu}, the second coordinate $\hat{\nu}$ of $\trivializationD_Y\circ \connectinghomo^{-1}\circ \isomorphism_{[Y]}^{x_0}(\dot{\varphi},\dot{\psi})$ is defined by
\begin{equation}
\label{eq:mu3}
\hat{\nu}
=-\overline{\partial}\dot{\xi}_i+[\GuaidF([X]),\GoodS(Y)_i]
=\mu^3_i-[\mu^1_i,T(\mu^2_i)]+[\mu^1_i, T(\mu^2_i)]=\mu^3_i
\end{equation}
on $U_i$. This means that the second coordinate $\hat{\nu}$ is equal to $\ahlforsW{x_0}(\dot{\psi})$. Thus \eqref{eq:proof_main_2} is confirmed.
\end{proof}

\section{Summary}
Suppose a locally finite covering $\mathcal{U}=\{U_i\}_{i\in I}$ of $M_0$ satisfies that each $U_i$ is the interior of closed topological disk with smooth boundary.
Since $H^1(U_i,\Theta_{M_0})=0$ for all $i\in I$, from the discussion in \S\ref{subsec:structure_direct_limit},
$$
\dim_{\mathbb{C}} \mathbb{T}_{[Y]}=6g-6,
$$
and the last arrows in the two horizontal sequences in \eqref{eq:direct_limit-exact1} are surjective.
Hence, the isomorphism $\isomorphism_{[Y]}^{x_0}\colon T_{[Y]}T\teich_g\cong B_2\times B_2\to T_{Y}[\mathcal{U}]$ given in  \Cref{thm:main1} induces a linear isomorphism
$$
\check{\isomorphism}_{[Y]}^{x_0}\colon T_{[Y]}T\teich_g\to \mathbb{T}_{[Y]}.
$$
In summary, we conclude the following.

\begin{theorem}[Double tangent space and its models]
\label{thm:main2}
Under the above notation,
the following diagram is commutative and all horizontal sequences of the diagram are exact:
\begin{equation}
\label{eq:direct_limit-exact2}
\minCDarrowwidth8pt
\begin{CD}
H^1(\mathcal{U},\Theta_{M_0})
@<{\verticalincmodel{Y}}<{\cong}<
\mathbb{T}^V_{Y}[\mathcal{U}] 
@>{inc}>>
\mathbb{T}_{Y}[\mathcal{U}] 
@>{onto}>>
H^1(\mathcal{U},\Theta_{M_0})
\\
@|
@VV{\cong}V
@VV{\cong}V
@|
\\
H^1(\mathcal{U},\Theta_{M_0})
@<{\verticalincmodel{[Y]}}<{\cong}<
\mathbb{T}^V_{[Y]}[\mathcal{U}] 
@>{inc}>>
\mathbb{T}_{[Y]}[\mathcal{U}] 
@>{onto}>>
H^1(\mathcal{U},\Theta_{M_0})
\\
@V{\Pi^{\mathcal{U}}}V{\cong}V
@V{\check{\Pi}^{\mathcal{U}}}V{\cong}V
@V{\check{\Pi}^{\mathcal{U}}}V{\cong}V
@V{\Pi^{\mathcal{U}}}V{\cong}V
\\
H^1(M_0,\Theta_{M_0})
@<{\verticalincmodel{[Y]}}<{\cong}<
\mathbb{T}^V_{[Y]}
@>{inc}>> 
\mathbb{T}_{[Y]}
@>{onto}>>
H^1(M_0,\Theta_{M_0})
\\
@V{{\mathscr{T}_{x_0}}}V{\cong}V
@.
@A{\check{\isomorphism}_{[Y]}^{x_0}}A{\cong}A
@V{{\mathscr{T}_{x_0}}}V{\cong}V
\\
T_{x_0}\teich_g 
@<{\verticalincmodel{[Y]}}<{\cong}<
T_{[Y]}T_{x_0}\teich_g 
@>{inc}>>
T_{[Y]}T\teich_g
@>{onto}>>
T_{x_0}\teich_g,
\end{CD}
\end{equation}
where $\verticalincmodel{[Y]}$ is the vertical projection (cf. \eqref{eq:vertical_projection}), and ``inc" means the inclusion.
\end{theorem}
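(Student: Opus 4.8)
The plan is to assemble \Cref{thm:main2} from the pieces that are, by this point, all in place: the vertical-space isomorphisms, the refinement isomorphisms from Chapter~\ref{chap:direct_limits}, the trivializations of \S\ref{sec:trivialization_of_model_space}, and \Cref{thm:main1}. Essentially nothing new needs to be proved; the work is to check that the squares in \eqref{eq:direct_limit-exact2} commute and that the indicated maps are isomorphisms under the standing hypothesis on $\mathcal{U}$ (each $U_i$ an embedded closed topological disk with smooth boundary, hence $H^1(U_i,\Theta_{M_0})=0$ for all $i\in I$).

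First I would recall the top two rows. The exactness of
$$
\begin{CD}
0@>>> \mathbb{T}^V_{Y}[\mathcal{U}] @>{inc}>> \mathbb{T}_{Y}[\mathcal{U}] @>{onto}>> H^1(\mathcal{U},\Theta_{M_0}) @>>>0
\end{CD}
$$
together with $\verticalincmodel{Y}\colon \mathbb{T}^V_{Y}[\mathcal{U}]\xrightarrow{\cong} H^1(\mathcal{U},\Theta_{M_0})$ is \Cref{prop:vertical_space} and \Cref{prop:structure_T_Y_U} (using $H^1(U_i,\Theta_{M_0})=0$ for surjectivity of the last arrow); the surjectivity of $\mathbb{T}_Y[\mathcal{U}]\to H^1(\mathcal{U},\Theta_{M_0})$ follows from \eqref{eq:dimention_T_Y} via \Cref{thm:trivialization}. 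The identification of $\mathbb{T}^V_{Y}[\mathcal{U}]$ with $\mathbb{T}^V_{[Y]}[\mathcal{U}]$ and the commutativity of the square with $\verticalincmodel{[Y]}$ is \eqref{eq:commute_vertical_TYU} in \Cref{def:model_space}. The third row and its vertical isomorphisms $\Pi^{\mathcal{U}}$, $\check{\Pi}^{\mathcal{U}}$ come from \S\ref{subsec:inductice_limit_def}: $\Pi^{\mathcal{U}}$ is injective by \cite[Theorem~3.4]{MR815922}, $\check{\Pi}^{\mathcal{U}}$ is injective by \Cref{lem:injectivity_refinement}, and under the disk hypothesis both are surjective because source and target both have dimension $6g-6$ (\S\ref{subsec:structure_direct_limit}); commutativity of the squares is \eqref{eq:direct_limit-exact1}, which in turn rests on \Cref{lem:commute_refinement}. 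The exactness of the bottom row of \eqref{eq:direct_limit-exact1} transports the top-row exactness to the limit.

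Next I would glue in the geometric identification. \Cref{thm:main1} gives, for a covering $\mathcal{U}$ made of smooth closed disks, an isomorphism $\isomorphism^{x_0}_{[Y]}\colon T_{[Y]}T\teich_g\cong T_{(0,\psi_0)}T\Bers{x_0}\to \mathbb{T}_Y[\mathcal{U}]$, compatible with the trivializations via the Ahlfors--Weill guiding frame in the sense of the diagram \eqref{eq:commutative_DT}. Composing with $(\check{\Pi}^{\mathcal{U}})\circ(\text{the identification }\mathbb{T}_Y[\mathcal{U}]\cong\mathbb{T}_{[Y]}[\mathcal{U}])$ yields $\check{\isomorphism}^{x_0}_{[Y]}\colon T_{[Y]}T\teich_g\to \mathbb{T}_{[Y]}$, an isomorphism since each factor is. To see that the bottom-most square commutes --- i.e. that $\check{\isomorphism}^{x_0}_{[Y]}$ intertwines the vertical inclusion $T_{[Y]}T_{x_0}\teich_g\hookrightarrow T_{[Y]}T\teich_g$ with $\mathbb{T}^V_{[Y]}\hookrightarrow\mathbb{T}_{[Y]}$ and the differential of $\Pi_{\teich_g}$ with the projection onto $H^1(M_0,\Theta_{M_0})$ --- I would trace a vertical tangent vector through \eqref{eq:main_map}: a purely vertical direction corresponds to $\dot\varphi=0$, which by the cocycle formulas of \S\ref{subsec:cochain} forces $[X]=0$, i.e. $\tv{X,\dot Y}{Y}\in\mathbb{T}^V_Y[\mathcal{U}]$; conversely surjectivity onto $\mathbb{T}^V_{[Y]}$ is a dimension count. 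The identification of $D\Pi_{\teich_g}$ with the projection is then automatic from exactness. Finally, the compatibility of $\check{\isomorphism}^{x_0}_{[Y]}$ with $\mathscr{T}_{x_0}$ on the $H^1$-column is read off from \eqref{eq:commutative_DT} and the definition of $\mathscr{T}_{x_0}$ in \eqref{eq:identify_H1_T_g}.

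The only genuine subtlety --- and the step I expect to cost the most care --- is verifying that $\check{\isomorphism}^{x_0}_{[Y]}$ is \emph{well-defined independently of the chosen disk covering} $\mathcal{U}$, i.e. that the isomorphisms $\isomorphism^{x_0}_{[Y]}$ for different such $\mathcal{U}$ are compatible with the refinement maps $\check{\Pi}^{\mathcal{U}}_{\mathcal{V}}$. One cannot shrink disks to refine inside the class of disk-coverings directly; instead I would use that any two such coverings have a common refinement $\mathcal{W}$ (not necessarily disk-type), observe that $\check{\Pi}^{\mathcal{U}}_{\mathcal{W}}$ and $\check{\Pi}^{\mathcal{V}}_{\mathcal{W}}$ are still injective by \Cref{lem:injectivity_refinement}, and check on the level of cochains that the cocycles $(X,\dot Y)$ produced in \S\ref{subsec:cochain} from the \emph{same} holomorphic family restricted to $\mathcal{U}$ and to $\mathcal{W}$ differ exactly by the restriction map --- which is the content of the computation \eqref{eq:well-defined-refinement} in the proof of \Cref{lem:commute_refinement}. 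Granting this, $\check{\isomorphism}^{x_0}_{[Y]}$ is canonical, the whole diagram \eqref{eq:direct_limit-exact2} commutes, all rows are exact, and the theorem follows. I would keep the written proof short, citing \Cref{thm:main1}, \Cref{lem:commute_refinement}, \Cref{lem:injectivity_refinement}, \Cref{prop:structure_T_Y_U}, and diagrams \eqref{eq:commutative_DT} and \eqref{eq:direct_limit-exact1}, and relegating the routine square-chasing to a remark.
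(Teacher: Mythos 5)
Your proposal is correct and follows essentially the same route as the paper: the theorem is assembled from \Cref{thm:main1}, the exact sequences of \Cref{prop:structure_T_Y_U}, the refinement results \Cref{lem:commute_refinement} and \Cref{lem:injectivity_refinement}, the dimension count $\dim_{\mathbb{C}}\mathbb{T}_{[Y]}=6g-6$ from \S\ref{subsec:structure_direct_limit}, and the diagrams \eqref{eq:commute_vertical_TYU}, \eqref{eq:commutative_DT}, and \eqref{eq:direct_limit-exact1}. Your extra check that $\check{\isomorphism}^{x_0}_{[Y]}$ is independent of the chosen disk covering is a point the paper passes over silently (it simply composes $\isomorphism^{x_0}_{[Y]}$ with the isomorphism $\check{\Pi}^{\mathcal{U}}$ into the direct limit), and your sketch of it via common refinements and \eqref{eq:well-defined-refinement} is a reasonable way to make that step explicit.
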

The first horizontal sequence in \eqref{eq:direct_limit-exact2} follows from \eqref{eq:commute_vertical_TYU}.
From \Cref{thm:Dol_direct_limit} and \eqref{eq:commute_Dol}, we also have the following commutative diagram.

\begin{equation}
\label{eq:direct_limit-exact3}
\minCDarrowwidth8pt
\begin{CD}
T_{x_0}\teich_g
@<{\verticalincmodelDol{[Y]}}<{\cong}<
\mathbb{T}^{Dol,V}_{[Y]}
@>{inc}>>
\mathbb{T}^{Dol}_{[Y]}
@>{onto}>>
T_{x_0}\teich_g
\\
@A{{\mathscr{T}_{x_0}}}A{\cong}A
@V{\connectinghomo}V{\cong}V
@V{\connectinghomo}V{\cong}V
@A{{\mathscr{T}_{x_0}}}A{\cong}A
\\
H^1(M_0,\Theta_{M_0})
@<{\verticalincmodel{[Y]}}<{\cong}<
\mathbb{T}^V_{[Y]}
@>{inc}>> 
\mathbb{T}_{[Y]}
@>{onto}>>
H^1(M_0,\Theta_{M_0})
\\
@V{{\mathscr{T}_{x_0}}}V{\cong}V
@.
@A{\check{\isomorphism}_{[Y]}^{x_0}}A{\cong}A
@V{{\mathscr{T}_{x_0}}}V{\cong}V
\\
T_{x_0}\teich_g 
@<{\verticalincmodel{[Y]}}<{\cong}<
T_{[Y]}T_{x_0}\teich_g 
@>{inc}>>
T_{[Y]}T\teich_g
@>{onto}>>
T_{x_0}\teich_g,
\end{CD}
\end{equation}
where each horizontal line is exact.
We also obtain the same diagram for $\mathbb{T}^{Bel}_{[\nu]}$.

\chapter{Variational formula of the pairing function}
\label{chap:variation_formula_pairing}
In this chapter, we shall give a variational formula for the pairing between $T\teich_g$ and $T^*\teich_g\cong \mathcal{Q}_g$.
As given in \S\ref{eq:pairing_Teichmuller}, the pairing
\begin{equation}
\label{eq:pairing_as_function}
T\teich_g\oplus\mathcal{Q}_{g}\ni (v,q)\mapsto 
\pairingsymb(v,q):=\langle v,q\rangle
\end{equation}
is a canonical pairing defined on the total space of the Whitney sum
$$
T\teich_g\oplus\mathcal{Q}_{g}=T\teich_g\oplus T^*\teich_g
=\{(v,q)\in T\teich_g\times T^*\teich_g\mid \pi_{\teich_g}(v)=\pi^*_{\teich_g}(q)\}
$$
of the (holomorphic) tangent bundle and the (holomorphic) cotangent bundle of the Teichm\"uller space, where $\pi^*_{\teich_g}\colon \mathcal{Q}_g\cong T^*\teich_g\to \teich_g$ is the projection. Notice that the total space of the Whitney sum is a complex submanifold of the product manifold $T\teich_g\times T^*\teich_g$, since the projections are submersions.
The pairing defines a holomorphic function on $T\teich_g\oplus T^*\teich_g$.

To this end, we first give a variational formula of the pairing function and a presentation of the variational formula by the model of the pairing in \S\ref{sec:differential_formula_for_the_pairing}. Then, using the variational formula, we will conclude that our model spaces are actually models of the second order infinitesimal spaces over the Teichm\"uller space in \S\ref{sec:trivialization_via_Bers_embedding}.


\section{Variations of $2$ forms}
A family $\{E_j\}_{j\in J}$ of subsets of $M_0$ is called a \emph{reasonable decomposition} of $M_0$ if each $E_i$ is the closure of a (topological) disk in $M_0$, the interiors of $E_{j_1}$ and $E_{j_2}$ are disjoint for distinct $j_1$, $j_2\in J$, $\cup_{j\in J}E_j=M_0$, and $M_0\setminus \cup_{j\in J}{\rm Int}(E_j)$ has measure zero. By definition, for any $2$-form $\Omega=\Omega(z)d\overline{z}\wedge dz$ on $M_0$, 
$$
\iint_{M_0}\Omega = \sum_{j\in J}\iint_{E_j}\Omega(z)d\overline{z}\wedge dz.
$$
By the Lebesgue theorem,
for any covering $\mathcal{U}=\{U_i\}_{i\in I}$ of $M_0$, there is a reasonable decomposition $\{E_j\}_{j\in J}$ of $M_0$ such that $E_j\subset U_{i(j)}$ for all $j\in J$ and some $i(j)\in I$.

From \S\ref{sec:Lie_derivative}, the Lie derivative of a smooth $2$-form $\Omega = \Omega(z)d\overline{z}\wedge dz$ along a smooth vector field $X=X(z)\partial_{z}$ is given by
$$
L_X\Omega=\left(\Omega_z(z)X(z)+\Omega(z)X_z(z)\right)d\overline{z}\wedge dz.
$$

\subsection{First derivative}
We use the notation on holomorphic families, families of local coordinates, and cocycles given in \S\ref{subsec:description_Hol} frequently. Let $\Omega^t=\{\Omega_i^t\}_{i\in I}$ be a smooth family of $2$-forms where $\Omega^t$ is a $2$-form on $M_t$. Let
\begin{align*}
\xi_i(z) & = -\left.\dfrac{\partial z_{i}^t}{\partial t}\right\mid_{t=0}\circ (z_{i}^0)^{-1}(z) \\
\Omega^t_i(z)&=\Omega^0_i(z)+t\dot{\Omega}_i(z)+\overline{t}\dot{\Omega}^*_i(z)+o(t)
\end{align*}
for $z\in z_i^0(U_i)$.
We can check that
\begin{align*}
\dot{\Omega}_j(z_{ij}(z))\left|\dfrac{dz_{ij}}{dz}(z)\right|^2-\dot{\Omega}_i(z)
&=L_{X_{ij}}(\Omega^0_i)(z)
\\
\dot{\Omega}^*_j(z_{ij}(z))\left|\dfrac{dz_{ij}}{dz}(z)\right|^2-\dot{\Omega}^*_i(z)
&=\overline{L_{X_{ij}}(\overline{\Omega^0_i})(z)}
\end{align*}
for $z\in z_i^0(U_i\cap U_j)$. Therefore,
$$
\dot{\Omega}_i-L_{\xi_i}(\Omega^0_i), \quad 
\dot{\Omega}^*_i-\overline{L_{\xi_i}(\overline{\Omega^0_i})}
$$
on $z_i(U_i)$ define well-defined $2$-forms on $M_0$.
The following is well-known. However, we give a proof for completeness.

\begin{proposition}
\label{prop:variation_two_forms}
Under the above notation, we have
\begin{align*}
\left.\partial_{t}\left(\iint_{M_t}\Omega_t\right)\right|_{t=0}
&=\iint_{M_0}(\dot{\Omega}_i(z)-L_{\xi_i}(\Omega^0_i)(z))d\overline{z}\wedge dz
\\
\left.\partial_{\overline{t}}\left(\iint_{M_t}\Omega_t\right)\right|_{t=0}
&=\iint_{M_0}(\dot{\Omega}^*_i(z)-\overline{L_{\xi_i}(\overline{\Omega^0_i})(z)})d\overline{z}\wedge dz.
\end{align*}
\end{proposition}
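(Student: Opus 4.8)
The plan is to reduce the global statement to a purely local computation on a single coordinate patch plus a cancellation argument when summing over a reasonable decomposition. First I would pick a reasonable decomposition $\{E_j\}_{j\in J}$ of $M_0$ subordinate to the covering $\mathcal{U}=\{U_i\}_{i\in I}$, so that for each $j$ there is $i(j)\in I$ with $E_j\subset U_{i(j)}$, and I would transport each $E_j$ through the family of charts $z_i^t$ to obtain a decomposition $\{E_j^t\}$ of $M_t$; since the interiors are disjoint and the complement has measure zero, $\iint_{M_t}\Omega_t=\sum_j \iint_{E_j^t}\Omega_i^t\,d\overline{z}\wedge dz$ where the integrand is written in the chart $(U_{i(j)},z_{i(j)}^t)$. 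The point is that the $t$-dependence now enters in two places: through $\Omega_i^t=\Omega_i^0+t\dot\Omega_i+\overline t\,\dot\Omega_i^*+o(t)$ and through the moving domain of integration $z_i^t(E_j)$; differentiating in $t$ at $t=0$ I would get a contribution $\iint_{E_j}\dot\Omega_i\,d\overline z\wedge dz$ from the integrand and a boundary contribution from the variation of the domain, the latter governed by the vector field generating $t\mapsto z_i^t\circ (z_i^0)^{-1}$, which by definition is $-\xi_i$ (note the sign convention in the definition of $\xi_i$).

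The second step is to identify the boundary term with $-\iint_{E_j}L_{\xi_i}(\Omega_i^0)\,d\overline z\wedge dz$. Here I would use the standard formula for the variation of an integral over a domain moving with velocity vector field $V$ (a transport/Reynolds-type identity): the derivative of $\iint_{\text{(moving domain)}}$ picks up $\iint \mathrm{div}$-type term which, for a $2$-form on a surface, is exactly $\iint L_V\Omega$ by Cartan's formula $L_V\Omega = d\iota_V\Omega$ together with Stokes; since the velocity of the domain $z_i^t(E_j)$ is $-\xi_i$ (as a vector field in the $z_i^0$-coordinate), this contributes $-\iint_{E_j}L_{\xi_i}(\Omega_i^0)\,d\overline z\wedge dz$, using the formula $L_X\Omega=(\Omega_z X+\Omega X_z)d\overline z\wedge dz$ recalled just before the proposition. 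Summing over $j$ gives $\partial_t\big(\iint_{M_t}\Omega_t\big)|_{t=0}=\sum_j\iint_{E_j}(\dot\Omega_i-L_{\xi_i}(\Omega_i^0))\,d\overline z\wedge dz$.

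The third step is to check that the summands glue into a globally defined $2$-form on $M_0$, so that the sum over $\{E_j\}$ is intrinsically $\iint_{M_0}(\dot\Omega_i-L_{\xi_i}(\Omega_i^0))\,d\overline z\wedge dz$ with no dependence on the chosen decomposition or charts. This is precisely the transformation identity stated just above the proposition, $\dot\Omega_j(z_{ij}(z))|dz_{ij}/dz|^2-\dot\Omega_i(z)=L_{X_{ij}}(\Omega_i^0)(z)$ together with the fact that $\xi_j-\xi_i=X_{ij}$ and the derivation property of the Lie derivative; combining these shows $\dot\Omega_i-L_{\xi_i}(\Omega_i^0)$ transforms as a $(1,1)$-form, i.e. is a well-defined $2$-form on $M_0$, which is exactly the remark the paper makes before the statement. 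The antiholomorphic derivative $\partial_{\overline t}$ is handled by the same argument applied to $\overline{\Omega_i^t}$ (or equivalently by noting that the $\overline t$-velocity of the domain is $-\overline{\xi_i}$ and using the conjugated transformation rule), yielding the conjugated formula with $\dot\Omega_i^*$ and $\overline{L_{\xi_i}(\overline{\Omega_i^0})}$. The main obstacle I expect is making the ``variation of a moving domain'' step rigorous with only the regularity at hand: one must justify differentiating under the integral sign and the boundary term computation when the decomposition pieces $E_j$ have merely piecewise-smooth boundary and the charts $z_i^t$ vary smoothly but the forms are only smooth; the cleanest fix is to take $E_j$ with smooth boundary (possible by the Lebesgue-type refinement quoted), apply Stokes on each $E_j$ where everything is $C^\infty$ up to the boundary, and observe the boundary integrals telescope/cancel along shared edges leaving only the interior term, exactly as in the standard proof that $\iint_{M_0}d\omega=0$.
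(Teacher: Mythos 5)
Your proposal is correct in outline, but it takes a genuinely different route from the paper. The paper does not use a moving-boundary/transport argument at all: after choosing the same reasonable decomposition $\{E_j\}$, it writes $W_i^t=z_i^t\circ(z_i^0)^{-1}(z)=z-t\xi_i(z)+o(t)$, notes that $W_i^t$ is quasiconformal so that $\{W_{i(j)}^t(E_j)\}$ is again a reasonable decomposition of $M_t$, and then simply changes variables back to the fixed domain $E_j$. The Jacobian $|(W_i^t)_z|^2-|(W_i^t)_{\overline z}|^2=1-t(\xi_i)_z-\overline{t}\,\overline{(\xi_i)_z}+o(t)$ combined with the chain-rule expansion of $\Omega^t(W_i^t(z))$ produces the coefficients $\dot\Omega_i-L_{\xi_i}(\Omega_i^0)$ and $\dot\Omega_i^*-\overline{L_{\xi_i}(\overline{\Omega_i^0})}$ directly, with no Stokes theorem, no boundary integrals, and no telescoping. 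What the paper's route buys is that all the regularity issues you worry about in your last step disappear: one only ever integrates over the fixed sets $E_j$ and differentiates a smooth integrand, so piecewise-smooth boundaries cause no trouble. What your route buys is conceptual transparency (the $-L_{\xi_i}$ term is visibly the infinitesimal drag of the domain), at the cost of having to justify the Reynolds-type identity on pieces with corners.

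One point in your sketch needs care before it is airtight. The transport theorem for a domain moving in \emph{real} time along the real velocity field whose $(1,0)$-part is $-\xi_i\partial_z$ contributes the \emph{full real} Lie derivative of the area form, which is $-(L_{\xi_i}(\Omega_i^0)+\overline{L_{\xi_i}(\overline{\Omega_i^0})})\,d\overline z\wedge dz$, not just the holomorphic piece $-L_{\xi_i}(\Omega_i^0)$ given by the displayed formula you cite. Since $t$ is a complex parameter and $\partial_t$, $\partial_{\overline t}$ are Wirtinger derivatives, you must split $t=t_1+it_2$, apply the transport identity separately in the $t_1$- and $t_2$-directions (velocities $-\xi_i$ and $-i\xi_i$ respectively), and recombine via $\partial_t=\tfrac12(\partial_{t_1}-i\partial_{t_2})$; the conjugate part $\overline{L_{\xi_i}(\overline{\Omega_i^0})}$ then cancels from the $\partial_t$-formula and survives precisely in the $\partial_{\overline t}$-formula, as the statement requires. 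With that bookkeeping made explicit, and with the gluing of $\dot\Omega_i-L_{\xi_i}(\Omega_i^0)$ into a global $2$-form handled exactly as you say, your argument goes through.
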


\begin{proof}
Let $\{E_j\}_{j\in J}$ be a reasonable decomposition of $M_0$ such that for each $j\in J$ there is an $i(j)\in I$ with $E_j\subset U_{i(j)}$. Let $W_i^t(z)=z^t_i\circ (z^0_i)^{-1}(z)=z-t\xi_i(z)+o(t)$ as $t\to 0$. Since $W_i^t$ defines a holomorphic motion, $W_t^t$ is a quasiconformal mapping. Hence, $\{W_{i(j)}^t(E_j)\}_{j\in J}$ is a reasonable deomposition on $M_t$ for sufficiently small $t$. Therefore,
$$
\iint_{M_t}\Omega_t=\sum_{j\in J}\iint_{W_{i(j)}^t(E_j)}\Omega^t_{i(j)}(w)d\overline{w}\wedge dw
$$
On the other hand,
\begin{align*}
&\iint_{W_{i(j)}^t(E_j)}\Omega^t_{i(j)}(w)d\overline{w}\wedge dw\\
&=\iint_{E_j}\Omega^t(W_{i(j)}^t(z))(|(W_{i(j)}^t)_z|^2-|(W_{i(j)}^t)_{\overline{z}}|^2)d\overline{z}\wedge dz
\\
&=\iint_{E_j}\Omega^0_{i(j)}(z)d\overline{z}\wedge dz \\
&\quad +t\iint_{E_j}\left(\dot{\Omega}_{i(j)}(z)-L_{\xi_i}(\Omega^0_{i(j)})(z)\right)d\overline{z}\wedge dz
\\
&\quad +\overline{t}\iint_{E_j}\left(\dot{\Omega}^*_{i(j)}(z)-
\overline{L_{\xi_i}(\overline{\Omega^0_{i(j)}})(z)}\right)d\overline{z}\wedge dz
+o(t),
\end{align*}
which implies what we wanted.
\end{proof}


\subsection{Notation}
\label{subsec:notation}
Let $F$ a differentiable mapping on a complex manifold $M$ around $p_0\in M$.
Let $V\in T^{1,0}_{p_0}M$ and $\{p(t)\}_{|t|<\epsilon}$ be an analytic (holomorphic) disk in $M$ with $p(0)=p_0$. We denote by
\begin{align*}
DF|_{p_0}[V] &=\dfrac{\partial F\circ p}{\partial t}(0) \\
\overline{D}F|_{p_0}[V] &=\dfrac{\partial F\circ p}{\partial \overline{t}}(0).
\end{align*}
The following lemma is well-known. However, we give a proof for a completeness.

\begin{lemma}
\label{lem:non-degenerate}
For a differentiable mapping $F\colon M\to \mathbb{C}^n$,
The differential of $F$ at $p_0\in M$ (as a real manifold) is non-generate if and only if $DF|_{p_0}[V]+\overline{D}F|_{p_0}[V]=0$ implies $V=0$ for $V\in T^{1,0}_{p_0}M$.
\end{lemma}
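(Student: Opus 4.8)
The plan is to unravel the definitions on both sides and reduce everything to a standard fact about real-linear maps between complexified tangent spaces. First I would fix a holomorphic coordinate chart $(U,z=(z_1,\dots,z_m))$ around $p_0$ on $M$ (here $m=\dim_{\mathbb C}M$) so that the real tangent space $\tilde T_{p_0}M$ is spanned by $\partial_{x_k},\partial_{y_k}$, and identify $T^{1,0}_{p_0}M$ with its image under $\proje^{10}$; recall from \S\ref{sec:tangent_cotangent_space} that a complexified real tangent vector $v$ decomposes uniquely as $v^{10}+v^{01}$ with $v^{01}=\overline{v^{10}}$ when $v$ is a genuine (real) tangent vector. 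The differential of $F$ as a map of real manifolds, $dF|_{p_0}\colon \tilde T_{p_0}M\to \tilde T_{F(p_0)}\mathbb C^n$, is $\mathbb R$-linear, and I would note that for a real tangent vector $v$ represented by an analytic disk $p(t)$ with $\dot p(0)^{10}=V\in T^{1,0}_{p_0}M$, the chain rule gives $dF|_{p_0}(v)=\tfrac{d}{dt}F(p(t))|_0 + \tfrac{d}{d\bar t}F(p(t))|_0 = DF|_{p_0}[V]+\overline{D}F|_{p_0}[V]$. This is the bridge between the ``real differential'' language and the holomorphic/antiholomorphic derivative notation of \S\ref{subsec:notation}.

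Next I would make precise the map $V\mapsto DF|_{p_0}[V]+\overline D F|_{p_0}[V]$. Writing $F=(F_1,\dots,F_n)$ with $F_\ell$ complex-valued and $C^1$, one has $DF_\ell|_{p_0}[V]=\sum_k (\partial_{z_k}F_\ell)(p_0)\,V_k$ (the $\partial$-derivative, where $V=\sum_k V_k\partial_{z_k}$) and $\overline D F_\ell|_{p_0}[V]=\sum_k (\partial_{\bar z_k}F_\ell)(p_0)\,V_k$ (the $\bar\partial$-derivative, still contracted against the holomorphic components $V_k$, since the disk moves holomorphically in $t$ but $F$ need not). So the map in question is the $\mathbb C$-linear map $\Phi\colon T^{1,0}_{p_0}M\to\mathbb C^n$ with matrix $\big[(\partial_{z_k}F_\ell)(p_0)+(\partial_{\bar z_k}F_\ell)(p_0)\big]$. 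The statement to prove is then: $dF|_{p_0}$ is nondegenerate (as an $\mathbb R$-linear map of $2m$-dimensional real spaces, i.e.\ injective — which for $F$ into $\mathbb C^n$ read as the natural nondegeneracy condition, namely trivial kernel) if and only if $\Phi$ has trivial kernel. The ``if'' direction: if $dF|_{p_0}(v)=0$ for a real tangent vector $v$, apply the bridge identity to get $\Phi(v^{10})=0$, hence $v^{10}=0$, hence $v=v^{10}+\overline{v^{10}}=0$. The ``only if'' direction: if $\Phi(V)=0$ for some $V\in T^{1,0}_{p_0}M$, let $v=V+\bar V$ be the associated real tangent vector (via a disk $p(t)$); then $dF|_{p_0}(v)=\Phi(V)=0$, so by nondegeneracy $v=0$, forcing $V=0$ since $V=v^{10}$.

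I would then phrase the conclusion in exactly the notation of the lemma: ``$DF|_{p_0}[V]+\overline D F|_{p_0}[V]=0$ implies $V=0$'' is precisely ``$\ker\Phi=\{0\}$'', and ``the differential of $F$ at $p_0$ as a real manifold is nondegenerate'' is precisely ``$\ker dF|_{p_0}=\{0\}$'', so the two conditions coincide by the argument above. A small point of care is the well-definedness of $DF|_{p_0}[V]$ and $\overline D F|_{p_0}[V]$: they are stated via a choice of analytic disk $p(t)$ tangent to $V$, and one should observe (directly from the coordinate formulas $\sum_k(\partial_{z_k}F_\ell)V_k$ and $\sum_k(\partial_{\bar z_k}F_\ell)V_k$) that only the $1$-jet of $p$ at $0$, i.e.\ $V$ itself, enters; this is routine and I would dispatch it in a sentence.

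The main obstacle here is essentially bookkeeping rather than a genuine difficulty: one must be scrupulous about the distinction between the holomorphic derivative $\partial_{z_k}$ and the antiholomorphic derivative $\partial_{\bar z_k}$, both of which get contracted against the \emph{holomorphic} components of $V$ because the test disk is holomorphic in $t$; and one must keep straight that ``nondegeneracy of the real differential of a map into $\mathbb C^n$'' is being used in the sense of injectivity (trivial kernel), which is the only sense that makes the biconditional correct when $\dim_{\mathbb R}M=2m$ need not equal $2n$. Once these conventions are pinned down the proof is a two-line linear-algebra argument in each direction. I expect the write-up to be short — comparable in length to the proof of \Cref{prop:strictly_convex-L1-norm} or shorter.
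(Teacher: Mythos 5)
There is a genuine error in your coordinate computation of $\overline{D}F|_{p_0}[V]$, and it is precisely at the point you flag as requiring care. With the definition of \S\ref{subsec:notation}, $\overline{D}F|_{p_0}[V]=\partial_{\overline{t}}\,(F\circ p)(0)$ for a holomorphic disk $p(t)$ with $p'(0)=V$; the chain rule gives
$$\partial_{\overline{t}}(F_\ell\circ p)=\sum_k(\partial_{z_k}F_\ell)\,\partial_{\overline{t}}p_k+\sum_k(\partial_{\overline{z}_k}F_\ell)\,\partial_{\overline{t}}\overline{p_k}=\sum_k(\partial_{\overline{z}_k}F_\ell)(p_0)\,\overline{V_k},$$
since $\partial_{\overline{t}}p_k=0$ and $\partial_{\overline{t}}\overline{p_k}=\overline{\partial_t p_k}=\overline{V_k}$. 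So the antiholomorphic derivative is contracted against $\overline{V_k}$, not against $V_k$ as you assert. Consequently the map $V\mapsto DF|_{p_0}[V]+\overline{D}F|_{p_0}[V]$ is only $\mathbb{R}$-linear (it has a conjugate-linear part), and your claim that it is the $\mathbb{C}$-linear map with matrix $\bigl[(\partial_{z_k}F_\ell)+(\partial_{\overline{z}_k}F_\ell)\bigr]$ is false. This is not cosmetic: for $M=\mathbb{C}$, $n=1$, $F(z)=z+\overline{z}$, your matrix is $2\ne 0$ and so your $\Phi$ has trivial kernel, yet the real differential $v\mapsto 2\,\mathrm{Re}(v)$ is degenerate; with your formula the lemma would be false.

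That said, your overall strategy — identify $DF|_{p_0}[V]+\overline{D}F|_{p_0}[V]$ with the real differential applied to the real tangent vector corresponding to $V$, and then compare kernels — is exactly what the paper does. Once you replace your coordinate formula by the correct one, $\sum_k(\partial_{z_k}F_\ell)V_k+\sum_k(\partial_{\overline{z}_k}F_\ell)\overline{V_k}$, this expression is literally $\sum_k\bigl((\partial_{x_k}F_\ell)(V_1)_k+(\partial_{y_k}F_\ell)(V_2)_k\bigr)$ for $V=V_1+iV_2$, i.e.\ the real Jacobian applied to $(V_1,V_2)$, and your two implications go through verbatim. So the fix is local: drop the assertion of $\mathbb{C}$-linearity, correct the contraction to $\overline{V_k}$, and keep the rest.
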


\begin{proof}
We may assume that $M=\mathbb{C}^m$.
Let $z_k=x_k+iy_k$ for $k=1$, $\cdots$, $m$ and $x=(x_1,\cdots, x_m)$ and $y=(y_1,\cdots,y_m)$. Set $F=u+iv$, $F=(F_1,\cdots,F_n)$ and $F_s=u_s+iv_s$ for $s=1$, $\cdots$, $n$. 
Let $V=V_1+iV_2$.
Let $JF$ be the Jacobi matrix of $F$ at $p_0$ after identifying $\mathbb{C}^m\ni z\mapsto (x,y)\in \mathbb{R}^m\times \mathbb{R}^m$. Then,
$JF\begin{bmatrix} V_1 \\ V_2\end{bmatrix}=0$ means
\begin{align*}
\sum_{k=1}^m\left(\dfrac{\partial u_s}{\partial x_k}(V_1)_k+\dfrac{\partial u_s}{\partial y_k}(V_2)_k\right)&=0 \\
\sum_{k=1}^m\left(\dfrac{\partial v_s}{\partial x_k}(V_1)_k+\dfrac{\partial v_s}{\partial y_k}(V_2)_k\right)&=0
\end{align*}
for $s=1$, $\cdots$, $n$. This is equivalent to
\begin{align*}
0&=\sum_{k=1}^m\left(\dfrac{\partial F_s}{\partial x_k}(V_1)_k+\dfrac{\partial F_s}{\partial y_k}(V_2)_k\right)
=\sum_{k=1}^m\left(\dfrac{\partial F_s}{\partial z_k}V_k+\dfrac{\partial F_s}{\partial \overline{z}_k}\overline{V_k}\right)
=DF|_{p_0}[V]+\overline{D}F|_{p_0}[V]
\end{align*}
for $s=1$, $\cdots$, $n$. 
\end{proof}

\section{Variation of the pairing function}
\label{sec:differential_formula_for_the_pairing}
In the following theorem, 
we assume that $\mathcal{U}=\{U_i\}_{i\in I}$ is a locally finite covering of $M_0$ such that $H^1(U_i,\Theta_{M_0})=0$ for $i\in I$.

\begin{theorem}[Variational formula]
\label{thm:variational_formula_pairing}
Let $x_0=(M_0,f_0)\in \teich_g$.
Let $[Y]\in H^1(\mathcal{U},\Theta_{M_0})\cong H^1(M_0,\Theta_{M_0})\cong T_{x_0}\teich_g$ and $q_0\in Q_{M_0}\cong T^*_{x_0}\teich_g$.
Let $V_1=\tv{X,\dot{Y}}{Y}\in \mathbb{T}_{Y}[\mathcal{U}]\cong T_{[Y]}T\teich_g$ and $V_2=[X,\varphi]_{q_0}\in {\bf H}^1(\mathcal{U},\mathbb{L}_{q_0})\cong T_{q_0}\mathcal{Q}_g$.
Then, the differential of the pairing \eqref{eq:pairing_as_function} is given by
\begin{align}
\label{eq:differential_formula}
\left.D\pairingsymb\right|_{([Y],q_0)}\left[
V_1,V_2
\right]
&=-\dfrac{1}{2i}\iint_{M_0}(dA_i-L_{\xi_i}((\eta_i)_{\overline{z}}q_0))d\overline{z}\wedge dz,
\end{align}
where 
\begin{itemize}
\item
$\xi\in C^0(\mathcal{U},\sob^{1,1}(\Theta_{M_0}))$ and $\eta\in C^0(\mathcal{U},\sob^{2,1}(\Theta_{M_0}))$ with $\delta \xi=X$ and $\delta \eta=Y$;
\item
$q_0=\{q_0\}_{i\in I}$ and $\eta_iq_0=\eta_i(z)q_0(z)dz$; and 
\item
$A_i=A_i(z)dz$ satisfies $A_i(z)\in W^{1,1}(U_i)$ and
\begin{align}
\label{eq:cocycle_condition}
A_i(z^0_{ij}(z))\dfrac{dz^0_{ij}}{dz}(z)-A_j(z)
&=
\dot{Y}_{ji}(z)q_0(z)+Y_{ji}(z)\varphi_j(z)
+L_{X_{ji}}(\eta_iq_0)(z)
\end{align}
for $z\in z_j^0(U_i\cap U_j)$ and $i$, $j\in I$,
where $\varphi=\{\varphi_i\}_{i\in I}$.
\end{itemize}
\end{theorem}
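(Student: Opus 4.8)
The strategy is to realize the tangent vectors $V_1 \in T_{[Y]}T\teich_g$ and $V_2 \in T_{q_0}\mathcal{Q}_g$ simultaneously as the $2$-jet data of a single holomorphic family, and then to compute the derivative of the pairing directly using the variational formula for integrals of $2$-forms (\Cref{prop:variation_two_forms}). First I would set up the geometry: the point $([Y],q_0)$ of the Whitney sum $T\teich_g \oplus \mathcal{Q}_g$ is moved along a holomorphic disk $\{(v_t, q_t)\}_{|t|<\epsilon}$ with $(v_0,q_0) = ([Y],q_0)$, chosen so that its velocity at $t=0$ is $(V_1, V_2)$. Concretely, following \S\ref{subsec:charts}–\S\ref{subsec:cochain} and \S\ref{subsec:description_Hol}, I introduce a two-parameter holomorphic family $\mathcal{M} \to D = D_1 \times D_2$ of Riemann surfaces over a bidisk: the $s$-direction carries the ``covector'' data and the $t$-direction records how the base point of the tangent vector moves. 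The cochains $X = \{X_{ij}\}$, $Y = \{Y_{ij}\}$, $\dot Y = \{\dot Y_{ij}\}$ arise as in \eqref{eq:double_tangent_space-2}, so that $\tv{X,\dot Y}{Y}$ represents $V_1$ by \Cref{thm:main1}, while a holomorphic family $\{q^s\}$ of quadratic differentials on the fibers $M_{0,s}$, differentiated in $s$, produces the cocycle $(X,\varphi) \in \mathbf{Z}^1(\mathcal{U},\mathbb{L}_{q_0})$ representing $V_2$ as in \S\ref{subsec:description_Hol}. The compatibility condition $D\Pi_{\teich_g}[V_1] = D\Pi^\dagger_{\teich_g}[V_2] = [X]$ is exactly what makes the two families share the same $X$.

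Next I would write the pairing along the disk. For each $t$, $\langle v_t, q_t\rangle = -\pi\,\mathrm{Res}([X^t q_t])$ by \eqref{eq:pairing_2}, or equivalently, after choosing a $0$-cochain $\xi^t$ with $\delta \xi^t = X^t$ and lifting appropriately, $\langle v_t,q_t\rangle = -\tfrac{1}{2i}\iint_{M_0}(\text{exact-error term})\,d\overline z\wedge dz$ where the integrand is built from $\xi^t$, $q^t$, and the transition functions $z^t_{ij}$, in the spirit of \Cref{lem:pairing_tangent_cotangent}(3) and \Cref{thm:doulbeaut_presentation_pairing}. The key computation is then to differentiate this expression in $t$ at $t=0$. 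I would apply \Cref{prop:variation_two_forms} to the $t$-family of $2$-forms, which produces the correction term $-L_{\xi_i}(\cdots)$ coming from the motion of the underlying complex structure. Carrying out this differentiation — tracking the derivatives of $\xi^t$, of $q^t$ (whose $s$-derivative is $\varphi$), and of the transition maps (whose mixed second derivatives give $\dot Y$ via \Cref{prop:dot_X_Y} and \Cref{prop:cocycle_conditions}) — should yield, after collecting terms, a cocycle-type relation of the form \eqref{eq:cocycle_condition} for a $1$-form $A_i\,dz$, together with the claimed formula \eqref{eq:differential_formula}. The regularity claims ($\xi \in C^0(\mathcal{U},\sob^{1,1})$, $\eta \in C^0(\mathcal{U},\sob^{2,1})$, $A_i \in W^{1,1}$) are handled by the remarks in \S\ref{sec:remark_smoothness}, which justify applying the Green/Stokes theorem at this level of smoothness; the existence of a solution $A$ to \eqref{eq:cocycle_condition} follows because its right-hand side is a $1$-cocycle (this should be checked by a direct $\delta$-computation using $(X,\dot Y) \in \ker(D^Y_1)$ and $(X,\varphi) \in \mathbf{Z}^1(\mathcal{U},\mathbb{L}_{q_0})$) and the sheaf of smooth $1$-forms is fine.

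The main obstacle I anticipate is the bookkeeping in the mixed-derivative computation: one must carefully separate the $t$- and $s$-dependence, keep track of which quantities are the $\partial/\partial t$ versus the $\partial/\partial s$ versus the mixed $\partial^2/\partial t\partial s$ derivatives of the transition functions, and verify that all the non-holomorphic ($\overline t$, $\overline s$) contributions either vanish or are absorbed into the well-defined global $2$-forms guaranteed by \Cref{prop:variation_two_forms} and the relations \eqref{eq:eta-star}, \eqref{eq:xi-star}. A secondary technical point is checking that the resulting formula is independent of all the choices (the representatives $Y$, $(X,\dot Y)$, $(X,\varphi)$, the cochains $\xi$, $\eta$, and the lift $A$) — but this parallels the invariance arguments already carried out in \Cref{lem:pairing_tangent_cotangent}(3), so I would invoke those computations rather than redo them. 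Once \eqref{eq:differential_formula} is established, comparing it with the intrinsic characterization of the pairing on $TM$ (\Cref{prop:characterization_pairing}, especially \eqref{eq:prop_characterization_pairing3}) will in the subsequent section identify the model realizations of the flip, switch, and dualization, which is the motivation for proving the formula in this form.
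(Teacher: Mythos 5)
Your plan follows the paper's own proof essentially step for step: realize $V_1,V_2$ via the two-parameter family of \S\ref{subsec:charts}--\S\ref{subsec:Notation_2}, write $\langle v_t,q_t\rangle$ as the residue integral $-\tfrac{1}{2i}\iint_{M_t}d(\eta_i^t q_t\,dz)$, differentiate at $t=0$ with \Cref{prop:variation_two_forms} (which supplies the $-L_{\xi_i}$ correction), verify the cocycle condition for $A_i=\dot\eta_i q_0+\eta_i^0\varphi_i$ via \Cref{prop:dot_X_Y}, and treat the independence of choices separately. The only nit is a notational slip: the residue presentation of the pairing uses the cochain $\eta^t$ with $\delta\eta^t=Y^t$ (not $\xi^t$ with $\delta\xi^t=X^t$); $\xi$ enters only through the motion of the underlying surface in the $t$-direction.
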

The proof of \Cref{thm:variational_formula_pairing} is given in \S\ref{subsec:proof_differential_formula}.

Suppose $A=\{A_i\}_{i\in I}$ satisfies \eqref{eq:cocycle_condition}.
Since $dA_i=(A_i)_{\overline{z}}(z)d\overline{z}\wedge dz$, $X_{ji}$ is holomorphic, and $(\eta_i)_{\overline{z}}=(\eta_j)_{\overline{z}}$ on $U_i\cap U_j$,
\begin{align*}
(A_i)_{\overline{z}}(z^0_{ij}(z))\left|\dfrac{dz^0_{ij}}{dz}(z)\right|^2-(A_j)_{\overline{z}}(z)
&=(L_{X_{ji}}(\eta_iq_0))_{\overline{z}}
=L_{X_{ji}}((\eta_i)_{\overline{z}}q_0)
\end{align*}
on $U_i\cap U_j$. Hence, the integrand in the integral at the right-hand side of \eqref{eq:differential_formula} is an $2$-form on $M_0$.
%
%

\subsection{Well-definedness of the integral}
\label{subsec:well_defined-integral}
Since the tangent vectors are defined as the equivalence classes, before proving \Cref{thm:variational_formula_pairing}, we shall check that the integral in the right-hand side of \eqref{eq:differential_formula} is independent of the choices of the representatives.

\subsubsection{{\bf The choice of $\{A_i\}_{i\in I}$}}
When $B=\{B_i\}_{i\in I}$ satisfies \eqref{eq:cocycle_condition} instead of $A=\{A_i\}_{i\in I}$.
Then, $\Omega_i=A_i-B_i$ defines a $(1,0)$-form $\Omega=\Omega(z)dz$ on $M_0$ with $\Omega_i(z)\in W^{1,1}(U_i)$.
Therefore, we obtain
$$
\dfrac{1}{2i}
\iint_{M_0}(dA_i-L_{\xi_i}((\eta_i)_{\overline{z}}q_0))-\dfrac{1}{2i}\iint_{M_0}(dB_i-L_{\xi_i}((\eta_i)_{\overline{z}}q_0))
=\dfrac{1}{2i}\iint_{M_0}d\Omega=0.
$$
by the Green formula for $W^{1,1}$-forms. See \S\ref{sec:remark_smoothness}.

\subsubsection{{\bf The choices of $\xi$ and $\eta$}}
We first check that the integral is independent of the choice of $\xi=\{\xi_i\}_{i\in I}$.
Take $\Xi\in C^0(\mathcal{U},\sob^{1,1}(\Theta_{M_0}))$ with $\delta \Xi = X$. Then, $\xi-\Xi$ defines a global $\sob^{1,1}$-vector field $\mathcal{X}$ on $M_0$. 
Since the integrand in this case is 
$\{dA_i-L_{\Xi_i}(d(\eta_iq_0))\}_{i\in I}$ and $\nu=-(\eta_i)_{\overline{z}}=-\overline{\partial}\eta_i$ is a global $(-1,1)$-form on $M_0$, the difference of the integrands satisfies
\begin{align*}
\left(dA_i-L_{\xi_i}((\eta_i)_{\overline{z}}q_0)\right)-
\left(dA_i-L_{\Xi_i}((\eta_i)_{\overline{z}}q_0)\right)
&=-L_{\mathcal{X}}((\eta_i)_{\overline{z}}q_0)
=d\left(\nu q_0\mathcal{X}d\overline{z}\right).
\end{align*}
Therefore, we obtain the difference of their integrals vanishes.
%

We shall check that the integral is independent of the choice of $\eta=\{\eta_i\}_{i\in I}$.
Let $H=\{H_i\}_{i\in I}\in C^0(\mathcal{U},\sob^{2,1}(\Theta_{M_0}))$ with $\delta H=Y$.
Take $B=\{B_i\}_{i\in I}$ satisfying \eqref{eq:cocycle_condition} when we consider $H$ instead of $\eta$.
Notice that $\mathcal{Y}=\eta-H$ defines a global vector field on $M_0$. Set $\Omega_i=A_i-B_i$ for $i\in I$.
From \eqref{eq:cocycle_condition}, the difference of the integrands in \eqref{eq:differential_formula} satisfies
\begin{align*}
d\Omega_i-L_{\xi_i}(\mathcal{Y}_{\overline{z}} q_0)
&=d\Omega_i-
L_{\xi_i}((\mathcal{Y} q_0)_{\overline{w}}d\overline{w}\wedge dw)
\\
&=d\left(\Omega_i(w)dw+\mathcal{Y}_{\overline{w}}(w)q_0(w)\xi_i(w)d\overline{w}\right)
\end{align*}
and
\begin{align*}
\left(d\Omega_i(z^0_{ij}(z))-L_{\xi_i}(\mathcal{Y}_{\overline{z}} q_0)(z_{ij}^0(z))\right)
&\left|\dfrac{dz^0_{ij}}{dz}(z)\right|^2
-
\left(
d\Omega_j(z)-L_{\xi_j}(\mathcal{Y}_{\overline{z}} q_0)(z)
\right)
\\
&
=
dL_{X_{ji}}(\mathcal{Y} q_0)(z)-L_{X_{ji}}(\mathcal{Y}_{\overline{z}} q_0))(z)
\\
&=L_\mathcal{Y}((X_{ji}q_0)_{\overline{z}})=0
\end{align*}
for $z\in z_j^0(U_i\cap U_j)$ and $w\in z_i^0(U_i)$ since $X_{ji}q_0$ is a holomorphic $1$-form on $z_j^0(U_i\cap U_j)$.
Therefore, the deference of the integrands is an exact $2$-form on $M_0$, and hence the integral of the difference vanishes.

\subsubsection{{\bf The representative of $[Y]$}}
%
Let $\beta\in C^0(\mathcal{U},\Theta_{M_0})$.
From \Cref{prop:isomorphism_tile_L}, we consider $\tilde{L}_\beta(X,\dot{Y})=(X,\dot{Y}+K'(\beta,X))\in \ker(D^{Y+\delta\beta}_1)$ instead of $(X,\dot{Y})\in \ker(D^Y_1)$, 
the right-hand side of \eqref{eq:cocycle_condition} becomes
\begin{align*}
&(\dot{Y}_{ji}+[\beta_i,X_{ji}])q_0+(Y_{ji}+(\beta_i-\beta_j))\varphi_j+L_{X_{ji}}((\eta_i+\beta_i)q_0)\\
&=(\dot{Y}_{ji}q_0+Y_{ji}\varphi_j+L_{X_{ji}}(\eta_iq_0))+([\beta_i,X_{ji}]q_0+(\beta_i-\beta_j)\varphi_j+L_{X_{ji}}(\beta_iq_0))
\\
&=(\dot{Y}_{ji}q_0+Y_{ji}\varphi_j+L_{X_{ji}}(\eta_iq_0))+(\beta_i\varphi_i-\beta_j\varphi_j).
\end{align*}
Therefore, the integral in \eqref{eq:differential_formula}  in this case satisifes
$$
\dfrac{1}{2i}\iint_{M_0}
\left(
d(A_i+\beta_i\varphi_i)-L_{\xi_i}(d(\eta_i q_0))
\right)=
\dfrac{1}{2i}\iint_{M_0}
\left(dA_i-L_{\xi_i}(d(\eta_i q_0))\right).
$$

\subsubsection{{\bf The representatives of $\tv{X,\dot{Y}}{[Y]}$ and $[X,\varphi]_{q_0}$}}
Take two cochains $\alpha$, $\beta\in C^0(\mathcal{U},\Theta_{M_0})$. From the definitions of ${\bf H}^1(\mathbb{L}_{q_0})$ and $\mathbb{T}_Y$, we choose
$(X+\delta\alpha, \dot{Y}+\delta\beta+K(\alpha,Y))$ and $(\varphi+L_\alpha(q_0), X+\delta\alpha)$ as the representatives in stead of $(X,\dot{Y})\in \ker(D^Y_0)$ and $(\varphi,X)\in C^0(\mathcal{U},\Omega_{M_0}^{\otimes 2})\oplus C^1(\mathcal{U},\Theta_{M_0})$.
In this case, the right-hand side of \eqref{eq:cocycle_condition} becomes
\begin{align*}
&
(\dot{Y}_{ji}+\beta_i-\beta_j+[\alpha_j,Y_{ji}])q_0+
Y_{ji}(\varphi_j+L_{\alpha_j}(q_0))+
L_{X_{ji}+\alpha_i-\alpha_j}(\eta_iq_0)
\\
&=\left(
\dot{Y}_{ji}q_0+Y_{ji}\varphi_j+L_{X_{ji}}(\eta_iq_0)
\right)
\\
&\quad+\left(
\beta_i-\beta_j+[\alpha_j,Y_{ji}])q_0+Y_{ji}L_{\alpha_j}(q_0)+
L_{\alpha_i}(\eta_iq_0)-L_{\alpha_j}(\eta_iq_0)
\right)
\\
&=\left(
\dot{Y}_{ji}q_0+Y_{ji}\varphi_j
+L_{X_{ji}}(\eta_iq_0)
\right)
+
(\beta_iq_0+L_{\alpha_i}(\eta_iq_0))-
(\beta_jq_0+L_{\alpha_j}(\eta_jq_0))
\end{align*}
on $z_j^0(U_i\cap U_j)$.
%
Hence, the integrand in \eqref{eq:differential_formula} in this case satisfies
\begin{align*}
d(A_i+\beta_iq_0+L_{\alpha_i}(\eta_iq_0))-L_{\xi_i+\alpha_i}(\eta_iq_0)
=dA_i-L_{\xi_i}(\eta_iq_0)
\end{align*}
since $\beta_iq_0=\beta_i(z)q_0(z)dz$ is holomorphic on $U_i$.
This implies that the integral in \eqref{eq:differential_formula} is independent of the choices of representatives.

\subsection{Proof of \Cref{thm:variational_formula_pairing}}
\label{subsec:proof_differential_formula}
We use the symbols defined from \S\ref{subsec:charts} to \S\ref{subsec:Notation_2}, frequently.
Let $M_t=M_{t,0}$ for $|t|<\epsilon_0$. Let $\{|t|<\epsilon_0\}\to x_t\in \teich_g$ be the representation of the holomorphic family.
Let $q^t=\{q^t_i\}_{i\in I}$ be a holomorphic family of holomorphic quadratic differentials such that $q^t\in \mathcal{Q}_{M_t}$.
Let $v_t=[\{Y^t_{ij}\}_{i,j\in I}]\in H^1(M_t,\Theta_{M_t})\cong T_{x_t}\teich_g$.
Then, from \eqref{eq:pairing_2},
\begin{align*}
\langle v_t,q_t\rangle
&=-\pi {\rm Res}([Y_{ij}^tq_t])
=-\dfrac{1}{2i}\iint_{M_t}d(\eta_i^tq_t dz).
\end{align*}
Since
$$
\eta_i^t(z)q_t(z)=\eta_i^0(z)q_0(z)+tA_i(z)+\overline{t}B_i(z)+o(t)
$$
on $z_i^0(U_i)$ as $t\to 0$ where $B$ is some function on $z^0_i(U_i)$,
and
\begin{equation}
\label{eq:A-eta}
A_i(z)=\dot{\eta_i}(z)q_0(z)+\eta_i^0(z)\varphi_i(z).
\end{equation}
By \Cref{prop:dot_X_Y},
\begin{align*}
&A_i(z_{ij}^0(z))\dfrac{dz_{ij}^0}{dz}(z)-A_j(z)
\\
&=
\left(\dot{\eta_i}(z^0_{ij}(z))q_0(z_{ij}^0(z))+\eta_i^0(z_{ij}^0(z))\varphi_i(z_{ij}^0(z))\right)\dfrac{dz_{ij}^0}{dz}(z)
\\
&\qquad\qquad
-
(\dot{\eta_j}(z)q_0(z)+\eta_j^0(z)\varphi_j(z))
\\
&=\left(
\dot{\eta_i}(z^0_{ij}(z))\left(\dfrac{dz_{ij}^0}{dz}(z)\right)^{-1}-\dot{\eta}_j(z)\right)q_0(z)
\\
&\qquad
+\eta_j^0(z)L_{X_{ji}}(q_0)+Y_{ji}(z)\varphi_j(z)+Y_{ji}(z)L_{X_{ji}}(q_0)(z)
\\
&=\dot{Y}_{ji}(z)q_0(z)+Y_{ji}(z)\varphi_j(z)-[\eta_i^0,X_{ji}](z)q_0(z)+\eta_j^0(z)L_{X_{ji}}(q_0)+Y_{ji}(z)L_{X_{ji}}(q_0)(z) \\
&=\dot{Y}_{ji}(z)q_0(z)+Y_{ji}(z)\varphi_j(z)+L_{X_{ji}}(\eta^0_jq_0)(z)+L_{X_ji}(Y_{ji}q_0)
\\
&=\dot{Y}_{ji}(z)q_0(z)+Y_{ji}(z)\varphi_j(z)+L_{X_{ji}}(\eta_i^0q_0)(z),
\end{align*}
which means $\{A_i\}_{i\in I}$ satisfies \eqref{eq:cocycle_condition}.
From \Cref{prop:variation_two_forms} and the discussion in \S\ref{subsec:well_defined-integral}, we obtain the formula \eqref{eq:differential_formula}.

\subsection{Variational formula with the Dolbeault representation}
From \Cref{thm:Dolbeault_pre} and \eqref{eq:A-eta}, we obtain the following.

\begin{corollary}
\label{coro:variational_pairing_Dol}
Under the assumption and the notation in \Cref{thm:Dolbeault_pre}, let $\tvD{\mu,\dot{\nu}}{Y}\in \mathbb{T}^{Dol}_Y[\mathcal{U}]$ with $\connectinghomo(\tvD{\mu,\dot{\nu}}{Y})=\tv{X,\dot{Y}}{Y}$, $\mu_i=-(\xi_i)_{\overline{z}}$, $\nu_i=-(\eta_i)_{\overline{z}}$ and $\dot{\nu}_i=-(\dot{\eta}_i)_{\overline{z}}$ for $i\in I$. 
Then,
\begin{align*}
\left.D\pairingsymb\right|_{([Y],q_0)}\left[
V_1,V_2
\right]
&=\dfrac{1}{2i}\iint_{M_0}
\left((\dot{\nu}_i- [\xi_i,\eta_i]_{\overline{z}})q_0+\nu_i\varphi_i
-
L_{\xi_i}(\nu_iq_0)\right)
d\overline{z}\wedge dz,
\end{align*}
where $\connectinghomo$ is the connecting homomorphism (cf. \Cref{prop:Dol_vertical_space_isomorphism} and \Cref{prop:dot_X_Y}).
\end{corollary}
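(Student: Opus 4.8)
\textbf{Proof proposal for \Cref{coro:variational_pairing_Dol}.}
The plan is to simply transport the variational formula of \Cref{thm:variational_formula_pairing} through the connecting isomorphism $\connectinghomo$ and the identifications of the Dolbeault type presentation, and then simplify the integrand. First I would invoke \Cref{thm:Dolbeault_pre} (and the explicit description of $\connectinghomo$ recorded after its statement), which says that $\connectinghomo(\tvD{\mu,\dot{\nu}}{Y})=\tv{X,\dot{Y}}{Y}$ is witnessed by cochains $\xi$, $\dot{\eta}\in C^0(\mathcal{U},\mathcal{A}^{0,0}(\Theta_{M_0}))$ with $(\mu,\dot{\nu})=(-\overline{\partial}\xi,-\overline{\partial}\dot{\eta})$ and $(X,\dot{Y})=(\delta\xi,\delta\dot{\eta}+K(\xi,Y))$; here, using the notation of \Cref{coro:variational_pairing_Dol}, $\mu_i=-(\xi_i)_{\overline{z}}$ and $\dot{\nu}_i=-(\dot{\eta}_i)_{\overline{z}}$, and we take $\eta$ with $\delta\eta=Y$, $\nu_i=-(\eta_i)_{\overline{z}}$. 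Since \Cref{thm:variational_formula_pairing} and its well-definedness discussion in \S\ref{subsec:well_defined-integral} allow $\xi\in C^0(\mathcal{U},\sob^{1,1}(\Theta_{M_0}))$ and $\eta\in C^0(\mathcal{U},\sob^{2,1}(\Theta_{M_0}))$, these witnessing cochains are admissible in the formula \eqref{eq:differential_formula}.

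Next I would choose the $1$-form $A=\{A_i\}_{i\in I}$ in \eqref{eq:differential_formula} explicitly. Formula \eqref{eq:A-eta} in the proof of \Cref{thm:variational_formula_pairing} already provides a valid choice, namely $A_i(z)=\dot{\eta}_i(z)q_0(z)+\eta_i^0(z)\varphi_i(z)$ (with $\eta^0=\eta$ here), and the subsequent computation there verifies the cocycle condition \eqref{eq:cocycle_condition}. With this choice, $dA_i=\overline{\partial}(\dot{\eta}_iq_0+\eta_i\varphi_i)=-\dot{\nu}_iq_0 - \nu_i\varphi_i$ in the convention $\overline{\partial}(fq_0)= f_{\overline{z}}q_0\, d\overline{z}\wedge dz$ of \Cref{convention:2}, and $L_{\xi_i}((\eta_i)_{\overline{z}}q_0)=-L_{\xi_i}(\nu_iq_0)$. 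Substituting into \eqref{eq:differential_formula} gives
$$
\left.D\pairingsymb\right|_{([Y],q_0)}[V_1,V_2]
=\dfrac{1}{2i}\iint_{M_0}\left(\dot{\nu}_iq_0+\nu_i\varphi_i - L_{\xi_i}(\nu_iq_0)\right)d\overline{z}\wedge dz.
$$
The only remaining discrepancy with the claimed formula is the term $[\xi_i,\eta_i]_{\overline{z}}q_0$, so the last step is to relate $\dot{\eta}_i$ to the bracket $[\xi_i,\eta_i]$.

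The substantive point — and the main (mild) obstacle — is to check that replacing $\dot{\eta}_i$ by $\dot{\eta}_i-[\xi_i,\eta_i]$ changes the integral by $\frac{1}{2i}\iint_{M_0}[\xi_i,\eta_i]_{\overline{z}}q_0\,d\overline{z}\wedge dz$ exactly, i.e.\ that the two representations $(\mu,\dot{\nu})$ and $(\mu,\dot{\nu}-[\mu,\?])$ differ in the controlled way dictated by \Cref{prop:equivalence_class}. Concretely, I would observe that $\dot{\eta}_i - [\xi_i,\eta_i]$ is the natural Beltrami-type witness coming from \Cref{prop:dot_X_Y} (compare \eqref{eq:eta_xi}: $\dot{\eta}_i=\dot{\xi}_i+[\xi_i,\eta_i]+\chi$ for a global $\chi$), so that $-\overline{\partial}(\dot{\eta}_i-[\xi_i,\eta_i])$ is precisely the differential appearing via $\connectinghomo$ in the Dolbeault-in-terms-of-Beltrami picture; the global vector field $\chi$ contributes nothing after applying the Green theorem (as in \S\ref{subsec:well_defined-integral}), since $\overline{\partial}\chi$ against $q_0$ integrates to zero by \eqref{eq:Lie-derivative_02} and Stokes. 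Collecting $\dot{\nu}_iq_0$ into $(\dot{\nu}_i - [\xi_i,\eta_i]_{\overline{z}})q_0$ then yields exactly the stated formula. Everything else is the routine bookkeeping of the $\overline{\partial}$ and Lie-derivative conventions of \S\ref{sec:Lie_derivative}, which I would not belabor.
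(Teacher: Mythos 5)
Your route is the paper's --- specialize \Cref{thm:variational_formula_pairing} with the explicit cochain of \eqref{eq:A-eta} --- and the identity \eqref{eq:eta_xi} you reach for at the end is indeed the crux. But the middle of your argument contains a genuine error caused by conflating two different cochains both called $\dot{\eta}$. The $\dot{\eta}_i$ appearing in \eqref{eq:A-eta} is the one of \S\ref{subsec:Notation_2}, satisfying $\dot{Y}_{ij}=\dot{\eta}_j-\dot{\eta}_i+[\eta^0_j,X_{ij}]$ (equation \eqref{eq:dot_Y_ij}); the $\dot{\eta}$ of the corollary, for which $\dot{\nu}_i=-(\dot{\eta}_i)_{\overline{z}}$ and $\dot{Y}=\delta\dot{\eta}+K(\xi,Y)$, is the connecting-homomorphism witness, i.e.\ the $\dot{\xi}$ of that section. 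By \eqref{eq:eta_xi} the two differ by $[\xi_i,\eta_i]+\chi$ with $\chi$ a global vector field. Hence your claim that $A_i=\dot{\eta}_iq_0+\eta_i\varphi_i$, taken with the connecting-homomorphism $\dot{\eta}$, ``verifies the cocycle condition \eqref{eq:cocycle_condition}'' is false: that choice differs from the correct one by $[\xi_i,\eta_i]q_0$ (plus the harmless global term $\chi q_0$), and since $[\xi,\eta]$ is only a cochain, $\delta([\xi,\eta]q_0)\neq 0$ in general, so \eqref{eq:cocycle_condition} fails. Your intermediate formula $\tfrac{1}{2i}\iint(\dot{\nu}_iq_0+\nu_i\varphi_i-L_{\xi_i}(\nu_iq_0))$ is therefore not merely ``missing a term'': its integrand is not even a globally defined $2$-form, and your closing attempt to reconcile it with the stated formula would, if both expressions were valid, force $\iint_{M_0}[\xi_i,\eta_i]_{\overline{z}}q_0=0$, which is neither true nor well posed (note also the unresolved placeholder in your appeal to \Cref{prop:equivalence_class}).

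The fix is to apply \eqref{eq:eta_xi} before differentiating rather than after: keep $A_i$ exactly as in \eqref{eq:A-eta} and rewrite it in the corollary's notation as $A_i=(\dot{\eta}_i+[\xi_i,\eta_i]+\chi)q_0+\eta_i\varphi_i$, so that $dA_i=\bigl(-\dot{\nu}_i+[\xi_i,\eta_i]_{\overline{z}}+\chi_{\overline{z}}\bigr)q_0-\nu_i\varphi_i$ since $q_0$ and $\varphi_i$ are holomorphic. Substituting into \eqref{eq:differential_formula} produces the bracket term of the statement directly, and the global remainder $\chi_{\overline{z}}q_0\,d\overline{z}\wedge dz=d(\chi q_0\,dz)$ integrates to zero by Stokes. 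With that correction your argument coincides with the paper's.
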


\begin{remark}
\label{remark:dolbeault_rep_variation}
From the discussion in \S\ref{subsec:well_defined-integral}, when $[X]=0$, $\xi$ may be assumed to be $\xi=0$. In this case, in the formula in \Cref{coro:variational_pairing_Dol}, $\dot{\nu}_i$ and $\varphi_i$ can be assumed to be a globally defined on $M_0$, and the formula becomes
\begin{align}
\label{eq:differential_formula_dol_2}
\left.D\pairingsymb\right|_{([Y],q_0)}\left[
V_1,V_2
\right]
&=\dfrac{1}{2i}\iint_{M_0}
(\dot{\nu}_i q_0+\nu_i\varphi_i)d\overline{z}\wedge dz.
\end{align}
\end{remark}

\subsection{Variation of Pairing and Pairing on second order infinitesimal space}
The following formula will be applied in defining the model map of the switch later
(cf. \Cref{prop:Derivative_of_pairing_and_flip} and \Cref{prop:switch-cotangent-tangent}).

\begin{proposition}[A formula of the variation of Pairing]
\label{prop:variational_formula_and_pairing}
The variational formula for the pairing is presented by
\begin{equation}
\label{eq:differential_and_pairing_on_TT}
D\mathcal{P}|_{([Y],q)}
\left[
\tv{X,\dot{Y}}{[Y]},[X,\varphi]_q
\right]
=\mathcal{P}_{\mathbb{TT}}|_{[X]}
\left(
\tv{Y,\dot{Y}+[X,Y]}{[X]}, \tvdag{\varphi,q}{[X]}
\right)
\end{equation}
for $\tv{X,\dot{Y}}{[Y]}\in \mathbb{T}_{[Y]}[\mathcal{U}]\cong T_{[Y]}T\teich_g$,
$[X,\varphi]_q\in {\bf H}^1(\mathcal{U},\mathbb{L}_{q})\cong T_{q}\mathcal{Q}_g$,
$[Y]\in H^1(\mathcal{U},\Theta_{M_0})\cong H^1(M_0,\Theta_{M_0})\cong T_{x_0}\teich_g$ and $q\in Q_{M_0}\cong T^*_{x_0}\teich_g$.
\end{proposition}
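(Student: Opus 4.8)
The plan is to directly compare the two sides of \eqref{eq:differential_and_pairing_on_TT} using the explicit formulae already established: \Cref{thm:variational_formula_pairing} (and its reformulation \Cref{coro:variational_pairing_Dol}) for the left-hand side, and \Cref{lem:pairing_tangent_cotangent} (or equivalently \Cref{thm:doulbeaut_presentation_pairing}) for the right-hand side. Since both expressions are integrals over $M_0$ of $2$-forms built from the same local data --- the cochains $\xi$, $\eta$ with $\delta\xi=X$, $\delta\eta=Y$, a quadratic differential $q_0=\{q_0\}$, a cochain $\varphi=\{\varphi_i\}$, and the Lie derivative terms --- the statement should reduce to an identity of integrands modulo exact $2$-forms, i.e. modulo the Green/Stokes theorem. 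The first step is to fix a locally finite covering $\mathcal{U}=\{U_i\}_{i\in I}$ with $H^1(U_i,\Theta_{M_0})=0$ so that both \Cref{thm:variational_formula_pairing} and the model pairing $\mathcal{P}_{\mathbb{TT}}$ of \Cref{def:model_pairing} are available, and to record which cocycle on the right one must use.

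The key observation is already flagged in \Cref{remark:2} after \Cref{prop:dot_X_Y}: equation \eqref{eq:Xij-Yij}, namely $\dot{X}_{ij}-\dot{Y}_{ij}=[X_{ij},Y_{ij}]$, together with \Cref{prop:cocycle_conditions}, tells us that the role of $X$ and $Y$ is interchanged by the flip, and that $(Y,\dot{Y}+[X,Y])\in\ker(D_1^X)$ is the correct representative appearing in $\mathbb{T}_{[X]}[\mathcal{U}]$ on the right-hand side. So the second step is to verify this ``flip-type'' bookkeeping: check that $(Y,\dot{Y}+[X,Y])$ indeed satisfies the defining equations of $\ker(D_1^X)$ (this is a short computation from \Cref{prop:cocycle_conditions}, using $\zeta(X,Y)=\zeta(Y,X)$ and the symmetry of the primary obstruction), and that $(\varphi,q)$ with $\delta\varphi-L_Y(q)=?$ --- more precisely, that $(\varphi,q)$ represents an element of $\mathbb{T}^\dagger_{[X]}[\mathcal{U}]$, which requires $\delta\varphi-L_X(q)=0$; but this is exactly the condition $[X,\varphi]_q\in{\bf H}^1(\mathcal{U},\mathbb{L}_q)$ coming from \eqref{eq:double_complex}. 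Thus the right-hand side of \eqref{eq:differential_and_pairing_on_TT} is well-posed once one unwinds the bracket notations.

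The third and computational step is to plug in. On the left, \Cref{thm:variational_formula_pairing} gives $D\mathcal{P}|_{([Y],q_0)}[V_1,V_2]=-\tfrac{1}{2i}\iint_{M_0}(dA_i-L_{\xi_i}((\eta_i)_{\overline z}q_0))$ with $A_i$ governed by the cocycle condition \eqref{eq:cocycle_condition}; on the right, \Cref{lem:pairing_tangent_cotangent} (applied with the cocycle $(Y,\dot{Y}+[X,Y])$, the cotangent datum $(\varphi,q)$, and a cochain $\zeta$ with $\delta\zeta=Y$ --- one may take $\zeta=\eta$) gives $\mathcal{P}_{\mathbb{TT}}=-\tfrac{1}{2i}\iint_{M_0}((A'_i)_{\overline z}-L_{\eta_i}((\eta_i)_{\overline z}\cdot\text{(something)}))$ where $A'_i$ satisfies its own cocycle relation $\delta A'=\{Y_{ij}\varphi_i+(\dot{Y}_{ij}+[X_{ij},Y_{ij}])q+L_{Y_{ij}}(\eta_j q)\}$. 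The main work is then to show these two global $2$-forms differ by an exact form. I would do this by choosing the witnessing $0$-cochains compatibly: namely set $A_i$ from \eqref{eq:cocycle_condition} and exhibit $A'_i - A_i$ plus a correction built from $\xi_i,\eta_i,q_0,\varphi_i$ as a global $(1,0)$-form, using \eqref{eq:Lie-derivative_1}, \eqref{eq:Lie-derivative_01} and \eqref{eq:Lie-derivative_04} to collapse the Lie-derivative terms. The regularity issues (the cochains are only in $\sob^{1,1}$ or $\sob^{2,1}$) are handled exactly as in \S\ref{sec:remark_smoothness}, so the Green formula applies.

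\textbf{Main obstacle.} The hard part will be the bookkeeping of which cochain plays which role: the left-hand formula uses $\xi$ (with $\delta\xi=X$) for the ``base-moving'' direction and $\eta$ (with $\delta\eta=Y$) to express the residue, while the right-hand side of \eqref{eq:differential_and_pairing_on_TT} lives over $[X]$, so $X$ becomes the ``point'' and $Y$ becomes the ``velocity'', and the representative of the cotangent vector is $(\varphi,q)$ rather than something involving $\eta$. Matching the $L_{\xi_i}((\eta_i)_{\overline z}q_0)$ correction term on the left against the analogous correction $L_{\eta_i}(\cdots)$ on the right, and verifying that the leftover difference is a globally exact $2$-form on $M_0$ (not merely a Čech cocycle of exact pieces), is where the care is needed; once the correct global primitive is identified, the conclusion follows from Stokes. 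As a sanity check I would verify the identity in the torus case using the explicit formulae of \Cref{chap:Teichmuller_space_of_tori}, where everything is a constant multiple and the brackets vanish.
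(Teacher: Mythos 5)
Your approach is essentially the paper's: it evaluates the right-hand side via the defining formula \eqref{eq:pairing_definition_TM_TstarM} of $\mathcal{P}_{\mathbb{TT}}$ with the roles of $X$ and $Y$ interchanged, and matches the resulting cocycle condition for $A$ against \eqref{eq:cocycle_condition}, using $\dot{Y}_{ij}+\dot{Y}_{ji}+[X_{ij},Y_{ij}]=0$ to identify $-(\dot{Y}+[X,Y])_{ji}q$ with $\dot{Y}_{ij}q$. The only simplification over your plan is that once this identification and the cochain bookkeeping (the Lie-derivative correction on the right is along the base cochain $\xi$ with $\delta\xi=X$, giving $L_{\xi_i}((\eta_i)_{\overline{z}}q)$, literally the same term as on the left) are done, the two integrands coincide outright, so no final Stokes argument is needed.
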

\begin{proof}
%
From \eqref{eq:pairing_definition_TM_TstarM},
the right-hand side of \eqref{eq:differential_and_pairing_on_TT} is equal to
\begin{align*}
-\dfrac{1}{2i}\iint_{M_0}((A_i)_{\overline{z}}-L_{\xi_i}((\eta_i)_{\overline{z}}q))d\overline{z}\wedge dz,
\end{align*}
where $\xi=\{\xi_i\}_{i\in I}$ and $\eta=\{\eta_i\}_{i\in I}\in C^0(\mathcal{U},\mathcal{A}^{0,0}(\Theta_{M_0}))$
with $\delta\xi=X$ and $\delta\eta=Y$, and $A=\{A_i\}_{i\in I}\in C^0(\mathcal{U},\mathcal{A}^{0,0}(\Omega_{M_0}))$ with
\begin{align*}
A_j-A_i
&=Y_{ij}\varphi_i-(\dot{Y}_{ji}+[X_{ji},Y_{ji}])q+L_{X_{ij}}(\eta_jq)
\\
&=Y_{ij}\varphi_i+\dot{Y}_{ij}q+L_{X_{ij}}(\eta_jq).
\end{align*}
From \eqref{eq:cocycle_condition}, we have the equality \eqref{eq:differential_and_pairing_on_TT}.
%
%
%
%
\end{proof}

\section{Models are models of the second order infinitesimal spaces}
\label{sec:trivialization_via_Bers_embedding}
In this section, we fix $x_0\in \teich_g$ and identify the Teichm\"uller space $\teich_g$ with the image of the Bers embedding with basepoint $x_0$, and hence it is a bounded domain in $B_2=B_2(\mathbb{D}^*,\Gamma_0)$. Hence, the holomorphic tangent bundle $T\teich_g$ is naturally identified with the trivial bundle $\teich_g\times B_2\to \teich_g$ via the Bers embedding.

Let $\GuaidF=\ahlforsW{x_0}\circ D\Bersemb_{x_0}|_{x_0}\circ \mathscr{T}_{x_0}$ be the Ahlfors-Weill guiding frame, and $\GoodS$ is a good section defined from $\GuaidF$.
Let
$$
\trivialization_{[Y]}\colon \mathbb{T}_{[Y]}[\mathcal{U}]\to H^1(\mathcal{U},\Theta_{M_0})^{\oplus 2}
$$
be the trivialization  defined from the Ahlfors-Weill guiding frame $\GuaidF$.
Since the differential $D\Bersemb_{x_0}|_{x_0}\colon T_{x_0}\teich_g\to B_2$ is the inverse of $\ahlforsWhat{x_0}=\left.D\Bersproj_{x_0}\right|_0\circ \ahlforsW{x_0}$ (cf. \eqref{eq:Bers_tangent}), from the left square of the commutative diagram \eqref{eq:commutative_DT}, the map
\begin{equation}
\label{eq:Bers_trivialization_model}
(D\Bersemb_{x_0}|_{x_0})^{\oplus 2}\circ 
(\mathscr{T}_{x_0})^{\oplus 2} \circ \trivialization_{[Y]}
\, \colon \, \mathbb{T}_{[Y]}[\mathcal{U}]\to B_2\times B_2
\end{equation}
coincides with the  the triviallization (identification) $\mathbb{T}_{[Y]}[\mathcal{U}]\to T_{[Y]}T\teich_g\cong B_2\times B_2$ via the Bers embedding.

The purpose of this section is to observe that when we choose  the trivializations defined in \Cref{thm:trivialization}, \Cref{thm:trivialization_cotangent_to_tangent} and \Cref{thm:Trivialization_T_Tstar}
are natural in the sense that when we choose the Ahlfors-Weill guiding frame,  these trivializations for the second order infinitesimal spaces are naturally thought of as the trivialization via the Bers embedding.

In the following discussion, we notice that $B_2$ and $\mathcal{Q}_{x_0}$ have a natural non-degenerate pairing:
$$
B_2\times \mathcal{Q}_{x_0}\ni (\varphi,q)\mapsto \langle [\ahlforsW{x_0}(\varphi)],q\rangle.
$$

\subsection{Models of the tangent space to $T^*\teich_g=\mathcal{Q}_g$}
In \S\ref{subsec:trivialization_tangent_bundle_cotangent_bundle}, we discuss the trivialization of the cotangent bundle $\mathcal{Q}_g\cong T^*\teich_g$ as $T^*\teich_g\cong \teich_g\times \mathcal{Q}_{x_0}$.

Let $(\varphi_t,q_t)=(t\dot{\varphi},q_0+t\dot{q})+o(t)\in \teich_g\times \mathcal{Q}_{x_0}\subset B_2\times \mathcal{Q}_{x_0}$ and $(\varphi_t,\psi_t)=(t\dot{\varphi}, \psi_0+t\dot{\psi})+o(t)\in \teich_g\times B_2\subset B_2\times B_2$ be holomorphic curves.
Suppose that $[X,\Phi]_{q_0}\in H_1(\mathcal{U},\mathbb{L}_{q_0})$ and $\tv{X,\dot{Y}}{[Y]}\in \mathbb{T}_{[Y]}[\mathcal{U}]$ be the tangent vectors of the holomorphic curves at $t=0$.
By definition, $D\Bersemb_{x_0}|_{x_0}\circ \mathscr{T}_{x_0}([X])=\dot{\varphi}$ and $D\Bersemb_{x_0}|_{x_0}\circ \mathscr{T}_{x_0}([Y])=\psi_0$.

From the trivializations $T\teich_g\cong \teich_g\times B_2$ and $T^*\teich_g\cong \teich_g\times \mathcal{Q}_{x_0}$ discussed in \S\ref{subsec:trivialization_tangent_bundle_cotangent_bundle},
we obtain
$$
\pairingsymb_{T\teich_g}|_{\varphi_t}((\varphi_t,\psi_t), (\varphi_t,q_t))=\langle [\ahlforsW{x_0}(\psi_t)],q_t\rangle.
$$
From \Cref{thm:variational_formula_pairing},
\begin{equation}
\label{eq:Bers_emb_pairing}
\left.D\pairingsymb\right|_{([Y],q_0)}\left[
\tv{X,\dot{Y}}{[Y]},[X,\Phi]_{q_0}
\right]
=
\langle [\ahlforsW{x_0}(\psi_0)],\dot{q}\rangle+
\langle [\ahlforsW{x_0}(\dot{\psi})],q_0\rangle.
\end{equation}

Since $\GuaidF([X])=\ahlforsW{x_0}(\dot{\varphi})$ and $\GuaidF([Y])=\ahlforsW{x_0}(\psi_0)$,
from \Cref{thm:Trivialization_pairing_TTT_g} and \Cref{prop:variational_formula_and_pairing},
the left-hand side of \eqref{eq:Bers_emb_pairing} is equal to
$$
\mathcal{P}_{\mathbb{TT}}|_{[X]}
\left(
\tv{Y,\dot{Y}+[X,Y]}{[X]}, \tvdag{\Phi,q_0}{[X]}
\right)
=
\langle [\ahlforsW{x_0}(\psi_0)],\dot{Q}\rangle+
\langle [\hat{\nu}],q_0\rangle
$$
where 
\begin{align*}
(\mathscr{T}_{x_0}\oplus \mathscr{T}_{x_0})
\circ \trivialization_{[X]}\left(\tv{Y,\dot{Y}+[X,Y]}{[X]}\right)
&=([\ahlforsW{x_0}(\psi_0)],[\hat{\nu}])
\in T_{x_0}\teich_g\oplus T_{x_0}\teich_g \\
\trivialization^{\dagger}_{[X]}\left(\tvdag{\Phi,q_0}{[X]}\right)
&=(\dot{Q},q_0)\in \mathcal{Q}_{x_0}\oplus \mathcal{Q}_{x_0}
\end{align*}
and these trivializations are defined with the guiding frame $\GuaidF$.
Notice from the discussion around \eqref{eq:mu3} that $\hat{\nu}=\ahlforsW{x_0}(\dot{\psi})$. Therefore, we conclude
$$
\langle [\ahlforsW{x_0}(\psi_0)],\dot{q}\rangle=\langle [\ahlforsW{x_0}(\psi_0)],\dot{Q}\rangle
$$
and
$\dot{Q}=\dot{q}$ since we can choose $\psi_0$ arbitrarilly.

We summarize the above discussion as follows.
\begin{proposition}
Under the trivialization $T^*\mathcal{Q}_g\cong \teich_g\times \mathcal{Q}_{x_0}$ defined in \S\ref{subsec:trivialization_tangent_bundle_cotangent_bundle}, the tangent space $H^1(M_0,\mathbb{L}_{q_0})$ at $q_0\in \mathcal{Q}_{x_0}\subset \mathcal{Q}_g$ is trivialized by the trivialization in \Cref{thm:Trivialization_T_Tstar} induced by the guiding frame $\GuaidF=\ahlforsW{x_0}\circ D\Bersemb_{x_0}|_{x_0}\circ \mathscr{T}_{x_0}$ defined from the Ahlfors-Weill section.
\end{proposition}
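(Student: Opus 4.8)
The statement to prove is that the trivialization of $T_{q_0}\mathcal{Q}_g = {\bf H}^1(M_0,\mathbb{L}_{q_0})$ provided by \Cref{thm:Trivialization_T_Tstar} when the guiding frame is the Ahlfors--Weill guiding frame $\GuaidF=\ahlforsW{x_0}\circ D\Bersemb_{x_0}|_{x_0}\circ \mathscr{T}_{x_0}$ agrees with the trivialization $T^*\teich_g\cong \teich_g\times \mathcal{Q}_{x_0}$ coming from the Bers embedding that was set up in \S\ref{subsec:trivialization_tangent_bundle_cotangent_bundle}. The content of this is exactly what has just been extracted as the unnumbered \Cref{proposition} environment at the end of the excerpt, whose proof is (essentially) complete in the paragraphs immediately preceding it. So the plan is to package that computation. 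First I would recall the two trivializations explicitly: on the one hand, a holomorphic family $(\varphi_t,q_t)=(t\dot\varphi, q_0+t\dot q)+o(t)\in \teich_g\times\mathcal{Q}_{x_0}$ represents a tangent vector $[X,\Phi]_{q_0}\in {\bf H}^1(\mathcal{U},\mathbb{L}_{q_0})$ whose image under the Bers trivialization is, by definition, the pair $(\dot\varphi,\dot q)$ (after applying $D\Bersemb_{x_0}|_{x_0}\circ\mathscr{T}_{x_0}$ on the base factor); on the other hand, \Cref{thm:Trivialization_T_Tstar} produces $\trivialization^{*}_{q_0}([X,\Phi]_{q_0}) = ([X],\{\Phi_i-L_{\GoodS(X)_i}(q_0)+Q\}_{i\in I})$ with $Q$ the unique smooth quadratic differential satisfying \eqref{eq:condition_Q} and the uniqueness condition \eqref{eq:uniqueness_lift_Q}. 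The claim is that, under the pairing identifications, the second coordinate of the second trivialization equals $\dot q$.

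The key step is to compare both trivializations through the variational formula of the pairing. Concretely I would take a second holomorphic curve $(\varphi_t,\psi_t)=(t\dot\varphi,\psi_0+t\dot\psi)+o(t)\in\teich_g\times B_2$ representing $\tv{X,\dot Y}{[Y]}\in\mathbb{T}_{[Y]}[\mathcal{U}]$ with $\GuaidF([Y])=\ahlforsW{x_0}(\psi_0)$, and compute $D\pairingsymb|_{([Y],q_0)}[\tv{X,\dot Y}{[Y]},[X,\Phi]_{q_0}]$ in two ways. Computing it via the Bers trivializations of $T\teich_g$ and $T^*\teich_g$ and \Cref{thm:variational_formula_pairing} gives $\langle[\ahlforsW{x_0}(\psi_0)],\dot q\rangle + \langle[\ahlforsW{x_0}(\dot\psi)],q_0\rangle$ (equation \eqref{eq:Bers_emb_pairing}). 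Computing it via \Cref{prop:variational_formula_and_pairing} rewrites the left-hand side as $\mathcal{P}_{\mathbb{TT}}|_{[X]}(\tv{Y,\dot Y+[X,Y]}{[X]},\tvdag{\Phi,q_0}{[X]})$, and then \Cref{thm:Trivialization_pairing_TTT_g} (with the Ahlfors--Weill guiding frame) turns that into $\langle[\ahlforsW{x_0}(\psi_0)],\dot Q\rangle + \langle[\hat\nu],q_0\rangle$, where $(\dot Q,q_0)=\trivialization^{\dagger}_{[X]}(\tvdag{\Phi,q_0}{[X]})$ — and where, crucially, $\trivialization^{\dagger}$ and $\trivialization^{*}$ are the ``same'' trivialization in the sense spelled out in \S\ref{sec:duals_in_model_spaces} (via the non-degenerate pairing), so $\dot Q$ is exactly the quadratic differential produced by \Cref{thm:Trivialization_T_Tstar}. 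Invoking the identification $\hat\nu=\ahlforsW{x_0}(\dot\psi)$ established around \eqref{eq:mu3} in the proof of \Cref{thm:main1}, the two $q_0$-terms cancel, and one is left with $\langle[\ahlforsW{x_0}(\psi_0)],\dot q\rangle = \langle[\ahlforsW{x_0}(\psi_0)],\dot Q\rangle$ for all $\psi_0$; non-degeneracy of the pairing $B_2\times\mathcal{Q}_{x_0}\ni(\varphi,q)\mapsto\langle[\ahlforsW{x_0}(\varphi)],q\rangle$ then forces $\dot Q=\dot q$, which is the assertion.

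The main obstacle I expect is purely bookkeeping: making sure that the three trivializations in play — the Bers trivialization of $T^*\teich_g$ over the basepoint, the trivialization $\trivialization^{\dagger}_{[X]}$ of $\mathbb{T}^\dagger_{[X]}[\mathcal{U}]$ from \Cref{thm:trivialization_cotangent_to_tangent}, and the trivialization $\trivialization^{*}_{q_0}$ of ${\bf H}^1(\mathcal{U},\mathbb{L}_{q_0})$ from \Cref{thm:Trivialization_T_Tstar} — are matched up consistently through the dual-pairing identifications of \S\ref{sec:duals_in_model_spaces}, and that the Ahlfors--Weill guiding frame is the one used uniformly everywhere (so that $\GoodS(X)_i=\xi^0_i=-T(\mu^1_i)$ etc., exactly as in the proof of \Cref{thm:main1}). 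There is no genuinely new analytic input; everything reduces to combining \Cref{thm:variational_formula_pairing}, \Cref{prop:variational_formula_and_pairing}, \Cref{thm:Trivialization_pairing_TTT_g}, \Cref{thm:main1}, and the non-degeneracy of the relevant pairings, so the write-up is a matter of arranging these citations in the order above and letting the $q_0$-terms cancel.
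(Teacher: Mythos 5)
Your proposal is correct and follows essentially the same route as the paper: the paper's own argument is precisely the double computation of $D\pairingsymb|_{([Y],q_0)}$ — once through the Bers trivializations via \Cref{thm:variational_formula_pairing} to get $\langle[\ahlforsW{x_0}(\psi_0)],\dot q\rangle+\langle[\ahlforsW{x_0}(\dot\psi)],q_0\rangle$, and once through \Cref{prop:variational_formula_and_pairing} and \Cref{thm:Trivialization_pairing_TTT_g} to get $\langle[\ahlforsW{x_0}(\psi_0)],\dot Q\rangle+\langle[\hat\nu],q_0\rangle$ — followed by the identification $\hat\nu=\ahlforsW{x_0}(\dot\psi)$ from the proof of \Cref{thm:main1} and the arbitrariness of $\psi_0$ to conclude $\dot Q=\dot q$. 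Your bookkeeping of the dual-pairing identifications matches what the paper does, so no gap remains.
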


\subsection{Models of cotangent spaces}
\label{subsec:models_cotangent_spaces}
As we discussed in \S\ref{sec:duals_in_model_spaces}, 
the trivializations of $\mathbb{T}_{[Y]}[\mathcal{U}]$ and ${\bf H}^{1,\dagger}(\mathcal{U},\mathbb{L}_{q_0})$ ($[Y]\in H^1(\mathcal{U},\Theta_{M_0})$ and $q_0\in \mathcal{Q}_{x_0}$) defined in \Cref{thm:trivialization_cotangent_to_tangent} and \Cref{thm:Trivialization_T_Tstar} are thought of as the isomorphisms between the dual spaces. Namely, the trivializations are natural in terms of the pairings. For the record, we summarize the following.
\begin{proposition}[Models of duals are models of cotangent spaces]
\label{thm:dual_spaces_are_duals}
The trivializations defined by the Ahlfors-Weill guiding frames gives natural identifications
\begin{align*}
\mathbb{T}_{[Y]}^\dagger[\mathcal{U}]
&\to T^*_{[Y]}T\teich_g\cong \mathcal{Q}_{x_0}\oplus \mathcal{Q}_{x_0}
\\
{\bf H}^{1,\dagger}(\mathcal{U},\mathbb{L}_{q_0})
&\to
T^*_{q_0}T^*\teich_g
\cong \mathcal{Q}_{x_0}\oplus H^1(\mathcal{U},\mathbb{L}_{q_0}).
\end{align*}
\end{proposition}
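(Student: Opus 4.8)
The statement to prove is Proposition \ref{thm:dual_spaces_are_duals}: that the trivializations of $\mathbb{T}^\dagger_{[Y]}[\mathcal{U}]$ and ${\bf H}^{1,\dagger}(\mathcal{U},\mathbb{L}_{q_0})$ induced by the Ahlfors-Weill guiding frame $\GuaidF=\ahlforsW{x_0}\circ D\Bersemb_{x_0}|_{x_0}\circ \mathscr{T}_{x_0}$ are precisely the trivializations of $T^*_{[Y]}T\teich_g$ and $T^*_{q_0}T^*\teich_g$ coming from the Bers embedding and duality. The plan is to assemble this from three ingredients already in hand: (i) \Cref{thm:main2} (and the commutative diagram \eqref{eq:commutative_DT} in \Cref{thm:main1}) identifying $\mathbb{T}_{[Y]}\cong T_{[Y]}T\teich_g$ and its model trivialization with the Bers trivialization $\mathbb{T}_{[Y]}[\mathcal{U}]\to B_2\times B_2$ via \eqref{eq:Bers_trivialization_model}; (ii) \Cref{prop:some_covering_hypercohomology} and Remark \ref{remark:differential_projection_cotangent} identifying ${\bf H}^1(\mathcal{U},\mathbb{L}_{q_0})\cong T_{q_0}\mathcal{Q}_g$ together with the trivialization discussed in the previous subsection; and (iii) the non-degeneracy results \Cref{prop:non-degenerate} and \Cref{prop:non-degenerate_cotang} together with the pairing presentations \Cref{thm:Trivialization_pairing_TTT_g} and \Cref{thm:pairing_cotangent_over_cotangent}.

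First I would recall the purely linear-algebraic fact (cited from \cite{MR1009162}) that a non-degenerate bilinear pairing $B\colon V\times V^\dagger\to\mathbb{C}$ induces a canonical isomorphism $V^\dagger\cong V^*$, and that for a direct sum $V=V_1\oplus V_2$ with $V^\dagger=V_1^\dagger\oplus V_2^\dagger$ and pairing $((v_1,v_2),(w_1,w_2))\mapsto B_1(v_1,w_1)+B_2(v_2,w_2)$ one gets $(V_1\oplus V_2)^*\cong V_1^*\oplus V_2^*$ compatibly. Then I would apply this twice. For $\mathbb{T}^\dagger_{[Y]}[\mathcal{U}]$: the model pairing $\mathcal{P}_{\mathbb{TT}}$ is non-degenerate (\Cref{prop:non-degenerate}), and under the guiding-frame trivializations $\trivialization_{[Y]}$ and $\trivialization^\dagger_{[Y]}$ it takes, by \Cref{thm:Trivialization_pairing_TTT_g}, the split form $\langle[\mu],\varphi\rangle+\langle[\hat\nu],q\rangle$ on $\big(H^1(\mathcal{U},\Theta_{M_0})^{\oplus2}\big)\times\big(\mathcal{Q}_{x_0}^{\oplus2}\big)$, using on each factor the residue pairing that identifies $\mathcal{Q}_{x_0}$ with $T^*_{x_0}\teich_g$. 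Since $\mathbb{T}_{[Y]}$ is (via \Cref{thm:main2}) the double tangent space $T_{[Y]}T\teich_g$ with model trivialization equal to the Bers trivialization onto $B_2\times B_2$, the dual trivialization $\trivialization^\dagger_{[Y]}\colon \mathbb{T}^\dagger_{[Y]}[\mathcal{U}]\to\mathcal{Q}_{x_0}\oplus\mathcal{Q}_{x_0}$ is exactly the trivialization of $T^*_{[Y]}T\teich_g$ dual to the Bers trivialization. For ${\bf H}^{1,\dagger}(\mathcal{U},\mathbb{L}_{q_0})$: the pairing $\paircot$ is non-degenerate (\Cref{prop:non-degenerate_cotang}) and under the guiding-frame trivializations $\trivialization^*_{q_0}$, $\trivialization^{*\dagger}_{q_0}$ takes, by \Cref{thm:pairing_cotangent_over_cotangent}, the split form $\langle[\mu],\Psi\rangle+\langle[\nu],\psi\rangle$ on $\big(H^1(\mathcal{U},\Theta_{M_0})\oplus\mathcal{Q}_{x_0}\big)\times\big(\mathcal{Q}_{x_0}\oplus H^1(\mathcal{U},\Theta_{M_0})\big)$; since the previous subsection's proposition shows $\trivialization^*_{q_0}$ is the Bers-type trivialization of $T_{q_0}\mathcal{Q}_g\cong T_{q_0}T^*\teich_g$, the linear-algebra lemma yields that $\trivialization^{*\dagger}_{q_0}\colon {\bf H}^{1,\dagger}(\mathcal{U},\mathbb{L}_{q_0})\to\mathcal{Q}_{x_0}\oplus H^1(\mathcal{U},\mathbb{L}_{q_0})$ is the dual trivialization of $T^*_{q_0}T^*\teich_g$.

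The remaining point is bookkeeping: one must check that the identification $\mathbb{T}^\dagger_{[Y]}[\mathcal{U}]\cong T^*_{[Y]}T\teich_g$ being asserted is indeed the one coming from the abstract duality $T^*(T\teich_g)=(T(T\teich_g))^*$ and not merely an abstract isomorphism. This amounts to verifying that the pairing $\mathcal{P}_{\mathbb{TT}}$ on the model agrees, under the isomorphisms $\isomorphism^{x_0}_{[Y]}$ and $\trivialization^*_{q_0}$, with the canonical pairing $\mathcal{P}_{T\teich_g}$ between $T(T\teich_g)$ and $T^*(T\teich_g)$; but this is exactly the content built into \Cref{thm:Trivialization_pairing_TTT_g} combined with \Cref{prop:variational_formula_and_pairing} and \Cref{thm:main1}, so no new computation is needed. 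I expect the main (though mild) obstacle to be purely notational: carefully tracking which copy of $H^1(\mathcal{U},\Theta_{M_0})$ or $\mathcal{Q}_{x_0}$ sits in which slot across the horizontal-versus-vertical splitting in \eqref{eq:cotangent_space_tangent_bundle_cohom_class} and \eqref{eq:cotangent_space_tangent_bundle_cycle}, and confirming that the vertical-horizontal exact sequences are respected — i.e. that the horizontal inclusion and vertical projection on the model side match the pull-back $(\Pi^\dagger)^*$ and vertical projection $\verticalprojop$ of \S\ref{sec:cotangent_to_tangent_bundle} and \S\ref{sec:tangent_on_cotangent_bundles} under the trivializations. Once that diagram-chase is done, the proposition follows formally from non-degeneracy of the two model pairings.
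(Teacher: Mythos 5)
Your proposal is correct and follows essentially the same route as the paper: the paper likewise derives the proposition from the non-degeneracy of the model pairings, the split form of those pairings under the guiding-frame trivializations (\Cref{thm:Trivialization_pairing_TTT_g}, \Cref{thm:pairing_cotangent_over_cotangent}), the elementary duality of direct sums, and the prior identification of the non-dagger models with the actual tangent spaces via the Ahlfors--Weill/Bers trivializations. No gap to report.
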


\chapter{Flip, Switch, Dualization and Lie bracket}
\label{chap:flip_switch_dual_lie}

The purpose of this chapter is to define the models of the flip, the switch, and the dualizaiton for the models of the second order infinitesimal spaces.
We also give a presentation of the Lie bracket for $C^1$-vector fields of type $(1,0)$ on the Teichm\"uller space.

\section{Model spaces over underlying manifolds}
For a complex manifold $M$, the flip on $TTM$, the switch from $TT^*\!M$ to $T^*TM$, and the dualization between $T^*\!T^*\!M$ and $TT^*\!M$ preserves the fibers over points of $M$. From this observation, we define
\begin{align*}
(\mathbb{T}\mathbb{T})_{x_0}[\mathcal{U}]
&=
\left\{
\left(
[Y],\tv{X,\dot{Y}}{[Y]}
\right)\mid [Y]\in H^1(\mathcal{U},\Theta_{M_0}), \tv{X,\dot{Y}}{[Y]}\in \mathbb{T}_{[Y]}[\mathcal{U}]
\right\}
\\
(\mathbb{T}^\dagger\mathbb{T})_{x_0}[\mathcal{U}]
&=
\left\{
\left(
[Y],\tvdag{\psi,q}{[Y]}
\right)\mid [Y]\in H^1(\mathcal{U},\Theta_{M_0}), \tvdag{\psi,q}{[Y]}\in \mathbb{T}^\dagger_{[Y]}[\mathcal{U}]
\right\}
\\
(\mathbb{T}\mathbb{T}^\dagger)_{x_0}[\mathcal{U}]
&=
\left\{
\left(
q,[X,\varphi]_q
\right)\mid q\in \mathcal{Q}_{x_0}, [X,\varphi]_q\in {\bf H}^1(\mathcal{U},\mathbb{L}_q)
\right\}
\\
(\mathbb{T}^\dagger\mathbb{T}^\dagger)_{x_0}[\mathcal{U}]
&=
\left\{
\left(
q,[\Phi,Y]_q
\right)\mid q\in \mathcal{Q}_{x_0}, [\Phi,Y]_q\in {\bf H}^{1,\dagger}(\mathcal{U},\mathbb{L}_q)
\right\}.
\end{align*}
where $x_0=(M_0,f_0)\in \teich_g$ and $\mathcal{U}$ be a locally finite covering of $M_0$
with $H^1(U_i,\Theta_{M_0})=0$ for $i\in I$.

Throughtout this chapter, we fix a guiding frame $\GuaidF$ and the good section $\GoodS$ for trivializations (cf. \S\ref{sec:good_section} and \Cref{thm:trivialization}).
From \Cref {thm:trivialization}, \Cref{thm:trivialization_cotangent_to_tangent}, and \Cref{thm:Trivialization_T_Tstar}, we have the following bijections:
\begin{align*}
\trivialization_{\mathbb{T}}
&
\colon (\mathbb{T}\mathbb{T})_{x_0}[\mathcal{U}]
\to H^1(\mathcal{U},\Theta_{M_0})^{\oplus 3} \\
\trivialization^\dagger_{\mathbb{T}}
&
\colon (\mathbb{T}^\dagger\mathbb{T})_{x_0}[\mathcal{U}]
\to H^1(\mathcal{U},\Theta_{M_0}) \oplus\mathcal{Q}_{x_0}\oplus \mathcal{Q}_{x_0}\\
\trivialization^*_{\mathbb{T}}
&
\colon (\mathbb{T}\mathbb{T}^\dagger)_{x_0}[\mathcal{U}]
\to \mathcal{Q}_{x_0}\oplus H^1(\mathcal{U},\Theta_{M_0}) \oplus\mathcal{Q}_{x_0}\\
\trivialization^{*\dagger}_{\mathbb{T}}
&
\colon (\mathbb{T}\mathbb{T}^\dagger)_{x_0}[\mathcal{U}]
\to \mathcal{Q}_{x_0}\oplus\mathcal{Q}_{x_0}\oplus H^1(\mathcal{U},\Theta_{M_0})
\end{align*}
by
\begin{align*}
\trivialization_{\mathbb{T}}([Y],\tv{X,\dot{Y}}{[Y]})
&=\left([Y],\trivialization_{[Y]}\left(\tv{X,\dot{Y}}{[Y]}\right)\right) \\
\trivialization^\dagger_{\mathbb{T}}([Y],\tvdag{\psi,q}{[Y]}
&=\left([Y],\trivialization^\dagger_{[Y]}\left(\tvdag{\psi,q}{[Y]}\right)\right) \\
\trivialization^*_{\mathbb{T}}(q,[X,\varphi]_q)
&=\left(q,\trivialization^*_{q}\left([X,\varphi]_q\right)\right) \\
\trivialization^{*\dagger}_{\mathbb{T}}(q,[\Phi,Y]_q)
&=\left(q,\trivialization^{*\dagger}_{q}\left([\Phi,Y]_q\right)\right).
\end{align*}

\section{Flip}
\label{sec:involutive_automorphism}
In this section, we discuss the flip on the double tangent space.

\subsection{Action on the model spaces}
We define
\begin{align*}
(\mathbb{Z}\mathbb{Z})_{x_0}[\mathcal{U}]
&=
\left\{
\left(
Y,(X,\dot{Y})
\right)\mid Y\in Z^1(\mathcal{U},\Theta_{M_0}), (X,\dot{Y})\in \ker(D^Y_1)
\right\},
\end{align*}
where $D^Y_1$ is a linear map defined in \S\ref{subsec:model_space_definition}.
We claim
\begin{proposition}
\label{prop:involution_cocycles}
Let $\mathcal{U}$ be a locally finite covering of $M_0$.
We define $\canoflip_{\mathbb{Z},x_0}\colon
(\mathbb{Z}\mathbb{Z})_{x_0}[\mathcal{U}]\to (\mathbb{Z}\mathbb{Z})_{x_0}[\mathcal{U}]$ and $\canoflip_{\mathbb{T}}=\canoflip_{\mathbb{T},x_0}\colon(\mathbb{T}\mathbb{T})_{x_0}[\mathcal{U}]\to (\mathbb{T}\mathbb{T})_{x_0}[\mathcal{U}]$ by
\begin{align}
\canoflip_{\mathbb{Z},x_0}
\left(Y,(X,\dot{Y})\right)
&=\left(X,(Y,\dot{Y}+[X,Y])\right)
\label{eq:flip1}
\\
\canoflip_{\mathbb{T},x_0}
\left([Y],\tv{X,\dot{Y}}{[Y]}\right)
&=\left([X],\tv{Y,\dot{Y}+[X,Y]}{[X]}\right).
\label{eq:flip2}
\end{align}
Then, $\canoflip_{\mathbb{Z},x_0}$ and $\canoflip_{\mathbb{T},x_0}$ are well-defined self-maps on $(\mathbb{ZZ})_{x_0}[\mathcal{U}]$ and $(\mathbb{TT})_{x_0}[\mathcal{U}]$ satisfying the commutative diagram:
$$
\begin{CD}
(\mathbb{ZZ})_{x_0}[\mathcal{U}] @>{\canoflip_{\mathbb{Z}}}>>  (\mathbb{ZZ})_{x_0}[\mathcal{U}]  \\
@VVV @VVV \\
(\mathbb{ZZ})_{x_0}[\mathcal{U}]  @>{\canoflip_{\mathbb{T}}}>>  (\mathbb{TT})_{x_0}[\mathcal{U}],
\end{CD}
$$
where vertical arrows mean the natural projection.
\end{proposition}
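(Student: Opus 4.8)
<br>

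The plan is to verify the two claimed well-definedness statements and the commutativity of the diagram by reducing everything to straightforward algebraic manipulations with cocycles, using the structural results already established.

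First I would prove that $\canoflip_{\mathbb{Z},x_0}$ is a well-defined self-map on $(\mathbb{ZZ})_{x_0}[\mathcal{U}]$. Given $(Y,(X,\dot{Y}))$ with $Y\in Z^1(\mathcal{U},\Theta_{M_0})$ and $(X,\dot{Y})\in\ker(D^Y_1)$, I must check that $X\in Z^1(\mathcal{U},\Theta_{M_0})$ (immediate, since $X$ appears in the definition of $\ker(D^Y_1)$ only through the cocycle condition built into $D^Y_1$; more precisely, the first component $\dot Y + \dot Y^* + [X,Y]=0$ together with the second component forces $X$ to be a cocycle once $Y$ is --- alternatively this is guaranteed by the construction in \S\ref{sec:families_charts_cochains}) and that $(Y,\dot{Y}+[X,Y])\in\ker(D^X_1)$. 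The latter is exactly the content of \Cref{prop:cocycle_conditions}: setting $\dot Z = \dot Y + [X,Y]$, the symmetry $[X_{ij},Y_{ij}]=[X_{ji},Y_{ji}]$ gives $\dot Z_{ij}+\dot Z_{ji}+[Y_{ij},X_{ij}]=0$, which is the first defining equation of $D^X_1$ (with the roles of $X$ and $Y$ swapped); and the second equation, $\delta(\dot Z + \tfrac12[Y,X])-\zeta(Y,X)=0$, follows because $\dot Z + \tfrac12[Y,X] = \dot Y + \tfrac12[X,Y]$ and $\zeta(Y,X)=\zeta(X,Y)$, so this is precisely the second defining equation of $D^Y_1$ applied to $(X,\dot Y)$. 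I would also record that $\canoflip_{\mathbb{Z},x_0}$ is an involution: applying it twice sends $(Y,(X,\dot Y))$ to $(Y,(X,\dot Y + [X,Y] + [Y,X])) = (Y,(X,\dot Y))$ since $[X,Y]+[Y,X]=0$.

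Next I would descend to $(\mathbb{TT})_{x_0}[\mathcal{U}]$, i.e. show $\canoflip_{\mathbb{T},x_0}$ is well-defined on equivalence classes. Here I need: (i) if $Y'=Y+\delta\beta$ and $(X,\dot{Y}')=\tilde{\mathcal{L}}_{\beta;Y}(X,\dot{Y})=(X,\dot Y + K'(\beta,X))$ is the corresponding representative at $Y'$, then the flip takes this to a representative at $[X]$ equivalent to the flip of $(X,\dot Y)$; and (ii) if $(X,\dot Y)$ and $(X,\dot Y)+D^Y_0(\alpha,\gamma)$ are two representatives of the same class in $\mathbb{T}_Y[\mathcal{U}]$, their flips differ by an element of ${\rm Im}(D^X_0)$. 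For (ii), a direct computation: $D^Y_0(\alpha,\gamma)=(\delta\alpha,\delta\gamma+K(\alpha,Y))$, and adding this changes $X$ to $X+\delta\alpha$ and $\dot Y + [X,Y]$ to $\dot Y + [X,Y] + \delta\gamma + K(\alpha,Y) + [\delta\alpha,Y]$; using $K(\alpha,Y)=-\tfrac12[\partial\alpha,Y]+S(\alpha,Y)$ from \eqref{eq:K-bra-S} and comparing with $D^X_0(\text{something})$ plus an $\tilde{\mathcal{L}}$-adjustment (since the base cocycle $Y$ is now $Y$ but the flipped object lives over $X+\delta\alpha$), I would match this against the combination of $\tilde{\mathcal{L}}_{\alpha; X}$ and $D^{X+\delta\alpha}_0$ that \Cref{prop:isomorphism_tile_L} says governs equality in $\mathbb{T}_{[X]}[\mathcal{U}]$. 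For (i), the key identity is $K'(\beta,X) + \tfrac12[X,\delta\beta] - E(X,\delta\beta) \equiv 0$ modulo $\delta C^0$ from (c) of \Cref{lem:map_E}, or more elementarily that $\tilde{\mathcal{L}}_{\beta;Y}$ changes $\dot Y$ only through $K'(\beta,X)=S(\beta,X)+\tfrac12[\delta\beta,X]$, which upon flipping becomes a change by $K(\beta,X)$-type term that is exactly the $D^X_0$-adjustment needed. The commutativity of the square with the vertical projections is then automatic from the construction, since both $\canoflip_{\mathbb{Z},x_0}$ and $\canoflip_{\mathbb{T},x_0}$ are given by the same formula $(X,\dot Y)\mapsto(Y,\dot Y+[X,Y])$ applied before versus after passing to cohomology classes.

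The main obstacle I anticipate is bookkeeping in step (i)--(ii): tracking precisely how the two distinct change-of-basepoint operations ($\tilde{\mathcal{L}}_{\beta;Y}$, which moves the base cocycle $Y$, and the flip, which swaps the roles of $X$ and $Y$ so that $X$ becomes the new base) interact, and confirming that the resulting discrepancies land in ${\rm Im}(D^{X}_0)$ rather than merely in ${\rm Im}(D^{Y}_0)$. This is where the asymmetry between $K$ and $K'$ (equations \eqref{eq:K-bra-S}, \eqref{eq:K_prime-bra-S}) and the identity $\delta S(\alpha,Y)=\zeta(\delta\alpha,Y)$ from \eqref{eq:S-L-zeta1} must be invoked carefully; once those are in hand, everything reduces to the kind of Jacobi-identity and $\delta$-commutation computations already carried out in the proofs of \Cref{prop:vertical_space} and \Cref{prop:isomorphism_tile_L}, so no genuinely new idea is required.
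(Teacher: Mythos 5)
Your proposal is correct and follows essentially the same route as the paper: a direct check that $(Y,\dot Y+[X,Y])\in\ker(D^X_1)$ using the antisymmetry of the bracket and the symmetry $\zeta(X,Y)=\zeta(Y,X)$, followed by verifying compatibility with the two equivalence relations (adding $D_0^Y(\alpha,\gamma)$ and changing the base cocycle via $\tilde{\mathcal{L}}_{\beta;Y}$) through the elementary identity $K'(\alpha,Y)-K(\alpha,Y)=[\delta\alpha,Y]$, which is exactly how the paper matches the flipped discrepancies against $\tilde{\mathcal{L}}_{\alpha;X}$ and ${\rm Im}(D_0^{X+\delta\alpha})$. The only cosmetic slips are the appeal to \Cref{prop:cocycle_conditions} (which concerns cocycles from holomorphic families, not abstract elements of $\ker(D^Y_1)$ --- though you immediately redo the computation directly) and the unnecessary detour through \Cref{lem:map_E}; neither affects the argument.
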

\begin{proof}
We need to check that $\canoflip_{\mathbb{Z},x_0}$ and $\canoflip_{\mathbb{T},x_0}$ are well-defined.

Let $(X,\dot{Y})\in \ker(D^Y_1)$.  For $i$, $j\in I$,
\begin{align*}
&(\dot{Y}_{ij}+[X_{ij},Y_{ij}])+(\dot{Y}_{ij}+[X_{ij},Y_{ij}])+[Y_{ij},X_{ij}]
=\dot{Y}_{ij}+\dot{Y}_{ji}+[X_{ij},Y_{ij}]=0.
\\
&
\delta\left((\dot{Y}+[X,Y])+\dfrac{1}{2}[Y,X]\right)-\zeta(Y,X)
=
\delta\left((\dot{Y}+\dfrac{1}{2}[X,Y]\right)-\zeta(X,Y)=0,
\end{align*}
which means that $(Y,\dot{Y}+[X,Y])\in \ker(D^X_1)$. Therefore, $\canoflip_{\mathbb{Z},x_0}$ is well-defined.

Next, we check $\canoflip_{\mathbb{T},x_0}$ is well-defined.
For $\alpha$, $\beta\in C^0(\mathcal{U},\Theta_{M_0})$,
the second cooridnate of $\canoflip_{\mathbb{Z}}(Y, (X,\dot{Y})+D_0^Y(\alpha,\beta))$ is equal to
\begin{align*}
(Y,\dot{Y}+\delta\beta+K(\alpha,Y)+[X+\delta \alpha,Y])
&=(Y,\dot{Y}+[X,Y]+K'(\alpha,Y)+(0,\delta\beta) \\
&=\tilde{\mathcal{L}}_{\alpha;X}(Y,\dot{Y}+[X,Y])+D_0^{X+\delta\alpha}(0,\beta).
\end{align*}
Since $\tilde{\mathcal{L}}_{\alpha;X}(Y,\dot{Y}+[X,Y])\in \ker(D^{Y+\delta\alpha}_1)$,
from \Cref{prop:isomorphism_tile_L} and \Cref{def:model_space},
the equivalence class $\tv{Y,\dot{Y}+[X,Y]}{[X]}\in \mathbb{T}_{[X]}$ is well-defined from $\tv{X,\dot{Y}}{Y}\in \mathbb{T}_Y$.
Notice that $\tilde{L}_{\beta;Y}(X,\dot{Y})=(X,\dot{Y}+K'(\beta,X))$
($\in \ker(D^{Y+\delta\beta}_1)$) corresponds to
\begin{align*}
(Y+\delta\beta,\dot{Y}+K'(\beta,X)+[X,Y+\delta\beta])
&=(Y+\delta\beta,\dot{Y}+[\beta_i,X]+[X,Y])\\
&=(Y,\dot{Y}+[X,Y])+D_0^{X}(\beta,0).
\end{align*}
In the sequel, for $\tv{X,\dot{Y}}{[Y]}\in \mathbb{T}_{[Y]}$,
the equivalence class $\tv{Y,\dot{Y}+[X,Y]}{[X]}$ is well-defined in $\mathbb{T}_{[X]}$.
\end{proof}

\subsection{The action of the Flip $\canoflip_{\teich_g}$}
We define 
a self-map $\canoflip^{triv}_{\mathbb{T},x_0}$ of $(\mathbb{TT})_{x_0}[\mathcal{U}]$ by
\begin{align*}
\canoflip^{triv}_{\mathbb{T},x_0}([Y],[X],[W])
&=([X],[Y],[W]),
\end{align*}
where $\trivialization_{[Y]}$ is the trivialization defined in \Cref{thm:trivialization}.
We define $\isomorphism_{x_0}\colon (TT)_{x_0}\teich_g\to(\mathbb{TT})_{x_0}\teich_g$ by
$$
\isomorphism_{x_0}(v)
=
\left(
\mathscr{T}_{x_0}^{-1}(\Pi_{T\teich_g}(v)), 
\isomorphism_{\Pi_{T\teich_g}(v)}^{\Pi_{\teich_g}(\Pi_{T\teich_g}(v))}(v)
\right),
$$
where $\isomorphism_{\cdot}^{\cdot}$ is the isomorphism defined in \Cref{thm:main1}.

We discuss the flip on the double tangent space on Teichm\"uller space. For $x_0\in \teich_g$, let $\Bersemb_{x_0}\colon \teich_g\to B_2$ be the Bers embedding and $\Bers{x_0}$ the image (\S\ref{subsec:Bers_embedding}). Then the twice differentials of the Bers embedding $\Bersemb_{x_0}$ induces the canonical isomorphism
$$
TT\teich_g\to \Bers{x_0}\times B_2\times B_2\times B_2
$$
since $\Bers{x_0}$ is a domain in $B_2$ ($\cong \mathbb{C}^{3g-3}$). Under the identification, the action of the flip $\canoflip_{\teich_g}$ on $TT\teich_g$ is presented by
$$
\begin{CD}
\Bers{x_0}\times B_2\times B_2\times B_2 @>{\canoflip_{\teich_g}}>> \Bers{x_0}\times B_2\times B_2\times B_2 \\
(x,\psi_0,\dot{\varphi},\dot{\psi})
@>>>
(x,\dot{\varphi},\psi_0,\dot{\psi})
\end{CD}
$$
from the definition of the flip (cf. \S\ref{subsubsec:canonical_flip}).

Our model of the flip \eqref{eq:flip2} presents the action of the flip on the double tangent space. Actually we have the following.

\begin{theorem}[Flip]
\label{thm:canonical_flip}
The flip $\canoflip_{\mathbb{T}}$ on $(\mathbb{TT})_{x_0}[\mathcal{U}]$ satisfying the commutative diagram:
$$
\begin{CD}
(TT)_{x_0}\teich_g
@>{\isomorphism_{x_0}}>>
(\mathbb{TT})_{x_0}[\mathcal{U}] 
@>{\trivialization_{\mathbb{T}}}>> 
H^1(\mathcal{U},\Theta_{M_0})^{\oplus 3} \\
@V{\canoflip_{\teich_g}}VV
@V{\canoflip_{\mathbb{T}}}VV 
@VV{\canoflip^{triv}_{\mathbb{T}}}V \\
(TT)_{x_0}\teich_g
@>{\isomorphism_{x_0}}>>
(\mathbb{TT})_{x_0}[\mathcal{U}]
@>{\trivialization_{\mathbb{T}}}>>  
H^1(\mathcal{U},\Theta_{M_0})^{\oplus 3},
\end{CD}
$$
where $\canoflip_{\teich_g}$ is the flip on $TT\teich_g$.
\end{theorem}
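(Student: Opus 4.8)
\textbf{Proof proposal for \Cref{thm:canonical_flip}.}

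The plan is to establish the commutativity of the right-hand square first — namely $\trivialization_{\mathbb{T}}\circ \canoflip_{\mathbb{T}}=\canoflip^{triv}_{\mathbb{T}}\circ \trivialization_{\mathbb{T}}$ — and then to deduce the commutativity of the left-hand square, invoking \Cref{thm:canonical_flip}'s left triangle from \Cref{thm:main1} (more precisely, the commutative diagram \eqref{eq:commutative_DT}). For the right-hand square, I would unwind the definitions. Start with an element $\left([Y],\tv{X,\dot{Y}}{[Y]}\right)\in (\mathbb{TT})_{x_0}[\mathcal{U}]$ and a fixed representative $(X,\dot Y)\in\ker(D^Y_1)$. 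Apply $\trivialization_{\mathbb{T}}$, which by definition equals $\left([Y],[X],\left[\dot Y+\tfrac12[X,Y]-E(X,Y)\right]\right)$ via \eqref{eq:trivialization}. Then apply $\canoflip^{triv}_{\mathbb{T}}$, giving $\left([X],[Y],\left[\dot Y+\tfrac12[X,Y]-E(X,Y)\right]\right)$. On the other route, apply $\canoflip_{\mathbb{T}}$ first: by \eqref{eq:flip2} this gives $\left([X],\tv{Y,\dot Y+[X,Y]}{[X]}\right)$, and now applying $\trivialization_{[X]}$ (the trivialization at base cocycle $X$, again via \eqref{eq:trivialization}) produces $\left([X],[Y],\left[(\dot Y+[X,Y])+\tfrac12[Y,X]-E(Y,X)\right]\right)$. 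The two second components of the $H^1$-triple agree on the nose; so the whole verification reduces to the single identity
\begin{equation}
\left[\dot Y+\tfrac12[X,Y]-E(X,Y)\right]=\left[\dot Y+[X,Y]+\tfrac12[Y,X]-E(Y,X)\right]
\end{equation}
in $H^1(\mathcal{U},\Theta_{M_0})$. Since $[Y,X]=-[X,Y]$ componentwise, the right-hand side simplifies to $\dot Y+\tfrac12[X,Y]-E(Y,X)$, so what remains is exactly the symmetry $E(X,Y)=E(Y,X)$ modulo $\delta C^0(\mathcal{U},\Theta_{M_0})$, which is precisely item (d) of \Cref{lem:map_E}. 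I would also need to record the elementary bookkeeping that $\canoflip_{\mathbb{T}}$ and $\canoflip_{\mathbb{Z}}$ are well-defined, but that is already \Cref{prop:involution_cocycles}.

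For the left-hand square, the strategy is to transport the already-proven right-hand commutativity across the isomorphisms $\isomorphism_{x_0}$ and the Bers-embedding trivializations. Concretely, under the Ahlfors--Weill guiding frame the composite $(D\Bersemb_{x_0}|_{x_0})^{\oplus}\circ(\mathscr{T}_{x_0})^{\oplus}\circ\trivialization_{[Y]}\circ\isomorphism_{[Y]}^{x_0}$ is the identification $T_{(0,\psi_0)}T\Bers{x_0}=B_2\times B_2\to B_2\times B_2$ coming directly from the Bers embedding (this is the content of \eqref{eq:commutative_DT} and the discussion around \eqref{eq:Bers_trivialization_model}). Meanwhile the classical flip $\canoflip_{\teich_g}$ on $TT\Bers{x_0}=\Bers{x_0}\times B_2\times B_2\times B_2$ is just the coordinate swap $(x,\psi_0,\dot\varphi,\dot\psi)\mapsto(x,\dot\varphi,\psi_0,\dot\psi)$ by \S\ref{subsubsec:canonical_flip}, which is exactly $\canoflip^{triv}_{\mathbb{T}}$ read through these identifications (after applying $\mathscr{T}_{x_0}$ and $\ahlforsWhat{x_0}$ to pass between $H^1(\mathcal{U},\Theta_{M_0})$ and $B_2$). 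So the left square commutes because the outer rectangle commutes (by the classical flip's coordinate description) and the right square commutes (by the first paragraph), and $\trivialization_{\mathbb{T}}\circ\isomorphism_{x_0}$ is a bijection. One subtlety to handle carefully: the indexing base point of the trivialization changes from $Y$ to $X$ under the flip, so I must make sure $\isomorphism_{x_0}$ in the bottom row of the diagram is applied at the flipped tangent vector $\canoflip_{\teich_g}(v)$, whose underlying tangent vector corresponds to $[X]$; this is why the identity $E(X,Y)=E(Y,X)$ modulo coboundaries is exactly what is needed to make the base-change matching consistent.

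The main obstacle I anticipate is purely notational rather than conceptual: keeping straight which cocycle ($X$, $Y$, or $\dot Y+[X,Y]$) plays the role of the "base" for each trivialization, and making sure the constant $E$ is evaluated with arguments in the correct order. The genuine mathematical input — the symmetry of the primary-obstruction coboundary $E$ and the well-definedness of $\canoflip_{\mathbb{T}}$ — is already available as \Cref{lem:map_E}(d) and \Cref{prop:involution_cocycles}, so no new estimates or analytic work is required. I would therefore present the proof as: (i) quote \Cref{prop:involution_cocycles} for well-definedness; (ii) compute both ways around the right square, reduce to $E(X,Y)\equiv E(Y,X)$, and cite \Cref{lem:map_E}(d); (iii) transport through \eqref{eq:commutative_DT} and the coordinate description of $\canoflip_{\teich_g}$ to get the left square. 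A final sentence would note that, since all maps in sight are $\mathbb{C}$-linear isomorphisms, the diagram identifies $\canoflip_{\teich_g}$ with the model flip $\canoflip_{\mathbb{T}}$ as claimed.
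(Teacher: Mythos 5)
Your proposal is correct and follows essentially the same route as the paper: the right-hand square is verified by computing both composites and reducing, via the antisymmetry of the bracket, to the congruence $E(X,Y)\equiv E(Y,X)$ modulo $\delta C^0(\mathcal{U},\Theta_{M_0})$, which is exactly item (d) of \Cref{lem:map_E}; the left-hand square is then obtained from \Cref{thm:main1} (the diagram \eqref{eq:commutative_DT}) with the Ahlfors--Weill guiding frame. No gaps.
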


\begin{proof}
From \Cref{thm:main1}, the commutativity of the left-square of the diagram follows from that of the right-square by applying a special guiding frame defined in \eqref{eq:ahlfors_weill-frame}. Hence, we only check the commutativity of the right-square.

Let $([X],[W])\in H^1(\mathcal{U},\Theta_{M_0})^{\oplus 2}$ and $\tv{X,\dot{Y}}{Y}\in \mathbb{T}_Y[\mathcal{U}]$ with
$\trivialization_Y\left(\tv{X,\dot{Y}}{Y}\right)=([X],[W])$. Suppose $\trivialization_X\left(\tv{Y,\dot{Y}+[X,Y]}{X}\right)=([Y],[W'])$. Then
\begin{align*}
W&=\dot{Y}+\dfrac{1}{2}[X,Y]-E(X,Y) \\
W'&=\dot{Y}+[X,Y]+\dfrac{1}{2}[Y,X]-E(Y,X)=
\dot{Y}+\dfrac{1}{2}[X,Y]-E(Y,X)
\end{align*}
modulo $\delta C^0(\mathcal{U},\Theta_{M_0})$, where $\mathcal{U}$ is an appropriate covering on $M$. 
From (d) of \Cref{lem:map_E},
$$
W-W'=E(X,Y)-E(Y,X)\equiv 0
$$
modulo $\delta C^0(\mathcal{U},\Theta_{M_0})$.
Hence $[W]=[W']$ in $H^1(\mathcal{U},\Theta_{M_0})$.
\end{proof}

\subsection{Flip via Dolbeaut presentations}
\label{subsec:Flip_Dolbeaut}
From the \eqref{eq:eta_xi} 
and \Cref{coro:variational_pairing_Dol}, it is natural to define, in the cocycle level, the flip under the two Doubeaut-type presentations (given in \S\ref{sec:Dolbeaut_type_presentation})
for $\tvD{\mu,\dot{\nu}}{Y}\in \mathbb{T}_{Y}^{Dol}[\mathcal{U}]$ and $\tvDBel{\mu,\dot{\nu}}{\eta}\in \mathbb{T}_\eta^{Bel}[\mathcal{U}]$
by
\begin{align*}
\canoflip^{Dol}_{\mathbb{T}}(\tvD{\mu,\dot{\nu}}{Y})=
\tvD{-\overline{\partial}\eta, \dot{\nu}-\overline{\partial}[\xi,\eta]}{X}
\in \mathbb{T}_{X}^{Dol}[\mathcal{U}]
\\
\canoflip^{Bel}_{\mathbb{T}}(\tvDBel{\mu,\dot{\nu}}{\eta})=
\tvDBel{-\overline{\partial}\eta, \dot{\nu}-\overline{\partial}[\xi,\eta]}{\xi}
\in \mathbb{T}_{\xi}^{Bel}[\mathcal{U}]
\end{align*}
for $\mathbb{T}^{Dol}_{Y}[\mathcal{U}]$ and $\mathbb{T}^{Bel}_{\eta}[\mathcal{U}]$ , after fixing cochains $\xi$, $\eta\in C^0(\mathcal{U},\mathcal{A}^{0,0}(\Theta_{M_0}))$ with $\delta\xi=X$, $-\overline{\partial}\xi=\mu$, and $\delta\eta=Y$, where $[\xi,\eta]=\{[\xi_i,\eta_i]\}_{i\in I}$.

Indeed,
take a cochain $\dot{\eta}$ with $-\overline{\partial}\dot{\eta}=\dot{\nu}$.
Let
$\dot{Y}=\delta\dot{\eta}+K(\xi,Y)$.
Then
\begin{align*}
\dot{Y}+[X,Y]
&=
\delta\dot{\eta}+(K'(\xi,Y)-K(\eta,X))+K(\eta,X) \\
&=\delta\dot{\eta}+([\xi_j,\eta_j]-[\xi_j,\eta_i]-[\eta_i,\xi_j]+[\eta_i,\xi_i])+K(\eta,X) \\
&=\delta(\dot{\eta}+[\xi,\eta])+K(\eta,X).
\end{align*}
From \eqref{eq:flip2},
we deduce
$$
-\overline{\partial}(\dot{\eta}+[\xi,\eta])
=\dot{\nu}-\overline{\partial}[\xi,\eta]
$$
is the second coordinate of $\canoflip^{Dol}_{\mathbb{T}}(\tvD{\mu,\dot{\nu}}{Y})$ (and hence, of $\canoflip^{Bel}_{\mathbb{T}}(\tvDBel{\mu,\dot{\nu}}{\eta})$).
The commutatice diagrams like as those given in \Cref{prop:involution_cocycles} and \Cref{thm:canonical_flip} also hold in the Dolbeaut-type representations from above observations.
%



\section{Dualization, Pairing and Symplectic form}
\subsection{Trivialization}
Let $\mathcal{U}=\{U_i\}_{i\in I}$ be a locally finite covering with $H^1(U_i, \Theta_{M_0})=0$ for $i \in I$.
We define the model of the \emph{dualization} by
$$
\canoflipdagger_{\mathbb{T}}\colon 
{\bf H}^{1}(\mathcal{U},\mathbb{L}_{q_0})
\ni [X,\varphi]_{q_0}
\to
[\varphi,-X]_{q_0}
\in
{\bf H}^{1,\dagger}(\mathcal{U},\mathbb{L}_{q_0}).
$$
We already discussed the model $\paircot$ of the pairing between $T_{q_0}T^*\teich_g\cong {\bf H}^{1}(\mathcal{U},\mathbb{L}_{q_0})$ and $T^*_{q_0}T^*\teich_g\cong {\bf H}^{1,\dagger}(\mathcal{U},\mathbb{L}_{q_0})$, and the model $\omega_{\mathcal{Q}_g}$ of the symplectic form on $T_{q_0}T^*\teich_g\cong {\bf H}^{1}(\mathcal{U},\mathbb{L}_{q_0})$ in \Cref{sec:ModelTTster_TstarTstar}.

We set
\begin{align*}
\canoflipdagger_{\mathbb{T}}
&\colon
(\mathbb{T}\mathbb{T}^\dagger)_{x_0}[\mathcal{U}]
\to 
(\mathbb{T}^\dagger\mathbb{T}^\dagger)_{x_0}[\mathcal{U}]
\\
\canoflipdagger_{\mathbb{T}}{}^{triv}
&
\colon \mathcal{Q}_{x_0}\oplus H^1(\mathcal{U},\Theta_{M_0})\oplus \mathcal{Q}_{x_0}\to \mathcal{Q}_{x_0}\oplus
 \mathcal{Q}_{x_0}\oplus H^1(\mathcal{U},\Theta_{M_0})
\end{align*}
by
\begin{align*}
\canoflipdagger_{\mathbb{T}}(q,[X,\varphi]_q)&=
(q,[\varphi,-X]_q)
\\
\canoflipdagger_{\mathbb{T}}{}^{triv}(q,[X],\varphi)
&=(q,\varphi, -[X]).
\end{align*}
We can easily check that these maps are well-defined.
From the definitions and \Cref{thm:pairing_cotangent_over_cotangent}, we deduce the following
(cf. \Cref{prop:dualization_symplectic_form_M}).
\begin{theorem}[Dualization, Pairing and Symplectic form]
\label{thm:dualization_pairing_symplectic_form}
The diagram
$$
\begin{CD}
(\mathbb{T}\mathbb{T}^\dagger)_{x_0}[\mathcal{U}]
@>{\trivialization^{*}_{\mathbb{T}}}>>
\mathcal{Q}_{x_0}\oplus  H^1(\mathcal{U},\Theta_{M_0})\oplus \mathcal{Q}_{x_0} \\
@V{\canoflipdagger_{\mathbb{T}}}VV @VV{\canoflipdagger_{\mathbb{T}}{}^{triv}}V
\\
(\mathbb{T}\dagger\mathbb{T}^\dagger)_{x_0}[\mathcal{U}]
@>{\trivialization^{*\dagger}_{\mathbb{T}}}>> 
\mathcal{Q}_{x_0}\oplus\mathcal{Q}_{x_0}\oplus H^1(\mathcal{U},\Theta_{M_0})
\end{CD}
$$
 is commutative.
Furthermore,
$$
\paircot([X,\varphi]_q,\canoflipdagger_{\mathbb{T}}([X,\varphi']_q))
=2\omega_{\mathcal{Q}_g}([X',\varphi']_q,[X,\varphi]_q)
$$
holds for $[X,\varphi]_q$, $[X',\varphi']_q\in (\mathbb{T}\mathbb{T}^\dagger)_{x_0}[\mathcal{U}]$.
\end{theorem}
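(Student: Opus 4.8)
The plan is to verify both assertions by unwinding the explicit formulas for the trivializations in \Cref{thm:Trivialization_T_Tstar}, together with the definitions of $\canoflipdagger_{\mathbb{T}}$ and $\canoflipdagger_{\mathbb{T}}{}^{triv}$, and then to read off the ``furthermore'' from the residue expressions for $\paircot$ and $\omega_{\mathcal{Q}_g}$ supplied by \Cref{thm:pairing_cotangent_over_cotangent}. Throughout, $x_0=(M_0,f_0)$ and $\mathcal{U}=\{U_i\}_{i\in I}$ with $H^1(U_i,\Theta_{M_0})=0$, and $\GuaidF,\GoodS$ are the fixed guiding frame and good section used for all trivializations in the chapter.

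First I would prove the commutativity of the square. Fix $q\in\mathcal{Q}_{x_0}$ and a representative $(X,\varphi)$ of $[X,\varphi]_q\in{\bf H}^1(\mathcal{U},\mathbb{L}_q)$. Going right and then down, $\trivialization^*_{\mathbb{T}}$ sends $(q,[X,\varphi]_q)$ to $(q,[X],\{\varphi_i-L_{\GoodS(X)_i}(q)+Q\}_{i\in I})$, where $Q$ is the unique holomorphic quadratic differential with $\overline{\partial}Q_i=\overline{\partial}L_{\GoodS(X)_i}(q)$ and the normalization \eqref{eq:uniqueness_lift_Q}; applying $\canoflipdagger_{\mathbb{T}}{}^{triv}$ yields $(q,\{\varphi_i-L_{\GoodS(X)_i}(q)+Q\}_{i\in I},-[X])$. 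Going down and then right, $\canoflipdagger_{\mathbb{T}}$ sends $(q,[X,\varphi]_q)$ to $(q,[\varphi,-X]_q)$, and $\trivialization^{*\dagger}_{\mathbb{T}}$ then produces $(q,\{\varphi_i+L_{\GoodS(-X)_i}(q)+Q'\}_{i\in I},[-X])$, where $Q'$ is the unique quadratic differential with $\overline{\partial}Q'_i=-\overline{\partial}L_{\GoodS(-X)_i}(q)$ and the same normalization. Since $\GoodS$ is $\mathbb{C}$-linear (\Cref{prop:linear-map-L}) we have $\GoodS(-X)_i=-\GoodS(X)_i$, hence $L_{\GoodS(-X)_i}(q)=-L_{\GoodS(X)_i}(q)$ and the defining equation for $Q'$ becomes $\overline{\partial}Q'_i=\overline{\partial}L_{\GoodS(X)_i}(q)$; by uniqueness of the normalized lift, $Q'=Q$. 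The two composites therefore agree, and independence of the representative is already contained in the well-definedness of $\trivialization^*_{\mathbb{T}}$, $\trivialization^{*\dagger}_{\mathbb{T}}$, $\canoflipdagger_{\mathbb{T}}$.

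For the pairing identity, which is the model-space counterpart of \Cref{prop:dualization_symplectic_form_M}, I would apply $\canoflipdagger_{\mathbb{T}}$ to the second slot, writing $\canoflipdagger_{\mathbb{T}}([X',\varphi']_q)=[\varphi',-X']_q$, and use the trivialized formulas of \Cref{thm:pairing_cotangent_over_cotangent}. Let $\psi=\{\varphi_i-L_{\GoodS(X)_i}(q)+Q\}$ and $\psi'=\{\varphi'_i-L_{\GoodS(X')_i}(q)+Q'\}$ be the quadratic-differential components of $\trivialization^*_q([X,\varphi]_q)$ and $\trivialization^*_q([X',\varphi']_q)$. The key observation, exactly as in the commutativity argument, is that $\trivialization^{*\dagger}_q([\varphi',-X']_q)=(\psi',[-X'])$: the lift occurring in $\trivialization^{*\dagger}_q$ for $Y=-X'$ is governed by $\overline{\partial}Q'_i=-\overline{\partial}L_{\GoodS(-X')_i}(q)=\overline{\partial}L_{\GoodS(X')_i}(q)$, i.e. it is the same normalized lift $Q'$ as for $\trivialization^*_q([X',\varphi']_q)$. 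Substituting into the residue formula for $\paircot$ gives $-\pi\bigl({\rm Res}([\{X_{ij}\psi'_i\}])-{\rm Res}([\{X'_{ij}\psi_i\}])\bigr)$, and the residue formula for $\omega_{\mathcal{Q}_g}$ gives $2\omega_{\mathcal{Q}_g}([X',\varphi']_q,[X,\varphi]_q)=\pi\bigl({\rm Res}([\{X'_{ij}\psi_i\}])-{\rm Res}([\{X_{ij}\psi'_i\}])\bigr)$; the two coincide, as desired.

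I do not expect a genuinely deep obstacle here: the proof is essentially bookkeeping. The one place that demands real care is the identification $Q'=Q$ of the auxiliary normalized holomorphic quadratic differentials across the two \emph{different} trivializations $\trivialization^*$ and $\trivialization^{*\dagger}$; this rests on matching the $\overline{\partial}$-equations (after using $\mathbb{C}$-linearity of $\GoodS$) together with the uniqueness built into the normalization \eqref{eq:uniqueness_lift_Q} and \eqref{eq:condition_Q}. The secondary point requiring attention is the sign and factor bookkeeping — the minus sign in $\canoflipdagger_{\mathbb{T}}$, the factor $\tfrac12$ in Kawai's convention for $\omega_{\mathcal{Q}_g}$ (\Cref{prop:holomorphic_symplectic_form}), and the order of the arguments on the two sides — so that the ``$2$'' and the orientation come out exactly matching \Cref{prop:dualization_symplectic_form_M}.
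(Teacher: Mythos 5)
Your proposal is correct and follows essentially the same route as the paper, which gives no detailed argument but simply asserts that the theorem follows ``from the definitions and \Cref{thm:pairing_cotangent_over_cotangent}''; your write-up supplies exactly the bookkeeping the paper leaves implicit, and the key point you isolate — that $\mathbb{C}$-linearity of $\GoodS$ forces the normalized lifts $Q$ and $Q'$ for $[X,\varphi]_q$ and for $[\varphi,-X]_q$ to satisfy the same $\overline{\partial}$-equation and hence to coincide by the uniqueness condition \eqref{eq:uniqueness_lift_Q} — is precisely what makes the two composites and the two residue expressions match.
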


\section{Switch}
\label{sec:model_switch}
\subsection{Action on the model spaces}
We check the following (cf. \Cref{prop:Derivative_of_pairing_and_flip})
\begin{proposition}[Switch]
\label{prop:switch-cotangent-tangent}
The mapping
%
$$
\switch_{\mathbb{T}}\colon
(\mathbb{T}\mathbb{T}^\dagger )_{x_0}[\mathcal{U}]
\ni
(q,[X,\varphi]_q)
\mapsto
([X],\tvdag{\varphi,q}{[X]}) 
\in (\mathbb{T}^\dagger  \mathbb{T})_{x_0}[\mathcal{U}]
$$
is a well-defined bijection, and satisfies the following with the differential of the pairing:
\begin{equation}
\label{eq:differential_and_pairing}
D\mathcal{P}|_{(v,q)}[V_1,V_2]
=\mathcal{P}_{\mathbb{TT}}|_{[X]}(\canoflip_{\mathbb{T}}(V_1),\switch_{\mathbb{T}}(V_2))
\end{equation}
for $V_1=\tv{X,\dot{Y}}{Y}\in \mathbb{T}_{Y}[\mathcal{U}]\cong T_{[Y]}T\teich_g$, $V_2=[\varphi, X]\in {\bf H}^1(\mathcal{U},\mathbb{L}_{q_0})\cong T_{q_0}\mathcal{Q}_g$,
$[Y]\in H^1(\mathcal{U},\Theta_{M_0})\cong H^1(M_0,\Theta_{M_0})\cong T_{x_0}\teich_g$ and $q\in Q_{M_0}\cong T^*_{x_0}\teich_g$.
\end{proposition}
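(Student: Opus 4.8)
The plan is to verify \Cref{prop:switch-cotangent-tangent} in two parts: first the well-definedness and bijectivity of $\switch_{\mathbb{T}}$, and then the identity \eqref{eq:differential_and_pairing}. For the well-definedness, I would start from the representative level. Given $q\in \mathcal{Q}_{x_0}$ and $[X,\varphi]_q\in {\bf H}^1(\mathcal{U},\mathbb{L}_q)$, the cocycle condition reads $\delta\varphi = L_X(q)$, i.e. $\varphi_j-\varphi_i = L_{X_{ij}}(q)$. I need to check that $(\varphi,q)$ lies in $\ker(D^{X,\dagger}_1)$, where $D^{X,\dagger}_1(\psi,q) = (\delta\psi - L_X(q), \delta q)$: indeed $\delta\varphi - L_X(q) = 0$ by the hypercohomology cocycle relation, and $\delta q = 0$ since $q$ is a global quadratic differential. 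So $(\varphi,q)\in\ker(D^{X,\dagger}_1)$, hence $\tvdag{\varphi,q}{[X]}\in\mathbb{T}^\dagger_{[X]}[\mathcal{U}]$ makes sense. Then I must confirm independence of the choice of representative $(X,\varphi)$ in $[X,\varphi]_q$: if we replace $(X,\varphi)$ by $(X+\delta\alpha,\varphi+L_\alpha(q))$ for $\alpha\in C^0(\mathcal{U},\Theta_{M_0})$, then $[X+\delta\alpha]=[X]$ in $H^1(\mathcal{U},\Theta_{M_0})$, and I should check that $(\varphi+L_\alpha(q), q)$ represents the same class $\tvdag{\psi,q}{[Y]}$ after the identification $\mathcal{L}^\dagger_{\alpha;X}$ of \eqref{eq:equivalence_cotangent_to_tangent}; since $\mathcal{L}^\dagger_{\alpha;X}(\varphi,q) = (\varphi+L_\alpha(q),q)$ this is immediate. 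Bijectivity is then most cleanly obtained by exhibiting the inverse explicitly: $([X],\tvdag{\psi,q}{[X]})\mapsto (q,[X,\psi]_q)$, which is well-defined by the same computations run backwards, or alternatively by comparing the trivializations $\trivialization^*_{\mathbb{T}}$ and $\trivialization^\dagger_{\mathbb{T}}$ and noting that $\switch_{\mathbb{T}}$ corresponds to a linear isomorphism between $\mathcal{Q}_{x_0}\oplus H^1(\mathcal{U},\Theta_{M_0})\oplus \mathcal{Q}_{x_0}$ and $H^1(\mathcal{U},\Theta_{M_0})\oplus \mathcal{Q}_{x_0}\oplus \mathcal{Q}_{x_0}$ (a permutation of factors), using \Cref{thm:Trivialization_T_Tstar} and \Cref{thm:trivialization_cotangent_to_tangent}.

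For the identity \eqref{eq:differential_and_pairing}, the key observation is that this is essentially a repackaging of \Cref{prop:variational_formula_and_pairing} together with the definition of the flip $\canoflip_{\mathbb{T}}$ in \eqref{eq:flip2}. Concretely, \Cref{prop:variational_formula_and_pairing} gives
\[
D\mathcal{P}|_{([Y],q)}\left[\tv{X,\dot{Y}}{[Y]},[X,\varphi]_q\right]
=\mathcal{P}_{\mathbb{TT}}|_{[X]}\left(\tv{Y,\dot{Y}+[X,Y]}{[X]}, \tvdag{\varphi,q}{[X]}\right).
\]
By \eqref{eq:flip2}, the first argument on the right is precisely $\canoflip_{\mathbb{T}}(\tv{X,\dot{Y}}{[Y]})$ (recalling $\canoflip_{\mathbb{T},x_0}([Y],\tv{X,\dot{Y}}{[Y]}) = ([X],\tv{Y,\dot{Y}+[X,Y]}{[X]})$), and by the definition of $\switch_{\mathbb{T}}$ the second argument is $\switch_{\mathbb{T}}(q,[X,\varphi]_q)$ (more precisely its image-component $\tvdag{\varphi,q}{[X]}$). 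So the right-hand side is exactly $\mathcal{P}_{\mathbb{TT}}|_{[X]}(\canoflip_{\mathbb{T}}(V_1),\switch_{\mathbb{T}}(V_2))$, which is what we want. I would also note that the compatibility of base points holds: $V_1\in T_{[Y]}T\teich_g$ and $V_2\in T_q\mathcal{Q}_g$ sit over the point $(v,q)$ of the Whitney sum $T\teich_g\oplus \mathcal{Q}_g$ precisely when $D\Pi_{\teich_g}(V_1) = D\Pi^\dagger_{\teich_g}(V_2)$, which forces the $H^1$-component of $V_2$ to equal $[X]$ (via \Cref{remark:differential_projection_cotangent}); this is why writing $V_2 = [\varphi,X]$ with the same $X$ as in $V_1$'s $\dot{Y}$-data is legitimate in the statement.

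The main obstacle I anticipate is bookkeeping the identifications rather than any genuinely hard analysis: the model spaces $\mathbb{T}_{[Y]}[\mathcal{U}]$, ${\bf H}^1(\mathcal{U},\mathbb{L}_q)$, $\mathbb{T}^\dagger_{[Y]}[\mathcal{U}]$, ${\bf H}^{1,\dagger}(\mathcal{U},\mathbb{L}_q)$ are all defined as cohomology-type quotients, and the maps between them depend on how one threads the base-change isomorphisms $\mathcal{L}_{\beta;Y}$, $\mathcal{L}^\dagger_{\beta;Y}$ through the cocycle-level formulas. I would want to be careful that $\switch_{\mathbb{T}}$ respects these base-change identifications simultaneously on both sides — that is, the square relating $\switch_{\mathbb{Z}}$ at the cocycle level to $\switch_{\mathbb{T}}$ at the cohomology level commutes, analogous to the diagram in \Cref{prop:involution_cocycles} for the flip. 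Once that commutativity is in hand, \eqref{eq:differential_and_pairing} is a direct consequence of \Cref{prop:variational_formula_and_pairing} as above. I do not expect to need the explicit integral formula \eqref{eq:pairing_definition_TM_TstarM} beyond what is already encapsulated in \Cref{prop:variational_formula_and_pairing}, so the proof should be short and primarily structural.
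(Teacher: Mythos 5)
Your proposal is correct and follows essentially the same route as the paper: the well-definedness is checked at the representative level exactly as you describe (the cocycle relation $\delta\varphi=L_X(q)$ puts $(\varphi,q)$ in $\ker(D^{X,\dagger}_1)$, and a change of representative by $\alpha$ is absorbed by $\mathcal{L}^\dagger_{\alpha;X}$), bijectivity is read off from the definition, and \eqref{eq:differential_and_pairing} is deduced directly from \Cref{prop:variational_formula_and_pairing} combined with the formula \eqref{eq:flip2} for the flip. Your extra remarks on the explicit inverse and the base-point compatibility are consistent with, and slightly more detailed than, the paper's argument.
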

\begin{proof}
We only check the well-definedness of the map $\switch_{\mathbb{T}}$. The relation \eqref{eq:differential_and_pairing} follows from \Cref{prop:variational_formula_and_pairing}.

Let $(X,\varphi)$ and $(X',\varphi')$ be two representatives of $[X,\varphi]_q\in H^1(\mathcal{U},\mathbb{L}_{q})$.
There is an $\alpha\in C^0(\mathcal{U},\Theta_{M_0})$ such that $X'=X+\delta \alpha$, $\delta\varphi = L_X(q)$, and $\varphi'=\varphi+L_\alpha(q)$. Therefore,
$(\varphi,q)\in \ker(D^{X,\dagger}_1)$, 
$(\varphi',q)\in \ker(D^{X',\dagger}_1)$, 
and 
$$
(\varphi',q)=(\varphi+L_{\alpha}(q),q)=\mathcal{L}^\dagger_{\alpha;X}(\varphi,q)
$$
and hence $\tvdag{\varphi,q}{[X]}$ is well-defined in $\mathbb{T}^\dagger_{[X]}[\mathcal{U}]$.
The bijectivity follows from the definition.
%
\end{proof}
We define the map $\switch^{triv}_{\mathbb{T}}$ by
$$
\begin{CD}
\mathcal{Q}_{x_0}\oplus H^1(\mathcal{U},\Theta_{M_0})\oplus \mathcal{Q}_{x_0}
@>{\switch^{triv}_{\mathbb{T}}}>>
H^1(\mathcal{U},\Theta_{M_0})\oplus \mathcal{Q}_{x_0}\oplus \mathcal{Q}_{x_0}
\\
(q,[X],\psi)
@>>>
([X],\psi,q).
\end{CD}
$$
From the definition of the switch, we have the following commutative diagram:
$$
\begin{CD}
(\mathbb{T}\mathbb{T}^\dagger)_{x_0}[\mathcal{U}]
@>{\trivialization^{*}_{\mathbb{T}}}>>
\mathcal{Q}_{x_0}\oplus H^1(\mathcal{U},\Theta_{M_0})\oplus \mathcal{Q}_{x_0}
\\
@V{\switch_{\mathbb{T}}}VV @VV{\switch^{triv}_{\mathbb{T}}}V \\
(\mathbb{T}^\dagger\mathbb{T})_{x_0}[\mathcal{U}] @>{\trivialization^{\dagger}_{\mathbb{T}}}>>
H^1(\mathcal{U},\Theta_{M_0})\oplus \mathcal{Q}_{x_0}\oplus \mathcal{Q}_{x_0}
\end{CD},
$$
where the trivializations in the vertical directions of the diagram can be chosen arbitrarily.

\subsection{Maps on the second order infinitesimal spaces}
As noticed in \S\ref{sec:trivialization_via_Bers_embedding}, our model spaces $(\mathbb{T}\mathbb{T})_{x_0}[\mathcal{U}]$ and $(\mathbb{T}\mathbb{T}^*)_{x_0}[\mathcal{U}]$ are thought of as a model of $(TT)_{x_0}\teich_g$ and $(TT^*)_{x_0}\teich_g$, respectively, via the Bers embedding (or the Ahlfors-Weill section).
We denote by $\isomorphism^\dagger_{x_0}$ the natural identification from $(TT^*)_{x_0}\teich_g$ to $(\mathbb{T}\mathbb{T}^*)_{x_0}[\mathcal{U}]$.
As discussed in \S\ref{subsec:models_cotangent_spaces}, there are natural bijections
\begin{align*}
\isomorphism^*_{x_0} &\colon (T^*T)_{x_0}\teich_g \to (\mathbb{T}^*\mathbb{T})_{x_0}[\mathcal{U}] \\
\isomorphism^{*\dagger}_{x_0} &\colon (T^*T^*)_{x_0}\teich_g \to (\mathbb{T}^*\mathbb{T}^*)_{x_0}[\mathcal{U}]
\end{align*}
defined from the dual isomorphisms.

From \Cref{prop:switch-cotangent-tangent},
we have the following commutative diagram for the switch between $T^*T\teich_g$ and $TT^*\teich_g$:
\begin{equation}
\label{eq:flip_model}
\begin{CD}
(T^*T)_{x_0}\teich_g @>{\switch_{\teich_g}}>> (TT^*)_{x_0}\teich_g \\
@V{\cong}VV @V{\cong}VV \\
(\mathbb{T}^\dagger\mathbb{T})_{x_0}[\mathcal{U}] @>{\switch_{\mathbb{T}}}>> (\mathbb{T}\mathbb{T}^\dagger)_{x_0}[\mathcal{U}]
\end{CD}
\end{equation}
where two vertical arrows are canonical identification (cf. \S\ref{sec:flip_TstarTMand TTstarM} and \S\ref{sec:trivialization_via_Bers_embedding}).

\begin{remark}
For the record, we notice the following:
In the correspondence \eqref{eq:flip_model} that for $([X],\tvdag{\psi,q}{[X]})$ in the left-hand side, we first choose $X$ of a representative of $[X]$ and the representaive $(\psi,q)\in\ker(D^{X,\dagger}_1)$ of $\tvdag{\psi,q}{[X]}$. Then, $(X,\psi)$ is a cocycle in $Z^1(\mathcal{U},\mathbb{L}_q)$.
\end{remark}

%
%
%

\section{Lie bracket between vector fields}
\label{sec:lie_bracket}
Let $\mathcal{X}$ and $\mathcal{Y}$ be $C^1$-vector fields of type $(1,0)$ defined around $x_0$. We think of $\mathcal{X}$ and $\mathcal{Y}$ as smooth maps from a neighborhood of $x_0$ in $\teich_g$ to $T\teich_g$.

\begin{theorem}[Lie bracket]
\label{thm:lie_bracket}
Let $\mathcal{U}$ be a locally finite covering of $M$ satisfying that each $U_i$ is an embedded closed disk with smooth boundary.
Under the above notation,
let $\mathcal{X}_{x_0}=[X]$ and $\mathcal{Y}_{x_0}=[Y]\in T_{x_0}\teich_g\cong H^1(M_0,\Theta_{M_0})\cong H^1(\mathcal{U},\Theta_{M_0})$. Suppose that
\begin{align*}
\left(D\mathcal{X}(\mathcal{Y})_{[X]}\right)^{10}
&=\tv{Y, \dot{X}}{X}\in \mathbb{T}_X[\mathcal{U}]\cong \mathbb{T}_{[X]}[\mathcal{U}]
\cong T_{[X]}T\teich_g \\
\left(D\mathcal{Y}(\mathcal{X}\right)_{[Y]})^{01}
&=\tv{X, \dot{Y}}{Y}
\in \mathbb{T}_Y[\mathcal{U}]\cong \mathbb{T}_{[Y]}[\mathcal{U}]
\cong T_{[Y]}T\teich_g.
\end{align*}
Then, the Lie bracket of $\mathcal{X}$ and $\mathcal{Y}$ at $x$ is represented by
\begin{equation}
\label{eq:lie_bracket_formula}
\left[\mathcal{X},\mathcal{Y}\right]_{x_0}=[\dot{Y}-\dot{X}+[X,Y]]
\in H^1(M_0,\Theta_{M_0}).
\end{equation}
In particular, $\left[\mathcal{X},\mathcal{Y}\right]_{x_0}=0$ if and only if
$$
\dot{X}-\dot{Y}=[X,Y]+\delta\beta
$$
for some $\beta\in C^0(\mathcal{U},\Theta_{M_0})$.
\end{theorem}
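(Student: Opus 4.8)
<br>

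The plan is to reduce the statement to the formula for the flip on the double tangent space (\Cref{prop:Lie_bracket}, \Cref{prop:involution_cocycles}, \Cref{thm:canonical_flip}) combined with the Dolbeault-type description of the tangent bundle to $T\teich_g$. The key observation is that $\left(D\mathcal{X}(\mathcal{Y})_{[X]}\right)^{10}$ and $\left(D\mathcal{Y}(\mathcal{X})_{[Y]}\right)^{01}$ are, up to the flip, represented by comparable cocycles, so that the difference — after applying $\canoflip$ to one of them — lands in the vertical space and maps to the claimed cohomology class under the vertical isomorphism $\verticalincmodel{[Y]}$.

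First I would recall the abstract formula of \Cref{prop:Lie_bracket}: for $C^1$-vector fields $X,Y$ of type $(1,0)$ on a complex manifold $M$,
$$
(\verticalinc{M}{X_p})^{-1}\Big(
\canoflip_M\big(\proje^{10}(DY(X)_{Y_p})\big)
-\proje^{10}(DX(Y)_{X_p})
\Big)=[X,Y]_p.
$$
Applying this with $M=\teich_g$, $p=x_0$, and using the identification $\isomorphism_{x_0}\colon (TT)_{x_0}\teich_g\to (\mathbb{TT})_{x_0}[\mathcal{U}]$ of \Cref{thm:main1}, the second term $\proje^{10}(D\mathcal{X}(\mathcal{Y})_{[X]})$ becomes $\tv{Y,\dot{X}}{X}\in \mathbb{T}_X[\mathcal{U}]$ and the first term $\proje^{10}(D\mathcal{Y}(\mathcal{X})_{[Y]})$ becomes $\tv{X,\dot{Y}}{Y}\in \mathbb{T}_Y[\mathcal{U}]$ (the $(0,1)$ versus $(1,0)$ bookkeeping should be tracked carefully, but \Cref{prop:dot_X_Y} and \Cref{remark} after it show these are the natural cocycles). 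By \Cref{thm:canonical_flip} and \eqref{eq:flip2}, $\canoflip_{\mathbb{T}}$ sends $\tv{X,\dot{Y}}{[Y]}$ to $\tv{Y,\dot{Y}+[X,Y]}{[X]}\in \mathbb{T}_{[X]}[\mathcal{U}]$. Hence the bracketed difference is represented by
$$
\tv{Y,\dot{Y}+[X,Y]}{X}-\tv{Y,\dot{X}}{X}=\tv{Y,\dot{Y}+[X,Y]-\dot{X}}{X}
$$
computed in $\mathbb{T}_X[\mathcal{U}]$ via \Cref{remark:sum_double_tangent_space}. Since $\mathcal{X}_{x_0}=\mathcal{Y}_{x_0}$ need not hold, I must be careful that subtraction makes sense: both vectors live in $\mathbb{T}_X[\mathcal{U}]$ over the same base cocycle $X$ after the flip, so the subtraction is legitimate. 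The first coordinate $Y$ of this difference is a cocycle, so this element lies in the vertical space $\mathbb{T}^V_X[\mathcal{U}]$ only if $[Y]=0$; instead, the correct reading is that \Cref{prop:Lie_bracket} already packages the vertical-inclusion inverse, and the difference
$\canoflip_M(\proje^{10}(D\mathcal{Y}(\mathcal{X})))-\proje^{10}(D\mathcal{X}(\mathcal{Y}))$
lies in the vertical space $T^V_{[Y]}T\teich_g$. So I would instead arrange the algebra so the difference is $\tv{\delta 0,\dot Y+[X,Y]-\dot X}{X}$—i.e. verify the first coordinates cancel—and then apply $\verticalincmodel{X}$ from \eqref{eq:vertical_isomorphism}, which sends $\tv{0,\dot{W}}{X}$ to $[\dot{W}]$, giving $[\dot{Y}-\dot{X}+[X,Y]]\in H^1(\mathcal{U},\Theta_{M_0})\cong H^1(M_0,\Theta_{M_0})$. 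This establishes \eqref{eq:lie_bracket_formula}.

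The final ``in particular'' clause is then immediate: $[\mathcal{X},\mathcal{Y}]_{x_0}=0$ in $H^1(M_0,\Theta_{M_0})$ means $\dot{Y}-\dot{X}+[X,Y]$ is a coboundary, i.e. $\dot{X}-\dot{Y}=[X,Y]+\delta\beta$ for some $\beta\in C^0(\mathcal{U},\Theta_{M_0})$, using the injectivity of $H^1(\mathcal{U},\Theta_{M_0})\to H^1(M_0,\Theta_{M_0})$ for the chosen covering. The main obstacle I anticipate is the careful bookkeeping of $(1,0)$ versus $(0,1)$ parts and of which vector field plays the role of ``$X$'' versus ``$Y$'' in \Cref{prop:Lie_bracket}: the hypotheses of the theorem state $\left(D\mathcal{X}(\mathcal{Y})\right)^{10}=\tv{Y,\dot X}{X}$ and $\left(D\mathcal{Y}(\mathcal{X})\right)^{01}=\tv{X,\dot Y}{Y}$, so one must check that the flip is applied to the correct term and that the anti-symmetry $\canoflip$-formula matches the sign in $\dot{Y}-\dot{X}+[X,Y]$ (note $[Y,X]=-[X,Y]$, and \eqref{eq:Xij-Yij} in \Cref{prop:dot_X_Y} already records $\dot{X}_{ij}-\dot{Y}_{ij}=[X_{ij},Y_{ij}]$, which is exactly the consistency condition needed). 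I expect this sign-tracking to be the only delicate point; once it is pinned down the rest is a direct application of \Cref{prop:Lie_bracket}, \Cref{thm:canonical_flip}, and \Cref{prop:vertical_space}.
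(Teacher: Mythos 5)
Your proposal is correct and follows essentially the same route as the paper: flip one of the two second-order vectors via $\canoflip_{\mathbb{T}}$ so that both lie over $[X]$, subtract, observe the difference is vertical, and read off the class $[\dot{Y}-\dot{X}+[X,Y]]$ through $\verticalincmodel{X}$, invoking \Cref{prop:Lie_bracket} to identify this with the Lie bracket. Your momentary worry about the first coordinate is resolved exactly as you suspect: by \Cref{remark:sum_double_tangent_space} the difference of the two representatives is $(Y-Y,\dot{Y}+[X,Y]-\dot{X})=(0,\dot{Y}-\dot{X}+[X,Y])$, so the element is automatically vertical.
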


\begin{proof}
From \Cref{prop:involution_cocycles}, 
$$
\canoflip_{\mathbb{Z}}(Y,(X,\dot{Y}))-(X,(Y,\dot{X}))=
(X,(0,\dot{Y}-\dot{X}+[X,Y]))
$$
is a lift of 
$$
\canoflip_{\mathbb{T}}\left([Y],\tv{X, \dot{Y}}{[Y]}\right)-
\left([X],\tv{Y, \dot{X}}{[X]}\right),
$$
and its vertical projection is presented as $[\dot{Y}-\dot{X}+[X,Y]]\in H^1(M_0,\Theta_{M_0})$ (cf. \Cref{prop:vertical_space} and \Cref{remark:sum_double_tangent_space}). The formula \eqref{eq:lie_bracket_formula} follows from \Cref{prop:Lie_bracket}.
\end{proof}

\begin{remark}
Since $\dot{Y}^*=-\dot{Y}-[X,Y]$, \eqref{eq:lie_bracket_formula} is restated as 
\begin{equation}
\label{eq:lie_bracket_formula2}
\left[\mathcal{X},\mathcal{Y}\right]_{x_0}=[\dot{Y}^*+\dot{X}]
\in H^1(M_0,\Theta_{M_0})
\end{equation}
where $*$ means the filp of subscripts defined in \eqref{eq:filp}.
\end{remark}

\section{Lie bracket via the Dolbeaut presentation}
We rephrase \Cref{thm:lie_bracket} under the Dolbeaut presentations.
Let $\mathcal{X}$ and $\mathcal{Y}$ as above.
Fix cochains $\xi$, $\eta\in C^0(\mathcal{U},\mathcal{A}^{0,0}(\Theta_{M_0}))$ with $\delta \xi=X$ and $\delta \eta=Y$.
Suppose that
\begin{align*}
\left(D\mathcal{X}(\mathcal{Y})_{[X]}\right)^{10}
&=\tvD{-\partial \eta, \dot{\nu}}{X}\in \mathbb{T}^{Dol}_X[\mathcal{U}]
\\
&=\tvDBel{-\partial \eta, \dot{\nu}}{\xi}\in \mathbb{T}^{Bel}_\xi[\mathcal{U}]
\\
\left(D\mathcal{Y}(\mathcal{X}\right)_{[Y]})^{01}
&=\tvD{-\partial \xi, \dot{\mu}}{Y}
\in \mathbb{T}^{Dol}_Y[\mathcal{U}]
\\
&=\tvDBel{-\partial \xi, \dot{\mu}}{\eta}
\in \mathbb{T}^{Bel}_\eta[\mathcal{U}].
\end{align*}
From the discussion in \S\ref{subsec:Flip_Dolbeaut},
we have
\begin{align*}
\left[\mathcal{X},\mathcal{Y}\right]_{x_0}=
[\dot{\nu}-\dot{\mu}+\overline{\partial}[\xi,\eta]]
\in T_{x_0}\teich_g,
\end{align*}
which implies that 
$$
\left\langle \left[\mathcal{X},\mathcal{Y}\right]_{x_0},q\right\rangle
=
\iint_{M_0}(\dot{\nu}_i-\dot{\mu}_i+[\xi_i,\eta_i]_{\overline{z}})q
$$
for all $q\in \mathcal{Q}_{x_0}=T^*_{x_0}\teich_g$.
Thus, $\left[\mathcal{X},\mathcal{Y}\right]_{x_0}=0$ if and only if 
there is a global vector field $\chi\in \Gamma(M_0,\mathcal{A}^{0,0}(\Theta_{M_0}))$ such that
$$
\dot{\nu}-\dot{\mu}+\overline{\partial}[\xi,\eta]=-\overline{\partial}\chi.
$$


\chapter[Variational formula]{Variational formulae of $L^1$-Norm function and Teichm\"uller metric}
\label{chap:variational_formula_L1-teich}

\section{$L^1$-norm for holomorphic quadratic differentials}
In this section, we discuss the variational formula of the $L^1$-norm for holomorphic quadratic differentials 
$$
T^*\teich_g\cong \mathcal{Q}_g\ni q\mapsto \LOneNorm(q):=\|q\|
$$
on the cotangent bundle. It is known that the $L^1$-norm function $\LOneNorm$ is of class $C^1$ on the cotangent bundle  except for the zero section (cf. \cite[Lemma 2]{MR0288254}).
For reader's convenience, we summarize Royden's argument briefly. 

Let $x_0=(M_0,f_0)\in \teich_g$. Fix a symplectic basis $\{A_k,B_k\}_{k=1}^g$ of $H_1(M_0,\mathbb{Z})$.
Let $w_0$ be an Abelian differential on $M_0$ with simple zeros. For $x=(M,f)\in \teich_g$, we denote by $w^x_0$ the Abelian differential on $M$ with same $A$-periods as $w_0$. Take a small neighborhood $U$ of $x_0$ so that each zero of $w^x_0$ for $x\in U$ are simple. We label the zeros of $w_0^x$. Let $w^x_i$ be the normalized Abelian differential on $M$ whose $A_k$-period is $\delta_{ik}$ for $k=1,\cdots,g$. For $i=g+1$, $\cdots$, $3g-3$, let $\gamma_i^\mu$ be the Abelian differential of the third kind which has simple pole of residue $1$ at the $(i-g)$-th zero of $w_0^x$, and a simple pole of residue $-1$ at the $2g-2$-nd zero of $w_0^x$.
Then,
$$
U\times \mathbb{C}\mapsto (x,(t_1,\cdots,t_{3g-3}))\mapsto
\left(\sum_{i=1}^gt_iw_i^x+\sum_{i=g+1}^{3g-3}t_j\gamma_j^x\right)w_0^x\in \mathcal{Q}_g
$$
gives a differentiable (holomorphic) local trivialization. We can check that the differential in the right of the above equation is differentiable (cf. Lemma 1 in \cite{MR0288254}).  Hence, the norm function varies as a function of $C^1$. 
We apply (a part of) Royden's argument to obtain our variational formula and present the variational formula in our own setting.

\subsection{Variational formula derived from Royden's calculation}
\label{subsec:Royden_cal}
The calculation here almost follows from that by Royden in \cite{MR0288254}. We start with the following lemma.

\begin{lemma}[Lemma 1 in \cite{MR0288254}]
\label{lem:royden}
Let $q_0$ be a holomorphic quadratic diffetential on a Riemann surface $W$, and $\varphi$ a smooth (not necessarily holomorphic) quadratic differential. Let $K$ be a compact set in $W$. Then the function
$$
f(t)=\iint_K|q_0(z)+t\varphi(z)|dxdy
$$
is differentiable as a function of $t$, and satisfies
$$
f(t)=f(0)+\dfrac{t}{2}\iint_{K}\dfrac{\overline{q_0}\varphi}{|q_0|}
dxdy
+
\dfrac{\overline{t}}{2}\iint_{K}\dfrac{q_0\overline{\varphi}}{|q_0|}dxdy+o(t).
$$
\end{lemma}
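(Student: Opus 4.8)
The plan is to reduce the statement to a pointwise first--order Taylor expansion of the map $w\mapsto|w|$ on $\mathbb{C}\setminus\{0\}$ and then integrate that expansion over $K$, the only delicate point being the behaviour near the zeros of $q_0$. Since $q_0$ is holomorphic and (as the asserted formula forces) not identically zero, its zero set is discrete, so $Z:=\{z\in K\mid q_0(z)=0\}$ is finite. Away from a small neighbourhood of $Z$ the quantity $|q_0|$ is bounded below, so the expansion is uniform there; near $Z$ one falls back on the Lipschitz bound $\bigl||q_0(z)+t\varphi(z)|-|q_0(z)|\bigr|\le|t|\,|\varphi(z)|$, valid at every point, together with the fact that the bad region can be taken of arbitrarily small area.

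First I would record the pointwise expansion. Fix $z$ with $q_0(z)\ne0$ and write $w_0=q_0(z)$, $w_1=\varphi(z)$. From $|w_0+tw_1|^2=|w_0|^2+2\,{\rm Re}(\overline{w_0}tw_1)+|t|^2|w_1|^2$, the identity $2\,{\rm Re}(\zeta)=\zeta+\overline{\zeta}$, and $\sqrt{1+x}=1+\tfrac12 x+O(x^2)$, one obtains, for $|t|$ small relative to $|w_0|/(|w_1|+1)$,
$$
|w_0+tw_1|=|w_0|+\frac{t}{2}\,\frac{\overline{w_0}w_1}{|w_0|}+\frac{\overline{t}}{2}\,\frac{w_0\overline{w_1}}{|w_0|}+R(z,t),\qquad |R(z,t)|\le \frac{C\,|t|^2\,|w_1|^2}{|w_0|}.
$$
Two observations make the integration work: the two linear terms have modulus at most $|t|\,|\varphi(z)|$, so they are dominated by $|t|\,|\varphi|\in L^1(K)$ and the integrals $\iint_K\frac{\overline{q_0}\varphi}{|q_0|}\,dxdy$ and $\iint_K\frac{q_0\overline{\varphi}}{|q_0|}\,dxdy$ converge; and the crude bound $\bigl||w_0+tw_1|-|w_0|\bigr|\le|t|\,|w_1|$ holds for \emph{every} $z$, including the zeros of $q_0$.

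Then I would estimate the error. Fix $\eta>0$, let $S:=\sup_K|\varphi|$, and choose an open set $K_\epsilon\supset Z$ with $\mathrm{Area}(K_\epsilon\cap K)$ so small that $2S\,\mathrm{Area}(K_\epsilon\cap K)<\eta/2$. On $K\setminus K_\epsilon$ one has $|q_0|\ge\delta_\epsilon>0$, so $|R(z,t)|\le C_\epsilon|t|^2$ and this part contributes at most $C_\epsilon|t|^2\,\mathrm{Area}(K)$ to the error. On $K_\epsilon\cap K$ I bound both $\bigl||q_0+t\varphi|-|q_0|\bigr|$ and the two linear terms by $|t|\,|\varphi|$, so the local contribution to the error is at most $2|t|\,S\,\mathrm{Area}(K_\epsilon\cap K)<(\eta/2)|t|$. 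Hence for $|t|<\eta/\bigl(2C_\epsilon\,\mathrm{Area}(K)+1\bigr)$ the total error is $<\eta|t|$; since $\eta$ was arbitrary, the error is $o(t)$, which is exactly the claimed expansion and in particular shows that $f$ is differentiable (as a map $\mathbb{C}\to\mathbb{R}$) at $t=0$ with the stated differential.

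The main obstacle is precisely this $\epsilon$--splitting: $w\mapsto|w|$ fails to be differentiable at $0$, so one cannot simply differentiate under the integral sign near the zeros of $q_0$; it is the holomorphicity of $q_0$ (finiteness of $Z$, hence arbitrarily small enclosing area) together with the universal Lipschitz estimate that rescues the argument. I would also flag the standing hypothesis $q_0\not\equiv0$ — otherwise $f(t)=|t|\iint_K|\varphi|\,dxdy$ is genuinely non-differentiable at $0$ — and remark that differentiability of $f$ at a nearby point $t_0$ follows by running the same argument with $q_0+t_0\varphi$ in place of $q_0$, valid as long as that (smooth) differential still has a zero set of measure zero, which is all that is needed for the $C^1$--regularity of the $L^1$--norm function in the applications that follow.
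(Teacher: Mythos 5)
Your proof is correct and follows essentially the same route the paper attributes to Royden: a pointwise first-order expansion of $|q_0+t\varphi|$ where $|q_0|>0$, combined with the universal Lipschitz bound and the smallness in measure of a neighbourhood of the (finite) zero set to control the error, i.e.\ an exhaustion of $K\cap\{|q_0|>0\}$. The paper only sketches this argument in one sentence after the statement, so your write-up simply supplies the details of that same strategy, including the correct caveats about $q_0\not\equiv 0$ and about differentiability at nearby $t_0$.
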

In the proof of \Cref{lem:royden}, Royden first discusses the variation
$$
|q_0+t\varphi|=|q_0|+t\dfrac{\overline{q_0}}{2|q_0|}\varphi+\overline{t}\dfrac{q_0}{2|q_0|}\overline{\varphi}+o(t)
$$
on a compact set on which $|q_0|>0$, and take the limit via the exhaustion of $K$ by relatively compact domains of $K\cap \{|q_0|>0\}$.
By applying Royden's argument and \Cref{prop:variation_two_forms}, we also prove the following
(see also the proof of \cite[Theorem 5.3]{MR3413977}).

\begin{theorem}[Variation of $L^1$-norm]
\label{thm:variation_L1-norm}
Let $\mathcal{U}=\{U_i\}_{i\in I}$ be a locally finite covering of $M_0$ with $H^1(U_i,\Theta_{M_0})=0$ for all $i\in I$.
For $V=[X,\varphi]_{q_0}\in {\bf H}^1(\mathcal{U},\mathbb{L}_{q_0})\cong T_{q_0}T^*\teich_g$, we have
\begin{equation}
\label{eq:differential_L1norm}
D \LOneNorm|_{q_0}[V]=
\dfrac{1}{4i}\iint_{M_0}
\dfrac{\overline{q_0}}{|q_0|}\left(\varphi_i-L_{\xi_i}(q_0)\right)d\overline{z}\wedge dz,
\end{equation}
where $q_0\in \mathcal{Q}_{x_0}$ and $\xi=\{\xi_i\}_{i\in I}\in C^0(\mathcal{U},\sob^{1,1}(\Theta_{M_0}))$ with $\delta\xi = X$.
\end{theorem}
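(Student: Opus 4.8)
<br>

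The plan is to mimic Royden's proof of Lemma~1 (recalled here as \Cref{lem:royden}) and combine it with the variational formula for integrals of $2$-forms established in \Cref{prop:variation_two_forms}. First I would fix a holomorphic family $\{q_t\}_{|t|<\epsilon}$ of holomorphic quadratic differentials with $q_0$ as above, representing the tangent vector $V=[X,\varphi]_{q_0}$; using the setup of \S\ref{subsec:description_Hol}, the differential has a presentation $(X,\varphi)\in {\bf Z}^1(\mathcal{U},\mathbb{L}_{q_0})$, and after choosing $\xi=\{\xi_i\}_{i\in I}\in C^0(\mathcal{U},\sob^{1,1}(\Theta_{M_0}))$ with $\delta\xi=X$ (whose existence with this regularity follows as in \S\ref{sec:remark_smoothness}), the quantity $\varphi_i-L_{\xi_i}(q_0)$ is a globally defined smooth (not holomorphic) quadratic differential on $M_0$, cf.\ the discussion after \Cref{thm:variational_formula_pairing}. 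Then the $L^1$-norm is $\LOneNorm(q_t)=\iint_{M_t}|q_t|$, and I want to differentiate this at $t=0$.

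The key computational step is to transport the integral over $M_t$ back to $M_0$ via the quasiconformal coordinate change $W_i^t$, exactly as in the proof of \Cref{prop:variation_two_forms}: write $\LOneNorm(q_t)=\sum_{j}\iint_{W_{i(j)}^t(E_j)}|q^t_{i(j)}(w)|\,dudv$ for a reasonable decomposition $\{E_j\}$ subordinate to $\mathcal{U}$, pull back by $W_{i(j)}^t$, and expand. The density $|q^t_i\circ W_i^t|\cdot(|(W_i^t)_z|^2-|(W_i^t)_{\overline z}|^2)$ on $E_j$ has, on the set where $|q_0|>0$, the first-order expansion coming from $q^t_i = q_0 + t\,\varphi_i + o(t)$ together with the Jacobian expansion, which combine to give $|q_0| + \mathrm{Re}\!\left(t\,\dfrac{\overline{q_0}}{|q_0|}(\varphi_i - L_{\xi_i}(q_0))\right) + o(t)$ — this is precisely Royden's pointwise computation in \Cref{lem:royden} rewritten in our cochain language, where the $-L_{\xi_i}(q_0)$ term is the contribution of the Beltrami/Jacobian part of the deformation. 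Since the zeros of $q_0$ form a discrete (hence measure zero) set and $1/|q_0|\in L^1_{\mathrm{loc}}$ off the zeros, I exhaust $E_j\cap\{|q_0|>0\}$ by relatively compact subdomains, apply the dominated-convergence / exhaustion argument of Royden to pass the $t$-derivative inside the integral, and sum over $j$. This yields
\[
D\LOneNorm|_{q_0}[V] = \mathrm{Re}\,\frac{1}{4i}\iint_{M_0}\frac{\overline{q_0}}{|q_0|}\bigl(\varphi_i - L_{\xi_i}(q_0)\bigr)\,d\overline z\wedge dz,
\]
and since we are computing the holomorphic (complex-linear) differential $D$ in the sense of \S\ref{subsec:notation}, the $\overline t$-part drops and the formula \eqref{eq:differential_L1norm} results.

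Two things still need checking. First, well-definedness: the right-hand side of \eqref{eq:differential_L1norm} must be independent of the representative $(X,\varphi)$ of $[X,\varphi]_{q_0}$ and of the choice of lift $\xi$; this is routine and parallels \S\ref{subsec:well_defined-integral} — changing $(X,\varphi)$ by $D^{q_0}_0(\alpha)$ changes $\varphi_i - L_{\xi_i}(q_0)$ by a global holomorphic quadratic differential times nothing that survives, or more precisely by $L_{\alpha-\alpha}(q_0)=0$ after adjusting $\xi$, so the integrand is unchanged; changing $\xi$ by a global $\sob^{1,1}$ vector field $\mathcal X$ changes the integrand by $-L_{\mathcal X}(q_0)\cdot\overline{q_0}/|q_0|$, which I must show integrates to zero, and this follows because $\overline{q_0}/|q_0|\cdot L_{\mathcal X}(q_0) = \overline{q_0}/|q_0|\cdot(\mathcal X q_0)_z\,dz$ is, up to an exact form, $d$ of the $(1,0)$-form $(\overline{q_0}/|q_0|)\mathcal X q_0\,$-type expression and the Green formula for $\sob^{1,1}$ forms applies (again cf.\ \S\ref{sec:remark_smoothness}). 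Second, and this is the main obstacle: justifying the interchange of differentiation and integration near the zeros of $q_0$. Royden handles exactly this point in \cite{MR0288254} by an exhaustion argument, and I will follow him — the function $f(t)=\iint_K|q_0+t\varphi|$ is shown differentiable by first working on compacta where $|q_0|\ge\delta>0$, getting a uniform $o(t)$ estimate there, and then controlling the contribution of a shrinking neighborhood of the zero set using that $q_0$ has isolated zeros of finite order so that $\iint_{|q_0|<\delta}1\to 0$ as $\delta\to 0$ and the difference quotient of $|q_0+t\varphi|$ is bounded by $|\varphi|\in L^\infty$. Assembling these estimates over the finitely many zeros of $q_0$ and the reasonable decomposition gives the result. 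I expect the bookkeeping of the Jacobian expansion of $W_i^t$ (to identify the $-L_{\xi_i}(q_0)$ term with the correct sign and constant $1/4i$) and the careful exhaustion near the zeros to be where most of the real work lies, but both are essentially present already in \Cref{lem:royden} and \Cref{prop:variation_two_forms}.
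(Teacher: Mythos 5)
Your proposal is correct and follows essentially the same route as the paper: the paper likewise obtains the formula by combining Royden's pointwise expansion of $|q_0+t\varphi|$ (with the exhaustion argument near the zeros of $q_0$) with \Cref{prop:variation_two_forms}, using the identity $L_{\xi_i}(|q_0|)=\tfrac{\overline{q_0}}{2|q_0|}L_{\xi_i}(q_0)$ to identify the Lie-derivative term, and then checking independence of the choices of $\xi$ and of the representative exactly as you do. The only blemish is presentational: your displayed formula carries a spurious $\mathrm{Re}$ that contradicts your (correct) closing remark that $D$ is the $\partial_t$-derivative in the sense of \S\ref{subsec:notation}, so no real part should appear in \eqref{eq:differential_L1norm}.
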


We notice the following.

\begin{enumerate}
\item
Since the zeros of $q_0$ is discrete, the integral \eqref{eq:differential_L1norm} makes sense.
\item
In the calculation of the formula \eqref{eq:differential_L1norm},  the following observation may be useful : For a differentiable vector field $X=X(z)\partial_{z}$ on $U_i$,
$$
L_{X}(|q_0|)
=\left(\dfrac{\overline{q_0}}{2|q_0|}(q_0)'X+|q_0|X_z\right)d\overline{z}\wedge dz
=\dfrac{\overline{q_0}}{2|q_0|}L_X(q_0)
$$
on $z_i^0(U_i)\cap \{|q_0|>0\}$. 
\item
The formula \eqref{eq:differential_L1norm} is independent of the choice of $\xi$. Indeed, take $\Xi=\{\Xi_i\}_{i\in I}\in C^0(\mathcal{U},\sob^{1,1}(\Theta_{M_0}))$ with $\delta \Xi=X$. Then $\mathcal{X}=\xi-\Xi$ is a global $\sob^{1,1}$-vector field on $M_0$. In this case
\begin{align*}
&\iint_{M_0}\dfrac{\overline{q_0}}{|q_0|}\left(\varphi_i-L_{\Xi_i}(q_0)\right)-\iint_{M_0}\dfrac{\overline{q_0}}{|q_0|}\left(\varphi_i-L_{\xi_i}(q_0)\right)
\\
&=\iint_{M_0}\dfrac{\overline{q_0}}{|q_0|}L_{\mathcal{X}}(q_0)=-2\iint_{M_0}d(\mathcal{X}|q_0|d\overline{z})=0
\end{align*}
by the Green theorem.
\item
The formula \eqref{eq:differential_L1norm} is independent of the choice of the representative of $V=[X,\varphi]_{q_0}$. Let $\alpha\in C^0(\mathcal{U},\Theta_{M_0})$. We now choose $(X+\delta\alpha, \varphi+L_{\alpha}(q_0))$ as the representative of $V=[X,\varphi]_{q_0}$. In this case, we may think $\xi+\alpha$ instead of $\xi$, and the inside of the parentheses of the integrand in \eqref{eq:differential_L1norm} becomes
\begin{align*}
\varphi_i+L_{\alpha_i}(q_0)-L_{\xi_i+\alpha_i}(q_0)=\varphi-L_{\xi_i}(q_0).
\end{align*}
This means that the integral is invariant.
\item
The formula \eqref{eq:differential_L1norm} recovers Royden's formula in \Cref{lem:royden}. Indeed, when $[X]\in H^1(\mathcal{U},\Theta_{M_0})$ is trivial, from the above observation, we may assume that $\xi\in C^0(\mathcal{U},\Theta_{M_0})$. Hence, $\varphi_i-L_{\xi_i}(q_0)$ in the integrand of the right-hand side of \eqref{eq:differential_L1norm} is defined from a (global) holomorphic quadratic differential on $M_0$.
\label{enumerate:L1-derivateive5}
\end{enumerate}

From the definition of the pairing \eqref{eq:pairing_cotangent}, we conclude the following:

\begin{corollary}[$\partial$-derivative of $L^1$-norm]
\label{coro:gradient_vector_L1}
Let $q_0\in \mathcal{Q}_g$. Then,
$$
\partial \LOneNorm|_{q_0}=
\dfrac{1}{2}[-L_{\eta}(q_0), Y]^\dagger_{q_0}=-\dfrac{1}{2}[L_{\eta}(q_0), -Y]^\dagger_{q_0}
\in {\bf H}^{1,\dagger}(\mathcal{U},\mathbb{L}_{q_0})\cong T^*_{q_0}\mathcal{Q}_g
$$
where $L_\eta(q_0)=\{L_{\eta_i}(q_0)\}_{i\in I}$, $\eta_i=\{\eta_i\}_{i\in I}\in C^0(\mathcal{U},\sob^{1,1}(\Theta_{M_0}))$ with 
$(\eta_i)_{\overline{z}}=-\overline{q_0}/|q_0|$ for $i\in I$ 
and $Y=\delta\eta\in Z^1(\mathcal{U},\Theta_{M_0})$.
\end{corollary}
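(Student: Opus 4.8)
The plan is to identify the $\partial$-derivative $\partial\LOneNorm|_{q_0}$ as a cotangent vector in $T^*_{q_0}\mathcal{Q}_g$ by matching the variational formula of \Cref{thm:variation_L1-norm} against the model of the pairing $\paircot$ given in \eqref{eq:pairing_cotangent}. Concretely, for any tangent vector $V=[X,\varphi]_{q_0}\in{\bf H}^1(\mathcal{U},\mathbb{L}_{q_0})\cong T_{q_0}\mathcal{Q}_g$, \Cref{thm:variation_L1-norm} gives
$$
D\LOneNorm|_{q_0}[V]=\frac{1}{4i}\iint_{M_0}\frac{\overline{q_0}}{|q_0|}\bigl(\varphi_i-L_{\xi_i}(q_0)\bigr)\,d\overline{z}\wedge dz,
$$
with $\delta\xi=X$. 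By definition of $\partial$ (it is the $(1,0)$-part of the total differential, which here is a $\mathbb{C}$-linear functional on $T_{q_0}\mathcal{Q}_g$ since $\LOneNorm$ restricted to a complex line through $q_0$ is $\mathbb{R}$-linear off the zero section, so $\overline{D}\LOneNorm=0$ after the natural identifications) we have $\partial\LOneNorm|_{q_0}[V]=D\LOneNorm|_{q_0}[V]$. So the task reduces to exhibiting a cocycle representative $(\Phi,Y')\in {\bf Z}^{1,\dagger}(\mathcal{U},\mathbb{L}_{q_0})$ such that $\paircot(V,[\Phi,Y']^\dagger_{q_0})$ equals the right-hand side above for every $V$, and then invoking the non-degeneracy of $\paircot$ (\Cref{prop:non-degenerate_cotang}) to conclude that $\partial\LOneNorm|_{q_0}=[\Phi,Y']^\dagger_{q_0}$.

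The natural candidate is suggested by the formula itself: take $\eta=\{\eta_i\}_{i\in I}\in C^0(\mathcal{U},\sob^{1,1}(\Theta_{M_0}))$ with $(\eta_i)_{\overline{z}}=-\overline{q_0}/|q_0|$ on each $U_i$ (such $\eta_i$ exist by the discussion in \S\ref{sec:remark_smoothness}, since $\overline{q_0}/|q_0|$ is a Teichm\"uller-Beltrami differential), set $Y=\delta\eta\in Z^1(\mathcal{U},\Theta_{M_0})$, and consider the pair $(-L_\eta(q_0),Y)$, where $L_\eta(q_0)=\{L_{\eta_i}(q_0)\}_{i\in I}$. First I would verify that this is a legitimate element of ${\bf H}^{1,\dagger}(\mathcal{U},\mathbb{L}_{q_0})$: one needs $(-L_{\eta_i}(q_0))_j-(-L_{\eta_i}(q_0))_i=-L_{Y_{ij}}(q_0)$, which is immediate from $\delta\eta=Y$ and linearity of the Lie derivative in the vector field. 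Then I would plug $[\Phi,Y]^\dagger_{q_0}$ with $\Phi=-L_\eta(q_0)$ into the formula \eqref{eq:pairing_cotangent}: since $(\eta_i)_{\overline{z}}=-\overline{q_0}/|q_0|$, the term $(\eta_i)_{\overline{z}}(\varphi_i-L_{\xi_i}(q_0))$ in the integrand becomes $-\frac{\overline{q_0}}{|q_0|}(\varphi_i-L_{\xi_i}(q_0))$, while the other term $(\xi_i)_{\overline{z}}(\Phi_i+L_{\eta_i}(q_0))=(\xi_i)_{\overline{z}}\cdot 0=0$ because $\Phi_i=-L_{\eta_i}(q_0)$. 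This yields $\paircot(V,[\Phi,Y]^\dagger_{q_0})=-\frac{1}{2i}\iint_{M_0}\bigl(-\frac{\overline{q_0}}{|q_0|}\bigr)(\varphi_i-L_{\xi_i}(q_0))\,d\overline{z}\wedge dz=\frac{1}{2i}\iint_{M_0}\frac{\overline{q_0}}{|q_0|}(\varphi_i-L_{\xi_i}(q_0))\,d\overline{z}\wedge dz$, which is exactly $2\,D\LOneNorm|_{q_0}[V]$. Hence $\partial\LOneNorm|_{q_0}=\tfrac12[-L_\eta(q_0),Y]^\dagger_{q_0}=-\tfrac12[L_\eta(q_0),-Y]^\dagger_{q_0}$, as claimed.

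The main obstacle I anticipate is the regularity bookkeeping: the Lie derivative $L_{\eta_i}(q_0)$ only lives in an $L^1_{loc}$ (or at best $\sob^{1,1}$) class because $\eta_i$ is built from the Teichm\"uller-Beltrami differential $\overline{q_0}/|q_0|$, which is not continuous at the zeros of $q_0$; one must check that all the integrands appearing in \eqref{eq:pairing_cotangent} and in the manipulation above are genuinely integrable $2$-forms on $M_0$ and that the pairing formula \eqref{eq:pairing_cotangent} remains valid with this weaker regularity — precisely the content flagged in \S\ref{sec:remark_smoothness}, where the Royden-type exhaustion argument (exhaust $M_0\setminus\mathrm{Zero}(q_0)$ by compact sets, apply the Green formula there, pass to the limit) is invoked. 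I would also double-check the constant: the factor $\tfrac12$ in front of the cohomology class is forced by comparing the $\tfrac{1}{4i}$ in \Cref{thm:variation_L1-norm} with the $-\tfrac{1}{2i}$ normalization of $\paircot$, and by the sign conventions for $\Phi_j-\Phi_i=-L_{Y_{ij}}(q_0)$ built into ${\bf H}^{1,\dagger}$. Once these are settled, the identification follows formally from non-degeneracy of the model pairing.
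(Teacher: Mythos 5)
Your computation is correct and is essentially the paper's own (implicit) argument: the corollary is presented as an immediate consequence of \Cref{thm:variation_L1-norm} and the formula \eqref{eq:pairing_cotangent} for $\paircot$, and your steps --- kill the first term of the pairing by choosing $\Phi_i=-L_{\eta_i}(q_0)$, substitute $(\eta_i)_{\overline{z}}=-\overline{q_0}/|q_0|$ into the second, compare the constants $\tfrac{1}{4i}$ and $-\tfrac{1}{2i}$, and invoke non-degeneracy --- are exactly the intended ones.

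Two points should be tightened. First, for $[-L_\eta(q_0),Y]^\dagger_{q_0}$ to be a legitimate element of ${\bf H}^{1,\dagger}(\mathcal{U},\mathbb{L}_{q_0})$ it is not enough to check the relation $\Phi_j-\Phi_i=-L_{Y_{ij}}(q_0)$: each component $\Phi_i=-L_{\eta_i}(q_0)$ must be a \emph{holomorphic} section of $\Omega_{M_0}^{\otimes 2}$ on $U_i$, whereas a priori it is only a low-regularity object built from $\eta_i$ (which is itself only $\sob^{1,1}$ off the zeros of $q_0$). You flag a regularity concern but frame it as integrability of the integrands in the Green-formula manipulations; the more basic issue is well-definedness of the candidate cocycle. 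The paper resolves this in \Cref{remark:holomorphicity_cocycle}: one computes $(L_{\eta_i}(q_0))_{\overline{z}}=0$ away from ${\rm Zero}(q_0)$, uses Calder\'on--Zygmund to place $L_{\eta_i}(q_0)$ in $L^2_{loc}$, and then applies the Weyl lemma to conclude it is genuinely holomorphic on all of $U_i$. Second, your justification of $\partial\LOneNorm|_{q_0}[V]=D\LOneNorm|_{q_0}[V]$ via ``$\overline{D}\LOneNorm=0$'' is incorrect: $\LOneNorm$ is real-valued and \Cref{lem:royden} exhibits a nonzero $\overline{t}$-term, so $\overline{D}\LOneNorm=\overline{D\LOneNorm}\ne 0$. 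The identity you need holds for a simpler reason: $D$ as defined in \S\ref{subsec:notation} is the $t$-derivative along a holomorphic disk, which for any $C^1$ function already computes the $(1,0)$-part $\partial F(V)$. Neither point changes the conclusion, but both are needed for a complete proof.
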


Namely, \Cref{coro:gradient_vector_L1} implies
$$
\partial \LOneNorm([X,\varphi]_{q_0})=\paircot\left([X,\varphi]_{q_0},-\dfrac{1}{2}[L_{\eta}(q_0), -Y]^\dagger_{q_0}\right)
$$
for $[X,\varphi]_{q_0}\in {\bf H}^{1}(\mathcal{U},\mathbb{L}_{q_0})\cong T_{q_0}\mathcal{Q}_g$ and $q_0\in \mathcal{Q}_g$.

\begin{remark}
\label{remark:holomorphicity_cocycle}
Under the notation in the above discussion,
we notice that $L_{\eta_i}(q_0)$ is holomorphic on $U_i$ for all $i\in I$ and satisfies
$L_{\eta_j}(q_0)-L_{\eta_i}(q_0)=L_{Y_{ij}}(q_0)$ for $i$, $j\in I$. Indeed,
\begin{align*}
L_{\eta_i}(q_0)_{\overline{z}}
&=(q_0)'(\eta_i)_{\overline{z}}+2q_0(\eta_i)_{\overline{z}z}
=-(q_0)'\dfrac{\overline{q_0}}{|q_0|}
+
2q_0\dfrac{\overline{q_0}^2(q_0)'}{2|q_0|^3}=0
\end{align*}
on $U_i-{\rm Zero}(q_0)$.
Since $\overline{q_0}/|q_0|\in L^{\infty}(U_i)$, we may think that $\eta_i$ is H\"older continuous with exponent $1-(2/p)$ and $(\eta_i)_z\in L^p(U_i)$ for arbitrary $p>2$ by applying Calder{\'o}n-Zygmund's theorem after identifying $U_i$ with a bounded domain in $\mathbb{C}$ (e.g. \cite[Lemma 4.20, Proposition 4.23]{MR1215481}).
In particular, we have $L_{\eta_i}(q_0)\in L_{loc}^2(U_i)$ under the chart of $U_i$. Therefore, the Riemann removable singularity theorem (or the Weyl lemma) asserts that $L_{\eta_i}(q_0)$ is holomorphic on $U_i$.
\end{remark}

\subsection{Variational formula  from the universal deformation}
In \cite{MR523212}, Hubbard and Masur discuss the representation of the tangent vectors at $q_0\in \mathcal{Q}_g$ adapted to the condition of critical points of $q_0$. In this section, we  restate the differential formula in this setting. The subject of this subsection is not needed for further development in this paper. However, it is interesting by itself.

Let $q_0\in \mathcal{Q}_{M_0}$ and $[X,\varphi]_{q_0}\in {\bf H}^1(\mathcal{U},\mathbb{L}_{q_0})\cong T_{q_0}\mathcal{Q}_g$, where $\mathcal{U}=\{U_i\}_{i\in I}$ is a locally finite covering with $H^1(U_i,\Theta_{M_0})=0$ for $i\in I$. Let $\xi=\{\xi_i\}_{i\in I}$ with $\delta \xi=X$. We consider the reasonable decomposition $\{R_j\}_{j\in J}$ of $M_0$ such that
\begin{itemize}
\item
each $R_j$ contains at most one zero of $q_0$ in its interior, and $|q_0|>0$ on $\partial R_j$; and
\item
each $R_j$ is contained in $U_{i(j)}$ for some $i(j)\in I$.
\end{itemize}
Let $J$ be the set of indices $j\in J$ such that the interior of $R_j$ contains a zero $p_j\in M_0$ of $q_0$. For $j\in J\setminus J_0$, we fix $p_j\in {\rm Int}(R_j)$. We also assume
\begin{itemize}
\item
for $j\in J_0$, there is a local chart $(V_j,w_j)$ around $p_j$ such that $\overline{R}_j\subset V_j$, $w_j(p_j)=0$, and $q_0=w^{k_j}dw^2$ under the coordinate $(V_j,w_j)$.
\end{itemize}
For $j\in J\setminus J_0$, we set $k_j=0$. 
For $k\ge 0$, we define
$$
P_k=\{(a_0+a_1w+\cdots+a_{k-2}w^{k-2})dw^2\mid a_i\in \mathbb{C}\}
$$
for $k\ge 2$, and $P_k=\{0\}$ for $k=0$, $1$. 
From the universal deformation theorem by Hubbard and Masur,
each $\varphi_{i(j)}$ is represented as
$$
\varphi_{i(j)}=\mathfrak{p}_{\varphi;j}+L_{Z_j}(q_0)
$$
on $R_j$,
where $\mathfrak{p}_{\varphi;j}\in P_{k_j}$ and $Z_j$ is a (bounded) holomorphic vector field on $V_j$ (cf. \cite[Proposition 3.1]{MR523212}).

Under the above notation, the formula \eqref{eq:differential_L1norm} is rewritten as

\begin{equation}
\label{eq:differential_L1norm_universal_deformation}
D \LOneNorm|_{q_0}[V]=
\dfrac{1}{4i}\sum_{j\in J}\iint_{R_j}
\left(\dfrac{\overline{w}}{|w|}\right)^{k_j}\left(\mathfrak{p}_{\varphi;j}-L_{\xi_{i(j)}-Z_j}(w^{k_j}dw^2)\right)d\overline{w}\wedge dw.
\end{equation}

\subsection{Tangent space to the unit sphere bundle}
\label{subsec:tangent_space_to_the_unit_sphere_bundle}
In this section, we summerize the structure of the real and complex tangent spaces to the unit tangent bundle in $\mathcal{Q}_g$. 
As Royden observed that the $L^1$-norm function is of class $C^1$ on $\mathcal{Q}^\times_g=\mathcal{Q}_g-\{0\}$ (``$0$" means the zero section),
See \S\ref{sec:Royden_theorem_revisited} for detail.

Let 
$$
\mathcal{SQ}_g=\{q\in \mathcal{Q}_g\mid \|q\|=1\}.
$$
The bundle $\mathcal{SQ}_g\to \teich_g$ is the unit sphere bundle over $\teich_g$. Let $x_0=(M_0,f_0)\in \teich_g$ and $q_0\in \mathcal{Q}_{M_0}$. From \Cref{thm:variation_L1-norm}, the real tangent space to $\mathcal{SQ}_g$ at $q_0$ is described by
\begin{align*}
T^{\mathbb{R}}_{q_0}\mathcal{SQ}_g
&=
\left\{
V\in T_{q_0}\mathcal{Q}_g
\mid
{\rm Re}\left(\partial \LOneNorm(V)\right)=0
\right\}
\\
&=\left\{
[X,\varphi]_{q_0}\in T_{q_0}\mathcal{Q}_g
\mid
{\rm Re}\iint_{M_0}\dfrac{\overline{q_0}}{2|q_0|}\left(\varphi_i-L_{\xi_i}(q_0)\right)=0
\right\},
\end{align*}
where
$\xi=\{\xi_i\}_{i\in I}\in C^0(\mathcal{U},\sob^{1,1}(\Theta_{M_0})$ with $\delta\xi=X$
and $\mathcal{U}=\{U_i\}_{i\in I}$ is a locally finite covering with $H^1(U_i,\Theta_{M_0})=0$ for $i\in I$. In the following argument we identify $\mathcal{Q}_{x_0}$ with the vertical space of $T_{q_0}\mathcal{Q}_g={\bf H}^1(\mathcal{U},\mathbb{L}_{q_0})$. Namely,
$$
\begin{CD}
\mathcal{Q}_{x_0}\ni \psi @>{\cong}>> [0,\psi]_{q_0}\in {\bf H}^1(\mathcal{U},\mathbb{L}_{q_0}).
\end{CD}
$$
(cf. \Cref{remark:differential_projection_cotangent}). Let 
$$
(\mathcal{Q}_{x_0})^\perp_{q_0}=
\left\{[0,\psi]_{q_0}\in {\bf H}^1(\mathcal{U},\mathbb{L}_{q_0})
\mid
\psi\in \mathcal{Q}_{x_0},
\iint_{M_0}\dfrac{\overline{q_0}}{|q_0|}\psi=0
\right\}.
$$

Take $[X_{q_0}]\in H^1(\mathcal{U},\Theta_{M_0})$ such that the tangent vector corresponding to $[X_{q_0}]$ is presented by $\overline{q_0}/|q_0|$.
Fix a guiding frame $\GuaidF$ such that $\GuaidF([X_{q_0}])=\overline{q_0}/|q_0|$.
Take $\GoodS$ as \Cref{prop:linear-map-L} for $\GuaidF$. In \Cref{prop:linear-map-L}, we assume that all $\GuaidF([X])$ is smooth. However, we can treat this case in similar way.

We define a \emph{horizontal lift} of $[X]\in H^1(\mathcal{U},\Theta_{M_0})$ by
$$
V^H([X])=[X,L_{\GoodS(X)_i}(q_0)-Q_X]_{q_0}\in T_{q_0}\mathcal{Q}_g={\bf H}^1(\mathcal{U},\mathbb{L}_{q_0}),
$$
where $Q_X\in C^0(\mathcal{U},\mathcal{A}^{0,0}(\Omega_{M_0})^{\otimes 2})$ with
$$
\overline{\partial} Q_X=\overline{\partial}L_{\GoodS(X)_i}(q_0).
$$
and
\begin{equation}
\label{eq:uniquness_horizontal_lift-1}
\iint_{M_0}\GuaidF([Z])Q_X=0
\end{equation}
for all $[Z]\in H^1(\mathcal{U},\Theta_{M_0})$.
Notice that the horizontal lift is independent of choice of the representative $X$.
Indeed, for $\alpha\in C^0(\mathcal{U},\Theta_{M_0})$, from the definition of $Q_X$. $Q_{X+\delta\alpha}=Q_X$ and
\begin{align*}
[X+\delta\alpha,L_{\GoodS(X+\delta\alpha)_i}(q_0)-Q_{X+\delta\alpha}]_{q_0}
&=
[X+\delta\alpha,L_{\GoodS(X)_i+\alpha_i}(q_0)-Q_X]_{q_0}
\\
&=
[X+\delta\alpha,L_{\GoodS(X)_i}(q_0)-Q_X+L_{\alpha_i}(q_0)]_{q_0}
\\
&=[X,L_{\GoodS(X)_i}(q_0)-Q_X]_{q_0}.
\end{align*}
Therefore,
\begin{equation}
\label{eq:horizontal_lift-1}
T_{x_0}\teich_g\ni H^1(\mathcal{U},\Theta_{M_0})\ni [X]\mapsto V^H([X])
\in T_{q_0}\mathcal{Q}_g
\end{equation}
is an injective $\mathbb{C}$-linear map.
We also call the image $T^H_{q_0}\mathcal{Q}_g$ the \emph{horizontal lift of $T_{x_0}\teich_g$ (with the guiding frame $\GuaidF$)}.
We claim

\begin{proposition}
\label{prop:CR_L1}
The maximal complex subspace $T^{1,0}_{q_0}\mathcal{SQ}_g$ of $T^{\mathbb{R}}_{q_0}\mathcal{SQ}_g$ satisfies
\begin{equation}
\label{eq:CR_L1}
T^{1,0}_{q_0}\mathcal{SQ}_g
=T^H_{q_0}\mathcal{Q}_g\oplus (\mathcal{Q}_{x_0})^\perp_{q_0}.
\end{equation}
In particular,
$$
T^{\mathbb{R}}_{q_0}\mathcal{SQ}_g=T^H_{q_0}\mathcal{Q}_g\oplus (\mathcal{Q}_{x_0})^\perp_{q_0}\oplus \langle [0, iq_0]_{q_0}\rangle_{\mathbb{R}}.
$$
\end{proposition}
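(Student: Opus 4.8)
The plan is to compute the complex structure on $T_{q_0}\mathcal{Q}_g$ in the model $\mathbf{H}^1(\mathcal{U},\mathbb{L}_{q_0})$ and then intersect the real hyperplane $T^{\mathbb{R}}_{q_0}\mathcal{SQ}_g$ with $i\,T^{\mathbb{R}}_{q_0}\mathcal{SQ}_g$. Recall that $\partial\LOneNorm|_{q_0}$ is a $\mathbb{C}$-linear functional (indeed $\partial\LOneNorm|_{q_0}\in T^*_{q_0}\mathcal{Q}_g$, by \Cref{coro:gradient_vector_L1}), so that, writing $\ell(V)={\rm Re}(\partial\LOneNorm|_{q_0}(V))$, the real tangent space is $\ker\ell$ and its maximal complex subspace is $\ker\ell\cap\ker(\ell\circ i)=\ker(\partial\LOneNorm|_{q_0})$. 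Thus I first want to identify $\ker(\partial\LOneNorm|_{q_0})$ inside $\mathbf{H}^1(\mathcal{U},\mathbb{L}_{q_0})$ explicitly using \Cref{thm:variation_L1-norm}: a vector $[X,\varphi]_{q_0}$ lies in the maximal complex subspace exactly when $\iint_{M_0}\tfrac{\overline{q_0}}{|q_0|}(\varphi_i-L_{\xi_i}(q_0))\,d\overline{z}\wedge dz=0$, where $\delta\xi=X$.

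Next I would split a general tangent vector into a horizontal and vertical part, using the decomposition $0\to\mathcal{Q}_{x_0}\to\mathbf{H}^1(\mathcal{U},\mathbb{L}_{q_0})\to H^1(\mathcal{U},\Theta_{M_0})\to 0$ (\Cref{prop:some_covering_hypercohomology}, \Cref{remark:differential_projection_cotangent}). Concretely, given the chosen guiding frame with $\GuaidF([X_{q_0}])=\overline{q_0}/|q_0|$ and the good section $\GoodS$, any $[X,\varphi]_{q_0}$ is the sum of the horizontal lift $[X,L_{\GoodS(X)_i}(q_0)-Q]_{q_0}\in T^H_{q_0}\mathcal{Q}_g$ and a vertical vector $[0,\psi]_{q_0}$ with $\psi=\varphi_i-L_{\GoodS(X)_i}(q_0)+Q\in\mathcal{Q}_{x_0}$. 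The key computation is to evaluate $\partial\LOneNorm|_{q_0}$ on each summand. For the horizontal lift one may take $\xi=\GoodS(X)$, so the integrand becomes $\tfrac{\overline{q_0}}{|q_0|}(L_{\GoodS(X)_i}(q_0)-Q-L_{\GoodS(X)_i}(q_0))=-\tfrac{\overline{q_0}}{|q_0|}Q$; since $\overline{q_0}/|q_0|$ represents $[X_{q_0}]$ and $\GuaidF([X_{q_0}])=\overline{q_0}/|q_0|$, the uniqueness condition $\iint_{M_0}\GuaidF([Z])Q=0$ with $[Z]=[X_{q_0}]$ kills this term. Hence $\partial\LOneNorm|_{q_0}$ vanishes identically on $T^H_{q_0}\mathcal{Q}_g$, so $T^H_{q_0}\mathcal{Q}_g\subset T^{1,0}_{q_0}\mathcal{SQ}_g$. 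For a vertical vector $[0,\psi]_{q_0}$ we may take $\xi=0$, and $\partial\LOneNorm|_{q_0}([0,\psi]_{q_0})=\tfrac{1}{4i}\iint_{M_0}\tfrac{\overline{q_0}}{|q_0|}\psi\,d\overline{z}\wedge dz$, which vanishes exactly when $[0,\psi]_{q_0}\in(\mathcal{Q}_{x_0})^\perp_{q_0}$. Since $\partial\LOneNorm|_{q_0}$ is additive and vanishes on horizontal lifts, $[X,\varphi]_{q_0}\in\ker(\partial\LOneNorm|_{q_0})$ iff its vertical part lies in $(\mathcal{Q}_{x_0})^\perp_{q_0}$; this gives the direct sum decomposition \eqref{eq:CR_L1}. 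Finally, $[0,iq_0]_{q_0}$ satisfies $\partial\LOneNorm|_{q_0}([0,iq_0]_{q_0})=\tfrac{1}{4i}\iint_{M_0}\tfrac{\overline{q_0}}{|q_0|}\cdot iq_0=\tfrac{1}{4}\iint_{M_0}|q_0|>0$, hence is real but not complex-tangent; since $T^H_{q_0}\mathcal{Q}_g$ has complex dimension $3g-3$ and $(\mathcal{Q}_{x_0})^\perp_{q_0}$ complex dimension $3g-4$, a dimension count ($2(3g-3)+2(3g-4)+1=12g-13=\dim_{\mathbb{R}}T^{\mathbb{R}}_{q_0}\mathcal{SQ}_g$) forces the last real line to be spanned by $[0,iq_0]_{q_0}$, giving the second displayed equality.

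The main obstacle I anticipate is bookkeeping about which representatives and which guiding frame to use so that the cancellations are legitimate: one must verify that the horizontal lift is well-defined independently of the choice of $\xi$ and representative (already covered by the invariance remarks following \Cref{thm:variation_L1-norm}), and that the particular guiding frame with $\GuaidF([X_{q_0}])=\overline{q_0}/|q_0|$ can be used even though $\overline{q_0}/|q_0|$ is only bounded measurable rather than smooth; here one invokes the discussion in \S\ref{sec:remark_smoothness} and the Royden-type exhaustion argument (approximating by smooth forms on the complement of the zeroes of $q_0$) exactly as in \Cref{remark:holomorphicity_cocycle}. Once these regularity points are settled, the rest is the direct substitution into \eqref{eq:differential_L1norm} sketched above together with the elementary observation that for a $\mathbb{C}$-linear functional $\lambda$ the maximal complex subspace of $\{{\rm Re}\,\lambda=0\}$ is $\ker\lambda$.
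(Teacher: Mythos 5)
Your argument is correct and follows essentially the same route as the paper: the heart of the matter is the vanishing of $D\LOneNorm|_{q_0}$ on horizontal lifts via the uniqueness condition $\iint_{M_0}\GuaidF([X_{q_0}])Q=0$, the identification of the vertical kernel with $(\mathcal{Q}_{x_0})^\perp_{q_0}$, and a dimension count; your explicit characterization of $T^{1,0}_{q_0}\mathcal{SQ}_g$ as $\ker(\partial\LOneNorm|_{q_0})$ just makes the paper's dimension count unnecessary for \eqref{eq:CR_L1}.

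One slip in the last step: with $\|q_0\|=1$ one gets $\partial\LOneNorm|_{q_0}([0,iq_0]_{q_0})=\tfrac{1}{4i}\iint_{M_0}\tfrac{\overline{q_0}}{|q_0|}\,iq_0\,d\overline{z}\wedge dz=\tfrac{1}{4}\iint_{M_0}|q_0|\,d\overline{z}\wedge dz=\tfrac{i}{2}$, which is purely imaginary and nonzero — not the positive real number you wrote. This matters: if the value were a positive real, then ${\rm Re}(\partial\LOneNorm([0,iq_0]))\neq 0$ and $[0,iq_0]_{q_0}$ would fail to lie in $T^{\mathbb{R}}_{q_0}\mathcal{SQ}_g$ at all, contradicting the decomposition you are proving. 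The correct (purely imaginary, nonzero) value gives exactly what you need: the vector is in the real tangent space but not in $\ker(\partial\LOneNorm|_{q_0})$, and your dimension count then finishes the real splitting.
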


\begin{proof}
The horizontal lift $T^H_{q_0}\mathcal{Q}_g$ and $(\mathcal{Q}_{x_0})^\perp_{q_0}$ are linearly independent since $(\mathcal{Q}_{x_0})^\perp_{q_0}$ is contained in the kernel of the differential of the projection $D\Pi_{\teich_g}^\dagger\colon T\mathcal{Q}_g\to T\teich_g$. Therefore, the dimension of the right-hand side of \eqref{eq:CR_L1} is $3g-3+3g-4=6g-7$.
For a horizontal lift $V=[X,L_{\GoodS(X)_i}(q_0)-Q]_{q_0}\in T^H_{q_0}\mathcal{Q}_g$, from \Cref{thm:variation_L1-norm},
$$
D\LOneNorm|_{q_0}[V]=\iint_{M_0}\dfrac{\overline{q_0}}{2|q_0|}(L_{\GoodS(X)_i}(q_0)-Q-L_{\GoodS(X)_i}(q_0))=
-\dfrac{1}{2}\iint_{M_0}\GuaidF([X_{q_0}])Q=0.
$$
Therefore, $T^H_{q_0}\mathcal{Q}_g$ is a subspace of $T^{1,0}_{q_0}\mathcal{SQ}_g$.
Since the dimension of $T^{1,0}_{q_0}\mathcal{SQ}_g$ is $6g-7$, \eqref{eq:CR_L1} holds. We can easily check the decomposition of $T^{\mathbb{R}}_{q_0}\mathcal{SQ}_g$. 
\end{proof}

\begin{remark}
The horizontal lift $T^H_{q_0}\mathcal{Q}_g$ is dependent of the choice of the guiding frame $\GuaidF$. In fact, the dependence is caused from an observation that the differential $Q$ in the definition of the horizontal lift determined up to $(\mathcal{Q}_{x_0})^\perp_{q_0}$ when the guiding frame changes.
\end{remark}

\subsection{Conjectures on Levi convexity of unit ball bundles}
\label{subsec:conjecure_Levi}
Suppose $q_0$ is generic, that is, $q_0$ is in the principal stratum.
Under the period coordinates $z=(z_1,\cdots,z_{6g-6})$ via the double (branched) covering around $q_0$, the $L^1$-norm is presented as
$$
\LOneNorm (z) =i\sum_{j=1}^{3g-3}(z_j\overline{z_{g+j}}-z_{g+j}\overline{z_j})
$$
after fixing a symplectic basis on the covering surface. 
Since the period coordinates via the double covering is a complex analytic chart,
the Levi form of $\LOneNorm$ has exactly $3g-3$ positive (negative) eigenvalues (cf. \cite{MR644018}, \cite{MR1094714}. See also \cite[\S5.4]{MR3413977}).

Recall that  a $C^2$ function on in a complex manifold of dimension $n$ is said to be \emph{strictly $k$-pseudoconvex} if its Levi form has at least $n-k+1$ positive eigenvalues. A domain $D$ in a complex manifold is called \emph{$k$-convex} at $x\in \partial D$ if there is a neighborhood $U$ of $x$ and a strictly $k$-pseudoconvex function $\varphi$ on $U$ such that $D\cap U=\{\varphi(x)<\varphi(x)\}$ (cf. \cite[\S2.2]{MR1326623}). In this sence, the $L^1$-norm function is strictly $(3g-2)$-pseudoconvex on the principal stratum and the unit ball bundle $\mathcal{Q}_g^1=\{q\in \mathcal{Q}_g\mid \LOneNorm(q)<1\}$ is $(3g-2)$-convex at each generic boundary point.

Dumas \cite{MR3413977} observes that the complex Hessian (Levi form) of the $L^1$-norm function is positive definite at $q_0$ along $\mathcal{Q}_{x_0}$.
From an observation in the simplest case given in\S\ref{subsec:Sign_of_the_Levi_form}, we pose the following conjecture:

\begin{conjecture}
\label{conj:Levi-L1}
When $q_0\in \mathcal{Q}_g$ is generic, the Levi form of the $L^1$-norm function is negative on a horizontal lift $T^H_{q_0}\mathcal{Q}_g$.
\end{conjecture}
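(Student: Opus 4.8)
The plan is to reduce \Cref{conj:Levi-L1} to a Hodge-theoretic statement by passing to period coordinates on the principal stratum. Let $\pi\colon\hat M\to M_0$ be the orientation double cover associated with $q_0$, on which $\omega_0:=\sqrt{q_0}$ is a genuine holomorphic abelian differential with $\sigma^*\omega_0=-\omega_0$ for the deck involution $\sigma$; by Riemann-Hurwitz $\hat M$ has genus $4g-3$ and the $\sigma$-anti-invariant subspace $H^1(\hat M;\mathbb{C})^-$ has complex dimension $6g-6$. The relative period map $q\mapsto[\sqrt q]$ is a holomorphic chart of the principal stratum near $q_0$ with values in $H^1(\hat M;\mathbb{C})^-$ (for generic $q_0$ the zeros lift to $\sigma$-fixed branch points, so no relative directions are lost). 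In this chart $\LOneNorm$ equals, up to a positive constant, the Hermitian intersection form $\mathsf{H}(\alpha,\beta)=\tfrac{i}{2}\int_{\hat M}\alpha\wedge\bar\beta$ on $H^1(\hat M;\mathbb{C})^-$, because $\int_{M_0}|q_0|=\tfrac12\int_{\hat M}|\omega_0|^2$ and the Hodge-Riemann relations turn the latter into $\mathsf{H}$ evaluated on periods. Since $\mathsf{H}$ has signature $(3g-3,3g-3)$ on $H^1(\hat M;\mathbb{C})^-$, this recovers the eigenvalue count of \S\ref{subsec:conjecure_Levi}; and, because the chart is holomorphic and $\mathsf{H}$ has constant coefficients, the Levi form of $\LOneNorm$ at $q_0$ \emph{is} $\mathsf{H}$ (up to that positive constant).

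Next I would identify the relevant subspaces of $T_{q_0}\mathcal{Q}_g\cong H^1(\hat M;\mathbb{C})^-$. Differentiating the fibre inclusion $\mathcal{Q}_{x_0}\hookrightarrow\mathcal{Q}_g$ and expanding $\sqrt{q_0+t\dot q}=\omega_0(1+\tfrac t2\dot q/q_0+\cdots)$ shows that the vertical space $\ker D\Pi^\dagger_{\teich_g}=\mathcal{Q}_{x_0}$ is carried by the period chart onto the image of $\dot q\mapsto \dot q/2\omega_0$, which lands in $H^{1,0}(\hat M)^-$ (the apparent poles at the branch points cancel, and $\sigma$-anti-invariance is immediate); as this map is injective between spaces of dimension $3g-3$, the vertical space is exactly $H^{1,0}(\hat M)^-$. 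By the Hodge-Riemann bilinear relations $\mathsf{H}$ is positive definite on $H^{1,0}(\hat M)^-$ — which reproves Dumas's positivity along $\mathcal{Q}_{x_0}$ — negative definite on the complex-conjugate subspace $H^{0,1}(\hat M)^-$, which I write $\overline{\mathcal{Q}_{x_0}}$, and these two subspaces are $\mathsf{H}$-orthogonal complements. The class $[\omega_0]$ lies in $H^{1,0}(\hat M)^-$, and $\partial\LOneNorm|_{q_0}(V)$ is a constant multiple of $\mathsf{H}(V,[\omega_0])$, so $\partial\LOneNorm|_{q_0}$ annihilates $\overline{\mathcal{Q}_{x_0}}$; hence $\overline{\mathcal{Q}_{x_0}}\subset T^{1,0}_{q_0}\mathcal{SQ}_g$, and a dimension count yields $T^{1,0}_{q_0}\mathcal{SQ}_g=\overline{\mathcal{Q}_{x_0}}\oplus(\mathcal{Q}_{x_0})^{\perp}_{q_0}$, i.e. precisely the decomposition of \Cref{prop:CR_L1} with $T^H_{q_0}\mathcal{Q}_g$ replaced by $\overline{\mathcal{Q}_{x_0}}$. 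In particular $D\Pi^\dagger_{\teich_g}$ restricts to an isomorphism of $\overline{\mathcal{Q}_{x_0}}$ onto $T_{x_0}\teich_g$, so $\overline{\mathcal{Q}_{x_0}}$ \emph{is} a horizontal lift, and the Levi form of $\LOneNorm$ is negative definite on it.

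The remaining, and main, difficulty is to show that the horizontal lift $T^H_{q_0}\mathcal{Q}_g$ constructed in \S\ref{subsec:tangent_space_to_the_unit_sphere_bundle} — built from a good section $\GoodS$ and a guiding frame $\GuaidF$ normalized by $\GuaidF([X_{q_0}])=\overline{q_0}/|q_0|$, and determined only up to the guiding-frame ambiguity noted in the remark following \Cref{prop:CR_L1} — actually coincides with $\overline{\mathcal{Q}_{x_0}}=H^{0,1}(\hat M)^-$. Concretely I would try to prove that for the Ahlfors-Weill guiding frame the quadratic-differential representative $L_{\GoodS(X)_i}(q_0)-Q$ of a horizontal vector, pulled back to $\hat M$ and divided by $\omega_0$, is anti-holomorphic. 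The torus case of \S\ref{subsec:Sign_of_the_Levi_form}--\S\ref{subsec:CR-tori} is exactly this assertion in genus one: one checks there that the horizontal line $\langle({\rm Im}(\tau),iq)\rangle$ corresponds to $\langle(1,\bar\tau)\rangle=\overline{\mathcal{Q}_\tau}$ in period coordinates and that ${\rm Levi}(\LOneNorm)$ is indeed negative on it, so that computation should serve as the template. The obstruction is that $T^H_{q_0}\mathcal{Q}_g$ is assembled from objects harmonic for the hyperbolic metric on $M_0$, whereas $H^{0,1}(\hat M)^-$ is Hodge-theoretic for the flat metric $|q_0|$ on $\hat M$, so a priori these complements need not agree. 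If they do not, one falls back to the (still substantive) statement established by the first two paragraphs — that \emph{some} horizontal lift carries a negative definite Levi form — together with a control on how ${\rm Levi}(\LOneNorm)$ varies as $T^H_{q_0}\mathcal{Q}_g$ ranges over its allowed family, or one replaces the conjecture by the version naming $\overline{\mathcal{Q}_{x_0}}$ as the distinguished horizontal lift.

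A complementary route worth recording uses the identity $\LOneNorm=\teichmullernorm\circ\tb$ on $\mathcal{Q}^\times_g$ together with the $\mathbb{C}$-linearity of $D\tb$ on $T^H_{q_0}\mathcal{Q}_g$ (the genus-one instance is worked out in \S\ref{subsec:CR-tori}): this would tie negativity of ${\rm Levi}(\LOneNorm)$ on $T^H$ to a curvature sign for $\teichmullernorm$ along the unit tangent sphere bundle, linking \Cref{conj:Levi-L1} to the plurisubharmonicity questions for $\teichmullernorm$ raised in the same section. The difficulty here is that $\tb$ is holomorphic in the base but anti-holomorphic in the fibre, so computing ${\rm Levi}(\teichmullernorm\circ\tb)$ requires the full $(1,1)$-Hessian of $\tb$ transverse to $T^H$, not merely its $\mathbb{C}$-linear restriction at $q_0$; making that term manageable — again, presumably by passing to the flat geometry of $\hat M$ — is where the real work would lie.
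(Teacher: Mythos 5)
The statement you are addressing is posed in the paper as a \emph{conjecture}: the paper offers no proof, only the genus-one computation of \S\ref{subsec:Sign_of_the_Levi_form} and the signature count $(3g-3,3g-3)$ from period coordinates in \S\ref{subsec:conjecure_Levi}. So there is no argument of the author's to compare yours against, and any complete proof would be new content. Your first two paragraphs are, as far as I can check, correct and already go beyond the paper: the period chart on the principal stratum identifies $\LOneNorm$ with the constant-coefficient Hermitian intersection form $\mathsf{H}$ on $H^1(\hat M;\mathbb{C})^-$, the fibre $\mathcal{Q}_{x_0}$ is carried onto $H^{1,0}(\hat M)^-$ by $\dot q\mapsto\dot q/2\omega_0$ (your cancellation of the apparent poles at the branch points is right for simple zeros), and the Hodge--Riemann relations then give a canonical $\mathsf{H}$-negative complement $H^{0,1}(\hat M)^-$ to the vertical space which, since $\partial\LOneNorm|_{q_0}=\mathsf{H}(\,\cdot\,,[\omega_0])$ kills it, sits inside $T^{1,0}_{q_0}\mathcal{SQ}_g$ and projects isomorphically onto $T_{x_0}\teich_g$. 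That is a genuine theorem: the Levi form is negative definite on \emph{some} lift of $T_{x_0}\teich_g$ contained in the CR tangent space, which is strictly more than the bare eigenvalue count.

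The gap is exactly the one you flag yourself, and it is not cosmetic: the conjecture names the specific subspace $T^H_{q_0}\mathcal{Q}_g$ of \S\ref{subsec:tangent_space_to_the_unit_sphere_bundle}, constructed from a good section $\GoodS$ and a guiding frame normalized by $\GuaidF([X_{q_0}])=\overline{q_0}/|q_0|$, with a $\overline{\partial}$-correction $Q$ normalized against the guiding frame. You have produced no argument that $H^{0,1}(\hat M)^-$ lies in the family of such lifts, and the two constructions live in different geometries (the paper's in the quasiconformal/Dolbeault picture on $M_0$, yours in the flat Hodge theory of $\hat M$), so the identification is a real computation, not a formality. Note also that two lifts inside $T^{1,0}_{q_0}\mathcal{SQ}_g$ differ by a linear map into $(\mathcal{Q}_{x_0})^\perp_{q_0}\subset H^{1,0}(\hat M)^-$, on which $\mathsf{H}$ is \emph{positive}; so negativity is not automatically inherited by other members of the family, and one cannot wave the discrepancy away by invoking the guiding-frame ambiguity recorded in the remark after \Cref{prop:CR_L1}. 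Until you either prove the identification $T^H_{q_0}\mathcal{Q}_g=H^{0,1}(\hat M)^-$ for the stated normalization, or bound the mixed term $\mathsf{H}(v,\rho(v))+\mathsf{H}(\rho(v),v)+\mathsf{H}(\rho(v),\rho(v))$ for the discrepancy map $\rho\colon H^{0,1}(\hat M)^-\to(\mathcal{Q}_{x_0})^\perp_{q_0}$, what you have is a proof of a reformulated conjecture (with $\overline{\mathcal{Q}_{x_0}}$ as the distinguished lift), not of the one stated. Your fourth paragraph, via $\LOneNorm=\teichmullernorm\circ\tb$, is a plausible alternative route but is likewise only a plan, for the reason you give: the fibrewise anti-holomorphy of $\tb$ forces you to control its full second-order jet, which nothing in the paper supplies.
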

%
%

\begin{conjecture}
\label{conj:Levi-teich}
The Teichm\"uller metric is (strictly) plurisubharmonic on $T\teich_g$.
\end{conjecture}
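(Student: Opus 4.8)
The plan is to exploit Royden's identification of the Teichm\"uller metric with the Kobayashi--Royden metric (recalled in \S\ref{subsec:Teichmuller_metric}), for which plurisubharmonicity is a natural complex-analytic phenomenon, and to supplement it with an explicit Levi-form computation on the generic stratum coming from the duality developed in this paper. Since $\teich_g$ is biholomorphic to a bounded domain via the Bers embedding, it is hyperbolic, so its Kobayashi--Royden infinitesimal metric $F_K$ is a continuous positive function on $T^\times\teich_g$ and Royden's theorem gives $\teichmullernorm=F_K$ there. I would invoke, and if necessary re-derive for $\teich_g$ using Teichm\"uller's uniqueness theorem (the extremal disks being exactly the Teichm\"uller geodesic disks), the fact that $\log F_K$ is plurisubharmonic on the total space of the tangent bundle, which follows from the disk-envelope description of $F_K$. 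Granting this, because $\exp$ is convex and increasing, $\teichmullernorm=\exp(\log\teichmullernorm)$ is plurisubharmonic: for every holomorphic disk $g\colon\mathbb{D}\to T^\times\teich_g$ the function $\zeta\mapsto\log\teichmullernorm(g(\zeta))$ is subharmonic and $e^{(\cdot)}$ preserves subharmonicity. This argument uses no second-order regularity, so it yields the non-strict plurisubharmonicity on all of $T^\times\teich_g$; moreover the radial fibre direction $v\mapsto\alpha v$ already contributes strictly, since $\alpha\mapsto|\alpha|$ has strictly positive Laplacian.

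To upgrade to strict plurisubharmonicity I would work on the image $T\teich_g(1,\dots,1;-1)$ of the principal (generic) stratum, which is open and dense in $T^\times\teich_g$ and on which, by \Cref{thm:teichmuller_metric_is_real_analytic_strata}, the metric is real-analytic, so that its Levi form exists classically. Here I would compute the complex Hessian of $\teichmullernorm$ from the variational formula \Cref{thm:derivative_T-metric}, transporting the computation to $\mathcal{Q}_g$ through the Teichm\"uller Beltrami map $\tb$ and the identity $\teichmullernorm\circ\tb=\LOneNorm$ of \eqref{eq:T-N-dual}. Because $\tb$ is only real-analytic --- the fibre variable entering through $\overline{q}/|q|$, so that to leading order it is conjugate-linear in the fibre --- the second derivative of $\teichmullernorm\circ\tb=\LOneNorm$ splits into a part governed by the Levi form of $\LOneNorm$ (which \Cref{conj:Levi-L1} asserts is negative on the horizontal lift $T^H_{q_0}\mathcal{Q}_g$ and positive on the vertical space $\mathcal{Q}_{x_0}$) and a correction built from the real Jacobian of $\tb$. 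The infinitesimal duality \eqref{eq:infinitesimal_duality_diagram} together with \Cref{prop:switch-cotangent-tangent} pin down the first-order data precisely, and I would differentiate these once more using the second-order models $(\mathbb{TT})_{x_0}[\mathcal{U}]$ and Kawai's holomorphic symplectic form $\omega_{\mathcal{Q}_g}$ of \Cref{prop:holomorphic_symplectic_form}. The torus model (\S\ref{subsec:Sign_of_the_Levi_form}, \S\ref{subsec:CR-tori}) exhibits the mechanism: the sign reversal of the horizontal Levi form of $\LOneNorm$ combines with the fibre-radial positivity and the CR-isomorphism property of $\tb$ to produce a positive-definite Levi matrix for $\teichmullernorm$.

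The hard part is twofold. Analytically, the Teichm\"uller metric is only $C^1$ across the non-principal strata by Royden's theorem, so the classical Levi form is unavailable on a closed set, and strictness can at best be asserted on the open dense generic locus; a clean global strict statement would require a viscosity or potential-theoretic formulation across the singular strata, where one would lean on the non-smooth plurisubharmonicity obtained in the first paragraph to control the boundary behaviour. Geometrically, the genuine obstacle is controlling the full $(6g-6)\times(6g-6)$ Levi matrix in infinite-dimensional generality: one must show that the negative horizontal contribution of $\LOneNorm$ predicted by \Cref{conj:Levi-L1} is exactly cancelled and over-compensated by the nonholomorphicity correction of $\tb$, that is, that \Cref{conj:Levi-L1} and \Cref{conj:Levi-teich} are two faces of a single positivity statement. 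Establishing this cancellation rigorously --- rather than merely verifying it in the explicit torus case --- is where the main work lies, and it is precisely what the second-order infinitesimal calculus, the model pairings, and the switch--dualization formalism of this paper are designed to make tractable.
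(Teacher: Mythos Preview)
The statement you are trying to prove is labeled \emph{Conjecture} in the paper, not a theorem: the author explicitly poses it as an open problem in the subsection on Levi convexity of the unit ball bundles, alongside \Cref{conj:Levi-L1}, and provides no proof. The only evidence given is the explicit verification for $g=1$ in \S\ref{subsec:Sign_of_the_Levi_form}, where the Levi matrix of $\teichmullernorm$ on $T\teich_1$ is computed by hand and seen to be positive definite. So there is no ``paper's own proof'' to compare against.

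Your proposal, read as an attempted proof, has a genuine gap already in the first paragraph. The Kobayashi--Royden metric is defined as an \emph{infimum} over holomorphic disks, $F_K(x,\xi)=\inf\{1/r:\exists\,f\colon\mathbb{D}\to M,\ f(0)=x,\ f'(0)=r\xi\}$, and an infimum of plurisubharmonic functions is not plurisubharmonic in general; the ``envelope'' argument you sketch would work for a supremum (as in the Carath\'eodory metric) but not here. The assertion that $\log F_K$ is plurisubharmonic on $TM$ is not a known general fact for hyperbolic manifolds, and ``re-deriving it for $\teich_g$ using Teichm\"uller's uniqueness theorem'' is essentially the content of the conjecture itself, not a route around it. Your second and third paragraphs then build the strict case on \Cref{conj:Levi-L1}, which is likewise open, so at best you are reducing one conjecture to another --- a reduction you yourself flag as the ``hard part'' without carrying it out. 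In short, the proposal is a plausible programme in the spirit of the paper's infinitesimal-duality framework, but it is not a proof, and the paper does not claim to have one either.
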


\Cref{conj:Levi-L1} is possibly rephrased for general $q\in \mathcal{Q}_g$.

%

\section{Royden's theorem revisited}
\label{sec:Royden_theorem_revisited}
We first discuss the regularity of the $L^1$-norm function with our formula.
Namely, the continuity of the total differential $d\LOneNorm$ is derived from the presentation \eqref{eq:differential_L1norm} as follows.

Let $\{[X_n,\varphi_n]_{q_n}\}_{n=1}^\infty$ be a sequence in $T\mathcal{Q}_g$ converges to $[X_0,\varphi_0]_{q_0}$ with $q_0\ne 0$.
from the definition of the tangent vectors to $\mathcal{Q}_g$ (\S\ref{subsec:description_Hol}), we can take an appropriate cochain $\xi_n$ with $\delta \xi_n=X_n$ ($n=0,1,\cdots$) such that $(\varphi_n)_i$ and $(\xi_n)_i$ and their derivatives converge to $(\varphi_0)_i$ on $(\xi_0)_i$ and their derivatives uniformly on any compact sets on $U_i$ with suitable covering $\mathcal{U}=\{U_i\}_{i\in I}$ after trivializing locally the universal family over the Teichm\"uller space (see the discussion in \S\ref{subsec:charts}). Therefore,
$D \LOneNorm([X_n,\varphi_n]_{q_n})$ tends to $D \LOneNorm([X_0,\varphi_0]_{q_0})$ as $n\to \infty$
by Lebesgue theorem. This implies the continuity of $D\LOneNorm$ on $T\mathcal{Q}_g$.

The derivative $D\LOneNorm$ presents the $\partial$-derivative of the $L^1$-norm function (see \Cref{coro:gradient_vector_L1}). Since the $L^1$-norm function $\LOneNorm$ is real-valued, $\overline{\partial}\LOneNorm=\overline{\partial\LOneNorm}$ holds as the identify for sections to the complexified cotangent space 
$$
(T^*)^\mathbb{C}\mathcal{Q}_g=(T^*)^{\mathbb{R}}\mathcal{Q}_g\otimes \mathbb{C}=(T^*)^{1,0}\mathcal{Q}_g\oplus (T^*)^{0,1}\mathcal{Q}_g\to \mathcal{Q}_g^\times =\mathcal{Q}_g-\{0\},
$$
where ``$0$" means the zero section of the bundle $\mathcal{Q}_g\to \teich_g$.
As a consequence, we conclude that the total differential $d\LOneNorm=\partial\LOneNorm+\overline{\partial}\LOneNorm$ is a continuous section on $\mathcal{Q}_g^\times$.
%

For the record, by combining with \Cref{prop:strictly_convex-L1-norm}, we summarize as follows (cf. \cite[Lemma 2]{MR0288254}).

\begin{corollary}[Royden]
\label{coro:L1-norn_C1}
The $L^1$-norm function $\LOneNorm$ on $\mathcal{Q}_g^\times =\mathcal{Q}_g-\{0\}$ is strictly convex and of class $C^1$.
\end{corollary}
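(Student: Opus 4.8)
The statement to prove is \Cref{coro:L1-norn_C1}: the $L^1$-norm function $\LOneNorm$ on $\mathcal{Q}_g^\times$ is strictly convex and of class $C^1$.

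The plan is to assemble this from the two ingredients already established in the excerpt. The strict convexity half is immediate: it is precisely \Cref{prop:strictly_convex-L1-norm}, applied fiberwise, since the $L^1$-norm on each $\mathcal{Q}_{M_0}$ is strictly convex and $\mathcal{Q}_g^\times$ is the union of the punctured fibers. So the real work is the $C^1$-regularity, and the approach is exactly the argument sketched in the paragraphs immediately preceding the corollary in \S\ref{sec:Royden_theorem_revisited}, which I would simply write out carefully.

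First I would fix a point $[X_0,\varphi_0]_{q_0}\in T\mathcal{Q}_g$ with $q_0\neq 0$, choose a locally finite covering $\mathcal{U}=\{U_i\}_{i\in I}$ of $M_0$ with $H^1(U_i,\Theta_{M_0})=0$, and recall from \S\ref{subsec:description_Hol} and \S\ref{subsec:charts} that tangent vectors to $\mathcal{Q}_g$ arise by differentiating holomorphic families of quadratic differentials; after locally trivializing the universal family, a convergent sequence $[X_n,\varphi_n]_{q_n}\to [X_0,\varphi_0]_{q_0}$ admits representatives $\xi_n$ (with $\delta\xi_n=X_n$) and $(\varphi_n)_i$ whose values and derivatives converge locally uniformly on each $U_i$ to those of $\xi_0$ and $(\varphi_0)_i$. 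Then I would invoke the variational formula \Cref{thm:variation_L1-norm}, namely $D\LOneNorm|_{q_0}[V]=\frac{1}{4i}\iint_{M_0}\frac{\overline{q_0}}{|q_0|}(\varphi_i-L_{\xi_i}(q_0))\,d\overline{z}\wedge dz$, together with the fact that the zeros of $q_0$ are discrete (so $\overline{q_0}/|q_0|$ is a bounded measurable function on $M_0$ whose restriction to the complement of the finitely many zeros is continuous), and apply the Lebesgue dominated convergence theorem to conclude $D\LOneNorm([X_n,\varphi_n]_{q_n})\to D\LOneNorm([X_0,\varphi_0]_{q_0})$. This gives continuity of $D\LOneNorm$, i.e. of $\partial\LOneNorm$, on $\mathcal{Q}_g^\times$. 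Finally, since $\LOneNorm$ is real-valued, $\overline{\partial}\LOneNorm=\overline{\partial\LOneNorm}$ as sections of $(T^*)^{0,1}\mathcal{Q}_g$, so the total differential $d\LOneNorm=\partial\LOneNorm+\overline{\partial}\LOneNorm$ is continuous on $\mathcal{Q}_g^\times$; by \Cref{coro:gradient_vector_L1} this identifies the derivative explicitly as an element of ${\bf H}^{1,\dagger}(\mathcal{U},\mathbb{L}_{q_0})\cong T^*_{q_0}\mathcal{Q}_g$, confirming $\LOneNorm\in C^1(\mathcal{Q}_g^\times)$.

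The main obstacle, such as it is, is bookkeeping rather than conceptual: one must be careful that the local representatives $\xi_n$ and $\varphi_n$ can genuinely be chosen to converge in the required $C^\infty_{\mathrm{loc}}$ sense on a \emph{fixed} covering as $q_n\to q_0$, which uses the smoothness of the local trivialization of the universal curve over $\teich_g$ (the quasiconformal-mappings construction of \S\ref{subsec:charts}) and the continuity of the dependence of the Ahlfors–Weill-type solutions on parameters established via the regularity of the integral operators $T$ and $H$ in \S\ref{appendix:regularity}. Once that convergence is granted, the domination needed for Lebesgue's theorem is automatic because $|\overline{q_0}/|q_0||\le 1$ and the remaining integrand factors converge boundedly on compacta and the exceptional zero set has measure zero. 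I would also remark, as the excerpt does, that this regularity is sharp: by \cite{MR0288254} and \cite[Proposition 7.5.5]{MR2245223} the function $\LOneNorm$ is not in general $C^2$, so $C^1$ is the best one can assert.
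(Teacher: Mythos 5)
Your proposal is correct and follows essentially the same route as the paper: strict convexity is quoted fiberwise from \Cref{prop:strictly_convex-L1-norm}, and the $C^1$-regularity is obtained by combining the variational formula of \Cref{thm:variation_L1-norm} with the convergence of local representatives $(\xi_n,\varphi_n)$ on a fixed covering, the Lebesgue theorem, and the relation $\overline{\partial}\LOneNorm=\overline{\partial\LOneNorm}$ for the real-valued function $\LOneNorm$. The only difference is that you spell out the bookkeeping (choice of convergent representatives via the local trivialization of the universal family) slightly more explicitly than the paper does.
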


As noted by Royden in \cite{MR0288254}, the regularity of the Teichm\"uller metric follows from the regularity of the $L^1$-norm and the following criterion (cf. \cite[Lemma]{MR0288254}). For the convenience of readers, we give a proof of the criterion at \S\ref{sec:Royden_s_criterion}.

We recall that a function $G$ on $\mathbb{C}^n$ is said to be \emph{complex homogeneous} if $G(\alpha\eta)=|\alpha|G(\eta)$ for $\alpha\in \mathbb{C}$ and $\eta\in \mathbb{C}^n$ (See \S\ref{chap:introduction}). 

\begin{proposition}[Royden]
\label{prop:Royden_criterion}
Let $U$ be an open subset of $\mathbb{C}^m$ and $G(x,\eta)$ a continuous function on $U\times \mathbb{C}^n$, which for each $x$ is a positive convex, and complex homogeneous function in $\eta\in \mathbb{C}^n$. Define $F(x,\xi)$ on $U\times \mathbb{C}^n$ by 
$$
F(x,\xi)=\sup\{{\rm Re}(\eta(\xi)) \mid G(x,\eta)=1\}
$$
where $\eta(\xi)$ denotes the $\mathbb{C}$-bilinear pairing
$$
\eta(\xi)=\sum_{i=1}^n\eta_i\xi_i.
$$
Then, $F$ is continuous on $U\times \mathbb{C}^n$, and $F(x,\cdot)$ is a positive convex complex homogeneous function in $\xi\in \mathbb{C}^n$ for each $x\in U$. In addition,
\begin{itemize}
\item for $x\in U$, if $G(x,\cdot)$ is of class $C^1$ on $\mathbb{C}^n-\{0\}$, then $F(x,\cdot)$ is strictly convex; and
\item if $G(x,\cdot)$ is strictly convex for each $x\in U$, and $G$ is totally differentiable in the variable $x\in U$ and the $x$-derivative is continuous on $U\times (\mathbb{C}^n-\{0\})$, $F$ is of class $C^1$ on $U\times (\mathbb{C}^n-\{0\})$. In fact, when
$$
G(x+\Delta x,\eta)=G(x,\eta)+A_{x,\eta}(\Delta x)+\overline{A_{x,\eta}(\Delta x)}+o(|\Delta x|)
$$
as $|\Delta x|\to 0$,
\begin{align*}
F(x+\Delta x,\xi+\Delta \xi)
&=-F(x,\xi)(A_{x,\eta_{x,\xi}}(\Delta x)+\overline{A_{x,\eta_{x,\xi}}(\Delta x)})
\\
&\qquad +\dfrac{1}{2}\eta_{x,\xi}(\Delta \xi)+\dfrac{1}{2}\overline{\eta_{x,\xi}(\Delta \xi)}
+o(|\Delta x|+|\Delta\xi|)
\end{align*}
as $|\Delta x|+|\Delta \xi|\to 0$, where $\eta_{x,\xi}\in \mathbb{C}^n$ is a unique vector satisfying $F(x,\xi)={\rm Re}(\eta_{x,\xi}(\xi))$ and $G(x,\eta_{x,\xi})=1$.
\end{itemize}
\end{proposition}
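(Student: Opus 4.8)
The plan is to prove \Cref{prop:Royden_criterion} by a sequence of standard convex-analysis arguments, building up from the basic properties of $F$ to the $C^1$-regularity, which is the real content. First I would establish that $F(x,\cdot)$ is a positive convex complex homogeneous function on $\mathbb{C}^n$ for each fixed $x$. Positivity and complex homogeneity are immediate from the definition: writing $F(x,\alpha\xi)=\sup\{\mathrm{Re}(\eta(\alpha\xi))\mid G(x,\eta)=1\}$ and absorbing the phase of $\alpha$ into $\eta$ (using $G(x,e^{i\theta}\eta)=G(x,\eta)$) gives $F(x,\alpha\xi)=|\alpha|F(x,\xi)$. Convexity follows since $F(x,\cdot)$ is a supremum of the $\mathbb{R}$-linear functions $\xi\mapsto\mathrm{Re}(\eta(\xi))$. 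Positivity (i.e. $F(x,\xi)>0$ for $\xi\ne 0$) uses that $G(x,\cdot)$ is finite and positive, so the unit sphere $\{G(x,\eta)=1\}$ is a compact set not contained in any real hyperplane through the origin; a compactness argument over the unit sphere in $\xi$ then gives a positive lower bound. Continuity of $F$ on $U\times\mathbb{C}^n$ I would get from the joint continuity of $G$, the compactness of the sphere bundle $\{(x,\eta)\mid G(x,\eta)=1\}$ over a relatively compact neighborhood, and a standard $\sup$-of-continuous-functions/Dini-type argument.

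Next I would treat the two bulleted regularity claims, both of which rest on the duality between strict convexity and $C^1$-smoothness of the conjugate. For the first claim, fix $x$ and suppose $G(x,\cdot)\in C^1(\mathbb{C}^n\setminus\{0\})$. I would argue by contradiction: if $F(x,\cdot)$ were not strictly convex, there would be $\xi_1\ne\xi_2$, both on the unit $F$-sphere, with $F(x,\xi_1+\xi_2)=2$. Picking $\eta$ realizing the supremum for $\xi_1+\xi_2$, one finds $\mathrm{Re}(\eta(\xi_1))=\mathrm{Re}(\eta(\xi_2))=1$, so $\eta$ lies in the supporting hyperplanes of the unit $G$-ball dual sphere at both $\xi_1$ and $\xi_2$; differentiability of $G(x,\cdot)$ forces the supporting hyperplane of $\{G(x,\cdot)\le 1\}$ at the point $\eta/G(x,\eta)$ to be unique, which identifies $\xi_1$ and $\xi_2$ up to a positive scalar, and homogeneity then gives $\xi_1=\xi_2$, a contradiction. (This is essentially the argument in \Cref{prop:strictly_convex-L1-norm} run through the Legendre duality.) Conversely, for the second claim one uses that strict convexity of $G(x,\cdot)$ guarantees that for each $\xi\ne 0$ there is a \emph{unique} maximizer $\eta_{x,\xi}$ with $F(x,\xi)=\mathrm{Re}(\eta_{x,\xi}(\xi))$ and $G(x,\eta_{x,\xi})=1$; uniqueness is exactly the statement that the support function $F(x,\cdot)$ is differentiable in $\xi$ with $\partial F/\partial\xi$ expressible through $\eta_{x,\xi}$, and moreover $(x,\xi)\mapsto\eta_{x,\xi}$ is continuous by a compactness-plus-uniqueness argument (any limit of maximizers is a maximizer, hence equals the unique one).

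Having the continuous selection $\eta_{x,\xi}$, the final step is to compute the first-order expansion of $F$ and read off its differential. I would write, for small $\Delta x,\Delta\xi$, the two-sided estimate $F(x+\Delta x,\xi+\Delta\xi)\ge \mathrm{Re}(\eta'(\xi+\Delta\xi))$ for any $\eta'$ with $G(x+\Delta x,\eta')=1$, and $F(x+\Delta x,\xi+\Delta\xi)=\mathrm{Re}(\eta''(\xi+\Delta\xi))$ for the actual maximizer $\eta''=\eta_{x+\Delta x,\xi+\Delta\xi}$. Using the hypothesis $G(x+\Delta x,\eta)=G(x,\eta)+A_{x,\eta}(\Delta x)+\overline{A_{x,\eta}(\Delta x)}+o(|\Delta x|)$, a rescaled version of $\eta_{x,\xi}$, namely $\eta_{x,\xi}/G(x+\Delta x,\eta_{x,\xi})\approx (1-\mathrm{Re}(A_{x,\eta_{x,\xi}}(\Delta x)))\eta_{x,\xi}$, is admissible for the lower bound; plugging in and using $F(x,\xi)=\mathrm{Re}(\eta_{x,\xi}(\xi))$ gives the lower bound matching the asserted formula up to $o(|\Delta x|+|\Delta\xi|)$. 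For the matching upper bound one applies the same device to $\eta''$ together with the continuity $\eta''\to\eta_{x,\xi}$, so the cross terms are controlled. Combining the two bounds yields
\begin{align*}
F(x+\Delta x,\xi+\Delta\xi)
&=F(x,\xi)-F(x,\xi)\bigl(A_{x,\eta_{x,\xi}}(\Delta x)+\overline{A_{x,\eta_{x,\xi}}(\Delta x)}\bigr)\\
&\qquad+\tfrac{1}{2}\eta_{x,\xi}(\Delta\xi)+\tfrac{1}{2}\overline{\eta_{x,\xi}(\Delta\xi)}+o(|\Delta x|+|\Delta\xi|),
\end{align*}
which exhibits the total differential of $F$ and, by continuity of $(x,\xi)\mapsto\eta_{x,\xi}$ and of $(x,\eta)\mapsto A_{x,\eta}$, shows it depends continuously on $(x,\xi)$ on $U\times(\mathbb{C}^n\setminus\{0\})$; hence $F\in C^1$ there. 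The main obstacle I expect is the bookkeeping in this last double inequality: one must be careful that the perturbed competitor covectors are genuinely normalized to the perturbed sphere $\{G(x+\Delta x,\cdot)=1\}$ and that the error terms from renormalization, from the expansion of $G$, and from the continuity modulus of the maximizer selection all combine into a single $o(|\Delta x|+|\Delta\xi|)$; the uniqueness/continuity of $\eta_{x,\xi}$ (equivalently, the differentiability theory of support functions of strictly convex bodies) is the conceptual heart that makes this go through.
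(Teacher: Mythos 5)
Your proposal is correct, and its overall skeleton (basic properties of $F$ via the compactness of the unit $G$-spheres; strict convexity of $F(x,\cdot)$ from the uniqueness of the supporting hyperplane when $G(x,\cdot)\in C^1$; $C^1$-regularity of $F$ via the unique, continuously varying maximizer $\eta_{x,\xi}$ and a two-sided estimate using renormalized competitors $\eta/G(x+\Delta x,\eta)$) matches the paper's proof in \S\ref{sec:Royden_s_criterion}. The one genuine structural difference is in how the $\xi$-increment is handled. The paper first proves a quantitative \emph{uniform convexity} of $G$ on compact sets and a corresponding ``smoothness'' inequality for $F$ (parts (1)--(2) of \Cref{claim:claim2_appendix3}), and uses these to show that the one-sided derivatives of $t\mapsto F(x,\xi+th)$ agree, identifying the $\xi$-derivative as ${\rm Re}(\eta_{x,\xi}(\cdot))$ in a separate lemma before combining with the $x$-expansion. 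You instead fold both increments into a single joint two-sided estimate: the lower bound from the admissible competitor $\eta_{x,\xi}/G(x+\Delta x,\eta_{x,\xi})$, the upper bound from the actual maximizer $\eta''=\eta_{x+\Delta x,\xi+\Delta\xi}$ renormalized to the sphere at $x$, with the cross term ${\rm Re}\bigl((\eta''-\eta_{x,\xi})(\Delta\xi)\bigr)=o(|\Delta\xi|)$ controlled by the continuity of the maximizer selection (compactness plus uniqueness). This checks out and bypasses the uniform-convexity machinery entirely, yielding a slightly shorter route to the displayed expansion; the price is that everything hinges on two points you correctly flag but should make explicit in a written version: the uniformity of the $o(|\Delta x|)$ term in the expansion of $G$ over $\eta$ in compact sets (the paper gets this from the mean value theorem and continuity of $A_{x,\eta}$, see \eqref{eq:claim_3_Royden1-improve}), and the continuity of $(x,\xi)\mapsto\eta_{x,\xi}$. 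Your passing appeal to the general fact that differentiability of a support function is equivalent to uniqueness of the exposed point is not needed as a separate lemma, since your final computation establishes the expansion directly.
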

Actually, Royden stated the proposition for real Finsler metrics. We discuss in complex analytic case for the sake of (more) direct applications to our purpose. However, the proof is essentially same as that of the real case. In summary, we obtain

\begin{corollary}[Royden]
\label{coro:teichmuller_metric_C1}
The Teichm\"uller metric $\teichmullernorm$ on $T\teich_g-\{0\}$ is strictly convex and of class $C^1$.
\end{corollary}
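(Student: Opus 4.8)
The plan is to deduce this from \Cref{coro:L1-norn_C1} together with Royden's criterion \Cref{prop:Royden_criterion}, following Royden's original argument. Recall from \S\ref{subsec:Teichmuller_metric} that by \eqref{eq:Teichmuller-norm} the Teichm\"uller metric is the dual Finsler metric of the $L^1$-norm function on $\mathcal{Q}_g\cong T^*\teich_g$ via the pairing \eqref{eq:pairing_Teichmuller}. To place ourselves in the situation of \Cref{prop:Royden_criterion}, I would fix $x_0\in\teich_g$ and use the Bers embedding to regard $\teich_g$ as a bounded domain $U=\Bers{x_0}\subset B_2\cong\mathbb{C}^{3g-3}$; then, as in \S\ref{subsec:trivialization_tangent_bundle_cotangent_bundle}, the trivializations $T\teich_g\cong\teich_g\times B_2$ and $\mathcal{Q}_g\cong\teich_g\times\mathcal{Q}_{x_0}$ can be chosen so that the pairing function $\pairingsymb$ becomes the standard $\mathbb{C}$-bilinear pairing $\eta(\xi)=\sum_i\eta_i\xi_i$ on the fibers. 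Under this identification, write $G(x,\eta)=\LOneNorm(x,\eta)$ for the $L^1$-norm as a function on $U\times\mathcal{Q}_{x_0}$ and $F(x,\xi)=\teichmullernorm(x,\xi)$; then $F(x,\xi)=\sup\{{\rm Re}(\eta(\xi))\mid G(x,\eta)=1\}$ by \eqref{eq:Teichmuller-norm}.

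Next I would verify the hypotheses of \Cref{prop:Royden_criterion} for $G$. For each fixed $x$, $G(x,\cdot)$ is the $L^1$-norm on $\mathcal{Q}_x$: it is positive, convex, and complex homogeneous, hence satisfies the standing assumptions; moreover it is strictly convex by \Cref{prop:strictly_convex-L1-norm} and of class $C^1$ on $\mathcal{Q}_x-\{0\}$ by \Cref{coro:L1-norn_C1}. It remains to confirm that $G$ is totally differentiable in the base variable $x$ with $x$-derivative continuous on $U\times(\mathcal{Q}_{x_0}-\{0\})$; this is precisely the content of the variational formula \Cref{thm:variation_L1-norm} together with the continuity argument recalled at the beginning of \S\ref{sec:Royden_theorem_revisited} (using that, after a local trivialization of the universal family as in \S\ref{subsec:charts}, the cochains $\varphi$ and $\xi$ and their derivatives converge locally uniformly, so that the Lebesgue convergence theorem applies to \eqref{eq:differential_L1norm}). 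Then \Cref{prop:Royden_criterion} yields at once that $F=\teichmullernorm$ is continuous, and — since $G(x,\cdot)$ is $C^1$ on $\mathcal{Q}_x-\{0\}$ — that $F(x,\cdot)$ is strictly convex, while — since $G(x,\cdot)$ is strictly convex and $G$ is jointly totally differentiable in $x$ with continuous $x$-derivative — that $F$ itself is of class $C^1$ on $U\times(\mathbb{C}^{3g-3}-\{0\})$.

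Finally I would observe that these conclusions do not depend on the choice of base point $x_0$ or of the trivializations, since the $L^1$-norm, the Teichm\"uller metric, and the pairing are intrinsic objects; hence strict convexity of $\teichmullernorm$ on each fiber and $C^1$-regularity of $\teichmullernorm$ on $T\teich_g-\{0\}$ (the complement of the zero section) follow globally. The main obstacle is not conceptual but one of bookkeeping: one must make sure that the hypothesis ``$G$ totally differentiable in $x$ with continuous $x$-derivative'' of \Cref{prop:Royden_criterion} is genuinely supplied by our setup — that is, that the formula \eqref{eq:differential_L1norm}, which a priori records the directional derivatives of $\LOneNorm$ along tangent vectors to $\mathcal{Q}_g$ mixing base and fiber directions, does assemble into a continuous total differential in the base directions. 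This is exactly what the locally uniform convergence argument of \S\ref{sec:Royden_theorem_revisited} secures, so the verification reduces to carefully quoting that argument in the trivialized coordinates above.
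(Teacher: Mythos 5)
Your proposal is correct and follows essentially the same route as the paper: the paper derives this corollary by combining the strict convexity and $C^1$-regularity of the $L^1$-norm function (\Cref{coro:L1-norn_C1}, established via the variational formula and the continuity argument of \S\ref{sec:Royden_theorem_revisited}) with Royden's duality criterion \Cref{prop:Royden_criterion}, exactly as you do. Your added bookkeeping — trivializing via the Bers embedding so the pairing becomes the standard $\mathbb{C}$-bilinear one and checking that the continuity of $d\LOneNorm$ supplies the hypothesis on the $x$-derivative — is precisely the implicit content of the paper's ``In summary, we obtain.''
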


\section{Teichm\"uller metric}
The Teichm\"uller metric is thought of as a function on the tangent bundle:
$$
T\teich_g\ni v\mapsto \teichmullernorm(\pi_{\teich_g}(v),v)
$$
where $\pi_{\teich_g}\colon T\teich_g\to \teich_g$ is the projection.
(cf. \S\ref{subsec:Teichmuller_metric}).
In this section, we compute the differential of the Teichm\"uller metric
on $T\teich_g$ under our setting.

\subsection{Variational formula}
We prove the following.

\begin{theorem}[Derivative of the Teichm\"uller metric]
\label{thm:derivative_T-metric}
Let $x_0=(M_0,f_0)\in \teich_g$ and $v_0\in T_{x_0}\teich_g$. 
Let $\mathcal{U}=\{U_i\}_{i\in I}$ be a locally finite covering of $M_0$ such that each $U_i$ is a closed (topological) disk with smooth boundary in $M_0$.
Suppose thar $v_0$ is represented by the Teichm\"uller differential $\teichmullernorm(x_0,v_0)\dfrac{\overline{q_0}}{|q_0|}$ for $q_0\in \mathcal{Q}_{M_0}-\{0\}$.
Then, for $V=\tv{X,\dot{Y}}{Y}\in \mathbb{T}_Y[\mathcal{U}]\cong T_{v_0}T\teich_g$,
\begin{align*}
D\teichmullernorm|_{v_0}[V]
&=
\dfrac{1}{4i\|q_0\|}
\iint_{M_0}
\left(\dot{\nu}_iq_0+
\mu L_{\eta_i}(q_0)-L_{\eta_i}(\mu q_0)
\right)
d\overline{z}\wedge dz.
\end{align*}
where 
\begin{itemize}
\item
$q_0\in H^0(\mathcal{U},\Omega_{M_0}^{\otimes 2})=\mathcal{Q}_{M_0}$;
\item
$Y\in Z^1(\mathcal{U},\Theta_{M_0})$ with $[Y]=v_0$ in $H^1(\mathcal{U},\Theta_{M_0})\cong T_{x_0}\teich_g$; 
\item
$\xi$, $\eta\in C^0(\mathcal{U},\sob^{2,1}(\Theta_{M_0}))$ with $\delta\xi = X$ and $\delta \eta=Y$;
and
\item
$\tvD{\mu,\dot{\nu}}{Y}\in \mathbb{T}^{Dol}_Y[\mathcal{U}]$ with $\connectinghomo(\tvD{\mu,\dot{\nu}}{Y})=\tv{X,\dot{Y}}{Y}$ and $\mu=-\overline{\partial}\xi_i$ on $U_i$.
\end{itemize}
\end{theorem}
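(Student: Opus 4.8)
The plan is to combine the chain rule with the duality formula from Royden's criterion (Proposition~\ref{prop:Royden_criterion}) and the already-established variational formula for the $L^1$-norm (Theorem~\ref{thm:variation_L1-norm}), transported through the Teichm\"uller Beltrami map. Recall that the Teichm\"uller metric is by definition the dual Finsler metric of the $L^1$-norm via the pairing $\pairingsymb$, and that $v_0$ is represented by the infinitesimally extremal differential $\teichmullernorm(x_0,v_0)\overline{q_0}/|q_0|$; in particular $\teichmullernorm(x_0,v_0)=\|q_0\|$ and the ``extremal covector" realizing the supremum defining $\teichmullernorm$ at $v_0$ is $q_0/\|q_0\|$. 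Royden's criterion, in the refined differentiated form stated as the second bullet of Proposition~\ref{prop:Royden_criterion}, then tells us exactly how $\teichmullernorm$ varies: the derivative of $\teichmullernorm$ at $v_0$ in a direction $V\in T_{v_0}T\teich_g$ splits into a ``horizontal" contribution, built from the $x$-derivative $A_{x,\eta}$ of $\LOneNorm$ at the extremal covector $q_0/\|q_0\|$ evaluated against $D\Pi_{\teich_g}(V)$, and a ``vertical" contribution $\tfrac12\eta_{x,\xi}(\Delta\xi)$ coming from the motion of $v_0$ inside the fiber $T_{x_0}\teich_g$. The role of $\eta_{x,\xi}$ here is played precisely by $q_0/\|q_0\|$.

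First I would unwind the model data: with $V=\tv{X,\dot{Y}}{Y}$ and its Dolbeault representative $\tvD{\mu,\dot\nu}{Y}$ (via the connecting isomorphism $\connectinghomo$ of Theorem~\ref{thm:Dolbeault_pre}), the horizontal projection $D\Pi_{\teich_g}(V)$ corresponds to $[\mu]\in T_{x_0}\teich_g$, and the vertical part records the ``change of the tangent vector" which, after pairing against $q_0$, will produce the $\dot\nu_i q_0$ term together with the correction terms $\mu L_{\eta_i}(q_0)-L_{\eta_i}(\mu q_0)$. Concretely, I would: (1) write $\teichmullernorm(\Pi_{T\teich_g}(v),v)$ as the supremum over the unit sphere in $\mathcal{Q}$ and invoke Proposition~\ref{prop:Royden_criterion} to get the differentiated formula with $\eta_{x,\xi}=q_0/\|q_0\|$; (2) identify the $x$-derivative term $A_{x,q_0/\|q_0\|}$ of $\LOneNorm$ along the base direction $[\mu]$ using Theorem~\ref{thm:variation_L1-norm}, which contributes $\tfrac{1}{4i\|q_0\|}\iint_{M_0}\tfrac{\overline{q_0}}{|q_0|}(\cdots)$-type integrands, but note that along the Teichm\"uller direction these assemble (using the identity $L_\xi(|q_0|)=\tfrac{\overline{q_0}}{2|q_0|}L_\xi(q_0)$ from Remark following Theorem~\ref{thm:variation_L1-norm}, and the relation $\mu=-\overline\partial\xi_i$) into the combination $\mu L_{\eta_i}(q_0)-L_{\eta_i}(\mu q_0)$ plus the holomorphic piece that recombines with the fiber term; (3) identify the vertical/fiber term $\tfrac12 (q_0/\|q_0\|)(\Delta\xi)$ with $\tfrac{1}{4i\|q_0\|}\iint_{M_0}\dot\nu_i q_0\,d\overline z\wedge dz$ by using the variational formula of the pairing function, Corollary~\ref{coro:variational_pairing_Dol} (or its form in Remark~\ref{remark:dolbeault_rep_variation}), applied to the family of covectors $q_0/\|q_0\|$ paired against the varying tangent vector. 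Finally, assembling (2) and (3) and checking that the holomorphic ambiguities cancel (as in the well-definedness discussion in \S\ref{subsec:well_defined-integral} and in the remarks after Theorem~\ref{thm:variation_L1-norm}) yields the stated formula.

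The main obstacle I anticipate is the bookkeeping in step (2)--(3): reconciling the ``horizontal" term produced by Royden's criterion (phrased in terms of the $x$-derivative of $\LOneNorm$ at a fixed covector) with the Dolbeault-model quantities $\mu,\dot\nu,\eta_i$, and showing that the naively separate contributions fuse into the single clean integrand $\dot\nu_i q_0+\mu L_{\eta_i}(q_0)-L_{\eta_i}(\mu q_0)$. This requires carefully tracking which pieces are globally defined on $M_0$ and which are only local cochains, invoking the same Green's-theorem arguments (valid for $\sob^{1,1}$-forms, cf. \S\ref{sec:remark_smoothness}) that were used to prove Theorem~\ref{thm:variational_formula_pairing} and Theorem~\ref{thm:variation_L1-norm}, and in particular using that $L_{\eta_i}(q_0)$ differs from $L_{\eta_j}(q_0)$ by $L_{Y_{ij}}(q_0)$ so that $\mu L_{\eta_i}(q_0)-L_{\eta_i}(\mu q_0)$ transforms as a genuine $2$-form. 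A secondary subtlety is verifying that the representative $\eta$ can be chosen in $\sob^{2,1}$ (rather than merely $C^\infty$) so that the Teichm\"uller Beltrami differential $-\overline\partial\eta_i=\overline{q_0}/|q_0|$ is admissible; this is exactly the point addressed in the regularity discussion of \S\ref{sec:remark_smoothness}, which I would cite rather than redo. Once these regularity and transformation-law checks are in place, the formula follows by direct comparison of the two descriptions of $D\teichmullernorm|_{v_0}[V]$.
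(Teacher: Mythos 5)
Your proposal is correct in its essential ingredients and would lead to the stated formula, but it routes the key extremality input differently from the paper. The paper does not invoke the differentiated form of Royden's criterion at all for this computation: it introduces the nonnegative $C^1$-function $F(v,q)=\teichmullernorm(\pi_{\teich_g}(v),v)\,\LOneNorm(q)-{\rm Re}\,\pairingsymb(v,q)$ on the Whitney sum $T\teich_g\oplus T^*\teich_g$, observes that by Teichm\"uller's theorem it attains its minimum value $0$ at the extremal pair $(v_0,q_0)$, and hence gets
\[
D\teichmullernorm|_{v_0}[V_1]=\tfrac12\,D\pairingsymb|_{(v_0,q_0)}[V_1,V_2]-\teichmullernorm_0\,D\LOneNorm|_{q_0}[V_2]
\]
for an \emph{arbitrary} lift $V_2\in T_{q_0}\mathcal{Q}_g$ of the horizontal direction of $V_1$; it then substitutes \Cref{coro:variational_pairing_Dol} and \Cref{thm:variation_L1-norm} and simplifies using the Lie-bracket identities $[\xi_i,\eta_i]_{\overline z}q_0=-[\mu,\eta_i]q_0+[\xi_i,(\eta_i)_{\overline z}]q_0$ and $[\mu,\eta_i]q_0=\mu L_{\eta_i}(q_0)-L_{\eta_i}(\mu q_0)$. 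Your route through the second bullet of \Cref{prop:Royden_criterion} is legitimate (and morally the same squeeze argument, since that bullet is itself proved by a two-sided extremality estimate), but it forces you to fix dual local trivializations of $T\teich_g$ and $\mathcal{Q}_g$ so that $A_{x,\eta}$ and the fiber increment $\Delta\xi$ make sense, and then to show that the trivialization-dependent horizontal and vertical contributions recombine into the trivialization-independent integrand $\dot\nu_i q_0+\mu L_{\eta_i}(q_0)-L_{\eta_i}(\mu q_0)$; you correctly identify this as the main bookkeeping burden. One small caution on your step (3): the $\dot\nu_i$ in the final formula is the Dolbeault cochain attached to the chosen $\xi,\eta$, not the vertical component of $V$ in your trivialization (these differ by the $[\xi,\eta]$ and horizontal-lift corrections when $[X]\neq 0$), so the clean split you describe only emerges after the full expansion — the paper's use of an arbitrary lift $V_2$ sidesteps exactly this issue, which is what its more direct Lagrangian argument buys.
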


\begin{proof}
By considering $q_0/\|q_0\|$ instead of $q_0$ in the statement, we may assume that $\|q_0\|=1$.
From the discussion in \S\ref{sec:remark_smoothness}, we may also assume that
$$
(\eta_i)_{\overline{z}}=-\teichmullernorm(x_0,v_0)\dfrac{\overline{q_0}}{|q_0|}
$$
on $U_i$.
Consider a non-negative $C^1$-function
\begin{align}
\label{eq:equation_variation_T-metric}
F(v,q)
&=\teichmullernorm(\pi_{\teich_g}(v),v)\LOneNorm(q)-{\rm Re}\pairingsymb(v,q)
\end{align}
on the Whitney sum
$T\teich_g\oplus T^*\teich_g$.
From the assumption, by the Teichm\"uller theorem, $F$ attains the minimum $F(v_0,q_0)=0$ at $(v_0,q_0)$. For the simplicity we let $\teichmullernorm_0=\teichmullernorm(\pi_{\teich_g}(v_0),v_0)$.

Since $F$ attains the minumum at $(v_0,q_0)$ ($\|q_0\|=1$) and the pairing function $\pairingsymb$ is holomorphic,
$$
0=DF[V_1,V_2]=D\teichmullernorm|_{v_0}[V_1]+\teichmullernorm_{0}D \LOneNorm|_{q_0}[V_2]-
\dfrac{1}{2}\left.D\pairingsymb\right|_{(v_0,q_0)}\left[
V_1,V_2
\right]
$$
for all $(V_1,V_2)\in T_{(v_0,q_0)}(T\teich_g\oplus T^*\teich_g)$. 
From \Cref{coro:variational_pairing_Dol} and \Cref{thm:variation_L1-norm},
\begin{align*}
D\teichmullernorm|_{v_0}[V_1]
&=
\dfrac{1}{2}\left.D\pairingsymb\right|_{(v_0,q_0)}\left[
V_1,V_2
\right]
-
\teichmullernorm_{0}D \LOneNorm|_{q_0}[V_2]
\\
&=
\dfrac{1}{4i}
\iint_{M_0}
\left(
\left(
(\dot{\nu}_i-[\xi_i,\eta_i]_{\overline{z}}) q_0+\teichmullernorm_0\dfrac{\overline{q_0}}{|q_0|}\varphi_i
\right)
-
L_{\xi_i}
\left(\teichmullernorm_0\dfrac{\overline{q_0}}{|q_0|}q_0
\right)
\right)
d\overline{z}\wedge dz
\\
&\qquad
-
\teichmullernorm_0
\dfrac{1}{4i}\iint_{M_0}
\dfrac{\overline{q_0}}{|q_0|}\left(\varphi_i-L_{\xi_i}(q_0)\right)d\overline{z}\wedge dz
\\
&=
\dfrac{1}{4i}
\iint_{M_0}
\left(
\left(
(\dot{\nu}_i-[\xi_i,\eta_i]_{\overline{z}}) q_0+\teichmullernorm_0\dfrac{\overline{q_0}}{|q_0|}\varphi_i
\right)
-
\dfrac{\teichmullernorm_0\overline{q_0}}{2|q_0|}L_{\xi_i}(q_0)
\right)
d\overline{z}\wedge dz
\\
&\qquad
-
\teichmullernorm_0
\iint_{M_0}
\dfrac{\overline{q_0}}{|q_0|}\left(\varphi_i-L_{\xi_i}(q_0)\right)d\overline{z}\wedge dz
\Bigg)
\\
&=
\dfrac{1}{4i}
\iint_{M_0}
\left((\dot{\nu}_i-[\xi_i,\eta_i]_{\overline{z}})q_0+\dfrac{\teichmullernorm_0\overline{q_0}}{2|q_0|}L_{\xi_i}(q_0)
\right)
d\overline{z}\wedge dz.
\end{align*}
Since $[\xi_i,\eta_i]_{\overline{z}}=-[\mu,\eta_i]+[\xi_i,(\eta_i)_{\overline{z}}]$ and
\begin{align*}
[\mu,\eta_i]q_0
&=(\mu(\eta_i)_z-\mu_z\eta_i)q_0\\
&=\mu(\eta_i)_zq_0+\mu(\eta_i)_zq_0+\eta_i\mu (q_0)'-L_{\eta_i}(\mu q_0) \\
&=\mu L_{\eta_i}(q_0)-L_{\eta_i}(\mu q_0) \\
[\xi_i,(\eta_i)_{\overline{z}}]q_0
&=\left(\xi_i(\eta_i)_{\overline{z}z}-(\xi_i)_z(\eta_i)_{\overline{z}}\right)q_0
\\
&=
\xi_i\dfrac{\teichmullernorm_0\overline{q_0}}{2|q_0|}(q_0)'+
\teichmullernorm_0(\xi_i)_z|q_0|
=\dfrac{\teichmullernorm_0\overline{q_0}}{2|q_0|}L_{\xi_i}(q_0),
\end{align*}
we obtain
\begin{align*}
D\teichmullernorm|_{v_0}[V_1]&=
\dfrac{1}{4i}
\iint_{M_0}
\left(\dot{\nu}_iq_0+
\mu L_{\eta_i}(q_0)-L_{\eta_i}(\mu q_0)
\right)
d\overline{z}\wedge dz,
\end{align*}
which is what we wanted.
\end{proof}

Hence, from \Cref{thm:doulbeaut_presentation_pairing}, we conclude the following
(cf. \Cref{remark:holomorphicity_cocycle}).

\begin{corollary}[$\partial$-differential of the Teichm\"uller metric]
\label{coro:Differential_Teichmuller_metric}
Let $v_0=[Y]\in H^1(\mathcal{U},\Theta_{M_0})\cong T_{x_0}\teich_g$.
Suppose that $\teichmullernorm_0=\teichmullernorm(\pi_{\teich_g}(v_0),v_0)$ and  $v_0$ is represented by $\teichmullernorm_0\overline{q_0}/|q_0|$, where $q_0\in \mathcal{Q}_{x_0}-\{0\}$. Then,
$$
\partial \teichmullernorm|_{v_0}=\dfrac{1}{2\|q_0\|}\tvdag{L_{\eta_i}(q_0),q_0}{[Y]}
\in \mathbb{T}^\dagger_{[Y]}[\mathcal{U}]\cong T^*_{v_0}T\teich_g
$$
as a $1$-form on $T\teich_g$,
where $\eta=\{\eta_i\}_{i\in I}\in C^0(\mathcal{U},\sob^{2,1}(\Theta_{M_0-{\rm Zero}(q_0)}))$ with $\delta \eta=Y$ and $(\eta_i)_{\overline{z}}=-\teichmullernorm_0\overline{q_0}/|q_0|$.
\end{corollary}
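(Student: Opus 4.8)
The plan is to obtain \Cref{coro:Differential_Teichmuller_metric} directly from the variational formula \Cref{thm:derivative_T-metric} by recognizing its right-hand side as a value of the model pairing $\mathcal{P}_{\mathbb{TT}}$ via \Cref{thm:doulbeaut_presentation_pairing}. First I would fix, as in the statement, a cochain $\eta=\{\eta_i\}_{i\in I}\in C^0(\mathcal{U},\sob^{2,1}(\Theta_{M_0-{\rm Zero}(q_0)}))$ with $\delta\eta=Y$ and $(\eta_i)_{\overline{z}}=-\teichmullernorm_0\,\overline{q_0}/|q_0|$, and set $\psi_i=L_{\eta_i}(q_0)$ on $U_i$. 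By \Cref{remark:holomorphicity_cocycle} each $L_{\eta_i}(q_0)$ is holomorphic on $U_i$ and satisfies $L_{\eta_j}(q_0)-L_{\eta_i}(q_0)=L_{Y_{ij}}(q_0)$; together with $\delta q_0=0$ this shows $(\psi,q_0)=(\{L_{\eta_i}(q_0)\}_{i\in I},q_0)\in\ker(D^{Y,\dagger}_1)$, so that $\tvdag{L_{\eta_i}(q_0),q_0}{[Y]}$ is a legitimate element of $\mathbb{T}^\dagger_{[Y]}[\mathcal{U}]\cong T^*_{v_0}T\teich_g$.

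Next I would apply \Cref{thm:doulbeaut_presentation_pairing} to this cotangent vector. Writing $V=\tv{X,\dot Y}{Y}$ and $\connectinghomo^{-1}(V)=\tvDBel{\mu,\dot\nu}{\eta}$ with $\mu=-\overline{\partial}\xi_i$, $\dot\nu_i=-\overline{\partial}\dot\eta_i$ (the data appearing in both \Cref{thm:derivative_T-metric} and \Cref{thm:doulbeaut_presentation_pairing}, identified through \eqref{eq:canonical_identification_Dol} since $\delta\eta=Y$), substituting $\psi_i=L_{\eta_i}(q_0)$ and $q=q_0$ into the formula of \Cref{thm:doulbeaut_presentation_pairing} gives
$$
\mathcal{P}_{\mathbb{TT}}\bigl(V,\tvdag{L_{\eta_i}(q_0),q_0}{[Y]}\bigr)
=\frac{1}{2i}\iint_{M_0}\bigl(\mu L_{\eta_i}(q_0)+\dot\nu_i q_0-L_{\eta_i}(\mu q_0)\bigr)\,d\overline{z}\wedge dz .
$$
Comparing with the conclusion of \Cref{thm:derivative_T-metric}, namely
$$
D\teichmullernorm|_{v_0}[V]=\frac{1}{4i\|q_0\|}\iint_{M_0}\bigl(\dot\nu_i q_0+\mu L_{\eta_i}(q_0)-L_{\eta_i}(\mu q_0)\bigr)\,d\overline{z}\wedge dz ,
$$
yields $D\teichmullernorm|_{v_0}[V]=\frac{1}{2\|q_0\|}\mathcal{P}_{\mathbb{TT}}\bigl(V,\tvdag{L_{\eta_i}(q_0),q_0}{[Y]}\bigr)$ for every $V\in\mathbb{T}_Y[\mathcal{U}]\cong T_{v_0}T\teich_g$. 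Since $\partial\teichmullernorm|_{v_0}$ is by definition the element $\omega\in T^*_{v_0}T\teich_g$ with $\mathcal{P}_{\mathbb{TT}}(V,\omega)=D\teichmullernorm|_{v_0}[V]$ for all $V$, and $\mathcal{P}_{\mathbb{TT}}$ is non-degenerate (\Cref{prop:non-degenerate}), this identity forces $\partial\teichmullernorm|_{v_0}=\frac{1}{2\|q_0\|}\tvdag{L_{\eta_i}(q_0),q_0}{[Y]}$, which is the assertion; in particular the class on the right is independent of the choices involved.

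The main obstacle is regularity bookkeeping rather than any new idea: the cochain $\eta$ built from the Teichm\"uller Beltrami differential $\teichmullernorm_0\,\overline{q_0}/|q_0|$ is not smooth across ${\rm Zero}(q_0)$, so \Cref{thm:doulbeaut_presentation_pairing} cannot be quoted verbatim. I would handle this exactly as indicated in \S\ref{sec:remark_smoothness}: choose $\eta$ of class $\sob^{2,1}$ on $M_0\setminus{\rm Zero}(q_0)$ with $(\eta_i)_{\overline{z}}\in L^\infty$, exhaust $M_0\setminus{\rm Zero}(q_0)$ by smoothly bounded subdomains, carry out the Green-formula computations underlying \Cref{lem:pairing_tangent_cotangent} and \Cref{thm:doulbeaut_presentation_pairing} on each, and pass to the limit by the Royden-type argument of \cite{MR0288254}; one also checks, as in the remarks following \Cref{thm:variation_L1-norm} and in \S\ref{subsec:well_defined-integral}, that the relevant integrals do not depend on the representatives of $V$ or on the choice of $\eta$. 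With that justification in place, the comparison above is immediate.
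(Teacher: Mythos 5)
Your proposal is correct and follows essentially the same route as the paper: the corollary is obtained by substituting $\psi_i=L_{\eta_i}(q_0)$, $q=q_0$ into the Dolbeault presentation of the pairing (\Cref{thm:doulbeaut_presentation_pairing}), matching the result against the variational formula of \Cref{thm:derivative_T-metric}, and invoking \Cref{remark:holomorphicity_cocycle} for the cocycle condition and \S\ref{sec:remark_smoothness} for the regularity of $\eta$ across ${\rm Zero}(q_0)$. The paper states this in one line; your write-up supplies the same ingredients in detail, including the non-degeneracy of $\mathcal{P}_{\mathbb{TT}}$ to pin down the cotangent vector.
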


\Cref{coro:gradient_vector_L1} is equivalent to
$$
\partial \teichmullernorm
\left(\tvD{\mu,\dot{\nu}}{Y}\right)=
\mathcal{P}_{\mathbb{TT}}\left(
\tv{X,\dot{Y}}{[Y]},\dfrac{1}{2\|q_0\|}\tvdag{L_{\eta_i}(q_0),q_0}{[Y]}
\right)
$$
for $Y=\delta \eta$, $\tv{X,\dot{Y}}{Y}\in \mathbb{T}_{Y}[\mathcal{U}]\cong T_{v_0}T\teich_g$, $\tvD{\mu,\dot{\nu}}{Y}=\connectinghomo(\tv{X,\dot{Y}}{Y})$, and $v_0=\mathscr{T}_{x_0}([Y])$.

\begin{remark}
\label{remark:teichmuller_metric_section}
From \Cref{remark:dolbeault_rep_variation} and Remark \eqref{enumerate:L1-derivateive5} after the statement of \Cref{thm:variation_L1-norm}, when $[X]=0$, $\dot{\nu}=\dot{\nu}_i$ can be assumed to be a globally defined Beltrami differential on $M_0$ and the formula \eqref{thm:derivative_T-metric} becomes 
$$
D\teichmullernorm|_{v_0}[V]
=\dfrac{1}{4i\|q_0\|}
\iint_{M_0}
\dot{\nu}_i(z) q_0(z)
d\overline{z}\wedge dz
=\dfrac{1}{2}
\iint_{M_0}
\dot{\nu}(z) \dfrac{q_0(z)}{\|q_0\|}
dxdy
=\dfrac{1}{2}\langle \dot{\nu},q_0/\|q_0\|\rangle.
$$
In particular, in this case, we derive the following formula:
\begin{equation}
\label{eq:derivative_Teichmuller_vertical}
\left.\dfrac{d}{dt}\kappa(x_0,v_0+tV)\right|_{t=0}=2{\rm Re}\left(D\teichmullernorm|_{v_0}[V]\right)={\rm Re}\langle V,q_0/\|q_0\|\rangle,
\end{equation}
which is probably well-known to experts
(see \cite[Lemma 2.1]{https://doi.org/10.48550/arxiv.2211.16132}).
\end{remark}

\begin{remark}
In \Cref{thm:derivative_T-metric}, the condition of the cochain $\eta$ can be weakened.
Indeed, we may consider the cochain $\eta\in C^0(\mathcal{U},\sob^{2,1}(\Omega_{M_0}))$ with only the condition $\delta\eta=Y$ by applying the argument for the independence of choices of representatives for defining the model of pairing given in \S\ref{subsec:model_pairing_TT_TstarT}.
\end{remark}
%

\section{Infinitesimal Duality}
\label{sec:infiniteismal_duality}
From the definition, the Teichm\"uller metric is thought of as the dual (Finsler) metric on $\teich_g$ to the $L^1$-norm on $\mathcal{Q}_g$ (cf. \S\ref{subsec:Teichmuller_metric}).
The above mentioned variational formulae of the $L^1$-norm and the Teichm\"uller metric implies that these also satisfy the following duality in the infinitesimal sense:
\begin{theorem}[Infinitesimal duality]
\label{thm:infinitesimal_duality}
Let $q_0\in \mathcal{Q}_{x_0}$ and let $v_0\in T_{x_0}\teich_g$ which is presented by the Teichm\"uller Beltrami differential $\|q_0\|\overline{q}/|q|$. Then,
the $\partial$-derivatives of the squares of the $L^1$-norm and the Teichm\"uller metric satsify
\begin{equation}
\label{eq:infinitesimal_duality}
\switch_{\teich_g}\circ (\canoflipdagger_{\teich_g})^{-1}(-\partial \LOneNorm^2|_{q_0})=
\switch_{\teich_g}\left(
-\dfrac{1}{2}\mathscr{X}_{\LOneNorm^2}|_{q_0}
\right)=
\partial \teichmullernorm^2|_{v_0},
\end{equation}
where $\mathscr{X}_{\LOneNorm^2}$ is the Hamiltonian vector field of the square $\LOneNorm^2$ of the $L^1$-norm function.
\end{theorem}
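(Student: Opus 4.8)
The plan is to establish the two equalities in \eqref{eq:infinitesimal_duality} in turn. Fix a locally finite covering $\mathcal{U}=\{U_i\}_{i\in I}$ of $M_0$ with $H^1(U_i,\Theta_{M_0})=0$ for all $i$, and assume $q_0\neq 0$ (for $q_0=0$ all three expressions vanish). The first equality is formal: by \Cref{prop:H-v} applied with $M=\teich_g$ and $H=\LOneNorm^2$ on $T^*\teich_g=\mathcal{Q}_g$, one has $\mathscr{X}_{\LOneNorm^2}=2(\canoflipdagger_{\teich_g})^{-1}(\partial\LOneNorm^2)$, so $-\tfrac12\mathscr{X}_{\LOneNorm^2}=(\canoflipdagger_{\teich_g})^{-1}(-\partial\LOneNorm^2)$, and the claim follows after applying $\switch_{\teich_g}$. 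This uses that the canonical holomorphic symplectic form on $T^*\teich_g$ coincides with Kawai's form $\omega_{\mathcal{Q}_g}$ of \Cref{prop:holomorphic_symplectic_form}, so that $\canoflipdagger_{\teich_g}$ and the Hamiltonian vector field in \eqref{eq:infinitesimal_duality_diagram} are exactly the objects described by \Cref{thm:dualization_pairing_symplectic_form}. The content therefore lies in the second equality, and the plan is to prove it by computing both sides explicitly inside the model spaces of \Cref{chap:flip_switch_dual_lie}.

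\textbf{Left side.} Let $\tilde\eta=\{\tilde\eta_i\}_{i\in I}$ be a cochain with $(\tilde\eta_i)_{\overline z}=-\overline{q_0}/|q_0|$ on $U_i$, and put $\tilde Y=\delta\tilde\eta$. By \Cref{coro:gradient_vector_L1}, $\partial\LOneNorm|_{q_0}=-\tfrac12[L_{\tilde\eta}(q_0),-\tilde Y]^\dagger_{q_0}$ in ${\bf H}^{1,\dagger}(\mathcal{U},\mathbb{L}_{q_0})\cong T^*_{q_0}\mathcal{Q}_g$; since $\LOneNorm$, hence $\LOneNorm^2$, is $C^1$ on $\mathcal{Q}_g^\times$ (\Cref{coro:L1-norn_C1}) and $\partial(\LOneNorm^2)=2\LOneNorm\,\partial\LOneNorm$, this gives $-\partial\LOneNorm^2|_{q_0}=\|q_0\|\,[L_{\tilde\eta}(q_0),-\tilde Y]^\dagger_{q_0}$. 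The model dualization $\canoflipdagger_{\mathbb T}\colon [X,\varphi]_q\mapsto[\varphi,-X]_q$ of \Cref{chap:flip_switch_dual_lie} then yields $(\canoflipdagger_{\teich_g})^{-1}(-\partial\LOneNorm^2|_{q_0})=\|q_0\|\,[\tilde Y,L_{\tilde\eta}(q_0)]_{q_0}\in{\bf H}^1(\mathcal{U},\mathbb{L}_{q_0})$, and the model switch $\switch_{\mathbb T}\colon(q,[X,\varphi]_q)\mapsto([X],\tvdag{\varphi,q}{[X]})$ of \Cref{prop:switch-cotangent-tangent}, together with $c[X,\varphi]_{q_0}=[cX,c\varphi]_{q_0}$ and $L_{c\tilde\eta_i}(q_0)=cL_{\tilde\eta_i}(q_0)$, gives
\[
\switch_{\teich_g}\circ(\canoflipdagger_{\teich_g})^{-1}(-\partial\LOneNorm^2|_{q_0})=\tvdag{\|q_0\|L_{\tilde\eta_i}(q_0),\,q_0}{[\|q_0\|\tilde Y]}\in T^*_{v_0}T\teich_g.
\]
That $\canoflipdagger_{\mathbb T}$ and $\switch_{\mathbb T}$ represent $\canoflipdagger_{\teich_g}$ and $\switch_{\teich_g}$ under the Ahlfors--Weill trivializations is \eqref{eq:flip_model}, \Cref{thm:dualization_pairing_symplectic_form}, \Cref{thm:dual_spaces_are_duals} and \S\ref{sec:trivialization_via_Bers_embedding}; and $[\|q_0\|\tilde Y]$ corresponds under $\mathscr{T}_{x_0}$ (via \eqref{eq:identify_H1_T_g}) to $\|q_0\|[\overline{q_0}/|q_0|]=v_0=\tb(q_0)$, so the base points agree.

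\textbf{Right side.} Since $v_0$ is presented by the Teichm\"uller Beltrami differential $\|q_0\|\overline{q_0}/|q_0|$, the infinitesimal uniqueness part of Teichm\"uller's theorem (\S\ref{subsubsec:TB_differenital}) gives $\teichmullernorm_0:=\teichmullernorm(x_0,v_0)=\|q_0\|$. \Cref{coro:Differential_Teichmuller_metric} then reads $\partial\teichmullernorm|_{v_0}=\tfrac1{2\|q_0\|}\tvdag{L_{\eta_i}(q_0),q_0}{[Y]}$ for any admissible cochain $\eta$ with $(\eta_i)_{\overline z}=-\|q_0\|\overline{q_0}/|q_0|$ and $Y=\delta\eta$, $[Y]=v_0$; taking $\eta=\|q_0\|\tilde\eta$ (legitimate because the formula is independent of the choice of representative, \S\ref{sec:remark_smoothness}) so that $Y=\|q_0\|\tilde Y$, and using that $\teichmullernorm$, hence $\teichmullernorm^2$, is $C^1$ on $T\teich_g^\times$ (\Cref{coro:teichmuller_metric_C1}) with $\partial(\teichmullernorm^2)=2\teichmullernorm\,\partial\teichmullernorm$, we obtain $\partial\teichmullernorm^2|_{v_0}=\tvdag{\|q_0\|L_{\tilde\eta_i}(q_0),\,q_0}{[\|q_0\|\tilde Y]}$, which is precisely the element computed for the left side. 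This proves the second equality, hence \eqref{eq:infinitesimal_duality}; the direct computation in \Cref{chap:Teichmuller_space_of_tori} provides a consistency check in the torus case.

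The main difficulty I anticipate is not conceptual but careful bookkeeping of normalizations and scalar factors: the identity $\teichmullernorm_0=\|q_0\|$, the $2$ in $\mathscr{X}_H=2(\canoflipdagger)^{-1}(\partial H)$, the $-\tfrac12$ in \Cref{coro:gradient_vector_L1} and in the definition of $\canoflipdagger_{\mathbb T}$, the $2$'s from $\partial(F^2)=2F\,\partial F$, and the $\tfrac1{2\|q_0\|}$ in \Cref{coro:Differential_Teichmuller_metric} all have to cancel exactly, and the base points must be tracked through $\mathscr{T}_{x_0}$ and $\tb$. The one genuinely delicate point is regularity: neither $\LOneNorm$ nor $\teichmullernorm$ is $C^2$, and the cochains $\tilde\eta$, $\eta$ realizing the Teichm\"uller Beltrami differentials are only of class $\sob^{2,1}$ away from ${\rm Zero}(q_0)$, so one must invoke the independence-of-representatives statements of \S\ref{sec:remark_smoothness} --- proved via Royden-type exhaustions of $M_0\setminus{\rm Zero}(q_0)$ and the Green formula for $W^{1,1}$-forms --- both to justify the substitution $\eta=\|q_0\|\tilde\eta$ and to make the model-level identities meaningful.
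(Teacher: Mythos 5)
Your proposal is correct and follows essentially the same route as the paper's own proof: it establishes $\teichmullernorm(x_0,v_0)=\|q_0\|$, computes $-\partial\LOneNorm^2|_{q_0}=\|q_0\|[L_{\tilde\eta}(q_0),-\tilde Y]^\dagger_{q_0}$ via \Cref{coro:gradient_vector_L1}, rescales the cochain to $\eta'=\|q_0\|\tilde\eta$, pushes through the model dualization and switch, and matches the result against $\partial\teichmullernorm^2|_{v_0}=\tvdag{L_{\eta'}(q_0),q_0}{[Y']}$ from \Cref{coro:Differential_Teichmuller_metric}, with the first equality handled by \Cref{prop:H-v} exactly as in the paper. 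The added attention to the regularity of the cochains and the independence-of-representatives arguments of \S\ref{sec:remark_smoothness} is a reasonable elaboration of points the paper treats implicitly, not a change of method.
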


\begin{proof}
Let $\eta=\{\eta_i\}_{i\in I}$ be the cochain with $(\eta_i)_{\overline{z}}=-\overline{q_0}/|q_0|$. 
Let $Y=\delta \eta$. 
Notice that
\begin{equation}
\label{eq:teichmullernorm_tb}
\teichmullernorm(x_0,v_0)=\sup_{\|q\|=1,q\in \mathcal{Q}_{x_0}}{\rm Re}(\langle v_0,q\rangle)
=\sup_{\|q\|=1,q\in \mathcal{Q}_{x_0}}{\rm Re}\left(\iint_{M_0}\dfrac{\overline{q_0}}{|q_0|}q\right)
=\|q_0\|.
\end{equation}
Since a cochain $\eta'=\|q_0\|\eta$ and $Y'=\|q_0\|Y$ satisfies
$$
\delta \eta'=\|q_0\|Y=Y',
\quad
(\eta'_i)_{\overline{z}}=-\|q_0\|\dfrac{\overline{q_0}}{|q_0|}
=-\teichmullernorm(x_0,v_0)
\dfrac{\overline{q_0}}{|q_0|},
$$
$v_0\in T_{x_0}\teich_g$ is the tangent vector presented by 
$\teichmullernorm(x_0,v_0)\overline{q_0}/|q_0|$ and $[Y']\in H^1(\mathcal{U},\Theta_{M_0})$.
From \Cref{coro:gradient_vector_L1} and \Cref{coro:Differential_Teichmuller_metric},
\begin{align*}
-\partial \LOneNorm^2|_{q_0}
&=-2\LOneNorm(q_0)\partial \LOneNorm|_{q_0}
=\|q_0\|[L_{\eta}(q_0),-Y]_{q_0}\\
&=[L_{\|q_0\|\eta}(q_0),-\|q_0\|Y]_{q_0}\\
&=[L_{\eta'}(q_0),-Y']_{q_0}
\\
%
\partial \teichmullernorm^2|_{v_0}&=
2\teichmullernorm(x_0,v_0)\partial \teichmullernorm|_{v_0}
=2\|q_0\|\partial \teichmullernorm|_{v_0}
\\
&=\tvdag{L_{\eta'}(q_0),q_0}{[Y']}
\end{align*}
in the models.
From \Cref{thm:dualization_pairing_symplectic_form} and \Cref{prop:switch-cotangent-tangent},
we obtain
\begin{align*}
\switch_{\mathbb{T}}\circ (\canoflipdagger_{\mathbb{T}})^{-1}
\left([L_{\eta'}(q_0),-Y']_{q_0}\right)
&=
\switch_{\mathbb{T}}
\left([Y',L_{\eta'}(q_0)]_{q_0}\right)
\\
&=\tvdag{L_{\eta'}(q_0),q_0}{[Y']}.
\end{align*}
From \Cref{prop:H-v},
$$
(\canoflipdagger_{\mathbb{T}})^{-1}
\left(-\partial \LOneNorm^2|_{q_0}\right)
=\dfrac{1}{2}\mathscr{X}_{-\LOneNorm^2}|_{q_0}
=-\dfrac{1}{2}\mathscr{X}_{\LOneNorm^2}|_{q_0}.
$$
Since the model spaces and the model bundle maps actually describe the actual spaces and the actual bundle maps (cf.\S\ref{sec:trivialization_via_Bers_embedding}), 
we obtain the formula which we wanted. 
\end{proof}

From the calculation in the proof of \Cref{thm:infinitesimal_duality}, we also obtain the following.

\begin{corollary}[Canonical sections]
\label{coro:canonical_section}
We have
\begin{align*}
D\Pi^\dagger_{\teich_g}\left((\canoflipdagger_{\mathbb{T}})^{-1}
\left(-\partial \LOneNorm^2|_{q_0}\right)\right)
&=
D\Pi^\dagger_{\teich_g}\left(
-\dfrac{1}{2}\mathscr{X}_{\LOneNorm^2}|_{q_0}
\right) \\
&=\Pi_{\teich_g}\left(\partial \teichmullernorm^2|_{v_0}\right) \\
&=v_0=\left[
\|q_0\|\dfrac{\overline{q_0}}{|q_0|}
\right]\in T\teich_g
\end{align*}
for $q_0\in \mathcal{Q}_g$, where $\Pi_{\teich_g}\colon T\teich_g\to \teich_g$ and $\Pi^\dagger_{\teich_g}\colon \mathcal{Q}_g\to \teich_g$ are the projections of the bundles, $D\Pi^\dagger_{\teich_g}\colon T\mathcal{Q}_g\to T\teich_g$ is the differential.
\end{corollary}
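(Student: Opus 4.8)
The claim is \Cref{coro:canonical_section}, which asserts three coincidences: that applying the differential $D\Pi^\dagger_{\teich_g}$ of the projection $\mathcal{Q}_g\to\teich_g$ to the vector $(\canoflipdagger_{\mathbb{T}})^{-1}(-\partial\LOneNorm^2|_{q_0}) = -\tfrac12\mathscr{X}_{\LOneNorm^2}|_{q_0}$ yields $v_0$; that applying the projection $\Pi_{\teich_g}\colon T\teich_g\to\teich_g$ (strictly, the base-point map, but here it must be read as the horizontal projection implicit in the fact that $\partial\teichmullernorm^2|_{v_0}\in T^*_{v_0}T\teich_g$ and one takes its footpoint in $T\teich_g$) to $\partial\teichmullernorm^2|_{v_0}$ also yields $v_0$; and that $v_0 = [\|q_0\|\overline{q_0}/|q_0|]$, i.e. $v_0 = \tb(q_0)$. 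The third equality is just the definition of $v_0$ chosen in the statement of \Cref{thm:infinitesimal_duality}. So the real content is the first two equalities, and all the ingredients are already assembled in the proof of \Cref{thm:infinitesimal_duality}.

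First I would recall from that proof the explicit model representatives. Write $\eta=\{\eta_i\}_{i\in I}$ with $(\eta_i)_{\overline z}=-\overline{q_0}/|q_0|$, set $Y=\delta\eta$, and put $\eta'=\|q_0\|\eta$, $Y'=\|q_0\|Y$; then $v_0=\mathscr{T}_{x_0}([Y'])$ is precisely the Teichm\"uller Beltrami direction $\|q_0\|\overline{q_0}/|q_0|=\tb(q_0)$. From \Cref{coro:gradient_vector_L1} one has $-\partial\LOneNorm^2|_{q_0}=[L_{\eta'}(q_0),-Y']_{q_0}\in {\bf H}^{1,\dagger}(\mathcal{U},\mathbb{L}_{q_0})$, and from \Cref{prop:H-v} together with the definition of $\canoflipdagger_{\mathbb{T}}$ (\S\ref{sec:model_switch} and \Cref{thm:dualization_pairing_symplectic_form}), $(\canoflipdagger_{\mathbb{T}})^{-1}(-\partial\LOneNorm^2|_{q_0}) = -\tfrac12\mathscr{X}_{\LOneNorm^2}|_{q_0} = [Y',L_{\eta'}(q_0)]_{q_0}\in {\bf H}^1(\mathcal{U},\mathbb{L}_{q_0}) \cong T_{q_0}\mathcal{Q}_g$. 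Now by \Cref{remark:differential_projection_cotangent} the differential $D\Pi^\dagger_{\teich_g}=D\Pi_{\mathcal{Q}_g}|_{q_0}$ is exactly the horizontal projection $[X,\varphi]_{q_0}\mapsto[X]\in H^1(\mathcal{U},\Theta_{M_0})\cong T_{x_0}\teich_g$; applying it to $[Y',L_{\eta'}(q_0)]_{q_0}$ gives $[Y']$, hence $\mathscr{T}_{x_0}([Y'])=v_0$. That settles the first equality.

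For the second equality I would invoke \Cref{coro:Differential_Teichmuller_metric}: since $v_0$ is represented by $\teichmullernorm(x_0,v_0)\overline{q_0}/|q_0|$ and \eqref{eq:teichmullernorm_tb} gives $\teichmullernorm(x_0,v_0)=\|q_0\|$, the $\partial$-derivative of $\teichmullernorm^2$ at $v_0$ is $\partial\teichmullernorm^2|_{v_0}=2\|q_0\|\,\partial\teichmullernorm|_{v_0} = \tvdag{L_{\eta'_i}(q_0),q_0}{[Y']}\in \mathbb{T}^\dagger_{[Y']}[\mathcal{U}]\cong T^*_{v_0}T\teich_g$. By construction a covector $\tvdag{\psi,q}{[Y']}$ sits over the tangent vector $[Y']$ in the base $T\teich_g$ (this is the content of the exact sequence \eqref{eq:cotangent_space_tangent_bundle_cohom_class} and the identification in \Cref{thm:Trivialization_T_Tstar}), so its footpoint under $\Pi_{\teich_g}$ is $\mathscr{T}_{x_0}([Y'])=v_0$. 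Finally, stringing these together and using that the model maps $\canoflipdagger_{\mathbb{T}}$, $\switch_{\mathbb{T}}$ and the model spaces genuinely represent the actual second-order infinitesimal objects (\S\ref{sec:trivialization_via_Bers_embedding}, \Cref{thm:main2}, \Cref{thm:canonical_flip}, \Cref{thm:dualization_pairing_symplectic_form}, \Cref{prop:switch-cotangent-tangent}), the stated identities hold in $T\teich_g$, not merely in the models.

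The main obstacle is bookkeeping rather than any genuine difficulty: one must be careful that ``$\Pi_{\teich_g}$ applied to $\partial\teichmullernorm^2|_{v_0}$'' is interpreted correctly — $\partial\teichmullernorm^2|_{v_0}$ lives in $T^*_{v_0}T\teich_g=(T^*T^*\!)$-type space over $v_0$, and the relevant projection onto $T\teich_g$ is the base-point map $\Pi^\dagger_{T\teich_g}$ composed with nothing, i.e. the footpoint of the covector, which is $v_0$ by construction; this is the same subtlety already present (and handled) in the diagram \eqref{eq:infinitesimal_duality_diagram}. I would therefore spend one sentence making this identification precise by reference to \eqref{eq:cotangent_space_tangent_bundle_cohom_class}, and otherwise the proof is an immediate corollary of the computations inside the proof of \Cref{thm:infinitesimal_duality}, requiring no new estimates.
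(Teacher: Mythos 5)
Your proposal is correct and follows exactly the route the paper intends: the corollary is stated as an immediate consequence of the representatives $-\partial \LOneNorm^2|_{q_0}=[L_{\eta'}(q_0),-Y']_{q_0}$ and $\partial\teichmullernorm^2|_{v_0}=\tvdag{L_{\eta'}(q_0),q_0}{[Y']}$ computed in the proof of \Cref{thm:infinitesimal_duality}, combined with the identification of $D\Pi^\dagger_{\teich_g}$ with the horizontal projection $[X,\varphi]_{q_0}\mapsto[X]$ and the footpoint of the covector $\tvdag{\psi,q}{[Y']}$ being $[Y']$. Your one-sentence clarification of how the projection applied to $\partial\teichmullernorm^2|_{v_0}$ must be read is a reasonable and accurate gloss on the paper's slightly loose notation.
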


\chapter{Teichm\"uller Beltrami differentials}
\label{chap:TB_map}
In this chapter, we discuss the maps
\begin{align*}
\tb & \colon \mathcal{Q}_g^\times:=\mathcal{Q}_g-\{0\}\ni q\mapsto  \|q\|\left[\dfrac{\overline{q}}{|q|}\right]\in T^\times \teich_g=T\teich_g-\{0\}
\\
\tb_0 & \colon \mathcal{Q}_g^\times\ni q\mapsto\left[\dfrac{\overline{q}}{|q|}\right]\in T^\times\teich_g
\end{align*}
defined by the Teichm\"uller Beltrami differentials (\S\ref{subsubsec:TB_differenital}),
where ``$0$" in the equation means the zero sections.
We call the maps the \emph{Teichm\"uller Beltrami maps}.
From the Teichm\"uller theorem, the map $\tb_0$ is continuous, and the map $\tb$ is a homeomorphism.
The Teichm\"uller Beltrami maps satisfy
\begin{align}
\teichmullernorm
\left(\Pi_{T\teich_g}^\dagger(q_0), \tb(q_0)
\right)
&=\LOneNorm(q_0)
\label{eq:L1-norm_teichmuller_TB}
 \\
\tb_0(q_0)&=\LOneNorm(q_0)^{-1}\tb(q_0)
\label{eq:L1-norm_teichmuller_TB2}
\end{align}
for $q_0\in \mathcal{Q}_{x_0}$
(cf. \eqref{eq:teichmullernorm_tb}).

The aim of this chapter is to prove \Cref{thm:TB_map_is_diffeomorphism}.

\section{Teichm\"uller Beltrami maps are real-analytic on strata}
\label{sec:smoothness_TBmap}
We first discuss the smoothness of the Teichm\"uller Beltrami maps $\tb$ and $\tb_0$. 
From \Cref{coro:canonical_section}, we have the following commutative diagram:
$$
\minCDarrowwidth90pt
\xymatrix@C=50pt@R=40pt{
T\mathcal{Q}_g \ar[r]^{D\Pi^\dagger_{\teich_g}} & T\teich_g \ar[d]^{\Pi_{\teich_g}} \\
\mathcal{Q}^\times_g 
\ar[u]^{-\frac{1}{2}\mathscr{X}_{\LOneNorm^2}}
\ar[ru]_{\tb} \ar[r]_{\Pi^\dagger_{\teich_g}} 
& \teich_g.
%
%
}
$$
Notice that $-\mathscr{X}_{\LOneNorm^2}=2\canoflipdagger_{\teich_g}(-\partial\LOneNorm^2)$ from  \Cref{prop:H-v}, and the bundle map $\canoflipdagger_{\teich_g}\colon T\mathcal{Q}_g\to T^*\mathcal{Q}_g$ is biholomorphic.
Since the $L^1$-norm function $\LOneNorm$ on $\mathcal{Q}_g$ is real-analytic on each stratum of $\mathcal{Q}_g$, so is $\partial \LOneNorm^2$ (cf. \S\ref{sec:statification_space_QD}).
Therefore,
$$
\tb=D\Pi_{T\teich_g}^\dagger\circ \left(-\dfrac{1}{2}\mathscr{X}_{ \LOneNorm^2}\right)
$$
is real-analytc on each stratum.
Since $\tb_0=\LOneNorm^{-1}\tb$,
$\tb$ is also real-analytic on each stratum. 
We summarize as follows.

\begin{proposition}[Teichm\"uller Beltrami maps are real-analytic]
\label{prop:TB_differenials_are-RA}
The Teichm\"uller Beltrami maps $\tb$, $\tb_0\colon \mathcal{Q}^\times_g\to T^\times \teich_g$ are real-analytic on each stratum of $\mathcal{Q}_g$.
\end{proposition}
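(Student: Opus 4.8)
The plan is to deduce real-analyticity on strata from the factorization of $\tb$ through the Hamiltonian vector field of $\LOneNorm^2$. By \Cref{coro:canonical_section} one has $\tb=D\Pi^\dagger_{\teich_g}\circ\bigl(-\tfrac12\mathscr{X}_{\LOneNorm^2}\bigr)$ on $\mathcal{Q}_g^\times$, and by \Cref{prop:H-v}, $\mathscr{X}_{\LOneNorm^2}=2(\canoflipdagger_{\teich_g})^{-1}(\partial\LOneNorm^2)$. Fix a stratum $S=\mathcal{Q}_g(k_1,\dots,k_n;\epsilon)$; recall from \S\ref{sec:statification_space_QD} that $S$ is a complex submanifold of $\mathcal{Q}_g$ contained in $\mathcal{Q}_g^\times$, with period coordinates giving a holomorphic atlas compatible with the ambient complex structure, and that $\LOneNorm$ — hence $\LOneNorm^2=\LOneNorm\cdot\LOneNorm$ — is real-analytic on $S$ by \Cref{prop:L1-norm-realanalytic-strata}. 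In the factorization, $(\canoflipdagger_{\teich_g})^{-1}$ is a biholomorphism (so it preserves real-analyticity) and $D\Pi^\dagger_{\teich_g}\colon T\mathcal{Q}_g\to T\teich_g$, the differential of the holomorphic projection $\Pi^\dagger_{\teich_g}\colon\mathcal{Q}_g\to\teich_g$, is holomorphic, a fortiori real-analytic. Hence, once the section $q\mapsto\partial\LOneNorm^2|_q$ is shown to restrict to a real-analytic section of $T^*\mathcal{Q}_g$ over $S$, it follows that $\tb|_S=D\Pi^\dagger_{\teich_g}\circ\bigl(-(\canoflipdagger_{\teich_g})^{-1}(\partial\LOneNorm^2)\bigr)|_S$ is real-analytic into $T\teich_g$.

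The substantive step is therefore to verify that $\partial\LOneNorm^2$, equivalently $\mathscr{X}_{\LOneNorm^2}$, is real-analytic along $S$. The point requiring care is that $\LOneNorm$ is only known to be real-analytic along $S$ (not transversally to $S$ in $\mathcal{Q}_g$), so this does not follow from \Cref{prop:L1-norm-realanalytic-strata} by itself. Two routes seem viable. First: one expects the flow of $-\tfrac12\mathscr{X}_{\LOneNorm^2}$ to be a time reparametrization of the Teichm\"uller geodesic flow, which preserves the stratified structure of $\mathcal{Q}_g$; if so, $\mathscr{X}_{\LOneNorm^2}$ is tangent to $S$, hence its restriction to $S$ is intrinsic — the Hamiltonian vector field of the real-analytic function $\LOneNorm^2|_S$ with respect to the restriction of $\omega_{\mathcal{Q}_g}$ to the complex submanifold $S$ (here one must also check $\omega_{\mathcal{Q}_g}|_S$ is non-degenerate, using the period-coordinate description) — and is therefore real-analytic. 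Second, bypassing the tangency question: use the closed formula $\partial\LOneNorm|_{q_0}=\tfrac12[-L_\eta(q_0),Y]^\dagger_{q_0}$ of \Cref{coro:gradient_vector_L1}, with $\bar\partial\eta_i=-\overline{q_0}/|q_0|$ on $U_i$ and $Y=\delta\eta$; a local square root $q_0=f^2$ holomorphic in the period coordinates of $S$ shows $-\overline{q_0}/|q_0|$ depends real-analytically on $q_0\in S$ away from the (moving) zeros of $q_0$, and one then checks that the local $\bar\partial$-solutions $\eta_i$, their cocycle $Y$, and the holomorphic differentials $L_{\eta_i}(q_0)$ (see \Cref{remark:holomorphicity_cocycle}) inherit this real-analytic dependence; whence $\partial\LOneNorm^2=2\LOneNorm\,\partial\LOneNorm$ is real-analytic on $S$.

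Given this, $\tb|_S$ is real-analytic by the first paragraph, and $\tb_0|_S$ is real-analytic because $\tb_0=\LOneNorm^{-1}\tb$ by \eqref{eq:L1-norm_teichmuller_TB2}, $\LOneNorm$ is real-analytic and strictly positive on $S$, and scalar multiplication on $T\teich_g$ is holomorphic. As $S$ was an arbitrary stratum, this proves the proposition. I expect the main obstacle to be exactly the substantive step: controlling the dependence of $\partial\LOneNorm^2$ on $q_0$ within a stratum — whether via tangency of $\mathscr{X}_{\LOneNorm^2}$ to $S$ together with non-degeneracy of $\omega_{\mathcal{Q}_g}|_S$, or via real-analytic parameter dependence of the $\bar\partial$-solutions in \Cref{coro:gradient_vector_L1}; the rest is a formal composition of holomorphic and real-analytic maps.
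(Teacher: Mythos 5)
Your proof is essentially the paper's: the paper establishes the result by exactly the factorization $\tb = D\Pi^\dagger_{\teich_g}\circ\bigl(-\tfrac{1}{2}\mathscr{X}_{\LOneNorm^2}\bigr)$ from \Cref{coro:canonical_section}, together with \Cref{prop:H-v}, the biholomorphy of $\canoflipdagger_{\teich_g}$, real-analyticity of $\LOneNorm$ on each stratum, and finally $\tb_0=\LOneNorm^{-1}\tb$. The one point you single out as substantive --- that real-analyticity of $\LOneNorm$ restricted to a stratum does not formally yield real-analyticity of the full derivative $\partial\LOneNorm^2$ along that stratum --- is simply asserted in the paper without further argument, so your concern is legitimate and either of your proposed routes (tangency of $\mathscr{X}_{\LOneNorm^2}$ to the strata, or real-analytic dependence of the data in \Cref{coro:gradient_vector_L1} on $q_0$ within a stratum) would be what is needed to make that step fully explicit.
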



\section{Teichm\"uller Beltrami map is diffeomorphic on strata}
We deal with the following map defined by the pairing:
\begin{align}
\label{eq:variation_TB}
&\mathcal{Q}_g^*\oplus \mathcal{Q}_g\ni (q,\alpha)\mapsto 
\tbb(q,\alpha)=\|q\|\iint_M\dfrac{\overline{q}}{|q|}\alpha
=
\dfrac{\|q\|}{2i}\iint_M\dfrac{\overline{q(z)}}{|q(z)|}\alpha(z)d\overline{z}\wedge dz
\\
\label{eq:variation_TB0}
&\mathcal{Q}_g^*\oplus \mathcal{Q}_g\ni (q,\alpha)\mapsto 
\tbb_0(q,\alpha)=\iint_M\dfrac{\overline{q}}{|q|}\alpha
=\dfrac{1}{2i}\iint_M\dfrac{\overline{q(z)}}{|q(z)|}\alpha(z)d\overline{z}\wedge dz,
\end{align}
where $M$ is the underlying surface of $q$ (and $\alpha$). The map \eqref{eq:variation_TB0} is decomposed into
\begin{equation}
\label{eq:variation_TB2}
\begin{CD}
\mathcal{Q}_g^*\oplus \mathcal{Q}_g
@>{\tb_0\oplus id}>>
T\teich_g\oplus \mathcal{Q}_g
@>{\pairingsymb_{\teich_g}}>> \mathbb{C}\\
(q,\alpha) @>>> \left(\left[\dfrac{\overline{q}}{|q|}\right],\alpha\right) \\
@. (v,\alpha) @>>> \langle v,\alpha\rangle.
\end{CD}
\end{equation}

\subsection{Variational formula}
Let $\{(q_t,\alpha_t)\}_{|t|<\epsilon}$ be a holomorphic curve in $\mathcal{Q}^*_g\oplus \mathcal{Q}_g$.
Let $V_1=[X,\varphi]_{q_0}$ and $V_2=[X,\psi]_{\alpha_0}$ be the tangent vectors in
${\bf H}^1(\mathcal{U},\mathbb{L}_{q_0})\cong T_{q_0}\mathcal{Q}_g$ and
${\bf H}^1(\mathcal{U},\mathbb{L}_{\alpha_0})\cong T_{\alpha_0}\mathcal{Q}_g$,
where $\mathcal{U}=\{U_i\}_{i\in I}$ be a locally finite covering of the underlying surface $M_0$ of $q_0$ (and $\alpha_0$) with $H^1(U_i,\Theta_{M_0})=0$ for $i\in I$. 
Then,
\begin{align*}
\dfrac{\overline{q_0+t\varphi+o(t)}}{|q_0+t\varphi+o(t)|}(\alpha_0+t\psi+o(t)) 
&=\dfrac{\overline{q_0}}{|q_0|}
\alpha_0
+
t
\left(
-\dfrac{\overline{q_0}\alpha_0}{2q_0|q_0|}\varphi_i
+
\dfrac{\overline{q_0}}{|q_0|}
\psi_i
\right)
+\overline{t}
\dfrac{\alpha_0\overline{\varphi_i}}{2|q_0|}
+o(t)
\end{align*}
on any compact set in $U_i$ ($i\in I$) outside of the zeros of $q_0$, where $\xi=\{\xi_i\}_{i\in I}\in C^0(\mathcal{U}, \sob^{1,1}(\Theta_{M_0}))$ with $\delta\xi = X$. 
From \Cref{prop:tangent_space_strata}, the above calculation also works even when the family $\{q_t\}_{|t<\epsilon}$ is contained in a stratum in $\mathcal{Q}_g$. Thus, we obtain the following.

\begin{proposition}[Differential of Teichm\"uller Beltrami differentials]
\label{prop:derivative_pairing_Teichmuller_Beltrami}
Under the above notation, 
when $q_0$ and the family $\{q_t\}_{|t|<\epsilon}$ are in a stratum $\mathcal{Q}_g(k_1,\cdots,k_n;\epsilon)$,
\begin{align*}
&D\tbb_0|_{(q_0,\alpha_0)}
[V_1,V_2]
\\
&=
\dfrac{1}{2i}
\iint_{M_0}
\left(
-\dfrac{\overline{q_0}\alpha_0}{2q_0|q_0|}(\varphi_i-L_{\xi_i}(q_0))+
\dfrac{\overline{q_0}}{|q_0|}(\psi_i-L_{\xi_i}(\alpha_0))
\right)
d\overline{z}\wedge dz \\
&\overline{D} \tbb_0|_{(q_0,\alpha_0)}
[V_1,V_2]
\\
&=
\dfrac{1}{4i}
\iint_{M_0}
\dfrac{\alpha_0}{|q_0|}
\overline{
\left(
\varphi_i-L_{\xi_i}(q_0)
\right)
}
d\overline{z}\wedge dz
\end{align*}
for  $V_1=[X,\varphi]_{q_0}\in T_{q_0}\mathcal{Q}_g(k_1,\cdots,k_n;\epsilon)$
and $V_2=[X,\psi]_{\alpha_0}\in T_{\alpha_0}\mathcal{Q}_g$.
\end{proposition}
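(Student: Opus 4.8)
The plan is to differentiate the integral representation \eqref{eq:variation_TB0} of $\tb_0$ along a holomorphic curve, exactly as in Royden's computation underlying \Cref{thm:variation_L1-norm}, but now keeping both arguments $q$ and $\alpha$ variable. First I would fix a holomorphic curve $\{(q_t,\alpha_t)\}_{|t|<\epsilon}$ in $\mathcal{Q}_g^*\oplus\mathcal{Q}_g$ and the holomorphic family of Riemann surfaces adapted to it, following \S\ref{subsec:charts}, \S\ref{subsec:description_Hol} and \S\ref{subsec:cochain}; since $V_1=[X,\varphi]_{q_0}$ and $V_2=[X,\psi]_{\alpha_0}$ project to the same class $[X]\in H^1(\mathcal{U},\Theta_{M_0})$, the underlying curve $\{x_t\}\subset\teich_g$ is governed by the common cocycle $X$, and a cochain $\xi=\{\xi_i\}_{i\in I}$ with $\delta\xi=X$ records the infinitesimal coordinate change $\xi_i=-\partial_t z_i^t|_{t=0}\circ(z_i^0)^{-1}$ that enters \Cref{prop:variation_two_forms}. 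Viewing $\Omega_t=\tfrac{\overline{q_t}}{|q_t|}\alpha_t$ as a family of area forms, the pointwise first-order expansion recorded just above the statement gives, on compacta outside ${\rm Zero}(q_0)$,
\[
\dot\Omega_i=-\frac{\overline{q_0}\alpha_0}{2q_0|q_0|}\varphi_i+\frac{\overline{q_0}}{|q_0|}\psi_i,\qquad
\dot\Omega^*_i=\frac{\alpha_0\overline{\varphi_i}}{2|q_0|}.
\]

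The technical core is to apply the variation-of-$2$-forms formula \Cref{prop:variation_two_forms} to $\Omega_t$ in spite of its singularities at the zeroes of $q_t$. I would do this by Royden's exhaustion device (as in the proofs of \Cref{lem:royden} and \Cref{thm:variation_L1-norm}): exhaust $M_0\setminus{\rm Zero}(q_0)$ by an increasing sequence of relatively compact smooth subsurfaces, apply \Cref{prop:variation_two_forms} on each (where $\Omega_t$ is smooth), and pass to the limit. For the limit to converge to the asserted integral over $M_0$, the integrands
\[
-\frac{\overline{q_0}\alpha_0}{2q_0|q_0|}\bigl(\varphi_i-L_{\xi_i}(q_0)\bigr)+\frac{\overline{q_0}}{|q_0|}\bigl(\psi_i-L_{\xi_i}(\alpha_0)\bigr)\qquad\text{and}\qquad \frac{\alpha_0}{2|q_0|}\,\overline{\varphi_i-L_{\xi_i}(q_0)}
\]
must be integrable near ${\rm Zero}(q_0)$, and this is exactly where the stratum hypothesis is used. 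By \Cref{prop:tangent_space_strata}, a representative $\varphi$ of a vector tangent to $\mathcal{Q}_g(k_1,\dots,k_n;\epsilon)$ may be chosen so that $\varphi_i/q_0$ has at most a simple pole at each zero of $q_0$, and the same local-model computation shows $L_{\xi_i}(q_0)$ vanishes to order at least $k-1$ at a zero of order $k$; together with $\alpha_0/q_0$ being meromorphic, these bounds put the integrands in $L^1(M_0)$, so dominated convergence applies. I expect this integrability-at-the-zeroes step, together with checking that the exhaustion limit genuinely recovers \Cref{prop:variation_two_forms} in the singular setting, to be the main obstacle.

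With the limit secured it remains to simplify $\dot\Omega_i-L_{\xi_i}(\Omega^0_i)$ and $\dot\Omega^*_i-\overline{L_{\xi_i}(\overline{\Omega^0_i})}$. For the holomorphic direction I would invoke the Leibniz rule for the Lie derivative of the product of the $(-1,1)$-form $\overline{q_0}/|q_0|$ with the quadratic differential $\alpha_0$, using the elementary identities $\partial_z(\overline{q_0}/|q_0|)=-\tfrac{\overline{q_0}}{2q_0|q_0|}q_0'$ and $L_{\xi_i}(|q_0|)=\tfrac{\overline{q_0}}{2|q_0|}L_{\xi_i}(q_0)$ (the latter already noted after \Cref{thm:variation_L1-norm}), to obtain
\[
L_{\xi_i}\!\left(\frac{\overline{q_0}}{|q_0|}\alpha_0\right)=-\frac{\overline{q_0}\alpha_0}{2q_0|q_0|}L_{\xi_i}(q_0)+\frac{\overline{q_0}}{|q_0|}L_{\xi_i}(\alpha_0);
\]
subtracting this from $\dot\Omega_i$ and handling the conjugate expression likewise produces the two formulas in the statement (the overall sign being fixed by the $\tfrac{1}{2i}\iint(\cdot)\,d\overline z\wedge dz$ conventions of \S\ref{subsec:charts} and \Cref{prop:variation_two_forms}). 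Finally I would check that $D\tb_0$ and $\overline{D}\tb_0$ do not depend on the choices of representatives $(X,\varphi)\in{\bf Z}^1(\mathcal{U},\mathbb{L}_{q_0})$, $(X,\psi)\in{\bf Z}^1(\mathcal{U},\mathbb{L}_{\alpha_0})$ and of $\xi$ with $\delta\xi=X$, by the mechanism of \S\ref{subsec:well_defined-integral}: changing a representative alters the relevant integrand by the exterior derivative of a globally defined $1$-form (or by $L_{\mathcal{X}}$ of a global form), whose integral vanishes by the Green/Stokes theorem for $\sob^{1,1}$-forms recalled in \S\ref{sec:remark_smoothness}. This renders the two displayed formulas well-defined bilinear pairings on $T_{q_0}\mathcal{Q}_g(k_1,\dots,k_n;\epsilon)\oplus T_{\alpha_0}\mathcal{Q}_g$, as asserted.
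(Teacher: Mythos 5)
Your proposal is correct and follows essentially the same route as the paper: the pointwise Royden-type expansion of $\overline{q_t}\alpha_t/|q_t|$, the variation-of-$2$-forms formula (\Cref{prop:variation_two_forms}) combined with the exhaustion argument of \Cref{lem:royden} to handle the zeroes of $q_0$, the two Lie-derivative product identities, and the Green-formula check of independence of $\xi$ and of the representatives. Your added discussion of $L^1$-integrability near the zeroes via \Cref{prop:tangent_space_strata} is exactly the point the paper disposes of by citing that proposition, so there is nothing essentially different here.
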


Before proving \Cref{prop:derivative_pairing_Teichmuller_Beltrami}, we notice that the integrals in \Cref{prop:derivative_pairing_Teichmuller_Beltrami} are independent of the choice of the cochain $\xi\in C^0(\mathcal{U},\sob^{1,1}(\Theta_{M_0}))$ with $\delta\xi=X$. From a bit modification of the argument in Remark (3) below \Cref{thm:variation_L1-norm}, the independence follows from the following identities:
\begin{align}
\left(
-\dfrac{\overline{q_0}\alpha_0}{2q_0|q_0|}L_{\chi}(q_0)+\dfrac{\overline{q_0}}{|q_0|}L_{\chi}(\alpha_0)
\right)d\overline{z}\wedge dz
\label{eq:pairing_TB1}
&=-d\left(
\dfrac{\overline{q_0}\alpha_0}{|q_0|}\chi d\overline{z}
\right)
\\
\dfrac{\overline{\alpha_0}}{|q_0|}L_{\chi}(q_0)d\overline{z}\wedge dz
&=
-2d\left(
\dfrac{q_0\overline{\alpha_0}}{|q_0|}\chi d\overline{z}
\right)
\label{eq:pairing_TB2}
\end{align}
on $M_0-{\rm Zero}(q_0)$
for a global vector field $\chi$ on $M_0$ of class $\sob^{1,1}$. We should be careful in the discussion because the $(0,1$)-differentials in the rights of the above two identities are discontinuous on $M_0$. However, as was discussed before, the Green formula varies for such differentials
since the differentials are bounded and continuous except for the zeros of $q_0$
(cf. \S\ref{sec:remark_smoothness}).

The differentiable formulae are obtained by \Cref{prop:variation_two_forms} with applying the argument in \Cref{lem:royden}
and the following identifies
\begin{align*}
L_{\xi_i}\left(\dfrac{\overline{q_0}}{|q_0|}\alpha_0\right)
&=
-\dfrac{\overline{q_0}\alpha_0}{2q_0|q_0|}
L_{\xi_i}(q_0)
+
\dfrac{\overline{q_0}}{|q_0|}L_{\xi_i}(\alpha_0)
\\
L_{\xi_i}\left(
\overline{\dfrac{\overline{q_0}}{|q_0|}\alpha_0}\right)
&=
L_{\xi_i}\left(\dfrac{q_0}{|q_0|}\overline{\alpha_0}\right)
=
\dfrac{\overline{\alpha_0}}{2|q_0|}L_{\xi_i}(q_0)
\end{align*}
on $U_i$ for $i\in I$, which are equivalent to \eqref{eq:pairing_TB1} and \eqref{eq:pairing_TB2}, respectively.

\subsection{Proof of \Cref{thm:TB_map_is_diffeomorphism}}
\label{subsec:proof_TB_map_is_diffeomorphism}
From \Cref{prop:TB_differenials_are-RA}, we only discuss the regularity of the differentials of the Teichm\"uller Beltrami map $\tb$.
Let $N_0$ be a sufficiently small neighborhood of $x_0$ in $\teich_g$ and $\{\alpha_1$, $\cdots$, $\alpha_{3g-3}\}$ be a system of holomorphic sections of the bundle $\mathcal{Q}_g|_{N_0}\to N_0$ such that $\alpha_1(x)$, $\cdots$, $\alpha_{3g-3}(x)$ generate the fiber $\mathcal{Q}_x$ for any $x\in N_0$.
Then, the map
$$
T\teich_g|_{N_0}\ni v\mapsto (x,(\langle v,\alpha_1(x)\rangle,\cdots,\langle v,\alpha_{3g-3}(x)\rangle))\in N_0\times \mathbb{C}^{3g-3}
$$
($x\in N_0$ with $v\in T_x\teich_g$) is a holomorphic (local) trivialization.

\subsection*{Case : $q_0$ is generic}
We first discuss the case where $q_0$ is generic, i.e. $q_0$ is in the principal stratum $\mathcal{Q}_1=\mathcal{Q}_g(1,\cdots,1;-1)$.

Let $V=[X,\varphi]_{q_0}\in {\bf H}^1(\mathcal{U},\mathbb{L}_{q_0})$.
Let $V=[X,\varphi]_{q_0}\in {\bf H}^1(\mathcal{U},\mathbb{L}_{q_0})$. Let $\xi\in C^0(\mathcal{U},\mathcal{A}^{0,0}(\Theta_{M_0}))$ with $\delta\xi=X$. Let $\{q_t\}_{|t|<\epsilon}$ be a holomorphic family of holomorphic quadratic differentials which is tangent to $V$ at $t=0$. Let $x(t)\in \teich_g$ with $q_t\in \mathcal{Q}_{x(t)}$ ($x(0)=x_0$). Let $\alpha_i=\alpha_i(x_0)$ and $V_k=[X,\psi^k]_{\alpha_k}\in {\bf H}^1(\mathcal{U},\mathbb{L}_{\alpha_k})$ ($k=1$, $\cdots$, $3g-3$) the tangent vector to the family $\{\alpha_k(x(t))\}_{|t|<\epsilon}$ at $t=0$. 

Suppose $V=[X,\varphi]_{q_0}\in {\bf H}^1(\mathcal{U},\mathbb{L}_{q_0})$ satisfies
\begin{align*}
&D \tbb|_{(q_0,\alpha_k)}[V,V_k]+\overline{D} \tbb|_{(q_0,\alpha_k)}[V,V_k]
\\
&=
D\LOneNorm(q_0)|_{q_0}[V]\tbb_0(q_0,\alpha_k)+
\LOneNorm(q_0)D \tbb_0|_{(q_0,\alpha_k)}[V,V_k]
\\
&\quad
+
\overline{D\LOneNorm(q_0)|_{q_0}[V]}\tbb_0(q_0,\alpha_k)
+
\LOneNorm(q_0)\overline{D} \tbb_0|_{(q_0,\alpha_k)}[V,V_k]=0
\end{align*}
for $k=1$, $\cdots$, $3g-3$ (cf. \Cref{lem:non-degenerate}). 
These equations are equivalent to
\begin{align}
&
\iint_{M_0}\dfrac{\overline{q_0}\alpha_k}{|q_0|}
\iint_{M_0}
\dfrac{\overline{q_0}}{2|q_0|}\left(\varphi_i-L_{\xi_i}(q_0)\right)
+
\iint_{M_0}\dfrac{\overline{q_0}\alpha_k}{|q_0|}
\iint_{M_0}
\dfrac{q_0}{2|q_0|}\overline{\left(\varphi_i-L_{\xi_i}(q_0)\right)}
\label{eq:non_deg1}
\\
&=
-\iint_{M_0}|q_0|
\iint_{M_0}
\left(
-\dfrac{\overline{q_0}\alpha_k}{2q_0|q_0|}(\varphi_i-L_{\xi_i}(q_0))+
\dfrac{\overline{q_0}}{|q_0|}(\psi^k_i-L_{\xi_i}(\alpha_k))
\right)
\nonumber
\\
&\quad
-
\iint_{M_0}|q_0|
\iint_{M_0}
\dfrac{\alpha_k}{2|q_0|}
\overline{
\left(
\varphi_i-L_{\xi_i}(q_0)
\right)
}
\nonumber
\end{align}
for $k=1$, $\cdots$, $3g-3$, where we omit to write the Euclid measure $d\overline{z}\wedge dz/2i$ in the equation for the simplicity.
Since we now discuss about the non-degeneracy of the differential of the Teichm\"uller Beltrami map, we may further suppose that these equations hold no matter how we choose initially the sections $\alpha_k$ around $x_0$.

Suppose to the contrary that $[X]\ne 0$. Take $a\in \mathbb{C}$ arbitrary.
By definition, $D\alpha_k|_{x_0}[[X]]=V_k=[X,\psi^k]_{\alpha_k}$. 
Since $[X,\psi^k+\beta]_{\alpha_k}\in {\bf H}^1(\mathcal{U},\mathbb{L}_{\alpha_k})$ for $\beta\in \mathcal{Q}_{x_0}$, by modifying the sections $\alpha_k$ in the vertical direction in the beginning, we may assume that
\begin{equation}
\label{eq:non-deg2}
\iint_{M_0}\dfrac{\overline{q_0}}{|q_0|}(\psi_i^k-L_{\xi_i}(\alpha_k))=a
\end{equation}
for $k=1$, $\cdots$, $3g-3$.
However, this contradicts to \eqref{eq:non_deg1} because the terms except for the second in the first  term of the right-hand side are independent of $a\in \mathbb{C}$. Therefore, $[X]=0$ and $V_k=0$ for $k=1$, $\cdots$, $3g-3$. Hence, we may also assume that $\xi\in C^0(\mathcal{U},\Theta_{M_0})$
and $\psi^k=L_{\xi}(q_0)$.

By substituting $\beta=\varphi_i-L_{\xi_i}(q_0)\in \mathcal{Q}_{x_0}$ into \eqref{eq:non_deg1}, we obtain
\begin{equation}
\label{eq:non-deg4}
\iint_{M_0}\dfrac{\overline{q_0}\gamma}{|q_0|}
{\rm Re}\left(\iint_{M_0}
\dfrac{\overline{q_0}}{|q_0|}\beta
\right)
=
-\iint_{M_0}|q_0|
\iint_{M_0}
\dfrac{\gamma}{2|q_0|}
\left(
\overline{\beta}-\dfrac{\overline{q_0}}{q_0}\beta
\right)
\end{equation}
for all $\gamma\in \mathcal{Q}_{x_0}$ since $\{\alpha_k\}_{k=1}^{3g-3}$ is a basis of $\mathcal{Q}_{x_0}$.
By substituting $\gamma=q_0$ into \eqref{eq:non-deg4}, we get
\begin{equation}
\label{eq:non-deg3}
\iint_{M_0}\dfrac{\overline{q_0}}{|q_0|}\beta=\overline{\iint_{M_0}\dfrac{q_0}{|q_0|}\overline{\beta}}=0.
\end{equation}
Suppose to the contrary that $\beta \ne 0$. By substituting $\gamma=\beta$ into \eqref{eq:non-deg4}, we get
$$
0=\iint_{M_0}
\dfrac{\beta}{|q_0|}
\left(
\overline{\beta}-\dfrac{\overline{q_0}}{q_0}\beta
\right)
=
\iint_{M_0}
\left(
1-\dfrac{\overline{q_0}}{q_0}\dfrac{\beta}{\overline{\beta}}
\right)\dfrac{|\beta|^2}{|q_0|}.
$$
Since ${\rm Re}(1-(\overline{q_0}/q_0)(\beta/\overline{\beta}))\ge 0$, we obtain
$$
1-\dfrac{\overline{q_0}}{q_0}\dfrac{\beta}{\overline{\beta}}=0
$$
almost everywhere on $M_0$. Since $q_0$ and $\beta$ are holomorphic quadratic differentials, the equation means that the imaginary part of a meromorphic function $\beta/q_0$ vanishes. Therefore, we obtain $\beta=\lambda q_0$ for some $\lambda\in \mathbb{R}$. On the other hand, from \eqref{eq:non-deg3}, $\lambda=0$ and $\beta=0$. This is a contradiction. Thus, we conclude that $V=[X,\varphi]_{q_0}=[\delta\xi, L_\xi(q_0)]_{q_0}=0$ in ${\bf H}^1(\mathcal{U},\mathbb{L}_{q_0})$.

\subsection*{Case : $q_0$ is in another stratum}
The strategy of the proof of the non-degeneracy is same. We give a sketch by comparing with the discussion of the previous case.

Suppose $q_0\in \mathcal{Q}_g(k_1,\cdots,k_n;\epsilon)$. We denote by $\mathcal{Q}_{x_0}(q_0)$ the subspace of $\mathcal{Q}_{x_0}$ consisting of $\psi\in \mathcal{Q}_{x_0}$ such that $\psi/q_0$ is at most simple poles on $M_0$. Dumas \cite[\S5]{MR3413977} shows that $\mathcal{Q}_{x_0}(q_0)$ is thought of as the tangent space at $q_0$ to the stratum containing $q_0$ of a natural stratification on $\mathcal{Q}_{x_0}$ induced from the stratification on $\mathcal{Q}_g$. In particular, for $\psi\in \mathcal{Q}_{x_0}(q_0)$, $[0,\psi]_{q_0}\in T_{q_0}\mathcal{Q}_g(k_1,\cdots,k_n;\epsilon)$. Notice that $q_0\in \mathcal{Q}_{x_0}(q_0)$ and hence $\mathcal{Q}_{x_0}(q_0)$ is not trivial.
Dumas also discuss an Hermitian inner product
\begin{equation}
\label{eq:Hermitian_Dumas}
\iint_{M_0}\dfrac{\overline{\alpha}\beta}{|q_0|}\quad (\alpha,\beta\in \mathcal{Q}_{x_0}(q_0))
\end{equation}
on $\mathcal{Q}_{x_0}(q_0)$. Let $d_0=\dim_{\mathbb{C}}\mathcal{Q}_{x_0}(q_0)$.

From the beginning of the proof in the previous discussion, we modify the basis $\{\alpha_k\}_{k=1}^{3g-3}$ such that $\alpha_k\in \mathcal{Q}_{x_0}(q_0)$ for $k=1$, $\cdots$, $d_0$. Furthermore, $V_k\in T_{\alpha_k}\mathcal{Q}_g(k_1,\cdots,k_n;\epsilon)$ for $k=1$, $\cdots$, $d_0$.

Let $V=[X,\varphi]_{q_0}\in T_{q_0}\mathcal{Q}_g(k_1,\cdots,k_n;\epsilon)$, and suppose 
\begin{align*}
D \tbb|_{(q_0,\alpha_k)}[V,V_k]
+\overline{D} \tbb|_{(q_0,\alpha_k)}[V,V_k]
=
0
\end{align*}
for $k=1$, $\cdots$, $d_0$. Take an arbitrary $a\in \mathbb{C}$. From \Cref{prop:tangent_space_strata} and the above discussion, $[X,\psi^k+\beta]_{\alpha_k}\in T_{\alpha_k}\mathcal{Q}_g(k_1,\cdots,k_n;\epsilon)$ for $\beta\in \mathcal{Q}_{x_0}(q_0)$. Since the stratum $\mathcal{Q}_g(k_1,\cdots,k_n;\epsilon)$ is a complex manifold, after modifying the analytic disk $\{\alpha_k(x(t))\}_{|t|<\epsilon}$ in the vertical direction in the stratum, we may assume that \eqref{eq:non-deg2} holds for $k=1$, $\cdots$, $d_0$. Hence, we get $X=\delta \xi$ for some $\xi\in C^0(\mathcal{U},\Theta_{M_0})$ and $V_k=[X,\psi^k]_{\alpha_k}=0$ in ${\bf H}^1(\mathcal{U},\mathbb{L}_{\alpha_k})$. By thinking the Hermitian product \eqref{eq:Hermitian_Dumas} on $\mathcal{Q}_{x_0}(q_0)$, the rest of the discussion is almost the same as that given in the case where $q_0$ is generic to obtain $V=[X,\varphi]_{q_0}=0$.

\section{Conjectural picture on  unit sphere bundles}
\label{sec:onjectural_picture2}
Notice from \eqref{eq:T-N-dual} that the Teichm\"uller Beltrami map $\tb_0$ is a bundle isomorphism from the unit sphere bundle $\mathcal{S}\mathcal{Q}_g$ to the unit sphere bundle $\mathcal{S}\teich_g=\{v\in T\teich_g\mid \teichmullernorm (\Pi_{\teich_g}(v),v)=1\}$.
We discuss the Teichm\"uller Beltrami map $\tb_0$ in the infinitesimal level.
Here, we only treat the map $\tb_0$ on the generic differentials. The purpose of this section is to make a conjectural picture of the Teichm\"uller Beltrami map from Complex analysis point of view.

Let $q_0\in \mathcal{SQ}_g$ be a generic differential.
Let $x_0=\Pi^\dagger_{\teich_g}(q_0)$.
As the proof of \Cref{thm:TB_map_is_diffeomorphism}, let $\alpha_k$ be a local holomorphic section of $T\teich_g\to \teich_g$ defined on a neighborhood $U_0$ of $x_0$ such that $\{\alpha_k(x)\}_{k=1}^{3g-3}$ generates $T_x\teich_g$ for $x\in U_0$. 
Let $\tbb_{0,k}(q)=\langle \tb_0(q),\alpha_k(\Pi^\dagger_{\teich_g}(q))\rangle$ for $q\in (\Pi^\dagger_{\teich_g})^{-1}(U_0)$.
Then
\begin{align}
\label{eq:TB-last}
&(\Pi^\dagger_{\teich_g})^{-1}(U_0)
\ni q\mapsto 
(\Pi_{\teich_g}(\tb(q)),
(\tbb_{1}(q),\cdots,\tbb_{3g-3}(q))
)
\in U_0\times \mathbb{C}^{3g-3}
\end{align}
is a local trivialization.

\subsection{Horizontal lifts}
\label{subsec:horizontal_lifts_SQg}
Let $[X_{q_0}]\in H^1(\mathcal{U},\Theta_{M_0})$ which presents $\overline{q_0}/{|q_0|}$.
Take a guiding frame $\GuaidF$ satisfying $\GuaidF([X_{q_0}])=\overline{q_0}/|q_0|$.
Let $\GoodS$ be the good section defined from $\GuaidF$.
We assume that any $[X]\in H^1(\mathcal{U},\Theta_{M_0})$ is of form
\begin{equation}
\label{eq:horizontal_lift-good_section}
\GoodS(X)=\alpha \GoodS(X_{q_0})+\beta \xi,
\end{equation}
where $\alpha$, $\beta\in \mathbb{C}$ and $\xi\in C^0(\mathcal{U},\sob^{1,1}(\Theta_{M_0}))$.
For $[X]\in H^1(\mathcal{U},\Theta_{M_0})$, we define a horizontall lift of $[X]$ by
$$
V^H([X])=[X,\{L_{\GoodS(X)_i}(q_0)-Q_X\}_{i\in I}]_{q_0},
$$
where $Q_X\in \Gamma(M_0,\mathcal{A}^{0,1}(\Omega_{M_0}^{\otimes 2}))$ satisfies
\begin{align}
\overline{\partial}Q_X&=\overline{\partial}L_{\GoodS(X)_i}(q_0) \quad (\mbox{on $U_i$})
\nonumber \\
\iint_{M_0}\dfrac{\overline{\varphi}}{|q_0|}Q_X&=0
\label{eq:uniquness_horizontal_lift-2}
\end{align}
for $i\in I$ and $\varphi\in \mathcal{Q}_{x_0}$.
Though the uniqueness condition \eqref{eq:uniquness_horizontal_lift-2} is different from the condition given in \eqref{eq:uniquness_horizontal_lift-1},
we can also check that the map 
$$
T_{x_0}\teich_g\cong H^1(\mathcal{U},\Theta_{M_0})\ni [X]\mapsto V^H([X])\in T_{q_0}\mathcal{Q}_g
$$
is a well-defined injective map (cf. \S\ref{subsec:tangent_space_to_the_unit_sphere_bundle}). From the proof of \Cref{prop:CR_L1}, the image is thought of as a horizontal lift of $T_{x_0}\teich_g$ contained the real tangent space $T^{\mathbb{R}}_{q_0}\mathcal{SQ}_g$ of $\mathcal{SQ}_g$ at $q_0$.

The horizontal lift $V^H([X])$ here is independent of the choice of the guiding frame. Indeed, let $\GuaidF'$ be another guiding frame with $\GuaidF'([X_{q_0}])=\overline{q_0}/|q_0|$ such that the good section $\GoodS'$ defined from $\GuaidF'$ satisfies the condition \eqref{eq:horizontal_lift-good_section}. Let $[X]\in H^1(\mathcal{U},\Theta_{M_0})$ and set $\xi=\GoodS(X)$ and $\xi'=\GoodS'(X)$. Take $Q'_X\in  \Gamma(M_0,\mathcal{A}^{0,1}(\Omega_{M_0}^{\otimes 2}))$ defined from $\xi'$ with the uniqueness condition \eqref{eq:uniquness_horizontal_lift-2}. Notice from $\delta\xi=\delta\xi'=X$ that the difference
$$
(L_{\xi_i}(q_0)-Q_X)-(L_{\xi'_i}(q_0)-Q'_X)
$$
is a holomorphic quadratic differential on $M_0$, and $\chi=\xi_i-\xi'_i$ on $U_i$ is a global section (cf. \Cref{prop:linear-map-L}). 

Take an arbitrary $\varphi\in \mathcal{Q}_{x_0}$. From the uniqueness condition \eqref{eq:uniquness_horizontal_lift-2},
\begin{align*}
&\iint_{M_0}\dfrac{\overline{\varphi}}{|q_0|}(
(L_{\xi_i}(q_0)-Q_X)-(L_{\xi'_i}(q_0)-Q'_X)
)=\iint_{M_0}\dfrac{\overline{\varphi}}{|q_0|}L_{\chi}(q_0).
\end{align*}
Here, the integrand is 
\begin{align*}
\dfrac{\overline{\varphi}}{|q_0|}L_{\chi}(q_0)d\overline{z}\wedge dz
&=\dfrac{\overline{\varphi}}{|q_0|}(q_0'\chi+2q_0\xi_z)d\overline{z}\wedge dz
\\
&=
2\overline{\left(\dfrac{\varphi}{q_0}\right)}\left(\dfrac{\overline{q_0}q_0'}{2|q_0|}\chi+|q_0|\chi_z\right)d\overline{z}\wedge dz
\\
&=-2d\left(
\left(\overline{\left(\dfrac{\varphi}{q_0}\right)}|q_0|\chi 
\right)d\overline{z}
\right),
\end{align*}
which is the $d$-differential of a bounded $1$-form which is differentiable except for zeros of $q_0$. Therefore, by applying Royden's argment (\S\ref{sec:remark_smoothness}), we get 
\begin{align*}
&\iint_{M_0}\dfrac{\overline{\varphi}}{|q_0|}((L_{\xi_i}(q_0)-Q_X)-(L_{\xi'_i}(q_0)-Q'_X)) \\
&=\lim_{r\to 0}\iint_{M_r}\dfrac{\overline{\varphi}}{|q_0|}L_{\chi}(q_0)
=-2\lim_{r\to 0}\int_{\partial M_r}\overline{\varphi}\dfrac{q_0}{|q_0|}\chi \,d\overline{z}=0
\end{align*}
by Green's theorem, where $\{M_r\}_{r>0}$ is an exhaustion of $M-{\rm Zero}(q_0)$ defined by removing $r$-disks (with respect to some local charts) around each zero of $q_0$.
Since $\varphi\in \mathcal{Q}_{x_0}$ is taken arbitrary, we have that $L_{\xi_i}(q_0)-Q_X=L_{\xi'_i}(q_0)-Q'_X$ on $U_i$ for $i\in I$ (cf. \cite[\S5]{MR3413977}).

\subsection{Conjectural picture}
\label{subsec:conjectual_picture}
For discussing the following proposition, we recall the infinitesimal correspondence in  complex variables.
Let $Z$ be a complex manifold. The real tangent bundle $T^{\mathbb{R}}Z$ (the tangent bundle of the underlying differential manifold of $Z$) is embedded in the complexified tangent bundle $T^{\mathbb{C}}Z=TZ\otimes \mathbb{C}$.
The holomorphic tangent bundle $TZ$ is identified with $T^{\mathbb{R}}Z$ by $TZ\ni v\mapsto v+\overline{v}\in T^{\mathbb{R}}Z$ as $\mathbb{R}$-vector bundles (cf. \S\ref{sec:tangent_cotangent_space}).

Let $Z'$ be another complex manifold and $F\colon Z\to Z'$ be a $C^1$ map. The tangent map $DF\colon T^\mathbb{R}Z\to T^\mathbb{R}Z'$ extends naturally as a bundle map $DF\colon T^\mathbb{C}Z\to T^\mathbb{C}Z'$ between the complexified tangent bundles. Then, we have a sequence of bundle maps
$$
\begin{CD}
TZ @>>> T^{\mathbb{R}}Z\subset T^{\mathbb{C}}Z @>{DF}>> T^{\mathbb{C}}Z'=T^{1,0}Z'\oplus T^{0,1}Z' @>>> TZ'=T^{1,0}Z' \\
v @>>> v+\overline{v} @>>> w @>>> w^{10}
\end{CD}
$$
where $w^{10}$ is the $(1,0)$-part of $w\in T^{\mathbb{C}}Z'$.
In this paper, we call the correspondence $v\mapsto w^{10}=DF^{10}[v]$ the \emph{$(1,0)$-part} of $DF$. In the local expression, that is, when $Z$ and $Z'$ are the complex Euclidean spaces, the $(1,0)$-part is 
\begin{equation}
\label{eq:C-linear-10}
DF^{10}[v]=
\left.\dfrac{\partial}{\partial t}F(z(t))\right|_{t=0}
+
\left.\dfrac{\partial}{\partial \overline{t}}F(z(t))\right|_{t=0}
=\partial F|_{z(0)}\cdot v+\overline{\partial} F|_{z(0)}\cdot \overline{v},
\end{equation}
where $z(t)$ is a holomorphic curve in $Z$ tangent to $v$ at $t=0$
(cf. \Cref{lem:non-degenerate}).
From this observation, we see that the $(1,0)$-part of $DF$ is $\mathbb{R}$-linear, not $\mathbb{C}$-linear in general. When $F$ is (local) diffeomorphic, $DF^{10}$ is $\mathbb{R}$-isomorphic.

Notice that the horizontal lift $T^H_{q_0}\mathcal{Q}_g$ is a subspace of the holomorphic tangent space $T_{q_0}\mathcal{Q}_g$ at $q_0\in \mathcal{Q}_g$.
We claim

\begin{proposition}[$\mathbb{C}$-linearity on horizontal lifts]
\label{prop:C-linear-tb}
The $(1,0)$-part of the differential of the Teichm\"uller Beltrami map $\tb$ is $\mathbb{C}$-linear on $T^H_{q_0}\mathcal{Q}_g$. 
Furthermore, $T^H_{q_0}\mathcal{Q}_g$ is the maximal $\mathbb{C}$-subspace of $T^{\mathbb{R}}_{q_0}\mathcal{S}\mathcal{Q}_g$ where the $(1,0)$-part $D\!\tb^{10}$ of the differenital of the Teichm\"uller Beltrami map $\tb$ is $\mathbb{C}$-linear.
\end{proposition}

\begin{proof}
Let $[X]\in H^1(\mathcal{U},\Theta_{M_0})$, and set $W_j=D\alpha_j|_{x_0}([X])$.
Let $\{q_t\}_{|t|<\epsilon_0}$ be a holomorphic curve tangent to $V^H([X])$ at $t=0$.
Since $\Pi_{\teich_g}\circ\tb_0=\Pi^\dagger_{\teich_g}$, the first coordinate of \eqref{eq:TB-last} is holomorphic. Hence
$$
\left.
\dfrac{\partial \Pi_{\teich_g}\circ\tb(q_t)}{\partial \overline{t}}
\right|_{t=0}
=
\left.
\dfrac{\partial \Pi^\dagger_{\teich_g}(q_t)}{\partial \overline{t}}
\right|_{t=0}=0.
$$ 
As we discussed above, $V^H([X])$ is contained in the maximal complex subspace of $T^\mathbb{R}_{q_0}\mathcal{SQ}_g$, which is equivalent to $D\LOneNorm|_{q_0}[V^H([X])]=0$. Hence 
$$
\overline{D}\LOneNorm|_{q_0}[V^H([X])]=\overline{D\LOneNorm|_{q_0}[V^H([X])]}=0
$$
since the $L^1$-norm function is a real valued funciton.
From \Cref{prop:derivative_pairing_Teichmuller_Beltrami} and the uniqueness condition \eqref{eq:uniquness_horizontal_lift-2}, 
for $j=1$, $\cdots$, $3g-3$,
\begin{align*}
\left.
\dfrac{\partial \tbb_{j}(q_t)}{\partial \overline{t}}
\right|_{t=0}
&=
\overline{D}\LOneNorm|_{q_0}[V^H([X])] \tbb_0(q_0,\alpha_j(x_0))
\\
&\qquad+
\LOneNorm(q_0)\overline{D} \tbb_0|_{(q_0,\alpha_j(x_0))}
[V^H([X]),W_j ] \\
&=
\dfrac{\LOneNorm(q_0)}{4i}
\iint_{M_0}
\dfrac{\alpha_j(x_0)}{|q_0|}
\overline{
\left(
L_{\GoodS(X)_i}(q_0)-Q_X-L_{\GoodS(X)_i}(q_0)
\right)
}d\overline{z}\wedge dz
=0.
\end{align*}
Since \eqref{eq:TB-last} is a complex analytic local trivialization,
the above calculation means that the $(1,0)$-part of the differential of $\tb$ on $T^H_{q_0}\mathcal{Q}_g$ is obtained from its $\partial$-derivatives, and hence it is a $\mathbb{C}$-linear map from $T^H_{q_0}\mathcal{Q}_g$ to $T\teich_g$. 

Let $v=[X,\varphi]_{q_0}\in T^{\mathbb{R}}_{q_0}\mathcal{SQ}_g$. Suppose that the $\mathbb{C}$-subspace ($\mathbb{C}$-line) generated by $v$ is contained in $T^{\mathbb{R}}_{q_0}\mathcal{S}\mathcal{Q}_g$ and the differential of $\tb$ is $\mathbb{C}$-linear on the $\mathbb{C}$-subspace. Then, $D\tbb_{j}^{10}[v]$ in \eqref{eq:C-linear-10} corresponding to $v$ satisfies $D\tbb_{j}^{10}[iv]=i\,D\tbb_{j}^{10}[v]$, which is equivalent to the condition
$$
\left.
\dfrac{\partial \tbb_{j}(q_t)}{\partial \overline{t}}
\right|_{t=0}=0
$$
for $j=1$, $\cdots$, $3g-3$,
where $\{q_t\}_{|t|<\epsilon_0}$ is a path in $\mathcal{Q}_g$ tangent to $v$ at $t=0$. From the above calculation with \Cref{prop:derivative_pairing_Teichmuller_Beltrami}, this is also equivalent to
$$
\iint_{M_0}
\dfrac{\alpha_j(x_0)}{|q_0|}
\overline{
\left(
\varphi_i
-L_{\xi_i}(q_0)
\right)
}=0
$$
for $j=1$, $\cdots$, $3g-3$,
where $\xi\in \mathcal{C}^0(\mathcal{U},\mathcal{A}^{0,0}(\Theta_{M_0}))$ with $\delta\xi =X$. Take $Q_X\in H^0(\mathcal{U},\mathcal{A}^{0,0}(\Omega_{M_0}^{\otimes 2}))$ for $\{L_{\xi_i}(q_0)\}_{i\in I}$ with the uniqueness condition \eqref{eq:uniquness_horizontal_lift-2}. Then, $\varphi_i-L_{\xi_i}(q_0)-Q_X$ is a holomorphic quadratic differenital on $M_0$ which satisfies
$$
\iint_{M_0}
\dfrac{\alpha_j(x_0)}{|q_0|}
\overline{
\left(
\varphi_i
-L_{\xi_i}(q_0)+Q_X
\right)
}=0
$$
which means that $\varphi_i=L_{\xi_i}(q_0)-Q_X$ since $\{\alpha_j(x_0)\}_{j=1}^{3g-3}$ is a basis of $\mathcal{Q}_{x_0}$.
From the discussion in \S\ref{subsec:horizontal_lifts_SQg}, $v=[X,\varphi]_{q_0}=[X,L_{\xi_i}(q_0)-Q_X]_{q_0}=V^H([X])\in T^H_{q_0}\mathcal{Q}_g$. This means that $T^H_{q_0}\mathcal{Q}_g$ is maximal in the sense of the statement of this proposition.
\end{proof}

\begin{corollary}[Characterization of the horizontal lifts]
For a generic $q_0\in \mathcal{SQ}_g$,
the horizontal lift defined as above is presented intrinsically by
\begin{align*}
T^H_{q_0}\mathcal{Q}_g
&=
\left
\{V\in T^{\mathbb{R}}_{q_0}\mathcal{SQ}_g\mid
D\!\tb^{10}|_{q_0}[iV]=i\,D\!\tb^{10}|_{q_0}[V]
\right\} \\
&=
\left\{
V\in T_{q_0}\mathcal{Q}_g\mid
D\!\tb^{10}|_{q_0}[iV]=i\,D\!\tb^{10}|_{q_0}[V],
\ D\LOneNorm|_{q_0}[V]=0
\right\}.
\end{align*}
In particular,
the total space $T^H\mathcal{SQ}^0_g:=\cup_{q_0\in \mathcal{SQ}_g^0}T^H_{q_0}\mathcal{Q}_g$
is a complex subbundle of the pull-back bundle $T\mathcal{Q}_g|_{\mathcal{SQ}^0_g}\to \mathcal{SQ}_g^0$ via the inclusion $\mathcal{SQ}_g\hookrightarrow \mathcal{Q}_g$, where $\mathcal{SQ}_g^0$ is a subset of $\mathcal{SQ}_g$ consisting of generic differentials.
\end{corollary}

\begin{proof}
The characterization follows from the following discussion in the linear algebra:
Let $V\subset \mathbb{C}^m$ be an $\mathbb{R}$-subspace and $L\colon V\to \mathbb{C}^n$ be an $\mathbb{R}$-linear map.
Then,
$$
\{v\in V\mid L(iv)=iL(v)\}
$$
is maximal among $\mathbb{C}$-subspaces in $\mathbb{C}^m$ contained in $V$ on which $L$ is $\mathbb{C}$-linear.
\end{proof}

\begin{remark}
Since $T^H_{q_0}\mathcal{Q}_g$ is the subspace of $T^\mathbb{R}_{q_0}\mathcal{SQ}_g$, the differential of $\tb_0$ coincides with that of $\tb$ on $T^H_{q_0}\mathcal{Q}_g$.
Hence, from \Cref{prop:C-linear-tb}, the differential of $\tb_0$ is also $\mathbb{C}$-linear on $T^H_{q_0}\mathcal{Q}_g$. 
\end{remark}

\begin{remark}
In \Cref{prop:C-linear-tb}, we assume $q_0\in \mathcal{SQ}_{g}$.
The proof works for all generic differential $q_0\in \mathcal{Q}_g$
such that
$$
T_{q_0}^H\mathcal{Q}_g=\left\{
V\in T_{q_0}\mathcal{Q}_g\mid
D\!\tb^{10}|_{q_0}[iV]=i\,D\!\tb^{10}|_{q_0}[V],
\ D\LOneNorm|_{q_0}[V]=0
\right\}
$$
is the maximal $\mathbb{C}$-subspace of $T_{q_0}\mathcal{Q}_g$ contained in the kernel ${\rm Ker}({\rm Re}(D\LOneNorm|_{q_0}))$ on which the differential of the Teichm\"uller Beltrami map $\tb$ is $\mathbb{C}$-linear.
\end{remark}

From \Cref{prop:C-linear-tb}, we expect that the Teichm\"uller Beltrami map is a kind of nice on the horizontal lifts from the complex analysis point of view. For instance, relating the observation in \S\ref{subsec:CR-tori}
and the conjecture given in \S\ref{subsec:conjecure_Levi}, we pose the following problem.

\begin{problem}
\label{prob:2}
Study the subbundle $T^H\mathcal{SQ}^0_g=\cup_{q_0\in \mathcal{SQ}_g^0}T^H_{q_0}\mathcal{Q}_g$
in $T\mathcal{Q}_g|_{\mathcal{SQ}_g}$ of horizontal lifts and its images in $T\teich_g$ via the differential of the Teichm\"uller Beltrami map.
For instance, does
$T^H\mathcal{SQ}^0_g$ define an (abstruct) CR structure on $\mathcal{SQ}_g^0$ of CR-codimension $6g-7$\,? If so, is the Teichm\"uller Beltrami map a CR isomorphism\,?
\end{problem}
For CR structures, see \cite{MR1211412} for instance.

\chapter{Appendices}
\label{chap:appendices}

\section{Appendix 1 : Kodaira-Spencer theory and Teichm\"uller theory}
\label{subsec:appendix_1}
In this section, I summarize the relation between the Kodaira-Spencer theory and the (quasiconformal) Teichm\"uller theory.
Namely, we discuss the correspondence \eqref{eq:identify_H1_T_g} in view of the homolomorphic families.

Let $B$ be a neighborhood of the origin in $\mathbb{C}$.
Let $(\mathcal{M},\pi,B)$ be a holomorphic family of compact Riemann surfaces of genus $g$. For $t\in B$, set $M_t=\pi^{-1}(t)$. By taking $B$ sufficiently small, we identify $\mathcal{M}$ with the product $M_{0}\times B$ via the local trivialization as $C^\infty$-manifolds (cf. \cite[Theorem 2.4]{MR815922}). By the implicit mapping theorem, when $B$ is taken sufficiently small if necessary again, there is a covering $\{U_i\}_{i\in I}$ of $M_{0}$ and an injective holomorphic map $Z_i\colon U_i\times B_0\to \mathbb{C}\times B_0$ such that $z_i^t=Z_i\mid_{U_i\times \{t\}}\colon U_i\times \{t\}\to \mathbb{C}$ ($i\in I$) make an analytic chart of $M_t$ for all $t\in B$. We identify $U_i\times \{t\}$ with $U_i$ and define $z_{ij}^t=z^{t}_i\circ (z_j^{t})^{-1}$ on $U_i\cap U_j$. We set
\begin{align*}
X_{ij}(z) & = \left.\dfrac{\partial z_{ij}^t}{\partial t}\right\mid_{t=0}\circ (z_{ij}^0)^{-1}(z)\quad (z\in z_i^0(U_i\cap U_j)).
\end{align*}
Then, $\{X_{ij}\}_{i\in I}\in Z^1(\{U_i\}_i, \Theta_{M_0})$.
The cohomology class $[\{X_{ij}\}]\in H^1(\{U_i\}_i,\Theta_{M_0})$ defines the infinitesimal deformation at $t=0$ for the holomorphic family $(\mathcal{M},\pi,B)$ in the Kodaira-Spencer theory (cf. \cite[\S4.2]{MR815922}).


Let
\begin{align*}
X_{i}(z) & = -\left.\dfrac{\partial z_{i}^t}{\partial t}\right\mid_{t=0}\circ (z_{i}^0)^{-1}(z)\quad (z\in z_i^0(U_i)).
\end{align*}
Since $z_i^t\circ (z_j^0)^{-1}=z_{ij}^t\circ z_j^t\circ (z_j^0)^{-1}$ on $z_j(U_i\cap U_j)$, $\{X_i\}_{i\in I}\in C^0(\{U_i\}_i, \mathcal{A}^{0,0}(\Theta_{M_0}))$ satisfies
$$
X_j-X_i=X_{ij}
$$
on $U_i\cap U_j$. Namely, $\xi=\{X_i\}_{i\in I}$ satisfies $\delta\xi=\{X_{ij}\}_{i,j}$.
By definition,
$$
z^t_i\circ (z_i^0)^{-1}(z)=z-tX_i(z)+o(t)
$$
for $z\in z_i(U_i)$ and $i\in I$. Therefore,
$$
\mu = -(X_i)_{\overline{z}}
$$
on $U_i$ is recognized as the infinitesimal Beltrami differential corresponding to the cohomology class $[\{X_{ij}\}]\in H^1(\{U_i\}_i,\Theta_{M_0})$. (Compare \cite[\S7.2.4]{MR1215481}).

\section{Appendix 2 : Regularity of the operators}
\label{appendix:regularity}
In this section, we discuss the regularities of $T(\omega)$ and $H(\omega)$ defined at \eqref{eq:operator_T} and \eqref{eq:operator_H}. The following  might be well-known. However the author cannot find any suitable reference, and we shall give a proof for completeness. The author thanks Professor Hiroshi Yanagihara for kindly allowing the author to give his argument for the smoothness here.

In the following, we use the following facts:
\begin{itemize}
\item[(1)]
for $\omega\in C^\infty_0(\mathbb{C})$, $T(\omega)$ and $H(\omega)$ are of class $C^\infty$ on $\mathbb{C}$ and satisfies
$$
T(\omega)_z=H(\omega),\
T(\omega)_{\overline{z}}=\omega,\
\mbox{and}\
H(\omega)_{\overline{z}}=\omega_z
$$
on $\mathbb{C}$; and
\item[(2)] $T$ and $H$ are extended to for $\omega\in L^p(\mathbb{C})$ with compact support for some $p\ge 2$, $T(\omega)$ and $H(\omega)$ are holomorphic outside the support of $\omega$.
\end{itemize}
See \cite[III \S7]{MR0344463} for instance.

\begin{proposition}
Let $D\subset \mathbb{C}$ be a bounded domain with smooth boundary.
Let $\omega\in C^\infty(D)\cap L^p(D)$ for some $p\ge 2$.
We extend $\omega$ as a function on $\mathbb{C}$ by setting $\omega\equiv 0$ on the complement of $D$. 
Then, $T(\omega)$ and $H(\omega)$ are smooth functions on $D$ which satisfy
$$
T(\omega)_z=H(\omega),\
T(\omega)_{\overline{z}}=\omega,\
\mbox{and}\
H(\omega)_{\overline{z}}=\omega_z
$$
on $D$.
\end{proposition}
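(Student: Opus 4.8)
The plan is to localize: the regularity of $T(\omega)$ and $H(\omega)$ at a point $z_0\in D$ depends only on the values of $\omega$ near $z_0$, modulo a holomorphic (hence harmless) error coming from the part of $\omega$ supported away from $z_0$. So I would prove everything pointwise on $D$, patching with a cutoff function, using only the two facts (1) and (2) recorded before the statement.

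First, fix $z_0\in D$ and choose $\chi\in C^\infty_0(D)$ with $\chi\equiv 1$ on a neighborhood $V$ of $z_0$. Split $\omega=\chi\omega+(1-\chi)\omega$. Since $\chi$ has compact support in $D$ and $\omega\in C^\infty(D)$, the product $\chi\omega$ belongs to $C^\infty_0(\mathbb{C})$ (the possible blow-up of $\omega$ near $\partial D$ is killed by $\chi$); on the other hand $(1-\chi)\omega\in L^p(\mathbb{C})$ has compact support contained in $\overline D\setminus V$. By fact (1), $T(\chi\omega)$ and $H(\chi\omega)$ are of class $C^\infty$ on all of $\mathbb{C}$ and satisfy the three differential identities everywhere. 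By fact (2), $T((1-\chi)\omega)$ and $H((1-\chi)\omega)$ are holomorphic on $V$, in particular $C^\infty$ there, with vanishing $\overline{\partial}$-derivative on $V$. By linearity of $T$ and $H$ it follows that $T(\omega)$ and $H(\omega)$ are $C^\infty$ on $V$; since $z_0\in D$ was arbitrary, $T(\omega),H(\omega)\in C^\infty(D)$.

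Next I would check the differential identities on $V$. We have $T(\omega)_{\overline z}=T(\chi\omega)_{\overline z}+T((1-\chi)\omega)_{\overline z}=\chi\omega+0=\omega$ on $V$, using $\chi\equiv1$ there, and similarly $H(\omega)_{\overline z}=(\chi\omega)_z+0=\omega_z$ on $V$. For $T(\omega)_z=H(\omega)$ I would use $T(\chi\omega)_z=H(\chi\omega)$ from fact (1) together with $T((1-\chi)\omega)_z=H((1-\chi)\omega)$ on $V$; the latter follows from the identity $\partial_z T(\eta)=H(\eta)$, valid in the distributional sense for every $\eta\in L^p(\mathbb{C})$ of compact support. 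This last identity is obtained by approximating $\eta$ in $L^p$ by functions $\eta_n\in C^\infty_0(\mathbb{C})$ supported in a fixed compact set, using $\partial_z T(\eta_n)=H(\eta_n)$ from fact (1), and passing to the limit via the continuity of $T$ from $L^p$ of compact support into locally H\"older functions and the Calder\'on--Zygmund boundedness of $H$ on $L^p(\mathbb{C})$ (here $p\ge 2>1$); on the open set where $\eta$ vanishes, both $T(\eta)$ and $H(\eta)$ are holomorphic by fact (2), so the identity holds classically there. Adding the two contributions gives $T(\omega)_z=H(\omega)$ on $V$, and since $z_0$ was arbitrary, all three identities hold throughout $D$.

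The only step requiring care is the passage $\partial_z T=H$ from $C^\infty_0$ data to merely $L^p$ data; once that density argument is in place (equivalently, a citation to Vekua's treatment of the generalized Cauchy and Beurling transforms, e.g. \cite{MR0344463}), the rest is a routine manipulation of facts (1) and (2) via a partition-of-unity argument localized to each point of $D$. I do not expect any genuine obstacle beyond bookkeeping.
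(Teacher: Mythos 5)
Your proof is correct and follows essentially the same route as the paper's: the same cutoff decomposition $\omega=\chi\omega+(1-\chi)\omega$, with fact (1) handling the smooth compactly supported piece and fact (2) (holomorphy off the support) handling the remainder, the only delicate point in both being the identity $\partial_z T=H$ for merely $L^p$ data, which the paper likewise defers to Vekua's argument. Your explicit density argument for that step is a reasonable way to fill in what the paper states as "we can easily check".
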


\begin{proof}
The following argument is due to Professor Hiroshi Yanagihara.

Let $V\subset D$ be a domain with $\overline{V}\subset D$.
Take $\chi\in C^\infty_0(\mathbb{C})$ with $\chi\equiv 1$ on $V$, $\chi\equiv 0$ on $\mathbb{C}\setminus D$ and $0\le \chi\le 1$. Then
$$
T(\omega)=T(\chi \omega)+T(1-\chi)\omega)
$$
Since $T((1-\chi)\omega)$ is holomorphic on $V$ and $\chi \omega\in C^\infty_0(\mathbb{C})$, $T(\omega)$ is smooth on $V$. Moreover,
\begin{align*}
T(\omega)_z &=H(\chi \omega)+H((1-\chi)\omega)=H(\omega) \\
T(\omega)_{\overline{z}} &=\chi \omega+(1-\chi)\omega=\omega 
\end{align*}
on $V$. Though $(1-\chi)\omega$ is not smooth on $\mathbb{C}$,
we can easily check that the argument \cite[III \S7.2]{MR0344463} is available in our case for proving the above two differential formulae. Since $V$ is taken arbitrary, the above two equations hold on $D$. By the same argument, it is shown that $H(\omega)$ is also smooth on $D$, and satisfies
\begin{align*}
H(\omega)_{\overline{z}}
&=H(\chi \omega)_{\overline{z}}+H((1-\chi)\omega)_{\overline{z}}=\omega_z
\end{align*}
on $V$, since $\chi\equiv 1$ on $V$ and $H((1-\chi)\omega)$ is holomorphic on $V$. Hence, this equation also hold on $D$.
%
%
\end{proof}

\section{Appendix 3 : Royden's regularity criterion on the dual metric}
\label{sec:Royden_s_criterion}
In this section, we shall give a proof of the Royden's criterion on the regularity of the dual Finsler metric given in \Cref{prop:Royden_criterion}.
The following argument is due to \cite[\S9.3, Lemma 3]{MR903027}. 

%
\subsection{Proof of \Cref{prop:Royden_criterion}}
Let $\mathbb{S}_x=\{\eta\in \mathbb{C}^n\mid G(x,\eta)=1\}$ for $x\in U$. From the positivity of $G$, $\mathbb{S}_x$ is compact for each $x\in U$.
From the linearity of the pairing, and the complex homogeneity of $G$, we can easily the positivity and the complex homogeneity of $F$. For the continuity of $F$, let $(x_0,\xi_0)\in U\times \mathbb{C}^n-\{0\}$. Take $\eta_0\in \mathbb{S}_{x_0}$ with $F(x_0,\xi_0)={\rm Re}(\eta_0(\xi_0))$.
For $(x,\xi)\in U\times \mathbb{C}^n$,
\begin{align*}
F(x_0,\xi_0)
&={\rm Re}(\eta_0(\xi_0))={\rm Re}(\eta_0(\xi))+{\rm Re}(\eta_0(\xi_0))-{\rm Re}(\eta_0(\xi))\\
&\le F(x,\xi)+\sup_{\eta\in \mathbb{S}_{x_0}}|{\rm Re}(\eta(\xi_0))-{\rm Re}(\eta(\xi))|.
\end{align*}
From the continuity of $G$, there is a neighborhood $U_1$ of $x_0$ and a relatively compact neighborhood $V_1$ of $\mathbb{S}_{x_0}$ in $\mathbb{C}^n-\{0\}$ such that $\mathbb{S}_x\subset V_1$ for $x\in U_1$. From the above discussion, we obtain
$$
|F(x,\xi)-F(x_0,\xi_0)|\le \sup_{\eta\in V_1}|{\rm Re}(\eta(\xi_0-\xi))|
$$
for $(x,\xi)\in U\times \mathbb{C}^n$ with $x\in U_1$. Therefore, $F$ is continuous.
For the convexity of $F$, let $\xi_1$, $\xi_2\in \mathbb{C}^n$ and $\alpha_1$, $\alpha_2\in \mathbb{C}$.
Take $\eta_0\in \mathbb{S}_x$ and $F(x,\alpha_1\xi_1+\alpha_2\xi_2)={\rm Re}(\eta_0(\alpha_1\xi_1+\alpha_2\xi_2))$. Then
\begin{align*}
F(x,\alpha_1\xi_1+\alpha_2\xi_2)
&={\rm Re}(\eta_0(\alpha_1\xi_1+\alpha_2\xi_2))\\
&\le 
|\alpha_1|{\rm Re}(\eta_0(\xi_1))+|\alpha_2|{\rm Re}(\eta_0(\xi_2)) \\
&\le |\alpha_1|F(x,\xi_1)+|\alpha_2|F(x,\xi_2).
\end{align*}

We claim

\begin{claim}
\label{claim:claim1_appendix3}
Let $x\in U$.
When $G(x,\cdot)$ is of class $C^1$ in the variable $\eta$ with $\eta\ne 0$, 
$F(x,\xi)$ is strictly convex in $\xi\in \mathbb{C}^n$.
\end{claim}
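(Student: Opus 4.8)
\textbf{Proof proposal for \Cref{claim:claim1_appendix3}.}

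The plan is to argue by contradiction, exploiting the geometric meaning of the $C^1$ condition on $G(x,\cdot)$: that the unit sphere $\mathbb{S}_x=\{G(x,\cdot)=1\}$ has a well-defined (complex) tangent hyperplane at every point, so no two distinct boundary points of the unit ball $\{G(x,\cdot)\le 1\}$ can be supported by the same real hyperplane. Since $x$ is fixed throughout, I would suppress it from the notation and write $G(\cdot)$, $F(\cdot)$, $\mathbb{S}=\mathbb{S}_x$. Suppose $F$ is not strictly convex: there are $\xi_1\ne\xi_2$ in $\mathbb{C}^n$ with $F(\xi_1)=F(\xi_2)=1$ (after scaling, using complex homogeneity) and $F(\tfrac12(\xi_1+\xi_2))=1$ as well. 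Pick $\eta_0\in\mathbb{S}$ attaining $F(\tfrac12(\xi_1+\xi_2))={\rm Re}(\eta_0(\tfrac12(\xi_1+\xi_2)))$. Then the chain $1=\tfrac12{\rm Re}(\eta_0(\xi_1))+\tfrac12{\rm Re}(\eta_0(\xi_2))\le \tfrac12 F(\xi_1)+\tfrac12 F(\xi_2)=1$ forces ${\rm Re}(\eta_0(\xi_1))={\rm Re}(\eta_0(\xi_2))=1$, i.e. $\eta_0$ attains the supremum defining $F$ for \emph{both} $\xi_1$ and $\xi_2$.

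The next step is to translate ``$\eta_0$ attains $F(\xi)$'' into a statement about the supporting hyperplane of $\mathbb{S}$ at $\eta_0$. Since $\eta\mapsto{\rm Re}(\eta(\xi))$ is a real-linear functional and $\eta_0$ maximizes it over the compact set $\mathbb{S}$ (equivalently over the convex body $\{G\le 1\}$, by homogeneity), the hyperplane $\{{\rm Re}(\eta(\xi))={\rm Re}(\eta_0(\xi))\}$ supports $\{G\le 1\}$ at $\eta_0$. Because $G$ is $C^1$ near $\eta_0\ne 0$ with $\nabla G(\eta_0)\ne 0$ (the gradient is nonzero on the level set of a positive homogeneous $C^1$ function, as $\eta_0\cdot\nabla G(\eta_0)=G(\eta_0)=1$ by Euler's relation), this supporting hyperplane is \emph{unique}: it must be the tangent hyperplane $\{{\rm Re}(\,\overline{\nabla G(\eta_0)}\cdot(\eta-\eta_0)\,)=0\}$ in the real sense. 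Hence the real-linear functional $\eta\mapsto{\rm Re}(\eta(\xi))$ is, up to a positive scalar, determined by $\eta_0$. Applying this to $\xi=\xi_1$ and $\xi=\xi_2$, I conclude that ${\rm Re}(\eta(\xi_1))$ and ${\rm Re}(\eta(\xi_2))$ are positive multiples of the same real-linear functional; evaluating at a point of $\mathbb{S}$ where both equal $1$ pins the constant, so ${\rm Re}(\eta(\xi_1))={\rm Re}(\eta(\xi_2))$ for all $\eta\in\mathbb{C}^n$, which gives $\xi_1=\xi_2$ (a real-linear functional on $\mathbb{C}^n\cong\mathbb{R}^{2n}$ of the form ${\rm Re}(\eta(\xi))=\sum {\rm Re}(\eta_i\xi_i)$ determines $\xi$). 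This contradicts $\xi_1\ne\xi_2$.

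I expect the main obstacle to be the rigorous version of the ``unique supporting hyperplane'' step: one must carefully distinguish the complex-linear pairing $\eta(\xi)=\sum\eta_i\xi_i$ from the real-linear functional ${\rm Re}(\eta(\xi))$ and check that the $C^1$-smoothness of $G$ (a function of the $2n$ real variables underlying $\eta$) indeed yields uniqueness of the \emph{real} supporting hyperplane, and that two real-linear functionals agreeing on the tangent space to $\mathbb{S}$ at $\eta_0$ and normalized to take the value $1$ at $\eta_0$ coincide. This is a standard convexity fact (the subdifferential of a $C^1$ convex gauge at a boundary point is a single ray), but it deserves a clean statement. Everything else — the contradiction hypothesis, the equality-in-the-triangle-inequality argument forcing $\eta_0$ to be a common maximizer, and the final descent from ${\rm Re}(\eta(\xi_1))\equiv{\rm Re}(\eta(\xi_2))$ to $\xi_1=\xi_2$ — is routine. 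An alternative, perhaps smoother, route avoids hyperplanes entirely: parametrize using Lagrange multipliers, so $\eta_0$ maximizing ${\rm Re}(\eta(\xi))$ on $\{G=1\}$ gives $\xi = \lambda\,\overline{\nabla G(\eta_0)}$ for some $\lambda>0$ (with $\nabla$ the Wirtinger gradient and $\lambda$ fixed by ${\rm Re}(\eta_0(\xi))=1$ together with Euler's identity $\eta_0\cdot\nabla G(\eta_0)=1$); then $\xi_1$ and $\xi_2$ are both equal to this same explicit vector, immediately yielding $\xi_1=\xi_2$. I would likely present this Lagrange-multiplier version as the cleaner proof.
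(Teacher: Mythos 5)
Your proposal is correct and follows essentially the same route as the paper's proof: argue by contradiction, force a common maximizer $\eta_0$ for both $\xi_1$ and $\xi_2$ via equality in the triangle inequality, use the $C^1$-regularity of $G$ together with the homogeneity (Euler relation) to conclude that $\mathbb{S}_x$ has a unique (real) tangent/supporting hyperplane at $\eta_0$, and deduce $\xi_1=\xi_2$ from the nondegeneracy of the real pairing $(\eta,\xi)\mapsto{\rm Re}(\eta(\xi))$. The Lagrange-multiplier variant you sketch at the end is just a reformulation of the same uniqueness-of-the-normal argument, so there is nothing substantively different to compare.
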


\begin{proof}[Proof of \Cref{claim:claim1_appendix3}]
Indeed, if $F(x,\xi)$ is not strictly convex in $\xi$, we find  $\xi_1$, $\xi_2\in \mathbb{C}^n$ such that $\xi_1\ne \xi_2$ and $F(x,\xi_1)=F(x,\xi_2)=F(x,(\xi_1+\xi_2)/2)=1$. In particular $\xi_1+\xi_2\ne 0$.
Take $\eta_0\in \mathbb{S}_{x}$ such that ${\rm Re}(\eta_0(\xi_1+\xi_2))=2$. 
Since ${\rm Re}(\eta_0(\xi_j))\le F(x,\xi_j)=1$, ${\rm Re}(\eta_0(\xi_j))=1$  for $j=1,2$. Since $G(x,\cdot)$ is of class $C^1$ and $G(x,t\eta)=tG(x,\eta)$ for $t>0$,  the gradient of $G(x,\cdot)$ at $\eta$ does not vanish. Therefore, $\mathbb{S}_x$ is a $C^1$-submanifold and has a unique tangent plane of (real) dimension $2n-1$ at $\eta_0$. Since ${\rm Re}(\xi_1(\eta))\le F(x,\xi_1)=1$ for all $\eta\in \mathbb{S}_x$ and $\xi_1(\eta_0)=1$, the tangent plane of $\mathbb{S}_x$ at $\eta_0$ is described as
$$
\{\eta\in \mathbb{C}^n\mid {\rm Re}(\eta(\xi_1))=1\}.
$$
Namely, $\xi_1$ is the normal vector to the tangent plane.
By applying the same discussion to $\xi_2$, we get $\xi_2=t\xi_1$ for some $t\in \mathbb{R}$ with $t\ne 0$ from the uniqueness of the tangent plane. Since $F(x,\xi_1)=F(x,\xi_2)=1$ and $\xi_1+\xi_2\ne 0$, $\xi_1=\xi_2$. This is a contradiction.
\end{proof}

We next claim the following.

\begin{claim}
\label{claim:claim2_appendix3}
Suppose $G(x,\eta)$ is strictly convex in $\eta\in \mathbb{C}^n$ for all $x\in U$. Then,
\begin{itemize}
\item[(1)]
$G(x,\eta)$ is uniformly convex in $\eta\in \mathbb{C}^n$ in the sense that for any $\epsilon>0$ and a compact set $K\subset U$, there is $\delta>0$ such that for $x\in K$ and $\eta_1$, $\eta_2\in \mathbb{S}_x$, $G(x,\eta_1+\eta_2)\ge 2-\delta$ implies $G(x,\eta_1-\eta_2)<\epsilon$;
\item[(2)]
$F(x,\xi)$ is smooth in $\xi\in \mathbb{C}^n$ in the sense that for $\epsilon>0$ and a compact set $K\subset U$, there is $\delta>0$ such that if $F(x,\xi_1-\xi_2)<\delta$,
\begin{equation}
\label{eq:Gardiner-1}
F(x,\xi_1+\xi_2)\ge F(x,\xi_1)+F(x,\xi_2)-\epsilon F(x,\xi_1-\xi_2)
\end{equation}
for $x\in K$ and $\xi_1,\xi_2\in \mathbb{C}^n$;
\item[(3)]
For any $\xi\in \mathbb{C}^n-\{0\}$ and $x\in U$, there is a unique $\eta\in \mathbb{S}_x$ with  $F(x,\xi)={\rm Re}(\eta(\xi))$; and
\item[(4)]
For $\xi,h\in \mathbb{C}^n$ with $F(x,\xi)=1$, $f(t)=F(x,\xi+th)$ is differentiable at $t=0$. Furthermore, $f'(0)=\eta_0(h)$, where $\eta_0\in \mathbb{S}_x$ with $F(x,\xi)={\rm Re}(\eta_0(\xi))$.
\end{itemize}
\end{claim}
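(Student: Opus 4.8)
The plan is to establish the four items of \Cref{claim:claim2_appendix3} in the order (1), (3), (2), (4), since the uniqueness in (3) is used inside (2), and (1) together with (3) both feed into (4). Throughout I would use two compactness facts already available from the proof of \Cref{prop:Royden_criterion}: for every compact $K\subset U$ the spheres $\mathbb{S}_x$, $x\in K$, lie in one fixed compact subset of $\mathbb{C}^n\setminus\{0\}$ (cover $K$ by finitely many of the neighborhoods $U_1$ produced there), and, by the continuity and positivity of $F$ established above, the $F$-unit spheres $\{\xi:F(x,\xi)=1\}$, $x\in K$, likewise lie in a fixed compact subset of $\mathbb{C}^n\setminus\{0\}$. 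For (1) I would argue by contradiction: if uniform convexity fails there are $\epsilon_0>0$, a compact $K$, and sequences $x_n\in K$, $\eta^n_1,\eta^n_2\in\mathbb{S}_{x_n}$ with $G(x_n,\eta^n_1+\eta^n_2)\to 2$ but $G(x_n,\eta^n_1-\eta^n_2)\ge\epsilon_0$; passing to subsequences gives $x_n\to x_0\in K$, $\eta^n_j\to\eta_j\in\mathbb{S}_{x_0}$, and by continuity $G(x_0,\eta_1)=G(x_0,\eta_2)=1$, $G(x_0,\eta_1+\eta_2)=2$, $G(x_0,\eta_1-\eta_2)\ge\epsilon_0>0$, so $\eta_1\ne\eta_2$, contradicting the strict convexity of $G(x_0,\cdot)$.

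For (3), existence of a maximizer $\eta_0\in\mathbb{S}_x$ is immediate because $\mathbb{S}_x$ is compact and $\eta\mapsto{\rm Re}\,\eta(\xi)$ continuous, and $F(x,\xi)>0$ for $\xi\ne0$ since $\{G(x,\cdot)\le1\}$ contains a neighbourhood of $0$, so some $\eta\in\mathbb{S}_x$ has ${\rm Re}\,\eta(\xi)>0$. For uniqueness, if $\eta_0,\eta'_0\in\mathbb{S}_x$ both attain $c:=F(x,\xi)>0$ and $\eta_0\ne\eta'_0$, then ${\rm Re}\,\tfrac12(\eta_0+\eta'_0)(\xi)=c$ while $G(x,\tfrac12(\eta_0+\eta'_0))<1$ by strict convexity, so rescaling $\tfrac12(\eta_0+\eta'_0)$ up to $\mathbb{S}_x$ yields a point whose ${\rm Re}$-pairing with $\xi$ strictly exceeds $c=F(x,\xi)$ — impossible.

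Item (2) is the quantitative dual of (1) and I expect it to be the main obstacle. I would reduce it to (1) through the polarity between a modulus of convexity of $G$ and a modulus of smoothness of $F$, using the identity (valid for the maximizers $\eta_1,\eta_2,\eta\in\mathbb{S}_x$ of ${\rm Re}\,\cdot(\xi_1)$, ${\rm Re}\,\cdot(\xi_2)$, ${\rm Re}\,\cdot(\xi_1+\xi_2)$, which exist and are unique by (3))
$$
F(x,\xi_1)+F(x,\xi_2)-F(x,\xi_1+\xi_2)={\rm Re}\,(\eta_1-\eta)(\xi_1)+{\rm Re}\,(\eta_2-\eta)(\xi_2),
$$
both terms being nonnegative; the trivial triangle-inequality bound already gives that this defect is at most $F(x,\xi_1-\xi_2)$, and the content of (2) is improving the constant to an arbitrary $\epsilon$ once $F(x,\xi_1-\xi_2)$ is small. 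By complex homogeneity I may normalize so that $\xi_1,\xi_2$ range over a fixed compact set, and then the key input — itself a consequence of (1) — is that \emph{near}-maximizers of ${\rm Re}\,\cdot(\xi)$ over $\mathbb{S}_x$ are uniformly close to the true maximizer $\eta_{x,\xi}$ (if $G(x,\tfrac12(\eta+\eta_{x,\xi}))\le1$ with ${\rm Re}\,\tfrac12(\eta+\eta_{x,\xi})(\xi)$ close to $F(x,\xi)$, uniform convexity forces $\|\eta-\eta_{x,\xi}\|$ small), which controls ${\rm Re}\,(\eta_1-\eta)(\xi_1)$ and ${\rm Re}\,(\eta_2-\eta)(\xi_2)$. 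If a direct estimate turns out unwieldy I would instead run (2) entirely by contradiction over a compact $K$, extracting sequences and arriving at a clash with the uniqueness in (3) at the limiting point.

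Finally, for (4): with $F(x,\xi)=1$ and $h\in\mathbb{C}^n$, the function $f(t)=F(x,\xi+th)$, $t\in\mathbb{R}$, is convex by the convexity of $F(x,\cdot)$, hence has one-sided derivatives with $f'_-(0)\le f'_+(0)$. Taking the maximizer $\eta_0$ for $\xi$ gives $f(t)\ge{\rm Re}\,\eta_0(\xi+th)=1+t\,{\rm Re}\,\eta_0(h)$, so $f'_+(0)\ge{\rm Re}\,\eta_0(h)\ge f'_-(0)$. Conversely, letting $\eta_t\in\mathbb{S}_x$ be the maximizer for $\xi+th$, one has $f(t)={\rm Re}\,\eta_t(\xi)+t\,{\rm Re}\,\eta_t(h)\le1+t\,{\rm Re}\,\eta_t(h)$; by (3), every subsequential limit of $\eta_t$ as $t\to0$ is the maximizer for $\xi$, i.e.\ equals $\eta_0$, so $\eta_t\to\eta_0$ and letting $t\to0^{\pm}$ gives $f'_+(0)\le{\rm Re}\,\eta_0(h)\le f'_-(0)$. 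Hence $f$ is differentiable at $0$ with $f'(0)={\rm Re}\,\eta_0(h)$, which is the assertion of item (4).
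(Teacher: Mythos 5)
Items (1), (3), and (4) of your proposal are sound. For (1) and (3) your arguments are essentially the paper's; for (4) your route (convexity of $t\mapsto F(x,\xi+th)$, the two-sided sandwich $1+t\,{\rm Re}(\eta_0(h))\le f(t)\le 1+t\,{\rm Re}(\eta_t(h))$, and $\eta_t\to\eta_0$ forced by the uniqueness in (3)) is a clean alternative that never invokes (2), whereas the paper uses the smoothness inequality of (2) to make the one-sided derivatives agree.

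The genuine gap is in (2), and it is exactly at the step you wave through with ``which controls ${\rm Re}((\eta_1-\eta)(\xi_1))$ and ${\rm Re}((\eta_2-\eta)(\xi_2))$.'' Your identity for the defect is correct, and uniform convexity does give $G(x,\eta_1-\eta)<\epsilon$ and $G(x,\eta_2-\eta)<\epsilon$ once $F(x,\xi_1-\xi_2)$ is small (after normalizing $F(x,\xi_1)=1$). But the bound this most naturally yields is ${\rm Re}((\eta_1-\eta)(\xi_1))\le G(x,\eta_1-\eta)\,F(x,\xi_1)\le\epsilon$, i.e.\ a bound of size $\epsilon$ times $F(x,\xi_1)$ --- not $\epsilon$ times $F(x,\xi_1-\xi_2)$, which can be arbitrarily smaller. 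Since the assertion is precisely that the defect is at most $\epsilon\,F(x,\xi_1-\xi_2)$, smallness of $\eta_1-\eta$ alone does not finish the argument: you must still extract a factor of $F(x,\xi_1-\xi_2)$. The repair is to split $\xi_1=\tfrac12(\xi_1+\xi_2)+\tfrac12(\xi_1-\xi_2)$ and use that $\eta$ maximizes ${\rm Re}(\cdot\,(\xi_1+\xi_2))$ over $\mathbb{S}_x$, so ${\rm Re}\bigl((\eta_1-\eta)(\tfrac12(\xi_1+\xi_2))\bigr)\le 0$ and only ${\rm Re}\bigl((\eta_1-\eta)(\tfrac12(\xi_1-\xi_2))\bigr)\le\tfrac12 G(x,\eta_1-\eta)F(x,\xi_1-\xi_2)$ survives (and similarly for $\xi_2$). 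The paper avoids the maximizer $\eta$ for $\xi_1+\xi_2$ altogether via the polarization identity ${\rm Re}(\eta_1(\xi_1))+{\rm Re}(\eta_2(\xi_2))-\tfrac12{\rm Re}((\eta_1-\eta_2)(\xi_1-\xi_2))=\tfrac12{\rm Re}((\eta_1+\eta_2)(\xi_1+\xi_2))\le F(x,\xi_1+\xi_2)$, which produces the factor $F(x,\xi_1-\xi_2)$ directly from ${\rm Re}((\eta_1-\eta_2)(\xi_1-\xi_2))\le G(x,\eta_1-\eta_2)F(x,\xi_1-\xi_2)<2\epsilon F(x,\xi_1-\xi_2)$. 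Your fallback of a pure compactness/contradiction argument also does not go through as stated: in the negation both the defect and $\epsilon_0 F(x_n,\xi_1^n-\xi_2^n)$ tend to zero, so no contradiction appears in the limit unless you first renormalize by $F(x_n,\xi_1^n-\xi_2^n)$, which returns you to the quantitative estimate above.
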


\begin{proof}[Proof of \Cref{claim:claim2_appendix3}]
(1)\quad 
Otherwise, there are $\epsilon_0>0$ and a compact set $K_0\subset U$ such that for any $n\in \mathbb{N}$, there is $x_n\in K_0$ and $\eta_{1,n}$, $\eta_{2,n}\in \mathbb{S}_{x_n}$ such that $G(x_n,\eta_{1,n}+\eta_{2,n})>2-(1/n)$ and $G(x_n,\eta_{1,n}-\eta_{2,n})\ge \epsilon_0$.
By the compactness of $\cup_{x\in K_0}\mathbb{S}_x$, we may assume that $x_n\to x_0\in K_0$ and $\eta_{j,n}\to \eta_j\in \mathbb{S}_{x_0}$ as $n\to \infty$. From the continuity of $G$, we get
$G(x_0,\eta_1+\eta_2)=2$ and $G(x_0,\eta_1-\eta_2)\ge \epsilon_0$. This contradicts to the strictly convexity of $G$.

\medskip
\noindent
(2)\quad
For $\epsilon>0$ and a compact set $K\subset U$,
we take $\delta>0$ for $2\epsilon$ and $K$ as discussed in the uniform convexity of $G$.

Let $x\in K$, and $\xi_1$, $\xi_2\in \mathbb{C}^n$.
To prove \eqref{eq:Gardiner-1}, we may assume that $F(x,\xi_1)=1$, $F(x,\xi_2)\le 1$ and $\xi_1\ne 0$, $\xi_2\ne 0$ since the both sides of \eqref{eq:Gardiner-1} is homogeneous by multiplying positive constant.

Let $\eta_1$, $\eta_2\in \mathbb{S}_x$ with $F(x,\xi_j)={\rm Re}(\eta_j(\xi_j))$ for $j=1$, $2$. 
Suppose $F(x,\xi_1-\xi_2)<\delta/2$. Then ${\rm Re}(\eta_1(\xi_1))=F(x,\xi_1)=1$ and
$$
{\rm Re}(\eta_2(\xi_2))=F(x,\xi_2)>F(x,\xi_1)-\delta/2=1-\delta/2.
$$
Since $|{\rm Re}(\eta_j(\xi_1-\xi_2))|\le F(x,\xi_1-\xi_2)<\delta/2$ for $j=1,2$ and $F(x,\xi_1+\xi_2)\le F(x,\xi_1)+F(x,\xi_2)\le 2$,
\begin{align*}
G(x,\eta_1+\eta_2)
&\ge G(x,\eta_1+\eta_2)F\left(x,\dfrac{\xi_1+\xi_2}{2}\right)
\ge {\rm Re}\left((\eta_1+\eta_2)\left(\dfrac{\xi_1+\xi_2}{2}\right)\right) \\
&={\rm Re}(\eta_1(\xi_1))+{\rm Re}(\eta_2(\xi_2))+
\dfrac{1}{2}{\rm Re}(\eta_1(\xi_2-\xi_1))+\dfrac{1}{2}{\rm Re}(\eta_2(\xi_2-\xi_1))
\\
&>2-\delta
\end{align*}
From the uniform convexity of $G$, we have $G(x,\eta_1-\eta_2)<2\epsilon$.
Thus, we obtain
\begin{align*}
&F(x,\xi_1)+F(x,\xi_2)-\epsilon F(x,\xi_1-\xi_2)
\\
&F(x,\xi_1)+F(x,\xi_2)-\dfrac{1}{2}G(x,\eta_1-\eta_2)F(x,\xi_1-\xi_2)
\\
&\le {\rm Re}(\eta_1(\xi_1))+{\rm Re}(\eta_2(\xi_2))-\dfrac{1}{2}{\rm Re}((\eta_1-\eta_2)(\xi_1-\xi_2))
\\
&=
\dfrac{1}{2}{\rm Re}((\eta_1+\eta_2)(\xi_1+\xi_2))\le F(x,\xi_1+\xi_2)
\end{align*}
since $G(x,\eta_1+\eta_2)/2\le (G(x,\eta_1)+G(x,\eta_2))/2=1$.

\medskip
\noindent
(3)\quad
We may assume that $F(x,\xi)=1$. The existence is clear. Let $\eta_1$, $\eta_2\in \mathbb{S}_x$ with $1=F(x,\xi)={\rm Re}(\eta_1(\xi))={\rm Re}(\eta_2(\xi))$. 
Then
\begin{align*}
1\ge G\left(x,\dfrac{\eta_1+\eta_2}{2}\right)
&=G\left(x,\dfrac{\eta_1+\eta_2}{2}\right)F(x,\xi)
\ge {\rm Re}\left(\dfrac{\eta_1+\eta_2}{2}(\xi)\right)=1.
\end{align*}
Since $G(x,\cdot)$ is strictly convex, $\eta_1=\eta_2$.

\medskip
\noindent
(4)\quad
First we check that $f(t)$ is differentiable at $t=0$. 
For $|t_2|>|t_1|>0$ with $t_1/t_2>0$, 
$$
F(x,\xi+t_1h)=F\left(x,\dfrac{t_1}{t_2}(\xi+t_2h)+\left(1-\dfrac{t_1}{t_2}\right)\xi\right)
\le \dfrac{t_1}{t_2}
F(x,\xi+t_2h)+\left(1-\dfrac{t_1}{t_2}\right)F(x,\xi).
$$
Hence $f(t)-f(0)/t$ is increasing function in $t$.
Therefore, $f(t)$ has left and right derivatives at $t=0$.
By applying the smoothness discussed in (2) for $\xi_1=\xi+th$ and $\xi_2=\xi-th$, for any $\epsilon>0$, there is $\delta>0$ such that when $|t|<\delta/2F(x,h)$,
\begin{align*}
\dfrac{f(t)-f(0)}{t}-\dfrac{f(-t)-f(0)}{-t}
&=\dfrac{F(x,\xi+th)+F(x,\xi-th)-2F(x,\xi)}{t}
\\
&\le \epsilon \dfrac{F(x,(\xi+th)-(\xi-th))}{t}=2\epsilon F(t,h).
\end{align*}
Therefore, the left and right derivatives at $t=0$ agree.
Let $\eta_t\in \mathbb{S}_x$ with $F(x,\xi+th)={\rm Re}(\eta_t(\xi+th))$. Then,
$$
\dfrac{f(t)-f(0)}{t}={\rm Re}\left(
\dfrac{\eta_t(\xi+th)-\eta_0(\xi)}{t}
\right)
={\rm Re}(\eta_t(h))+{\rm Re}\dfrac{\eta_t(\xi)-\eta_0(\xi)}{t}
$$
As $t\to 0$, the limits of $(f(t)-f(0))/t$ and ${\rm Re}(\eta_t(h))$ exist because of the differentiability of $f(t)$ and the uniqueness observed in (3). Therefore, 
$$
\lim_{t\to 0}\dfrac{{\rm Re}(\eta_t(\xi))-{\rm Re}(\eta_0(\xi))}{t}
$$
also exists. Since ${\rm Re}(\eta_t(\xi))\le F(x,\xi)={\rm Re}(\eta_0(\xi))$,
the numerator is non-positive, and so is the limit when $t\to +0$. However as $t\to -0$, the limit is non-negative. Hence the limit should be zero. Thus we obtain
$$
f'(0)=\lim_{t\to 0}{\rm Re}(\eta_t(h))={\rm Re}(\eta_0(h)),
$$
which is what we wanted.
\end{proof}

Finally, we claim

\begin{claim}
\label{claim:claim3_appendix3}
If $G(x,\cdot)$ is strictly convex for $x\in U$, and $G$ is totally differentiable in the variable $x\in U$ and the $x$-derivative is continuous on $U\times (\mathbb{C}^n-\{0\})$, $F$ is of class $C^1$ on $U\times (\mathbb{C}^n-\{0\})$. 
\end{claim}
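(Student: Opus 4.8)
The plan is to follow Gardiner's treatment (\cite[\S9.3, Lemma 3]{MR903027}) and, in doing so, to produce the explicit first-order expansion of $F$ asserted in \Cref{prop:Royden_criterion}. No regularity of $G(x,\cdot)$ in the variable $\eta$ is available under the present hypotheses, so \Cref{claim:claim1_appendix3} is not used; instead I rely on the facts already in hand: $F$ is continuous, positive, convex and complex homogeneous in $\xi$, and by \Cref{claim:claim2_appendix3} there is for each $(x,\xi)$ with $\xi\neq0$ a \emph{unique} $\eta_{x,\xi}\in\mathbb{S}_x$ realizing $F(x,\xi)={\rm Re}(\eta_{x,\xi}(\xi))$, with one-sided directional derivatives of $F(x,\cdot)$ equal to ${\rm Re}(\eta_{x,\xi}(h))$ when $F(x,\xi)=1$. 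The first preliminary step is to prove that $(x,\xi)\mapsto\eta_{x,\xi}$ is continuous on $U\times(\mathbb{C}^n-\{0\})$: if $(x_n,\xi_n)\to(x_0,\xi_0)$ then $\{\eta_{x_n,\xi_n}\}$ lies in the compact set $\bigcup_{x\in K}\mathbb{S}_x$ for a compact $K\ni x_0$, any subsequential limit $\eta_\ast$ satisfies $G(x_0,\eta_\ast)=1$ and $F(x_0,\xi_0)={\rm Re}(\eta_\ast(\xi_0))$ by continuity of $G$ and of the pairing, hence $\eta_\ast=\eta_{x_0,\xi_0}$ by uniqueness, so the whole sequence converges.

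Next I would fix $(x_0,\xi_0)$, abbreviate $\eta_0=\eta_{x_0,\xi_0}$, and squeeze $F(x_0+\Delta x,\xi_0+\Delta\xi)$ between two bounds. For the lower bound, since $\eta_0/G(x_0+\Delta x,\eta_0)\in\mathbb{S}_{x_0+\Delta x}$, the definition of $F$ gives $F(x_0+\Delta x,\xi_0+\Delta\xi)\ge{\rm Re}(\eta_0(\xi_0+\Delta\xi))/G(x_0+\Delta x,\eta_0)$; expanding $G(x_0+\Delta x,\eta_0)=1+A_{x_0,\eta_0}(\Delta x)+\overline{A_{x_0,\eta_0}(\Delta x)}+o(|\Delta x|)$ by total differentiability of $G$ in $x$, inverting, and using ${\rm Re}(\eta_0(\xi_0))=F(x_0,\xi_0)$ yields
\[
F(x_0+\Delta x,\xi_0+\Delta\xi)\ \ge\ F(x_0,\xi_0)-F(x_0,\xi_0)\bigl(A_{x_0,\eta_0}(\Delta x)+\overline{A_{x_0,\eta_0}(\Delta x)}\bigr)+{\rm Re}(\eta_0(\Delta\xi))+o(|\Delta x|+|\Delta\xi|).
\]
For the upper bound, set $\eta_1=\eta_{x_0+\Delta x,\xi_0+\Delta\xi}$ (so $\eta_1\to\eta_0$ by the continuity just proved); then $\eta_1/G(x_0,\eta_1)\in\mathbb{S}_{x_0}$ gives $F(x_0,\xi_0+\Delta\xi)\ge F(x_0+\Delta x,\xi_0+\Delta\xi)/G(x_0,\eta_1)$. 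Since $G(x_0+\Delta x,\eta_1)=1$, and since continuity of the $x$-derivative promotes pointwise differentiability to a first-order expansion of $G$ in $x$ that is \emph{uniform} for $\eta$ in a compact neighborhood of $\eta_0$, one gets $G(x_0,\eta_1)=1-A_{x_0,\eta_1}(\Delta x)-\overline{A_{x_0,\eta_1}(\Delta x)}+o(|\Delta x|)$; feeding this back, using $\eta_1\to\eta_0$ and continuity of $A$ in $\eta$, and using the Fr\'echet differentiability of $F(x_0,\cdot)$ at $\xi_0$ (which holds because $F(x_0,\cdot)$ is convex and its directional derivatives $h\mapsto{\rm Re}(\eta_0(h))$ form an $\mathbb{R}$-linear functional, by \Cref{claim:claim2_appendix3}(4)), so that $F(x_0,\xi_0+\Delta\xi)=F(x_0,\xi_0)+{\rm Re}(\eta_0(\Delta\xi))+o(|\Delta\xi|)$, produces the matching upper bound.

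Combining the two bounds gives the expansion
\[
F(x+\Delta x,\xi+\Delta\xi)=F(x,\xi)-F(x,\xi)\bigl(A_{x,\eta_{x,\xi}}(\Delta x)+\overline{A_{x,\eta_{x,\xi}}(\Delta x)}\bigr)+\tfrac12\eta_{x,\xi}(\Delta\xi)+\tfrac12\overline{\eta_{x,\xi}(\Delta\xi)}+o(|\Delta x|+|\Delta\xi|),
\]
which is precisely the formula asserted in \Cref{prop:Royden_criterion}. Its linear part in $(\Delta x,\Delta\xi)$ has coefficients assembled from $F(x,\xi)$, from the $x$-derivative $A$ of $G$, and from $\eta_{x,\xi}$; these are continuous in $(x,\xi)$ by continuity of $F$, by the hypothesis that the $x$-derivative of $G$ is continuous on $U\times(\mathbb{C}^n-\{0\})$, and by the continuity of $(x,\xi)\mapsto\eta_{x,\xi}$ established above. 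Hence $F$ is of class $C^1$ on $U\times(\mathbb{C}^n-\{0\})$.

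The main obstacle is the upper-bound step: one must make the error in the first-order $x$-expansion of $G(x_0+\Delta x,\eta)$ uniform in $\eta$ over a neighborhood of $\eta_0$ — this is exactly where continuity of the $x$-derivative (rather than bare pointwise differentiability) enters, via a mean-value and uniform-continuity argument on compacta — and then coordinate this uniformity with the convergence $\eta_1\to\eta_0$ so that every auxiliary $o$-term collapses into a single $o(|\Delta x|+|\Delta\xi|)$; treating the two increments $\Delta x$ and $\Delta\xi$ simultaneously rather than one at a time is the part that requires care.
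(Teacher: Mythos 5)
Your proposal is correct and follows essentially the same route as the paper's proof: continuity of the maximizer $\eta_{x,\xi}$ from the uniqueness in \Cref{claim:claim2_appendix3}(3), a uniform-on-compacta first-order expansion of $G$ in $x$ obtained from the mean value theorem and the continuity of the $x$-derivative, a two-sided estimate produced by testing against the rescaled competitors $\eta_0/G(x_0+\Delta x,\eta_0)$ and $\eta_1/G(x_0,\eta_1)$ (the paper phrases this as "interchanging $x$ and $x+\Delta x$"), and the $\xi$-expansion from \Cref{claim:claim2_appendix3}(4). The only differences are bookkeeping ones — you treat the two increments simultaneously and are slightly more explicit about why Gateaux differentiability of the convex function $F(x,\cdot)$ upgrades to the Fr\'echet expansion — neither of which changes the substance of the argument.
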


\begin{proof}[Proof of \Cref{claim:claim3_appendix3}]
From the assumption,
\begin{align}
G(x+\Delta x,\eta)
&=G(x,\eta) 
+A_{x,\eta}(\Delta x)+\overline{A_{x,\eta}(\Delta x)}
+o(|\Delta x|),
\label{eq:claim_3_Royden1}
\end{align}
as $|\Delta x|\to 0$,
where $A_{x,\eta}$ is a $\mathbb{C}$-linear functional on $\mathbb{C}$ which varies continuously in $x$ and $\eta$, and $|\cdot|$ means here the Euclidean norm on $\mathbb{C}^n$.
By the mean value theorem,
$$
G(x+\Delta x,\eta)-G(x,\eta)=
A_{x+t\Delta x,\eta}(\Delta x)+\overline{A_{x+t\Delta x,\eta}(\Delta x)}
$$
for some $t=t(x,y,\Delta x)$ with $0<t(x,y,\Delta x)<1$.
Therefore,
from the continuity of the $x$-derivative $A_{x,\eta}$ of $G$.
\begin{align*}
&\dfrac{G(x+\Delta x,\eta)-(G(x,\eta)+A_{x,\eta}(\Delta x)+\overline{A_{x,\eta}(\Delta x)})}{|\Delta x|} \\
&=
2{\rm Re}\left(
(A_{x+t(x,y,\Delta x)\Delta x,\eta}-A_{x,\eta})
\left(\dfrac{\Delta x}{|\Delta x|}\right)
\right)
\to 0
\end{align*}
as $|\Delta x|\to 0$ whenever $(x,\eta)$ stays in a compact set in $U\times (\mathbb{C}^n-\{0\})$.
Therefore, the term $o(|\Delta x|)$ in \eqref{eq:claim_3_Royden1} is uniformly small on any compact set of $U\times( \mathbb{C}^n-\{0\})$ in the sense that for any $\epsilon>0$ and compact sets $K_1\subset U$ and $K_2\subset \mathbb{C}^n-\{0\}$,
there are $\delta>0$ and $C_0>0$ such that
\begin{equation}
\label{eq:claim_3_Royden1-improve}
\left|
G(x+\Delta x,\eta)
-
\left(
G(x,\eta) 
+A_{x,\eta}(\Delta x)+\overline{A_{x,\eta}(\Delta x)}
\right)
\right|\le
C_0\epsilon |\Delta x|
\end{equation}
for $(x,\eta)\in K_1\times K_2$ and $|\Delta x|<\delta$ with $x+\Delta x\in K_1$.

Let $(x,\xi)\in U\times \mathbb{C}^n$ with $\xi\ne 0$. Let $\eta_{x,\xi}\in \mathbb{S}_x$ with
$F(x,\xi)={\rm Re}(\eta_{x,\xi}(\xi))$. From (3) of \Cref{claim:claim2_appendix3}, $\eta_{x,\xi}$ uniquely exists. From the uniqueness discussed in (3), we can check that $\eta_{x,\xi}$ depends continuously in $(x,\xi)\in U\times (\mathbb{C}^n-\{0\})$.
From the identify
$$
\dfrac{1}{1+a}=1-a+\dfrac{a^2}{1+a^2}
$$
for $|a|<1$, there is $\delta'>0$ such that
\begin{align*}
&F(x+\Delta x,\xi)={\rm Re}(\eta_{x+\Delta x,\xi}(\xi))
=\dfrac{{\rm Re}(\eta_{x+\Delta x,\xi}(\xi))}{G(x+\Delta x,\eta_{x+\Delta x,\xi})} \\
&=\dfrac{{\rm Re}(\eta_{x+\Delta x,\xi}(\xi))}{G(x,\eta_{x+\Delta x,\xi})} 
-\dfrac{{\rm Re}(\eta_{x+\Delta x,\xi}(\xi))}{G(x,\eta_{x+\Delta x,\xi})^2}(A_{x,\eta_{x+\Delta x,\xi}}(\Delta x)+\overline{A_{x,\eta_{x+\Delta x,\xi}}(\Delta x)})
+C_0'\epsilon |\Delta x|
\\
&\le F(x,\xi)
-\dfrac{{\rm Re}(\eta_{x+\Delta x,\xi}(\xi))}{G(x,\eta_{x+\Delta x,\xi})^2}(A_{x,\eta_{x+\Delta x,\xi}}(\Delta x)+\overline{A_{x,\eta_{x+\Delta x,\xi}}(\Delta x)})
+C_0'\epsilon |\Delta x|
\end{align*}
for some $C_0'>0$ when $(x,\xi)\in K_1\times K_2$ and $|\Delta x|<\delta'$.
Since $\eta_{x+\Delta x,\xi}\to \eta_{x,\xi}$ and $A_{x,\eta_{x+\Delta x,\xi}}\to A_{x,\eta_{x,\xi}}$ as $|\Delta x|\to 0$, we get
$$
\left|\dfrac{{\rm Re}(\eta_{x+\Delta x,\xi}(\xi))}{G(x,\eta_{x+\Delta x,\xi})^2}A_{x,\eta_{x+\Delta x,\xi}}(\Delta x)
-\dfrac{{\rm Re}(\eta_{x,\xi}(\xi))}{G(x,\eta_{x,\xi})^2}A_{x,\eta_{x,\xi}}(\Delta x)
\right|
=C_0''\epsilon |\Delta x|
$$
for some $C_0''>0$,
when $(x,\xi)\in K_1\times K_2$ and $|\Delta x|<\delta'$ for some $\delta''>0$ with $x+\Delta x\in K_1$.
Therefore, we obtain
\begin{align*}
F(x+\Delta x,\xi)
&\le F(x,\xi)
-\dfrac{{\rm Re}(\eta_{x,\xi}(\xi))}{G(x,\eta_{x,\xi})^2}(A_{x,\eta_{x,\xi}}(\Delta x)+\overline{A_{x,\eta_{x,\xi}}(\Delta x)}) \\
&\qquad+(C_0'+C_0'')\epsilon |\Delta x|)
\end{align*}
when $(x,\xi)\in K_1\times K_2$ and $|\Delta x|<\min(\delta,\delta',\delta'')$ with $x+\Delta x\in K_1$.

Since ${\rm Re}(\eta_{x,\xi}(\xi))=F(x,\xi)$ and $G(x,\eta_{x,\xi})=1$,
by interchanging $x$ and $x+\Delta x$ in the above inequality, we deduce
\begin{align}
F(x+\Delta x,\xi)= F(x,\xi) 
-F(x,\xi)(A_{x,\eta_{x,\xi}}(\Delta x)+\overline{A_{x,\eta_{x,\xi}}(\Delta x)})+o(|\Delta x|)
\label{eq:claim_3_Royden2}
\end{align}
as $|\Delta x|\to 0$.
Since the term $o(|\Delta x|)$ in \eqref{eq:claim_3_Royden1} is uniformly small on any compact sets in $U\times (\mathbb{C}^n-\{0\})$,
so is the term $o(|\Delta x|)$ in \eqref{eq:claim_3_Royden2}.

Since $G$ is strictly convex, from (4) of \Cref{claim:claim2_appendix3}, 
$$
F(x,\xi+\Delta \xi)-F(x,\xi)={\rm Re}(\eta_{x,\xi}(\Delta \xi))+o(|\Delta \xi|)
$$
as $|\Delta \xi|\to 0$. Therefore,
\begin{align*}
&F(x+\Delta x,\xi+\Delta \xi)-F(x,\xi) \\
&F(x+\Delta x,\xi+\Delta \xi)-F(x,\xi+\Delta \xi)
+F(x,\xi+\Delta \xi)-F(x,\xi) \\
&=-F(x,\xi+\Delta \xi)(A_{x,\eta_{x,\xi+\Delta \xi}}(\Delta x)+\overline{A_{x,\eta_{x,\xi+\Delta \xi}}(\Delta x)})+o(|\Delta x|)\\
&\quad +\eta_{x,\xi}(\Delta \xi)+o(|\Delta \xi|) \\
&=-F(x,\xi)(A_{x,\eta_{x,\xi}}(\Delta x)+\overline{A_{x,\eta_{x,\xi}}(\Delta x)})+{\rm Re}(\eta_{x,\xi}(\Delta \xi))+o(|\Delta x|+|\Delta \xi|) 
\end{align*}
as $|\Delta x|\to 0$ and $|\Delta \xi|\to 0$. Since each partial derivatives $A_{x,\eta_{x,\xi}}$ and $\eta_{x,\xi}$ are continuous in $(x,\xi)\in U\times (\mathbb{C}^n-\{0\})$, we conclude that $F$ is of class $C^1$ on $U\times (\mathbb{C}^n-\{0\})$.
\end{proof}

\subsection{Remark on the reflexive duality}
\label{subsec:reflexive_dual}
Let $E\to M$ be a complex vector bundle and $E^*\to M$ be the dual bundle.
Let $G$ be a continuous function on $E^*$ which is positive convex and complex homogeneous on the fibers on $E^*$.
Royden's criterion implies that when $G=G(x,\eta)$ is
\begin{itemize}
\item strictly convex;
\item of class $C^1$ in each fiber; and
\item totally differentiable in the $x$-direction and the $x$-partial derivatives are continuous on the total space,
\end{itemize}
then the dual metric 
$$
F(x,\xi)=\sup\{{\rm Re}(\eta(\xi))\mid \eta\in E^*_x, G(x,\eta)=1\}
$$
on $E$ is strictly convex and of class $C^1$. It is remarkable that the regularity of the dual metric $F$ is (seemed to be) stronger than that of the initial metric $G$. Namely, when $G$, in addition, satisfies the reflexive duality in the sense that
$$
G(x,\eta)=\sup\{{\rm Re}(\eta(\xi))\mid \xi\in E_x, F(x,\xi)=1\},
$$
then $G$ gets a better regularity, that is, $G$ was of class $C^1$, which is apparently stronger than the initial asumption of $G$. As noted in \S\ref{chap:introduction}, the $L^1$-norm function and the Teichm\"uller metric are in this situation. Thus, the reflexive duality is possibly an important condition for the Finsler metrics.

\bibliographystyle{plain}
\bibliography{References-1}

\end{document}